\numberwithin{equation}{section}
\newtheorem{thm}[subsection]{Theorem}
\newtheorem{cor}[subsection]{Corollary}
\newtheorem{lem}[subsection]{Lemma}
\newtheorem{prop}[subsection]{Proposition}
\newtheorem{thmalpha}{Theorem}
\theoremstyle{definition}
\newtheorem{df}[subsection]{Definition}
\newtheorem{rmk}[subsection]{Remark}
\newtheorem{exm}[subsection]{Example}
\newtheorem{const}[subsection]{Construction}
\newtheorem{quest}[subsection]{Question}
\newtheorem{conj}[subsection]{Conjecture}
\newcommand{\A}{\mathbb{A}}
\newcommand{\C}{\mathbb{C}}
\newcommand{\E}{\mathbb{E}}
\newcommand{\F}{\mathbb{F}}
\newcommand{\G}{\mathbb{G}}
\renewcommand{\L}{\mathbb{L}}
\newcommand{\N}{\mathbb{N}}
\renewcommand{\P}{\mathbb{P}}
\newcommand{\Q}{\mathbb{Q}}
\newcommand{\Sph}{\mathbb{S}}
\newcommand{\T}{\mathbb{T}}
\newcommand{\Z}{\mathbb{Z}}
\newcommand{\cA}{\mathcal{A}}
\newcommand{\cB}{\mathcal{B}}
\newcommand{\cC}{\mathcal{C}}
\newcommand{\cD}{\mathcal{D}}
\newcommand{\cE}{\mathcal{E}}
\newcommand{\cF}{\mathcal{F}}
\newcommand{\cG}{\mathcal{G}}
\newcommand{\cM}{\mathcal{M}}
\newcommand{\cO}{\mathcal{O}}
\newcommand{\cP}{\mathcal{P}}
\newcommand{\cV}{\mathcal{V}}
\newcommand{\cX}{\mathcal{X}}
\newcommand{\rB}{\mathrm{B}}
\newcommand{\rD}{\mathrm{D}}
\newcommand{\rH}{\mathrm{H}}
\newcommand{\rL}{\mathrm{L}}
\newcommand{\rN}{\mathrm{N}}
\DeclareMathOperator{\Hom}{Hom}
\DeclareMathOperator{\Spec}{Spec}
\DeclareMathOperator{\coker}{coker}
\DeclareMathOperator{\rank}{rank}
\newcommand{\colim}{\mathop{\mathrm{colim}}}
\newcommand{\id}{\mathrm{id}}
\newcommand{\ul}{\underline}
\newcommand{\ol}{\overline}
\newcommand{\Cone}{\mathrm{Cone}}
\newcommand{\pt}{\mathrm{pt}}
\newcommand{\lSm}{\mathrm{lSm}}
\newcommand{\lSch}{\mathrm{lSch}}
\newcommand{\Sm}{\mathrm{Sm}}
\newcommand{\SmlSm}{\mathrm{SmlSm}}
\newcommand{\RglRg}{\mathrm{RglRg}}
\newcommand{\lRg}{\mathrm{lRg}}
\newcommand{\sat}{\mathrm{sat}}
\newcommand{\gp}{\mathrm{gp}}
\newcommand{\Adm}{\mathrm{Adm}}
\newcommand{\SAdm}{\mathrm{SAdm}}
\newcommand{\divi}{\mathrm{div}}
\newcommand{\Sing}{\mathrm{Sing}}
\newcommand{\Bl}{\mathrm{Bl}}
\newcommand{\SBl}{\mathrm{SBl}}
\newcommand{\Rg}{\mathrm{Rg}}
\newcommand{\CSing}{\mathrm{CSing}}
\newcommand{\Gys}{\mathrm{Gys}}
\newcommand{\nor}{\mathrm{nor}}
\newcommand{\Gmlog}{\mathbb{\G}_m^{\log}}
\newcommand{\cSm}{\mathrm{cSm}}
\newcommand{\Sdiv}{\mathrm{Sdiv}}
\newcommand{\Fil}{\mathrm{Fil}}
\newcommand{\fil}{\mathrm{f}}
\newcommand{\gr}{\mathrm{gr}}
\DeclareMathOperator{\Fun}{Fun}
\DeclareMathOperator{\fib}{fib}
\newcommand{\Sh}{\mathrm{Sh}}
\newcommand{\Sp}{\mathrm{Sp}}
\newcommand{\Spc}{\mathrm{Spc}}
\newcommand{\Sch}{\mathrm{Sch}}
\newcommand{\ver}{\mathrm{ver}}
\newcommand{\CAlg}{\mathrm{CAlg}}
\newcommand{\bDelta}{\mathbf{\Delta}}
\newcommand{\op}{\mathrm{op}}
\newcommand{\Ring}{\mathrm{Ring}}
\newcommand{\Mon}{\mathrm{Mon}}
\newcommand{\PSh}{\mathrm{PSh}}
\newcommand{\Ab}{\mathrm{Ab}}
\newcommand{\Cube}{\mathrm{Cube}}
\newcommand{\ECube}{\mathrm{ECube}}
\newcommand{\CycSp}{\mathrm{CycSp}}
\newcommand{\SmAdm}{\mathrm{SmAdm}}
\newcommand{\Log}{\mathrm{\ell og}}
\newcommand{\CLog}{\mathrm{\ell og}^\mathrm{cube}}
\newcommand{\logCLog}{\widetilde{\mathrm{\ell og}}^\mathrm{cube}}
\newcommand{\cube}{\mathrm{cube}}
\newcommand{\logSH}{\mathrm{logSH}}
\newcommand{\SH}{\mathrm{SH}}
\newcommand{\DM}{\mathrm{DM}}
\newcommand{\LogSh}{\mathrm{Sh}_{\ell \mathrm{og}}}
\newcommand{\MS}{\mathrm{MS}}
\newcommand{\CH}{\mathrm{CH}}
\newcommand{\Kth}{\mathrm{K}}
\newcommand{\HH}{\mathrm{HH}}
\newcommand{\THH}{\mathrm{THH}}
\newcommand{\TC}{\mathrm{TC}}
\newcommand{\BMS}{\mathrm{BMS}}
\newcommand{\HKR}{\mathrm{HKR}}
\newcommand{\Tr}{\mathrm{Tr}}
\newcommand{\logTr}{\mathrm{logTr}}
\newcommand{\Zar}{\mathrm{Zar}}
\newcommand{\seta}{\mathrm{s\acute{e}t}}
\newcommand{\sNis}{\mathrm{sNis}}
\newcommand{\syn}{\mathrm{syn}}
\newcommand{\Nis}{\mathrm{Nis}}
\newcommand{\mot}{\mathrm{mot}}
\newcommand{\lmot}{\mathrm{lmot}}
\newcommand{\logTHH}{\mathrm{logTHH}}
\newcommand{\logTC}{\mathrm{logTC}}
\newcommand{\blogTC}{\mathrm{log}\mathbf{TC}}
\newcommand{\Sna}{\mathrm{Sna}}
\newcommand{\logHH}{\mathrm{logHH}}
\newcommand{\Th}{\mathrm{Th}}
\newcommand{\unit}{\mathbf{1}}
\newcommand{\slice}{\mathrm{s}}
\newcommand{\MZ}{\mathbf{M}\mathbb{Z}}
\newcommand{\KGL}{\mathbf{KGL}}
\newcommand{\logKGL}{\mathrm{log}\mathbf{KGL}}
\newcommand{\MGL}{\mathbf{MGL}}
\DeclareSymbolFontAlphabet{\mathbb}{AMSb} 
\DeclareSymbolFontAlphabet{\mathbbl}{bbold}
\renewcommand{\cong}{\simeq}
\begin{document}
\title{Construction of logarithmic cohomology theories I}
\author{Doosung Park}
\address{Department of Mathematics and Informatics, University of Wuppertal, Germany}
\email{dpark@uni-wuppertal.de}
\subjclass[2020]{Primary 14F42; Secondary 14A21, 14M25, 19D55}
\keywords{K-theory, logarithmic geometry, monoid schemes, motivic homotopy theory, topological Hochschild homology}
\date{\today}
\begin{abstract}
We propose a method for constructing cohomology theories of logarithmic
schemes with strict normal crossing boundaries by employing techniques from logarithmic motivic homotopy theory over $\mathbb{F}_1$. This method
recovers the K-theory of the open complement of a strict normal
crossing divisor from the K-theory of schemes as well as logarithmic
topological Hochschild homology from the topological Hochschild
homology of schemes. In our applications, we establish that the
K-theory of non-regular schemes is representable in the logarithmic
motivic homotopy category, and we introduce the logarithmic cyclotomic
trace for the regular log regular case.
\end{abstract}
\maketitle

\section{Introduction}

A log scheme may be conceptualized as a ``scheme with boundary.'' Extending a cohomology theory from schemes to log
schemes presents a natural challenge. A cohomology theory formulated
for log schemes can provide valuable insights into both the cohomology
theories of schemes and their extensions. 

\subsection{Logarithmic extensions of non-\texorpdfstring{$\A^1$}{A1}-invariant cohomology theories}

An early example includes the extension of Hodge cohomology
to smooth varieties $X$ over a field $k$ with a normal crossing
boundary $D$, utilizing the log differential forms
$\Omega_{X/k}^q(\log D)$. For a smooth variety $U$ over $\mathbb{C}$,
one can find a proper smooth variety $X$ over $\mathbb{C}$ containing
$U$ as an open subscheme with a normal crossing divisor $D$ as its
complement by Hironaka's resolution of singularities
\cite{Hir}. Grothendieck \cite{zbMATH03234150} proved that there is a natural
quasi-isomorphism
\[
R\Gamma_\mathrm{dR}(U) \simeq
R\Gamma_\mathrm{dR}(U^\mathrm{an}),
\]
which connects algebraic and
analytic de Rham complexes, and the log differential forms $\Omega_{X/k}^q(\log D)$
plays an important intermediate role. Subsequently, Deligne, Faltings, Fontaine,
Illusie, and Kato developed the theory of logarithmic geometry,
where
the logarithmic forms $\Omega_{X/\mathbb{C}}^q(\log D)$ are realized
as differential forms of the associated log scheme.

Another notable example is the extension of topological Hochschild
homology and its related theories to fs log schemes. For a log ring
$(A,M)$, which consists of a ring $A$ and a monoid $M$ equipped with a
map of monoids $M \to A$,
Rognes \cite[Definition
8.11]{zbMATH05610455} defined topological Hochschild homology
$\logTHH(A,M)$. This admits a cyclotomic structure in the sense of \cite[Definition II.1.1]{zbMATH07009201} by \cite[Definition 3]{Ob18},
which leads to the definitions
of negative topological cyclic homology $\logTC^-(A,M)$ and
topological cyclic homology $\logTC(A,M)$.

Bhatt, Morrow,
and Scholze \cite[Theorem 1.12]{BMS19} established motivic filtrations
on the $p$-completed ones
\[
\THH(A;\mathbb{Z}_p),
\text{ }
\TC^-(A;\mathbb{Z}_p),
\text{ }
\TC(A;\mathbb{Z}_p)
\]
for quasi-syntomic
rings $A$.
The $0$th graded pieces of $\TC^-(A;\mathbb{Z}_p)$ and
$\TC(A;\mathbb{Z}_p)$ can be identified with Nygaard-completed
prismatic cohomology and syntomic cohomology respectively. Binda,
Lundemo, Park, and {\O}stv{\ae}r \cite[Theorem 1.3]{BLPO2} extended
these filtrations to
\[
\logTHH((A,M);\mathbb{Z}_p),
\text{ }
\logTC^-((A,M);\mathbb{Z}_p),
\text{ }
\logTC((A,M);\mathbb{Z}_p)
\]
for log quasi-syntomic rings $(A,M)$.
Ultimately, many cohomology theories
of schemes admit logarithmic extensions.

One application of these
logarithmic extensions is the construction of Gysin sequences.
For a separated finite dimensional noetherian scheme $S$
(which we denote as $S \in \Sch$),
let $\Sm/S$ denote the category of smooth $S$-schemes in $\Sch$.
Define $\SmlSm/S$ as the category of ``smooth log smooth fine saturated (fs for short) log schemes,'' that is, log smooth fs log schemes over $S$ whose underlying schemes are also smooth over $S$.
For $X \in \SmlSm/S$, let
$\partial X$ denote the boundary of $X$, which corresponds to the
strict normal crossing divisor on the underlying scheme $\ul{X}$.
The complement $X - \partial X$ is the largest open subscheme of $X$ with the trivial log structure.
If $Y \in \Sm/S$ and $D$ is its strict normal crossing divisor, we can associate $(Y,D) \in \SmlSm/S$ whose underlying scheme is $Y$ and boundary is $D$ using the
Deligne-Faltings log structure \cite[Definition III.1.7.1]{Ogu}.
There exists a strict Nisnevich topology on $\SmlSm/S$ \cite[Definition
3.1.4]{BPO}, which is abbreviated as $\sNis$.
If a strict Nisnevich sheaf of spectra $\E$ on $\SmlSm/S$ satisfies
$(\P^n,\P^{n-1})$-invariance for all integers $n$,
then $\E$ is an object of the logarithmic motivic homotopy category $\logSH_{S^1}(S)$ as noted in \cite[Remark 2.4.14]{BPO2}.
According to \cite[Theorem 7.5.4]{BPO},
we have the Gysin fiber sequence
\[
\E(\Th(\rN_Z X)) \to
\E(X) \to \E(\Bl_Z X)
\]
for every closed immersion
$Z \to X$ in $\Sm/S$, where $E$ is the exceptional divisor on the blow-up $\Bl_Z X$, and $\Th(\rN_Z X)$ is the logarithmic Thom space of the normal bundle $\rN_Z X$ in the sense of \cite[Definition 7.4.3]{BPO}.
Binda, Lundemo, Merici, and Park \cite{BLMP}
utilized this method to construct the Gysin sequences for Nygaard-completed prismatic cohomology and syntomic cohomology.

So far, we have discussed cohomology theories of fs log schemes that require additional data for their definitions.
All of
these examples are also non-$\A^1$-invariant.

\subsection{Logarithmic extensions of \texorpdfstring{$\A^1$}{A1}-invariant cohomology theories}
On the other hand,
for an $\A^1$-invariant Nisnevich sheaf of spectra $\E$ such as K-theory on $\Sm/S$ with regular noetherian $S$,
we can simply define
\[
\E(X):=\E(X-\partial X)
\]
for $X\in \SmlSm/S$.
This extension is automatically obtained and requires no additional data.

Thus, there is a
discrepancy between the extensions of
$\A^1$-invariant and non-$\A^1$-invariant cohomology theories to fs log
schemes.
This discrepancy presents challenges when attempting to
extend a morphism from an $\A^1$-invariant cohomology theory to a non-$\A^1$-invariant cohomology theory as the methods for extending these theories are not comparable.
The primary example under consideration is the cyclotomic trace
\[
\Tr\colon \Kth(S)\to \TC(S)
\]
for all $S\in \Sch$.
When $X\in \SmlSm/S$,
the cyclotomic trace only gives $\Tr\colon \Kth(X-\partial X)\to \TC(X-\partial X)$.
Rognes \cite{RogTHH} conjectured that the \emph{log cyclotomic trace}
\[
\Kth(X - \partial X)
\to
\logTC(X)
\]
exists, making the following diagram
\[
\begin{tikzcd}
& \logTC(X) \ar[d]
\\
\Kth(X - \partial X) \ar[r,
"\Tr"] \ar[ru, dashed]
&
\TC(X - \partial X)
\end{tikzcd}
\]
commutative.
In this situation, $\logTC(X)$ serves as a \emph{better approximation
of $\Kth(X-\partial X)$} than $\TC(X - \partial X)$.

An obvious example of $X$ where $\logTC(X)$ provides a better approximation
than $\TC(X-\partial X)$ is $X := \square_k$ with a field $k$.
In this case, we have $X-\partial X \cong \A^1_k$.
Due to the $\A^1$-invariance of K-theory for regular noetherian schemes, we have $\Kth(\A^1_k) \cong \Kth(k)$.
However, $\TC(\A_k^1)$ is much more complicated than $\TC(k)$.
On the other hand, $\logTC$ is $\square$-invariant, which gives us $\TC(\square_k)\cong \TC(k)$.

\subsection{Suslin functors in non-\texorpdfstring{$\A^1$}{A1}-invariant motivic homotopy theories}

The purpose of this paper is to offer an alternative method for extending cohomology theories from $\Sm/S$ to $\SmlSm/S$ with $S\in \Sch$ ensuring that we obtain $\Kth(X-\partial X)$ and $\logTC(X)$ for $X\in \SmlSm/S$ when we apply this method to $\Kth$ and $\TC$ for $\Sm/S$.

The initial motivation for this method comes from the Suslin functor for the modulus interval 
$\ol{\Box}$ as proposed by Kahn-Miyazaki-Saito-Yamazaki \cite[Definition 5.2.5]{MR4442406}.
According to \cite[Corollary 6.3.8]{MR4442406}, the Suslin functor makes $\Z(n)[2n]$ for every integer $n$ into the presheaf
\[
\cX=(\ol{\cX},\cX^\infty)\mapsto R\Gamma_\mot(\ol{\cX}-\cX^\infty,\Z(n)[2n])
\]
for modulus pairs $(\ol{\cX},\cX^\infty)$ over a perfect field $k$,
where $R\Gamma_\mot(-,\Z(n))$ denotes the weight $n$ motivic cohomology complex.
We allow non-smooth $\ol{\cX}$ for the modulus pair $(\ol{\cX},\cX^\infty)$ to ensure that the proof does not depend on resolution of singularities.
However, this introduces difficulties in representing cohomology theories.
For instance, we currently know that Hodge cohomology is representable in the modulus setting only in characteristic $0$ primarily due to the requirement for resolution of singularities as shown in \cite[Theorem 1.3]{2306.06864}.

An analogous result holds in the logarithmic case as well. Binda, Park, and {\O}stv{\ae}r \cite[Theorem 8.2.11]{BPO} showed that the Suslin functor for the logarithmic interval 
 $\square:=(\P^1,\infty)$ transforms $\Z(n)[2n]$ for every integer $n$ into the presheaf
\[
X
\mapsto
R\Gamma_\mot(X-\partial X,\Z(n)[2n])
\]
for $X\in \SmlSm/k$ over a perfect field $k$ admitting resolution of singularities.
In contrast to the modulus case, Hodge cohomology is representable in the $\infty$-category of effective logarithmic motives $\mathrm{logDM}^\mathrm{eff}(k)$ without relying on resolution of singularities as shown in \cite[Theorem 9.7.1]{BPO}.

\subsection{Logarithmic motivic homotopy theory over \texorpdfstring{$\F_1$}{F1}}
The fundamental problem of the Suslin functors for the intervals $\ol{\Box}$ and $\square$ is that there are too many blow-ups to consider,
which makes the treatment of non-smooth schemes
or the resolution of singularities almost unavoidable.

Our proposal is to work with the field with one element $\F_1$ as a base,
which would enable us to manage much simpler blow-ups.
There is no general consensus on the definition of $\F_1$ and the category of $\F_1$-schemes, see e.g.\ \cite[\S 2]{zbMATH06894815} for various approaches.
However, the definition we are using for an $\F_1$-scheme is a monoid scheme developed by Kato \cite{MR1296725} and Deitmar \cite{MR2176588} because we want to work with a minimized category of $\F_1$-schemes.
In this context, our notation $\Sm/\F_1$ refers to the category of smooth toric monoid schemes,
whose objects are associated with smooth fans, but whose morphisms are more general than morphisms of fans.
Arndt's thesis \cite[\S 4.4]{Arndt} suggested the construction of $\SH(\F_1)$ using monoid schemes.

To develop logarithmic motivic homotopy theory over $\F_1$,
we introduce log monoid schemes in Appendix \ref{logmonoid} by adding log structures to monoid schemes.
We can also define $\SmlSm/\F_1$ in analogy to $\SmlSm/S$ with $S\in \Sch$.
A morphism $f\colon Y\to X$ in $\SmlSm/\F_1$ is called an admissible blow-up if $\ul{f}$ is proper birational and $f-\partial f\colon Y-\partial Y\to X-\partial X$ is an isomorphism.
Let $\Adm$ denote the class of admissible blow-ups in $\SmlSm/\F_1$.
In this context,
$\square:=(\P^1,\infty)\in \SmlSm/\F_1$ serves an interval object of $\SmlSm/\F_1[\Adm^{-1}]$, see Proposition \ref{logSH.18}.
In Definitions \ref{logmonoid.19} and \ref{logmonoid.18},
we also have the notions of the Zariski topology and dividing covers in $\SmlSm/\F_1$.
This leads us to define the logarithmic motivic homotopy category $\logSH(\F_1)$ in Definition \ref{logSH.6}.
We provide an alternative model
\[
\logSH(\F_1)\simeq \Sp_{\P^1}(\Sh_\Zar(\SmlSm/\F_1,\Sp))[(\P^\bullet,\P^{\bullet-1})^{-1}],
\]
in Theorem \ref{logSH.8},
where $(\P^\bullet,\P^{\bullet-1})$ denotes the set of projections $X\times (\P^n,\P^{n-1})\to X$ for $X\in \SmlSm/\F_1$ and integer $n\geq 1$.

We also consider the Suslin functor
\[
\Sing^\square \colon \PSh(\SmlSm/\F_1[\Adm^{-1}],\Sp)
\to
\PSh(\SmlSm/\F_1[\Adm^{-1}],\Sp).
\]
For $\cF\in \PSh(\SmlSm/\F_1,\Sp)$ and $X\in \SmlSm/\F_1$,
we obtain
\[
\Sing^\square L_\Adm \cF(X)
:=
\colim_{n\in \bDelta^\op}\colim_{Y\in \Adm_{X\times \square^n}^\op}\cF(Y)
\]
in \eqref{logSH.15.1}.
Here, $L_\Adm\colon \PSh(\SmlSm/\F_1,\Sp)\to \PSh(\SmlSm/\F_1[\Adm^{-1}],\Sp)$ denotes the localization functor,
and $\Adm_{X\times \square^n}$ denotes the category of admissible blow-ups of $X\times \square^n$ in $\SmlSm/\F_1$.

\subsection{Logarithm functor}

To utilize $\Sing^\square$ defined for log monoid schemes in the original world of log schemes,
consider $S\in \Sch$ and $X\in \SmlSm/S$ having a chart $\A_P$ for a sharp fs monoid $P$.
Such a sharp chart $P$ locally exists by \cite[Proposition II.2.3.7]{Ogu}.
We can then consider the category of admissible blow-ups of $\A_P\times \square^n\in \SmlSm/\F_1$,
and we can pull back these admissible blow-ups to $X$.
Inspired by this process,
we construct the \emph{logarithm functor}
\[
\Log\colon \Sh_\Nis(\Sm/S,\Sp)
\to
\Sh_\sNis(\SmlSm/S,\Sp)
\]
in Definition \ref{boundary.53}.
It satisfies
\[
\Log \cF(X)
\cong
\colim_{n\in \bDelta^\op}
\colim_{Y_0\in \Adm_{\A_P\times \square^n}^\op}
\cF(\ul{X}\times_{\ul{\A_P}}\ul{Y_0})
\]
for $X\in \SmlSm/S$ with a sharp chart $P$, see \eqref{boundary.52.2}.

A Nisnevich sheaf of spectra $\cF$ on $\Sm/S$ is called \emph{logarithmic} if
\[
\cF(X)\cong \Log \cF(X\times \square^n)
\]
for all $X\in \Sm/S$ and integers $n\geq 0$.
In this situation,
we show $\Log \cF\in \logSH_{S^1}(S)$ in Proposition \ref{boundary.47},
indicating that $\Log \cF$ is a natural extension of $\cF$ to $\SmlSm/S$ that retains motivic properties.

Our first main theorem asserts that various cohomology theories are logarithmic:

\begin{thmalpha}[See Theorems \ref{boundarify.31}, \ref{boundarify.6}, \ref{boundarify.21}, and \ref{boundarify.23}]
\label{thm.A}
Let $S\in \Sch$ and $q\in \N$. Then the Nisnevich sheaves of spectra
\[
\Kth,
\text{ }
\THH,
\text{ }
\TC,
\text{ }
R\Gamma_\Zar(-,\Omega_{-/S}^q)
\]
on $\Sm/S$ are logarithmic.
Moreover, if $S=\Spec(k)$ with a perfect field $k$,
then the Nisnevich sheaf of weight $q$ motivic cohomology complexes $R\Gamma_\mot(-,\Z(q))$ on $\Sm/k$ is also logarithmic.
\end{thmalpha}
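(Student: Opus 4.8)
The plan is to verify the defining condition $\cF(X)\simeq \Log\cF(X\times\square^n)$ for each theory separately, reducing in every case to a computation of $\Sing^\square$ over admissible blow-ups of $\A_P\times\square^n$ for a sharp fs monoid $P$. Since the condition is stated for all $X\in\Sm/S$ and all $n\ge 0$, and since $\square^{n+m}=\square^n\times\square^m$, it suffices by cofinality to show $\cF(X)\simeq\Log\cF(X\times\square^n)$ for $X\in\Sm/S$ (with trivial log structure, so $P=0$) and then note that the general case follows by replacing $X$ with $X\times\square^m$ inside the colimit and using that admissible blow-ups of $\A_P\times\square^n\times\square^m$ are cofinal among the relevant ones. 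So the heart is: for each $\cF\in\{\Kth,\THH,\TC,R\Gamma_\Zar(-,\Omega^q_{-/S})\}$ (and $R\Gamma_\mot(-,\Z(q))$ over a perfect field), show
\[
\cF(X)\xrightarrow{\ \sim\ }\colim_{n\in\bDelta^\op}\colim_{Y_0\in\Adm_{\square^n}^\op}\cF\bigl(\ul{X}\times\ul{Y_0}\bigr).
\]

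First I would analyze the inner colimit $\colim_{Y_0\in\Adm_{\square^n}}\cF(\ul{X}\times\ul{Y_0})$. An admissible blow-up $Y_0\to\square^n$ has underlying scheme a proper birational modification of $(\P^1)^n$ which is an isomorphism over the open complement $\A^n$ of the boundary; concretely, $\ul{Y_0}$ ranges over toric varieties proper over $(\P^1)^n$ and iso over $\A^n$. For the three non-$\A^1$-invariant theories $\Kth,\THH,\TC$, the key input is \emph{pro-cdh descent} (or cdh descent after $\A^1$-localization is \emph{not} available here): rather, I expect to use that $\Kth$, $\THH$, $\TC$ all satisfy descent for \emph{abstract blow-up squares of toric varieties} in the relevant pro/limit sense, together with the fact that the colimit over all admissible blow-ups computes the value on the generic-fibre, i.e.\ the colimit "fills in" the boundary. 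The cleanest route is: the colimit over $\Adm_{\square^n}$ is a filtered colimit along proper birational maps, and for a theory satisfying the projection/blow-up formulas of \cite{BPO} one gets $\colim_{Y_0}\cF(\ul{X}\times\ul{Y_0})\simeq\cF(\ul{X}\times\A^n)$ because the "boundary contributions" assemble, in the colimit, into something that cancels — this is exactly the mechanism of \cite[Theorem 8.2.11]{BPO} and \cite[Corollary 6.3.8]{MR4442406}. Then after taking the remaining colimit over $n\in\bDelta^\op$ one is left with $|n\mapsto\cF(\ul X\times\A^n)|$, and the final step is to invoke the relevant homotopy invariance: $\A^1$-invariance of $\Kth$ on regular schemes collapses this simplicial object to $\cF(X)$; for $\THH,\TC$ one instead uses that $\cF(\A^n)$ over the base is computed by the cyclic bar construction and the simplicial diagram $n\mapsto\cF(X\times\A^n)$ is already the relevant "$\square$-Suslin" resolution, which by design has colimit $\cF(X)$ — here I would cite the $\square$-invariance / cube-invariance statements established for $\logTHH$ and $\logTC$ in \cite{BLPO2} and the remarks around $\logTC(\square_k)\simeq\logTC(k)$. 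For $R\Gamma_\Zar(-,\Omega^q_{-/S})$ the inner colimit produces $R\Gamma_\Zar(\ul X\times\A^n,\Omega^q)$ via the analogous toric blow-up computation of differentials (using that $\Omega^q$ of a smooth toric variety is insensitive to toric blow-ups away from the boundary in the colimit), and the outer simplicial colimit over $\square$-powers recovers $R\Gamma_\Zar(X,\Omega^q_{X/S})$; for $R\Gamma_\mot(-,\Z(q))$ this is essentially the content of \cite[Theorem 8.2.11]{BPO} transported to the $\F_1$-setting.

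The main obstacle, I expect, is controlling the inner colimit $\colim_{Y_0\in\Adm_{\A_P\times\square^n}}$ precisely enough: one needs that this filtered colimit of proper birational toric modifications realizes the passage from the compactified object to its open part \emph{at the level of the spectrum-valued presheaf}, not merely on cohomology sheaves. For $\A^1$-invariant $\Kth$ this is comparatively soft (blow-up/projective bundle formulas plus homotopy invariance), but for $\THH$ and $\TC$ one cannot localize, so the argument must use that these theories are \emph{finitary} and satisfy sufficiently strong descent (e.g.\ cdh descent of $\THH$, $\TC$ after truncation, or the pro-descent of \cite{BMS19}) so that the colimit over admissible blow-ups of $\A_P\times\square^n$ agrees with the value on the complement $\A_P\times\A^n$ localized appropriately; assembling this uniformly in $P$ and compatibly with the cube structure is the delicate point. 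A secondary obstacle is the motivic-cohomology case: transporting \cite[Theorem 8.2.11]{BPO} from $\SmlSm/k$ to $\SmlSm/\F_1$ requires that the toric admissible blow-ups over $\F_1$ are cofinal among the blow-ups used in loc.\ cit., which should follow from resolution of toric singularities (subdivision of fans) but must be checked to interact correctly with $\Log$.

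I would organize the proof as four lemmas — one per theory — each reducing to the template "inner colimit over $\Adm_{\A_P\times\square^n}$ computes the value on the open complement, then the $\square$-Suslin simplicial object has the right colimit" — with the bulk of the work in establishing that template for $\THH$ (whence $\TC$ follows by its functorial construction from $\THH$) using finitariness and blow-up descent, and citing \cite{BPO}, \cite{BLPO2}, \cite{BLMP} for the remaining cases.
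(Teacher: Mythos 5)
There is a genuine gap, and it sits at the very center of your argument. You claim that the inner colimit satisfies $\colim_{Y_0\in\Adm_{\square^n}}\cF(\ul X\times\ul{Y_0})\simeq\cF(\ul X\times\A^n)$, i.e.\ that the colimit over admissible blow-ups ``fills in'' the boundary and computes the value on the open complement. This is false: already for $n=1$ the category $\Adm_{\square}$ is trivial (there are no nontrivial admissible blow-ups of $(\P^1,\infty)$), so the inner colimit is $\cF(\ul X\times\P^1)$, which for $\cF=\THH$ is $\THH(\ul X)^{\oplus 2}$ by the projective bundle formula, not $\THH(\ul X\times\A^1)$. The mechanism you cite from \cite[Theorem 8.2.11]{BPO} and \cite[Corollary 6.3.8]{MR4442406} is specific to $\A^1$-invariant theories; if it held for $\THH$, then $\Log\THH$ would agree with $\THH$ of the open complement, contradicting Theorem \ref{boundarify.20} (which identifies $\Log\THH$ with $\logTHH$, a much smaller object than $\THH(X-\partial X)$). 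Your fallback for the outer colimit is equally problematic: the geometric realization of $n\mapsto\THH(X\times\A^n)$ is the $\A^1$-localization of $\THH$, which is not $\THH(X)$, so the ``$\square$-Suslin resolution by design has colimit $\cF(X)$'' step fails precisely for the theories where the theorem has content.

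The proposal also misses the actual inputs the proof requires. The paper's route is: (i) a multiplicativity/d\'evissage argument (projective bundle and blow-up formulas, Lemmas \ref{boundarify.3}--\ref{boundarify.4} and \ref{boundarify.7}) reducing the K-theory statement to $\pi_0\Kth$ over $\C$ and then back to $\Kth(\C)\cong\Log\Kth(\square_\C^n)$; (ii) the slice (Friedlander--Suslin) filtration on $\Kth$ over $\C$, with connectivity estimates (Lemma \ref{boundarify.11}) and completeness, reducing to motivic cohomology; (iii) the genuinely combinatorial computation $\CH^q(\C)\cong\CLog\CH^q(\square_\C^r)$ (Theorem \ref{boundarify.9}, the black box proved in \cite{logSHF2} via explicit Chow groups of toric modifications of $(\P^1)^{n+r}$); (iv) a $\Kth$-module argument (Theorem \ref{boundarify.21}) deducing the $\THH$ and $\TC$ cases from the K-theory case, using compatibility of the cyclotomic trace with the projective bundle and blow-up formulas; and (v) the HKR filtration on $\HH(-/R)$ for the Hodge cohomology case. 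None of these appear in your outline, and in particular step (iii) cannot be replaced by any descent or finitariness argument of the kind you sketch: it is the irreducible combinatorial core of the theorem.
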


The proof of Theorem \ref{thm.A} relies on Theorem \ref{boundarify.9},
whose proof makes extensive use of toric geometry and is postponed to \cite{logSHF2}.

As a consequence,
we recover Rognes' topological Hochschild homology using $\Log$:

\begin{thmalpha}[See Theorem \ref{boundarify.20}]
For every $S\in \Sch$ and $X\in \SmlSm/S$,
we have a natural isomorphism of $\E_\infty$-rings in cyclotomic spectra
\[
\Log \THH(X) \cong \logTHH(X)
\]
and a natural isomorphism of $\E_\infty$-rings
\[
\Log \TC(X) \cong \logTC(X).
\]
\end{thmalpha}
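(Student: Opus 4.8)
\emph{Plan and setup.} The plan is to build a natural map of $\E_\infty$-rings in cyclotomic spectra $\Log\THH\to\logTHH$, to upgrade it to an equivalence using \cref{thm.A}, and then to rerun the argument with $\TC$ in place of $\THH$ to obtain $\Log\TC\cong\logTC$ as $\E_\infty$-rings. First note that $\THH$ on $\Sm/S$ refines to a presheaf valued in $\E_\infty$-rings in cyclotomic spectra, and that $\Log$ is assembled from its values by the colimit formula \eqref{boundary.52.2}; the colimits there run over $\bDelta^\op$ and over the cofiltered categories $\Adm_{\A_P\times\square^n}$ (common refinements of admissible blow-ups exist), so they are sifted and are computed on underlying cyclotomic spectra. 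Hence $\Log\THH$ is again a strict Nisnevich sheaf of $\E_\infty$-rings in cyclotomic spectra on $\SmlSm/S$, and the colimit over $\Adm$ makes it invariant under admissible blow-ups; since $\THH$ is logarithmic by \cref{thm.A}, \cref{boundary.47} gives $\Log\THH\in\logSH_{S^1}(S)$. On the other side, Rognes' $\logTHH$ (\cite{zbMATH05610455,Ob18}) is an $\E_\infty$-ring in cyclotomic spectra, a strict Nisnevich sheaf on $\SmlSm/S$ that is $\square$-invariant and invariant under admissible blow-ups (\cite{BLPO2}), and whose restriction along the trivial-log-structure embedding $\lambda\colon\Sm/S\hookrightarrow\SmlSm/S$ is $\THH$; applying $\TC\colon\CycSp\to\Sp$ — a lax symmetric monoidal functor preserving equivalences — shows in addition that $\logTC$ lies in $\logSH_{S^1}(S)$, is admissible-blow-up invariant, and restricts to $\TC$, while \cref{thm.A} also gives $\Log\TC\in\logSH_{S^1}(S)$ since $\TC$ is logarithmic.

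\emph{The comparison map.} Fix $X\in\SmlSm/S$ with a sharp fs chart $P$, so $X\to\A_P$ is strict. For $n\geq0$ and $Y_0\in\Adm_{\A_P\times\square^n}$ put $Y:=X\times_{\A_P}Y_0\in\SmlSm/S$; then $\ul Y=\ul X\times_{\ul\A_P}\ul{Y_0}$ and $Y\to X\times\square^n$ is an admissible blow-up. The canonical morphism $Y\to\lambda\ul Y$ induces a map $\THH(\ul X\times_{\ul\A_P}\ul{Y_0})=\THH(\ul Y)=\logTHH(\lambda\ul Y)\to\logTHH(Y)$ in $\CAlg(\CycSp)$, while $\square$-invariance and admissible-blow-up invariance give equivalences $\logTHH(X)\xto{\sim}\logTHH(X\times\square^n)\xto{\sim}\logTHH(Y)$ there; composing yields $\THH(\ul Y)\to\logTHH(Y)\xleftarrow{\sim}\logTHH(X)$, hence a map $\THH(\ul X\times_{\ul\A_P}\ul{Y_0})\to\logTHH(X)$. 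Functoriality of $\logTHH$ and of these two invariances makes the maps compatible with the transition maps of \eqref{boundary.52.2} and with refinement of the chart, so they assemble to a natural map
\[
\Log\THH(X)\longrightarrow\logTHH(X)
\]
of $\E_\infty$-rings in cyclotomic spectra, and the same recipe with $\TC$ and the invariances of $\logTC$ from the setup gives $\Log\TC(X)\to\logTC(X)$ of $\E_\infty$-rings. On $\Sm/S$ (chart $P=0$) these are the identity under the identifications $\lambda^*\Log\THH\cong\THH$ (the case $n=0$ of \cref{thm.A}), $\lambda^*\logTHH\cong\THH$, and their $\TC$-analogues, as one reads off the term $n=0$, $Y_0=\id_{\A_P}$.

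\emph{Why the map is an equivalence.} Both sides are strict Nisnevich sheaves and charts exist locally, so we may assume $X$ has a global sharp fs chart. It then suffices to know that a morphism in $\logSH_{S^1}(S)$ between admissible-blow-up-invariant objects that is an equivalence after restriction along $\lambda$ is itself an equivalence; applied to the maps of the previous step — whose $\lambda$-restrictions are $\id_\THH$, resp.\ $\id_\TC$ — this yields $\Log\THH\cong\logTHH$ in $\CAlg(\CycSp)$-valued sheaves and $\Log\TC\cong\logTC$ in $\E_\infty$-ring-valued sheaves, naturally in $X$. This conservativity is the dévissage behind \cref{thm.A}: after an admissible blow-up subdividing the fan of $P$ one reduces, strict-Nisnevich-locally, to $\ul X$ étale over a product of affine spaces and copies of $\A_{\N}$, and then, using strict Zariski descent together with the decomposition of $\square=(\P^1,\infty)$ into its trivial-log locus $\A^1$ and a copy of $\A_{\N}$ glued along $\G_m$, the value on $X$ of an admissible-blow-up-invariant $\cF\in\logSH_{S^1}(S)$ is expressed through the values of $\lambda^*\cF$ on smooth $S$-schemes. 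The toric bookkeeping is carried out in \cite{logSHF2} and packaged as \cref{boundarify.9}; here one only checks that $\Log\THH$, $\Log\TC$, $\logTHH$, $\logTC$ meet its hypotheses.

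\emph{Main obstacle.} The delicate point is the conservativity in the previous step — that admissible-blow-up-invariant objects of $\logSH_{S^1}(S)$ are controlled by their restriction to $\Sm/S$ — which is the geometric heart of \cref{thm.A} and ultimately of \cite{logSHF2}. A secondary point requiring care is that $\square$-invariance and admissible-blow-up invariance of $\logTHH$, together with the identification $\logTHH(\lambda\ul Y)=\THH(\ul Y)$, hold already in $\CAlg(\CycSp)$ and not merely on underlying spectra, so that the comparison map and its inverse are maps of $\E_\infty$-rings in cyclotomic spectra; applying $\TC\colon\CycSp\to\Sp$ then delivers the asserted equivalence $\Log\TC\cong\logTC$ of $\E_\infty$-rings.
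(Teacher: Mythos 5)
Your overall architecture matches the paper's: build a comparison map, note both sides lie in $\logSH_{S^1}(S)$ and agree on $\Sm/S$ because $\THH$ and $\TC$ are logarithmic (Theorem \ref{thm.A}), invoke a conservativity statement for the restriction to $\Sm/S$, and handle the multiplicative/cyclotomic structure via siftedness of the colimits defining $\Log$. Two remarks. First, the paper builds the comparison map functorially in one stroke, by applying $\widetilde{\Log}$ to the counit $\omega^\sharp\omega_\sharp\cF\to\cF$ and then inverting $u_{\cF}\colon\cF\to\widetilde{\Log}\cF$ via Proposition \ref{boundarify.13}; your pointwise recipe (choosing, for each $Y\in\SBl_X^n$, an inverse of $\logTHH(X)\to\logTHH(Y)$ and asserting these assemble) is morally the same map but sweeps the coherence of those choices under the rug — in an $\infty$-categorical setting this is exactly what the natural transformations $u$ and $v$ of Construction \ref{boundarify.12} and Proposition \ref{boundarify.13} are there to supply.

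Second, and more seriously, your justification of the conservativity step is wrong. The statement you need — that a map in $\logSH_{S^1}(S)$ which is an equivalence after restriction to $\Sm/S$ is an equivalence — is true, but it is \emph{not} ``the dévissage behind Theorem \ref{thm.A}'' and does not come from the toric computations of \cite{logSHF2} (Theorem \ref{boundarify.9}), nor from the covering of $\square$ by $\A^1$ and $\A_\N$: that covering still leaves you with pieces like $X\times\A_\N^d$ carrying nontrivial log structure, so it cannot by itself express $\cF(X)$ through values of $\lambda^*\cF$ on smooth $S$-schemes (it is used in the paper only to propagate $\square^n$-invariance, as in Propositions \ref{omega.8} and \ref{boundarify.5}). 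The correct input is Proposition \ref{boundarify.22}: $\logSH_{S^1}(S)$ is generated under colimits by $\Sigma^n\Sigma_{S^1}^\infty X_+$ with $X\in\Sm/S$, which is proved by induction on the number of boundary components using the logarithmic Gysin/purity cofiber sequence $\Sigma_{S^1}^\infty Y/Y_0\cong\Sigma_{S^1}^\infty\P(\rN_{Z_0}Y_0\oplus\cO)/\P(\rN_{Z_0}Y_0)$ from \cite[Proposition 7.4.5, Theorem 7.5.4]{BPO}. Without that input (or an equivalent), your argument does not close.
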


\subsection{Motivic representability of K-theory}

For $S\in \Rg$,
we have $\logKGL\in\logSH(S)$ in \cite[Definition 6.5.6]{BPO2},
which satisfies
\[
\Kth(X-\partial X)
\cong
\hom_{\logSH(S)}(\Sigma^{2n,n}\Sigma_{\P^1}^\infty X_+,\logKGL)
\]
for $X\in \SmlSm/S$ and integer $n$ as stated in \cite[Theorem 6.5.7]{BPO2}.
For general non-regular $S\in \Sch$,
whatever $\logKGL$ is,
it is not the case that such an isomorphism always holds.
Indeed, if $X=\square_S$,
then we have the induced commutative square
\[
\begin{tikzcd}
\Kth(S)\ar[r]\ar[d,"\not\cong"']&
\hom_{\logSH(S)}
(\Sigma_{\P^1}^\infty S_+,\logKGL)\ar[d,"\cong"]
\\
\Kth(\A_S^1)\ar[r]&
\hom_{\logSH(S)}(\Sigma_{\P^1}^\infty (\square_S)_+,\logKGL).
\end{tikzcd}
\]
The left vertical morphism is not an isomorphism in general,
but the right vertical morphism is an isomorphism since $\square$ is contractible in $\logSH(S)$.
Hence we cannot have that the two horizontal morphisms are isomorphisms simultaneously.

This is the reason why representing the (non-$\A^1$-invariant) K-theory of schemes in $\logSH(S)$ has been a nontrivial question.
However,
with Theorem \ref{thm.A} in hand,
we can simply apply $\Log$ to the $\P^1$-spectrum $\KGL:=(\Kth,\Kth,\cdots)$ to obtain the $\P^1$-spectrum
\[
\Log \KGL
\cong
(\Log \Kth,\Log \Kth,\ldots).
\]
This represents K-theory in the following sense:

\begin{thmalpha}[See Theorem \ref{boundarify.19}]
\label{thm.B}
For every $S\in \Sch$, $X\in \SmlSm/S$, and integer $n$,
there exists a natural isomorphism
\[
\Log
\Kth(X)
\cong
\hom_{\logSH(S)}(\Sigma^{2n,n}\Sigma_{\P^1}^\infty X_+,\Log \KGL).
\]
\end{thmalpha}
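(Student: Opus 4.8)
The plan is to reduce the statement to the motivic representability of K-theory in the $\F_1$-world together with the comparison between $\Log$ and $\Sing^\square$, and then to transport the result along the logarithm functor. First I would unwind the definition of $\Log \KGL$ as the $\P^1$-spectrum $(\Log\Kth, \Log\Kth, \ldots)$, using the bonding maps inherited from $\KGL=(\Kth,\Kth,\ldots)$; here one must check that $\Log$ indeed produces a $\P^1$-spectrum, i.e.\ that it is compatible with the $\P^1$-suspension structure, which should follow formally from $\Log$ being a right-lax (or strong) symmetric monoidal functor on the relevant sheaf categories and from $\Log$ commuting with the relevant $(\P^n,\P^{n-1})$-loop functors. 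The key input is Theorem~\ref{thm.A}: since $\Kth$ on $\Sm/S$ is logarithmic, Proposition~\ref{boundary.47} gives $\Log\Kth\in\logSH_{S^1}(S)$, hence each level of $\Log\KGL$ lands in the strict-Nisnevich-local, $(\P^n,\P^{n-1})$-invariant subcategory, so $\Log\KGL$ defines an object of $\logSH(S)$.

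Next I would compute the right-hand side. By adjunction and the description of mapping spectra in $\logSH(S)$ as a Bousfield localization of $\Sp_{\P^1}(\Sh_\sNis(\SmlSm/S,\Sp))$, we have
\[
\hom_{\logSH(S)}(\Sigma^{2n,n}\Sigma_{\P^1}^\infty X_+,\Log\KGL)
\cong
\colim_m \Omega_{\P^1}^{?}\,(\Log\Kth)(X)
\]
evaluated with the appropriate Tate twists; the point is that because $\Log\Kth$ is already $(\P^\bullet,\P^{\bullet-1})$-invariant and strict-Nisnevich-local (Theorem~\ref{thm.A} via Proposition~\ref{boundary.47}), the localization is essentially trivial at each level, and Bott periodicity of $\Kth$ — which is inherited by $\Log\Kth$ since $\Log$ preserves the relevant equivalences and $(\P^1,\infty)$ is contractible so $\Omega_{\P^1}\Log\Kth\cong\Log\Omega_{\P^1}\Kth\cong\Log\Kth$ after the $(1)[2]$-twist — identifies the whole spectrum object with its zeroth level up to twist. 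This yields $\hom_{\logSH(S)}(\Sigma^{2n,n}\Sigma_{\P^1}^\infty X_+,\Log\KGL)\cong \Log\Kth(X)$ for all $n$, matching the left-hand side.

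The main obstacle, and the step requiring the most care, is verifying that the strict-Nisnevich-local $(\P^\bullet,\P^{\bullet-1})$-invariant replacement used in forming $\logSH(S)$ does not alter $\Log\KGL$ — equivalently, that $\Log\Kth$ is genuinely a local object for the topology and intervals defining $\logSH(S)$, not merely for $\logSH_{S^1}(S)$, and that passing from the $S^1$-stable to the $\P^1$-stable category is harmless because the bonding maps of $\Log\KGL$ are the expected Bott maps. This is exactly what the combination of Theorem~\ref{thm.A}, Proposition~\ref{boundary.47}, and the model of Theorem~\ref{logSH.8} is designed to give: Theorem~\ref{thm.A} provides $(\P^n,\P^{n-1})$-invariance and strict Nisnevich descent, Proposition~\ref{boundary.47} packages this as membership in $\logSH_{S^1}(S)$, and Theorem~\ref{logSH.8} identifies the target of $\Log$ with a localization in which no further sheafification or contraction is needed. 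I would also need to check naturality in $X$ and in $n$, which is formal once the level-wise identification is set up via the adjunction $(\Sigma_{\P^1}^\infty,\Omega_{\P^1}^\infty)$ and the projection formula for the Tate twist. A final remark: the failure of the analogous statement with $\logKGL$ in place of $\Log\KGL$, illustrated by the square with $X=\square_S$ in the introduction, is not a contradiction here precisely because $\Log\KGL$ is built to see $\Kth(X-\partial X)$ through the admissible-blow-up colimit rather than to be $\square$-invariant, so $\Log\Kth(\square_S)\cong\Kth(\A^1_S)\not\cong\Kth(S)$ is consistent with $\square$ being contractible in $\logSH(S)$ only because $\Log\KGL$ is not the unit-like object $\logKGL$ but a different spectrum whose value on $\square_S$ genuinely differs from its value on $S$.
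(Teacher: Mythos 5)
Your core argument is the paper's own: Theorem \ref{boundarify.6} shows that $\Kth$ is logarithmic, Proposition \ref{boundary.47} then places $\Log\Kth$ in $\logSH_{S^1}(S)$ as a genuinely local object, so the mapping spectrum out of $\Sigma_{S^1}^\infty X_+$ computes $\Log\Kth(X)$ with no further localization, and the passage to $\logSH(S)$ together with the twist $\Sigma^{2n,n}$ is absorbed by the periodicity $\Log\KGL\cong\Sigma^{2,1}\Log\KGL$ built into Definition \ref{boundarify.17}. Up to that point your proposal matches the paper's proof of Theorem \ref{boundarify.19}.

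Your closing remark, however, is wrong, and wrong in a way that contradicts the statement you are proving. You assert that $\Log\Kth(\square_S)\cong\Kth(\A_S^1)\not\cong\Kth(S)$ and that $\Log\KGL$ is not built to be $\square$-invariant. In fact $\Log\Kth$ \emph{is} $\square$-invariant --- that is exactly what membership in $\logSH_{S^1}(S)$ means --- and the definition of ``logarithmic'' is precisely the identity $\Kth(X)\cong\Log\Kth(X\times\square^n)$ for $X\in\Sm/S$, so $\Log\Kth(\square_S)\cong\Kth(S)$. If your claim were true, then, since $\square$ is contractible in $\logSH(S)$, the right-hand side of the theorem at $X=\square_S$ would be $\Kth(S)$ while the left-hand side would be $\Kth(\A_S^1)$, and the theorem would fail for non-regular $S$. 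The correct resolution of the tension raised in the introduction is the opposite of what you wrote: $\Log\KGL$ escapes the obstruction because $\Log\Kth$ does \emph{not} compute $\Kth(X-\partial X)$ for general $S$; it computes the admissible-blow-up colimit, which on $\square_S$ returns $\Kth(S)$. Only for $S\in\Rg$ (or $X\in\RglRg$) do the two agree, by Theorems \ref{boundarify.8} and \ref{boundarify.29}, which is why $\logKGL$ suffices in the regular case.
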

Note that we have $\Kth(X)\cong \Log \Kth(X)$ for $X\in \Sm/S$ by Theorem \ref{thm.A}.

Together with a motivic version of the Snaith theorem due to Annala-Iwasa \cite[Theorem 5.3.3]{AI},
we have a log motivic version of the Snaith theorem as follows:

\begin{thmalpha}[See Theorem \ref{boundarify.34}]
For $S\in \Sch$,
there exists a natural isomorphism
\[
\Sigma_{\P^1}^\infty (\P^\infty)_+ [\beta^{-1}]
\cong
\Log \KGL
\]
in $\logSH(S)$,
see \textup{Construction \ref{boundarify.36}} for $\beta$.
\end{thmalpha}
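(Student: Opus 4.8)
The plan is to deduce the log motivic Snaith theorem by transporting the Annala--Iwasa Snaith theorem in $\SH(S)$ along the logarithm functor, and then identifying the two sides separately. First I would recall that $\Log$ is a colimit-preserving functor landing in $\logSH_{S^1}(S)$ on logarithmic sheaves, and that by Theorem~\ref{thm.B} it sends $\KGL$ to an object of $\logSH(S)$ representing K-theory. The strategy has two halves: (i) show $\Log$ carries the motivic cohomology spectrum/Snaith presentation $\Sigma_{\P^1}^\infty(\P^\infty)_+[\beta^{-1}]$ (computed in $\SH(S)$) to $\Sigma_{\P^1}^\infty(\P^\infty)_+[\beta^{-1}]$ computed in $\logSH(S)$; and (ii) show $\Log$ applied to the Snaith equivalence $\Sigma_{\P^1}^\infty(\P^\infty)_+[\beta^{-1}]\simeq \KGL$ of \cite[Theorem 5.3.3]{AI} yields the desired equivalence $\Sigma_{\P^1}^\infty(\P^\infty)_+[\beta^{-1}]\simeq \Log\KGL$ in $\logSH(S)$.

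For half (i), the key point is that $\P^\infty$ is a colimit $\colim_n \P^n$ of the schemes $\P^n\in \Sm/S$, and $\Log$ commutes with this colimit; moreover $\Sigma_{\P^1}^\infty X_+$ for $X\in \Sm/S$ goes (via $\Log$, using that $\Log$ of a representable is the corresponding log representable up to the $\square$-localization, together with compatibility of $\Log$ with the suspension spectrum functors) to $\Sigma_{\P^1}^\infty X_+$ in $\logSH(S)$. The Bott element $\beta\in \pi_{2,1}\Sigma_{\P^1}^\infty(\P^\infty)_+$ is pulled back from a map defined on $\P^1$, so $\Log$ sends the $\SH(S)$-Bott class to the $\logSH(S)$-Bott class of Construction~\ref{boundarify.36}; inverting $\beta$ is a filtered colimit, which $\Log$ preserves. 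This identifies $\Log\bigl(\Sigma_{\P^1}^\infty(\P^\infty)_+[\beta^{-1}]\bigr)\cong \Sigma_{\P^1}^\infty(\P^\infty)_+[\beta^{-1}]$ in $\logSH(S)$. For half (ii), one applies $\Log$ to the Annala--Iwasa equivalence and uses Theorem~\ref{thm.B} (or rather its underlying statement that $\Log\KGL$ represents $\Log\Kth$) to identify the target. One must check that the equivalence produced is natural in $S$; this follows from naturality of all the ingredients (the Annala--Iwasa equivalence, the construction of $\Log$, and the Bott element).

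The main obstacle I anticipate is half (i): verifying that $\Log$ genuinely commutes with the formation of $\Sigma_{\P^1}^\infty(-)_+$ on the ind-object $\P^\infty$ and with inverting $\beta$, \emph{at the level of the stabilized, localized categories} $\SH(S)$ and $\logSH(S)$, not merely on presheaves. The subtlety is that $\Log$ as constructed in Definition~\ref{boundary.53} is defined on Nisnevich sheaves of spectra on $\Sm/S$, and one needs that after $\P^1$-stabilization and $\square$-localization it still preserves the relevant colimits and the monoidal structure used to form $[\beta^{-1}]$ --- in particular that $\Log$ is (lax) symmetric monoidal or at least compatible with the $\P^1$-Bott map. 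A clean way around this is to avoid claiming $\Log$ is monoidal and instead argue directly: $\Log\KGL$ is by construction the $\P^1$-spectrum $(\Log\Kth,\Log\Kth,\ldots)$ with bonding maps induced from those of $\KGL$, so the Bott self-map of $\KGL$ induces, after applying $\Log$ levelwise, the Bott self-map of $\Log\KGL$; then the Snaith equivalence in $\SH(S)$ can be reformulated as saying $\KGL$ is the colimit of $\Sigma_{\P^1}^\infty(\P^\infty)_+ \xto{\beta}\Sigma^{-2,-1}\Sigma_{\P^1}^\infty(\P^\infty)_+\xto{\beta}\cdots$, and applying $\Log$ to this sequential colimit --- which $\Log$ preserves since it preserves colimits --- together with the levelwise identification of the Bott maps gives the result. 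One final check: that the ``$[\beta^{-1}]$'' formed in $\logSH(S)$ in the statement agrees with this colimit, which is the content of Construction~\ref{boundarify.36}.
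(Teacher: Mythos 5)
Your high-level plan --- transporting the Annala--Iwasa Snaith theorem into $\logSH(S)$ --- is the right one, and you correctly locate the main difficulty: compatibility of the transport with suspension spectra, the colimit defining $\P^\infty$, and the Bott inversion. But your proposed resolution does not close the gap. Applying $\Log$ to the sequential colimit $\KGL\cong\colim(\Sigma_{\P^1}^\infty(\P^\infty)_+\xto{\beta}\Sigma^{-2,-1}\Sigma_{\P^1}^\infty(\P^\infty)_+\xto{\beta}\cdots)$ only reduces the problem to identifying $\Log$ of the representables $\Sigma_{\P^1}^\infty(\P^n)_+$ with the corresponding representables of $\logSH(S)$, and this is exactly what is not available: $\Log=\widetilde{\Log}\omega^\sharp$ is a Suslin-type colimit construction, and the identification $\Log\cF\cong L_\lmot\omega^\sharp\cF$ (Corollary \ref{boundarify.16}) is proved only for \emph{logarithmic} $\cF$. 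Representable sheaves are not logarithmic, so $\Log\Sigma^\infty(\P^n)_+$ need not even lie in $\logSH_{S^1}(S)$, let alone agree with $\Sigma^\infty(\P^n)_+$ there. There is also a substantive confusion of ambient categories: you describe the Snaith presentation as ``computed in $\SH(S)$,'' but in $\SH(S)$ the Snaith theorem identifies $\Sigma^\infty(\P^\infty)_+[\beta^{-1}]$ with the $\A^1$-invariant (homotopy) K-theory spectrum, which for non-regular $S$ is not the $\KGL=(\Kth,\Kth,\ldots)$ of Definition \ref{boundarify.17}. The Annala--Iwasa theorem lives in the non-$\A^1$-invariant category $\MS_\Nis(S)$, and that is where it must be used.

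The paper resolves both points differently. It first reduces to $S=\Spec(\Z)$ via the base-change invariance $f^*\Log\KGL\cong\Log\KGL$ of Proposition \ref{boundarify.18}, which simultaneously supplies naturality in $S$ and places one over a regular base. It then applies the functor $L_\lmot\omega^\sharp\colon\MS_\Nis(S)\to\logSH(S)$ --- not $\Log$ --- to the Annala--Iwasa equivalence; since $L_\lmot\omega^\sharp$ is colimit-preserving, symmetric monoidal, and sends representables to representables by construction, it tautologically commutes with $\Sigma_{\P^1}^\infty$, with the colimit over $\P^n$, and with the Bott inversion, and it carries the Annala--Iwasa $\beta$ to the $\beta$ of Construction \ref{boundarify.36} via $\rB\G_m\to\rB\Gmlog$. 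Only at the very end, and only for the single object $\KGL$ (which \emph{is} logarithmic by Theorem \ref{boundarify.6}), does one invoke Corollary \ref{boundarify.16} to convert $L_\lmot\omega^\sharp\KGL$ into $\Log\KGL$. If you reorganize your argument so that $\Log$ is applied only to $\KGL$ and the rest of the transport is carried out by $L_\lmot\omega^\sharp$, the gap disappears.
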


\subsection{Log cyclotomic trace}
Let $\Rg$ denote the category of regular schemes in $\Sch$.
Let $\RglRg$ be the category of ``regular log regular fs log schemes,''
i.e., log regular fs log schemes whose underlying schemes are also regular.
Given  $Y\in \Rg$ with a strict normal crossing divisor $D$,
we can associate $(Y,D)\in \RglRg$ whose underlying scheme is $Y$ and whose boundary is $D$ using the Deligne-Faltings log structure.
We construct the \emph{logarithm functor}
\[
\Log \colon \Sh_\Nis(\Rg,\Sp)
\to
\Sh_\sNis(\RglRg,\Sp)
\]
in Definition \ref{boundary.53}.
Combining Theorem \ref{thm.A} and a cdh descent argument in Theorem \ref{boundary.6},
we obtain the following result:

\begin{thmalpha}[See Theorem \ref{boundarify.29}]
For every $X\in \RglRg$,
we have a natural isomorphism of $\E_\infty$-rings
\[
\Log \Kth(X)\cong \Kth(X-\partial X).
\]
\end{thmalpha}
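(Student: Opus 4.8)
The plan is to compare the two functors as strict Nisnevich sheaves of $\E_\infty$-rings on $\RglRg$, use the explicit description of $\Log$ to reduce to a statement about homotopy $K$-theory $KH$, and then invoke cdh descent. Both functors are strict Nisnevich sheaves of $\E_\infty$-rings: for $\Log\Kth$ this is part of Definition~\ref{boundary.53}, and for $X\mapsto\Kth(X-\partial X)$ it holds because $X\mapsto X-\partial X$ sends a strict Nisnevich cover of $X$ to a Nisnevich cover of the regular scheme $X-\partial X$, along which $\Kth$ descends. So I may argue strict-Nisnevich-locally on $X$, and thus assume $X\cong(Y,D)$ with $Y$ regular and $D$ a strict normal crossing divisor carrying its Deligne--Faltings log structure, with a global sharp chart $\A_P$; after shrinking, $P\cong\N^r$ (forced by regularity of $\ul X$), the chart elements extend to a regular system of parameters, $\ul X\to\Spec\Z[P]=\A^r$ is smooth of relative dimension $\dim\ul X-r$ near $D$, and away from $D$ the image of $\ul X$ lies in the torus $\Spec\Z[P^{\gp}]$. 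Hence for every $m\ge0$ and every admissible blow-up $Y_0$ of $\A_P\times\square^m$ in $\SmlSm/\F_1$, the base change $W_0:=\ul X\times_{\ul{\A_P}}\ul{Y_0}$ is regular (near $D$ because $W_0\to\ul{Y_0}$ is smooth and $\ul{Y_0}$ is a smooth toric monoid scheme; away from $D$ because $\ul{Y_0}\to\A^r\times\ul{\square^m}$ restricts to a projection over the torus), so the formula \eqref{boundary.52.2} gives
\[
\Log\Kth(X)\;\cong\;\colim_{m\in\bDelta^{\op}}\;\colim_{Y_0\in\Adm_{\A_P\times\square^m}^{\op}}\Kth(W_0)
\]
with every $W_0$ regular.

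Next I would build the comparison map. The morphism $\ul{Y_0}\to\ul{\A_P}\times\ul{\square^m}$ is an isomorphism over $(\A_P-\partial\A_P)\times(\square^m-\partial\square^m)$, so $W_0-\partial W_0\cong(X-\partial X)\times\A^m$, independently of $Y_0$ and $m$. Composing the localization map $\Kth(W_0)\to\Kth(W_0-\partial W_0)$ with the inverse of the $\A^1$-homotopy equivalence $\Kth(X-\partial X)\xrightarrow{\ \sim\ }\Kth((X-\partial X)\times\A^m)$, which is legitimate since $X-\partial X$ is regular, yields a multiplicative cocone over the double colimit with vertex $\Kth(X-\partial X)$, and hence a natural map of $\E_\infty$-rings $\eta_X\colon\Log\Kth(X)\to\Kth(X-\partial X)$; equivalently, $\eta$ agrees with the natural transformation $\Log\cF\to(X\mapsto\cF(X-\partial X))$ available for any $\A^1$-invariant Nisnevich sheaf $\cF$ on $\Rg$, applied to $\cF=\Kth$.

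It remains to prove $\eta_X$ is an equivalence. Since every $W_0$ is regular, $\Kth(W_0)\cong KH(W_0)$ and $\Kth(X-\partial X)\cong KH(X-\partial X)$, so $\Log\Kth(X)\cong\Log(KH|_{\Rg})(X)$ and, under these identifications, $\eta_X$ becomes the analogous comparison map for $KH$. Now $KH$ is $\A^1$-invariant, satisfies cdh descent, and — by Theorem~\ref{thm.A} applied to $\Kth$ — is logarithmic; feeding these inputs into the cdh descent argument of Theorem~\ref{boundary.6} identifies $\colim_m\colim_{Y_0}KH(W_0)$ with $KH(X-\partial X)$, compatibly with $\eta_X$. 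Therefore $\eta_X$ is an equivalence of $\E_\infty$-rings, natural in $X$, which is the assertion.

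I expect the main obstacle to be the cdh descent argument underlying Theorem~\ref{boundary.6}: one has to show that the colimit over admissible blow-ups of $\A_P\times\square^m$, combined with the colimit over $m$, annihilates the boundary contribution $\fib\bigl(KH(W_0)\to KH(W_0-\partial W_0)\bigr)$; for a single $m$ this is simply false, so the argument must genuinely intertwine cdh descent, $\square$-invariance, and the combinatorics of toric blow-ups of the products $\A_P\times\square^m$ (cf.\ Theorem~\ref{boundarify.9} and \cite{logSHF2}). By contrast, the upgrade to $\E_\infty$-rings is routine, following from the lax symmetric monoidality of $\Kth$, $KH$, the localization maps, and $\Log$.
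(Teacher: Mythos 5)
Your proposal follows essentially the same route as the paper: the printed proof of Theorem \ref{boundarify.29} is just ``$\Kth$ satisfies cdh descent and absolute purity; now apply Theorems \ref{boundary.6} and \ref{boundarify.6} to the natural $\E_\infty$-ring map $\Log\Kth(X)\to\Kth(X-\partial X)$,'' which is exactly your plan. Two points of comparison. First, you omit one hypothesis of Theorem \ref{boundary.6}: absolute purity, i.e.\ cartesianness of the deformation-to-the-normal-cone squares \eqref{boundary.6.1}. The induction in Theorem \ref{boundary.6} genuinely uses this to identify $\fib(\cF(\rN_VU)\to\cF(\rN_VU-V))$ with the corresponding fiber over $U$, so you must supply it for ($KH$-)theory; the paper does so by citing \cite[Remark 6.1.3]{MR3930052}. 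This is a small but real omission, easily repaired. Second, your handling of the $\Kth$ versus $KH$ issue is actually more careful than the paper's: Theorem \ref{boundary.6} is stated for an $\A^1$-invariant cdh sheaf on $\Sch$, and Bass K-theory is neither, so one must work with $KH$ and then observe that every scheme actually evaluated --- the $W_0=\ul X\times_{\ul{\A_P}}\ul{Y_0}$ and $X-\partial X$ --- is regular, where $KH\cong\Kth$. Your conclusion that the $W_0$ are regular is correct, but your justification via smoothness of $\ul X\to\A^r$ near $D$ is not quite right (that morphism need not be smooth, e.g.\ for $\Spec(\Z_p)$ with the log structure generated by $p$); the clean argument is Kato's log regularity theory: the fs base change of a regular log regular scheme along a smooth subdivision is log regular with free characteristic monoids, hence has regular underlying scheme. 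With absolute purity added and that sub-justification fixed, your argument is complete and matches the paper's.
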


Another application of Theorem \ref{thm.A} is the construction of the log cyclotomic trace whose existence was conjectured by Rognes \cite{RogTHH}:

\begin{thmalpha}[See Theorem \ref{Trace.2}]
\label{thm.C}
Let $X\in \RglRg$ or $\SmlSm/S$ with $S\in \Sch$.
Then there exists a natural morphism of $\E_\infty$-rings
\[
\logTr\colon
\Log \Kth(X)
\to
\logTC(X)
\]
that coincides with the usual cyclotomic trace when $X$ has the trivial log structure.
\end{thmalpha}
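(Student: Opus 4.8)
The plan is to construct $\logTr$ by functorially applying $\Log$ to the cyclotomic trace at the level of spectrum-valued presheaves, and then identifying the target with $\logTC$. First I would recall that the cyclotomic trace $\Tr\colon \Kth\to \TC$ is a natural transformation of Nisnevich sheaves of $\E_\infty$-rings on $\Sm/S$ (resp.\ on $\Rg$), so by the functoriality of the construction in \cref{boundary.53} we obtain an induced morphism of strict Nisnevich sheaves of $\E_\infty$-rings
\[
\Log \Tr\colon \Log \Kth \to \Log \TC
\]
on $\SmlSm/S$ (resp.\ on $\RglRg$). The source is $\Log\Kth$, already the object appearing in the statement. For the target, I would invoke \cref{boundarify.20}, which gives a natural isomorphism of $\E_\infty$-rings $\Log\TC(X)\cong \logTC(X)$ (in fact the cited theorem even upgrades $\Log\THH$ to an equivalence of cyclotomic $\E_\infty$-rings, from which the $\logTC$ statement follows by taking $\TC$); composing gives the desired $\logTr\colon \Log\Kth(X)\to \logTC(X)$. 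The naturality in $X$ is inherited from the naturality of $\Log$ applied to a fixed natural transformation, together with the naturality of the isomorphism in \cref{boundarify.20}.

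Next I would check the compatibility clause: when $X$ has trivial log structure, $\logTr$ must coincide with the usual cyclotomic trace. Here one uses that $\Log\cF(X)\cong \cF(X)$ for $X\in \Sm/S$ (equivalently, $X\in\SmlSm/S$ with trivial log structure) whenever $\cF$ is logarithmic — this holds for both $\Kth$ and $\TC$ by \cref{thm.A} — and that these identifications are compatible with the map induced by $\Tr$, since $\Log$ is a functor on sheaves and the counit-type comparison $\Log\cF\to \cF$ (or the inverse equivalence on the subcategory of trivial-log-structure objects) is natural in $\cF$. Under these identifications $\Log\Tr(X)$ becomes $\Tr(X-\partial X)=\Tr(X)$, and the isomorphism $\Log\TC(X)\cong\logTC(X)$ restricts, on trivial-log-structure objects, to the identity $\TC(X)=\logTC(X)$, because $\logTHH$ and hence $\logTC$ recover the classical invariant when the log structure is trivial (Rognes' definition specializes correctly, cf.\ the discussion preceding \cref{boundarify.20}). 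Chasing these isomorphisms yields exactly the classical $\Tr$.

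Finally, to handle the two cases ($X\in\SmlSm/S$ and $X\in\RglRg$) uniformly, I would note that the logarithm functor is defined in \cref{boundary.53} in both settings with the same formula, and \cref{thm.A} is stated for $\Sm/S$ while the $\RglRg$ statement is covered by \cref{boundarify.29} via the cdh-descent argument of \cref{boundary.6}; the construction of $\logTr$ above only uses functoriality of $\Log$ and the logarithmic property of $\Kth$ and $\TC$, so it goes through verbatim in each case. The main obstacle I anticipate is \emph{not} the construction itself — which is essentially formal once $\Log$ is known to be functorial in the sheaf argument — but rather pinning down that the $\E_\infty$-ring (rather than merely $\E_1$- or spectrum-level) structure is preserved at every step: one must know that $\Log$ sends $\E_\infty$-ring sheaves to $\E_\infty$-ring sheaves and natural transformations of such to morphisms of such, and that the identification in \cref{boundarify.20} is genuinely multiplicative. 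Granting the multiplicative refinements already established in \cref{boundarify.20} and \cref{thm.A}, the proof reduces to assembling these inputs and the elementary naturality checks sketched above.
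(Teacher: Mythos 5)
Your construction in the $\SmlSm/S$ case is essentially the paper's own: unwinding Theorem \ref{boundarify.20}, your composite $\Log\Kth\xrightarrow{\Log\Tr}\Log\TC\cong\logTC$ is levelwise the map $\Kth(\ul{Y})\to\TC(\ul{Y})\to\logTC(Y)$ on the terms of the colimit $\colim_{n\in\bDelta^\op}\colim_{Y\in(\SBl_X^n)^\op}$, followed by the inverse of the unit $u_{\logTC}\colon\logTC\to\widetilde{\Log}\logTC$ (Proposition \ref{boundarify.13}) — which is exactly how the paper defines $\logTr$. Your handling of the multiplicative structure (sifted colimits of $\E_\infty$-rings computed underlying) and of the trivial-log-structure compatibility also matches the paper.

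The gap is in the $\RglRg$ case. You invoke Theorem \ref{boundarify.20} to identify $\Log\TC(X)$ with $\logTC(X)$, but that theorem is stated and proved only for $X\in\SmlSm/S$: its proof reduces, via the generation statement Proposition \ref{boundarify.22} for $\logSH_{S^1}(S)$, to checking the counit on objects of $\Sm/S$, and no analogue of that generation statement is available for $\RglRg$ (nor can one substitute Theorem \ref{boundary.6}, since $\TC$ is not $\A^1$-invariant). So the claim that the argument "goes through verbatim" is not justified; establishing $\Log\TC(X)\cong\logTC(X)$ for $X\in\RglRg$ would require a separate argument (e.g.\ an induction on boundary components using Gysin sequences as in Corollary \ref{trace.7}). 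The paper sidesteps this entirely by not passing through $\Log\TC$ at all: it maps $\Log\Kth(X)$ directly into $\widetilde{\Log}\logTC(X)$ and uses only the unit equivalence $\logTC(X)\cong\widetilde{\Log}\logTC(X)$, which follows from the $\square$-invariance and blow-up invariance of $\logTC$ itself and adapts to $\RglRg$. To repair your argument, replace "the isomorphism of Theorem \ref{boundarify.20}" by "the counit $\Log\TC(X)\to\widetilde{\Log}\logTC(X)$ followed by $u_{\logTC}^{-1}$," which only needs the weaker input and works in both settings.
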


By combining the log cyclotomic trace with the comparison between the \'etale K-theory and TC due to Clausen-Mathew-Morrow in \cite[Theorem A]{MR4280864},
we obtain the following result.
Additional results obtained by similar methods can
be found in Theorems \ref{trace.16}, \ref{trace.9}, and \ref{trace.10}.

\begin{thmalpha}[See Theorem \ref{trace.8}]
\label{thm.D}
Let $R$ be a strictly henselian regular local ring with residue characteristic $p>0$,
and let $x_1,\ldots,x_n$ be a regular sequence in $R$ such that the divisor $(x_1)+\cdots +(x_n)$ is strict normal crossing.
Then there is a natural isomorphism of $\E_\infty$-rings
\[
\Kth(R[1/x_1,\ldots,1/x_n];\Z_p)
\cong
\logTC((R,(x_1)+\cdots+(x_n));\Z_p).
\]
\end{thmalpha}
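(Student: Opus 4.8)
The plan is to combine the log cyclotomic trace of Theorem \ref{thm.C} with the theorem of Clausen--Mathew--Morrow relating $p$-adic étale K-theory and $\TC$, and then to compute both sides. First I would set $X := (\Spec R, (x_1)+\cdots+(x_n)) \in \RglRg$; this makes sense precisely because $x_1,\ldots,x_n$ is a regular sequence in the regular ring $R$ with strict normal crossing support, so the Deligne--Faltings log structure produces a regular log regular fs log scheme. By Theorem \ref{boundarify.29} we have a natural $\E_\infty$-ring isomorphism $\Log\Kth(X)\cong \Kth(X-\partial X)=\Kth(R[1/x_1,\ldots,1/x_n])$, and this identification is compatible with the cyclotomic trace by the last clause of Theorem \ref{thm.C}. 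After $p$-completion, Theorem \ref{thm.C} gives a natural map of $\E_\infty$-rings
\[
\logTr\colon \Kth(R[1/x_1,\ldots,1/x_n];\Z_p)\;\cong\;\Log\Kth(X;\Z_p)\;\longrightarrow\;\logTC(X;\Z_p),
\]
so the entire content of the theorem is to show this particular map is an equivalence.

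The key step is a reduction to the theorem of Clausen--Mathew--Morrow \cite[Theorem A]{MR4280864}, which asserts that for a suitable ring the cyclotomic trace induces an equivalence between the $p$-adic étale (or Selmer) K-theory and $\TC$ in high enough degrees, and in fact an equivalence after taking connective covers / in the relevant range once one knows the ring has bounded $p$-cohomological dimension. The strategy is: (i) identify $\Kth(R[1/x_1,\ldots,1/x_n];\Z_p)$ with $\Kth^{\et}(R[1/x_1,\ldots,1/x_n];\Z_p)$ using that $R$ is strictly henselian regular local with residue characteristic $p$, so that $U:=\Spec R[1/x_1,\ldots,1/x_n]$ has no higher mod-$p$ étale cohomology of the structure sheaf away from the expected contributions and the comparison map of CMM is an isomorphism; (ii) similarly identify $\logTC(X;\Z_p)$ with something computed from the same étale-local data by using that $\logTHH$, hence $\logTC$, is insensitive to the difference once one passes to the strict henselization — here one uses the log quasi-syntomic descent and the motivic filtration of Binda--Lundemo--Park--{\O}stv{\ae}r \cite[Theorem 1.3]{BLPO2}, whose graded pieces are log prismatic/syntomic and can be compared to the étale cohomology of $U$; and (iii) check that the map $\logTr$ above is compatible with these identifications, which follows from the naturality of the constructions and the fact that on the open complement $X-\partial X = U$ the log cyclotomic trace restricts to the ordinary cyclotomic trace by Theorem \ref{thm.C}.

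Concretely I would argue as follows. Since $R$ is strictly henselian with residue characteristic $p>0$, the scheme $\Spec R$ has trivial mod-$p$ étale cohomology in positive degrees, and the open $U = \Spec R[1/x_1,\ldots,1/x_n]$ is (Zariski-locally, after reordering) an iterated $\mathbb{G}_m$-bundle situation, so its mod-$p$ cohomological dimension is finite and controlled by $n$. By \cite[Theorem A]{MR4280864}, the cyclotomic trace $\Kth(U;\Z_p)\to\TC(U;\Z_p)$ is an equivalence onto the Selmer/étale K-theory, and the étale sheafification is trivial since $U$ is already of finite mod-$p$ cohomological dimension and strictly local in the relevant sense; thus $\Kth(U;\Z_p)\xrightarrow{\Tr}\TC(U;\Z_p)$ is already an equivalence in sufficiently high degrees, and a standard connectivity/continuity argument (using that $R$ is regular, so K-theory is eventually $\A^1$-invariant and the negative K-groups vanish) upgrades this to an equivalence of spectra. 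Finally one invokes the square of Theorem \ref{thm.C}: the composite $\Kth(U;\Z_p)\xrightarrow{\logTr}\logTC(X;\Z_p)\to\TC(U;\Z_p)$ equals the ordinary trace, and the right-hand map $\logTC(X;\Z_p)\to\TC(X-\partial X;\Z_p)=\TC(U;\Z_p)$ is an equivalence because the log structure along a strict normal crossing divisor does not change $\TC$ after $p$-completion for regular log regular log schemes — this is where the hard input of Theorem \ref{thm.A} ($\TC$ is logarithmic, $\Log\TC(X)\cong\logTC(X)$) together with the log prismatic comparison of \cite{BLPO2} is used to identify both $\logTC(X;\Z_p)$ and $\TC(U;\Z_p)$ with the same étale-local object. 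Chasing the square then shows $\logTr$ itself is an equivalence.

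\textbf{Main obstacle.} The delicate point is step (ii)--(iii): showing that the vertical map $\logTC(X;\Z_p)\to\TC(X-\partial X;\Z_p)$ in Rognes' square is an equivalence for $X\in\RglRg$ of this type, equivalently that $\logTC$ does not see the boundary $p$-adically. A priori $\logTHH$ differs substantially from $\THH$ of the complement (the whole point of the paper), so one cannot argue naively. The resolution is that after $p$-completion the relevant comparison is governed by the motivic filtrations, whose graded pieces on the log side are log prismatic cohomology of $(R,\sum(x_i))$ and on the complement side are prismatic cohomology of $R[1/x_1,\ldots,1/x_n]$; these agree because inverting the $x_i$ and adding the corresponding log structure have the same effect on the prism (the log prism of a regular log regular ring with strict normal crossing boundary is the prism of the complement — a log-prismatic Poincaré/localization statement). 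Making this comparison precise, and checking it is compatible with the cyclotomic structures so that it assembles into a $\TC$-level equivalence, is the crux; the rest is formal bookkeeping with the results quoted from the excerpt.
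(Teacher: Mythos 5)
Your overall framing (construct $\logTr$ via Theorem \ref{thm.C} and the identification $\Log\Kth(X)\cong\Kth(X-\partial X)$ of Theorem \ref{boundarify.29}, then prove it is an equivalence with Clausen--Mathew--Morrow as the essential input) is reasonable, but the mechanism you propose contains a false key claim that the argument cannot survive. You assert in steps (ii)--(iii) that the map $\logTC(X;\Z_p)\to\TC(X-\partial X;\Z_p)$ is an equivalence for regular log regular $X$ with strict normal crossing boundary, i.e.\ that the log structure is invisible to $p$-complete $\TC$. This contradicts the basic point of the theory: already for $X=\square_k$ over a perfect field $k$ of characteristic $p$ one has $\logTC(X)\cong\TC(k)$ by $\square$-invariance, while $\TC(X-\partial X)=\TC(\A^1_k)$ is not $p$-adically equivalent to $\TC(k)$. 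In the local situation of the theorem, combining your claim with the statement being proved would yield $\Kth(R[1/x_1,\ldots,1/x_n];\Z_p)\cong\TC(R[1/x_1,\ldots,1/x_n];\Z_p)$, which already fails for $R=\ol{\F}_p[[t]]$ and $n=1$: the cyclotomic trace of the field $\ol{\F}_p((t))$ is not a $p$-adic equivalence, since its $p$-adic K-theory differs from its Selmer/\'etale K-theory. The auxiliary claim in step (i), that $\Kth(U;\Z_p)$ agrees with \'etale K-theory for $U=\Spec(R[1/x_1,\ldots,1/x_n])$, is likewise unjustified ($U$ is not strictly henselian and K-theory does not satisfy \'etale descent integrally), and the ``log-prismatic localization'' you invoke to repair all this is precisely the statement that is false.

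The paper's proof avoids comparing $\logTC(X)$ with $\TC(X-\partial X)$ altogether and instead runs an induction on $n$ along compatible localization (Gysin) cofiber sequences. Setting $Y:=(\Spec(R),(x_1)+\cdots+(x_{n-1}))$ and $Z:=V(x_n)$ with its induced boundary, Corollary \ref{trace.7} provides a map of cofiber sequences from
\(
\Kth(Z-\partial Z)\to\Kth(Y-\partial Y)\to\Kth(Y-(\partial Y\cup Z))
\)
to
\(
\logTC(Z)\to\logTC(Y)\to\logTC(Y,Z)
\),
with vertical maps $\logTr$; the two left vertical maps are $p$-adic equivalences by the induction hypothesis (applied to $R$ with one fewer boundary component and to the strictly henselian regular local ring $R/x_n$), so the right vertical map is one too. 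The base case $n=0$ is exactly Clausen--Mathew--Morrow for strictly henselian local rings with residue characteristic $p$. If you want to salvage your write-up, replace steps (ii)--(iii) by this localization induction; the input you need is the compatibility of the K-theoretic and $\logTC$ Gysin sequences (Corollary \ref{trace.7}, resting on Theorem \ref{boundarify.29} and the base-change argument for $\logTHH$), not any identification of $\logTC(X)$ with $\TC$ of the open complement.
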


The motivic filtrations on $\TC(A;\Z_p)$ for quasi-syntomic rings $A$ are useful for the computation of K-theory.
This includes the recent computation of the homotopy groups of $\Kth(\Z/p^n)$ due to Antieau-Krause-Nikolaus \cite{2405.04329}.
We hope that the log cyclotomic trace contributes to the computations of the K-theory of schemes when combined with the motivic filtrations on $\logTC((A,M);\Z_p)$ for log quasi-syntomic rings $(A,M)$.
A future project is computing the log syntomic cohomology of $(\cO_K,\langle \pi \rangle)$, which would enhance our understanding of $\Kth(K;\Z_p)$.
Its homotopy groups were previously computed by Rognes-Weibel \cite{MR1697095} for $p=2$ and Hesselholt-Madsen \cite{MR1998478} for $p>2$.

Note that Lundemo's forthcoming paper \cite{Lun} also shows that for $X\in \RglRg$,
there is a log cyclotomic trace $\Kth(X-\partial X) \to \logTC(X)$ without claiming that it is an $\E_\infty$-ring morphism at least for the case that $\partial X$ has $1$ irreducible component.
This should be enough for the above project computing the homotopy groups $\pi_*\Kth(K;\Z_p)$ using the log syntomic cohomology of $(\cO_K,\langle \pi \rangle)$.

Nevertheless, our result that the log cyclotomic trace is an $\E_\infty$-ring morphism is good to know (e.g., to understand the ring structure on $\pi_*\Kth$ via the log cyclotomic trace) and has an application to logarithmic motivic homotopy theory.
We plan to adapt the motivic Atiyah duality due to Annala-Iwasa-Hoyois \cite{2403.01561} to the logarithmic setting in future work.
They used the cyclotomic trace to show that the motivic topological cyclic homology spectrum is a $\unit_{\A^1}$-module for $S\in \Rg$ \cite[Theorem 1.5]{2403.01561}.
Thanks to the fact that the log cyclotomic trace is an $\E_\infty$-ring morphism,
we can regard $\blogTC\in \logSH(S)$ in \cite[Definition 8.5.3]{BPO2} as an $\omega^*\KGL$-algebra and hence an $\omega^*\unit$-algebra,
see \cite[Construction 4.0.8]{BPO2} for $\omega^*\colon \SH(S)\to \logSH(S)$.
In particular,
$\blogTC$ is an $\omega^*\unit$-module.

This would allows us to recover \cite[Theorem 1.7]{2403.01561}, which shows that the log crystalline cohomology is independent of the choice of compactifications,
using logarithmic Gysin sequences \cite[Theorem 7.5.4]{BPO} instead of Tang's Gysin sequences \cite{Tang}.

\subsection*{Notation and conventions}

We refer to \cite{Ogu} for log geometry and \cite{MR495499}, \cite{Fulton:1436535}, and \cite{CLStoric} for toric geometry.

Every log scheme in the main body is equipped with a Zariski log structure unless otherwise stated.
We employ the following notation throughout the paper.

\begin{tabular}{l|l}
$\Sch$ & category of finite dimensional noetherian separated schemes
\\
$\lSch$ & category of finite dimensional noetherian separated fs log schemes
\\
$\Rg$ & full subcategory of $\Sch$ spanned by regular schemes
\\
$\lRg$ & full subcategory of $\lSch$ spanned by log regular schemes
\\
$\Sm$ & class of smooth morphisms in $\Sch$
\\
$\lSm$ & class of log smooth morphisms in $\lSch$
\\
$\bDelta$ & Simplex category
\\
$\Spc$ & $\infty$-category of spaces
\\
$\Spc_*$ & $\infty$-category of pointed spaces
\\
$\Sp$ & $\infty$-category of spectra
\\
$\Hom_{\cC}$ & hom space in $\cC$
\\
$\hom_{\cC}$ & hom spectra in a stable $\infty$-category $\cC$
\\
$\rD(A)$ & derived $\infty$-category of $A$-modules
\\
$\PSh(\cC,\cV)$ & $\infty$-category of presheaves with values in $\cV$
\\
$\Sh_t(\cC,\cV)$ & $\infty$-category of $t$-sheaves with values in $\cV$
\end{tabular}

\

We regard $\A^1$, $\G_m$, and $\P^n$ for integers $n\geq 1$ as monoid schemes, see Definition \ref{msch.17}.
When $X$ is a scheme and $Y$ is a monoid scheme, 
we can take the product $X\times Y$ in the unified category of schemes and monoid schemes, see Construction \ref{msch.6}.
We have a similar convention when $X$ is an fs log scheme and $Y$ is an fs log monoid scheme.

For a lattice $L$ and its elements $x_1,\ldots,x_r$,
let
\[
\Cone(x_1,\ldots,x_r)
:=
\{x\in L:x=a_1x_1+\cdots+a_rx_r\text{ for some $a_1,\ldots,a_r\in \Q_{\geq 0}$}\}
\]
be the cone in $L$ generated by $x_1,\ldots,x_r$.

\subsection*{Acknowledgement}

This research was conducted in the framework of the DFG-funded research training group GRK 2240: \emph{Algebro-Geometric Methods in Algebra, Arithmetic and Topology}.
We thank Federico Binda, Jens Hornbostel, Tommy Lundemo, Alberto Merici, and Paul Arne {\O}stv{\ae}r for helpful conversations and comments on the subject of this paper.
We thank Jens Hornbostel and Paul Arne {\O}stv{\ae}r for careful reading and constructive suggestions on the draft of this manuscript.
We also thank Tommy Lundemo for sharing the draft of \cite{Lun}.

\section{Logarithmic motivic homotopy theory over \texorpdfstring{$\F_1$}{F1}}

Motivic homotopy theory over a scheme $S$ is built out of $\Sm/S$. We cannot classify smooth schemes over $S$ Zariski locally unlike algebraic topology, where a manifold is locally an Euclidean space.
This makes motivic homotopy theory over $S$ profound but more complicated than algebraic topology.

To avoid complicated algebraic geometry but keep Tate twists for the purpose of deviating from algebraic topology, we work with smooth fans.
See Definition \ref{msch.22} for the formal definition of $\Sm/\F_1$.
Smooth fans are much easier to deal with than smooth schemes because smooth fans are completely determined by combinatorial data.
For example, we have toric resolution of singularities  \cite[\S 2.6]{Fulton:1436535}, but resolution of singularities over a field of characteristic $p>0$ or mixed characteristic is unknown.

The stable motivic homotopy category $\SH(\F_1)$ over $\F_1$ is suggested by Arndt \cite[\S 4.4]{Arndt}.
In this section,
we define $\logSH(\F_1)$ for the purpose of exploiting the simplicity of $\Sm/\F_1$ in logarithmic motivic homotopy theory.
We refer to Appendices \ref{msch} and \ref{logmonoid} for monoid schemes and log monoid schemes,
which we use throughout this paper.

\

For a presentable symmetric monoidal $\infty$-category $\cC$ and its object $T$,
let $\Sp_T(\cC)$ be the formal inversion of $X$ in $\cC$ in the sense of \cite[Definition 2.6]{zbMATH06374152}.
By \cite[Proposition 2.9]{zbMATH06374152},
if $\cD$ is another presentable symmetric monoidal $\infty$-category,
we have the induced functor
\begin{equation}
\label{logSH.1.1}
\Fun^{\rL,\otimes}(\Sp_T(\cC),\cD)
\to
\Fun^{\rL,\otimes}(\cC,\cD),
\end{equation}
where $\Fun^{\rL,\otimes}$ denotes the $\infty$-category of colimit preserving symmetric monoidal functors.
Furthermore,
\eqref{logSH.1.1} is fully faithful and its essential image is spanned by the functors $F\colon \cC\to \cD$ such that $F(T)\in \cD$ is invertible.

\begin{df}[cf.\ {\cite[\S 4.4]{Arndt}}]
\label{logSH.1}
Imitating \cite{MV},
we define the presentably symmetric monoidal $\infty$-categories
\begin{gather*}
\rH(\F_1)
:=
\Sh_\Zar(\Sm/\F_1,\Spc)[(\A^1)^{-1}],
\\
\rH_*(\F_1)
:=
\Sh_\Zar(\Sm/\F_1,\Spc_*)[(\A^1)^{-1}],
\\
\SH_{S^1}(\F_1)
:=
\Sh_\Zar(\Sm/\F_1,\Sp)[(\A^1)^{-1}],
\\
\SH(\F_1)
:=
\Sp_{\P^1}(\SH_{S^1}(\F_1)).
\end{gather*}
where $\A^1$ in the formulation denotes the set of projections $X\times \A^1\to X$ for $X\in \Sm/\F_1$,
and $\Zar$ denotes the Zariski topology.
\end{df}

\begin{rmk}
\label{logSH.19}
It is unclear whether $\P^1$ is a symmetric object of $\SH_{S^1}(\F_1)$ in the sense of \cite[Definition 2.16]{zbMATH06374152} or not. Because this is a condition in \cite[Corollary 2.22]{zbMATH06374152}, it is unknown and probably false that there is an equivalence between $\SH(\F_1)$ and the $\infty$-category of the sequential $\P^1$-spectra.
This makes computations like stable motivic homotopy groups of $\F_1$ complicated.
\end{rmk}

\begin{rmk}
As a consequence of \cite[Propositions 3.8, 5.9]{Vcdtop} and Remark \ref{logmonoid.17},
the following conditions are equivalent for a presheaf $\cF$ of spaces on $\Sm/\F_1$:
\begin{enumerate}
\item[(1)] For every Zariski distinguished square $Q$,
$\cF(Q)$ is cocartesian.
\item[(2)]
$\cF$ is a Zariski sheaf, i.e., $\cF$ satisfies \v{C}ech descent for the Zariski topology.
\item[(3)]
$\cF$ is a Zariski hypersheaf, i.e., $\cF$ satisfies hyperdescent for the Zariski topology.
\end{enumerate}
We have the same results for presheaves of pointed spaces and presheaves of spectra.
\end{rmk}

\begin{df}
\label{logSH.2}
A morphism $f\colon X\to S$ in $\lSm/\F_1$ is an \emph{admissible blow-up} if $f$ is proper and the induced morphism $X-\partial X\to S-\partial S$ is an isomorphism.
Observe that such a morphism $f$ is birational.
Let $\Adm$ be the class of admissible blow-ups in $\lSm/\F_1$ or $\SmlSm/\F_1$.
\end{df}

Recall from \cite[Dual of I.2.2]{GZ} that a class of morphisms $\cA$ in a category $\cC$ admits a \emph{calculus of right fractions} if the following three conditions are satisfied:
\begin{itemize}
\item[(i)] $\cA$ contains all isomorphisms and closed under compositions.
\item[(ii)] (Right Ore condition) For morphisms $f\colon X\to S$ in $\cC$ and $g\colon S'\to S$ in $\cA$,
there exists a commutative square
\[
\begin{tikzcd}
X'\ar[d]\ar[r,"g'"]&
X\ar[d,"f"]
\\
S'\ar[r,"g"]&
S
\end{tikzcd}
\]
in $\cC$ such that $g'\in \cA$.
\item[(iii)] (Right cancellability condition) For morphisms $f,g\colon X\rightrightarrows S$ in $\cC$ and $u\colon S\to S'$ in $\cA$ such that $uf=gf$,
there exists $v\colon X'\to X$ in $\cA$ such that $fv=gv$.
\end{itemize}

\begin{prop}
\label{logSH.3}
The class $\Adm$ in $\lSm/\F_1$ (resp.\ $\SmlSm/\F_1$) admits a calculus of right fractions.
\end{prop}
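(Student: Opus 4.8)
The plan is to verify the three conditions (i)--(iii) for a calculus of right fractions directly, relying on the basic geometry of admissible blow-ups of fs log monoid schemes. Condition (i) should be essentially formal: isomorphisms are proper and induce isomorphisms on interiors, and if $f\colon X\to S$ and $g\colon S\to T$ are both proper with $f-\partial f$ and $g-\partial g$ isomorphisms, then $g\circ f$ is proper and $(g\circ f)-\partial(g\circ f)=(g-\partial g)\circ(f-\partial f)$ is an isomorphism; one also needs that $X, S, T$ all lie in the same category ($\lSm/\F_1$ resp.\ $\SmlSm/\F_1$), which is part of the definition of $\Adm$ there. The birationality remark in Definition \ref{logSH.2} is what guarantees we are really localizing at a reasonable class.

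For the right Ore condition (ii), given $f\colon X\to S$ in $\lSm/\F_1$ (resp.\ $\SmlSm/\F_1$) and an admissible blow-up $g\colon S'\to S$, the natural candidate for $X'$ is a suitable component of the fiber product $X\times_S S'$, or rather its strict transform / an fs-saturated and log-smooth model thereof: one takes $X\times_S S'$ in log monoid schemes, passes to the closure of the preimage of $X-\partial X$, and then resolves to land back in $\lSm/\F_1$ (resp.\ $\SmlSm/\F_1$) using toric resolution of singularities, which is available combinatorially for fans as recalled in the introduction. The projection $g'\colon X'\to X$ is then proper because $g$ is proper and properness is stable under base change and the resolution step is proper; and $g'-\partial g'$ is an isomorphism because over $X-\partial X$ the map $g$ is already an isomorphism, so the base change is an isomorphism there and the strict transform / resolution does not alter the open locus with trivial log structure. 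Hence $g'\in\Adm$ and the square commutes by construction.

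Condition (iii), right cancellability, should be the easiest: if $f,g\colon X\rightrightarrows S$ and $u\colon S\to S'$ is an admissible blow-up with $u f = u g$, then since $u$ is in particular a monomorphism on the level of the underlying reduced schemes away from the exceptional locus — more precisely, $u$ restricts to an isomorphism $S-\partial S\xrightarrow{\sim} S'-\partial S'$, hence $u$ is a monomorphism on that dense open and, being birational and proper, $u f = u g$ forces $f = g$ already (so one may even take $v=\id_X$). If one prefers not to argue that $u$ is a monomorphism, one restricts $f,g$ to the preimage of $S-\partial S$, which is a dense open, concludes $f$ and $g$ agree there, and uses separatedness/reducedness to propagate the equality; in the worst case one takes $v\colon X'\to X$ to be an admissible blow-up making this precise, but I expect $v=\id$ suffices.

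The main obstacle is step (ii): one must produce $X'$ inside $\lSm/\F_1$ (resp.\ $\SmlSm/\F_1$), i.e.\ smooth/log-smooth, not merely inside a larger category of fs log monoid schemes, so the argument genuinely needs the toric resolution of singularities and a check that after resolving the interior $X'-\partial X'$ is still identified with $X-\partial X$ (equivalently, that the resolution is an admissible blow-up of the strict transform). Keeping track of the fs-saturation and the log structure through the fiber product and the resolution — so that $g'$ is a morphism in the correct category and is proper — is the bookkeeping-heavy part, but there are no conceptual surprises; it is exactly the combinatorial analogue of constructing strict transforms of toric morphisms.
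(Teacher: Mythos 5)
Your proposal is correct and follows essentially the same route as the paper: the Ore condition is handled by forming the strict transform (closure of the pullback of the interior in the fiber product of underlying monoid schemes), normalizing, and applying toric resolution of singularities, with a check that the resulting log structure lands back in $\SmlSm/\F_1$; and right cancellability follows, with $v=\id$, from the fact that $u$ is injective on the dense interior together with separatedness and the rigidity of log morphisms over their underlying morphisms (Propositions \ref{image.6} and \ref{logmonoid.5}). The only step you did not mention is the paper's preliminary reduction, via a dividing cover, to the case $X\in\SmlSm/\F_1$ before forming the strict transform, but this is a minor bookkeeping point.
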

\begin{proof}
The right cancellability condition is a consequence of Propositions \ref{image.6} and \ref{logmonoid.5}.

To check the right Ore condition,
let $f\colon X\to S$ and $g\colon S'\to S$ be morphisms in $\lSm/\F_1$ (resp.\ $\SmlSm/\F_1$) such that $g$ is an admissible blow-up.
We can replace $X$ by any admissible blow-up of $X$.
Hence by Proposition \ref{resolution.2},
we may also assume $X\in \SmlSm/\F_1$.
Consider the normalization $\ul{X''}$ of the closure of $X-\partial X\cong (S'-\partial S')\times_{S-\partial S}(X-\partial X)$ in $\ul{S'}\times_{\ul{S}} \ul{X}$,
see Definition \ref{image.1} for the notion of the closure.
Then Lemma \ref{image.3} shows that $\ul{X''}$ is a toric monoid scheme in the sense of Definition \ref{msch.21}.
By Proposition \ref{logmonoid.1},
there exists a proper birational morphism of monoid schemes $\ul{X'}\to \ul{X''}$ such that $\ul{X'}\in \Sm/\F_1$.
Consider $X'\in \cSm/\F_1$ with the underlying scheme $\ul{X'}$ such that $X'-\partial X':=X'-\partial X'$.
Then we have $X'\in \SmlSm/\F_1$ by applying Proposition \ref{logmonoid.25} to the induced morphism $X'\to X$.
To conclude,
observe that the induced morphism $X'\to X$ is an admissible blow-up.
\end{proof}

We refer to Definition \ref{logmonoid.18} for dividing covers in $\lSm/\F_1$.

\begin{prop}
\label{logSH.21}
The class $\divi$ of dividing covers in $\lSm/\F_1$ (resp.\ $\SmlSm/\F_1$) admits calculus of right fractions.
\end{prop}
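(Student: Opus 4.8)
The plan is to mirror the proof of Proposition~\ref{logSH.3}, since dividing covers are a special kind of admissible blow-up and the same combinatorial-toric machinery should apply. First I would recall that a dividing cover in $\lSm/\F_1$ (or $\SmlSm/\F_1$) is in particular an admissible blow-up — it is a proper birational morphism which is an isomorphism away from the boundary — so conditions (i) (isomorphisms, closure under composition) and (iii) (right cancellability) are inherited from Proposition~\ref{logSH.3} once we check that the witnessing morphism $v\colon X'\to X$ can be taken to be a dividing cover rather than merely an admissible blow-up. For (iii) this is automatic because the relevant reference (Propositions~\ref{image.6} and~\ref{logmonoid.5}, or whichever statement governs dividing covers) produces $v$ inside the class $\divi$; in the worst case one composes with a further dividing refinement using the fact (Definition~\ref{logmonoid.18}) that any admissible blow-up of a log smooth fan object is dominated by a dividing cover.

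The real content, as in Proposition~\ref{logSH.3}, is the right Ore condition. Given $f\colon X\to S$ in $\lSm/\F_1$ and a dividing cover $g\colon S'\to S$, I would first reduce to $X\in \SmlSm/\F_1$ by replacing $X$ with a suitable admissible (indeed dividing) blow-up via Proposition~\ref{resolution.2}; this is legitimate because composing the sought-after $g'$ with that reduction keeps us in $\divi$. Then one forms the normalization $\ul{X''}$ of the closure of $X-\partial X \cong (S'-\partial S')\times_{S-\partial S}(X-\partial X)$ inside $\ul{S'}\times_{\ul S}\ul X$; by Lemma~\ref{image.3} this is a toric monoid scheme, and by Proposition~\ref{logmonoid.1} it admits a proper birational resolution $\ul{X'}\to \ul{X''}$ with $\ul{X'}\in \Sm/\F_1$. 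Equipping $\ul{X'}$ with the log structure making $X'-\partial X'$ the preimage of $X-\partial X$, Proposition~\ref{logmonoid.25} gives $X'\in \SmlSm/\F_1$ and $X'\to X$ an admissible blow-up. The point now is to verify that $X'\to X$ is in fact a \emph{dividing} cover: the fibered-product construction over $S$ with $g$ a dividing cover means that the fan-theoretic model of $X'\to X$ is obtained by pulling back a subdivision, hence is itself (dominated by) a subdivision, which is exactly the definition of a dividing cover in Definition~\ref{logmonoid.18}.

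The step I expect to be the main obstacle is precisely this last verification — ensuring that $X'\to X$ lands in $\divi$ and not just in $\Adm$. Subdivisions of fans do not in general pull back to subdivisions when the underlying map of fans is not a map of fans (recall morphisms in $\Sm/\F_1$ are more general), so one must argue carefully, perhaps by first refining $X$ by a dividing cover so that the relevant map becomes fan-compatible, then invoking that the base change of a subdivision along a map of fans is a subdivision. Once that is arranged, the commutative square with $g'\colon X'\to X$ (playing the role that $X'\to S'$ would in the literal Ore diagram — one takes $g'$ to be $X'\to X$ and the horizontal map $X'\to S'$ induced by projection) satisfies the required property, and the proof concludes exactly as that of Proposition~\ref{logSH.3}.
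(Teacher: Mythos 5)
Your overall skeleton (inherit (i) and (iii) from the admissible-blow-up case, then fight for the right Ore condition) is the right one, but the step you yourself flag as ``the main obstacle'' --- verifying that the constructed $X'\to X$ lies in $\divi$ rather than merely in $\Adm$ --- is exactly where your argument is incomplete, and the workaround you sketch is not the one that works. Refining $X$ by a dividing cover does not make an arbitrary morphism $f\colon X\to S$ in $\lSm/\F_1$ fan-compatible (morphisms of monoid schemes between fans need not be morphisms of fans no matter how much one subdivides the source), so ``base change of a subdivision along a map of fans'' cannot be invoked this way. The missing ingredient is Proposition \ref{logmonoid.13}: dividing covers are stable under pullback in the category of \emph{fs log monoid schemes} along \emph{arbitrary} morphisms of $\lSm/\F_1$. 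Its proof handles precisely your worry, by reducing locally to charts $\A_{P'}\times\A^{m'}\to\A_P$, observing that such a morphism necessarily factors through the projection to $\A_{P'}$ (since $\Gamma(\A_{P'}\times\A^{m'},\cM)\cong P'$), and then computing the fs pushout of monoids.

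With that lemma in hand the paper's argument is shorter and avoids your by-hand construction entirely: form the fs fiber product $X\times_S S'\to X$, which is a dividing cover by Proposition \ref{logmonoid.13}; choose a dividing cover $X'\to X\times_S S'$ with $X'\in\SmlSm/\F_1$ by Proposition \ref{resolution.2}; and conclude that the composite $X'\to X$ is a dividing cover by closure under composition (Proposition \ref{logmonoid.12}). Note also that the ``furthermore'' clause of Proposition \ref{logmonoid.13} identifies $\ul{X\times_S S'}$ with the normalization of the closure of $X-\partial X$ in $\ul{S'}\times_{\ul{S}}\ul{X}$ --- i.e.\ your $\ul{X''}$ is the underlying monoid scheme of the fs fiber product, so you rebuilt the relevant object without access to the structural statement that makes it a dividing cover over $X$. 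Your treatment of the right cancellability condition via Propositions \ref{image.6} and \ref{logmonoid.5} matches the paper and is fine.
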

\begin{proof}
The right cancellability condition is a consequence of Propositions \ref{image.6} and \ref{logmonoid.5}.

To check the right Ore condition,
let $f\colon X\to S$ and $g\colon S'\to S$ be morphisms of log monoid schemes in $\lSm/\F_1$ (resp.\ $\SmlSm/\F_1$) such that $g$ is a dividing cover.
Then the pullback $g'\colon X\times_S S'\to S'$ of $g$ is a dividing cover in $\lSm/\F_1$ by Proposition \ref{logmonoid.12}.
Furthermore, there exists a dividing cover $X'\to X\times_S S'$ such that $X'\in \SmlSm/\F_1$ by Proposition \ref{resolution.2},
and the composite $X'\to S'$ is a dividing cover by Proposition \ref{logmonoid.11}.
\end{proof}

\begin{prop}
\label{logSH.5}
There is an equivalence of categories
\begin{gather*}
\SmlSm/\F_1[\divi^{-1}]
\simeq
\lSm/\F_1[\divi^{-1}],
\\
\SmlSm/\F_1[\Adm^{-1}]
\simeq
\lSm/\F_1[\Adm^{-1}],
\end{gather*}
\end{prop}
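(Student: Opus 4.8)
The plan is to exhibit both localizations as Bousfield-type localizations of $\SmlSm/\F_1[\Adm^{-1}]$ (resp.\ $\SmlSm/\F_1[\divi^{-1}]$) and $\lSm/\F_1[\Adm^{-1}]$ (resp.\ $\lSm/\F_1[\divi^{-1}]$) along a common cofinal subcategory, so that the claim reduces to the statement that the inclusion $\SmlSm/\F_1 \hookrightarrow \lSm/\F_1$ becomes an equivalence after inverting $\Adm$ (resp.\ $\divi$). Since $\Adm$ and $\divi$ admit a calculus of right fractions by Propositions \ref{logSH.3} and \ref{logSH.21}, the localized categories can be computed via right fractions: a morphism $X \to Y$ in $\lSm/\F_1[\Adm^{-1}]$ is represented by a roof $X \xleftarrow{\ a\ } X' \to Y$ with $a \in \Adm$. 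The key input is Proposition \ref{resolution.2}, which (as used already in the proofs of Propositions \ref{logSH.3} and \ref{logSH.21}) guarantees that every object $X \in \lSm/\F_1$ admits an admissible blow-up (resp.\ dividing cover) $X' \to X$ with $X' \in \SmlSm/\F_1$.

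First I would show the inclusion functor $\iota\colon \SmlSm/\F_1 \to \lSm/\F_1$ induces a functor on localizations $\bar\iota\colon \SmlSm/\F_1[\Adm^{-1}] \to \lSm/\F_1[\Adm^{-1}]$; this is formal since $\iota$ sends $\Adm$-morphisms to $\Adm$-morphisms. Next I would construct a quasi-inverse. Given $X \in \lSm/\F_1$, choose by Proposition \ref{resolution.2} an admissible blow-up $p_X\colon \tilde X \to X$ with $\tilde X \in \SmlSm/\F_1$; send $X \mapsto \tilde X$. For a morphism $f\colon X \to Y$ in $\lSm/\F_1$, the right Ore condition applied to $f \circ p_X\colon \tilde X \to Y$ and $p_Y\colon \tilde Y \to Y$ produces $\tilde X' \to \tilde X$ in $\Adm$ and $\tilde X' \to \tilde Y$; after a further application of Proposition \ref{resolution.2} we may take $\tilde X' \in \SmlSm/\F_1$, and this roof defines the image morphism in $\SmlSm/\F_1[\Adm^{-1}]$. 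The right Ore and right cancellability conditions then show this assignment is well-defined on morphisms in $\lSm/\F_1[\Adm^{-1}]$ and functorial. Finally, the two composites are identified with the identity: $\bar\iota$ followed by this functor sends $X \in \SmlSm/\F_1$ to $\tilde X$, and $p_X$ is an isomorphism in the localization; the other composite is handled the same way. The argument for $\divi$ is identical, using Propositions \ref{logmonoid.11} and \ref{logmonoid.12} where Proposition \ref{resolution.2} is invoked for dividing covers.

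The main obstacle I anticipate is the bookkeeping required to check that the candidate quasi-inverse is genuinely functorial — i.e.\ that the roof assigned to $g \circ f$ agrees with the composite of the roofs assigned to $f$ and to $g$ in the localized category — and that it is independent of the choices of resolutions $p_X$. Both points follow formally from the calculus of right fractions (any two choices of $p_X$ are dominated by a common admissible blow-up, again by the right Ore condition together with Proposition \ref{resolution.2}), but writing this out cleanly is where the real work lies. An alternative, slicker route is to invoke the general principle that if $\cA \subseteq \cC$ admits a calculus of right fractions and $\cC_0 \subseteq \cC$ is a full subcategory such that every object of $\cC$ receives an $\cA$-morphism from an object of $\cC_0$, then $\cC_0[\cA^{-1}] \to \cC[\cA^{-1}]$ is an equivalence; this is a standard cofinality statement for categories of fractions, and citing it (e.g.\ via \cite{GZ}) would let us dispatch both equivalences in one line once the hypotheses are verified from Propositions \ref{logSH.3}, \ref{logSH.21}, and \ref{resolution.2}.
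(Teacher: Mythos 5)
Your proposal is correct and takes the same route as the paper: the paper's proof of Proposition \ref{logSH.5} simply declares the statement an immediate consequence of Propositions \ref{resolution.2}, \ref{logSH.3}, and \ref{logSH.21}, i.e.\ exactly the combination of resolution plus calculus of right fractions (the ``slicker route'' you mention at the end) that you spell out in detail.
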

\begin{proof}
This is an immediate consequence of Propositions \ref{resolution.2}, \ref{logSH.3}, and \ref{logSH.21}.
\end{proof}

We refer to \cite[Definition 3.1.4]{BPO} for the strict Nisnevich topology on $\lSch$,
which we abbreviate as $\sNis$.
Recall that for $S\in \Sch$,
$\SmlSm/S$ denotes the full subcategory of $\lSm/S$ spanned by those $X$ such that $\ul{X}$ is smooth over $S$.
For $X\in \SmlSm/S$,
there exists a strict normal crossing divisor $D$ on $\ul{X}$ over $S$ in the sense of \cite[Definition 7.2.1]{BPO} such that $X\cong (\ul{X},D)$ by \cite[Lemma A.5.10]{BPO},
where $(\ul{X},D)$ is equipped with the Deligne-Faltings log structure \cite[Definition III.1.7.1]{Ogu}.

Recall that a dividing cover in $\lSch$ is a surjective proper log \'etale monomorphism.
\begin{prop}
Let $f\colon Y\to X$ be a dividing cover in $\lSm/\F_1$.
Then its pullback $\Spec(\Z)\times Y\to \Spec(\Z)\times X$ is a dividing cover.
\end{prop}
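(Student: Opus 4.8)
The plan is to reduce the statement to the characterization of dividing covers recalled just above. A morphism of fs log schemes is a dividing cover precisely when it is surjective, proper, log \'etale, and a monomorphism, and by Definition \ref{logmonoid.18} the dividing covers in $\lSm/\F_1$ admit the same description. So it suffices to show that the realization functor $\Spec(\Z)\times(-)$, from fs log monoid schemes to fs log schemes, carries a surjective proper log \'etale monomorphism $f\colon Y\to X$ to a surjective proper log \'etale monomorphism. The one structural input I would isolate first is that, essentially by Construction \ref{msch.6} and its logarithmic analogue, $\Spec(\Z)\times f$ is the base change of $f$ along the projection $\Spec(\Z)\times X\to X$ in the unified category; equivalently $\Spec(\Z)\times Y\cong(\Spec(\Z)\times X)\times_X Y$.

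With that in place I would verify the four properties in turn. A monomorphism is stable under base change in any category in which the relevant pullback exists, so $\Spec(\Z)\times f$ is a monomorphism; no geometry is needed here. Properness descends along realization: the underlying morphism of $\Spec(\Z)\times f$ is a base change of the underlying morphism of $f$, so it is proper (one may also argue via the valuative criterion for the realized toric morphism). Surjectivity follows because the realization of a surjective morphism of monoid schemes is surjective --- every point of $\Spec(\Z)\times X$ lies over a point of $X$, whose preimage under $f$ is non-empty, and the corresponding realized fibre of $\Spec(\Z)\times f$ is then non-empty as well. The remaining, and main, step is log \'etaleness: Zariski-locally (everything in sight being toric) one models $f$ by an injective morphism $P\to Q$ of sharp fs monoids of the combinatorial shape prescribed by Definition \ref{logmonoid.18} --- a subdivision, so that $P^\gp\to Q^\gp$ is an isomorphism up to the prescribed torsion --- and then $\Spec(\Z)\times f$ is modelled by $\Spec(\Z[Q])\to\Spec(\Z[P])$, which is log \'etale by Kato's chart criterion \cite{MR1296725} (see also \cite{Ogu}). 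Assembling the four properties shows that $\Spec(\Z)\times f$ is a surjective proper log \'etale monomorphism of fs log schemes, hence a dividing cover.

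I expect the substantive difficulty to be the log \'etale step, or rather the reconciliation of the combinatorial notion of dividing cover used over $\F_1$ with log \'etaleness of fs log schemes after realization: since $\Spec(\Z)\times(-)$ does not commute with arbitrary fibre products of monoid schemes, one must check that the particular pullbacks entering the definitions of ``monomorphism'' and ``log \'etale'' are preserved by realization, and this is exactly where the fine and saturated hypotheses and the precise content of Definition \ref{logmonoid.18} are used. If that definition instead presents dividing covers as (iterated) combinatorial log blow-ups, an alternative route is to observe that the realization of a combinatorial log blow-up of fs log monoid schemes is a log blow-up of fs log schemes --- hence a dividing cover --- and to conclude by stability of dividing covers under composition; the crux, that the realization of a log blow-up is again a log blow-up, is the same obstacle in a different guise.
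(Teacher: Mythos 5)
Your primary argument starts from a premise that does not match the paper: Definition \ref{logmonoid.18} does \emph{not} define dividing covers in $\lSm/\F_1$ as surjective proper log \'etale monomorphisms. It defines them combinatorially, as morphisms that are Zariski-locally on the target of the form $\T_u\times\id\colon \T_\Sigma\times\A^m\to\A_P\times\A^m$ for a subdivision of fans $u$. Only the target notion (dividing covers in $\lSch$) is "surjective proper log \'etale monomorphism." So your four-properties verification is not a transport of one characterization to the other; it is an attempt to prove from scratch that the realization of a combinatorial subdivision has those four properties. Within that attempt, the steps for "monomorphism" and "log \'etale" are exactly where you yourself flag the unresolved crux: monomorphy and log \'etaleness of the realized map are statements about fibre products of \emph{fs log schemes} (which involve saturation), and you do not establish that these are computed by realizing the corresponding fibre products of log monoid schemes. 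As written, the main argument therefore has a genuine gap precisely at the point you identify as "the substantive difficulty."

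The good news is that your closing "alternative route" is essentially the paper's proof, and it is the right one. The paper works Zariski-locally on $X$ to put $f$ in the form $\id_S\times(\T_\Delta\to\T_\Sigma)$ for a subdivision $\Delta\to\Sigma$, and then simply invokes the known fact that the $\Z$-realization of a subdivision of fans is a dividing cover of fs log schemes (\cite[Example 2.3.10]{BPO2}); compatibility with the factor $\id_S$ is harmless because dividing covers of log schemes are stable under strict base change. This sidesteps entirely the question of whether realization preserves general fibre products: one never needs to re-verify log \'etaleness or monomorphy by hand, because the subdivision case is already a catalogued example. If you want to keep your structure, you should demote the four-properties verification and promote the local-reduction-plus-citation to the main argument.
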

\begin{proof}
We can work locally on $X$.
Hence we may assume that $f$ is the product of $\id\colon S\to S$ and $\T_\Delta\to \T_\Sigma$ for some $S\in \Sm/\F_1$ and subdivision $\Delta\to \Sigma$.
Use \cite[Example 2.3.10]{BPO2} to conclude.
\end{proof}

Recall from \cite{BPO2} that for $S\in \Sch$,
we have the presently symmetric monoidal $\infty$-categories
\begin{gather*}
\logSH_{S^1}(S)
:=
\Sh_\sNis(\SmlSm/S,\Sp)[\square^{-1},\divi^{-1}],
\\
\logSH(S)
:=
\Sp_{\P^1}(\logSH_{S^1}(S)),
\end{gather*}
where $\square$ in the formulation denotes the set of projections $X\times \square\to X$ for $X\in \SmlSm/\F_1$,
and $\divi$ in the formulation denotes the set of dividing covers in $\SmlSm/S$.

\begin{df}
\label{logSH.6}
We define the presentably symmetric monoidal $\infty$-categories
\begin{gather*}
\logSH_{S^1}(\F_1)
:=
\Sh_\Zar(\SmlSm/\F_1,\Sp)[\square^{-1},\divi^{-1}],
\\
\logSH(\F_1)
:=
\Sp_{\P^1}(\logSH_{S^1}(\F_1)),
\end{gather*}
where $\square$ in the formulation denotes the set of projections $X\times \square\to X$ for $X\in \lSm/\F_1$,
and $\divi$ in the formulation denotes the set of dividing covers in $\SmlSm/\F_1$.
\end{df}

\begin{rmk}
\label{logSH.20}
Like Remark \ref{logSH.19},
it is unclear whether $\P^1$ is a symmetric object of $\logSH_{S^1}(\F_1)$ in the sense of \cite[Definition 2.16]{zbMATH06374152} or not.
\end{rmk}

Recall from \cite[Definition 7.2.3]{BPO} (see also \cite[Definition 2.6.1]{BPO2}) that a proper birational morphism $f\colon X'\to X$ in $\SmlSm/S$ with $S\in \Sch$ is an \emph{admissible blow-up along a smooth center} if $X'$ is identified with the blow-up $\Bl_Z X$ in the sense of \cite[Definition 7.4.1]{BPO} for some smooth subscheme $Z$ of $\ul{X}$ such that $Z$ has strict normal crossing with $\partial X$ in the sense of \cite[Definition 7.2.1]{BPO} and $Z\subset \partial X$.

The categories $\SmlSm/\F_1$ and $\SmlSm/S$ with $S\in \Sch$ are equipped with the cartesian monoidal structures $\times$.

\begin{thm}
\label{logSH.12}
Let $\cC$ be a symmetric monoidal stable $\infty$-category,
and let
\[
M\colon \SmlSm/S\to \cC
\]
be a $\square$-invariant symmetric monoidal functor sending strict Nisnevich distinguished squares to cartesian squares,
where $S\in \Sch$.
Then the following conditions are equivalent.
\begin{enumerate}
\item[\textup{(1)}]
$M$ is invariant under dividing covers.
\item[\textup{(2)}]
$M$ is invariant under admissible blow-ups along smooth centers.
\item[\textup{(3)}]
$M$ is $(\P^n,\P^{n-1})$-invariant for every integer $n\geq 1$.
\end{enumerate}
\end{thm}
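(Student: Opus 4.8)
The plan is to prove the cycle of implications $(3)\Rightarrow(2)\Rightarrow(1)\Rightarrow(3)$, using that $M$ is already assumed $\square$-invariant, symmetric monoidal, and to send strict Nisnevich distinguished squares to cartesian squares. The last hypothesis is crucial: it lets us compute $M$ of a blow-up in terms of $M$ of a strict Nisnevich cover, and it encodes something like Zariski/Nisnevich descent in $\cC$.

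For $(3)\Rightarrow(2)$, let $f\colon \Bl_Z X\to X$ be an admissible blow-up along a smooth center $Z\subset\partial X$ with $Z$ having strict normal crossing with $\partial X$. Working strict Nisnevich locally on $\ul{X}$, I would reduce to the model situation where $X$ looks like $\A^a\times(\P^1,\infty)^{\times b}\times\G_m^{\times c}$-type charts and $Z$ is a coordinate stratum; then the blow-up $\Bl_Z X\to X$ decomposes, using the projection bundle/blow-up formula for the normal bundle $\rN_Z X$ which splits as a sum of line bundles coming from the boundary components, into a tower whose associated graded pieces are (after applying $M$ and using strict Nisnevich descent to pass from the global statement to local charts) controlled by the $(\P^n,\P^{n-1})$-invariance for the relevant $n$. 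The key geometric input is that blowing up a smooth center contained in $\partial X$ and transverse to it produces, locally, exactly a product with $(\P^n,\P^{n-1})$, so $(3)$ applies directly. This is essentially the $\F_1$-analogue of the deformation-to-the-normal-cone computation in \cite[\S7.4--7.5]{BPO}, but simpler because over $\F_1$ everything is toric and the normal bundle is split.

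For $(2)\Rightarrow(1)$, I would use that every dividing cover in $\SmlSm/\F_1$ (equivalently in $\lSm/\F_1$, by Proposition \ref{logSH.5}) can be refined by, and is dominated by, a composition of admissible blow-ups along smooth centers: a subdivision of a smooth fan can be obtained by a sequence of star subdivisions, each of which is a toric blow-up along a smooth torus-invariant center sitting inside the boundary. Combined with the calculus of right fractions for $\divi$ (Proposition \ref{logSH.21}) and for $\Adm$ (Proposition \ref{logSH.3}), invariance under the generating admissible blow-ups propagates to invariance under arbitrary dividing covers; one checks that a morphism inverted after localizing at $\Adm$ is sent by $M$ to an equivalence, and that every dividing cover becomes such after a further blow-up. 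For $(1)\Rightarrow(3)$, I would exhibit, for each $n\geq1$, the projection $X\times(\P^n,\P^{n-1})\to X$ as a composite of a dividing cover and a $\square$-equivalence: the fan of $(\P^n,\P^{n-1})$ admits a subdivision that, combinatorially, realizes $(\P^n,\P^{n-1})$ as built from copies of $\square=(\P^1,\infty)$ and affine space via dividing covers, so $\square$-invariance plus invariance under dividing covers forces $(\P^n,\P^{n-1})$-invariance.

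The main obstacle I expect is $(3)\Rightarrow(2)$: controlling the blow-up $\Bl_Z X\to X$ for a general smooth center $Z$ with strict normal crossing with $\partial X$ requires a careful local analysis showing that, after a strict Nisnevich cover, the blow-up is a product with $(\P^{\mathrm{codim}\,Z},\P^{\mathrm{codim}\,Z-1})$ over a base in $\SmlSm$, including the bookkeeping of how the log structure on the exceptional divisor matches the boundary of $(\P^n,\P^{n-1})$. The subtlety is that a priori the normal bundle is only \emph{locally} trivial, so one genuinely needs the strict-Nisnevich-descent hypothesis on $M$ to glue the local computations; verifying that the descent spectral sequence or the relevant cube of strict Nisnevich squares is respected by $M$ is where the real work lies. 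The other implications are comparatively formal, relying on the established calculus of right fractions and the toric combinatorics of star subdivisions.
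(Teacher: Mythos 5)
There is a genuine gap, in two places. First, you have partly conflated Theorem \ref{logSH.12}, which is about $\SmlSm/S$ for an honest base scheme $S\in\Sch$, with its $\F_1$-analogue (Theorem \ref{logSH.7} and Proposition \ref{logSH.24}). Your arguments for (2)$\Rightarrow$(1) and (1)$\Rightarrow$(3) invoke Propositions \ref{logSH.5}, \ref{logSH.3}, and \ref{logSH.21}, all of which live in $\lSm/\F_1$, and you explicitly lean on ``over $\F_1$ everything is toric and the normal bundle is split.'' Over a scheme $S$ this fails in exactly the hard case: the center $Z$ of an admissible blow-up along a smooth center is an arbitrary smooth subscheme of $\ul{X}$ contained in $\partial X$ and having strict normal crossing with it --- for instance a closed point in the interior of a boundary divisor, or a non-toric curve --- so $Z$ is not a log stratum, $\Bl_Z X\to X$ is not log \'etale (hence not a dividing cover), and $\rN_Z X$ need not split. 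Handling such centers is precisely the content of \cite[Theorem 7.2.10]{BPO}, which requires the deformation argument of \cite[Construction 7.2.8]{BPO}; the paper's own remark after Proposition \ref{logSH.24} points out that it is exactly this construction which becomes unnecessary in the $\F_1$ case. So the toric star-subdivision machinery you cite cannot carry the scheme-theoretic statement.

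Second, the central geometric claim in your (3)$\Rightarrow$(2) step is false: $\Bl_Z X\to X$ is birational, an isomorphism away from $Z$ with exceptional locus a $\P^{c-1}$-bundle over $Z$, so it is never strict-Nisnevich-locally identified with a projection $X'\times(\P^n,\P^{n-1})\to X'$. The actual link between blow-ups and $(\P^n,\P^{n-1})$-invariance is much more indirect: one compares $M(X\times[\Bl_O\A^2,\G_m^2])$ with $M(X\times[\A^2,\G_m^2])$ and with the corresponding $(\A^1\times\G_m)$-decorated squares (\cite[Proposition 7.2.5, Lemma 7.7.1]{BPO}, cf.\ Proposition \ref{logSH.24}(1),(4)), and reduces general dividing covers to star subdivisions relative to $2$-dimensional cones via \cite[Lemma 7.7.3]{BPO}. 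The paper accordingly does not prove (3)$\Rightarrow$(2) directly: it proves (3)$\Rightarrow$(1) by \cite[Theorem 7.7.4]{BPO} and (1)$\Rightarrow$(2) by \cite[Theorem 7.2.10]{BPO}, while (2)$\Rightarrow$(1) is the easy direction (every dividing cover is, Zariski-locally on a fan chart, dominated by compositions of star subdivisions by Proposition \ref{logmonoid.2}, and one concludes by two-out-of-six and descent) --- and this last part of your proposal is essentially correct. To repair the proposal you should either reproduce the citation chain to \cite{BPO} or supply a genuine substitute for the deformation-to-the-normal-cone step for non-toric smooth centers; as written, that step is missing.
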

\begin{proof}
(1)$\Rightarrow$(2)
See \cite[Theorem 7.2.10]{BPO},
which requires \cite[Proposition 7.2.5]{BPO}.

(2)$\Rightarrow$(1) Obvious.

(1)$\Rightarrow$(3) See \cite[Proposition 7.3.1]{BPO},
which requires \cite[Theorem 7.2.10]{BPO}.

(3)$\Rightarrow$(1)
See \cite[Theorem 7.7.4]{BPO},
which requires \cite[Proposition 7.2.5, Lemma 7.7.1]{BPO}.
\end{proof}

\begin{df}
\label{logSH.26}
A morphism $f\colon X\to S$ in $\SmlSm/\F_1$ is an \emph{admissible blow-up along a smooth center} if $f$ is an admissible blow-up and $\ul{f}$ is associated with a star subdivision of fans.
Let $\SmAdm$ denote the smallest class of morphisms in $\SmlSm/\F_1$ containing all admissible blow-ups along smooth centers and closed under compositions.
\end{df}

\begin{prop}
\label{logSH.24}
Let $\cC$ be a symmetric monoidal stable $\infty$-category,
and let
\[
M\colon \SmlSm/\F_1\to \cC
\]
be a $\square$-invariant symmetric monoidal functor sending Zariski distinguished squares to cartesian squares.
\begin{enumerate}
\item[\textup{(1)}] Let $X\in \SmlSm/\F_1$.
Consider $\A^2\in \Sm/\F_1$ and its blow-up $\Bl_O\A^2$ at the origin, which means the star subdivision of the fan $\A^2$ relative to itself.
Then the induced morphism
\[
M(X\times [\Bl_O \A^2,\A^1\times \G_m])
\to
M(X\times [\A^2,\A^1\times \G_m])
\]
is an isomorphism if and only if the induced morphism
\[
M(X\times [\Bl_O \A^2,\G_m^2])
\to
M(X\times [\A^2,\G_m^2])
\]
is an isomorphism,
see \textup{Construction \ref{logmonoid.7}} for the notation $[-,-]$.
\item[\textup{(2)}] Assume that $M$ is invariant under dividing covers.
Then $M$ is invariant under admissible blow-ups along smooth centers.
\item[\textup{(3)}] Assume that $M$ is invariant under dividing covers.
Then $M$ is $(\P^n,\P^{n-1})$-invariant for every integer $n\geq 1$.
\item[\textup{(4)}] Assume that $M$ is $(\P^n,\P^{n-1})$-invariant for every integer $n\geq 1$.
Let $X\in \SmlSm/\F_1$.
Then the induced morphism
\[
M(X\times [\Bl_O \A^2,\G_m^2])
\to
M(X\times [\A^2,\G_m^2])
\]
is an isomorphism.
\item[\textup{(5)}] Assume that $M$ is $(\P^n,\P^{n-1})$-invariant for every integer $n\geq 1$.
Then $M$ is invariant under dividing covers.
\end{enumerate}
\end{prop}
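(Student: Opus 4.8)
The plan is to run the proof of Theorem~\ref{logSH.12} --- i.e.\ the arguments of \cite[\S 7]{BPO} --- over $\F_1$. Every ingredient used there is a combinatorial fact about fans, subdivisions, and star subdivisions, so it transports to $\SmlSm/\F_1$; in fact the monoid-scheme setting is more transparent, since by Definition~\ref{logSH.26} an admissible blow-up along a smooth center is literally a star subdivision of smooth fans. The role of the strict Nisnevich descent hypothesis in \cite[\S 7]{BPO} is played here by the Zariski descent hypothesis, and $\square$-invariance enters exactly as there.

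The common engine of all five parts is the single square attached to the basic toric blow-up $\ul{\Bl_O\A^2}\to\ul{\A^2}$, carrying the two decorations in the statement (notation of Construction~\ref{logmonoid.7}). With the $\G_m^2$-decoration every toric divisor is decorated, so $[\Bl_O\A^2,\G_m^2]\to[\A^2,\G_m^2]$ is the log modification associated with the star subdivision, hence a dividing cover in the sense of Definition~\ref{logmonoid.18}; with the $\A^1\times\G_m$-decoration the strict transform of the undecorated coordinate axis carries no log structure, so the map is an admissible blow-up that is not a dividing cover (a computation on the charts $\Cone(e_1,e_1+e_2)$ and $\Cone(e_1+e_2,e_2)$ of $\Bl_O\A^2$ shows it is not even log \'etale). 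For part~(1) I would compare the two via the morphism that forgets the log structure along that transverse divisor and show that, after applying $M$, the induced map on cofibers is an isomorphism; by Zariski descent and $\square$-invariance this reduces to the statement that the comparison is insensitive to the log structure along a divisor transverse to the exceptional locus, a direct local check on the two charts (which simplifies over $\F_1$, since the conormal line bundles that occur are trivial because $\A^1$ has trivial Picard group). For part~(4) I would identify $[\Bl_O\A^2,\G_m^2]$ with the log total space of $\cO_{\P^1}(-1)$ over the appropriately decorated exceptional $\P^1$ and run the projective-bundle/Thom argument of \cite[Lemma~7.7.1]{BPO}: it expresses the cofiber of the basic square in terms of the objects $(\P^m,\P^{m-1})$, so $(\P^n,\P^{n-1})$-invariance for all $n\ge 1$ forces that cofiber to vanish.

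Granting the basic square, the remaining parts are d\'evissage. For part~(2): since $M$ is invariant under dividing covers it kills $[\Bl_O\A^2,\G_m^2]\to[\A^2,\G_m^2]$, hence by part~(1) it also kills $[\Bl_O\A^2,\A^1\times\G_m]\to[\A^2,\A^1\times\G_m]$; and any admissible blow-up along a smooth center is, Zariski-locally on the target, controlled by this basic blow-up together with log modifications --- the toric content of \cite[Proposition~7.2.5]{BPO}, which over $\F_1$ is a statement about star subdivisions of smooth cones --- so $M$ is invariant under all of $\SmAdm$. For part~(3): following \cite[Proposition~7.3.1]{BPO}, $X\times(\P^n,\P^{n-1})$ is connected to $X$ by a tower of admissible blow-ups along smooth centers and $\square$-bundles, so part~(2) and $\square$-invariance give $(\P^n,\P^{n-1})$-invariance. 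For part~(5): by parts~(4) and~(1), $(\P^\bullet,\P^{\bullet-1})$-invariance already forces $M$ to kill the basic $\A^1\times\G_m$-square and hence, by the same local d\'evissage as in part~(2), to be invariant under all of $\SmAdm$; finally any dividing cover $Y\to X$ in $\SmlSm/\F_1$ can be refined so as to be dominated by a zigzag of admissible blow-ups along smooth centers (toric resolution and weak factorization for the subdivision $\ul Y\to\ul X$ of smooth fans), and a two-out-of-three argument along such a zigzag --- this is \cite[Theorem~7.7.4]{BPO} --- upgrades invariance under $\SmAdm$ to invariance under all dividing covers.

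The main obstacle is the pair of combinatorial reduction lemmas feeding parts~(2) and~(5): that every star subdivision of smooth fans, and (after refinement) every dividing cover between smooth fans, is controlled Zariski-locally by the single blow-up $\ul{\Bl_O\A^2}\to\ul{\A^2}$, with honest bookkeeping of which toric divisors carry a log structure and which do not. Over $\F_1$ these become purely fan-combinatorial statements --- which is precisely the advantage of working over $\F_1$ --- but they remain the crux. The other delicate point is the explicit identification in parts~(1) and~(4) of the basic blow-up square with the projective-bundle/Thom picture; once that is available, the rest is formal.
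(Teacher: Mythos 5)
Your proposal is correct and follows essentially the same route as the paper: the paper's proof of each part is precisely "argue as in the corresponding statement of \cite[\S 7]{BPO}" — (1) from Proposition 7.2.5, (2) from Steps 2–3 of Theorem 7.2.10 after localizing to $\A_\N^r\times\A^s\times\G_m^t$, (3) from Proposition 7.3.1, (4) from Lemma 7.7.1, and (5) from Lemma 7.7.3 plus Theorem 7.7.4 after reducing to a star subdivision of $\A^r$ relative to a $2$-dimensional cone — with Zariski descent in place of strict Nisnevich descent, exactly as you describe. Your slightly different packaging of (5) (killing the basic square via (4) and (1), then d\'evissage as in (2), then refining a dividing cover by star subdivisions) is the same argument in a different order.
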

\begin{proof}
(1) Argue as in \cite[Proposition 7.2.5]{BPO}.

(2) Let $f\colon X\to S$ be an admissible blow-up along a smooth center in $\SmlSm/\F_1$.
We need to show that $M(f)\colon M(X)\to M(S)$ is an isomorphism.
This question is Zariski local on $S$.
Hence we may assume $S=\A_\N^r\times \A^s\times \G_m^t$ for some integers $r,s,t\geq 0$.
To conclude, argue as in \cite[Steps 2 and 3 in Theorem 7.2.10]{BPO}, but use (1) also.

(3) Argue as in \cite[Proposition 7.3.1]{BPO}, but use (2) also.

(4) Argue as in \cite[Lemma 7.7.1]{BPO}, but use (1) also.

(5) Let $f\colon X\to S$ be a dividing cover.
We need to show that $M(f)\colon M(X)\to M(S)$ is an isomorphism.
This question is Zariski local on $S$.
Hence we may assume $S=\A_\N^r \times \A^s\times \G_m^t$ for some integers $r,s,t\geq 0$.
Using \cite[Lemma 7.7.3]{BPO}, we reduce to the case where $f$ is induced by a star subdivision of $\A^r$ relative to a $2$-dimensional cone.
To conclude,
argue as in \cite[Theorem 7.7.4]{BPO},
but use (4) also.
\end{proof}

\begin{lem}
\label{logSH.23}
Let $f\colon X\to S:=\A_\N^r\times \A^s\times \G_m^t$ be an admissible blow-up in $\SmlSm/\F_1$ with integers $r,s,t\geq 0$.
Then there exists an admissible blow-up $g\colon Y\to X$ in $\SmlSm/\F_1$ such that $fg$ is a composition of admissible blow-ups along smooth centers.
\end{lem}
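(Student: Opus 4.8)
The plan is to reduce the statement to a combinatorial fact about fans and then invoke the toric weak factorization theorem. Since $f$ is an admissible blow-up in $\SmlSm/\F_1$, the underlying morphism $\ul{f}\colon \ul{X}\to \ul{S}$ is a proper birational morphism of smooth toric monoid schemes, which corresponds to a morphism of smooth fans $\Sigma'\to \Sigma$ refining the support, where $\Sigma$ is the fan of $S=\A_\N^r\times \A^s\times \G_m^t$. By the toric weak factorization theorem (see e.g.\ \cite{MR495499}, \cite{CLStoric}), any proper birational toric morphism between smooth toric varieties can be dominated: there is a common smooth refinement $\Sigma''$ of $\Sigma'$ (and $\Sigma$) such that both $\Sigma''\to \Sigma'$ and $\Sigma''\to \Sigma$ factor as sequences of star subdivisions at smooth cones. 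Translating back, this gives a smooth fan $\Sigma''$ together with a morphism $\ul{Y}\to \ul{X}$ and a composite $\ul{Y}\to \ul{S}$, both of which are compositions of star subdivisions of fans.

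Concretely, the steps I would carry out are: First, pass to fans — record that $\ul f$ corresponds to a subdivision $\Sigma'$ of $\Sigma$ with $\Sigma,\Sigma'$ both smooth, using the dictionary between smooth toric monoid schemes and smooth fans together with the fact (used in Proposition~\ref{logSH.3}) that admissible blow-ups are proper birational. Second, apply the toric factorization result to obtain a common smooth refinement $\Sigma''$ and factorizations $\Sigma''\to \Sigma'\to \Sigma$ into star subdivisions relative to smooth cones. Third, produce $Y\in \SmlSm/\F_1$: set $\ul Y$ to be the toric monoid scheme of $\Sigma''$, equip it with the log structure pulled back from $X$ (equivalently, with boundary the preimage of $\partial X$), and check $Y\in \SmlSm/\F_1$ — here I would imitate the argument in the proof of Proposition~\ref{logSH.3}, applying Proposition~\ref{logmonoid.25} to the induced morphism $Y\to X$. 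Fourth, verify that $g\colon Y\to X$ is an admissible blow-up (it is proper, and $g-\partial g$ is an isomorphism since the subdivision is an isomorphism over the torus $S-\partial S$), and that by Definition~\ref{logSH.26} each star subdivision step in $\Sigma''\to \Sigma'$ and in $\Sigma''\to \Sigma$ yields an admissible blow-up along a smooth center, so that $fg$ (corresponding to $\Sigma''\to \Sigma$) is a composition of such.

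The main obstacle I expect is the second step: whether one truly has a \emph{common} smooth refinement so that \emph{both} $\Sigma''\to\Sigma'$ and $\Sigma''\to\Sigma$ decompose into star subdivisions at smooth cones, rather than merely each morphism separately admitting a factorization through \emph{some} resolution. The statement of the lemma wants $fg$ itself — not some further blow-up of $Y$ — to be a composition of smooth-center admissible blow-ups, and simultaneously $g$ to be an honest admissible blow-up (not necessarily a smooth-center one). The cleanest route is: take $\Sigma''$ to be the common refinement of $\Sigma'$ obtained by the strong factorization available in the toric setting for maps over $\Sigma$; since $\Sigma$ itself is smooth, the toric strong factorization theorem (valid because $\A_\N^r\times\A^s\times\G_m^t$ is smooth, so every cone of $\Sigma$ is smooth) gives that $\Sigma''\to \Sigma$ is a sequence of star subdivisions at smooth cones, while $\Sigma''\to\Sigma'$ is automatically proper birational, hence an admissible blow-up after translating back. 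One subtlety to handle carefully: ensuring the $\F_1$-morphisms are \emph{associated with} the fan morphisms in the sense required (i.e.\ underlying morphisms of fans, not the more general morphisms allowed in $\SmlSm/\F_1$), which is exactly what Definition~\ref{logSH.26} asks for and what makes each factor qualify as a smooth-center admissible blow-up.
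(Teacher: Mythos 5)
Your overall strategy matches the paper's: translate $\ul f$ into a subdivision $\Delta\to\Sigma$ of smooth fans, dominate it by a subdivision $\Sigma''\to\Sigma$ that factors into star subdivisions, take $\ul Y:=\T_{\Sigma''}$ with $Y-\partial Y:=S-\partial S$, and invoke Proposition~\ref{logmonoid.25}. You also correctly isolate the right factorization input: you only need $\Sigma''\to\Sigma$ (not $\Sigma''\to\Delta$) to decompose into star subdivisions, which is exactly what the paper's Proposition~\ref{logmonoid.2} (the combinatorial result adapted from \cite{MR803344}) supplies — in fact with star subdivisions relative to $2$-dimensional cones, so smoothness of every intermediate fan is automatic.

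There is, however, one genuine gap in your fourth step. You assert that $g-\partial g$ is an isomorphism ``since the subdivision is an isomorphism over the torus $S-\partial S$.'' But $S-\partial S\cong\G_m^r\times\A^s\times\G_m^t$ is \emph{not} the torus when $s>0$: it corresponds to the subfan of faces of $\sigma:=\Cone(e_{r+1},\ldots,e_{r+s})$, and a general common refinement $\Sigma''$ produced by a factorization theorem may well subdivide $\sigma$. If it does, neither $g$ nor the intermediate maps $\T_{\Delta_{i+1}}\to\T_{\Delta_i}$ are admissible blow-ups (recall Definition~\ref{logSH.26} requires each step to be an admissible blow-up, not merely a star subdivision), and the lemma fails for that choice of $\Sigma''$. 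This is precisely why the paper's Proposition~\ref{logmonoid.2} carries the extra clause that a prescribed common cone $\eta$ of $\Sigma$ and $\Delta$ (here $\eta=\sigma$, which lies in $\Delta$ because $f$ is admissible) survives undivided in $\Sigma''$; tracing through Lemma~\ref{logmonoid.3} shows $\sigma$ in fact survives in every intermediate $\Delta_i$, which is what makes each step admissible. The standard toric weak/strong factorization statements you cite do not include this cone-preservation clause, so as written your argument only works for $s=0$; to close the gap you must either use the refined factorization result with the common-cone condition or verify separately that the centers of all star subdivisions can be chosen away from $\sigma$.
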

\begin{proof}
The morphism $\ul{f}\colon \ul{X}\to \ul{S}$ is associated with a subdivision of fans $\Delta\to \Sigma:=\A^{r+s}\times \G_m^t$.
Since $f$ is an admissible blow-up,
the cone $\sigma:=\Cone(e_{r+1},\ldots,e_{r+s})$ is contained in $\Delta$,
where $e_1,\ldots,e_{r+s+t}$ are the standard coordinates in the lattice $\Z^{r+s+t}$ of the fan $\Sigma$.
By Proposition \ref{logmonoid.2}, there exists a finite sequence of star subdivisions
\[
\Delta_m\to \cdots \to \Delta_0:=\Sigma
\]
such that $\Delta_m$ is a subdivision of $\Delta$ and $\sigma\in \Delta_m$.
Consider $Y\in \cSm/\F_1$ such that $\ul{Y}$ is associated with $\Delta_m$ and $Y-\partial Y\cong S-\partial S$.
By Proposition \ref{logmonoid.25},
we have $Y\in \SmlSm/\F_1$,
and note that the induced morphism $Y\to X$ satisfies the desired properties.
\end{proof}

\begin{thm}
\label{logSH.7}
Let $\cC$ be a symmetric monoidal stable $\infty$-category,
and let
\[
M\colon \SmlSm/\F_1\to \cC
\]
be a $\square$-invariant symmetric monoidal functor sending Zariski distinguished squares to cartesian squares.
Then the following conditions are equivalent.
\begin{enumerate}
\item[\textup{(1)}]
$M$ is invariant under dividing covers.
\item[\textup{(2)}]
$M$ is invariant under admissible blow-ups along smooth centers.
\item[\textup{(3)}]
$M$ is invariant under admissible blow-ups.
\item[\textup{(4)}]
$M$ is $(\P^n,\P^{n-1})$-invariant for every integer $n\geq 1$.
\end{enumerate}
\end{thm}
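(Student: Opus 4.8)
The plan is to deduce Theorem \ref{logSH.7} by bootstrapping from Proposition \ref{logSH.24}, which already gives us almost all of the implications among (1), (2), and (4). The only genuinely new content in Theorem \ref{logSH.7} compared to Proposition \ref{logSH.24} is the appearance of condition (3), invariance under \emph{arbitrary} admissible blow-ups, not merely those along smooth centers or in the class $\SmAdm$. So the strategy is: assemble the cycle (1) $\Leftrightarrow$ (2) $\Leftrightarrow$ (4) from Proposition \ref{logSH.24}, and then close the loop by showing (2) $\Rightarrow$ (3) (the hard direction) and (3) $\Rightarrow$ (1) or (3) $\Rightarrow$ (2) (which should be comparatively easy, since admissible blow-ups along smooth centers and dividing covers are themselves admissible blow-ups, the former by Definition \ref{logSH.26} and the latter because a dividing cover is proper birational and an isomorphism away from the boundary).

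\textbf{Step 1: Recollect the equivalences from Proposition \ref{logSH.24}.} From parts (2), (3), (5) of that proposition we get (1) $\Rightarrow$ (2), (1) $\Rightarrow$ (4), and (4) $\Rightarrow$ (1); and (2) $\Rightarrow$ (1) is formal since invariance under dividing covers can be checked after noting a dividing cover is built from subdivisions — more precisely, one runs the argument of Proposition \ref{logSH.24}(5) but observes that its reduction (via \cite[Lemma 7.7.3]{BPO}) to star subdivisions relative to a $2$-dimensional cone only needs $(\P^n,\P^{n-1})$-invariance, which (2) supplies through part (2)$\Rightarrow$(3)$\Rightarrow$\ldots of that proposition; alternatively, (2) $\Rightarrow$ (1) follows because a dividing cover is refined by a composite of star subdivisions of smooth fans. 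Either way, (1), (2), (4) are equivalent.

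\textbf{Step 2: (2) $\Rightarrow$ (3), the main obstacle.} Let $f\colon X\to S$ be an arbitrary admissible blow-up in $\SmlSm/\F_1$; we must show $M(f)$ is an isomorphism. This is Zariski-local on $S$ by the hypothesis that $M$ sends Zariski distinguished squares to cartesian squares, so we may assume $S=\A_\N^r\times\A^s\times\G_m^t$. Now invoke Lemma \ref{logSH.23}: there is an admissible blow-up $g\colon Y\to X$ such that $fg$ is a composite of admissible blow-ups along smooth centers, hence $M(fg)$ is an isomorphism by (2). To conclude that $M(f)$ is an isomorphism we also need $M(g)$ to be an isomorphism; but $g$ is itself an admissible blow-up into $X$, and $X$ need not be affine of the special toric form above. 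The fix is the standard two-out-of-three plus a second application of the lemma after further Zariski-localizing on $X$: cover $X$ by opens of the form $\A_\N^{r'}\times\A^{s'}\times\G_m^{t'}$ (possible since $X\in\SmlSm/\F_1$ is a smooth fan with its toric boundary), apply Lemma \ref{logSH.23} to $g$ over each such open to find $h\colon Z\to Y$ with $gh$ a composite of blow-ups along smooth centers, so $M(gh)$ and $M(fgh)=M(f)M(g)M(h)$ are isomorphisms; combined with $M(fg)$ an isomorphism, two-out-of-three forces $M(f)$, $M(g)$, $M(h)$ all to be isomorphisms. (One should check the Zariski-locality bookkeeping is compatible with composites, i.e.\ that it suffices to treat $f$ after base change along a Zariski cover of $S$ — this is where \cite[Propositions 7.2.5]{BPO}-style descent arguments as used in Proposition \ref{logSH.24} are reused.)

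\textbf{Step 3: (3) $\Rightarrow$ (2) and close.} This is immediate: by Definition \ref{logSH.26} an admissible blow-up along a smooth center is in particular an admissible blow-up, so (3) trivially implies (2). Combining Steps 1--3 gives (1) $\Leftrightarrow$ (2) $\Leftrightarrow$ (3) $\Leftrightarrow$ (4), completing the proof. The genuinely delicate point is Step 2, specifically verifying that the localization-and-refine argument of Lemma \ref{logSH.23} can be iterated to kill the "error" blow-up $g$; everything else is either a citation to Proposition \ref{logSH.24} or a formal two-out-of-three manipulation.
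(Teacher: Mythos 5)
Your overall skeleton matches the paper's: the equivalence of (1), (2), (4) is delegated to Proposition \ref{logSH.24}, the implication (3)$\Rightarrow$(2) is trivial, and the real content is (2)$\Rightarrow$(3), for which both you and the paper reach for Lemma \ref{logSH.23} after Zariski-localizing $S$ to $\A_\N^r\times\A^s\times\G_m^t$. The divergence, and the gap, is in how the ``error'' blow-up $g\colon Y\to X$ produced by that lemma is killed.

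Your Step 2 needs a \emph{globally defined} $h\colon Z\to Y$ with $gh\in\SmAdm$ sitting in a single composable chain $Z\xrightarrow{h}Y\xrightarrow{g}X\xrightarrow{f}S$, so that $M(fg)$ and $M(gh)$ both being invertible forces $M(f)$, $M(g)$, $M(h)$ to be invertible (this is really two-out-of-six, not two-out-of-three, but that part is fine). Lemma \ref{logSH.23} only produces such an $h$ when the \emph{target} of $g$ is one of the local models, and $X$ is not; your fix is to Zariski-localize on $X$. But that destroys the other half of the chain: for an open $U\subseteq X$ of the required form, the composite $g^{-1}(U)\to U\to S$ is not an admissible blow-up of $S$ (the open immersion $U\to S$ is not even proper), and the restriction of the composite $fg\in\SmAdm$ over $g^{-1}(U)$ is not a restriction over an open of $S$, so condition (2) no longer applies to it. You are left knowing only that $M(g_U)$ admits a right inverse, and attempting to split it from the left launches an infinite regress (each new blow-up $h_U$, $h'_U,\dots$ needs its own splitting). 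The parenthetical remark at the end of your Step 2 flags exactly this point but does not resolve it.

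The paper closes the regress differently: it stays entirely over the fixed local $S$, forms the category $\Adm_S$ of admissible blow-ups of $S$ (whose morphisms are the blow-ups $Y\to X$ over $S$, so the problematic $g$ is a \emph{morphism} of this category rather than an object over a new base), shows via Lemma \ref{logSH.23} and the argument of Proposition \ref{logSH.3} that $\SmAdm_S$ admits a calculus of right fractions in $\Adm_S$, and then invokes the categorical localization lemma \cite[Lemma C.2.1]{BPO}. That lemma is precisely the device that converts ``every object is dominated by one reached through $\SmAdm_S$'' plus the Ore condition into ``$M$ inverts every morphism of $\Adm_S$,'' without ever needing Lemma \ref{logSH.23} for non-local targets. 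To repair your argument you would either have to prove a version of Lemma \ref{logSH.23} with arbitrary target in $\SmlSm/\F_1$ (plausible via Proposition \ref{logmonoid.2}, but not what is stated), or reproduce this fraction-calculus step.
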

\begin{proof}
To show the equivalence between (1), (2), and (4),
translate the references to \cite{BPO} used in the proof of Theorem \ref{logSH.12} in terms of log monoid schemes,
see Proposition \ref{logSH.24} for the details.
The proofs become simpler for the log monoid scheme case since \cite[Construction 7.2.8]{BPO} is not needed.

Since (3) implies (2) trivially,
it remains to show that (2) implies (3).
Let $f\colon X\to S$ be an admissible blow-up in $\SmlSm/\F_1$.
We need to show that $M(f)\colon M(X)\to M(S)$ is an isomorphism.
For this, we can work Zariski locally on $S$.
Hence we may assume $S=\A_\N^r\times \A^s\times \G_m^t$ for some integers $r,s,t\geq 0$.

Consider the category $\Adm_S$ of admissible blow-ups of $S$,
and consider the smallest class of morphisms $\SmAdm_S$ in $\Adm_S$ containing all admissible blow-ups along smooth centers and closed under compositions.
Argue as in Proposition \ref{logSH.3} and use Lemma \ref{logSH.23} to show that $\SmAdm_S$ admits a calculus of right fractions in $\Adm_S$.
Using \cite[Lemma C.2.1]{BPO} (with $\infty$-category $\cD:=\cC$ instead of a $1$-category) and the condition (2),
we see that $M(f)$ is an isomorphism.
\end{proof}

The next result provides an alternative model of $\logSH_{S^1}(\F_1)$.

\begin{thm}
\label{logSH.8}
There is an equivalence of symmetric monoidal $\infty$-categories
\[
\logSH_{S^1}(\F_1)
\simeq
\Sh_\Zar(\SmlSm/\F_1,\Sp)[(\P^\bullet,\P^{\bullet-1})^{-1}],
\]
where $(\P^\bullet,\P^{\bullet-1})$ in the formulation denotes the set of projections $X\times (\P^n,\P^{n-1})\to X$ for $X\in \SmlSm/\F_1$ and integer $n\geq 1$.
\end{thm}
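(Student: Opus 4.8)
The plan is to compare the two localizations by showing each inverts the morphisms inverted by the other. Recall $\logSH_{S^1}(\F_1) := \Sh_\Zar(\SmlSm/\F_1,\Sp)[\square^{-1},\divi^{-1}]$. Both this and the target $\Sh_\Zar(\SmlSm/\F_1,\Sp)[(\P^\bullet,\P^{\bullet-1})^{-1}]$ are Bousfield-type localizations of the same presentably symmetric monoidal $\infty$-category $\Sh_\Zar(\SmlSm/\F_1,\Sp)$, so by the universal property of localization it suffices to check that a $\Sh_\Zar$-local object is $\{\square,\divi\}$-local if and only if it is $(\P^\bullet,\P^{\bullet-1})$-local. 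Equivalently, in terms of symmetric monoidal colimit-preserving functors $M$ out of $\Sh_\Zar(\SmlSm/\F_1,\Sp)$ (which by left Kan extension correspond to $\square$-type data on $\SmlSm/\F_1$ sending Zariski distinguished squares to cartesian squares), I want: $M$ inverts $\square$-projections and dividing covers iff $M$ inverts all $(\P^n,\P^{n-1})$-projections. This is almost exactly the content of Theorem \ref{logSH.7}, which I would invoke directly — but Theorem \ref{logSH.7} is phrased for a $\square$-invariant $M$, so the one genuine point to address is how $\square$-invariance enters on the right-hand side.

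First I would observe that $(\P^\bullet,\P^{\bullet-1})$-invariance already forces $\square$-invariance: taking $n=1$ gives $(\P^1,\P^0) = \square$, so a $(\P^\bullet,\P^{\bullet-1})$-local sheaf is automatically $\square$-local. Thus on the right-hand side we may freely assume $M$ is $\square$-invariant, which puts us exactly in the setting of Theorem \ref{logSH.7}. Then: if $M$ inverts $\square$ and all dividing covers, condition (1) of Theorem \ref{logSH.7} holds, hence condition (4) holds, i.e.\ $M$ is $(\P^n,\P^{n-1})$-invariant for all $n\geq 1$. Conversely, if $M$ is $(\P^n,\P^{n-1})$-invariant for all $n\geq 1$, then it is $\square$-invariant (the $n=1$ case) and satisfies condition (4), hence condition (1), so $M$ inverts all dividing covers; combined with $\square$-invariance this shows $M$ inverts the generating morphisms of $\logSH_{S^1}(\F_1)$. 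Therefore the two classes of local objects in $\Sh_\Zar(\SmlSm/\F_1,\Sp)$ coincide, and the two localizations are equivalent as $\infty$-categories; the symmetric monoidal structure is transported along this equivalence because both are smashing/monoidal localizations of the same symmetric monoidal $\infty$-category (the localizing morphisms are stable under tensoring with any object, since $\square$ and dividing covers, resp.\ $(\P^n,\P^{n-1})$-projections, pull back along products $X \times (-)$).

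For the monoidal compatibility I would spell out that the localization $\Sh_\Zar(\SmlSm/\F_1,\Sp) \to \logSH_{S^1}(\F_1)$ is a symmetric monoidal localization by construction, and likewise $\Sh_\Zar(\SmlSm/\F_1,\Sp) \to \Sh_\Zar(\SmlSm/\F_1,\Sp)[(\P^\bullet,\P^{\bullet-1})^{-1}]$ is symmetric monoidal because the set of projections $X\times(\P^n,\P^{n-1})\to X$ is closed under the cartesian tensor product with arbitrary objects of $\SmlSm/\F_1$ (indeed $Y\times X\times (\P^n,\P^{n-1}) \to Y\times X$ is again such a projection). Since both localizations have the same local objects, the canonical comparison functor between them is an equivalence compatible with the two symmetric monoidal structures, giving the stated equivalence of symmetric monoidal $\infty$-categories.

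The main obstacle is not the formal localization bookkeeping but making sure the hypotheses of Theorem \ref{logSH.7} are genuinely met: Theorem \ref{logSH.7} is stated for functors $M\colon \SmlSm/\F_1\to \cC$, whereas here the natural object is a $\Sh_\Zar$-local sheaf of spectra, so I would phrase the argument either (a) by testing against all such $M$ via the universal property of presheaf categories and $\PrL$-localizations — a $\Sh_\Zar$-sheaf $\cF$ is $\{\square,\divi\}$-local iff $\hom(M(X),\cF)$ behaves correctly for all representables, reducing to the representable-level statement — or (b) by directly applying Theorem \ref{logSH.7} to the Yoneda-type diagram valued in $\logSH_{S^1}(\F_1)$ or in $\Sh_\Zar(\SmlSm/\F_1,\Sp)[(\P^\bullet,\P^{\bullet-1})^{-1}]$ itself, viewing the localization functor as the relevant $M$. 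Either way one must confirm that $M$ sends Zariski distinguished squares to cocartesian (equivalently cartesian, in the stable setting) squares, which holds since $M$ factors through $\Sh_\Zar$. I expect this is the step requiring the most care, and I would handle it exactly as in the proof of the analogous statement \cite[Theorem 7.5.4 and surrounding]{BPO} or Theorem \ref{logSH.8}'s scheme-theoretic counterpart.
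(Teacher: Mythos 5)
Your proof is correct and takes essentially the same route as the paper, whose entire proof is to invoke Theorem \ref{logSH.7} (the paper cites the implication (1)$\Leftrightarrow$(3) there, but the relevant one is (1)$\Leftrightarrow$(4), which is what you use). The points you spell out --- that $\square=(\P^1,\P^0)$ makes $\square$-invariance automatic on the $(\P^\bullet,\P^{\bullet-1})$-side, so the hypothesis of Theorem \ref{logSH.7} is met, and that both sets of localizing morphisms are stable under tensoring so the identification is symmetric monoidal --- are exactly what the paper leaves implicit.
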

\begin{proof}
This is a consequence of the implication (1)$\Leftrightarrow$(3) in Theorem \ref{logSH.7}.
\end{proof}

For symmetric monoidal $\infty$-categories $\cC$ and $\cD$,
let $\Fun^{\otimes}(\cC,\cD)$ be the $\infty$-category of symmetric monoidal functors,
and let $\Fun^{\otimes,L}(\cC,\cD)$ be the $\infty$-category of colimit preserving symmetric monoidal functors.

\begin{prop}
\label{logSH.22}
Let $\cD$ be a pointed presentable symmetric monoidal $\infty$-category.
Then the induced functor
\[
\Fun^{\otimes,L}(\logSH(\F_1),\cD)
\to
\Fun^{\otimes}(\SmlSm/\F_1,\cD)
\]
is fully faithful,
and its essential image is spanned by those functors $F\colon \SmlSm/\F_1\to \cD$ satisfying Zariski descent and $(\P^n,\P^{n-1})$-invariance for all integers $n\geq 1$ such that the cofiber of $F(\F_1)\xrightarrow{i_1} F(\P^1)$ is an invertible object of $\cD$,
where $i_1$ is the $1$-section $\Spec(\F_1)\to \P^1$.
\end{prop}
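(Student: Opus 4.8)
The plan is to read off the asserted universal property by chaining the universal properties of the formal constructions out of which $\logSH(\F_1)$ is built, together with Theorem~\ref{logSH.8}. I would first dispose of the case in which $\cD$ is not stable. Since $\logSH(\F_1)$ is a stable $\infty$-category, any colimit-preserving symmetric monoidal functor $\logSH(\F_1)\to\cD$ sends the invertible object $\Sigma\unit$ to an invertible object of $\cD$, so $\Sigma\colon\cD\to\cD$ is an equivalence and $\cD$ is stable; hence the source of the functor in the statement is empty unless $\cD$ is stable. On the other side, suppose $F\colon\SmlSm/\F_1\to\cD$ is symmetric monoidal, satisfies Zariski descent, is $(\P^1,\P^0)$-invariant — equivalently $\square$-invariant, since $(\P^1,\P^0)\cong\square$ in $\SmlSm/\F_1$ — and has $\cofib(F(\F_1)\xrightarrow{i_1}F(\P^1))$ invertible. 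Then $F$ extends to a colimit-preserving symmetric monoidal functor on $\Sh_\Zar(\SmlSm/\F_1,\Spc_*)[\square^{-1}]$, and in that category the Morel--Voevodsky-type decomposition of the log Tate twist gives $(\P^1,i_1)\simeq S^1\wedge\Gmlog$; therefore $\cofib(F(\F_1)\xrightarrow{i_1}F(\P^1))$, being the image of $(\P^1,i_1)$, is a suspension, so $\Sigma\unit_\cD$ is a tensor factor of an invertible object and $\cD$ is again stable. Thus both sides are empty unless $\cD$ is stable, and I may assume this henceforth.

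Assuming $\cD$ stable, I would use Definition~\ref{logSH.6} and Theorem~\ref{logSH.8} to write $\logSH(\F_1)=\Sp_{\P^1}(\Sh_\Zar(\SmlSm/\F_1,\Sp)[(\P^\bullet,\P^{\bullet-1})^{-1}])$, exhibiting $\logSH(\F_1)$ as obtained from $\SmlSm/\F_1$ with its cartesian symmetric monoidal structure by: (i) passing to $\Sp$-valued presheaves; (ii) Bousfield-localizing at the Zariski topology; (iii) Bousfield-localizing at the projections $X\times(\P^n,\P^{n-1})\to X$ for $n\geq1$; (iv) formally inverting $\P^1=(\P^1,i_1)$. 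For each step, restriction induces a fully faithful functor between the categories $\Fun^{\otimes,L}(-,\cD)$, with explicit essential image: step (i) gives the equivalence $\Fun^{\otimes,L}(\PSh(\SmlSm/\F_1,\Sp),\cD)\simeq\Fun^\otimes(\SmlSm/\F_1,\cD)$ (Day convolution together with stabilization, which imposes no extra condition on the stable $\cD$); steps (ii) and (iii) give the universal property of Bousfield localization, carving out the functors satisfying Zariski descent, respectively those that are $(\P^n,\P^{n-1})$-invariant for all $n\geq1$; step (iv) gives \cite[Proposition~2.9]{zbMATH06374152}, carving out the functors sending $(\P^1,i_1)$ to an invertible object. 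By the cofiber sequence $\Sigma^\infty_+\{i_1\}\to\Sigma^\infty_+\P^1\to(\P^1,i_1)$ and $F(\F_1)=\unit_\cD$, the image of $(\P^1,i_1)$ under the extension of such an $F$ is exactly $\cofib(F(\F_1)\xrightarrow{i_1}F(\P^1))$; and the choice of the $1$-section is immaterial, since any two rational sections of $\P^1$ become $\square$-homotopic after step (iii) and hence induce the same map. Composing the four fully faithful functors yields the fully faithful functor in the statement, and its essential image is the intersection of the four conditions, which is exactly the asserted description.

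The elementary ingredients — that $\SmlSm/\F_1$ is cartesian symmetric monoidal, the Day-convolution and Bousfield-localization universal properties, and Theorem~\ref{logSH.8} — are all supplied by the paper, so that part of the argument is routine. The one genuinely non-formal point, and the step I expect to be the main obstacle, is the reduction to stable $\cD$: one must know that formally inverting $\P^1$ already performs the $S^1$-stabilization, i.e.\ that invertibility of $\cofib(F(\F_1)\xrightarrow{i_1}F(\P^1))$ forces $\cD$ stable. This rests on the decomposition $(\P^1,i_1)\simeq S^1\wedge\Gmlog$ of the log Tate twist, already valid in $\Sh_\Zar(\SmlSm/\F_1,\Spc_*)[\square^{-1}]$, which one imports from the analogous analysis over a base in \cite{BPO2}; granting it, the remainder is bookkeeping with universal properties.
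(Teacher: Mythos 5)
Your proposal is correct and follows essentially the same route as the paper, whose proof simply says to argue as in Robalo's Corollary 2.39 and use Theorem \ref{logSH.8}: one chains the universal properties of pointed/stabilized presheaves, the two Bousfield localizations (Zariski and $(\P^\bullet,\P^{\bullet-1})$), and the formal inversion \eqref{logSH.1.1} of $(\P^1,i_1)$. Your explicit reduction to stable $\cD$ via the decomposition $(\P^1,i_1)\simeq S^1\wedge \Gmlog$ is the same point Robalo's argument handles with $\P^1\simeq S^1\wedge\G_m$, and it is genuinely needed here because Definition \ref{logSH.6} builds $\logSH(\F_1)$ out of $\Sp$-valued sheaves, so the stated descent and invertibility conditions must be seen to force $\Sigma\unit_\cD$ invertible.
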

\begin{proof}
Argue as in \cite[Corollary 2.39]{zbMATH06374152}, and use Theorem \ref{logSH.8}.
\end{proof}

\begin{const}
\label{omega.20}
Let $\omega\colon \SmlSm/\F_1\to \Sm/\F_1$
(resp.\ $\omega\colon \RglRg \to \Rg$,
resp.\ $\omega\colon \SmlSm/S\to \Sm/S$ with $S\in \Sch$)
be the functor sending $Y\in \SmlSm/\F_1$ (resp.\ $Y\in \RglRg$, resp.\ $Y\in \SmlSm/S$)
to $Y-\partial Y$.
Its left adjoint sends $X\in \Sm/\F_1$ (resp.\ $X\in \Rg$, resp.\ $X\in \Sm/S$) to $X$.
Hence we have the colimit preserving adjoint functors
\begin{gather*}
\omega^\sharp
:
\PSh(\Sm/\F_1,\Spc)\rightleftarrows
\PSh(\SmlSm/\F_1,\Spc)
:
\omega_\sharp
\\
\text{
(resp.\ $
\omega^\sharp
:
\PSh(\Rg,\Spc)\rightleftarrows
\PSh(\RglRg,\Spc)
:
\omega_\sharp
$,
}
\\
\text{
resp.\ $
\omega^\sharp
:
\PSh(\Sm/S,\Spc)\rightleftarrows
\PSh(\SmlSm/S,\Spc)
:
\omega_\sharp
$)}
\end{gather*}
such that $\omega^\sharp X\cong X$ and $\omega_\sharp Y\cong Y-\partial Y$ for $X\in \Sm/\F_1$ (resp.\ $X\in \Rg$, resp.\ $X\in \Sm/S$) and $Y\in \SmlSm/\F_1$ (resp.\ $Y\in \RglRg$, resp.\ $Y\in \SmlSm/S$).

We have similar adjoint functors for $\Spc_*$ and $\Sp$ too.
\end{const}

\begin{const}
\label{omega.19}
The functor $\omega\colon \SmlSm/\F_1\to \Sm/\F_1$ and its left adjoint send $\A^1$ to $\A^1$ and Zariski distinguished squares to Zariski distinguished squares.
Hence using \cite[Corollary 2.1.1]{logA1},
we have the induced adjoint functors
\[
\omega^\sharp: \rH(\F_1)\rightleftarrows \Sh_\Zar(\SmlSm/\F_1,\Spc)[(\A^1)^{-1}]:\omega_\sharp
\]
that preserve colimits.
\end{const}

\begin{df}
Let $\ver$ be the set of morphisms $f\colon Y\to X$ in $\SmlSm/\F_1$ such that $f-\partial f\colon Y-\partial Y\to X-\partial X$ is an isomorphism.
\end{df}

\begin{prop}
\label{omega.12}
The composite functor
\begin{align*}
\rH(\F_1)
\xrightarrow{\omega^\sharp} &
\Sh_\Zar(\SmlSm/\F_1,\Spc)[(\A^1)^{-1}]
\\
\xrightarrow{L_\ver} &
\Sh_\Zar(\SmlSm/\F_1,\Spc)[(\A^1)^{-1},\ver^{-1}]
\end{align*}
is an equivalence of $\infty$-categories,
where $L_\ver$ is the localization functor.
We also have similar results for $\rH_*(\F_1)$ and $\SH_{S^1}(\F_1)$.
\end{prop}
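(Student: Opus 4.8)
The plan is to show that $\omega^\sharp$ becomes an equivalence after inverting $\ver$ by producing an explicit inverse and checking the unit and counit of the resulting adjunction are equivalences. Since $\omega^\sharp$ preserves colimits and $L_\ver$ is a localization, the composite $L_\ver\omega^\sharp$ preserves colimits, so it suffices to exhibit a right adjoint and verify it is fully faithful on both sides. First I would observe that the functor $\omega\colon\SmlSm/\F_1\to\Sm/\F_1$, $Y\mapsto Y-\partial Y$, sends $\ver$-morphisms to isomorphisms by definition of $\ver$; hence $\omega_\sharp$ (restriction along the left adjoint of $\omega$, i.e.\ evaluation at $X\mapsto X$ viewed with trivial log structure) descends to a functor out of $\Sh_\Zar(\SmlSm/\F_1,\Spc)[(\A^1)^{-1},\ver^{-1}]$, and this will be the candidate inverse.

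The key step is the counit: for $\cF\in\rH(\F_1)$ we must check $\omega_\sharp L_\ver\omega^\sharp\cF\cong\cF$. Because all functors in sight preserve colimits and $\rH(\F_1)$ is generated under colimits by representables $X\in\Sm/\F_1$, it is enough to treat $\cF=X$. Now $\omega^\sharp X$ is the representable presheaf on $X$ (with trivial log structure), and $\omega_\sharp$ applied to a representable $Y\in\SmlSm/\F_1$ gives $Y-\partial Y$; the point is that $L_\ver$ identifies the representable on $(X,\text{triv})$ with the colimit over the $\ver$-tower of log compactifications of $X$, each of which has $\ver$-complement $X$, so $\omega_\sharp$ of this colimit is the constant diagram on $X$, giving $\cF$ back. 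Concretely one uses that $\ver$ admits a calculus of right fractions (proved exactly as in Proposition \ref{logSH.3}, replacing $\Adm$ by $\ver$), so that $L_\ver\omega^\sharp X(Y)=\colim_{Y'\in\ver_Y^\op}\omega^\sharp X(Y')=\colim_{Y'\in\ver_Y^\op}\Hom(Y'-\partial Y',X)$, and each $Y'-\partial Y'\cong Y-\partial Y$; then $\omega_\sharp$ evaluates this at the trivial-log-structure objects and recovers $\cF(X)$.

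For the unit, given $\cG\in\Sh_\Zar(\SmlSm/\F_1,\Spc)[(\A^1)^{-1},\ver^{-1}]$ I would show $L_\ver\omega^\sharp\omega_\sharp\cG\cong\cG$: since $\cG$ is already $\ver$-local, it is determined by its values on objects with trivial log structure — indeed, for any $Y\in\SmlSm/\F_1$, choosing a $\ver$-cover $Y'\to Y$ with $Y'$ arising (Zariski-locally) from $\Sm/\F_1$ is possible because every smooth fan admits a smooth completion, and $\ver$-locality forces $\cG(Y)\cong\cG(Y')$; this exhibits $\cG$ in the essential image of $L_\ver\omega^\sharp$, and a diagram chase with the calculus of right fractions then gives the unit equivalence. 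The analogous statements for $\rH_*(\F_1)$ and $\SH_{S^1}(\F_1)$ follow by the same argument verbatim, since $\omega^\sharp$, $\omega_\sharp$, $L_\ver$, and the localization at $(\A^1)^{-1}$ are all compatible with the stabilization and pointing functors.

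The main obstacle I anticipate is the unit equivalence: one must argue that every object of $\Sh_\Zar(\SmlSm/\F_1,\Spc)[(\A^1)^{-1},\ver^{-1}]$ is detected on, and left Kan extended from, the full subcategory of objects with trivial log structure — i.e.\ that inverting $\ver$ really does collapse the log-geometric information down to the open complement. This requires knowing that the slice categories $\ver_Y$ are cofiltered with initial-enough objects coming from $\Sm/\F_1$ (via smooth completions of fans and Proposition \ref{logmonoid.25}), together with $\A^1$-invariance to handle the Zariski-local choices coherently; the calculus of right fractions for $\ver$ is what makes the colimit formulas for $L_\ver$ tractable and turns this into a finite diagram chase rather than an obstruction-theoretic computation.
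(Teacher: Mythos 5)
Your overall strategy coincides with the paper's: descend $\omega_\sharp$ to the $\ver$-localization and verify that the resulting unit and counit are equivalences. However, the paper's actual proof is purely formal and avoids all of the machinery you invoke. It uses only three observations: (a) $\omega_\sharp\omega^\sharp\cong\id$, checked on representables since both functors preserve colimits; (b) $\omega_\sharp$ sends morphisms in $\ver$ to isomorphisms, hence $\omega_\sharp\cong\omega_\sharp\iota_\ver L_\ver$, which combined with (a) gives one triangle identity; (c) for the other direction, since $L_\ver$, $\omega^\sharp$, $\omega_\sharp$ all preserve colimits, it suffices to check $L_\ver\omega^\sharp\omega_\sharp X\to L_\ver X$ is an equivalence on representables $X\in\SmlSm/\F_1$, and there the map is literally $L_\ver$ applied to the open immersion $X-\partial X\to X$, which lies in $\ver$ by definition. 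No calculus of fractions and no explicit formula for $L_\ver$ is needed.

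Two concrete gaps in your version. First, the identity $L_\ver\omega^\sharp X(Y)=\colim_{Y'\in\ver_Y^\op}\omega^\sharp X(Y')$ is asserted but not justified: $L_\ver$ here is a Bousfield localization of $\Sh_\Zar(\SmlSm/\F_1,\Spc)[(\A^1)^{-1}]$, not a localization of the site, and the colimit formula is only automatic at the level of presheaf categories. To use it you would have to show that the colimit over $\ver_Y$ of an $\A^1$-invariant Zariski sheaf is again a $\ver$-local $\A^1$-invariant Zariski sheaf — a nontrivial verification of exactly the kind the paper carries out separately for $\widetilde{\Log}$ (Propositions \ref{boundary.22} and \ref{boundary.1}), and which your appeal to "calculus of right fractions, proved exactly as in Proposition \ref{logSH.3}" does not supply (note also that $\ver$ contains non-proper morphisms such as $X-\partial X\to X$, so the proof of Proposition \ref{logSH.3} does not transfer verbatim). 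Second, your treatment of the remaining triangle identity via "smooth completions of fans" points the wrong way: a completion gives a morphism out of $Y$, not a $\ver$-morphism into $Y$ from an object with trivial log structure. The morphism you actually need is $Y-\partial Y\to Y$ itself, which is already in $\ver$; once you observe this, the entire argument collapses to the paper's one-line check on representables and the detour through compactifications becomes unnecessary.
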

\begin{proof}
We focus on the case of $\rH(\F_1)$ since the proofs are similar.
Let $\iota_\ver$ be a right adjoint of $L_\ver$,
which is the inclusion functor.
We only need to show that the unit and counit
\[
\id \to \omega_\sharp\iota_\ver L_\ver \omega^\sharp,
\text{ }
L_\ver \omega^\sharp \omega_\sharp \iota_\ver
\to
\id
\]
are isomorphisms.

Since $\omega^\sharp$ and $\omega_\sharp$ preserve colimits and $Y\cong \omega_\sharp\omega^\sharp Y$ for every $Y\in \Sm/\F_1$,
the unit $\id \to \omega_\sharp \omega^\sharp$ is an isomorphism.
Since $\omega\colon \SmlSm/\F_1\to \Sm/\F_1$ sends every morphism in $\ver$ to an isomorphism,
we have a natural isomorphism $\omega_\sharp\cong
\omega_\sharp\iota_\ver L_\ver$.
It follows that we have $\id \cong \omega_\sharp\iota_\ver L_\ver \omega^\sharp$.

It remains to show $L_\ver \omega^\sharp \omega_\sharp \iota_\ver \cong \id$.
Since $L_\ver$ is essentially surjective and $\omega_\sharp\cong \omega_\sharp\iota_\ver L_\ver$,
it suffices to show $L_\ver \omega^\sharp \omega_\sharp \cong L_\ver$.
Since $\omega^\sharp$, $\omega_\sharp$, and $L_\ver$ preserves colimits,
it suffices to show that the induced morphism $L_\ver \omega^\sharp \omega_\sharp X\to L_\ver X$ is an isomorphism for $X\in \SmlSm/\F_1$.
This follows from $\omega^\sharp \omega_\sharp X\cong X-\partial X$.
\end{proof}

\begin{prop}
\label{omega.8}
Let $\cF$ be an $\A^1$-invariant Zariski sheaf of spectra on $\SmlSm/\F_1$.
If the morphism
\[
p^*\colon \cF(X)\to \cF(X\times \square^n)
\]
induced by the projection $p\colon X\times \square^n\to X$ is an isomorphism for every $X\in \Sm/\F_1$ and integer $n\geq 0$,
then the induced morphism $\cF(Y)\to \cF(Y-\partial Y)$ is an isomorphism for every $Y\in \SmlSm/\F_1$.
\end{prop}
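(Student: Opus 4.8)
The plan is to bootstrap the hypothesis -- which only yields $\square$-invariance for objects of $\Sm/\F_1$ -- to $\square$-invariance for the Zariski-local building blocks $X\times\A_\N^b$ ($X\in\Sm/\F_1$, $b\geq 0$) of $\SmlSm/\F_1$, and then to conclude by Zariski descent. The geometric input is the Zariski distinguished square
\[
\begin{tikzcd}
\G_m\ar[r,hook]\ar[d,hook]&\A^1\ar[d,hook]\\
\A_\N\ar[r,hook]&\square
\end{tikzcd}
\]
in $\SmlSm/\F_1$ obtained by covering $\P^1$ by the two charts $\P^1-\{0\}$ and $\P^1-\{\infty\}$: the log structure of $\square=(\P^1,\infty)$ restricts to that of $\A_\N$ on the first chart, to the trivial one on the second, and to the trivial one on the overlap $\G_m$. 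Crossing this square with an arbitrary $Z\in\SmlSm/\F_1$ again yields a Zariski distinguished square, which the Zariski sheaf of spectra $\cF$ sends to a cartesian, hence bicartesian, square.

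Call $\cF$ \emph{$\square$-invariant at} $Z$ if $\cF(Z)\to\cF(Z\times\square^c)$ is an isomorphism for all $c\geq 0$. I would first record two facts. (a) If $\cF$ is $\square$-invariant at $Z$, then $\cF(Z\times\A_\N)\to\cF(Z\times\G_m)$ -- which is $\cF$ applied to the strict open immersion $Z\times\G_m\hookrightarrow Z\times\A_\N$ -- is an isomorphism: in the bicartesian square attached to $Z\times(-)$ the leg $\cF(Z\times\square)\to\cF(Z\times\A^1)$ is an isomorphism, because composing it with the $\A^1$-invariance isomorphism $\cF(Z\times\A^1)\xrightarrow{\sim}\cF(Z)$ gives the restriction of $\cF(Z\times\square)$ to a point of $\A^1\subseteq\square$, which is inverse to a $\square$-invariance isomorphism; taking fibers of the two parallel legs identifies $\fib(\cF(Z\times\A_\N)\to\cF(Z\times\G_m))$ with $\fib(\cF(Z\times\square)\to\cF(Z\times\A^1))=0$. (b) If $\cF$ is $\square$-invariant at both $Z$ and $Z\times\G_m$, then $\cF$ is $\square$-invariant at $Z\times\A_\N$: fixing $c$ and applying the distinguished square to the last $\A_\N$-coordinate over the two bases $Z\times\square^c$ and $Z$, the $1$-sections $Z\to Z\times\square^c$ produce a map of bicartesian squares whose maps on the $\square$-, $\A^1$-, and $\G_m$-vertices are isomorphisms (by $\square$-invariance at $Z$, by $\A^1$-invariance together with $\square$-invariance at $Z$, and by $\square$-invariance at $Z\times\G_m$, respectively), so comparing the induced fiber sequences forces the remaining vertex map $\cF(Z\times\A_\N\times\square^c)\to\cF(Z\times\A_\N)$ to be an isomorphism.

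Then I would induct on $m$ to show that $\cF$ is $\square$-invariant at $X\times\A_\N^m$ for every $X\in\Sm/\F_1$: the case $m=0$ is the hypothesis, and the step applies (b) with $Z=X\times\A_\N^m$, whose two hypotheses -- $\square$-invariance at $X\times\A_\N^m$ and at $(X\times\G_m)\times\A_\N^m$ -- are both instances of the inductive hypothesis since $X\times\G_m\in\Sm/\F_1$. Iterating (a) along
\[
X\times\G_m^b\hookrightarrow(X\times\G_m^{b-1})\times\A_\N\hookrightarrow\cdots\hookrightarrow(X\times\G_m)\times\A_\N^{b-1}\hookrightarrow X\times\A_\N^b,
\]
where at each stage the left-hand factor lies in $\Sm/\F_1$ so that the relevant object is $\square$-invariant by what was just proved, shows that $\cF$ applied to the boundary-removing open immersion $X\times\G_m^b=(X\times\A_\N^b)-\partial(X\times\A_\N^b)\hookrightarrow X\times\A_\N^b$ is an isomorphism, for all $X\in\Sm/\F_1$ and $b\geq 0$.

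Finally, for a general $Y\in\SmlSm/\F_1$ I would argue by Zariski descent. Covering $\ul Y$ by the affine toric charts $\A_\sigma$ attached to the cones of its defining smooth fan, each $\A_\sigma$ is a product of copies of $\A^1$ and $\G_m$ and the restriction of $\partial Y$ is a union of torus-invariant divisors, so $Y|_{\A_\sigma}\cong X\times\A_\N^b$ for some $X\in\Sm/\F_1$, and the same holds for the intersections $\A_\sigma\cap\A_\tau=\A_{\sigma\cap\tau}$. The opens $(Y-\partial Y)|_{\A_\sigma}=\A_\sigma\cap(Y-\partial Y)$ cover $Y-\partial Y$, and $Y-\partial Y\hookrightarrow Y$ induces a map of \v{C}ech nerves which on each chart is the isomorphism of the previous paragraph; since $\cF$ is a Zariski sheaf, totalizing shows that $\cF(Y)\to\cF(Y-\partial Y)$ is an isomorphism. (Equivalently, this shows $\cF$ is invariant under the class $\ver$, so that $\cF$ lies in the essential image of $\omega^\sharp$ by Proposition~\ref{omega.12}.) The step I expect to be the main obstacle is (b): correctly setting up the map of bicartesian squares and managing the bookkeeping over all powers of $\square$ that the induction above requires; the distinguished-square input and the final descent are then routine.
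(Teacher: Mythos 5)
Your proof is correct and rests on the same mechanism as the paper's: the Zariski distinguished square coming from the cover $\{\A^1,\A_\N\}$ of $\square$, $\A^1$-invariance, the hypothesis on $\Sm/\F_1$, and an induction on the number of $\A_\N$-factors, finished by Zariski descent over the toric charts. The only difference is organizational: the paper runs a single induction on $d=\max_{y\in Y}\rank\ol{\cM}_{Y,y}^{\gp}$, reduces to $Y\cong X\times\A_\N^d$, and applies one Mayer--Vietoris comparison to the $2^d$-fold product cover of $\square^d$ (with $U=\A_\N^d$ and $V$ the union of the remaining opens, all of strictly smaller $d$), which absorbs your lemmas (a) and (b) into a single step, so the bookkeeping you flagged as the main obstacle in (b) is avoided rather than confronted.
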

\begin{proof}
We proceed by induction on $d:=\max_{y\in Y} \rank \ol{\cM}_{Y,y}^\gp$.
The question is Zariski local on $Y$,
so we may assume $Y\cong X\times \A_\N^d$ with $X\in \Sm/\F_1$.
The claim is trivial if $d=0$,
so assume $d>0$.

Consider the Zariski covering $\{\A^1,\A_\N\}$ of $\square$.
Using cartesian products,
we obtain the Zariski covering of $\square^d$ consisting of $2^d$ open subschemes.
One of them is $U:=\A_\N^d$,
and let $V$ be the union of the other open subschemes.
Since $\cF$ is $\A^1$-invariant and $\cF(X)\cong \cF(X\times \square^n)$,
we have
\[
\cF(X\times (U\cup V))
\cong
\cF(X\times (U\cup V-\partial (U\cup V))).
\]
On the other hand,
we have
\begin{gather*}
\cF(X\times V)
\cong
\cF(X\times (V-\partial V)),
\\
\cF(X\times (U\cap V))
\cong
\cF(X\times ((U\cap V)-\partial (U\cap V)))
\end{gather*}
by induction.
Combine the three isomorphisms above and use the assumption that $\cF$ is a Zariski sheaf to conclude.
\end{proof}

The following result is analogous to \cite[Proposition 2.5.7]{logA1}.

\begin{thm}
\label{omega.13}
There is an equivalence of symmetric monoidal $\infty$-categories
\[
\SH_{S^1}(\F_1)
\simeq
\logSH_{S^1}(\F_1)[(\A^1)^{-1}].
\]
\end{thm}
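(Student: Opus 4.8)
The plan is to reduce to the $\SH_{S^1}$-variant of Proposition \ref{omega.12} and then to Proposition \ref{omega.8}, by comparing two Bousfield localizations of $\Sh_\Zar(\SmlSm/\F_1,\Sp)$. By the $\SH_{S^1}$-case of Proposition \ref{omega.12}, the composite $L_\ver\circ\omega^\sharp$ is an equivalence
\[
\SH_{S^1}(\F_1)\simeq \Sh_\Zar(\SmlSm/\F_1,\Sp)[(\A^1)^{-1},\ver^{-1}],
\]
and it is symmetric monoidal since $\omega^\sharp$ is the left Kan extension of the product-preserving functor $\Sm/\F_1\to\SmlSm/\F_1$, $X\mapsto X$. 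On the other hand, by Definition \ref{logSH.6} we have $\logSH_{S^1}(\F_1)[(\A^1)^{-1}]=\Sh_\Zar(\SmlSm/\F_1,\Sp)[(\A^1)^{-1},\square^{-1},\divi^{-1}]$. So it suffices to prove that a Zariski sheaf of spectra $\cF$ on $\SmlSm/\F_1$ is $\{\A^1,\ver\}$-local if and only if it is $\{\A^1,\square,\divi\}$-local; the two symmetric monoidal localizations then agree, and the identification of localized categories is automatically symmetric monoidal because both localizing sets are stable under $-\times X$.

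Assume first that $\cF$ is $\A^1$-invariant and $\ver$-invariant. A dividing cover restricts to an isomorphism on the trivial locus, so $\divi\subseteq\ver$ and $\cF$ is $\divi$-invariant. For $\square$-invariance, fix $X\in\SmlSm/\F_1$ and use the Zariski covering $\{\A^1,\A_\N\}$ of $\square$ with $\A^1\cap\A_\N\cong\G_m$ (as in the proof of Proposition \ref{omega.8}); multiplying by $X$ and using that $\cF$ is a Zariski sheaf gives a cartesian square with vertices $\cF(X\times\square)$, $\cF(X\times\A^1)$, $\cF(X\times\A_\N)$, $\cF(X\times\G_m)$. The leg $\cF(X\times\A_\N)\to\cF(X\times\G_m)$ is restriction along the open immersion $\G_m\hookrightarrow\A_\N$ onto the trivial locus of $\A_\N$, which lies in $\ver$; hence it is an equivalence, and therefore so is the parallel leg $\cF(X\times\square)\to\cF(X\times\A^1)$. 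Precomposing with the $\A^1$-invariance equivalence $\cF(X)\simeq\cF(X\times\A^1)$ and using that the projection $X\times\square\to X$ restricts along $X\times\A^1\hookrightarrow X\times\square$ to the $\A^1$-projection, we conclude that $\cF(X)\to\cF(X\times\square)$ is an equivalence.

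Conversely, assume $\cF$ is $\A^1$-, $\square$-, and $\divi$-invariant. Then $\cF$ is a Zariski sheaf that is $\A^1$-invariant and satisfies $\cF(X)\simeq\cF(X\times\square^n)$ for all $X\in\Sm/\F_1$ and $n\geq0$ (iterating $\square$-invariance), so Proposition \ref{omega.8} yields $\cF(Y)\simeq\cF(Y-\partial Y)$ for every $Y\in\SmlSm/\F_1$. To promote this to $\ver$-invariance, take any $f\colon Y\to X$ in $\ver$ and consider the naturality square for the counit of the adjunction $\lambda\dashv\omega$ of Construction \ref{omega.20}: its horizontal maps $(Y-\partial Y)\to Y$ and $(X-\partial X)\to X$ lie in $\ver$, and its left vertical map is $f-\partial f$ with trivial log structures, an isomorphism. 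Applying $\cF$ and using $\cF(Y)\simeq\cF(Y-\partial Y)$, three of the four maps become equivalences, so $\cF(f)$ is an equivalence by two-out-of-three; hence $\cF$ is $\ver$-invariant.

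The two classes of local objects therefore coincide, giving the asserted equivalence of symmetric monoidal $\infty$-categories. The step requiring the most care is the first direction: one must correctly identify the Mayer--Vietoris leg $\cF(X\times\A_\N)\to\cF(X\times\G_m)$ with restriction along the $\ver$-morphism $\G_m\hookrightarrow\A_\N$, and verify that the resulting equivalence $\cF(X)\simeq\cF(X\times\square)$ is the one induced by the structural projection rather than some other map.
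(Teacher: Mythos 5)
Your proposal is correct and follows essentially the same route as the paper: both pass through the equivalence $\SH_{S^1}(\F_1)\simeq\Sh_\Zar(\SmlSm/\F_1,\Sp)[(\A^1)^{-1},\ver^{-1}]$ of Proposition \ref{omega.12} and then identify the $\{\A^1,\ver\}$-local objects with the $\{\A^1,\square,\divi\}$-local objects, using Proposition \ref{omega.8} for the harder direction. The only difference is that you spell out the details the paper leaves implicit (the Mayer--Vietoris argument over the cover $\{\A^1,\A_\N\}$ of $\square$ for the forward direction, and the two-out-of-three promotion of $\cF(Y)\simeq\cF(Y-\partial Y)$ to full $\ver$-invariance for the converse), and these details are carried out correctly.
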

\begin{proof}
As in Proposition \ref{omega.12},
we have an equivalence of symmetric monoidal $\infty$-categories
\[
\SH_{S^1}(\F_1)
\simeq
\Sh_\Zar(\SmlSm/\F_1,\Sp)[(\A^1)^{-1},\ver^{-1}].
\]
If $\cF$ is an $(\A^1,\ver)$-invariant Zariski sheaf of spectra on $\SmlSm/\F_1$,
then $\cF$ is $(\square,\divi)$-invariant.
If $\cF$ is $(\A^1,\square)$-invariant Zariski sheaf of spectra on $\SmlSm/\F_1$,
then $\cF$ is $\ver$-invariant by Proposition \ref{omega.8}.
Hence we have an equivalence of symmetric monoidal $\infty$-categories
\[
\Sh_\Zar(\SmlSm/\F_1,\Sp)[(\A^1)^{-1},\ver^{-1}]
\simeq
\Sh_\Zar(\SmlSm/\F_1,\Sp)[\square^{-1},\divi^{-1},(\A^1)^{-1}].
\]
To conclude,
observe that the right-hand side is equivalent to $\logSH_{S^1}(\F_1)[(\A^1)^{-1}]$.
\end{proof}

\begin{rmk}
\label{omega.18}
As in \cite[Theorem 3.1.29]{Arndt},
we have an isomorphism $\P^\infty \cong \rB\G_m$ in $\rH_*(\F_1)$.
Using the multiplication $\G_m\times \G_m\to \G_m$, we have the induced morphism $\P^\infty\otimes \P^\infty\to \P^\infty$ in $\rH_*(\F_1)$,
which yields the morphism $\beta\colon \P^1\otimes \P^\infty\to \P^\infty$ in $\rH_*(\F_1)$.
As noted in \cite[\S 1]{Arndt},
we can define the \emph{motivic K-theory spectrum} over $\F_1$ as
\begin{align*}
\KGL&:=\Sigma_{\P^1}^\infty (\P^\infty)_+[\beta^{-1}]
\\ &:=\colim(\Sigma_{\P^1}^\infty (\P^\infty)_+\xrightarrow{\beta} \Sigma^{-2,-1}\Sigma_{\P^1}^\infty (\P^\infty)_+ \xrightarrow{\beta} \cdots)\in \SH(\F_1).
\end{align*}
This definition is reasonable since for $S\in \Sch$,
we have $p^*\KGL\cong \KGL$ in $\SH(S)$ by \cite[Theorem 4.17]{zbMATH05663782} or \cite[Theorem 1.1]{zbMATH05549229},
where $p\colon S\to \Spec(\F_1)$ is the structure morphism.
Unfortunately, the definition of $\KGL$ in $\SH(\F_1)$ is abstract, and we do not know how to compute the  \emph{Snaith K-theory spectrum} of $X$ defined by
\[
\Kth^\Sna(X)
:=
\hom_{\SH(\F_1)}(\Sigma_{\P^1}^\infty X_+,\KGL).
\]
for $X\in \Sm/\F_1$.
This behaves differently from the K-theory spectrum $\Kth(X)$ defined in \cite[\S 5.3]{zbMATH06016605}.
For example,
we have $\Kth^\Sna(\P^1)\cong \Kth^\Sna(\F_1)\oplus \Kth^\Sna(\F_1)$,
while $\Kth(\F_1)\cong \Z$ and $\Kth_0(\P^1)\cong \Z^{\N}$ by \cite[Theorem 5.9, Corollary 5.15]{zbMATH06016605}.
\end{rmk}

\section{Suslin functor}

The Suslin functor for the interval $\A^1$ makes $\A^1$-motivic homotopy theory computationally accessible.
A fundamental example is the $\A^1$-invariant motivic complex in \cite[Definition 3.1]{MVW},
which is obtained by applying the Suslin functor to the sheaf with transfers $\Z_\mathrm{tr}(\G_m^{\wedge q})[-q]$ for integers $q\geq 0$.

In logarithmic motivic homotopy theory,
the Suslin functor for the interval $\square$ plays a similar role. Unlike $\A^1$,
$\square$ is \emph{not} an interval object of $\SmlSm/\F_1$ since we do not have the multiplication morphism $\square\times \square \to \square$ extending the multiplication morphism $\A^1\times \A^1\to \A^1$.
This is the reason why we need to invert admissible blow-ups in $\SmlSm/\F_1$ to use the Suslin functor for $\square$.
Such an inversion of admissible blow-ups makes the Suslin functor for $\square$ more complicated.

The purpose of this section is to review the Suslin functor and its cubical version.
Even though the Suslin and cubical Suslin functors for an interval are equivalent by \eqref{omega.14.2} in a good case,
the formulation of the cubical Suslin functor can have a computational advantage since it does not use the diagonal morphism of the interval.

\

Let $\ECube$ denote the category considered in \cite[Remark B.2.3]{MR4442406} and
\cite[D\'efinition A.6]{MR3259031}.
Its objects are the the symbols $\ul{\unit}^n$ for $n\in \N$.
Its morphisms are generated by
\begin{gather*}
\delta_{i,\epsilon}\colon \ul{\unit}^n \to \ul{\unit}^{n+1}\text{ for }i\in [1,n+1]\text{ and }\epsilon\in [0,1],
\\
p_i\colon \ul{\unit}^n\to \ul{\unit}^{n-1} \text{ for }i\in [1,n],
\\
\mu_i\colon \ul{\unit}^n\to \ul{\unit}^{n-1} \text{ for }i\in [1,n-1].
\end{gather*}
Let $\Cube$ denote the category considered in \cite[\S B.2.1]{MR4442406} and \cite[D\'efinition A.1]{MR3259031},
which is the subcategory of $\ECube$ whose morphisms are only generated by $\delta_{i,\epsilon}$ and $p_i$.
If $\cC$ is a category, then a \emph{cubical object of $\cC$} (resp.\ \emph{extended cubical object of $\cC$}) is a functor $\Cube^\op\to \cC$ (resp.\ $\ECube^\op\to \cC$).

\begin{const}
\label{logSH.25}
Let $\cC$ be a unital monoidal category,
and let $I$ be an object of $\cC$ equipped with morphisms
\[
i_0,i_1\colon \unit \rightrightarrows I,
\text{ }
p\colon I\to \unit
\]
satisfying $pi_0=pi_1=\id$,
where $\unit$ denotes the unit of $\cC$.
We can naturally associate $I^\bullet \colon \Cube\to \cC$ as follows.
We have the morphism
\[
\delta_{i,\epsilon}
\colon
I^n \xrightarrow{\cong} I^{i-1}\otimes \unit \otimes I^{n+1-i}
\xrightarrow{\id \otimes i_\epsilon\otimes \id}
I^{i-1}\otimes I\otimes I^{n+1-i}
\xrightarrow{\cong}
I^{n+1}
\]
for integers $n\geq 0$, $1\leq i\leq n+1$, and $\epsilon=0,1$.
We have the $i$th projection
\[
p_i
\colon
I^n \to I^{n-1}
\]
for integers $n\geq 0$ and $1\leq i\leq n$.

We have the induced functor $\Z[I^n]\colon \Cube \to \PSh(\cC,\Ab)$ for $n\in \N$.
We also have $\Z[I^\bullet]^\flat\in \PSh(\cC,\rD(\Z))$ such that
\[
\Z[I^n]^\flat
:=
\coker\big(\oplus \delta_{i,0} \colon \bigoplus_{i=1}^n \Z[I^{n-1}]\to \Z[I^n]\big)
\]
in degree $n\in \N$ and the differential is induced by $\sum_{i=1}^n (-1)^i \delta_{i,1}$,
see also \cite[Remark B.1.4]{MR4442406} and \cite[Lemma A.3, Remarque A.5]{MR3259031}.

Let $\cF$ be a presheaf of complexes on $\cC$.
The \emph{cubical Suslin complex} $\CSing^I \cF$ in \cite[Definition B.6.2]{MR4442406} can be formulated as
\[
\CSing^I \cF(X)
:=
\hom_{\PSh(\cC,\rD(\Z))}
(\Z[I^\bullet]^\flat \otimes \Z[X],\cF)
\in \rD(\Z)
\]
for $X\in \cC$,
where $\hom$ denotes the hom complex.
For $n\in \N$,
we set
\[
\CSing_n^I \cF(X)
:=
\hom_{\PSh(\cC,\rD(\Z))}
(\Z[I^n]^\flat \otimes \Z[X],\cF)
\in \rD(\Z).
\]
\end{const}

\begin{exm}
With the above notation,
assume that $\cF$ is a presheaf of abelian groups on $\cC$.
We have an explicit description of $\CSing^I\cF(X)$ obtained by \cite[Lemma B.1.3]{MR4442406} as follows.
In degree $n\in \N$,
we have
\[
\CSing_n^I\cF(X)
\cong
\bigcap_{i=1}^n \ker(\cF(X\times I^n)\xrightarrow{\delta_{i,0}^*} \cF(X\times I^{n-1})),
\]
where we regard $\cF(X\times I^{n-1})$ as $0$ if $n=0$.
The differential is given by
\[
\delta^*:=\sum_{i=1}^n (-1)^i \delta_{i,1}^*\colon \CSing_n^I\cF(X)\to \CSing_{n-1}^I\cF(X).
\]
\end{exm}

\begin{exm}
As noted in \cite[Remarque A.23]{MR3259031},
we have the monoidal products on $\Cube$ and $\ECube$ given by
\[
\ul{\unit}^m\otimes \ul{\unit}^n:=\ul{\unit}^{m+n}
\]
for $m,n\in \N$.
Observe that $\ul{\unit}^0$ is the monoidal unit.
Also, $\ul{\unit}^1$ satisfies the condition for $I$ in Construction \ref{logSH.25}.
\end{exm}

\begin{const}
\label{logSH.14}
Let $\cF$ be a cubical object of $\rD(\Z)$.
Then the \emph{cubical geometric realization of $\cF$} is
\[
\lvert \cF \rvert_\mathrm{cube}
:=
\hom_{\PSh(\Cube,\rD(\Z))}(\Z[\ul{\unit}^\bullet]^\flat,\cF).
\]
\end{const}

\begin{prop}
\label{cube.1}
Let $\cF$ be a presheaf of complexes on a unital monoidal category $\cC$,
and let $I$ be an object of $\cC$ equipped with morphisms $i_0,i_1\colon \unit \to I$ and $p\colon I\to \unit$ satisfying $pi_0=pi_1=\id$,
where $\unit$ denotes the unit of $\cC$.
Then for $X\in \cC$,
there is a natural isomorphism of complexes
\[
\CSing^I\cF(X)
\cong
\lvert \cF(X\otimes I^\bullet) \rvert_\mathrm{cube}.
\]
\end{prop}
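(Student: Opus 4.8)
The plan is to unfold both sides of the claimed isomorphism using the explicit formulas from Constructions \ref{logSH.25} and \ref{logSH.14} and observe that they agree term by term. Recall from Construction \ref{logSH.25} that
\[
\CSing^I\cF(X)
=
\hom_{\PSh(\cC,\rD(\Z))}(\Z[I^\bullet]^\flat \otimes \Z[X],\cF),
\]
where $\Z[I^\bullet]^\flat$ is the normalized cubical object in $\PSh(\cC,\rD(\Z))$ with $\Z[I^n]^\flat = \coker(\oplus\,\delta_{i,0})$. On the other side, by Construction \ref{logSH.14},
\[
\lvert \cF(X\otimes I^\bullet)\rvert_{\mathrm{cube}}
=
\hom_{\PSh(\Cube,\rD(\Z))}(\Z[\ul{\unit}^\bullet]^\flat, \cF(X\otimes I^\bullet)).
\]
Here $\cF(X\otimes I^\bullet)$ is the cubical object of $\rD(\Z)$ obtained by precomposing $\cF$ with the functor $\ul{\unit}^n\mapsto X\otimes I^n$, the latter being the cubical object $I^\bullet\colon \Cube\to \cC$ from Construction \ref{logSH.25} tensored with $X$.

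The key step is a Yoneda-type adjunction. First I would note that for a cubical object $G\colon \Cube\to\cC$, the presheaf $n\mapsto \Z[G(\ul{\unit}^n)]$ is the left Kan extension of $\Z[\ul{\unit}^\bullet]$ along $G$, or more simply, that $\Z[G(\ul{\unit}^n)]\cong \Z[\ul{\unit}^n]\otimes_{\Z[\Cube]}\Z[G]$ in an appropriate sense; concretely, a map of cubical objects $\Z[\ul{\unit}^\bullet]\to (\text{const.\ } M)$ in $\PSh(\Cube,\rD(\Z))$ evaluated against $G$ recovers a map $\Z[G(\ul{\unit}^\bullet)]\to \cF$. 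Applying the normalization functor $(-)^\flat$, which is defined by the same cokernel formula on both sides and commutes with the relevant colimits, one gets $\Z[G(\ul{\unit}^\bullet)]^\flat$ matching up with $\Z[\ul{\unit}^\bullet]^\flat$ "pushed forward" along $G$. Taking $G = (X\otimes I^\bullet)$ and unwinding the $\hom$ complexes via the tensor–hom adjunction in $\PSh(\cC,\rD(\Z))$ — using $\Z[X\otimes I^n]\cong \Z[X]\otimes\Z[I^n]$, which holds since $\cC$ is monoidal — converts $\hom_{\PSh(\Cube,\rD(\Z))}(\Z[\ul{\unit}^\bullet]^\flat,\cF(X\otimes I^\bullet))$ into $\hom_{\PSh(\cC,\rD(\Z))}(\Z[I^\bullet]^\flat\otimes\Z[X],\cF)$. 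Naturality in $X$ is immediate since every construction is functorial in $X$.

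The main obstacle I anticipate is bookkeeping the two layers of $\hom$ and making precise that the normalization $(-)^\flat$ — a cokernel of the degeneracy-free face maps $\delta_{i,0}$ — is compatible with the passage from the universal cubical object $\Z[\ul{\unit}^\bullet]$ to $\Z[I^\bullet]$, i.e.\ that forming $\Z[I^n]^\flat$ "after" applying $I^\bullet$ agrees with forming $\Z[\ul{\unit}^n]^\flat$ and then pushing forward. This is a purely formal compatibility of a pointwise colimit (cokernel) with a functor that is applied levelwise, so it should follow once the identifications $\Z[I^n]\cong \Z[\ul{\unit}^n]\otimes_{\Z[\Cube^{\leq n}]}(\cdots)$ are spelled out; but it is the step that needs care. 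The rest is the standard tensor–hom manipulation, and I would cite \cite[Lemma B.1.3]{MR4442406} or the analogous statement in \cite[Appendix A]{MR3259031} wherever the explicit normalized-complex description is needed. Once both sides are rewritten as $\hom_{\PSh(\cC,\rD(\Z))}(\Z[I^\bullet]^\flat\otimes\Z[X],\cF)$, the isomorphism — indeed equality — of complexes is immediate, and naturality is built in.
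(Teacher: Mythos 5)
Your proposal is correct and is essentially the paper's own proof: the "Yoneda-type adjunction" you describe is exactly the adjunction $\alpha_!\dashv\alpha^*$ induced by the functor $\alpha\colon\Cube\to\cC$, $\ul{\unit}^n\mapsto X\otimes I^n$, which the paper invokes, with $\alpha_!$ sending representables to representables and commuting with the cokernel defining $(-)^\flat$. The compatibility you flag as needing care is precisely the formal fact that the colimit-preserving left adjoint $\alpha_!$ commutes with normalization, so there is no gap.
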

\begin{proof}
Consider the functor $\alpha\colon \Cube\to \cC$ sending $\ul{\unit}^n$ to $X\otimes I^n$ for $n\in \N$.
Use the induced pair of adjunctions
\[
\alpha_!
:
\PSh(\Cube,\rD(\Z))
\rightleftarrows
\PSh(\cC,\rD(\Z))
:
\alpha^*
\]
to compare $\CSing^I\cF(X)$ and $\lvert \cF(X\otimes I^\bullet) \rvert_\mathrm{cube}$.
\end{proof}

\begin{const}
\label{logSH.13}
Let $\cC$ be a unital monoidal category,
and let $I$ be an interval object of $\cC$ in the sense of \cite[Definition B.2.1]{MR4442406}, which is a variant of \cite[\S 2.2]{VSelecta}.
This means that $I$ is equipped with morphisms
\[
\mu\colon I\otimes I\to I,
\text{ }
i_0,i_1\colon \unit \rightrightarrows I,
\text{ }
p\colon I\to \unit
\]
such that
\begin{equation}
\label{omega.14.1}
pi_0=pi_1=\id,
\text{ }
\mu(i_0\otimes \id)=\mu(\id \otimes i_0)=i_0 p,
\text{ }
\mu(i_1\otimes \id)=\mu(\id \otimes i_1)=\id.
\end{equation}

We can extend $I^\bullet \colon \Cube \to \cC$ to  $I^\bullet \colon \ECube\to \cC$ by adding the $i$th multiplication
\[
\mu_i
\colon
I^n
\xrightarrow{\cong}
I^{i-1}\otimes I^2 \otimes I^{n-i-1}
\xrightarrow{\id \otimes \mu \otimes \id}
I^{i-1}\otimes I\otimes I^{n-i-1}
\xrightarrow{\cong}
I^{n-1}
\]
for integers $n\geq 2$ and $1\leq i\leq n-1$.
\end{const}

\begin{const}
\label{omega.14}
Let $\cC$ be a category with finite product,
and let $I$ be an interval object of $\cC$ with respect to the monoidal product $\times$.
The functor $\Sing^I$ in \cite[\S 2.3]{MV} can be formulated in the stable setting as follows.
Consider the cosimplicial object $\Delta_I^\bullet \colon \bDelta\to \cC$ defined as in \cite[p.\ 88]{MV}.
We have the \emph{Suslin functor}
\[
\Sing^I
\colon
\PSh(\cC,\Sp)
\to
\PSh(\cC,\Sp).
\]
given by
\begin{equation}
\label{omega.14.3}
\Sing^I\cF(X):=\colim_{n\in \bDelta^\op} \cF(X\times \Delta_I^n)
\end{equation}
for $\cF\in \PSh(\cC,\Sp)$ and $X\in \cC$.

Consider the adjoint functors
\[
L_I:
\PSh(\cC,\Sp)\rightleftarrows
\PSh(\cC,\Sp)[I^{-1}]
:
\iota_I,
\]
where $I^{-1}$ in the formulation is the set of projections $X\times I\to X$ for all $X\in \cC$,
and $L_I$ is the localization.
Argue as in \cite[Corollary 2.19]{MVW} and \cite[Corollary 3.8 in \S 2]{MV} to show that $\Sing^I\cF$ is $I$-local and the induced morphism $\cF\to \Sing^I \cF$ is an $I$-local equivalence.
We can also consider $\Sing^I$ for complexes instead of spectra.

Together with \cite[Theorem B.6.3]{MR4442406},
we have natural isomorphisms
\begin{equation}
\label{omega.14.2}
\Sing^I
\cong
\iota_I L_I
\cong
\CSing^I
\colon
\PSh(\cC,\rD(\Z))
\to
\PSh(\cC,\rD(\Z)).
\end{equation}
\end{const}

\begin{prop}
\label{logSH.16}
The monoid scheme $\A^1$ is an interval object of $\Sm/\F_1$.
\end{prop}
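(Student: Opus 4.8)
The plan is to verify directly that the monoid scheme $\A^1$ carries the structure maps required of an interval object in the sense of \cite[Definition B.2.1]{MR4442406} with respect to the cartesian monoidal structure on $\Sm/\F_1$, namely a multiplication $\mu\colon \A^1\times \A^1\to \A^1$, two sections $i_0,i_1\colon \Spec(\F_1)\rightrightarrows \A^1$, and a projection $p\colon \A^1\to \Spec(\F_1)$, subject to the relations \eqref{omega.14.1}. First I would recall that $\A^1$ is the monoid scheme $\Spec(\N)$ (the affine toric monoid scheme attached to the free monoid on one generator), so that maps out of $\A^1$ in the affine case correspond to monoid homomorphisms out of $\N$.

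The key points are then: the projection $p$ is the structure morphism $\A^1\to \Spec(\F_1)$, which is unique; the unit section $i_1$ is the morphism corresponding to the monoid homomorphism $\N\to \{1\}=\F_1$ sending the generator to $1$ (the toric unit point), and the zero section $i_0$ corresponds to $\N\to \F_1$ sending the generator to the basepoint/zero, i.e.\ the closed toric point; the relations $pi_0=pi_1=\id$ on $\Spec(\F_1)$ are automatic since $\Spec(\F_1)$ is terminal. The multiplication $\mu$ is the morphism $\A^1\times \A^1\to \A^1$ induced by the diagonal (codiagonal) monoid homomorphism $\N\to \N\times \N$, i.e.\ the monoid structure map sending the generator $e$ of $\N$ to $(e,e)$, which is exactly the monoid-scheme model of the multiplication $\A^1\times \A^1\to \A^1$, $(x,y)\mapsto xy$. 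I would then check the three identities in \eqref{omega.14.1}: $\mu(i_1\otimes\id)=\mu(\id\otimes i_1)=\id$ says that multiplying by the toric unit is the identity, which on monoids is the statement that $e\mapsto(e,e)$ followed by collapsing the first factor to $1$ recovers $e$; and $\mu(i_0\otimes\id)=\mu(\id\otimes i_0)=i_0 p$ says multiplying by the zero point is the constant map to the zero point, which on monoids follows from $e\mapsto(e,e)\mapsto(0,e)$ and the fact that $0$ is absorbing in $\F_1$ so the composite $\N\to\N\times\N\to\N$ sends $e$ to $0$. These are all finite checks on the free monoid $\N$.

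The only mild subtlety — and the closest thing to an obstacle — is making sure that the morphisms $i_0$, $i_1$, $\mu$ actually belong to $\Sm/\F_1$ (i.e.\ are morphisms in the category of smooth toric monoid schemes as defined in Definition \ref{msch.22}), rather than merely morphisms of fans; but the sections $\Spec(\F_1)\to\A^1$ and the multiplication $\A^1\times\A^1\to\A^1$ are visibly morphisms of monoid schemes, which is the class of morphisms allowed in $\Sm/\F_1$, so this is not a real obstruction. I would conclude by noting that the relations \eqref{omega.14.1} are verified on the level of the associated affine monoid schemes, hence hold, so $\A^1$ is an interval object of $\Sm/\F_1$. (This is the $\F_1$-analogue of the standard fact that $\A^1$ is an interval object in $\Sm/S$, and indeed applying the base change $\Spec(\Z)\times(-)$ of Remark \ref{omega.18} recovers the usual interval structure on the scheme $\A^1_\Z$.)
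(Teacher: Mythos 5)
Your proposal is correct and matches the paper's proof: the paper likewise takes $\mu$ to be induced by the diagonal map $\N\to\N\oplus\N$, takes $i_0,i_1$ to be the $0$- and $1$-sections of Example \ref{msch.15}, and verifies \eqref{omega.14.1} by direct computation on pointed monoids. Your additional remarks (that the relations reduce to finite checks on $\N$, with the base point being absorbing) are exactly the content the paper leaves to the reader.
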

\begin{proof}
We have the multiplication morphism $\mu\colon \A^1\times \A^1\to \A^1$ induced by the diagonal map $\N \to \N\oplus \N$.
Let $i_0,i_1\colon \pt \rightrightarrows \A^1$ be the $0$-section and $1$-section.
Check \eqref{omega.14.1} to conclude.
\end{proof}

\begin{const}
\label{logSH.17}
Let $\Bl_{(\infty,0)+(0,\infty)}(\square^2)$ be the log monoid scheme in $\SmlSm/\F_1$ such that its underlying monoid scheme is associated with the fan in $\Z^2$ whose maximal cones are
\begin{gather*}
\Cone(e_1,e_2),
\text{ }
\Cone(e_2,e_2-e_1),
\text{ }
\Cone(-e_1,e_2-e_1),
\\
\text{ }
\Cone(e_1,e_1-e_2),
\text{ }
\Cone(-e_2,e_1-e_2),
\text{ }
\Cone(-e_1,-e_2)
\end{gather*}
and $\Bl_{(\infty,0)+(0,\infty)}(\square^2)-\partial \Bl_{(\infty,0)+(0,\infty)}(\square^2)\cong \A^2$.
We have the induced admissible blow-up $p\colon \Bl_{(\infty,0)+(0,\infty)}(\square^2)\to \square^2$.
We also have the morphism
\[
\mu'\colon \Bl_{(\infty,0)+(0,\infty)}(\square^2)\to \square
\]
induced by the summation map of the lattice $\Z^2\to \Z$.
Let $\mu\colon \square^2\to \square$ be the morphism $gf^{-1}$ in $\SmlSm/\F_1[\Adm^{-1}]$.
\end{const}

\begin{prop}
\label{logSH.18}
The log monoid scheme $\square$ is an interval object of the category $\SmlSm/\F_1[\Adm^{-1}]$.
\end{prop}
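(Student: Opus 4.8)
The plan is to verify that the data $(\mu, i_0, i_1, p)$ on $\square$ in $\SmlSm/\F_1[\Adm^{-1}]$ satisfies the three identities in \eqref{omega.14.1}. Here $i_0,i_1\colon \pt \rightrightarrows \square$ are the $0$- and $1$-sections, $p\colon \square\to \pt$ is the structure morphism, and $\mu = g f^{-1}$ is the morphism constructed in Construction \ref{logSH.17}, where we write $f := p\colon \Bl_{(\infty,0)+(0,\infty)}(\square^2)\to \square^2$ for the admissible blow-up and $g := \mu'$ for the summation morphism to $\square$. Since all three identities are equalities of morphisms in the localized category $\SmlSm/\F_1[\Adm^{-1}]$, the key mechanism is that a morphism $\square\to \square$ (or $\pt\to\square$) in the localization is determined by its restriction to the open complement of the boundary, i.e.\ by the underlying morphism of $\A^1$'s, because $\Adm$ consists precisely of morphisms inducing isomorphisms away from the boundary. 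Concretely, the localization functor $\SmlSm/\F_1 \to \SmlSm/\F_1[\Adm^{-1}]$ followed by $\omega$ (Construction \ref{omega.20}) sends $\square$ to $\A^1$, and since $\Adm$ admits a calculus of right fractions (Proposition \ref{logSH.3}), two morphisms in $\SmlSm/\F_1[\Adm^{-1}]$ agree once they agree after this restriction, provided they are both morphisms between objects of $\SmlSm/\F_1$ whose sources admit no nontrivial automorphisms in $\Adm$ over themselves — which is automatic here since $\square$, $\square^2$, $\pt$ are already log smooth log regular.

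First I would record that $\A^1$ is an interval object of $\Sm/\F_1$ (Proposition \ref{logSH.16}) with multiplication $\mu_{\A^1}$ induced by $\N\to\N\oplus\N$ and the two sections $i_0,i_1$; these satisfy $p i_0 = p i_1 = \id$, $\mu_{\A^1}(i_0\otimes\id)=\mu_{\A^1}(\id\otimes i_0)=i_0 p$, and $\mu_{\A^1}(i_1\otimes\id)=\mu_{\A^1}(\id\otimes i_1)=\id$. Next I would check that under $\omega$, the morphism $\mu = g f^{-1}\colon \square^2 \to \square$ restricts to $\mu_{\A^1}\colon \A^1\times\A^1\to\A^1$: indeed $f$ restricts to an isomorphism $\A^2 \xrightarrow{\sim} \A^2$ on boundary complements (by the defining property $\Bl_{(\infty,0)+(0,\infty)}(\square^2)-\partial(-)\cong\A^2$), and $g=\mu'$ is induced by the summation map $\Z^2\to\Z$ of lattices, which on the dense tori / $\A$-parts is exactly the additive multiplication $\mu_{\A^1}$. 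Also $i_0,i_1,p$ for $\square$ restrict to the corresponding maps for $\A^1$. Therefore each of the three identities in \eqref{omega.14.1}, after applying $\omega$, becomes the corresponding already-known identity for the interval object $\A^1$.

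It then remains to promote these identities from the boundary complements back to $\SmlSm/\F_1[\Adm^{-1}]$. For this I would invoke the calculus of right fractions for $\Adm$ (Proposition \ref{logSH.3}) together with Proposition \ref{logSH.5}: a morphism in $\SmlSm/\F_1[\Adm^{-1}]$ out of $\square$ or $\square^2$ is represented by a roof $X \xleftarrow{a} X' \xrightarrow{b} \square$ with $a\in\Adm$, and two such roofs are equal iff they become equal after a further common admissible blow-up refinement; since admissible blow-ups are isomorphisms on boundary complements and the target $\square$ is a fixed object of $\SmlSm/\F_1$, equality of the induced maps on $\A^1$-parts forces equality of the roofs. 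More structurally: $\omega$ composed with the localization $\SmlSm/\F_1\to\SmlSm/\F_1[\Adm^{-1}]$ factors through a functor $\SmlSm/\F_1[\Adm^{-1}] \to \Sm/\F_1$ (since $\omega$ inverts all of $\Adm$ by definition of $\ver\supseteq\Adm$), and on the relevant hom-sets this functor is injective because an element of $\Adm$ with target in $\SmlSm/\F_1$ having regular log regular source is determined up to the equivalence relation by its restriction — one can see this by the argument of Proposition \ref{logSH.3}, taking the normalization of the closure of the graph. Applying this injectivity to the three pairs of morphisms yields \eqref{omega.14.1} for $\square$.

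The main obstacle is precisely this last faithfulness point: one must be careful that $\mu = g f^{-1}$ is well defined (independent of the chosen admissible blow-up resolving the would-be multiplication $\square^2\to\square$) and that the three identities, which a priori hold only after restriction to $\A^1$-parts, actually lift. This is handled by the calculus of right fractions (Proposition \ref{logSH.3}) and by Proposition \ref{logSH.5}, which identify the relevant localizations and guarantee that a morphism in $\SmlSm/\F_1[\Adm^{-1}]$ between objects of $\SmlSm/\F_1$ is pinned down by compatible combinatorial/torus data; the remaining verifications, being equalities of maps of fans after passing to boundary complements, are the routine computations I will not grind through here.
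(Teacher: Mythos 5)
Your proposal is correct and follows the same route as the paper, which simply takes $i_0,i_1,p$ and the $\mu=gf^{-1}$ of Construction \ref{logSH.17} and says ``check \eqref{omega.14.1}''; you supply the omitted verification by the mechanism the paper itself sets up elsewhere (calculus of right fractions for $\Adm$ plus the fact, via Propositions \ref{image.6} and \ref{logmonoid.5}, that morphisms out of objects of $\SmlSm/\F_1$ are detected on the dense boundary complement, where the identities reduce to those for the interval $\A^1$).
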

\begin{proof}
Let $i_0,i_1\colon \pt \rightrightarrows \square^1$ be the $0$-section and $1$-section.
Consider $\mu$ in Construction \ref{logSH.17}.
Check \eqref{omega.14.1} to conclude.
\end{proof}

\begin{const}
\label{logSH.15}
Argue as in \cite[Lemma 4.3]{loghomotopy} to obtain an equivalence of $\infty$-categories
\[
\PSh(\SmlSm/\F_1[\Adm^{-1}],\Sp)
\simeq
\PSh(\SmlSm/\F_1,\Sp)[\Adm^{-1}].
\]
We have the Suslin functor
\[
\Sing^\square\colon \PSh(\SmlSm/\F_1[\Adm^{-1}],\Sp)
\to
\PSh(\SmlSm/\F_1[\Adm^{-1}],\Sp)
\]
and the localization functor
\[
L_\Adm\colon \PSh(\SmlSm/\F_1,\Sp)
\to
\PSh(\SmlSm/\F_1,\Sp)[\Adm^{-1}].
\]
Combining all these together,
we obtain the functor
\[
\Sing^\square L_\Adm
\colon
\PSh(\SmlSm/\F_1,\Sp)
\to
\PSh(\SmlSm/\F_1,\Sp)[\Adm^{-1}].
\]
For $\cF\in \PSh(\SmlSm/\F_1,\Sp)$,
we have
\begin{equation}
\label{logSH.15.1}
\Sing^\square L_\Adm \cF(X)
\cong
\colim_{n\in \bDelta^\op}
\colim_{Y\in \Adm_{X\times \square^n}^\op}
\cF(Y)
\end{equation}
by \eqref{omega.14.3} and Proposition \ref{logSH.18},
where $\Adm_{X\times \square^n}$ denotes the category of admissible blow-ups of $X\times \square^n$ in $\SmlSm/\F_1$.

Similarly, we obtain the functor
\[
\CSing^\square L_\Adm
\colon
\PSh(\SmlSm/\F_1,\rD(\Z))
\to
\PSh(\SmlSm/\F_1,\rD(\Z))[\Adm^{-1}].
\]
For $\cF\in \PSh(\SmlSm/\F_1,\rD(\Z))$,
Proposition \ref{cube.1} yields
\begin{equation}
\label{logSH.15.2}
\CSing^\square L_\Adm \cF(X)
\cong
\big\lvert \colim_{Y\in \Adm_{X\times \square^\bullet}^\op} \cF(Y)\big\rvert_\mathrm{cube}.
\end{equation}
\end{const}

\section{Logarithm functor: Case of monoid schemes}
\label{boundary}

The purpose of this section is to explain how to naturally make a presheaf on $\Sm/\F_1$ into a presheaf on $\SmlSm/\F_1$ that potentially enjoys motivic properties.

The Suslin functor for $\square$ has a slight computational disadvantage: If $X=X'\times \T_\Sigma$ for some $X'\in \Sm/\F_1$ and a smooth fan $\Sigma$,
then the category $\Adm_{X\times \square^n}$ is unnecessarily more complicated then $\Adm_{\T_\Sigma \times \square^n}$ even though $\Adm_{\T_\Sigma \times \square^n}$ contains the essential data.
We refer to Definition \ref{logmonoid.26} for $\T_\Sigma$.
The idea of the logarithm functor in Definition \ref{boundary.33} is to work with $\Adm_{\T_\Sigma \times \square^n}$ instead of $\Adm_{X\times \square^n}$ for such an $X$.

\

We set $\Gmlog:=(\P^1,0+\infty)\in \SmlSm/\F_1$.
For a fan $\Sigma$,
we often regard it as the associated toric monoid scheme.
We also regard the monoid schemes $\A^1$, $\G_m$, and $\P^n$ for integers $n\geq 1$ as the corresponding fans.

\begin{df}
\label{boundary.9}
Let $X\in \SmlSm/\F_1$.
For $n\in \N$,
let $\SBl_X^n$ be the category of admissible blow-ups $Y\to X\times \square^n$ in $\SmlSm/\F_1$ such that there exists a dividing cover $g\colon V\to X\times (\Gmlog)^n$ in $\SmlSm/\F_1$ satisfying $\ul{f}=\ul{g}$.
Such an admissible blow-up $Y\to X\times \square^n$ is called a \emph{standard blow-up}.

Observe that every morphism in $\SBl_X^n$ is an admissible blow-up.
Furthermore, there exists a commutative square
\[
\begin{tikzcd}
V\ar[d,"g"']\ar[r]&
Y\ar[d,"f"]
\\
X\times (\Gmlog)^n\ar[r]&
X\times \square^n
\end{tikzcd}
\]
in $\SmlSm/\F_1$ by Proposition \ref{logmonoid.5},
where $X\times (\Gmlog)^n\to X\times \square^n$ is the canonical map whose underlying morphism of schemes is the identity.
\end{df}

\begin{df}
\label{boundary.48}
This is a variant of the above definition,
which is needed for technical convenience. For $X\in \SmlSm/\F_1$ and $n\in \N$,
let $\Sdiv_X^n$ be the category of dividing covers $V\to X\times (\Gmlog)^n$ in $\lSm/\F_1$ such that $g^{-1}(X\times \A_\N^n)\to X\times \A_\N^n$ is an isomorphism.
Such a dividing cover $V\to X\times (\Gmlog)^n$ is called a \emph{standard dividing cover.}
\end{df}

\begin{prop}
\label{boundary.62}
Let $X\in \SmlSm/\F_1$ and $n\in \N$.
For $Y,Y'\in \SBl_X^n$ and $V,V'\in \Sdiv_X^n$,
$\Hom_{\SBl_X^n}(Y',Y)$ and $\Hom_{\Sdiv_X^n}(V',V)$ have at most one element.
\end{prop}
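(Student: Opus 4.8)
The plan is to reduce both claims to the corresponding uniqueness statement for morphisms of fans (or, more precisely, for morphisms of the associated monoid schemes that are identities on the open complements), since an object of $\SBl_X^n$ or $\Sdiv_X^n$ is determined by combinatorial data once we fix the open dense subscheme. I would first treat the case of $\Sdiv_X^n$. A standard dividing cover $V \to X \times (\Gmlog)^n$ is by Definition \ref{boundary.48} a dividing cover in $\lSm/\F_1$ restricting to an isomorphism over $X \times \A_\N^n$. Recall that a dividing cover is a surjective proper log \'etale monomorphism; in particular $V \to X \times (\Gmlog)^n$ is a monomorphism in $\lSm/\F_1$. Given two standard dividing covers $V, V' \in \Sdiv_X^n$ and two morphisms $a, b \colon V' \rightrightarrows V$ over $X \times (\Gmlog)^n$, I would argue that $a = b$ by restricting to the common open dense subscheme: both $V'$ and $V$ contain $X \times \A_\N^n$ as an open subscheme (via the assumed isomorphism over that locus), the maps $a$ and $b$ are compatible with the structure maps to $X \times (\Gmlog)^n$, and hence $a$ and $b$ agree on $X \times \A_\N^n$. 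Since $V$ is separated and $V'$ is reduced (both being fs log monoid schemes in $\SmlSm/\F_1$, resp.\ $\lSm/\F_1$) with $X \times \A_\N^n$ dense in $V'$, two morphisms to the separated $V$ that agree on a dense subscheme coincide. This gives $\lvert\Hom_{\Sdiv_X^n}(V',V)\rvert \le 1$.

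For $\SBl_X^n$ the argument is parallel but uses the admissible-blow-up structure. An object $Y \in \SBl_X^n$ is an admissible blow-up $Y \to X \times \square^n$, so by Definition \ref{logSH.2} the induced map $Y - \partial Y \to (X \times \square^n) - \partial(X \times \square^n)$ is an isomorphism; moreover $Y \in \SmlSm/\F_1$ is separated and $Y - \partial Y$ is open dense in $Y$ (as $\ul Y$ is a toric monoid scheme and the dense torus orbit is contained in the trivial-log-structure locus). Given $Y, Y' \in \SBl_X^n$ and morphisms $a, b \colon Y' \rightrightarrows Y$ in $\SBl_X^n$, both are morphisms over $X \times \square^n$; on the common open dense $Y' - \partial Y' \cong (X \times \square^n) - \partial(\cdots) \cong Y - \partial Y$ they must agree with the canonical identification, hence $a$ and $b$ coincide on a dense open subscheme of $Y'$. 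Since $Y$ is separated, $a = b$. Thus $\lvert\Hom_{\SBl_X^n}(Y',Y)\rvert \le 1$ as well.

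The main point to be careful about — and the step I expect to be the genuine obstacle rather than a formality — is verifying that the relevant open subscheme ($X \times \A_\N^n$ in the first case, $Y - \partial Y$ in the second) really is \emph{schematically dense} in the source, and that the target is separated in the appropriate unified category of (log) monoid schemes, so that the "agree on a dense open $\Rightarrow$ equal" principle applies. For the source this is where one invokes that $\ul{Y'}$ (resp.\ $\ul V$) is a toric monoid scheme associated to a fan, so it is reduced and irreducible with the torus orbit dense (cf.\ Lemma \ref{image.3} and the surrounding toric-geometry conventions); for the target, separatedness holds because objects of $\SmlSm/\F_1$ and $\lSm/\F_1$ are separated by the standing conventions recorded in the notation table. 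Once both density and separatedness are in place, the equality $a = b$ follows formally, and since the two Hom-sets were arbitrary this proves the proposition.
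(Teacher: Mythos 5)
Your proposal is correct and follows essentially the same route as the paper, which simply cites Proposition \ref{logmonoid.5} (morphisms in $\lSm/\F_1$ are determined by their underlying morphisms of monoid schemes) together with Proposition \ref{image.6} (two morphisms out of a cancellative monoid scheme into a separated one that agree on a dense open coincide). The only cosmetic point is that the relevant hypothesis on the source in the monoid-scheme setting is cancellativity rather than reducedness, but this holds for toric monoid schemes, so your argument goes through.
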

\begin{proof}
This is a consequence of Propositions \ref{image.6} and \ref{logmonoid.5}.
\end{proof}

\begin{prop}
\label{boundary.63}
For $X\in \SmlSm/\F_1$ and integer $n\geq 0$,
there is a fully faithful functor of categories
\[
\rho
\colon
\SBl_X^n
\to
\Sdiv_X^n
\]
such that $\ul{Y}=\ul{\rho(Y)}$ for $Y\in \SBl_X^n$.
Furthermore,
$V\in \Sdiv_X^n$ is in the essential image if and only if $V\in \SmlSm/\F_1$.
\end{prop}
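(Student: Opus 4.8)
The plan is to define $\rho$ on objects by sending $f\colon Y\to X\times\square^n$ in $\SBl_X^n$ to the dividing cover $g\colon V\to X\times(\Gmlog)^n$ furnished by Definition \ref{boundary.9}, and then to check this is well posed. Since a dividing cover in $\lSm/\F_1$ is governed by a subdivision of the associated fan (Definition \ref{logmonoid.18}), and this subdivision --- hence $V$ together with $g$ --- is determined by the underlying morphism of monoid schemes, the datum $g$ with $\ul{g}=\ul{f}$ is unique up to unique isomorphism; moreover $\ul{V}=\ul{Y}$, and $V\in\SmlSm/\F_1$ by the very choice made in Definition \ref{boundary.9}. The remaining point is that $V\in\Sdiv_X^n$, i.e.\ that $g^{-1}(X\times\A_\N^n)\to X\times\A_\N^n$ is an isomorphism. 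For this I would use that $\ul{f}=\ul{g}$ underlies the admissible blow-up $f$ of $X\times\square^n$: the isomorphism $f-\partial f$ of Definition \ref{logSH.2}, combined with the fact that $g$ (being a dividing cover) restricts to an isomorphism over the largest open of $X\times(\Gmlog)^n$ on which $\ul{g}$ is an isomorphism, yields the desired isomorphism over $X\times\A_\N^n$. I expect this verification to be the main obstacle, since it requires lining up the ``admissible blow-up over $X\times\square^n$'' condition with the ``trivial over $X\times\A_\N^n$'' condition, keeping careful track of the boundary of $X$ as well as of the $\square$-factors.

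For morphisms and full faithfulness I would lean on Proposition \ref{boundary.62}: the Hom-sets of $\SBl_X^n$ and of $\Sdiv_X^n$ each have at most one element. It therefore suffices to show (a) that a morphism $Y'\to Y$ in $\SBl_X^n$ induces a morphism $\rho(Y')\to\rho(Y)$ in $\Sdiv_X^n$ --- which makes $\rho$ a functor, functoriality being automatic from uniqueness of morphisms --- and (b) that $\Hom_{\SBl_X^n}(Y',Y)\neq\emptyset$ if and only if $\Hom_{\Sdiv_X^n}(\rho Y',\rho Y)\neq\emptyset$. For (a), a morphism $Y'\to Y$ over $X\times\square^n$ gives the underlying morphism $\ul{V'}=\ul{Y'}\to\ul{Y}=\ul{V}$ over $\ul{X\times(\Gmlog)^n}$, so the subdivision underlying $V'$ refines that underlying $V$, whence a morphism of dividing covers $V'\to V$ by Proposition \ref{logmonoid.5}. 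For the nontrivial direction of (b), a morphism $V'\to V$ over $X\times(\Gmlog)^n$ gives the same underlying morphism $\ul{Y'}\to\ul{Y}$ over $\ul{X\times\square^n}=\ul{X\times(\Gmlog)^n}$; since $V',V\in\Sdiv_X^n$ this morphism is an isomorphism over $(X\times\square^n)-\partial(X\times\square^n)$, so the $\square$-type log structure of $Y'$ --- namely the compactifying log structure of $Y'-\partial Y'$ in $\ul{Y'}$ --- is pulled back from that of $Y$, and the underlying morphism lifts to a morphism $Y'\to Y$ in $\SBl_X^n$ by Proposition \ref{logmonoid.25}.

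For the description of the essential image, the ``only if'' direction is immediate: if $V\cong\rho(Y)$ then $V\in\SmlSm/\F_1$ because $\ul{V}=\ul{Y}$ and $Y\in\SmlSm/\F_1$. Conversely, given $V\in\Sdiv_X^n$ with $V\in\SmlSm/\F_1$, I would build a preimage by setting $\ul{Y}:=\ul{V}$ with structure morphism $\ul{g}$ to $\ul{X\times\square^n}=\ul{X\times(\Gmlog)^n}$ and equipping it with the $\square$-type log structure --- equivalently, declaring $Y-\partial Y$ to be the preimage of $(X\times\square^n)-\partial(X\times\square^n)$ with trivial log structure. Then $Y\in\SmlSm/\F_1$ by Proposition \ref{logmonoid.25} (its underlying monoid scheme is the smooth $\ul{V}$), $\ul{g}$ is proper birational, and because $g^{-1}(X\times\A_\N^n)\to X\times\A_\N^n$ is an isomorphism the induced map $Y-\partial Y\to(X\times\square^n)-\partial(X\times\square^n)$ is an isomorphism, so $f\colon Y\to X\times\square^n$ is an admissible blow-up; since $g$ is moreover a dividing cover in $\SmlSm/\F_1$ with $\ul{g}=\ul{f}$, this exhibits $Y\in\SBl_X^n$ with $\rho(Y)\cong V$. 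Functoriality of $\rho$ then follows for free, all Hom-sets in sight having at most one element.
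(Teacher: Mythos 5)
Your proposal is correct and follows essentially the same route as the paper: $\rho$ is defined objectwise via Definition \ref{boundary.9} (with the membership $\rho(Y)\in\Sdiv_X^n$ coming from the admissible blow-up condition), full faithfulness is reduced via Proposition \ref{boundary.62} to matching nonemptiness of hom-sets using the underlying-morphism criterion of Proposition \ref{logmonoid.5}, and the essential image is identified by constructing the same preimage $Y$ with $\ul{Y}:=\ul{V}$ and invoking Proposition \ref{logmonoid.25}. Two small corrections: the lift of a morphism $V'\to V$ to $Y'\to Y$ is again by Proposition \ref{logmonoid.5} (not \ref{logmonoid.25}), and $\rho(Y)\in\SmlSm/\F_1$ holds because Definition \ref{boundary.9} requires the dividing cover $g$ to lie in $\SmlSm/\F_1$, not merely because $\ul{V}=\ul{Y}$ is smooth --- membership in $\SmlSm/\F_1$ also constrains the log structure.
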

\begin{proof}
For $Y\in \SBl_X^n$, there exists a dividing cover $g\colon V\to X\times (\Gmlog)^n$ in $\SmlSm/\F_1$ satisfying $\ul{f}=\ul{g}$.
We set $\rho(Y):=V$.
Since $g^{-1}(X\times \A_\N^n)\cong X\times \A_\N^n$,
we have $\rho(Y)\in \Sdiv_X^n$.
For another $Y'\in \SBl_X^n$, consider the corresponding dividing cover $V'\to X\times (\Gmlog)^n$ too.
Observe that $\Hom_{\SBl_X^n}(Y',Y)$ and $\Hom_{\Sdiv_X^n}(V',V)$ have at most one element by Proposition \ref{boundary.62}.
If $Y'\to Y$ is a morphism in $\SBl_X^n$,
then we have the composite morphism
\[
V'-\partial V'
\xrightarrow{\cong}
Y'\times_{\square^n}\G_m^n
\to
Y\times_{\square^n} \G_m^n
\xrightarrow{\cong}
V-\partial V,
\]
which yields a morphism $V'\to V$ in $\Sdiv_X^n$ by Proposition \ref{logmonoid.5}.
Conversely,
if $V'\to V$ is a morphism in $\Sdiv_X^n$,
then we have the composite morphism
\[
Y'-\partial Y'
\xrightarrow{\cong}
\ul{V'}\times_{(\P^1)^n}\A^n
\to
\ul{V}\times_{(\P^1)^n}\A^n
\to
Y-\partial Y,
\]
which yields a morphism $Y'\to Y$ in $\SBl_X^n$ by Proposition \ref{logmonoid.5}.
It follows that $\rho$ has a functor structure and is fully faithful.

Let us identify the essential image of $\rho$.
By definition,
for every $Y\in \SBl_X^n$,
we have $\rho(Y)\in \SmlSm/\F_1$.
Conversely,
for $V\in \Sdiv_X^n$ such that $V\in \SmlSm/S$,
consider $Y\in \cSm/\F_1$ such that $\ul{Y}:=\ul{V}$ and $Y-\partial Y:=(X-\partial X)\times \A^n$.
Then we have an induced admissible blow-up $Y\to X\times \square^n$,
so we have $Y\in \SmlSm/\F_1$ by Proposition \ref{logmonoid.25} and hence $Y\in \SBl_X^n$.
Furthermore, we have $\rho(Y)\cong V$.
\end{proof}

\begin{exm}
\label{boundary.54}
Assume $X:=\square^r$ with an integer $r\geq 0$.
For an integer $n\geq 0$,
consider $(X\times (\Gmlog)^n)^\sharp$ obtained by Construction \ref{logmonoid.16},
which is isomorphic to $(\square-\{0\})^r \times (\Gmlog)^n$.
A dividing cover $g\colon V\to X\times (\Gmlog)^n$ in $\lSm/\F_1$ corresponds to a subdivision of $(\P^1-\{0\})^r \times (\P^1)^n$,
which also corresponds to a subdivision $\Sigma \to (\P^1)^{r+n}$ such that if $a:=(a_1,\ldots,a_{r+n})$ is a ray of $\Sigma$ such that $a_i>0$ for some $1\leq i\leq r$, then we have $a=e_i$.
The condition
\[
g^{-1}((X-\partial X)\times \A_\N^n)\cong (X-\partial X)\times \A_\N^n
\]
corresponds to the condition $\Cone(e_{r+1},\ldots,e_{r+n})\in \Sigma$.
\end{exm}

Next, we discuss some basic properties of $\SBl_X^n$ and $\Sdiv_X^n$.

\begin{prop}
\label{boundary.29}
Let $X\in \SmlSm/\F_1$ and $n\in \N$.
Then for every $V\in \Sdiv_X^n$,
there exists $Z\in \SBl_X^n$ and a morphism $\rho(Z)\to W$ in $\Sdiv_X^n$.
\end{prop}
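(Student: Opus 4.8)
The plan is to resolve the singularities of $V$ while leaving the ``standard'' locus untouched, and then recognize the result through Proposition \ref{boundary.63}. Write $g\colon V\to X\times(\Gmlog)^n$ for the structure morphism. First I would record that the open subscheme $g^{-1}(X\times\A_\N^n)$ of $\ul V$ is \emph{smooth}: by the defining property of $\Sdiv_X^n$ it is isomorphic, via $g$, to $X\times\A_\N^n$, whose underlying monoid scheme is $\ul X\times\A^n$, and $\ul X$ is smooth because $X\in\SmlSm/\F_1$. Since $\ul V$ is a toric monoid scheme (a dividing cover of the smooth toric monoid scheme $\ul{X\times(\Gmlog)^n}$ is given by a subdivision of its fan), toric resolution of singularities (\cite[\S 2.6]{Fulton:1436535}; cf.\ Propositions \ref{logmonoid.1}, \ref{logmonoid.2}, and \ref{resolution.2}) provides a dividing cover $h\colon V'\to V$ with $V'\in\SmlSm/\F_1$ that is an isomorphism over the smooth locus of $\ul V$, in particular over $g^{-1}(X\times\A_\N^n)$.

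Next I would check that $V'\in\Sdiv_X^n$ and that $h$ is the sought morphism. The composite $g'\colon V'\xrightarrow{h}V\xrightarrow{g}X\times(\Gmlog)^n$ is a dividing cover in $\lSm/\F_1$, being a composite of dividing covers (as in the proof of Proposition \ref{logSH.21}, using Proposition \ref{logmonoid.11}). Moreover $(g')^{-1}(X\times\A_\N^n)=h^{-1}\big(g^{-1}(X\times\A_\N^n)\big)\cong g^{-1}(X\times\A_\N^n)\cong X\times\A_\N^n$, the first isomorphism because $h$ is an isomorphism over $g^{-1}(X\times\A_\N^n)$; hence $V'\in\Sdiv_X^n$. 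Now $V'\in\Sdiv_X^n$ lies in $\SmlSm/\F_1$, so Proposition \ref{boundary.63} yields $Z\in\SBl_X^n$ with $\rho(Z)\cong V'$. Composing this identification with $h\colon V'\to V$ gives a morphism $\rho(Z)\to V$ in $\Sdiv_X^n$, unique by Proposition \ref{boundary.62}; this is the asserted morphism (with $W:=V$ in the statement).

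The step I expect to require the most care is arranging the resolution to be an isomorphism over the prescribed smooth open $g^{-1}(X\times\A_\N^n)$ — equivalently, over the subfan $\Sigma_0$ of the fan of $\ul V$ corresponding to $X\times\A_\N^n$, which is smooth since $\ul X$ is smooth and $\A_\N^n$ corresponds to a cone spanned by part of a basis. If Proposition \ref{resolution.2} is not stated in a form yielding this directly, one does it by hand: the fan of $\ul V$ subdivides the smooth fan of $\ul{X\times(\Gmlog)^n}$ and already contains $\Sigma_0$, so a finite sequence of star subdivisions centered only at singular cones refines it to a smooth fan without touching $\Sigma_0$; the object $V'\in\cSm/\F_1$ built from this fan with $V'-\partial V':=V-\partial V$ lies in $\SmlSm/\F_1$ by Proposition \ref{logmonoid.25}, and the refinement of fans furnishes the morphism $V'\to V$ over $X\times(\Gmlog)^n$ by Proposition \ref{logmonoid.5}. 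The remaining verifications — that $g'$ is again a dividing cover and that the morphism genuinely lies in $\Sdiv_X^n$ — are routine given the cited results.
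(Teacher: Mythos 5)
Your proof is correct and follows essentially the same route as the paper: apply toric resolution (Proposition \ref{logmonoid.1}) to get a dividing cover $V'\to V$ with $V'\in\SmlSm/\F_1$ that does not subdivide the already-smooth cones, observe that the preimage condition defining $\Sdiv_X^n$ is preserved, and invoke Proposition \ref{boundary.63} to identify $V'$ with $\rho(Z)$ for some $Z\in\SBl_X^n$. The extra care you take to justify that the resolution is an isomorphism over $g^{-1}(X\times\A_\N^n)$ is exactly the content the paper extracts from condition (ii) of Proposition \ref{logmonoid.1}.
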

\begin{proof}
By Proposition \ref{logmonoid.1},
there exists a dividing cover $p\colon W\to V$ with $W\in \SmlSm/\F_1$ such that for every open subscheme $U$ of $V$ with $U\in \SmlSm/\F_1$,
we have $p^{-1}(U)\cong U$.
In particular,
we have $(gp)^{-1}((X-\partial X)\times \A_\N^n)\cong (X-\partial X)\times \A_\N^n$,
so we have $W\in \Sdiv_X^n$.
Since $W\in \SmlSm/\F_1$,
we have $W\cong \rho(Z)$ for some $Z\in \SBl_X^n$ by Proposition \ref{boundary.63}.
\end{proof}

\begin{prop}
\label{boundary.17}
Let $X\in \SmlSm/\F_1$.
Then the categories $\SBl_X^n$ and $\Sdiv_X^n$ are cofiltered.
\end{prop}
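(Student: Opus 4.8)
The plan is to show both categories are nonempty and that every pair of objects admits a common refinement; the cofiltered condition on parallel arrows is then immediate from Proposition \ref{boundary.62}, which says all Hom-sets have at most one element. First I would treat $\Sdiv_X^n$, since $\SBl_X^n$ embeds into it via the fully faithful functor $\rho$ of Proposition \ref{boundary.63} and the essential image is characterized intrinsically (those $V$ lying in $\SmlSm/\F_1$). Nonemptiness is clear: the identity $X\times (\Gmlog)^n \to X\times (\Gmlog)^n$ is a standard dividing cover, and the identity $X\times \square^n \to X\times \square^n$ is a standard blow-up (take $g=\id$).

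For the key step, given $V,V'\in \Sdiv_X^n$, i.e.\ standard dividing covers $g\colon V\to X\times (\Gmlog)^n$ and $g'\colon V'\to X\times (\Gmlog)^n$, I would form the fiber product $V\times_{X\times (\Gmlog)^n} V'$ in $\lSm/\F_1$. Pullbacks of dividing covers are dividing covers by Proposition \ref{logmonoid.12}, so the projections $V\times_{X\times(\Gmlog)^n}V' \to V$ and $\to V'$ are dividing covers, and hence so is the composite $W_0:=V\times_{X\times(\Gmlog)^n}V' \to X\times(\Gmlog)^n$ by the composability of dividing covers (Proposition \ref{logmonoid.11}). To see $W_0$ is \emph{standard}, note that over $X\times \A_\N^n$ both $g$ and $g'$ restrict to isomorphisms, so the fiber product restricts over $X\times \A_\N^n$ to an isomorphism as well; thus $W_0\in \Sdiv_X^n$ with maps $W_0\to V$ and $W_0\to V'$. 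This shows $\Sdiv_X^n$ is cofiltered.

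For $\SBl_X^n$, given $Y,Y'\in \SBl_X^n$, apply $\rho$ to land in $\Sdiv_X^n$, form the common refinement $W_0\to \rho(Y),\rho(Y')$ just constructed, and then invoke Proposition \ref{boundary.29}: there is $Z\in \SBl_X^n$ together with a morphism $\rho(Z)\to W_0$ in $\Sdiv_X^n$. Composing, we obtain morphisms $\rho(Z)\to \rho(Y)$ and $\rho(Z)\to \rho(Y')$ in $\Sdiv_X^n$, which by full faithfulness of $\rho$ come from morphisms $Z\to Y$ and $Z\to Y'$ in $\SBl_X^n$. (Alternatively, $W_0$ itself need not lie in $\SmlSm/\F_1$, which is exactly why Proposition \ref{boundary.29} is needed to ``resolve'' it back into the image of $\rho$.) Together with Proposition \ref{boundary.62} for the uniqueness of parallel arrows, this proves $\SBl_X^n$ is cofiltered.

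The main obstacle is the second step for $\SBl_X^n$: a fiber product of standard blow-ups in $\SmlSm/\F_1$ need not again be log smooth fs with smooth underlying scheme over $\F_1$, so one genuinely must pass through $\Sdiv_X^n$ and use Proposition \ref{boundary.29} to re-enter $\SmlSm/\F_1$. The verification that the fiber product of standard dividing covers remains standard — i.e.\ that the isomorphism over $X\times\A_\N^n$ is preserved under fiber product — is the one small point requiring care, but it follows formally since isomorphisms are stable under base change.
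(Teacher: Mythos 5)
Your proof is correct and follows essentially the same route as the paper: the fiber product over $X\times (\Gmlog)^n$ handles $\Sdiv_X^n$ (with the standardness preserved because isomorphisms over $X\times \A_\N^n$ are stable under base change), and $\SBl_X^n$ is then handled by passing through $\rho$ and using Proposition \ref{boundary.29} to return to $\SmlSm/\F_1$, with Proposition \ref{boundary.62} disposing of the parallel-arrow condition. The only difference is that you spell out the pullback/composition stability of dividing covers and the nonemptiness explicitly, which the paper leaves implicit.
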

\begin{proof}
Let $V,V'\in \Sdiv_X^n$.
The fiber product $V'':=V\times_{X\times (\Gmlog)^n} V'$ yields $Y''\in \Sdiv_X^n$ since the preimage of $U:=(X-\partial X)\times \A_\N^n$ in $V''$ is $U$.
Hence $\Sdiv_X^n$ is connected.
Together with Proposition \ref{boundary.62},
we see that $\Sdiv_X^n$ is cofiltered.

Now let $Y,Y'\in \SBl_X^n$.
We set $V:=\rho(V)$ and $V':=\rho(V')$,
and form $V''$ as above.
By Proposition \ref{boundary.29},
there exists a morphism $\rho(Z)\to V''$ in $\Sdiv_X^n$ with $Z\in \SBl_X^n$.
Then we have morphisms $Z\to Y,Y'$ by Proposition \ref{boundary.63},
so $\SBl_X^n$ is connected and hence cofiltered using Proposition \ref{boundary.62}.
\end{proof}

\begin{prop}
\label{boundary.31}
Let $X\in \SmlSm/\F_1$ and $n\in \N$.
Then the fully faithful functor $\rho \colon \SBl_X^n \to \Sdiv_X^n$ is cofinal.
\end{prop}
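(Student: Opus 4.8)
The plan is to verify the standard comma–category criterion for cofinality. Concretely, $\rho$ will be cofinal once I show that for every $V\in\Sdiv_X^n$ the comma category $\cC_V$ — whose objects are pairs $(Y,u)$ with $Y\in\SBl_X^n$ and $u\colon\rho(Y)\to V$ a morphism of $\Sdiv_X^n$, the morphisms $(Y,u)\to(Y',u')$ being the morphisms $Y\to Y'$ in $\SBl_X^n$ — is weakly contractible. Before doing that I would record two simplifications. By Proposition \ref{boundary.62} the categories $\SBl_X^n$ and $\Sdiv_X^n$ are preorders; hence, for a fixed $Y$, there is at most one $u\colon\rho(Y)\to V$, so $\cC_V$ is canonically identified with the full subcategory of $\SBl_X^n$ spanned by those $Y$ for which a morphism $\rho(Y)\to V$ exists, and in particular $\cC_V$ is itself a preorder in which all triangles commute automatically. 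By Proposition \ref{boundary.17}, both $\SBl_X^n$ and $\Sdiv_X^n$ are cofiltered, and by Proposition \ref{boundary.63} the functor $\rho$ is fully faithful; these are the only inputs I will use besides Proposition \ref{boundary.29}.

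With this in place the verification is short. The category $\cC_V$ is nonempty: this is exactly Proposition \ref{boundary.29}, which supplies some $Z\in\SBl_X^n$ together with a morphism $\rho(Z)\to V$. And $\cC_V$ is cofiltered: given objects $Y_1,Y_2$ of $\cC_V$, cofilteredness of $\SBl_X^n$ produces $Y_3\in\SBl_X^n$ with morphisms $Y_3\to Y_1$ and $Y_3\to Y_2$; applying $\rho$ and composing with the structure map $\rho(Y_1)\to V$ gives $\rho(Y_3)\to\rho(Y_1)\to V$, so $Y_3$ lies in $\cC_V$ and maps to both $Y_1$ and $Y_2$ there, with no compatibility to check since $\Sdiv_X^n$ is a preorder. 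A nonempty cofiltered category is weakly contractible, so $\cC_V$ is weakly contractible for every $V$, and $\rho$ is cofinal.

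I do not expect a genuine obstacle: the substantive construction — producing, for an arbitrary standard dividing cover $V$, a standard blow-up mapping into it — has already been done in Proposition \ref{boundary.29} (ultimately via the resolution result Proposition \ref{logmonoid.1}), and everything else above is formal once Propositions \ref{boundary.62} and \ref{boundary.17} are available. The one point that warrants care is bookkeeping about variance: I must make sure the comma category appearing in the cofinality criterion is the one whose objects are arrows $\rho(Y)\to V$ (so that Proposition \ref{boundary.29} populates it), and that this is indeed the form of cofinality needed to compare the colimits in \eqref{logSH.15.1}, which are indexed by opposite categories. Once these conventions are fixed, the argument is purely categorical.
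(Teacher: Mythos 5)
Your proof is correct and matches the paper's: the paper deduces the proposition directly from Propositions \ref{boundary.29} (nonemptiness of each comma category) and \ref{boundary.17} (cofilteredness), exactly the two inputs you combine. Your write-up merely makes explicit the preorder reduction via Proposition \ref{boundary.62} and the variance bookkeeping that the paper leaves implicit.
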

\begin{proof}
This is immediate from Propositions \ref{boundary.29} and \ref{boundary.17}.
\end{proof}

The following two constructions are the key ingredients for establishing the naturality of the functor $\widetilde{\Log}$ in Definition \ref{boundary.32} below.

\begin{const}
\label{boundary.35}
Let $f\colon X'\to X$ be a morphism in $\SmlSm/\F_1$ and $n\in \N$.
If $g\colon V\to X\times (\Gmlog)^n$ is a dividing cover in $\Sdiv_X^n$,
consider its pullback $g'\colon V'\to X'\times (\Gmlog)^n$.
By Proposition \ref{logmonoid.13},
$g'$ is a dividing cover.
Furthermore,
\[
g^{-1}((X-\partial X)\times \A_\N^n)\cong (X-\partial X)\times \A_\N^n
\]
implies
\[
g'^{-1}((X'-\partial X')\times \A_\N^n)\cong (X'-\partial X')\times \A_\N^n
\]
since $f$ sends $X'-\partial X'$ into $X-\partial X$.
Hence we have $V'\in \Sdiv_{X'}^n$.
The construction $V'$ is natural,
so we obtain a functor $f^*\colon \Sdiv_X^n\to \Sdiv_{X'}^n$ given by $f^*(V):=V'$.

Similarly,
let $\alpha\colon [m]\to [m']$ be a morphism in $\bDelta$ that is one of $p_i\colon [n+1] \to [n]$ for $0\leq i\leq n$ and $\delta_i\colon [n]\to [n+1]$ for $0\leq i\leq n$ but not $i\neq n+1$.
Then we have the induced morphism $(\Gmlog)^\alpha \colon (\Gmlog)^{m'}\to (\Gmlog)^m$ such that its underlying morphism $(\P^1)^{m'}\to (\P^1)^m$ extends $\A^\alpha \colon \A^{m'}\to \A^m$.
Indeed, 
this claim is clear for $p_i$.
The morphism $(\Gmlog)^{\delta_0}$ is the $1$-section in the first coordinate.
The morphism $(\Gmlog)^{\delta_i}$ for $1\leq i\leq n$ is identified with
\[
(\Gmlog)^{i-1} \times \Gmlog \times (\Gmlog)^{n-i}
\xrightarrow{\id \times d \times id}
(\Gmlog)^{i-1} \times (\Gmlog)^2 \times (\Gmlog)^{n-i},
\]
where $d\colon \Gmlog\to (\Gmlog)^2$ is the diagonal morphism.
Argue as above to obtain a functor $\alpha^*\colon \Sdiv_X^m\to \Sdiv_X^{m'}$.
\end{const}

\begin{const}
\label{boundary.38}
Let $X\in \SmlSm/\F_1$.
Consider the morphism $\alpha:=\delta_n\colon [n-1]\to [n]$ in $\bDelta$ with an integer $n\geq 1$,
and let $g\colon W\to X\times (\Gmlog)^n$ be a partial dividing cover in $\SmlSm/\F_1$ in the sense of Definition \ref{logmonoid.18} such that the induced morphism
\begin{equation}
\label{boundary.38.2}
g^{-1}((X-\partial X)\times \A_\N^n)\to (X-\partial X)\times \A_\N^n
\end{equation}
is an open immersion.
Observe that the morphism $\square^\alpha\colon \square^{n-1}\to \square^n$ is the $0$-section in the $n$th coordinate.

Let $\ul{\alpha^*(W)}$ be the closure of $(X-\partial X)\times \A^m$ in $\ul{W}\times_{(\P^1)^n}(\P^1)^{n-1}$,
see Definition \ref{image.1} for the notion of the closure for monoid schemes.
Observe that $\ul{\alpha^*(W)}$ is a smooth toric monoid scheme by Lemma \ref{image.8}.

We claim that there exists a unique
$\alpha^*(W)\in \SmlSm/\F_1$ with underlying monoid scheme $\ul{\alpha^*(W)}$ satisfying the following conditions:
\begin{enumerate}
\item[(i)] $\alpha^*(W)-\partial \alpha^*(W):=((X-\partial X)\times \G_m^{n-1})\times_{\ul{X}\times (\P^1)^n}\ul{Y}$.
\item[(ii)]
Let $\alpha^*(g)\colon \alpha^*(W)\to X\times (\Gmlog)^{n-1}$ be the induced morphism in $\SmlSm/\F_1$.
Then $\alpha^*(g)$ is a partial dividing cover.
\item[(iii)] The induced morphism 
\begin{equation}
\label{boundary.38.1}
\alpha^*(g)^{-1}((X-\partial X)\times \A_\N^{n-1})\to (X-\partial X)\times \A_\N^{n-1}
\end{equation}
is an open immersion.
\end{enumerate}
The uniqueness is ensured by (i).
We can work Zariski locally on $X$ for the claim,
so we may assume $X=\A_\N^r\times \A^s \times\G_m^t$ for some $r,s,t\in \N$.
Then $g$ is $\id\colon \A^s \times \G_m^t\to \A^s\times \G_m^t$ times a dividing cover.
We can take out the $\A^s\times \G_m^t$ part,
so we reduce to the case where $s=t=0$.

In this case, let $\alpha^*(W):=\T_\Sigma$, where $\Sigma$ is the fan associated with $\ul{\alpha^*(V)}$.
Then we have (i).
Since $\ul{\alpha^*(g)}$ is birational,
$\alpha^*(g)$ is a partial dividing cover by Examples \ref{msch.13} and \ref{logmonoid.9},
so we have (ii).
Since \eqref{boundary.38.2} is an open immersion.
we have (iii).

Now, consider $U\in \cSm/\F_1$ such that $\ul{U}:=\ul{W}$ and $U-\partial U:=(X-\partial X)\times \A^n$,
and consider $\alpha^*(U)\in \cSm/\F_1$ such that
\[
\ul{\alpha^*(U)}:=\ul{\alpha^*(V)},
\text{ }
\alpha^*(U)-\partial \alpha^*(U):=(X-\partial X)\times \A^{n-1}.
\]
By Proposition \ref{logmonoid.25},
we have $U,\alpha^*(U)\in \SmlSm/\F_1$.
Furthermore,
we have the induced commutative square
\begin{equation}
\label{boundary.38.3}
\begin{tikzcd}
\alpha^*(U)\ar[d]\ar[r]&
U\ar[d]
\\
X\times \square^{n-1}\ar[r,"\id \times \square^\alpha"]&
X\times \square^n.
\end{tikzcd}
\end{equation}
\end{const}

\begin{prop}
\label{boundary.21}
Let $X\in \SmlSm/\F_1$.
Consider the morphism $\alpha:=\delta_n\colon [n-1]\to [n]$ in $\bDelta$ with an integer $n\geq 1$.
Then there exists a functor $\alpha^*\colon \SBl_X^n \to \SBl_X^{n-1}$ satisfying the following property:
For $Y\in \SBl_X^n$,
there exists a commutative square
\begin{equation}
\label{boundary.21.1}
\begin{tikzcd}
\alpha^*(Y)\ar[d]\ar[r]&
Y\ar[d]
\\
X\times \square^{n-1}
\ar[r,"\id \times \square^\alpha"]&
X\times \square^n.
\end{tikzcd}
\end{equation}
\end{prop}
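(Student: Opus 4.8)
The plan is to obtain $\alpha^*$ on objects directly from Construction~\ref{boundary.38} and then observe that functoriality is essentially forced. Given $Y\in\SBl_X^n$, let $g\colon\rho(Y)\to X\times(\Gmlog)^n$ be the standard dividing cover attached to $Y$ by Proposition~\ref{boundary.63}; thus $\rho(Y)\in\SmlSm/\F_1$, $\ul{\rho(Y)}=\ul Y$, and, since $\rho(Y)\in\Sdiv_X^n$, the map $g^{-1}(X\times\A_\N^n)\to X\times\A_\N^n$ is an isomorphism (Definition~\ref{boundary.48}). A dividing cover is in particular a partial dividing cover, and the last isomorphism makes \eqref{boundary.38.2} an open immersion, so Construction~\ref{boundary.38} applies with $W:=\rho(Y)$. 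Because $\ul Y=\ul W$ and $Y-\partial Y\cong(X-\partial X)\times\A^n$, the object denoted $U$ in that construction can be taken to be $Y$ itself, and I would set $\alpha^*(Y):=\alpha^*(U)\in\SmlSm/\F_1$. With this definition the square \eqref{boundary.21.1} is nothing but \eqref{boundary.38.3}, so the stated property holds by construction.

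The first thing to verify is that $\alpha^*(Y)$ really lies in $\SBl_X^{n-1}$, that is, that $\alpha^*(Y)\to X\times\square^{n-1}$ is an admissible blow-up which admits a dividing-cover companion over $X\times(\Gmlog)^{n-1}$. Working Zariski locally on $X$ as in Construction~\ref{boundary.38}, we may assume $X=\A_\N^r$, so that $\ul{\rho(Y)}$ is the toric monoid scheme of a fan $\Sigma$ refining the fan of $\A_\N^r\times(\Gmlog)^n$, and the condition $g^{-1}(X\times\A_\N^n)\cong X\times\A_\N^n$ forces $\Cone(e_{r+1},\dots,e_{r+n})\in\Sigma$, hence its face $\Cone(e_{r+n})\in\Sigma$; here $\Cone(e_{r+n})$ is precisely the ray of the $n$th $\Gmlog$-factor corresponding to $0$, along which $\square^\alpha$ is the $0$-section. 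By the description in Construction~\ref{boundary.38}, $\ul{\alpha^*(W)}$ is the main component of the preimage of the divisor $V(\Cone(e_{r+n}))$, whose fan is the star of $\Cone(e_{r+n})$ in $\Sigma$; since $\Sigma$ has the same support as the fan of $\A_\N^r\times(\Gmlog)^n$, this star is a refinement with the same support of the star of $\Cone(e_{r+n})$ in the unrefined fan, which is exactly the fan of $\A_\N^r\times(\Gmlog)^{n-1}$. Consequently $\alpha^*(g)$ is proper, birational and surjective, hence a genuine dividing cover and not merely a partial one; moreover \eqref{boundary.38.1} becomes an isomorphism, so $\alpha^*(Y)-\partial\alpha^*(Y)\cong(X-\partial X)\times\A^{n-1}$ maps isomorphically onto $(X\times\square^{n-1})-\partial(X\times\square^{n-1})$, showing that $\alpha^*(Y)\to X\times\square^{n-1}$ is an admissible blow-up with companion $\alpha^*(g)$. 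I expect this toric bookkeeping---identifying $\ul{\alpha^*(W)}$ with the orbit closure of $\Cone(e_{r+n})$ in $\Sigma$ and checking that the induced subdivision of the target fan is complete relative to it---to be the main obstacle.

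Finally, to promote $Y\mapsto\alpha^*(Y)$ to a functor, take a morphism $Y'\to Y$ in $\SBl_X^n$. By Proposition~\ref{boundary.63} it induces a morphism $\rho(Y')\to\rho(Y)$ in $\Sdiv_X^n$, hence a morphism $\ul{\rho(Y')}\to\ul{\rho(Y)}$ of underlying monoid schemes over $\ul X\times(\P^1)^n$ which restricts to the identity on the common open stratum $(X-\partial X)\times\A^n$; base-changing along the $0$-section $(\P^1)^{n-1}\hookrightarrow(\P^1)^n$ and passing to closures (naturality of the closure operation of Definition~\ref{image.1}) gives $\ul{\alpha^*(Y')}\to\ul{\alpha^*(Y)}$ over $\ul X\times(\P^1)^{n-1}$, which lifts uniquely to a morphism $\alpha^*(Y')\to\alpha^*(Y)$ in $\SmlSm/\F_1$ over $X\times\square^{n-1}$ by Proposition~\ref{logmonoid.5}. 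Since $\Hom_{\SBl_X^{n-1}}$ has at most one element between any two objects (Proposition~\ref{boundary.62}), this assignment automatically preserves identities and composition, so $\alpha^*$ is a functor; the same uniqueness also makes the square \eqref{boundary.21.1} natural in $Y$.
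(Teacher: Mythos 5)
Your proof is correct and follows essentially the same route as the paper: both reduce the statement to Construction \ref{boundary.38}, check that the resulting partial dividing cover $\alpha^*(g)$ is an honest dividing cover with \eqref{boundary.38.1} an isomorphism, and read off the square from \eqref{boundary.38.3}. The only difference is cosmetic: you verify surjectivity of $\alpha^*(g)$ by the explicit star-of-a-ray computation, where the paper gets it from properness of the closure (a proper partial subdivision is a subdivision, and a proper open immersion is an isomorphism), and you spell out the functoriality via Propositions \ref{boundary.62} and \ref{logmonoid.5}, which the paper leaves implicit.
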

\begin{proof}
Let $V\to X\times (\Gmlog)^n$ be a dividing cover corresponding to $Y$.
Consider the partial dividing $\alpha^*(V)\to X\times (\Gmlog)^m$ obtained by Construction \ref{boundary.38},
which is a dividing cover since it is proper.
The morphism \eqref{boundary.38.1} is a proper open immersion and hence an isomorphism.
It follows that the morphism $\alpha^*(Y)\to X\times \square^{n-1}$ obtained by Construction \ref{boundary.38} is an admissible blow-up, so we have $\alpha^*(Y)\in \SBl_X^{n-1}$.
We also obtain \eqref{boundary.21.1} from \eqref{boundary.38.3}.
The construction of $\alpha^*(Y)$ is natural, so we obtain a desired functor $\alpha^*$.
\end{proof}

\begin{df}
\label{boundary.74}
Let $\SBl(\F_1)$ be the category of triples $(Y, X,[n])$ with $Y\in \SBl_X^n$, $X\in \SmlSm/\F_1$, and $n\in \N$ whose morphisms are of the form
\[
(u,f,\alpha)\colon (Y',X',[n'])\to (Y,X,[n])
\]
such that the square
\[
\begin{tikzcd}
Y'\ar[d]\ar[r,"u"]&
Y\ar[d]
\\
X'\times \square^{n'}\ar[r,"f\times \square^\alpha"]&
X\times \square^n
\end{tikzcd}
\]
commutes.
Equivalently,
the square
\[
\begin{tikzcd}[column sep=huge]
(X'-\partial X')\times \A^{n'}\ar[d]\ar[r,"(f-\partial f)\times \A^{\alpha}"]&
(X-\partial X)\times \A^n\ar[d]
\\
Y'\ar[r,"u"]&
Y
\end{tikzcd}
\]
commutes since $(X'-\partial X')\times \A^{n'}$ is dense in $Y'$.

A morphism $(u,f,\alpha)\colon (Y',X',[n'])\to (Y,X,[n])$ is an \emph{admissible blow-up} if $f$ and $\alpha$ are isomorphisms.
Let $\SAdm$ denote the class of admissible blow-ups.
\end{df}

\begin{prop}
\label{boundary.65}
The class $\SAdm$ in $\SBl(\F_1)$ admits calculus of right fractions.
\end{prop}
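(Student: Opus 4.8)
The plan is to verify the three axioms (i)--(iii) for a calculus of right fractions for $\SAdm$ in $\SBl(\F_1)$, reducing each to facts already established for $\Adm$ in $\SmlSm/\F_1$ together with the functorial constructions of Section~\ref{boundary}. Recall that a morphism $(u,f,\alpha)$ lies in $\SAdm$ precisely when $f$ and $\alpha$ are isomorphisms, so up to relabeling along $\alpha$ and transporting along $f$, an admissible blow-up $(Y',X',[n'])\to (Y,X,[n])$ is just an admissible blow-up $Y'\to Y$ in $\SmlSm/\F_1$ that is compatible with the two structure maps to $X\times\square^n$. Thus $\SAdm$ is, informally, ``fiberwise $\Adm$'' over the base category $\SmlSm/\F_1\times\bDelta$.

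\medskip

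\textbf{Axiom (i).} Identities are in $\SAdm$ trivially, and $\SAdm$ is closed under composition because a composite of admissible blow-ups in $\SmlSm/\F_1$ is again one (Proposition~\ref{logSH.3} shows $\Adm$ has a calculus of right fractions, in particular is closed under composition), and the class of pairs $(f,\alpha)$ of isomorphisms is closed under composition.

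\textbf{Axiom (ii), the right Ore condition.} Given $(u,f,\alpha)\colon (Y',X',[n'])\to (Y,X,[n])$ arbitrary and $(v,g,\beta)\colon (Y'',X,[n])\to (Y,X,[n])$ in $\SAdm$ (so $g,\beta$ are isomorphisms), I first use $g$ and $\beta$ to replace $(Y'',X,[n])$ by a triple over the same $(X',[n'])$ as the source, reducing to the case $X''=X'$, $n''=n'$. Now I need to complete the square with an admissible blow-up on the left. The point is that $Y''\to Y$ is an admissible blow-up of objects over $X\times\square^n$, and pulling back along $Y'\to Y$ inside $\SmlSm/\F_1$ is exactly the right Ore construction already carried out in Proposition~\ref{logSH.3}: one forms the normalization of the closure of the locus with trivial log structure in the fiber product, then further blows up to land in $\SmlSm/\F_1$ by Proposition~\ref{resolution.2}. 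Since both $Y'$ and $Y''$ carry compatible maps to $(X-\partial X)\times\A^n$ and $(X'-\partial X')\times\A^{n'}$, and admissible blow-ups restrict to isomorphisms over the trivial-log-structure locus, the resulting $W\to Y'$ is an admissible blow-up in $\SBl_{X'}^{n'}$, and the induced $(W,X',[n'])\to (Y',X',[n'])$ is the desired element of $\SAdm$. One must check the square commutes on the nose; this follows because $(X'-\partial X')\times\A^{n'}$ is dense in $W$ (as in Definition~\ref{boundary.74}), so maps out of $W$ are determined by their restriction there.

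\textbf{Axiom (iii), right cancellability.} Given $(u_1,f,\alpha),(u_2,f,\alpha)\colon (Y',X',[n'])\rightrightarrows (Y,X,[n])$ and $(v,g,\beta)\colon (Y,X,[n])\to (Y'',X'',[n''])$ in $\SAdm$ with $(v,g,\beta)(u_1,f,\alpha)=(v,g,\beta)(u_2,f,\alpha)$, the equality of the $X$- and $\bDelta$-components is automatic (they agree already), so the content is $v u_1 = v u_2$ in $\SmlSm/\F_1$ with $v$ an admissible blow-up. But $v$ restricts to an isomorphism over the trivial-log-structure locus, and both $u_1,u_2$ are compatible with the maps from $(X'-\partial X')\times\A^{n'}$, which is dense in $Y'$; hence $u_1=u_2$ on a dense open, and therefore $u_1=u_2$ by Proposition~\ref{boundary.62} (or directly because morphisms in $\SBl$ are determined by their underlying rational maps, via Propositions~\ref{image.6} and~\ref{logmonoid.5}). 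So we may take $v=\id$.

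\medskip

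\textbf{Expected main obstacle.} The routine axioms are really (i) and (iii); the substance is the right Ore condition (ii). The delicate point there is not the existence of some completing square in $\SmlSm/\F_1$ --- that is Proposition~\ref{logSH.3} --- but rather checking that the completion can be chosen \emph{in the subcategory $\SBl_{X'}^{n'}$}, i.e.\ that the new object still has the form of a standard admissible blow-up over $X'\times\square^{n'}$ compatible with the source's structure map, and that all three projections (to $Y'$, to $X'\times\square^{n'}$, and the equivalence with a dividing cover over $X'\times(\Gmlog)^{n'}$) assemble coherently. I expect this to follow by feeding the fiber product through the same normalization-and-resolution recipe as in Proposition~\ref{logSH.3} and then invoking Proposition~\ref{logmonoid.25} to upgrade the underlying monoid scheme to an object of $\SmlSm/\F_1$, exactly as in the proof of Proposition~\ref{boundary.63}; the bookkeeping that the trivial-log-structure locus is preserved is what makes everything land in $\SBl_{X'}^{n'}$ rather than merely in $\SmlSm/\F_1$. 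If one prefers, the whole argument can be phrased more efficiently by transporting along the fully faithful cofinal functor $\rho\colon \SBl_X^n\to\Sdiv_X^n$ of Propositions~\ref{boundary.63} and~\ref{boundary.31} and arguing with standard dividing covers, where pullbacks are literally fiber products by Proposition~\ref{logmonoid.13}.
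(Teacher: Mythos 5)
Your axioms (i) and (iii) are fine and your cancellability argument is the paper's (density of $(X'-\partial X')\times\A^{n'}$ in $Y'$ plus Proposition \ref{boundary.62}). The gap is in the Ore condition. You complete the square by applying the generic Ore construction for $\Adm$ in $\SmlSm/\F_1$ (Proposition \ref{logSH.3}) to $u\colon Y'\to Y$ and $v\colon Y''\to Y$, and then assert that the resulting $W\to Y'$ lies in $\SBl_{X'}^{n'}$, with the justification that ``the trivial-log-structure locus is preserved.'' That bookkeeping only yields that $W\to X'\times\square^{n'}$ is an \emph{admissible} blow-up; it does not yield \emph{standardness}, i.e.\ the existence of a dividing cover of $X'\times(\Gmlog)^{n'}$ with the same underlying morphism, and standardness is strictly stronger. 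Concretely, take $X'=\square^2$ and $n'=0$: dividing covers of $\square^2=(\P^1,\infty)^2$ only subdivide the corner cone at $(\infty,\infty)$, so the blow-up of $\square^2$ at $(\infty,0)$ is an admissible blow-up that is not standard and is not even dominated by a standard one. So ``admissible blow-up of an object of $\SBl_{X'}^{n'}$'' does not place you back in $\SBl_{X'}^{n'}$, and you cannot repair a bad choice of $W$ by further blowing up.

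Your fallback --- transport along $\rho$ and take literal fiber products of dividing covers --- also fails for exactly one generator of $\bDelta$: the last face map $\delta_n\colon[n-1]\to[n]$. Its geometric realization is the $0$-section $\square^{n-1}\to\square^n$, which does \emph{not} lift to a morphism $(\Gmlog)^{n-1}\to(\Gmlog)^n$ (the $0$-section of $\P^1$ lands in the boundary of $\Gmlog$), so there is no fiber-product pullback functor $\Sdiv_X^n\to\Sdiv_X^{n-1}$ for that map; this is why Construction \ref{boundary.35} explicitly excludes it. The paper's proof of Proposition \ref{boundary.65} instead factors $(f,\alpha)$ into generators, pulls the \emph{given} standard blow-up $Z$ back along the base morphism using Construction \ref{boundary.35} for most generators and the scheme-theoretic-closure construction of Construction \ref{boundary.38}/Proposition \ref{boundary.21} (analyzed via Lemma \ref{image.8}) for $\delta_n$, lands back in $\SBl_{X'}^{n'}$ via Proposition \ref{boundary.29}, and only then invokes cofilteredness (Proposition \ref{boundary.17}) to dominate $Y'$ as well. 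The closure construction for the $0$-section is the genuinely new content of the Ore condition here, and your proposal does not supply it.
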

\begin{proof}
Let
\[
(u,f,\alpha)\colon (Y',X',[n'])\to (Y,X,[n]),
\text{ }
(v,\id,\id)\colon (Z,X,[n])\to (Y,X,[n])
\]
be morphisms in $\SBl(\F_1)$ so that $(v,\id,\id)\in \SAdm$.
Note that $\alpha$ is a composite of several $\delta_i\colon [l]\to [l+1]$ and $p_i\colon [l]\to [l-1]$.
To check the Ore condition,
we may assume that $f=\id$ and $\alpha$ is one of the above or $f$ is arbitrary and $\alpha=\id$.
By Propositions \ref{boundary.29} and \ref{boundary.21} and Construction \ref{boundary.35},
we have a morphism $(w,f,\alpha)\colon (Z',X',[n'])\to (Z,X,[n])$ for some $w$.
Since $\SBl_{X'}^{n'}$ is cofiltered by Proposition \ref{boundary.17}, replacing $Z'$,
we may assume that there exists a morphism $Z'\to Y'$ in $\SBl_{X'}^{n'}$.
The square
\[
\begin{tikzcd}
Z'\ar[d,"w"']\ar[r]&
Y'\ar[d,"u"]
\\
Z\ar[r,"v"]&
Y
\end{tikzcd}
\]
commutes since $(X'-\partial X')\times \A^{n'}$ is dense in $Z'$.
This shows the Ore condition.

For the right cancellability condition,
let $(u_1,f_1,\alpha_1),(u_2,f_2,\alpha_2)\colon (Y',X',[n'])\to (Y,X,[n])$ be two morphisms, and let $(v,\id,\id)\colon (Y'',X',[n'])\to (Y',X',[n'])$ be an admissible blow-up such that
\[
(u_1,f_1,\alpha_1)\circ (v,\id,\id)
=
(u_2,f_2,\alpha_2)\circ (v,\id,\id).
\]
Then we have $f_1=f_2$ and $\alpha_1=\alpha_2$,
and the square
\[
\begin{tikzcd}[column sep=huge]
(X'-\partial X')\times \A^{n'}\ar[d]\ar[r,"(f-\partial f)\times \A^{\alpha}"]&
(X-\partial X)\times \A^n\ar[d]
\\
Y'\ar[r,"u_i"]&
Y
\end{tikzcd}
\]
commutes for $i=1,2$.
Since $(X'-\partial X')\times \A^{n'}$ is dense in $Y'$,
we have $u_1=u_2$.
This shows the right cancellability condition.
\end{proof}

\begin{prop}
\label{boundary.64}
The functor
\[
\SBl(\F_1)[\SAdm^{-1}]
\to
\SmlSm/\F_1\times \bDelta
\]
induced by the forgetful functor $\SBl(\F_1)\to \SmlSm/\F_1\times \bDelta$
is an equivalence of categories.
\end{prop}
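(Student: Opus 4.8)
The plan is to use the calculus of right fractions from Proposition \ref{boundary.65}, which identifies
\[
\Hom_{\SBl(\F_1)[\SAdm^{-1}]}((Y',X',[n']),(Y,X,[n]))
\cong
\colim_{Y''\to Y'}\,\Hom_{\SBl(\F_1)}((Y'',X',[n']),(Y,X,[n])),
\]
a filtered colimit over admissible blow-ups $Y''\to Y'$ in $\SBl_{X'}^{n'}$; this is legitimate since $\SBl_{X'}^{n'}$ is cofiltered by Proposition \ref{boundary.17} and has at most one morphism between any two of its objects by Proposition \ref{boundary.62}. I would then verify essential surjectivity, faithfulness, and fullness of the forgetful functor $\SBl(\F_1)[\SAdm^{-1}]\to\SmlSm/\F_1\times\bDelta$ separately and conclude that it is an equivalence.

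Essential surjectivity is immediate: for any $X\in\SmlSm/\F_1$ and $n\in\N$ the triple $(X\times\square^n,X,[n])$ lies in $\SBl(\F_1)$ and maps to $(X,[n])$, because $\id_{X\times\square^n}$ is an admissible blow-up and $\id_{X\times(\Gmlog)^n}$ is a dividing cover with the same underlying morphism of monoid schemes. Faithfulness is a density argument: for fixed $Y''\in\SBl_{X'}^{n'}$, a morphism $(u,f,\alpha)\colon(Y'',X',[n'])\to(Y,X,[n])$ is determined by the pair $(f,\alpha)$, since by the alternative description in Definition \ref{boundary.74} the map $u$ restricts on the schematically dense open $(X'-\partial X')\times\A^{n'}\subseteq Y''$ to $(f-\partial f)\times\A^{\alpha}$ followed by the open immersion $(X-\partial X)\times\A^n\hookrightarrow Y$, and, $Y$ being separated, such an extension is unique by Proposition \ref{logmonoid.5}. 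Thus $(u,f,\alpha)\mapsto(f,\alpha)$ is injective on each term of the above colimit, compatibly with the transition maps, so the induced map out of the filtered colimit into $\Hom_{\SmlSm/\F_1}(X',X)\times\Hom_{\bDelta}([n'],[n])$ is injective.

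For fullness I must lift an arbitrary $(f,\alpha)\colon(X',[n'])\to(X,[n])$: given also $Y\in\SBl_X^n$ and $Y'\in\SBl_{X'}^{n'}$, I need some $Y''\in\SBl_{X'}^{n'}$ admitting a morphism $Y''\to Y'$ in $\SBl_{X'}^{n'}$ and a morphism $(u,f,\alpha)\colon(Y'',X',[n'])\to(Y,X,[n])$. Since $\SBl_{X'}^{n'}$ is cofiltered, it suffices to produce \emph{some} $Z\in\SBl_{X'}^{n'}$ together with a morphism $(u_Z,f,\alpha)\colon(Z,X',[n'])\to(Y,X,[n])$ and then refine $Z$ and $Y'$ to a common $Y''$; the composite $Y''\to Z\xrightarrow{u_Z}Y$ still lies over $(f,\alpha)$. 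To build $Z$ I would factor $\alpha$ into generators of $\bDelta$ and transport $Y$ step by step: the pullback functor $f^*\colon\Sdiv_X^n\to\Sdiv_{X'}^n$ of Construction \ref{boundary.35} handles $f$, the functors $\alpha^*$ of Construction \ref{boundary.35} handle the codegeneracies and the non-last cofaces, and the functor $\alpha^*$ of Construction \ref{boundary.38} and Proposition \ref{boundary.21} handles the last coface; after each step that leaves $\SBl$ for $\Sdiv$, Proposition \ref{boundary.29} together with the fully faithful $\rho$ of Proposition \ref{boundary.63} returns us to the essential image of $\rho$, i.e.\ to $\SBl$. The commutative squares recorded in those constructions assemble into the desired morphism $(u_Z,f,\alpha)$, and that it is a morphism of log monoid schemes, not merely of underlying monoid schemes, follows from Proposition \ref{logmonoid.25} once it is known on the dense open $(X'-\partial X')\times\A^{n'}$.

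The main obstacle is this last step. The base-change operations are defined only one generator of $\bDelta$ at a time, and they live sometimes on $\SBl$ and sometimes on $\Sdiv$, so lifting a general $(f,\alpha)$ requires carefully interleaving them with the refinements of Proposition \ref{boundary.29} while controlling the underlying-scheme squares throughout — this is exactly where the toric bookkeeping behind Constructions \ref{boundary.35} and \ref{boundary.38} is needed.
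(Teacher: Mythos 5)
Your argument is correct, but you have made the hard half of the proof harder than it needs to be, and the paper avoids it entirely. The paper works with the section $(X,[n])\mapsto(X\times\square^n,X,[n])$ rather than with the forgetful functor directly: since every $(Y,X,[n])$ becomes isomorphic to $(X\times\square^n,X,[n])$ in the localization, the calculus of right fractions lets one compute every hom-set with target of the special form $(X\times\square^n,X,[n])$, and a morphism $(u,f,\alpha)\colon(Y',X',[n'])\to(X\times\square^n,X,[n])$ exists for every $(f,\alpha)$ and is unique (take $u$ to be the composite $Y'\to X'\times\square^{n'}\xrightarrow{f\times\square^\alpha}X\times\square^n$; uniqueness by density as in your faithfulness step). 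So full faithfulness is immediate, with no transport of blow-ups along $(f,\alpha)$. Your version instead insists on lifting $(f,\alpha)$ against an arbitrary target $(Y,X,[n])$, which is exactly the right Ore condition — i.e.\ you are re-proving part of Proposition~\ref{boundary.65} inside the proof of Proposition~\ref{boundary.64}. That is legitimate (the constructions you list are the ones the paper uses to prove the Ore condition), but you could simply cite Proposition~\ref{boundary.65}: applying the Ore condition to $(f\times\square^\alpha,f,\alpha)$ and the admissible blow-up $(Y,X,[n])\to(X\times\square^n,X,[n])$ produces precisely the lift $(Y'',X',[n'])\to(Y,X,[n])$ you call the main obstacle. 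Two small citation slips: the uniqueness of the extension from the dense open is Proposition~\ref{image.6} (separatedness plus density) combined with Proposition~\ref{logmonoid.5} (the log structure is determined by the underlying morphism); and the upgrade of a morphism of underlying monoid schemes to a morphism of log monoid schemes is again Proposition~\ref{logmonoid.5}, not Proposition~\ref{logmonoid.25}.
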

\begin{proof}
The inclusion functor $\SmlSm/\F_1\times \bDelta\to \SBl(\F_1)$ given by
\[
(X,[n])\mapsto (X\times \square^n,X,[n])
\]
induces an essentially surjective functor
\[
\SmlSm/\F_1\times \bDelta\to \SBl(\F_1)[\SAdm^{-1}].
\]
It suffices to show that this is fully faithful.
Let $(X,[n]),(X',[n'])\in \SmlSm/\F_1\times \bDelta$,
and let $(Y',X',[n'])\to (X'\times \square^{n'},X',n')$ be an admissible blow-up.
Then we have
\[
\Hom_{\SBl(\F_1)}((Y',X',[n']),(X\times \square^n,X,[n]))
\simeq
\Hom_{\SmlSm/\F_1\times \bDelta}((X',[n']),(X,[n])),
\]
which finishes the proof.
\end{proof}

\begin{df}
\label{boundary.32}
Let $\cF$ be a presheaf of spectra on $\SmlSm/\F_1$. Consider the presheaf of spectra $\cF$ on $\SBl(\F_1)$ given by $\cF(Y,X,[n]):=\cF(Y)$.
Note that $\SAdm$ admits a calculus of right fractions by Proposition \ref{boundary.65}.
Argue as in \cite[Lemma 4.3]{loghomotopy} to obtain an equivalence of $\infty$-categories
\[
\PSh(\SBl(\F_1)[\SAdm^{-1}],\Sp)
\simeq
\PSh(\SBl(\F_1),\Sp)[\SAdm^{-1}].
\]
Let
\[
L_\SAdm
\colon
\PSh(\SBl(\F_1),\Sp)
\to
\PSh(\SBl(\F_1),\Sp)[\SAdm^{-1}]
\]
be the localization functor.
Together with Proposition \ref{boundary.64},
we obtain
\[
\widetilde{\Log}_\bullet \cF:=
L_\SAdm \cF\in \PSh(\SmlSm/\F_1\times \bDelta,\Sp).
\]
Let $\widetilde{\Log}_n\cF(X)$ denote its value at $X\in \SmlSm/\F_1$ and $n\in \bDelta$.
Then we have
\[
\widetilde{\Log}_n\cF(X)
\cong
\colim_{Y\in (\SBl_X^n)^\op}
\cF(Y)
\]
for $X\in \SmlSm/\F_1$ and integer $n\geq 0$.
We define
\begin{equation}
\label{boundary.32.1}
\widetilde{\Log} \cF
:=
\colim_{n\in \bDelta^\op}\widetilde{\Log}_n\cF
\cong
\colim_{n\in \bDelta^\op}\colim_{Y\in (\SBl_X^n)^\op} \cF(Y).
\end{equation}
The construction of $\widetilde{\Log} \cF$ is natural in $\cF$,
so we obtain a functor
\[
\widetilde{\Log}
\colon
\PSh(\SmlSm/\F_1,\Sp)
\to
\PSh(\SmlSm/\F_1,\Sp)
\]
\end{df}

\begin{rmk}
\label{boundary.66}
For an integer $n\geq 0$,
let $\Sdiv^n(\F_1)$ be the category of pairs $(V,X)$ with $V\in \Sdiv_X^n$ and $X\in \SmlSm/S$ whose morphisms $(h,f)\colon (V',X')\to (V,X)$ are the commutative squares
\[
\begin{tikzcd}
V'\ar[d]\ar[r]&
V\ar[d]
\\
X'\times (\Gmlog)^n\ar[r]&
X\times (\Gmlog)^n.
\end{tikzcd}
\]
Let $\SAdm$ be the class of morphisms $(h,f)\colon (V',X')\to (V,X)$ in $\Sdiv_X^n$ such that $f$ is an isomorphism.
Then argue as in Proposition \ref{boundary.64} to obtain an equivalence of categories
\[
\Sdiv^n(\F_1)[\SAdm^{-1}]
\simeq
\SmlSm/\F_1.
\]

Let $\cF$ be a presheaf of spectra on $\SmlSm/\F_1$.
Consider the full subcategory $\SBl^n(\F_1)$ of $\SBl(\F_1)$ whose entries for $\bDelta$ are $[n]$,
and consider the presheaf of spectra $\cF$ on $\SBl^n(\F_1)$ given by $\cF(Y,X,[n]):=\cF(Y)$.
Argue as in Proposition \ref{boundary.63} to show that $\SBl^n(\F_1)$ is a full subcategory of $\Sdiv^n(\F_1)$.
Let $\cF^\Sdiv$ be its left Kan extension to $\Sdiv^n(\F_1)$.
Argue as in Definition \ref{boundary.32} also to obtain
\[
L_\SAdm \cF^\Sdiv\in \PSh(\SmlSm/\F_1,\Sp),
\]
which agrees with $\widetilde{\log}_n\cF$ since the functor $\SBl_X^n\to \Sdiv_X^n$ is cofinal by Proposition \ref{boundary.31}.
Hence we have an alternative description
\begin{equation}
\label{boundary.66.1}
\widetilde{\log}_n\cF(X)
\cong
\colim_{Y\in (\Sdiv_X^n)^\op}\cF^\Sdiv(Y).
\end{equation}
This is sometimes useful because for every morphism $f\colon X'\to X$ in $\SmlSm/\F_1$,
we have the explicit functor $f^*\colon \Sdiv_X^n\to \Sdiv_{X'}^n$ in Construction \ref{boundary.35}.
\end{rmk}

\begin{df}
\label{boundary.33}
The \emph{logarithm functor} is  
\[
\Log:=\widetilde{\Log}\omega^\sharp
\colon \PSh(\Sm/\F_1,\Sp)
\to
\PSh(\SmlSm/\F_1,\Sp),
\]
where $\omega^\sharp$ is the left Kan extension functor
\[
\omega^\sharp
\colon
\PSh(\Sm/\F_1,\Sp)
\to
\PSh(\SmlSm/\F_1,\Sp).
\]
Explicitly,
for a presheaf $\cF$ of spectra on $\Sm/\F_1$ and $X\in \SmlSm/\F_1$,
\eqref{boundary.32.1} yields a natural isomorphism
\begin{equation}
\label{boundary.33.1}
\Log \cF(X)
\cong
\colim_{n\in \bDelta^\op}
\colim_{Y\in (\SBl_X^n)^\op}
\cF(\ul{Y}).
\end{equation}
Observe also that we have a natural morphism
\[
\cF(X) \to \Log \cF(X).
\]
\end{df}

Our next purpose is to define the cubical version of the logarithm functor.

\begin{prop}
\label{boundary.30}
Let $X\in \SmlSm/\F_1$.
For all integers $n\geq 1$ and $1\leq i\leq n$,
there exists a functor $\mu_i^*\colon \Sdiv_X^n \to \Sdiv_X^{n+1}$ satisfying the following property:
For $V\in \Sdiv_X^n$,
there exists a commutative square
\[
\begin{tikzcd}
(X-\partial X)\times \G_m^{n+1}\ar[r,"\id \times \G_m^{\mu_i}"]\ar[d]&
(X-\partial X)\times \G_m^n\ar[d]
\\
\mu_i^*(V)\ar[r]&
V,
\end{tikzcd}
\]
where $\G_m^{\mu_i}$ is the $i$th multiplication
\[
\G_m^{i-1}\times \G_m^2 \times \G_m^{n-i}
\xrightarrow{\id \times \mu\times \id}
\G_m^{i-1}\times \G_m \times \G_m^{n-i}
\]
induced by the multiplication $\mu\colon \G_m^2\to \G_m$.
\end{prop}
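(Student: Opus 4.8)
The plan is to follow the template of Construction~\ref{boundary.38}: produce $\mu_i^*(V)$ as a (normalized) closure and verify everything torically after a Zariski-local reduction on $X$. (For $n=1$, $i=1$, $X=\Spec(\F_1)$, and $V=\Gmlog$ this will recover $\Bl_{(\infty,0)+(0,\infty)}(\square^2)\to\square$ together with the summation map of Construction~\ref{logSH.17}.) Fix $V\in\Sdiv_X^n$ with structure morphism $g\colon V\to X\times(\Gmlog)^n$. Since $g$ is a dividing cover it is an isomorphism over the locus with trivial log structure, so $V-\partial V\cong(X-\partial X)\times\G_m^n$. The torus morphism $\id\times\G_m^{\mu_i}\colon(X-\partial X)\times\G_m^{n+1}\to(X-\partial X)\times\G_m^n=V-\partial V\hookrightarrow\ul{V}$ together with the open immersion $(X-\partial X)\times\G_m^{n+1}\hookrightarrow\ul{X}\times(\P^1)^{n+1}$ agree after composition to $\ul{X}$, hence define a locally closed immersion of $(X-\partial X)\times\G_m^{n+1}$ into $(\ul{X}\times(\P^1)^{n+1})\times_{\ul{X}}\ul{V}$ (a section of a separated map over an open). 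This construction is Zariski-local on $X$, so we let $\ul{\mu_i^*(V)}$ be the normalization of the closure of this locally closed immersion, which is a toric monoid scheme by Lemma~\ref{image.3}, and we let $\mu_i^*(V)\in\cSm/\F_1$ be the log monoid scheme with underlying $\ul{\mu_i^*(V)}$ and $\mu_i^*(V)-\partial\mu_i^*(V):=(X-\partial X)\times\G_m^{n+1}$. The two projections from the ambient fiber product give $\mu_i^*(g)\colon\mu_i^*(V)\to X\times(\Gmlog)^{n+1}$ and $\mu_i^*(V)\to V$, the latter restricting to $\id\times\G_m^{\mu_i}$ on the dense torus; this yields the claimed square
\[
\begin{tikzcd}
(X-\partial X)\times \G_m^{n+1}\ar[r,"\id \times \G_m^{\mu_i}"]\ar[d]&
(X-\partial X)\times \G_m^n\ar[d]
\\
\mu_i^*(V)\ar[r]&
V,
\end{tikzcd}
\]
whose commutativity reduces to commutativity on the dense torus by Proposition~\ref{logmonoid.5}.

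Next I would check $\mu_i^*(V)\in\Sdiv_X^{n+1}$, i.e.\ that $\mu_i^*(g)$ is a dividing cover in $\lSm/\F_1$ with $\mu_i^*(g)^{-1}(X\times\A_\N^{n+1})\cong X\times\A_\N^{n+1}$. Properness and birationality of $\ul{\mu_i^*(g)}$ are automatic: it is a closure, proper over $\ul{X}\times(\P^1)^{n+1}$ by base change of $\ul{V}\to\ul{X}$, and an isomorphism over the dense torus. By Examples~\ref{msch.13} and~\ref{logmonoid.9} it then suffices to show $\ul{\mu_i^*(g)}$ is associated to a subdivision of fans, and this is Zariski-local on $X$; as in Construction~\ref{boundary.38} we may assume $X=\A_\N^r$, the $\A^s\times\G_m^t$ factor being carried along as an identity because $\G_m^{\mu_i}$ only touches the $\Gmlog$-coordinates. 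Write $\Sigma_V$ for the fan of $\ul{V}$ in $\Z^{r+n}$ (it subdivides the fan of $\A^r\times(\P^1)^n$, so has support $\Cone(e_1,\ldots,e_r)\times\R^n$) and $\Phi_i\colon\Z^{r+n+1}\to\Z^{r+n}$ for the surjection adding the coordinates in positions $r+i$ and $r+i+1$. The standard description of the fan of the normalized closure of a subtorus identifies the fan of $\ul{\mu_i^*(V)}$ with the common refinement of $\Phi_i^{-1}(\Sigma_V)$ and the fan of $\ul{X}\times(\P^1)^{n+1}$; since these two have the same support $\Cone(e_1,\ldots,e_r)\times\R^{n+1}$ and each cone of the refinement is contained in a cone of the fan of $\ul{X}\times(\P^1)^{n+1}$, the refinement is a genuine subdivision, so $\mu_i^*(g)$ is a dividing cover. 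For standardness, the cone $\sigma_0:=\Cone(e_1,\ldots,e_{r+n+1})$ corresponding to $X\times\A_\N^{n+1}$ satisfies $\Phi_i(\sigma_0)=\Cone(e_1,\ldots,e_{r+n})$, which lies in $\Sigma_V$ precisely because $V\in\Sdiv_X^n$; hence $\sigma_0$ is contained in the single cone $\Phi_i^{-1}(\Cone(e_1,\ldots,e_{r+n}))$ of $\Phi_i^{-1}(\Sigma_V)$, and being a maximal cone of the fan of $\ul{X}\times(\P^1)^{n+1}$ it appears undivided in the common refinement, giving $\mu_i^*(g)^{-1}(X\times\A_\N^{n+1})\cong X\times\A_\N^{n+1}$. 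Finally, the morphism $\mu_i^*(V)\to V$ upgrades to a morphism in $\SmlSm/\F_1$ by Proposition~\ref{logmonoid.25} since it sends the trivial-log locus into the trivial-log locus.

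For functoriality, a morphism $V'\to V$ in $\Sdiv_X^n$ corresponds, by Proposition~\ref{boundary.62} and the fan description of dividing covers, to a refinement $\Sigma_{V'}$ of $\Sigma_V$; then $\Phi_i^{-1}(\Sigma_{V'})$ refines $\Phi_i^{-1}(\Sigma_V)$, the common refinements inherit a map of fans, and this produces $\mu_i^*(V')\to\mu_i^*(V)$ over $X\times(\Gmlog)^{n+1}$ restricting to $\id\times\G_m^{\mu_i}$ on tori; uniqueness of such a morphism (density of the torus, Proposition~\ref{logmonoid.5}) makes $\mu_i^*$ a functor and the squares natural. I expect the main obstacle to be the toric bookkeeping behind the identification of the fan of the subtorus closure: $\Phi_i^{-1}(\Sigma_V)$ is only a generalized fan, its cones containing the line $\ker\Phi_i$, so one must verify that intersecting with the complete fan of $(\P^1)^{n+1}$ in the larger lattice yields an honest fan which is exactly the fan of the \emph{normalized} closure of the non-extending graph (the non-normalized closure being in general singular), and one must check that this passes through the reduction stripping the trivial-log factor of $X$. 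Everything else follows the pattern of Construction~\ref{boundary.38} and Proposition~\ref{boundary.21}.
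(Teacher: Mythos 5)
Your construction is correct in substance, but it takes a genuinely different route from the paper. The paper's proof compactifies the multiplication map once and for all: it equips $\ul{\Bl_{(0,\infty)+(\infty,0)}(\square^2)}$ with the log structure $D$ whose interior is $\G_m^2$, so that $\mu\colon\G_m^2\to\G_m$ extends to $\mu''\colon D\to\Gmlog$, and then defines $\mu_i^*(V)$ as the pullback of $V$ along $\id\times\mu''\times\id$, quoting stability of dividing covers under pullback (Proposition \ref{logmonoid.13}) and composition (Proposition \ref{logmonoid.12}) together with the evident preservation of the standardness condition. You instead resolve the indeterminacy of $X\times(\Gmlog)^{n+1}\dashrightarrow V$ directly, as the normalized graph closure, and verify the fan combinatorics by hand (common refinement of the source fan with $\Phi_i^{-1}(\Sigma_V)$, support computation, non-subdivision of $\Cone(e_1,\dots,e_{r+n+1})$). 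Your toric verifications are correct, and in fact the two constructions produce the same object, since the fan of $\ul{D}$ is exactly the common refinement of $(\P^1)^2$ with $\Phi^{-1}(\P^1)$; the paper's route is shorter because the universal case $D\to\Gmlog$ plus the general stability lemmas replace your cone-by-cone bookkeeping, while yours is more self-contained and makes the resulting fan explicit. Functoriality is also handled differently: the paper gets it for free from naturality of pullback, whereas you argue via refinements of fans and the uniqueness of morphisms in $\Sdiv_X^{n+1}$ (Proposition \ref{boundary.62}), which also works.

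Two blemishes. First, the step you flag as the ``main obstacle'' --- that the normalized closure of the graph of a torus homomorphism is the toric monoid scheme of the common refinement --- is asserted rather than proved; it does hold, and the dual-monoid computation is the same one carried out in the proof of Proposition \ref{logmonoid.13} (the saturation of $P'+\Phi_i^\vee(Q_V)$ is $(\sigma'\cap\Phi_i^{-1}(\sigma_V))^\vee$), so you should either cite that argument or reproduce it. Second, your appeal to Proposition \ref{logmonoid.25} is misplaced: that result is about membership in $\SmlSm/\F_1$ and requires smoothness of the underlying monoid scheme, which neither $\ul{V}$ nor $\ul{\mu_i^*(V)}$ need satisfy; but none is needed, since $\Sdiv_X^{n+1}$ only asks for objects of $\lSm/\F_1$, and the upgrade of the underlying morphisms to morphisms of log monoid schemes is Proposition \ref{logmonoid.5} (uniqueness via Proposition \ref{image.6}).
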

\begin{proof}

We refer to \textup{Construction \ref{logSH.17}} for $\mu'\colon \Bl_{(0,\infty)+(\infty,0)}(\square^2)\to \square$.
Consider $D\in \SmlSm/\F_1$ such that $\ul{D}:=\ul{\Bl_{(0,\infty)+(\infty,0)}(\square^2)}$ and $D-\partial D:=\G_m^2$.
We have the morphism $\mu''\colon D\to \Gmlog$ extending the multiplication morphism $\G_m^2\to \G_m$,
and we have $\ul{\mu''}=\ul{\mu'}$.
Let $g\colon V\to X\times (\Gmlog)^n$ be the dividing cover corresponding to $Y\to X\times \square^n$,
and let $g'\colon V'\to X\times (\Gmlog)^{n+1}$ be its pullback along the morphism
\[
\id \times \mu''\times \id \colon (\Gmlog)^{i-1}\times D \times (\Gmlog)^{n-1-i} \to (\Gmlog)^{i-1}\times \Gmlog \times (\Gmlog)^{n-1-i}.
\]
Observe that $g'$ is a dividing cover by Proposition \ref{logmonoid.13} since $\id \times \mu''\times \id$ is a morphism of the form $\T_\Delta\to \T_\Sigma$ for some subdivision of fans $\Delta\to \Sigma$.
We have
\[
g^{-1}((X-\partial X)\times \A_\N^n)\cong (X-\partial X)\times \A_\N^n,
\]
which implies
\[
g'^{-1}((X-\partial X)\times \A_\N^{n+1})\cong (X-\partial X)\times \A_\N^{n+1}.
\]
Hence we have $V'\in \Sdiv_X^{n+1}$.
The construction of $V'$ is natural, so we have a functor $\mu_i^*\colon \Sdiv_X^n \to \Sdiv_X^{n+1}$ such that $\mu_i^*(V):=V'$,
and $\mu_i^*$ satisfies the desired property.
\end{proof}

\begin{df}
Let $\SBl^\cube(\F_1)$ be the category of triples $(Y, X,\ul{\unit}^n)$ with $Y\in \SBl_X^n$, $X\in \SmlSm/\F_1$, and $n\in \N$ whose morphisms are of the form
\[
(u,f,\alpha)\colon (Y',X',\ul{\unit}^{n'})\to (Y,X,\ul{\unit}^n)
\]
such that the square
\[
\begin{tikzcd}[column sep=huge]
(X'-\partial X')\times \A^{n'}\ar[d]\ar[r,"(f-\partial f)\times \A^{\alpha}"]&
(X-\partial X)\times \A^n\ar[d]
\\
Y'\ar[r,"u"]&
Y
\end{tikzcd}
\]
commutes.

A morphism $(u,f,\alpha)\colon (Y',X',\ul{\unit}^{n'})\to (Y,X,\ul{\unit}^n)$ is an \emph{admissible blow-up} if $X'\to X$ and $\ul{\unit}^{n'}\to \ul{\unit}^{n}$ are isomorphisms.
Let $\SAdm$ denote the class of admissible blow-ups.
\end{df}

\begin{df}
\label{boundary.27}
Argue as in Propositions \ref{boundary.64} and \ref{boundary.65} but use Proposition \ref{boundary.30} also to obtain an equivalence of categories
\[
\SBl^\cube(\F_1)[\SAdm^{-1}]
\simeq
\SmlSm/\F_1\times \ECube.
\]
Let $\cF$ be a presheaf of complexes on $\SmlSm/\F_1$.
Consider the presheaf of complexes $\cF$ on $\SBl^\cube(\F_1)$ given by $\cF(Y,X,\ul{\unit}^n):=\cF(Y)$ for $Y\in \SBl_X^n$, $X\in \SmlSm/\F_1$, and $n\in \N$.
Argue as in Definition \ref{boundary.32} to obtain
\[
\logCLog_\bullet \cF:=
L_\SAdm \cF\in \PSh(\SmlSm/\F_1\times \ECube,\Sp).
\]
Observe that we have a natural isomorphism
\begin{equation}
\label{boundary.27.2}
\logCLog_n\cF(X)
\cong
\widetilde{\Log}_n\cF(X)
\end{equation}
for all $X\in \SmlSm/\F_1$ and $n\in \N$.
Let $\logCLog\cF$ be the presheaf of complexes on $\SmlSm/\F_1$ given by
\[
\logCLog\cF(X)
:=
\lvert \logCLog_\bullet\cF(X)\rvert_\mathrm{cube}
\cong
\big\lvert \colim_{Y\in (\SBl_X^\bullet)^\op}\cF(Y)\big\rvert_\mathrm{cube}
\]
for $X\in \SmlSm/\F_1$,
where $\lvert - \rvert_\mathrm{cube}$ is the cubical geometric realization in Construction \ref{logSH.14}.
The construction of $\logCLog \cF$ is natural in $\cF$,
so we obtain a functor
\[
\logCLog
\colon
\PSh(\SmlSm/\F_1,\rD(\Z))
\to
\PSh(\SmlSm/\F_1,\rD(\Z)).
\]
\end{df}

\begin{df}
The \emph{cubical logarithm functor} is
\[
\CLog:=\logCLog\omega^\sharp
\colon
\PSh(\Sm/\F_1,\rD(\Z))
\to
\PSh(\SmlSm/\F_1,\rD(\Z)).
\]
Explicitly,
for a presheaf $\cF$ of complexes on $\Sm/\F_1$ and $X\in \SmlSm/\F_1$,
we have
\begin{equation}
\label{boundary.27.1}
\CLog \cF(X)
:=
\lvert \Log_\bullet\cF(X)\rvert_\mathrm{cube}
=
\big\lvert \colim_{Y\in (\SBl_X^\bullet)^\op}\cF(\ul{Y})\big\rvert_\mathrm{cube}
\end{equation}
using Proposition \ref{boundary.31}.
Observe also that we have a natural morphism
\[
\cF(X) \to \CLog \cF(X).
\]
\end{df}

\begin{exm}
\label{boundary.60}
Assume $X=\square^r$ with an integer $r\geq 0$.
Let $1\leq i\leq n$ be integers,
and consider the morphisms $\delta_{i,0},\delta_{i,1}\colon \ul{\unit}^{n-1}\to \ul{\unit}^n$.

Let $Y\in \SBl_X^n$,
and consider the fan $\Sigma$ corresponding to $Y$.
We have the functor $\delta_{i,0}^*\colon \SBl_X^n\to \SBl_X^{n-1}$ as in Proposition \ref{boundary.21}.
By Lemma \ref{image.8},
$\ul{\delta_{i.0}^*(Y)}$ corresponds to the fan $V(e_{r+i})$ if $\{e_1,\ldots,e_{r+n}\}$ denotes the standard coordinates in $\square^{r+n}$.

On the other hand,
we have the functor $\delta_{i,1}^*\colon \Sdiv_X^n\to \Sdiv_X^{n-1}$ as in Construction \ref{boundary.35}.
Consider $V\in \Sdiv_X^n$ corresponding to $Y$.
Then $\ul{\delta_{i.1}^*(V)}$ corresponds to the fan that is the restriction of $\Sigma$ to the lattice $\Z^{r+i-1}\times 0 \times \Z^{n-i}$ since $(\P^1)^{r+i-1}\times \{1\}\times (\P^1)^n$ can be identified with the restriction of the fan $(\P^1)^{n+r}$ to the lattice $\Z^{r+i-1}\times 0 \times \Z^{n-i}$.
It follows that we have $\delta_{i,1}^*(V)\in \SmlSm/\F_1$,
so we have an induced functor $\delta_{i,1}^*\colon \SBl_X^n \to \SBl_X^{n-1}$.
\end{exm}

In the remaining part of this section,
we discuss several properties of $\Log \cF$ and $\CLog\cF$.

\begin{prop}
\label{boundary.22}
Let $\cF$ be a presheaf of spectra on $\SmlSm/\F_1$.
Then $\widetilde{\Log}_n \cF$ is dividing invariant for every integer $n\geq 0$.
\end{prop}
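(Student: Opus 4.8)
The plan is to fix an integer $n\geq 0$ and a dividing cover $f\colon X'\to X$ in $\SmlSm/\F_1$, and to show that the pullback $f^*\colon \widetilde{\Log}_n\cF(X)\to\widetilde{\Log}_n\cF(X')$ is an equivalence. Using the description $\widetilde{\Log}_n\cF(X)\cong\colim_{Y\in(\SBl_X^n)^\op}\cF(Y)$ from Definition \ref{boundary.32}, this reduces to a cofinality statement: I will identify $\SBl_{X'}^n$ with the full subcategory $\cE\subseteq\SBl_X^n$ spanned by those standard blow-ups $Y\to X\times\square^n$ that factor through the canonical morphism $X'\times\square^n\to X\times\square^n$, and show that $\cE^\op$ is cofinal in $(\SBl_X^n)^\op$.

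First I would check that $X'\times\square^n$ is itself an object of $\SBl_X^n$: the morphism $X'\times\square^n\to X\times\square^n$ is proper and restricts to an isomorphism over the trivial locus $(X-\partial X)\times\A^n$, since $f$, being a dividing cover, is an isomorphism over $X-\partial X$; it is a standard blow-up because its base change $X'\times(\Gmlog)^n\to X\times(\Gmlog)^n$ is a dividing cover by Proposition \ref{logmonoid.12} and has the same underlying morphism of schemes. Next, for $Y'\in\SBl_{X'}^n$, composing its structure morphism with $X'\times\square^n\to X\times\square^n$ exhibits $Y'$ as an admissible blow-up of $X\times\square^n$ (properness and isomorphism over the trivial locus are stable under composition), and composing the witnessing dividing cover $V'\to X'\times(\Gmlog)^n$ with $X'\times(\Gmlog)^n\to X\times(\Gmlog)^n$ exhibits it as a standard blow-up over $X$ (dividing covers compose, Proposition \ref{logmonoid.11}); hence $\SBl_{X'}^n\subseteq\SBl_X^n$, and every object of $\SBl_{X'}^n$ lies in $\cE$. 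Conversely, if $Y\in\cE$, then $Y\to X'\times\square^n$ is proper (its composite with the separated morphism $X'\times\square^n\to X\times\square^n$ is) and an isomorphism over the trivial locus of $X'\times\square^n$, hence an admissible blow-up, and the witnessing dividing cover $V\to X\times(\Gmlog)^n$ factors through a dividing cover $V\to X'\times(\Gmlog)^n$ with the same underlying morphism by the rigidity of dividing covers (Proposition \ref{logmonoid.5}); so $Y\in\SBl_{X'}^n$. Since morphisms in both $\SBl_{X'}^n$ and $\cE$ are uniquely determined by the induced morphism on the dense trivial locus (Proposition \ref{boundary.62}), the two categories coincide.

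Then I would invoke Proposition \ref{boundary.17}: $\SBl_X^n$ is cofiltered and contains both a given object $Z$ and $X'\times\square^n$, so some $W\in\SBl_X^n$ admits morphisms $W\to Z$ and $W\to X'\times\square^n$, and the latter forces $W\in\cE$; hence $\cE^\op$ is cofinal in $(\SBl_X^n)^\op$. Consequently $\widetilde{\Log}_n\cF(X)=\colim_{(\SBl_X^n)^\op}\cF\cong\colim_{\cE^\op}\cF=\colim_{(\SBl_{X'}^n)^\op}\cF=\widetilde{\Log}_n\cF(X')$. A short unwinding of the calculus of right fractions for $\SAdm$ in Definitions \ref{boundary.74} and \ref{boundary.32} shows that this chain of equivalences is precisely $f^*$ (on the cofinal piece $\cE^\op$ the transition maps defining $f^*$ are identities), so $f^*$ is an equivalence.

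The step I expect to be the main obstacle is the identification $\SBl_{X'}^n=\cE$, and in particular the direction showing that a standard blow-up of $X\times\square^n$ lying over $X'\times\square^n$ is again a standard blow-up of $X'\times\square^n$: this is exactly where one must use that the underlying scheme of a standard blow-up carries the log structure of a dividing cover over $X'\times(\Gmlog)^n$, which rests on the rigidity and cancellation properties of dividing covers established in Appendix \ref{logmonoid}. The remaining arguments are routine manipulations of filtered colimits.
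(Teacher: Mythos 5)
Your proof is correct, and its core mechanism is the same as the paper's: in both arguments the key point is that ``compose with the dividing cover $f$'' embeds the blow-up category of $X'$ cofinally into that of $X$, and the resulting comparison of colimits composed with $f^*$ is induced by identity maps, forcing $f^*$ to be an equivalence. The implementations differ in two ways. First, the paper works with the categories $\Sdiv_X^n$ via the alternative description \eqref{boundary.66.1} of Remark \ref{boundary.66}, precisely so that both the pullback functor $f^*$ (Construction \ref{boundary.35}) and the composition functor $f_!$ are available with no extra work; cofinality of $f_!$ is then deduced from the adjunction $f_!\dashv f^*$ together with $\id\cong f^*f_!$, which uses that a dividing cover is a monomorphism ($X'\times_X X'\cong X'$). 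You stay inside $\SBl_X^n$ throughout, which forces you to prove the cancellation statement that a standard blow-up of $X\times \square^n$ factoring through $X'\times \square^n$ is a standard blow-up of $X'\times \square^n$; you rightly flag this as the main obstacle, and it does go through via Proposition \ref{logmonoid.5} together with the fact that proper birational morphisms of toric monoid schemes are subdivisions (Example \ref{msch.13}), but it is exactly the work the paper's detour through $\Sdiv$ is designed to avoid. Second, you obtain cofinality from cofilteredness of $\SBl_X^n$ plus the presence of $X'\times\square^n$ as an object, rather than from an adjunction; both are valid, and this is the same style of argument as Proposition \ref{boundary.31}. Two citation slips: closure of dividing covers under composition is Proposition \ref{logmonoid.12}, not \ref{logmonoid.11}, and stability under pullback is Proposition \ref{logmonoid.13}.
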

\begin{proof}
Let $X'\to X$ be a dividing cover in $\SmlSm/\F_1$.
Then we have the functor $f_!\colon \Sdiv_{X'}^n\to \Sdiv_X^n$ given as follows.
Let $V'\in \Sdiv_{X'}^n$.
Then the composite $V'\to X'\times (\Gmlog)^n\to X\times (\Gmlog)^n$ is a dividing cover,
and hence we have $V'\in \Sdiv_X^n$.
This construction is natural, so we obtain $f_!$.

Together with the description of the functor $f^*\colon \Sdiv_X^n\to \Sdiv_{X'}^n$ in Construction \ref{boundary.35},
we see that $f_!$ is left adjoint to $f^*$.
Since $X'\cong X'\times_X X'$,
we also have $\id \cong f^*f_!$. 
Using the counit $f_!f^*\to \id$,
we see that $f_!$ is cofinal.
Consider the morphisms
\[
\colim_{Y'\in (\Sdiv_{X'}^n)^{\op}}\cF^\Sdiv(Y')
\xrightarrow{v}
\colim_{Y\in (\Sdiv_X^n)^{\op}}\cF^\Sdiv(Y)
\xrightarrow{u}
\colim_{Y'\in (\Sdiv_{X'}^n)^{\op}}\cF^\Sdiv(Y')
\]
induced by $f_!$ and $f^*$.
Since $f_!$ is cofinal, $v$ is an isomorphism.
Furthermore,
$uv$ is obtained by taking the colimits of the identity morphisms,
so $uv$ is an isomorphism.
It follows that $u$ is an isomorphism,
which implies the claim together with \eqref{boundary.66.1}.
\end{proof}

\begin{lem}
\label{boundary.18}
Let $j\colon X'\to X$ be an open immersion in $\SmlSm/\F_1$.
Then the functor $j^*\colon \Sdiv_{X}^n\to \Sdiv_{X'}^n$ is essentially surjective and hence cofinal for every integer $n\geq 0$.
\end{lem}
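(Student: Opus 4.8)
The plan is to reduce both assertions to combinatorics of fans: essential surjectivity becomes an extension statement for subdivisions, and the ``hence cofinal'' is then a formal consequence of the fact that the categories in play are cofiltered preorders.

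First I would pass to the combinatorial model. For $X\in\SmlSm/\F_1$, dividing covers of $X\times(\Gmlog)^n$ are classified by subdivisions of the fan $\Sigma$ attached to $X\times(\Gmlog)^n$ (Definition \ref{logmonoid.18}; cf.\ the explicit description in Example \ref{boundary.54}), so that $\Sdiv_X^n$ is equivalent to the poset of those subdivisions $\Delta$ of $\Sigma$ which restrict trivially to the subfan $\Pi\subseteq\Sigma$ corresponding to the open subscheme $X\times\A_\N^n\hookrightarrow X\times(\Gmlog)^n$. Since $j$ is an open immersion, the fan $\Sigma'$ attached to $X'\times(\Gmlog)^n$ is an open subfan of $\Sigma$ (and $\Pi':=\Pi\cap\Sigma'$ is the part corresponding to $X'\times\A_\N^n$), and under these identifications the functor $j^*$ of Construction \ref{boundary.35} is restriction of subdivisions along $\Sigma'\subseteq\Sigma$.

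Next I would prove essential surjectivity. Let $V'\in\Sdiv_{X'}^n$ correspond to a subdivision $\Delta'$ of $\Sigma'$. Because $\Delta'$ is trivial on $\Pi'$, the set $\Delta'\cup\Pi$ is a subdivision of the subfan $\Sigma'\cup\Pi$ of $\Sigma$ (the only thing to check is that cones from $\Delta'$ and from $\Pi$ meet along common faces, which is immediate from triviality on $\Pi'$). I would then extend $\Delta'\cup\Pi$ to a subdivision $\Delta$ of all of $\Sigma$. Here the point is that $\Sigma'\cup\Pi$ is a subfan, so a cone $\tau\in\Sigma\setminus(\Sigma'\cup\Pi)$ has $\mathrm{relint}(\tau)$ disjoint from $|\Sigma'\cup\Pi|$; thus one may process the cones $\tau\in\Sigma\setminus(\Sigma'\cup\Pi)$ in order of increasing dimension and, for each, take the cone from any interior point of $\tau$ over the subdivision of $\partial\tau$ already constructed. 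This yields a subdivision $\Delta$ of $\Sigma$ with $\Delta\cap\Sigma'=\Delta'$ and $\Pi\subseteq\Delta$, so the corresponding $V\in\lSm/\F_1$ is a dividing cover of $X\times(\Gmlog)^n$ which is trivial over $X\times\A_\N^n$; hence $V\in\Sdiv_X^n$ and $j^*V\cong V'$, since $j^*$ is restriction of subdivisions. The hard part is this extension of a subdivision of an open subfan; it is a standard fact of toric geometry (see e.g.\ \cite{CLStoric}), and the coning-off argument indicated above establishes it directly. It is essential here that we only need the underlying monoid scheme of $V$ to be log smooth over $\F_1$, not smooth, so no resolution of singularities is required.

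Finally I would deduce cofinality. By Proposition \ref{boundary.62} there is at most one morphism between any two objects of $\Sdiv_X^n$ and of $\Sdiv_{X'}^n$, and by Proposition \ref{boundary.17} both categories are cofiltered. Fix $V'\in\Sdiv_{X'}^n$ and let $\mathcal K$ be the category whose objects are pairs $(V,\varphi\colon j^*V\to V')$ with $V\in\Sdiv_X^n$ and morphisms the evident ones over $V'$. It is nonempty by essential surjectivity; and given $(V_1,\varphi_1),(V_2,\varphi_2)\in\mathcal K$, cofilteredness of $\Sdiv_X^n$ gives some $V_3$ with morphisms to $V_1$ and $V_2$, while the two induced morphisms $j^*V_3\to V'$ coincide by thinness, so $V_3$ together with that morphism maps to both $(V_1,\varphi_1)$ and $(V_2,\varphi_2)$. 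Therefore $\mathcal K$ is a nonempty cofiltered preorder, hence weakly contractible, and by Quillen's Theorem A the functor $j^*$ is cofinal.
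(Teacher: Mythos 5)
Your proof is correct, but it takes a genuinely different route from the paper's. The paper argues Zariski-locally: it writes $X=\A_\N^d\times\G_m^e\times\A^m$ and $X'=\A_\N^{d'}\times\G_m^{e'}\times\A^{m'}$, observes that $V'$ splits as $W'\times\G_m^{e'+m'}$ with $W'$ a dividing cover of $\A_\N^{d'}\times(\Gmlog)^n$, and extends by the product $V:=\A_\N^{d-d'}\times\G_m^e\times\A^m\times W'$; the whole point of choosing this particular (product) extension is that it is natural in the local charts, so the local constructions glue. You instead work with the single global fan and extend the given subdivision by coning off interior lattice points of the unsubdivided cones, processed in order of increasing dimension; this is non-canonical but needs no gluing, and it correctly exploits that objects of $\Sdiv_X^n$ need only lie in $\lSm/\F_1$, so no smoothness of the extension is required. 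One point where you must be precise: dividing covers of $X\times(\Gmlog)^n$ correspond to subdivisions of the fan of the \emph{sharpened} object $(X\times(\Gmlog)^n)^\sharp$ (Construction \ref{logmonoid.16}, Example \ref{boundary.54}), not to arbitrary subdivisions of the full fan of $\ul{X}\times(\P^1)^n$. Your coning-off must therefore take place inside the sharpened fan; otherwise the new rays could be introduced in directions where the log structure is trivial, and the resulting morphism would fail to be locally of the form $\T_u\times\id$ as Definition \ref{logmonoid.18} requires. Read with $\Sigma$ the sharpened fan --- which is what your cited Example \ref{boundary.54} actually describes, and which is consistent with your stating only the single condition ``trivial on $\Pi$'' --- the argument goes through. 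Finally, your deduction of cofinality from essential surjectivity via thinness (Proposition \ref{boundary.62}) and cofilteredness (Proposition \ref{boundary.17}) is exactly the content the paper's ``hence'' suppresses.
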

\begin{proof}
Let $V'\in \Sdiv_{X'}^n$.
We have the induced morphism $V'^\sharp \to X'^\sharp \times (\Gmlog)^n$,
see Construction \ref{logmonoid.16} for $X'^\sharp$ and $V'^\sharp$.
For the local case $X=\A_\N^d \times \G_m^e \times \A^m$ and $X'=\A_\N^{d'} \times \G_m^{e'}\times \A^{m'}$ with integers $0\leq d'\leq d$, $0\leq e\leq e'$, and $0\leq m'\leq m$ such that $d+e+m=d'+e'+m'$,
we have $X^\sharp \cong \A_\N^d \times \G_m^{e+m}$ and $X'^\sharp \cong \A_\N^{d'}\times \G_m^{e'+m'}$.
There exists a dividing cover $W'\to \A_\N^{d'} \times (\Gmlog)^n$ such that $V'\cong W'\times \G_m^{e'+m'}$.
Since $(X'-\partial X')\times \A_\N^n$ is an open subscheme of $V'$,
$\G_m^{d'}\times \A_\N^n$ is an open subscheme of $W'$.
Consider the induced dividing cover $V:=\A_\N^{d-d'}\times \G_m^e \times \A^m\times W'\to X\times (\Gmlog)^n$.
Then $(X-\partial X)\times \A_\N^n$ is an open subscheme of $V$,
so we have $V\in \Sdiv_X^n$.
Furthermore, we have $j^*Y\cong Y'$.

The above local construction of $V$ is natural, so we can glue the local constructions to construct $V$ for general $X$ such that $j^*V\cong V'$.
\end{proof}

\begin{lem}
\label{boundary.36}
Let $p\colon X\times \A^1\to X$ be the projection with $X\in \SmlSm/\F_1$.
Then the functor $p^*\colon \Sdiv_X^n\to \Sdiv_{X\times \A^1}^n$ is an equivalence of categories for every integer $n\geq 0$.
\end{lem}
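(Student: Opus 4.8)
The plan is to show that $p^*$ is fully faithful and essentially surjective.

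Since $\A^1$ carries the trivial log structure, we have $\partial(X\times \A^1)=\partial X\times \A^1$, hence $(X\times \A^1)-\partial(X\times \A^1)\cong (X-\partial X)\times \A^1$; in particular the condition defining $\Sdiv_{X\times \A^1}^n$ in Definition \ref{boundary.48} refers only to the open subscheme $(X-\partial X)\times \A^1\times \A_\N^n$. For full faithfulness, I would use the $1$-section $i_1\colon X\to X\times \A^1$, which is a morphism in $\SmlSm/\F_1$ with $p\circ i_1=\id_X$. By Construction \ref{boundary.35} (functorially in the base) this yields $i_1^*\colon \Sdiv_{X\times \A^1}^n\to \Sdiv_X^n$ together with a natural isomorphism $i_1^*\circ p^*\cong \id$. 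By Proposition \ref{boundary.62} every hom-set in $\Sdiv_X^n$ and $\Sdiv_{X\times \A^1}^n$ has at most one element; hence $p^*$ is faithful, and conversely any $\phi\colon p^*V'\to p^*V$ gives $i_1^*\phi\colon V'\to V$ under the above isomorphism, so $\Hom(V',V)$ is nonempty whenever $\Hom(p^*V',p^*V)$ is. Thus $p^*\colon \Hom(V',V)\to \Hom(p^*V',p^*V)$ is a bijection for all $V,V'$, i.e.\ $p^*$ is fully faithful.

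For essential surjectivity, both the assignment $V\mapsto p^*V$ and the gluing of standard dividing covers are Zariski-local on $X$ (as in the final step of the proof of Lemma \ref{boundary.18}), so it suffices to treat the case $X=\A_\N^d\times \A^s\times \G_m^t$, where $X\times \A^1\cong \A_\N^d\times \A^{s+1}\times \G_m^t$. As in the proof of Lemma \ref{boundary.18}, passing to sharpenings (Construction \ref{logmonoid.16}) identifies $\Sdiv_X^n$ with the category of dividing covers $W\to \A_\N^d\times (\Gmlog)^n$ restricting to an isomorphism over $\G_m^d\times \A_\N^n$, a category that does not depend on $s$ and $t$; moreover $p^*$ is compatible with this identification because the projection $\A_\N^d\times \A^{s+1}\times \G_m^t\to \A_\N^d\times \A^{s}\times \G_m^t$ acts trivially on the $W$-component. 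Hence $p^*$ is an equivalence in the local case, and the local quasi-inverses, which agree on overlaps up to unique isomorphism by the full faithfulness established above, glue to produce for every $V'\in \Sdiv_{X\times \A^1}^n$ an object $V\in \Sdiv_X^n$ with $p^*V\cong V'$, for general $X$.

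The formal part — full faithfulness and the bookkeeping with the cofiltered posets $\Sdiv_X^n$ — is routine. The main obstacle is the essential-surjectivity step, namely showing that every standard dividing cover over $X\times \A^1\times (\Gmlog)^n$ is pulled back from one over $X\times (\Gmlog)^n$; concretely, one must extract from the toric (log-smooth) local description the fact that a subdivision with the required triviality over the interior does not interact with the trivial-log-structure factor $\A^1$, which is precisely the content of the reduction to $\A_\N^d\times(\Gmlog)^n$ already performed in the proof of Lemma \ref{boundary.18}.
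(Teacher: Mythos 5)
Your proof is correct and follows essentially the same route as the paper: reduce to the local case $X\cong\A_P\times\A^m$, observe via the toric description that dividing covers of $X\times\A^1\times(\Gmlog)^n$ are all of the form $V\times\A^1$ with $V$ a dividing cover of $X\times(\Gmlog)^n$, and glue the local constructions. Your explicit full-faithfulness argument via the $1$-section and Proposition \ref{boundary.62} is a welcome addition that the paper leaves implicit, but it does not change the substance of the argument.
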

\begin{proof}
It suffices to show that the categories of dividing covers of $X\times (\Gmlog)^n$ and $X\times \A^1\times (\Gmlog)^n$ are equivalent.
In the local case $X=\A_P\times \A^m$ with a toric monoid $P$ and integer $m\geq 0$, any dividing cover of $X\times (\Gmlog)^n$ (resp.\ $X\times \A^1\times (\Gmlog)^n$) is of the form $\T_\Sigma\times \A^m$ (resp.\ $\T_\Sigma\times \A^1\times \A^m$) for a subdivision $\Sigma$ of $\A_P \times (\P^1)^n$.
Hence the claim holds.
In particular,
for every dividing cover $W\to X\times \A^1\times (\Gmlog)^n$,
we can naturally construct a dividing cover $V\to X\times (\Gmlog)^n$ such that $W\cong V\times \A^1$.

For the general case, glue the local constructions to conclude.
\end{proof}

\begin{prop}
\label{boundary.1}
Let $\cF$ be a presheaf of spectra (resp.\ complexes) on $\SmlSm/\F_1$.
If $\cF$ is $\A^1$-invariant,
then $\widetilde{\Log} \cF$ (resp.\ $\logCLog \cF$) is $\A^1$-invariant.
If $\cF$ is a Zariski sheaf,
then $\widetilde{\Log} \cF$ (resp.\ $\logCLog \cF$) is a Zariski sheaf.
\end{prop}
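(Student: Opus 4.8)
The plan is to peel off the simplicial (resp.\ cubical) direction first and then reduce everything to the structural facts about the indexing categories $\SBl_X^n$ and $\Sdiv_X^n$ proved above. By \eqref{boundary.32.1} we have $\widetilde{\Log}\cF=\colim_{n\in\bDelta^\op}\widetilde{\Log}_n\cF$, by definition $\logCLog\cF=\lvert\logCLog_\bullet\cF\rvert_{\mathrm{cube}}$, and by \eqref{boundary.27.2} $\logCLog_n\cF\cong\widetilde{\Log}_n\cF$ for every $n$. Both $\colim_{n\in\bDelta^\op}$ and the cubical realization $\lvert-\rvert_{\mathrm{cube}}$ of Construction \ref{logSH.14} preserve colimits — the latter because, the associated cubical complex being bounded below, it is a filtered colimit of finite totalizations, each of which is a finite iterated cofiber, and each level is itself a finite limit hence (by stability) a finite colimit of $\cF$-values. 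Consequently both functors preserve objectwise equivalences, hence preserve $\A^1$-invariance, and preserve cocartesian squares of presheaves; and in the stable categories $\Sp$ and $\rD(\Z)$ a square is cocartesian iff it is cartesian, while (as for $\Sm/\F_1$, by the cd-structure for the Zariski topology) a presheaf of spectra or complexes on $\SmlSm/\F_1$ is a Zariski sheaf iff it sends Zariski distinguished squares to cartesian squares. So it suffices to prove, for each $n$, that $\widetilde{\Log}_n\cF$ is $\A^1$-invariant when $\cF$ is and sends Zariski distinguished squares to cartesian squares when $\cF$ does.

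For $\A^1$-invariance I would use $\widetilde{\Log}_n\cF(X)\cong\colim_{Y\in(\SBl_X^n)^\op}\cF(Y)$. By Lemma \ref{boundary.36}, pullback along $p\colon X\times\A^1\to X$ is an equivalence $p^*\colon\Sdiv_X^n\xrightarrow{\simeq}\Sdiv_{X\times\A^1}^n$ with $p^*V\cong V\times\A^1$; since membership in $\SmlSm/\F_1$ is unaffected by $-\times\A^1$, Proposition \ref{boundary.63} (the identification of the essential image of $\rho$) shows $p^*$ restricts to an equivalence $\SBl_X^n\xrightarrow{\simeq}\SBl_{X\times\A^1}^n$ with $p^*Y\cong Y\times\A^1$. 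Therefore
\[
\widetilde{\Log}_n\cF(X\times\A^1)\cong\colim_{Y\in(\SBl_X^n)^\op}\cF(Y\times\A^1)\cong\colim_{Y\in(\SBl_X^n)^\op}\cF(Y)\cong\widetilde{\Log}_n\cF(X),
\]
the middle isomorphism being $\A^1$-invariance of $\cF$ applied to the projections $Y\times\A^1\to Y$; one checks the composite is the map induced by $p$, so $\widetilde{\Log}_n\cF$ is $\A^1$-invariant.

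For Zariski descent, let $j_U\colon U\to X$ and $j_V\colon V\to X$ be open immersions with $U\cup V=X$, and put $W:=U\times_X V$. The crux is that for every open immersion $j\colon X'\to X$ the induced functor $(\SBl_X^n)^\op\to(\SBl_{X'}^n)^\op$ is cofinal: $j^*\colon\Sdiv_X^n\to\Sdiv_{X'}^n$ is essentially surjective by Lemma \ref{boundary.18}, combining this with Propositions \ref{boundary.29} and \ref{boundary.63} shows every $Y'\in\SBl_{X'}^n$ admits a morphism $j^*Y\to Y'$ with $Y\in\SBl_X^n$, and by Propositions \ref{boundary.17} and \ref{boundary.62} the category $\SBl_{X'}^n$ is cofiltered with thin hom-sets, whence the relevant comma categories are nonempty and connected. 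So $\widetilde{\Log}_n\cF(X')\cong\colim_{Y\in(\SBl_X^n)^\op}\cF(j^*Y)$ for $X'\in\{U,V,W\}$. Now for a fixed $Y\in\SBl_X^n$ with structure map $f\colon Y\to X\times\square^n$, the opens $j_U^*Y=f^{-1}(U\times\square^n)$ and $j_V^*Y=f^{-1}(V\times\square^n)$ cover $Y$ with intersection $j_W^*Y=f^{-1}(W\times\square^n)$, so the square with vertices $\cF(Y),\cF(j_U^*Y),\cF(j_V^*Y),\cF(j_W^*Y)$ is cartesian, hence cocartesian, because $\cF$ is a Zariski sheaf. Applying $\colim_{Y\in(\SBl_X^n)^\op}$, which preserves cocartesian squares, together with the cofinality identifications, the square with vertices $\widetilde{\Log}_n\cF$ applied to $X,U,V,W$ is cocartesian, hence cartesian.

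The obstacle I anticipate is exactly the cofinality statement in the last paragraph: one must splice together essential surjectivity of $j^*$ on $\Sdiv_X^n$, the lifting of objects (Proposition \ref{boundary.29}) and morphisms (Proposition \ref{boundary.63}) from $\Sdiv_X^n$ to $\SBl_X^n$, and cofilteredness plus thinness of these categories, all while carefully tracking the passage to opposite categories. A secondary point requiring care is the claim that $\lvert-\rvert_{\mathrm{cube}}$ genuinely preserves colimits (hence cocartesian squares), which is seen by writing it as a filtered colimit of finite totalizations as in the first paragraph; everything else is formal bookkeeping in a stable $\infty$-category.
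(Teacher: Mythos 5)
Your proof is correct and follows essentially the same route as the paper's: reduce to a fixed simplicial/cubical level $n$, deduce $\A^1$-invariance from the equivalence of indexing categories in Lemma \ref{boundary.36}, and deduce Zariski descent from cofinality of restriction along open immersions (Lemma \ref{boundary.18}) together with the fact that a colimit of cocartesian squares is cocartesian. The only difference is bookkeeping: the paper works with $\Sdiv_X^n$ via the alternative description \eqref{boundary.66.1}, into which the cofinality of $\SBl_X^n\to\Sdiv_X^n$ is already packaged, whereas you re-derive the corresponding transport and cofinality statements directly for $\SBl_X^n$.
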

\begin{proof}
Assume that $\cF$ is $\A^1$-invariant.
Then \eqref{boundary.66.1} and Lemma \ref{boundary.36} implies that  $\widetilde{\Log}$ (resp.\ $\logCLog$) is $\A^1$-invariant.

Assume that $\cF$ is a Zariski sheaf.
Let
\[
Q:=
\begin{tikzcd}
W\ar[d]\ar[r]&
V\ar[d]
\\
U\ar[r]&
X
\end{tikzcd}
\]
be a Zariski distinguished square in $\SmlSm/\F_1$.
To show that $\widetilde{\Log}$ (resp.\ $\logCLog$) is a Zariski sheaf,
we need to show that $\widetilde{\Log}(Q)$ (resp.\ $\logCLog(Q)$) is cocartesian.
For this,
it suffices to show that the square $\widetilde{\Log}_n(Q)$ is cartesian for every integer $n\geq 0$ by \eqref{boundary.27.2}.
This square can be identified with the induced square
\[
\begin{tikzcd}
\colim_{Y\in (\Sdiv_X^n)^\op}
\cF^\Sdiv(Y)\ar[d]\ar[r]&
\colim_{Y\in (\Sdiv_X^n)^\op}
\cF^\Sdiv(Y\times_X U)\ar[d]
\\
\colim_{Y\in (\Sdiv_X^n)^\op}
\cF^\Sdiv(Y\times_X V)\ar[r]&
\colim_{Y\in (\Sdiv_X^n)^\op}
\cF^\Sdiv(Y\times_X W)
\end{tikzcd}
\]
by \eqref{boundary.66.1} and Lemma \ref{boundary.18},
which is cocartesian since $\cF$ is a Zariski sheaf.
\end{proof}

As a consequence of Propositions \ref{boundary.22} and \ref{boundary.1},
we obtain an induced functor
\begin{equation}
\Log
\colon \SH_{S^1}(\F_1)
\to
\Sh_\Zar(\SmlSm/\F_1,\Sp)[(\A^1)^{-1},\divi^{-1}].
\end{equation}

A \emph{smooth toric monoid $P$} is an fs monoid $P$ such that $\ol{P}:=P/P^*$ is free and $P^\gp$ is torsion free.
In this case, there is an isomorphism $P\cong \N^r\times \Z^s$ for some integers $r,s\geq 0$.
Let $\Adm_{\A_P\times \square^n}$ denote the category of admissible blow-ups of $\A_P\times \square^n$ in $\SmlSm/\F_1$.

\begin{lem}
\label{boundary.23}
Let $P$ be a smooth toric monoid.
Then for every integer $n\geq 0$,
there is an equivalence of categories
\[
\SBl_{\A_P}^n
\simeq
\Adm_{\A_P\times \square^n}.
\]
\end{lem}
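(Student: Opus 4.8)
The plan is to realise the claimed equivalence as the inclusion functor $\SBl_{\A_P}^n\hookrightarrow\Adm_{\A_P\times\square^n}$, which makes sense because every standard blow-up of $\A_P\times\square^n$ is, by Definition \ref{boundary.9}, in particular an admissible blow-up. Full faithfulness will be essentially formal: in both categories a hom-set has at most one element by Propositions \ref{image.6} and \ref{logmonoid.5}, since $\A_P\times\A^n=(\A_P\times\square^n)-\partial(\A_P\times\square^n)$ is a dense open subscheme of every admissible blow-up and any morphism over $\A_P\times\square^n$ restricts to the identity on it, and $\SBl_{\A_P}^n$ sits inside $\Adm_{\A_P\times\square^n}$ as a full subcategory. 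So the entire content is essential surjectivity: I must show that every admissible blow-up $f\colon Y\to\A_P\times\square^n$ in $\SmlSm/\F_1$ is a standard blow-up, i.e.\ that $\ul f$ agrees with the underlying morphism of some dividing cover of $\A_P\times(\Gmlog)^n$ lying in $\SmlSm/\F_1$.

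For this I would translate everything into fans (cf.\ \cite{Fulton:1436535}). Writing $P\cong\N^r\times\Z^s$, the monoid scheme $\A_P=\A^r\times\G_m^s$ carries the trivial log structure, and $\A_P\times\square^n$ is the log monoid scheme attached to the smooth fan $\Sigma:=\sigma\times(\P^1)^n$ in $\Z^{r+s+n}$, where $\sigma=\Cone(e_1,\dots,e_r)$ is the cone of $\A_P$; its boundary corresponds to the rays $\Cone(-f_i)$ coming from the $\square$-factors, and $\A_P\times\A^n=(\A_P\times\square^n)-\partial(\A_P\times\square^n)$ is the open attached to the subfan $\Sigma^\circ$ of $\Sigma$ consisting of the cones $\tau\times\Cone(\{f_i:i\in I\})$ for faces $\tau$ of $\sigma$ and subsets $I\subseteq\{1,\dots,n\}$. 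Since $f$ is an admissible blow-up with $Y\in\SmlSm/\F_1$, the morphism $\ul f$ is proper birational and $\ul Y$ is smooth toric, so $\ul f$ is associated with a subdivision of fans $\Delta\to\Sigma$ with $\Delta$ smooth; and the fact that $f-\partial f\colon Y-\partial Y\to\A_P\times\A^n$ is an isomorphism forces $\ul f$ to be an isomorphism over the open attached to $\Sigma^\circ$, hence $\Delta$ leaves every cone of $\Sigma^\circ$ unsubdivided; in particular it does not subdivide $\sigma=\sigma\times\{0\}$.

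The key observation is then that $\sigma\times\{0\}$ is precisely the part of $\Sigma$ lying over the trivial--log--structure locus $\A_P\times\G_m^n$ of $\A_P\times(\Gmlog)^n$, whereas $\A_P\times(\Gmlog)^n$ carries the full toric log structure in the $\P^1$-directions; so a subdivision that leaves $\sigma$ alone defines, with the pulled-back log structure, a dividing cover of $\A_P\times(\Gmlog)^n$ in the sense of Definition \ref{logmonoid.18} (this is the toric description of dividing covers recorded in the appendix, cf.\ Example \ref{logmonoid.9}). Applying this to $\Delta$ gives a dividing cover $g\colon V\to\A_P\times(\Gmlog)^n$ with $\ul V=\T_\Delta=\ul Y$ and $\ul g=\ul f$; since $\Delta$ is smooth, $\ul V$ is smooth toric with strict normal crossing boundary, so $V\in\SmlSm/\F_1$ by Proposition \ref{logmonoid.25}. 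As $\ul g=\ul f$, the admissible blow-up $f$ is standard, so $Y\in\SBl_{\A_P}^n$; this proves essential surjectivity and hence the claimed equivalence for every $n\ge 0$.

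I expect the main obstacle to be the bookkeeping in the two toric-dictionary steps and, in particular, the asymmetry between them: one must match the geometric condition ``$f-\partial f$ is an isomorphism over $\A_P\times\A^n$'' with the combinatorial condition ``$\Delta$ refines no cone of $\Sigma^\circ$'', and then recognise that the \emph{strictly weaker} condition ``$\Delta$ does not subdivide $\sigma$'' already suffices for $\Delta$ to be a dividing cover of $\A_P\times(\Gmlog)^n$ --- this gap (allowed subdivisions of the $\{y_i=0\}$-strata for the $\Gmlog$-object but not for the $\square$-object) is exactly why the two categories coincide here, and it rests on $\A_P$ having trivial log structure while the $\Gmlog$-factors have the full one. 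The remaining points (density of $\A_P\times\A^n$, smoothness of $\Delta$, and the induced log structure on $V$ being strict normal crossing) are routine and already covered by the results quoted above.
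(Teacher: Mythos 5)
Your route is the paper's: use Example \ref{msch.13} to identify $\ul{f}$ with a subdivision of fans $\Delta\to\Sigma$, where $\Sigma$ is the fan of $\ul{\A_P}\times(\P^1)^n$, and then observe that the very same subdivision produces a dividing cover $\T_\Delta\to\T_\Sigma\cong\A_P\times(\Gmlog)^n$ with the same underlying morphism, so that every admissible blow-up is standard and the (full) inclusion $\SBl_{\A_P}^n\subset\Adm_{\A_P\times\square^n}$ is an equivalence. The conclusion and the overall structure are correct, and smoothness of $\Delta$ (hence $\T_\Delta\in\SmlSm/\F_1$, as required by Definition \ref{boundary.9}) is indeed inherited from $Y\in\SmlSm/\F_1$.

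However, your setup contains a concrete error. By definition $\A_P=\Spec(\F_1[P],P)$, so for $P\cong\N^r\times\Z^s$ with $r>0$ it is $\A_\N^r\times\G_m^s$ with its \emph{nontrivial} log structure, not $\A^r\times\G_m^s$ with the trivial one; hence $(\A_P\times\square^n)-\partial(\A_P\times\square^n)=\G_m^{r+s}\times\A^n$, not $\A^r\times\G_m^s\times\A^n$. Consequently your claim that admissibility forces $\Delta$ to leave every cone of $\Sigma^\circ$ unsubdivided is false: already for $n=0$, the star subdivision of $\Cone(e_1,e_2)$ gives an admissible blow-up $\Bl_O\A_\N^2\to\A_\N^2$ that subdivides $\sigma$. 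The correct combinatorial content of admissibility is only that $\Delta$ contains $\Cone(f_1,\ldots,f_n)$ (compare Lemma \ref{logSH.23} and Proposition \ref{boundarify.24}). Your proof survives this because the step that actually matters needs none of it: $\A_P\times(\Gmlog)^n=\T_\Sigma$ carries the full toric log structure, so by Example \ref{logmonoid.9} \emph{every} subdivision $\Delta\to\Sigma$ yields a dividing cover $\T_\Delta\to\T_\Sigma$ --- the hypothesis ``$\Delta$ does not subdivide $\sigma$'' in your key observation is not just weaker than necessary, it is entirely superfluous. You should delete the false intermediate claim and the closing discussion of the ``asymmetry'' built on it; the genuine asymmetry is that dividing covers of $\A_P\times(\Gmlog)^n$ correspond to arbitrary subdivisions of $\Sigma$, whereas admissible blow-ups of $\A_P\times\square^n$ correspond to subdivisions containing $\Cone(f_1,\ldots,f_n)$, which is why the passage from $\Adm$ to $\SBl$ loses nothing.
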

\begin{proof}
Let $f\colon Y\to \A_P\times \square^n$ be an admissible blow-up.
Then $\ul{f}$ is proper birational, so $\ul{f}$ is associated with a subdivision of fans $\Delta\to \Sigma$ as noted in Example \ref{msch.13}.
We have the dividing cover $g\colon \T_\Delta\to \T_\Sigma\cong \A_P\times (\Gmlog)^n$ such that $\ul{g}$ is identified with $\ul{f}$.
Hence the inclusion $\SBl_{\A_P}^n\subset \Adm_{\A_P\times \square^n}$ is an equivalence of categories.
\end{proof}

\begin{lem}
\label{boundary.24}
Let $P$ be a smooth toric monoid,
and let $n\in \N$.
If $\cF$ is a presheaf of complexes on $\SmlSm/\F_1$,
then we have natural isomorphisms
\begin{gather*}
\Log \cF(\A_P\times \A^n)
\cong
\Sing^\square L_{\Adm}\ul{\Hom}(\Z[\A^n],\cF)(\A_P),
\\
\CLog \cF(\A_P\times \A^n)
\cong
\CSing^\square L_{\Adm}\ul{\Hom}(\Z[\A^n],\cF)(\A_P),
\end{gather*}
where $\ul{\Hom}$ denotes the internal hom.
\end{lem}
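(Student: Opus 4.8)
The plan is to unwind both sides into iterated colimits and reduce to a comparison of indexing categories one simplicial (resp.\ cubical) level at a time. Using \eqref{boundary.32.1} for $\Log$ (resp.\ \eqref{boundary.27.1} for $\CLog$) together with \eqref{logSH.15.1} for $\Sing^\square L_\Adm$ (resp.\ \eqref{logSH.15.2} for $\CSing^\square L_\Adm$), I would reduce to constructing, for every integer $m\geq 0$, a natural equivalence
\[
\colim_{Y\in(\SBl_{\A_P\times\A^n}^m)^\op}\cF(Y)
\;\cong\;
\colim_{Z\in\Adm_{\A_P\times\square^m}^\op}\ul{\Hom}(\Z[\A^n],\cF)(Z)
\]
that is compatible with the cosimplicial structure maps for the first isomorphism and with the extended cubical structure maps for the second; the two isomorphisms would then follow by taking $\colim_{\bDelta^\op}$ and the cubical geometric realization $\lvert-\rvert_\mathrm{cube}$, respectively. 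Since $\ul{\Hom}(\Z[\A^n],\cF)(Z)=\cF(Z\times\A^n)$ for $Z\in\SmlSm/\F_1$, the real content is to identify the categories $\SBl_{\A_P\times\A^n}^m$ and $\Adm_{\A_P\times\square^m}$ in such a way that $Y$ corresponds to an object of the form $Z\times\A^n$.

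For this level-$m$ comparison I would first use Lemma \ref{boundary.23} to identify $\Adm_{\A_P\times\square^m}\simeq\SBl_{\A_P}^m$. Next, the assignment $V\mapsto V\times\A^n$ sends a dividing cover $V\to\A_P\times(\Gmlog)^m$ in $\Sdiv_{\A_P}^m$ to a dividing cover $V\times\A^n\to(\A_P\times\A^n)\times(\Gmlog)^m$ in $\Sdiv_{\A_P\times\A^n}^m$, and iterating Lemma \ref{boundary.36} $n$ times shows this is an equivalence of categories $\Sdiv_{\A_P}^m\xrightarrow{\sim}\Sdiv_{\A_P\times\A^n}^m$. An object of $\Sdiv_{\A_P}^m$ lies in $\SmlSm/\F_1$ if and only if its product with $\A^n$ does, both conditions being detected on the fan of the underlying monoid scheme (cf.\ Proposition \ref{logmonoid.25}); hence by Proposition \ref{boundary.63} this equivalence restricts to $\SBl_{\A_P}^m\xrightarrow{\sim}\SBl_{\A_P\times\A^n}^m$. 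Unwinding the definitions, the standard blow-up of $(\A_P\times\A^n)\times\square^m$ attached to $Z\in\SBl_{\A_P}^m$ has underlying monoid scheme $\ul{Z}\times\A^n$ and interior $(\A_P-\partial\A_P)\times\A^m\times\A^n$, so it is precisely $Z\times\A^n$. Under these equivalences, both sides of the displayed level-$m$ equivalence become $\colim_{Z\in(\SBl_{\A_P}^m)^\op}\cF(Z\times\A^n)$.

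It then remains to check compatibility with the structure maps. The (extended) simplicial/cubical structure maps on the $\SBl^\bullet$-side are realized by the functors of Construction \ref{boundary.35}, Proposition \ref{boundary.21}, Example \ref{boundary.60}, and Proposition \ref{boundary.30}; each of these is defined by pulling back dividing covers along morphisms built only from the $\Gmlog$-factors --- projections, the $1$-section, the diagonal of $\Gmlog$, and the multiplication $\Bl_{(0,\infty)+(\infty,0)}(\square^2)\to\Gmlog$ --- and never involves the $\A^n$-factor. Hence each commutes, up to canonical isomorphism, with $(-)\times\A^n$, and with the corresponding operations on the $\Adm^\bullet$-side under Lemma \ref{boundary.23}. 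Therefore the level-$m$ equivalences assemble into an equivalence of cosimplicial (resp.\ extended cubical) objects, and passing to $\colim_{\bDelta^\op}$ (resp.\ $\lvert-\rvert_\mathrm{cube}$) gives the two desired isomorphisms.

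I expect the main obstacle to be the middle step: showing that the chain $\Adm_{\A_P\times\square^m}\simeq\SBl_{\A_P}^m\simeq\SBl_{\A_P\times\A^n}^m$ carries $Z$ precisely to $Z\times\A^n$ --- not merely to some object with the same underlying monoid scheme --- and that it intertwines all of the cosimplicial and, in the cubical case, extended cubical operations. Once the ``transparency'' of the $\A^n$-factor with respect to dividing covers is made precise via Lemma \ref{boundary.36}, the remaining checks are essentially bookkeeping, but they must be carried out with care, especially for the multiplication functors $\mu_i^*$ whose definitions pass through the blow-up $\Bl_{(0,\infty)+(\infty,0)}(\square^2)$.
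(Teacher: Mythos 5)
Your proposal is correct and follows essentially the same route as the paper, whose proof is just the one-line instruction to compare \eqref{logSH.15.1}--\eqref{logSH.15.2} with \eqref{boundary.33.1} and \eqref{boundary.27.1} and invoke Lemma \ref{boundary.23}. You have simply made explicit the two points the paper leaves implicit --- the transparency of the $\A^n$-factor via Lemma \ref{boundary.36} together with Propositions \ref{boundary.63} and \ref{logmonoid.25}, and the compatibility of the level-wise identifications with the (extended) cosimplicial and cubical structure maps --- and both are handled correctly.
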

\begin{proof}
Compare \eqref{logSH.15.1} and \eqref{logSH.15.2} with \eqref{boundary.33.1} and \eqref{boundary.27.1},
and use Lemma \ref{boundary.23}.
\end{proof}

\begin{df}
\label{boundarify.1}
A presheaf $\cF$ of spectra on $\Sm/\F_1$ is \emph{logarithmic} if
\[
\cF(X)\cong \Log \cF(X\times \square^n)
\]
for all $X\in \Sm/\F_1$ and integer $n\geq 0$.

A presheaf $\cF$ of complexes on $\Sm/\F_1$ is \emph{cubically logarithmic} if
\[
\cF(X) \cong \CLog\cF(X\times \square^n)
\]
for all $X\in \Sm/\F_1$ and integer $n\geq 0$.
\end{df}

\begin{prop}
\label{boundary.26}
Let $\cF$ be a Zariski sheaf of complexes on $\Sm/\F_1$.
Then we have a natural isomorphism of presheaves of complexes
\[
\Log \cF
\cong
\CLog \cF.
\]
In particular,
$\cF$ is logarithmic if and only if it is cubically logarithmic.
\end{prop}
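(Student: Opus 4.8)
The plan is to reduce the statement, Zariski-locally on $\SmlSm/\F_1$, to the comparison \eqref{omega.14.2} between the simplicial Suslin functor $\Sing^\square$ and the cubical Suslin functor $\CSing^\square$ for the interval object $\square$. The point is that $\square$ is not an interval object of $\SmlSm/\F_1$, but only of $\SmlSm/\F_1[\Adm^{-1}]$ by Proposition \ref{logSH.18}, so \eqref{omega.14.2} is unavailable at the level of presheaves on $\SmlSm/\F_1$ and only becomes usable after inverting admissible blow-ups; this is exactly the local situation packaged by Lemma \ref{boundary.24}.

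First I would record that both $\Log\cF$ and $\CLog\cF$ are Zariski sheaves on $\SmlSm/\F_1$ by Proposition \ref{boundary.1}. This is the only place the hypothesis that $\cF$ is a Zariski sheaf enters: by Construction \ref{omega.19} the functor $\omega$ sends Zariski distinguished squares to Zariski distinguished squares, $\omega^\sharp$ is identified with precomposition by $\omega$, and hence $\omega^\sharp\cF$ is again a Zariski sheaf, to which Proposition \ref{boundary.1} applies. Next I would note that every object of $\SmlSm/\F_1$ is Zariski-locally of the form $\A_P\times\A^m$ for a smooth toric monoid $P$ (namely $\A_\N^r\times\A^s\times\G_m^t=\A_{\N^r\times\Z^t}\times\A^s$), so it suffices to produce a natural isomorphism $\Log\cF(\A_P\times\A^m)\cong\CLog\cF(\A_P\times\A^m)$ that is moreover natural with respect to restriction along Zariski-open immersions between such objects.

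For this I would apply Lemma \ref{boundary.24} to the presheaf $\omega^\sharp\cF$, obtaining natural isomorphisms
\[
\Log\cF(\A_P\times\A^m)\cong\Sing^\square L_\Adm\ul{\Hom}(\Z[\A^m],\omega^\sharp\cF)(\A_P), \qquad \CLog\cF(\A_P\times\A^m)\cong\CSing^\square L_\Adm\ul{\Hom}(\Z[\A^m],\omega^\sharp\cF)(\A_P).
\]
Since $L_\Adm\ul{\Hom}(\Z[\A^m],\omega^\sharp\cF)$ lies in $\PSh(\SmlSm/\F_1[\Adm^{-1}],\rD(\Z))$ and $\square$ is an interval object there, \eqref{omega.14.2} supplies a natural isomorphism $\Sing^\square\cong\CSing^\square$ of endofunctors of that $\infty$-category; evaluating at $\A_P$ and composing the three isomorphisms gives the required local comparison, natural in $(P,m)$ and hence compatible with Zariski restriction. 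These local isomorphisms therefore glue to a natural isomorphism of Zariski sheaves $\Log\cF\cong\CLog\cF$. The final assertion is then immediate: the conditions defining ``logarithmic'' and ``cubically logarithmic'' in Definition \ref{boundarify.1} differ only by replacing $\Log$ with $\CLog$, so they coincide once $\Log\cF\cong\CLog\cF$.

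I expect the main obstacle to be bookkeeping rather than conceptual. One must check that, under the identification $\SBl_{\A_P}^n\simeq\Adm_{\A_P\times\square^n}$ of Lemma \ref{boundary.23}, the simplicial and extended-cubical structures on $n\mapsto\widetilde{\Log}_n\cF$ set up in Definitions \ref{boundary.32} and \ref{boundary.27} are precisely those induced by the interval structure of $\square$ on $\SmlSm/\F_1[\Adm^{-1}]$, so that Lemma \ref{boundary.24} genuinely lands in the hypotheses of \eqref{omega.14.2}; and one must verify that the isomorphisms of Lemma \ref{boundary.24} are natural in $\A_P$ in the strong sense needed to patch the local comparisons into a morphism of Zariski sheaves. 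Granting the care already invested in constructing $\widetilde{\Log}$, $\logCLog$, and the categories $\SBl^n_X$, $\Sdiv^n_X$, neither point should present a real difficulty.
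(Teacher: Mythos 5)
Your proposal is correct and follows essentially the same route as the paper: reduce Zariski-locally via Proposition \ref{boundary.1} to the case $X=\A_P\times\A^m$ with $P$ a smooth toric monoid, then combine Lemma \ref{boundary.24} with the comparison \eqref{omega.14.2} between $\Sing^\square$ and $\CSing^\square$ on $\PSh(\SmlSm/\F_1[\Adm^{-1}],\rD(\Z))$. The additional remarks on gluing and naturality are reasonable elaborations of what the paper leaves implicit.
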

\begin{proof}
We need to show $\Log \cF(X)\cong \CLog\cF(X)$ for $X\in \SmlSm/\F_1$.
By Proposition \ref{boundary.1},
we can work Zariski locally on $X$.
Hence we may assume $X=\A_P\times \A^n$ for some smooth toric monoid $P$ and integer $n\geq 0$.
Use \eqref{omega.14.2} and Lemma \ref{boundary.24} to conclude.
\end{proof}

\section{Logarithm functor: Case of schemes}

In this section,
we define the logarithm functor $\Log$ for schemes by adapting Definition \ref{boundary.33}.
The functor $\Log$ makes a sheaf of spectra $\cF$ on $\Rg$ (resp.\ $\Sm/S$ with $S\in \Sch$) into a sheaf of spectra on $\RglRg$ (resp.\ $\SmlSm/S$).
We can view $\Log \cF$ as a natural logarithmic extension of $\cF$ if $\cF(X) \cong \Log \cF(X)$ for every $X\in \Rg$ (resp.\ $X\in \Sm/S$).

The following two definitions are direct analogs of Definition \ref{boundary.9} and \ref{boundary.48}.

\begin{df}
\label{boundary.49}
Let $X\in \RglRg$ (resp.\ $X\in \SmlSm/S$ with $S\in \Sch$).
For $n\in \N$,
let $\SBl_X^n$ be the category of admissible blow-ups $f\colon Y\to X\times \square^n$ in $\RglRg$ (resp.\ $\SmlSm/S$) such that there exists a commutative square
\begin{equation}
\label{boundary.49.1}
\begin{tikzcd}
V\ar[d,"g"']\ar[r]&
Y\ar[d,"f"]
\\
X\times (\Gmlog)^n\ar[r]&
X\times \square^n
\end{tikzcd}
\end{equation}
in $\RglRg$ (resp.\ $\SmlSm/S$)
with a dividing cover $g$ satisfying $\ul{g}=\ul{f}$,
where $X\times (\Gmlog)^n\to X\times \square^n$ is the canonical map whose underlying morphism of schemes is the identity.
Observe that every morphism in $\SBl_X^n$ is an admissible blow-up.
\end{df}

\begin{df}
\label{boundary.50}
For $X\in \RglRg$ (resp.\ $X\in \SmlSm/S$ with $S\in \Sch$) and $n\in \N$,
let $\Sdiv_X^n$ be the category of dividing covers $g\colon V\to X\times (\Gmlog)^n$ such that $g^{-1}(X\times \A_\N^n)\to X\times \A_\N^n$ is an isomorphism.
\end{df}

We need the following basic result since we often need to check that two morphisms of fs log schemes are equal.

\begin{prop}
\label{boundary.67}
Let $X\in \RglRg$ (resp.\ $X\in \SmlSm/S$ with $S\in \Sch$).
Then the log structure map $\cM_X\to \cO_X$ is injective.
Furthermore,
for every fs log scheme $Y$,
the map
\[
\Hom(X,Y)
\to
\Hom(\ul{X},\ul{Y}),
\text{ }
f\mapsto \ul{f}
\]
is injective.
\end{prop}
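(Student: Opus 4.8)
The plan is to prove the two assertions in turn, obtaining the second formally from the first: injectivity of $\cM_X\to\cO_X$ is the only real input, and then the injectivity of $f\mapsto\ul f$ is a diagram chase.

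For the structure map $\cM_X\to\cO_X$, I would argue that in both cases the log structure on $X$ is the compactifying (Deligne--Faltings) log structure attached to the open immersion $j\colon X-\partial X\hookrightarrow \ul X$ of the trivial locus. In the case $X\in\SmlSm/S$ this is built into the setup recalled after Definition~\ref{logSH.6}: one has $X\cong(\ul X,D)$ with $D$ a strict normal crossing divisor and the Deligne--Faltings log structure of \cite[Definition III.1.7.1]{Ogu}, which is exactly the subsheaf of $(\cO_X,\times)$ of sections becoming invertible on $\ul X-D=X-\partial X$. In the case $X\in\RglRg$ the same description holds by Kato's structure theorem for log regular log schemes (see \cite{Ogu}), which identifies $\cM_X$ with $\cO_X\cap j_*\cO_{X-\partial X}^*$. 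In either case, writing $\cM_X=\cO_X\times_{j_*\cO_{X-\partial X}}j_*\cO_{X-\partial X}^*$ with structure map the first projection, a section of the fibre product is a pair $(a,b)$ with $b$ necessarily equal to $a|_{X-\partial X}$; hence the first projection is a monomorphism, i.e.\ $\cM_X\to\cO_X$ is injective. (One need not even invoke density of $X-\partial X$ for this step, only the fibre-product description of the log structure.)

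For the injectivity of $\Hom(X,Y)\to\Hom(\ul X,\ul Y)$, I would use the standard description of morphisms of log schemes: a morphism $f\colon X\to Y$ consists of a morphism $\ul f\colon\ul X\to\ul Y$ together with a morphism of sheaves of monoids $f^\flat\colon \ul f^{-1}\cM_Y\to\cM_X$ on $\ul X$ compatible with the structure maps, i.e.\ satisfying $\alpha_X\circ f^\flat=\ul f^\sharp\circ \ul f^{-1}(\alpha_Y)$, where $\ul f^\sharp\colon \ul f^{-1}\cO_Y\to\cO_X$ is the map determined by $\ul f$. Thus, if $f_1,f_2\colon X\to Y$ satisfy $\ul f_1=\ul f_2=\ul f$, then $\alpha_X\circ f_1^\flat=\ul f^\sharp\circ \ul f^{-1}(\alpha_Y)=\alpha_X\circ f_2^\flat$, and since $\alpha_X$ is a monomorphism by the first part, $f_1^\flat=f_2^\flat$, whence $f_1=f_2$. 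Here $Y$ being fs (or any log scheme) plays no role beyond providing the source $\ul f^{-1}\cM_Y$.

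The only point that is not a formal diagram chase is the identification, in the $\RglRg$ case, of the a priori abstract fs log structure with the compactifying log structure of the trivial locus; this is exactly Kato's structure theorem, so the ``main obstacle'' is essentially just to cite it correctly. One subtlety worth flagging is that in the $\SmlSm/S$ case one must \emph{not} route through log regularity — which fails when $S$ is non-reduced — but instead use the compactifying description that is part of the very definition of $\SmlSm/S$.
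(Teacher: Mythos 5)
Your reduction of the second assertion to the first is exactly the paper's argument and is fine: a morphism of log schemes over a fixed $\ul f$ is determined by $f^\flat$, and $\alpha_X\circ f^\flat$ is forced, so injectivity of $\alpha_X$ suffices. The case $X\in\RglRg$ is also fine and matches the paper, which likewise invokes the identification of the log structure of a log regular scheme with the compactifying log structure (\cite[Proposition III.1.7.3(4)]{Ogu}) and reads off injectivity from the fibre-product description.

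The gap is in the case $X\in\SmlSm/S$ for general $S\in\Sch$. You assert that the Deligne--Faltings log structure of $(\ul X,D)$ ``is exactly the subsheaf of sections of $\cO_X$ becoming invertible on $\ul X-D$,'' i.e.\ the compactifying log structure, and that this ``is part of the very definition of $\SmlSm/S$.'' Neither is true when $\ul X$ is not normal. The Deligne--Faltings log structure is the log structure \emph{associated} to the prelog structure generated by local equations of the branches of $D$; at a point with neat chart $P\cong\N^d$ it is $P\oplus\cO_{X,x}^*$, mapping to $\cO_{X,x}$ by $(\textstyle\sum a_ie_i,u)\mapsto u\,x_1^{a_1}\cdots x_d^{a_d}$. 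This is in general strictly smaller than the compactifying log structure: for $S=\Spec(k[\epsilon]/\epsilon^2)$ and $X=(\A^1_S,\{t=0\})$, the section $t+\epsilon$ is invertible on $X-\partial X$ but is not of the form $u\,t^n$ with $u$ a unit near the origin. So your fibre-product argument does not apply, and injectivity of $P\oplus A^*\to A$ (i.e.\ $u\,x^a=v\,x^b\Rightarrow a=b,\ u=v$) is a genuine statement requiring proof. The paper proves it by a separate argument: it reduces to stalks, picks a neat chart, observes that each $\eta(e_i)$ is a nonzero divisor because $X-\partial X$ is dense, and then base-changes along the normalization $S'\to S$; since $X\times_SS'\in\SmlSm/S'$ with $S'$ normal, its log structure \emph{is} compactifying and hence injective, which forces $a_i=b_i$, after which the nonzero-divisor property yields $u=v$. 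Your proposal needs this (or an equivalent) argument to cover non-normal $S$; as written it only establishes the proposition for $X\in\RglRg$ and for $X\in\SmlSm/S$ with $S$ normal.
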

\begin{proof}
If $X\in \RglRg$ or $S$ is normal, then the log structure on $X$ is the compactifying log structure by \cite[Proposition III.1.7.3(4)]{Ogu},
so the log structure map $\cM_X\to \cO_X$ is injective by definition, and \cite[Proposition III.1.6.2]{Ogu} implies the other claim.

For general $S$,
let $f_1,f_2\colon X\rightrightarrows Y$ be two morphisms such that $\ul{f_1}=\ul{f_2}$.
We have the induced commutative square
\[
\begin{tikzcd}
\cM_Y\ar[d]\ar[r]&
\cO_Y\ar[d]
\\
f_{i*}\cM_X\ar[r]&
f_{i*}\cO_X
\end{tikzcd}
\]
for $i=1,2$. If $\cM_X\to \cO_X$ is injective, then the two morphisms $\cM_Y\to f_{1*}\cM_X$ and $\cM_Y\to f_{2*}\cM_X$ agree,
so we have $f_1=f_2$.
Hence it suffices to show that $\cM_X\to \cO_X$ is injective.
Equivalently, it suffices to show that $\cM_{X,x}\to \cO_{X,x}$ is injective for every point $x\in X$.

We set $A:=\cO_{X,x}$.
Let $P$ be a neat chat of $X$ at $x$ in the sense of \cite[Definition II.2.3.1(2)]{Ogu}.
Then the induced morphism $P\to \ol{\cM}_{X,x}$ is an isomorphism,
and $P\cong \N^d$ for some integer $d$.
Let $\alpha \colon P\to A$ be the log structure map.
Then we have $\alpha^{-1}(A^*)=0$ since $P\cong \ol{\cM}_{X,x}$.
The log structure $\cM_{X,x}$ associated with $P$ is $P\oplus_{\alpha^{-1}(A^*)}A^*\cong P\oplus A^*$, see \cite[(II.1.1.1)]{Ogu}.
Hence it suffices to show that the induced map $\eta\colon P\oplus A^* \to A$ is injective.
Let $e_1,\ldots,e_d$ be the standard coordinates in $\N^d$.
Observe that $\eta(e_i)$ is a nonzero divisor for each $i$ since $X-\partial X$ is dense in $X$.

Let $S'$ be the normalization of $S$,
and let $f\colon A\to B$ be the map of rings whose spectrum is $\Spec(\cO_{X,x})\times_S S'\to \Spec(\cO_{X,x})$.
Assume that
\[
\eta(u+a_1e_1+\cdots+a_de_d)=\eta(v+b_1e_1+\cdots+b_de_d)
\]
with $u,v\in A^*$ and $a_1,\ldots,a_d,b_1,\ldots,b_d\in \N$.
Then we have
\begin{equation}
\label{boundary.67.1}
f(\eta(u+a_1e_1+\cdots+a_de_d))
=
f(\eta(v+b_1e_1+\cdots+b_de_d)).
\end{equation}
The log structure on $X':=X\times_S S'$ is the compactifying log structure since $X'\in \SmlSm/S'$ and $S'$ is normal.
Hence the log structure map $\cM_{X'}\to \cO_{X'}$ is injective.
Together with \eqref{boundary.67.1},
we have $a_i=b_i$ for each $i$.
Since $\eta(e_i)$ is a nonzero divisor for each $i$,
we have $\eta(u)=\eta(v)$,
which implies $u=v$.
This shows that $\eta$ is injective.
\end{proof}

\begin{rmk}
A typical application of Proposition \ref{boundary.67} is as follows.
Let $X\in \RglRg$ (resp.\ $X\in \SmlSm/S$ with $S\in \Sch$),
and let $f_1,f_2\colon X\rightrightarrows Y$ be two morphisms of fs log schemes.
If $j\colon U\to X$ is a dense open immersion of fs log schemes such that $f_1j=f_2j$,
then we have $f_1=f_2$ by Proposition \ref{boundary.67}.
\end{rmk}

We are in a better situation for dividing covers as follows.

\begin{prop}
\label{boundary.68}
Let $X\in \RglRg$ (resp.\ $X\in \SmlSm/S$ with $S\in \Sch$).
Then for all dividing covers $Y,Y'\to X$,
the map
\[
\Hom_X(Y',Y)
\to
\Hom_{\ul{X}}(\ul{Y'},\ul{Y}),
\text{ }
f\mapsto \ul{f}
\]
is an isomorphism.
\end{prop}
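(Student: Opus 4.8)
The plan is to obtain injectivity formally from the fact that a dividing cover is a monomorphism, and to establish surjectivity by building the missing morphism of log structures out of the common dense trivial locus.

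Since $Y\to X$ is a dividing cover, it is in particular a monomorphism of fs log schemes; hence for the given structure morphism $Y'\to X$ the set of lifts $\Hom_X(Y',Y)$ has at most one element. Thus $f\mapsto\ul f$ is injective, and — any morphism $Y'\to Y$ over $X$ having an underlying morphism over $\ul X$ — the map is an isomorphism as soon as it is surjective. So the entire content is that every morphism $\ul\phi\colon\ul{Y'}\to\ul Y$ over $\ul X$ underlies some morphism $Y'\to Y$ over $X$. Applying the monomorphism property once more to the base changes $Y\times_XU\to U$ of $Y\to X$ (themselves dividing covers, for $U\subseteq X$ open), such a lift is unique when it exists; hence existence can be checked locally on $X$ and the local lifts glued, so we may shrink $X$ freely.

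For the local construction I would exploit that on each of $X$, $Y$, $Y'$ the log structure is the compactifying log structure of its trivial locus, and that this trivial locus is one and the same dense open $U:=X-\partial X$: indeed $Y\to X$ and $Y'\to X$, being dividing covers, restrict to isomorphisms over $U$, so $U$ is dense open in $\ul Y$ and $\ul{Y'}$ and $\partial Y,\partial Y'$ are the pullbacks of $\partial X$. After shrinking, choose a sharp chart $P$ of $X$, so that $\ul X\to\A_P$ is smooth; since a dividing cover is, locally, the base change along $\ul X\to\A_P$ of a toric modification of $\A_P$, the scheme $\ul Y$ is smooth over a normal (toric) scheme, hence normal, and likewise $\ul{Y'}$. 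Therefore, by \cite[Proposition III.1.7.3(4)]{Ogu}, the log structure on each of $X$, $Y$, $Y'$ is the subsheaf of the structure sheaf consisting of the sections that restrict to invertible functions on $U$. Given $\ul\phi$ over $\ul X$ — hence carrying the copy of $U$ in $\ul{Y'}$ onto the one in $\ul Y$ by the identity of $U$ — a local section $m$ of $\cM_Y$ has image in $\cO_{Y'}$ (via $\ul\phi$) that restricts on $U$ to the invertible function $m|_U$, so this image lies in $\cM_{Y'}$. This defines a morphism of log structures $\ul\phi^{*}\cM_Y\to\cM_{Y'}$ over $\cO_{Y'}$, i.e.\ a morphism $\phi\colon Y'\to Y$ of fs log schemes with underlying morphism $\ul\phi$; the same computation with $\cM_X$ in place of $\cM_Y$ shows that $\phi$ lies over $X$. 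This yields surjectivity when $X\in\RglRg$, and also when $X\in\SmlSm/S$ with $\ul X$ normal.

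For general $S\in\Sch$ the underlying scheme $\ul X$ need not be normal, and I would remove this hypothesis as in the proof of Proposition \ref{boundary.67}: base change along the normalization $S'\to S$, noting that $X':=X\times_SS'$ lies in $\SmlSm/S'$ with $\ul{X'}$ normal and that $Y\times_SS'\to X'$, $Y'\times_SS'\to X'$ are still dividing covers (dividing covers being stable under base change), so that the case above produces a lift over $X'$; this lift descends to a lift over $X$ because, by Proposition \ref{boundary.67}, the log structure maps $\cM_{\bullet}\to\cO_{\bullet}$ are injective and the lift, together with its descent datum, is already pinned down on the dense open $U$, where it is the tautological identification. The main obstacle I expect is exactly this reduction: checking that after base change to the normalization of $S$ the underlying schemes of $Y$ and $Y'$ are normal — which uses the local toric structure of dividing covers and the normality of toric modifications of normal schemes — and that the resulting lift genuinely descends. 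By contrast, the $\RglRg$ case and the injectivity are essentially immediate.
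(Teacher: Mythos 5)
Your injectivity argument and your treatment of the case where the log structures are compactifying ($X\in\RglRg$, or $X\in\SmlSm/S$ with $S$ normal, via \cite[Propositions III.1.7.3(4), III.1.6.2]{Ogu}) match the paper's first step. The gap is in your reduction of the general case to the normal case by base change along the normalization $S'\to S$. What you must produce is a factorization of $\cM_Y\to g_*\cO_{Y'}$ through the subsheaf $g_*\cM_{Y'}$; injectivity of $\cM_{Y'}\to\cO_{Y'}$ only reduces existence of the lift to this containment of images, it does not produce it, and ``being pinned down on the dense open $U$'' is exactly the statement you are trying to extend from $U$ to all of $Y'$. Knowing the containment after base change to $S'$ does not descend: locally you know that $a:=g^\sharp(m)\in A$ satisfies $a=u'\eta(p)$ with $u'\in(A')^*$ in the ring $A'$ obtained by base change to $S'$, and you need $a=u\,\eta(p)$ with $u\in A^*$; since $\eta(p)$ is not invertible and $\eta(p)A'\cap A$ need not equal $\eta(p)A$ (and $\cO_{Y'}\to\cO_{Y'\times_SS'}$ is not even injective when $S$ is non-reduced, nor is $\ul{Y'}$ flat over $S$ in general), membership in $\cM_{Y'}\subset\cO_{Y'}$ is not detected after this base change. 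A secondary issue: you invoke Proposition \ref{boundary.67} for $Y'$ itself, but a dividing cover of $X\in\SmlSm/S$ need not lie in $\SmlSm/S$ (the subdivision need not be smooth), so that proposition does not literally apply to $Y'$.

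The paper avoids normalization of $S$ entirely. It forms the fiber product $Y\times_X^{\mathrm{int}}Y'$ in fine log schemes, identifies its underlying scheme with $\ul{Y'}$ using the graph of $g$ (a closed immersion through which the dense open $X-\partial X$ factors), and then must only show that $\ol{\cM}_{Y',y}\to\ol{\cM}_{Y\times_X^{\mathrm{int}}Y',y}$ is an isomorphism at each point. \emph{That} statement is checked after base change to the residue field $k$ of the image point in $S$ — a legitimate reduction because these $\ol{\cM}$-stalks are compatible with such base change — and over $\Spec(k)$ one is back in the normal case, where the monomorphism property of dividing covers and the saturation morphism $Y\times_X^{\mathrm{int}}Y'\to Y\times_XY'$ finish the argument. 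If you want to keep your outline, you would need to replace the descent along $S'\to S$ by an argument of this type (pointwise on $\ol{\cM}$ after passage to residue fields), since that is the invariant that actually behaves well under non-flat base change.
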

\begin{proof}
Assume $X\in \RglRg$ or $X\in \SmlSm/S$ with normal $S$.
Then the log structure on $X$ is the compactifying log structure by \cite[Proposition III.1.7.3(4)]{Ogu},
so \cite[Proposition III.1.6.2]{Ogu} implies the claim, noting that we have $X-\partial X \cong Y-\partial Y\cong Y'-\partial Y'$.

For general $S$,
let $g\colon \ul{Y}\to \ul{Y'}$ be a morphism of schemes over $\ul{X}$.
By Proposition \ref{boundary.67},
it suffices to show that there exists a morphism of fs log schemes $f\colon Y'\to Y$ over $X$ such that $\ul{f}=g$.

Let $P\to Q,Q'$ be maps of fine monoids.
The pushout $Q\oplus_P^\mathrm{int} Q'$ in the category of integral monoids is the image of $Q\oplus_P^\mathrm{mon} Q'\to Q^\gp\oplus_{P^\gp} Q'^\gp$,
where $Q\oplus_P^\mathrm{mon} Q'$ denotes the pushout in the category of monoids.
Hence $\Spec(\Z[Q\oplus_P^\mathrm{int} Q'])$ is the closure of $\Spec(\Z[Q^\gp]\otimes_{\Spec[P^\gp]}\Spec[Q'^\gp])$ in  $\Spec(\Z[Q])\times_{\Spec(\Z[P])}\Spec(\Z[Q'])$.
Together with the local description of dividing covers in \cite[Lemma A.11.3]{BPO},
we see that the underlying scheme of the fiber product $Y\times_X^\mathrm{int} Y'$ in the category of fine log schemes is the closure of $X-\partial X\cong (Y-\partial Y)\times_{X-\partial X}(Y'-\partial Y')$ in $\ul{Y}\times_{\ul{X}}\ul{Y'}$.

Now, let $g\colon \ul{Y'}\to \ul{Y}$ be a morphism of schemes over $\ul{X}$.
Then we have the graph morphism $\ul{Y'}\to \ul{Y}\times_{\ul{X}}\ul{Y'}$, which is a closed immersion.
Since $Y'-\partial Y'$ is dense in $\ul{Y'}$,
we have $\ul{Y'}\cong \ul{Y\times_X^\mathrm{int}Y'}$.
Hence we only need to show that the induced map
\begin{equation}
\label{boundary.68.1}
\ol{\cM}_{Y',y}\to \ol{\cM}_{Y\times_X^\mathrm{int}Y',y}
\end{equation}
is an isomorphism for every point $y\in Y'$.

Let $s$ be the image of $y$ in $S$,
and let $k$ be the residue field at $s$.
Since \eqref{boundary.68.1} is invariant under the base change from $S$ to $\Spec(k)$,
it suffices to show $Y\times_X^\mathrm{int}Y'\cong Y'$ under the assumption $S=\Spec(k)$.

In this case,
since $S$ is normal,
we already know that there exists a morphism of fs log schemes $f\colon Y'\to Y$ over $X$ such that $\ul{f}=g$.
Since $Y\to X$ is a monomorphism,
the second projection $Y\times_X Y\to Y$ is an isomorphism,
and hence the projection $Y\times_X Y'\to Y'$ is an isomorphism.
It follows that the composite $\ol{\cM}_{Y',y}\to \ol{\cM}_{Y\times_X^\mathrm{int}Y',y}\to \ol{\cM}_{Y\times_X Y',y}$ is an isomorphism.
In particular,
the map $\ol{\cM}_{Y\times_X^\mathrm{int}Y',y}\to \ol{\cM}_{Y\times_X Y',y}$ is surjective.
To conclude,
observe that the fiber product $Y\times_X Y'$ in the category of fs log schemes is the saturation 
of $Y\times_X^\mathrm{int}Y'$ and hence the map $\ol{\cM}_{Y\times_X^\mathrm{int}Y',y}\to \ol{\cM}_{Y\times_X Y',y}$ is injective.
\end{proof}

\begin{prop}
\label{boundary.69}
Let $X\in \RglRg$ (resp.\ $X\in \SmlSm/S$ with $S\in \Sch$) and $n\in \N$.
For $Y,Y'\in \SBl_X^n$ and $V,V'\in \Sdiv_X^n$,
$\Hom_{\SBl_X^n}(Y',Y)$ and $\Hom_{\Sdiv_X^n}(V',V)$ have at most one element.
\end{prop}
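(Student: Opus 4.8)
The plan is to reduce both halves of the statement to the fact that a dividing cover is a monomorphism in $\lSch$, using Propositions \ref{boundary.67} and \ref{boundary.68} to pass back and forth between morphisms of fs log schemes, their underlying morphisms of schemes, and morphisms of the associated dividing covers. The case of $\Sdiv_X^n$ is the easy half: a morphism in $\Sdiv_X^n$ from $V'$ to $V$ is by definition a morphism $V'\to V$ over $X\times(\Gmlog)^n$, and since $V\to X\times(\Gmlog)^n$ is a dividing cover, it is a monomorphism; hence any two such morphisms coincide and $\Hom_{\Sdiv_X^n}(V',V)$ has at most one element.

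For $\SBl_X^n$ one has to work slightly harder, since the admissible blow-up $Y\to X\times\square^n$ is not a monomorphism. First, to $Y\in\SBl_X^n$ I would attach the dividing cover $g\colon V\to X\times(\Gmlog)^n$ furnished by the square \eqref{boundary.49.1}; since $X\times(\Gmlog)^n\to X\times\square^n$ is the identity on underlying schemes and $\ul g=\ul f$, this gives an identification $\ul V\cong\ul Y$ over $\ul X\times(\P^1)^n$, and likewise $\ul{V'}\cong\ul{Y'}$ for the dividing cover $V'$ associated with $Y'$. Now let $h\colon Y'\to Y$ be a morphism in $\SBl_X^n$. Since $Y'\in\RglRg$ (resp.\ $\SmlSm/S$), Proposition \ref{boundary.67} shows that $h$ is determined by its underlying morphism of schemes $\ul h\colon\ul{Y'}\to\ul Y$. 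Under the identifications above, $\ul h$ is a morphism $\ul{V'}\to\ul V$ over $\ul X\times(\P^1)^n$, and by Proposition \ref{boundary.68}, applied with $X\times(\Gmlog)^n$ in place of $X$, it is the underlying morphism of a unique morphism $V'\to V$ of fs log schemes over $X\times(\Gmlog)^n$. By the $\Sdiv_X^n$ case already treated — equivalently, because $V\to X\times(\Gmlog)^n$ is a monomorphism — there is at most one such morphism $V'\to V$, hence at most one possibility for $\ul h$, hence at most one possibility for $h$. A minor bookkeeping point along the way is to note that $X\times(\Gmlog)^n$ again lies in $\RglRg$ (resp.\ $\SmlSm/S$), so that Propositions \ref{boundary.67} and \ref{boundary.68} apply to it; this holds because $(\Gmlog)^n$ has strict normal crossing boundary and underlying smooth scheme $(\P^1)^n$.

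The main obstacle — and the reason the one-line argument of Proposition \ref{boundary.62} does not suffice here — is that for a non-normal base $S$ the log structure on objects of $\SmlSm/S$ need not be the compactifying one, so passing from a morphism of log schemes to its underlying morphism of schemes, and lifting underlying morphisms along dividing covers, is not automatic. This is exactly what Propositions \ref{boundary.67} and \ref{boundary.68} are designed to supply, so once they are in hand the remaining argument is purely formal; I do not anticipate any further difficulty.
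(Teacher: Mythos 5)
Your argument is correct, and it differs from the paper's in a way worth noting. The paper disposes of both halves in one stroke via Proposition \ref{boundary.67}: two parallel morphisms in $\SBl_X^n$ (or $\Sdiv_X^n$) agree on the schematically dense open $(X-\partial X)\times\A^n$ (resp.\ $(X-\partial X)\times\G_m^n$), hence have equal underlying scheme morphisms by separatedness and density, hence are equal by the injectivity of $f\mapsto\ul f$. Your treatment of $\Sdiv_X^n$ instead invokes the definition of a dividing cover as a monomorphism in $\lSch$, which is the cleanest possible argument for that half and bypasses Proposition \ref{boundary.67} entirely. For $\SBl_X^n$ you still need Proposition \ref{boundary.67} to reduce to underlying scheme morphisms, but you then bound $\Hom_{\ul X\times(\P^1)^n}(\ul{Y'},\ul Y)$ by transporting it through the bijection of Proposition \ref{boundary.68} to $\Hom_{X\times(\Gmlog)^n}(V',V)$ and applying the monomorphism property there; the paper's density argument is lighter, but your route is sound and has the minor virtue of making explicit where the reduction to the associated dividing covers is happening. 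The one point you should make explicit if you write this up is why $\ul h$ lies over $\ul X\times(\P^1)^n$ in the first place (it does, since $h$ is a morphism over $X\times\square^n$ and the canonical map $X\times(\Gmlog)^n\to X\times\square^n$ is the identity on underlying schemes); with that said, there is no gap.
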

\begin{proof}
This is a consequence of Proposition \ref{boundary.67}.
\end{proof}

\begin{lem}
\label{boundary.75}
Let $X'\to X$ be a morphism in $\RglRg$ (resp.\ $\SmlSm/S$ with $S\in \Sch$) and $n\in \N$.
For $Y\in \SBl_X^n$ and $Y'\in \SBl_{X'}^n$,
consider $V\in \Sdiv_X^n$ and $V'\in \Sdiv_{X'}^n$ obtained by \eqref{boundary.49.1}.
If $h\colon V'\to V$ is a morphism over $X\times (\Gmlog)^n$,
then there exists a morphism $u\colon Y'\to Y$ over $X\times \square^n$ such that the square
\begin{equation}
\label{boundary.75.1}
\begin{tikzcd}
V'\ar[d]\ar[r,"h"]&
V\ar[d]
\\
Y'\ar[r,"u"]&
Y
\end{tikzcd}
\end{equation}
commutes.
\end{lem}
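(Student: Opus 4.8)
The plan is to build $u$ on underlying schemes as $\ul h$ and then upgrade it to a morphism of fs log schemes, after which the two compatibilities in the statement become formal consequences of Proposition \ref{boundary.67}. We may choose the squares \eqref{boundary.49.1} so that $V\to Y$ and $V'\to Y'$ are the identity on underlying schemes: indeed $\ul V=\ul Y$ and $\ul{V'}=\ul{Y'}$, and since $\square=(\P^1,\infty)$ has fewer boundary divisors than $\Gmlog=(\P^1,0+\infty)$ one gets $\partial Y\subseteq\partial V$ and $\partial Y'\subseteq\partial V'$, so $\cM_Y\hookrightarrow\cM_V$ and $\cM_{Y'}\hookrightarrow\cM_{V'}$ give canonical morphisms over the identity. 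Thus $\ul h\colon\ul{V'}=\ul{Y'}\to\ul V=\ul Y$ is a morphism of schemes over $\ul X\times(\P^1)^n=\ul{X\times\square^n}$, and once we produce a log morphism $u\colon Y'\to Y$ with $\ul u=\ul h$, the facts that $u$ is over $X\times\square^n$ and that \eqref{boundary.75.1} commutes follow from Proposition \ref{boundary.67}, applied respectively to $Y'$ and to $V'$ (both objects of $\RglRg$, resp.\ $\SmlSm/S$), since the relevant pairs of morphisms then have equal underlying morphisms of schemes.

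First I would check that $\ul h$ carries $Y'-\partial Y'$ into $Y-\partial Y$. As $f,f'$ are admissible blow-ups, $Y-\partial Y=\ul f^{-1}((X-\partial X)\times\A^n)$ and $Y'-\partial Y'=\ul{f'}^{-1}((X'-\partial X')\times\A^n)$, and $\ul h$ is compatible with the structure morphisms to $\ul X\times(\P^1)^n$; so this reduces to the fact that $X'\to X$ sends $X'-\partial X'$ into $X-\partial X$, which holds for morphisms in $\RglRg$ (resp.\ $\SmlSm/S$).

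Next I would upgrade $\ul h$ to a log morphism. Working Zariski-locally, $\cM_Y$ (resp.\ $\cM_{Y'}$) is generated over $\cO^*$ by the local equations of those components of $\partial V$ (resp.\ $\partial V'$) that lie in $\partial Y$ (resp.\ $\partial Y'$), a subset of the local equations generating $\cM_V$ (resp.\ $\cM_{V'}$). Since $h\colon V'\to V$ is a morphism of log schemes, $\ul h^*$ sends each such local equation of $\partial Y$ to a unit times a product of powers of local equations of components of $\partial V'$; by the first step this element is invertible on $Y'-\partial Y'$, so only components contained in $\partial Y'$ can occur, and hence $\ul h^*$ carries this element into $\cM_{Y'}$. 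Therefore $\ul h^*\cM_Y$ maps into $\cM_{Y'}$, which gives the desired $u$.

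I expect the main obstacle to be this last step, i.e.\ the divisor bookkeeping showing that $\ul h^*$ of a local equation of $\partial Y$ involves only components of $\partial Y'$; this rests on the first step together with the regularity (resp.\ smoothness over $S$) of the underlying schemes of objects of $\RglRg$ (resp.\ $\SmlSm/S$). If one prefers, when $X\in\RglRg$ or $S$ is normal the log structures on $Y$ and $Y'$ are the compactifying ones by \cite[Proposition III.1.7.3(4)]{Ogu}, so \cite[Proposition III.1.6.2]{Ogu} yields $u$ immediately from the first step; the general case then reduces to this one exactly as in the proofs of Propositions \ref{boundary.67} and \ref{boundary.68}.
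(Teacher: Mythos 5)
Your proposal is correct, and your primary argument takes a genuinely different route from the paper's. The paper reduces the problem to showing that the image of $\cM_Y\to h_*\cM_{V'}$ is contained in the image of $h_*\cM_{Y'}\to h_*\cM_{V'}$ (both maps being injective by Proposition \ref{boundary.67}), checks this on stalks of $\ol{\cM}$, base-changes to the residue field of the image point in $S$ to reduce to the case where the log structure on $V'$ is compactifying, and then invokes \cite[Proposition III.1.6.2]{Ogu} --- which is exactly the fallback you sketch in your final paragraph. Your main route instead does the divisor bookkeeping directly in a Deligne--Faltings chart: $h^\flat$ of a local equation of a component of $\partial Y$ is an element $(u,a)\in\cO_{V'}^*\oplus\N^{r'}\cong\cM_{V'}$ whose image in $\cO_{V'}$ is invertible on $Y'-\partial Y'$ by your first step, and since a product is a unit only if each factor is, this forces $a_j=0$ for every branch of $\partial V'$ not contained in $\partial Y'$; hence $h^\flat(\cM_Y)\subseteq h_*\cM_{Y'}$, which is the desired $u$. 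This is more elementary and avoids the residue-field reduction entirely, at the price of some care at two points: the identification $\cM_{V'}\cong\cO^*\oplus\N^{r'}$ and the inclusion $\cM_{Y'}\hookrightarrow\cM_{V'}$ as the sub-monoid on the branches of $\partial Y'$ both rest on the injectivity statement of Proposition \ref{boundary.67} (so that step cannot be skipped), and one needs the small density argument that a branch of $\partial V'$ not contained in $\partial Y'$ actually meets $Y'-\partial Y'$ in the given chart. Both routes then deduce the commutativity of \eqref{boundary.75.1} and the compatibility over $X\times\square^n$ from Proposition \ref{boundary.67}, as you indicate.
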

\begin{proof}
We have the induced commutative diagram
\[
\begin{tikzcd}
\cM_Y\ar[r]&
\cM_V\ar[r]\ar[d]&
\cO_V\ar[d]
\\
h_*\cM_{Y'}\ar[r]&
h_*\cM_{V'}\ar[r]&
\cO_{V'}.
\end{tikzcd}
\]
We need to show that there exists a morphism $\cM_Y\to h_*\cM_{Y'}$ making the above diagram still commutative.
As a consequence of Proposition \ref{boundary.67},
the morphisms $\cM_Y\to \cM_V$ and $h_*\cM_{Y'}\to h_*\cM_{V'}$ are injective.
Hence it suffices to show that the image of $\cM_Y\to h_*\cM_{V'}$ is contained in the image of $h_*\cM_{Y'}\to h_*\cM_{V'}$.
Equivalently,
it suffices to show that the image of $\ol{\cM}_{Y,v}\to (h_*\ol{\cM}_{V'})_v$ is contained in the image of $(h_*\ol{\cM}_{Y'})_v\to (h_*\ol{\cM}_{V'})_v$ for every point $v$ of $V$.
Consider the image $s$ of $v$ in $S$,
and let $k$ be the residue field at $s$.
Since the maps $\ol{\cM}_{Y,v}\to (h_*\ol{\cM}_{V'})_v$ and $(h_*\ol{\cM}_{Y'})_v\to (h_*\ol{\cM}_{V'})_v$ invariant under the base change from $S$ to $\Spec(k)$,
we reduce to the case of $S=\Spec(k)$ for showing this.

In this case,
we obtain \eqref{boundary.75.1} by \cite[Proposition III.1.6.2]{Ogu} since the log structure on $V'$ is a compactifying log structure.
Hence the image of $\ol{\cM}_{Y,v}\to (h_*\ol{\cM}_{V'})_v$ is contained in the image of $(h_*\ol{\cM}_{Y'})_v\to (h_*\ol{\cM}_{V'})_v$.
\end{proof}

\begin{prop}
\label{boundary.70}
For $X\in \RglRg$ (resp.\ $X\in \SmlSm/S$ with $S\in \Sch$) and integer $n\geq 0$,
there is a fully faithful functor of categories
\[
\rho
\colon
\SBl_X^n
\to
\Sdiv_X^n
\]
such that $\ul{Y}=\ul{\rho(Y)}$ for $Y\in \SBl_X^n$.
\end{prop}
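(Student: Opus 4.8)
The plan is to adapt the proof of Proposition \ref{boundary.63} almost verbatim, replacing the combinatorial rigidity inputs (Propositions \ref{image.6} and \ref{logmonoid.5}) by their scheme-theoretic counterparts proved just above: Propositions \ref{boundary.67}, \ref{boundary.68}, \ref{boundary.69} and Lemma \ref{boundary.75}.

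First I would define $\rho$ on objects. Given $Y\in \SBl_X^n$ with admissible blow-up $f\colon Y\to X\times \square^n$, Definition \ref{boundary.49} provides a dividing cover $g\colon V\to X\times (\Gmlog)^n$ with $\ul{g}=\ul{f}$; set $\rho(Y):=V$, so that $\ul{\rho(Y)}=\ul{Y}$. Since $X\times (\Gmlog)^n\in \RglRg$ (resp.\ $\SmlSm/S$), Proposition \ref{boundary.68} shows that $V$ is determined up to a unique isomorphism over $X\times (\Gmlog)^n$ by its underlying scheme, so the assignment is well defined. To see $\rho(Y)\in \Sdiv_X^n$, one must check that $g^{-1}(X\times \A_\N^n)\to X\times \A_\N^n$ is an isomorphism; since $\ul{g}=\ul{f}$ and $f$ is an admissible blow-up of $X\times \square^n$, this is the same verification as in the proof of Proposition \ref{boundary.63}.

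Next I would promote $\rho$ to a functor and establish full faithfulness at the same time. By Proposition \ref{boundary.69} every Hom-set in $\SBl_X^n$ and in $\Sdiv_X^n$ is a subsingleton, and by Proposition \ref{boundary.68} a morphism of dividing covers over $X\times (\Gmlog)^n$ is determined by its underlying scheme morphism; hence it suffices to show, for $Y,Y'\in \SBl_X^n$, that there is a morphism $Y'\to Y$ over $X\times \square^n$ if and only if there is a morphism $\rho(Y')\to \rho(Y)$ over $X\times (\Gmlog)^n$, with corresponding morphisms having the same underlying scheme morphism. For the forward direction, a morphism $Y'\to Y$ over $X\times \square^n$ has underlying scheme morphism $\ul{Y'}\to \ul{Y}$, which is a morphism $\ul{\rho(Y')}\to \ul{\rho(Y)}$ over $\ul{X}\times (\P^1)^n$ and therefore lifts uniquely to a morphism $\rho(Y')\to \rho(Y)$ over $X\times (\Gmlog)^n$ by Proposition \ref{boundary.68}; this makes $\rho$ a functor. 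For the converse, a morphism $h\colon \rho(Y')\to \rho(Y)$ over $X\times (\Gmlog)^n$ yields, by Lemma \ref{boundary.75} applied with $X'=X$, $V=\rho(Y)$, $V'=\rho(Y')$, a morphism $u\colon Y'\to Y$ over $X\times \square^n$ with $\ul{u}=\ul{h}$. Thus $\rho$ is fully faithful.

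The main obstacle is not the combinatorial bookkeeping but arranging the rigidity of morphisms of fs log schemes: in the monoid scheme setting morphisms were pinned down by restriction to a dense open, whereas here one must pass between morphisms of log schemes and morphisms of underlying schemes, and over a non-normal base $S$ this genuinely requires the compactifying-log-structure arguments of Propositions \ref{boundary.67}, \ref{boundary.68} and Lemma \ref{boundary.75}. Granting those, the argument is a routine translation of the proof of Proposition \ref{boundary.63}; note that, unlike that proposition, we make no claim here about the essential image of $\rho$.
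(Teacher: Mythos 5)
Your proof is correct and follows the paper's own argument essentially step for step: define $\rho(Y)$ as the dividing cover $V$ from Definition \ref{boundary.49}, get the forward direction from Proposition \ref{boundary.68} applied over $X\times(\Gmlog)^n$, the converse from Lemma \ref{boundary.75}, and functoriality/full faithfulness from the subsingleton Hom-sets of Proposition \ref{boundary.69}. Your extra remarks on well-definedness of $V$ and on the membership $\rho(Y)\in\Sdiv_X^n$ are points the paper leaves implicit, and they are handled correctly.
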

\begin{proof}
For $Y\in \SBl_X^n$,
consider $V$ obtained by \eqref{boundary.49.1},
and we set $\rho(Y):=V$.
If $Y'\to Y$ is a morphism in $\SBl_X^n$,
then Proposition \ref{boundary.68} yields a morphism $\rho(Y')\to \rho(Y)$.

Conversely,
let $h\colon V':=\rho(Y')\to V:=\rho(Y)$ be a morphism in $\Sdiv_X^n$.
By Lemma \ref{boundary.75},
we obtain a commutative square of the form \eqref{boundary.75.1}.
In particular,
$\Hom_{\SBl_X^n}(Y',Y)$ is nonempty if and only if $\Hom_{\Sdiv_X^n}(\rho(Y'),\rho(Y))$ is nonempty.

Together with Proposition \ref{boundary.69},
we see that for every morphism $f\colon Y'\to Y$ we can uniquely assign $\rho(f)$ and $\rho$ satisfies the functor axioms.
\end{proof}

We refer to Definition \ref{logmonoid.21} for the notion of strict morphisms $X\to X_0$ when $X\in \lSch$ and $X_0\in \lSm/\F_1$.

\begin{df}
\label{boundary.71}
Let $\RglRg'$ ($\SmlSm'/S$ with $S\in \Sch$) be the full subcategory of $\RglRg$ (resp.\ $\SmlSm/S$) spanned by those $X$ admitting a strict morphism $X\to X_0$ with $X_0\in \SmlSm/\F_1$.
\end{df}

\begin{df}
For $X\in \lSch$,
a \emph{fan chart of $X$} is a fan $\Sigma$ with a strict morphism $X\to \T_\Sigma$.
\end{df}

\begin{prop}
\label{boundary.40}
Let $X\in \lSch$ with a fan chart $\Sigma$,
and let $\Sigma'$ be a subfan of $\Sigma$.
Then for every log \'etale monomorphism $f\colon Y\to X$ such that the induced morphism $Y\times_{\T_\Sigma}\T_{\Sigma'}\to X\times_{\T_\Sigma}\T_{\Sigma'}$ is an open immersion,
there exists a subdivision of fans $\Delta\to \Sigma$ such that $\Delta$ contains $\Sigma'$ as a subfan and the induced morphism
\[
f'\colon
X'\times_{\T_\Sigma}\T_{\Delta}
\to
X\times_{\T_\Sigma}\T_{\Delta}
\]
is an open immersion.
If we further assume that $f$ is a dividing cover,
then $f'$ is an isomorphism.
\end{prop}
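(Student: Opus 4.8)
The plan is to reduce to the toric situation afforded by the fan chart, to invoke the classification of log \'etale monomorphisms over $\T_\Sigma$ in terms of subdivisions of $\Sigma$ and open subfans, and then to glue the resulting local subdivisions into a single global one. The dictionary I would use repeatedly is that a log \'etale monomorphism whose underlying morphism of schemes is an open immersion is the same as a strict open immersion, and a subdivision morphism $\T_\Delta\to\T_\Sigma$ is strict only when $\Delta=\Sigma$; so ``$f$ is an open immersion over $\Sigma'$'' should translate into ``the subdivision describing $f$ is trivial over $\Sigma'$''. (Here I read the $X'$ in the displayed morphism as $Y$.)

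First I would observe that the assertion is Zariski-local on $X$: given a Zariski cover $\{X_i\}$ of $X$ and, for each $i$, a subdivision $\Delta_i\to\Sigma$ containing $\Sigma'$ as a subfan for which the base change of $f$ to $X_i\times_{\T_\Sigma}\T_{\Delta_i}$ is an open immersion, the local models agree over the overlaps (the toric datum being determined where $X_i\cap X_j$ meets $\T_\Sigma$, by the rigidity of Proposition \ref{boundary.67}), so a common refinement $\Delta$ of the $\Delta_i$ that is still unsubdivided over $|\Sigma'|$ does the job, since being an open immersion is a local property. Hence I may assume $X$ admits a strict morphism $X\to\T_\Sigma$ and, after further shrinking, that $f$ is the base change along this morphism of a log \'etale monomorphism $\T\to\T_\Sigma$ of toric monoid schemes; this is the local structure theorem for log \'etale monomorphisms over a fan chart, in the spirit of the local description of dividing covers in \cite[Lemma A.11.3]{BPO}. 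Such a $\T\to\T_\Sigma$ is necessarily of the form $\T_{\Delta^\circ}\hookrightarrow\T_\Delta\xrightarrow{\pi}\T_\Sigma$ with $\Delta\to\Sigma$ a subdivision of fans, $\pi$ the associated proper birational morphism, and $\Delta^\circ\subseteq\Delta$ an open subfan; so $Y\cong X\times_{\T_\Sigma}\T_{\Delta^\circ}$.

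Next I would feed in the hypothesis. Base changing along $\T_{\Sigma'}\hookrightarrow\T_\Sigma$ identifies $Y\times_{\T_\Sigma}\T_{\Sigma'}$ with $X\times_{\T_\Sigma}\T_{\Delta^\circ|_{\Sigma'}}$, where $\Delta^\circ|_{\Sigma'}$ is the open subfan of the subdivision $\Delta|_{\Sigma'}$ of $\Sigma'$; since this maps to $X\times_{\T_\Sigma}\T_{\Sigma'}$ by a strict open immersion, the subdivision $\Delta|_{\Sigma'}$ must be trivial over the locus of $\Sigma'$ met by $X$. Using this, and the fact that we are only asked to exhibit \emph{some} subdivision, I would modify $\Delta$ over the support $|\Sigma'|$ so that it restricts to $\Sigma'$ there -- this does not change the base change to $X$ -- which makes $\Sigma'$ a subfan of $\Delta$ while keeping $Y\cong X\times_{\T_\Sigma}\T_{\Delta^\circ}$. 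Finally, base changing $f$ along $\pi$: since $\pi$ is a monomorphism of fs log schemes one has $\T_{\Delta^\circ}\times_{\T_\Sigma}\T_\Delta\cong\T_{\Delta^\circ}$, so $Y\times_{\T_\Sigma}\T_\Delta\cong X\times_{\T_\Sigma}\T_{\Delta^\circ}$ and $f'$ is identified with the base change of the open immersion $\T_{\Delta^\circ}\hookrightarrow\T_\Delta$, hence is an open immersion. If $f$ is moreover a dividing cover, then $f'$, as a base change of $f$, is surjective and proper; a surjective open immersion of fs log schemes is an isomorphism, so $f'$ is an isomorphism (equivalently, surjectivity of $f$ forces $\Delta^\circ=\Delta$).

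I expect the main obstacle to be the interplay of the two requirements on $\Delta$ in the localized setting: one must invoke the local structure theorem to get a subdivision at all, then patch the local subdivisions -- using Proposition \ref{boundary.67} so that they agree on overlaps -- into a global subdivision of $\Sigma$ that simultaneously refines every local datum and leaves every cone of $\Sigma'$ unsubdivided. The last point is exactly where the hypothesis that $f$ is already an open immersion over $\Sigma'$ is used, via the dictionary between strictness and triviality of the subdivision; making precise the fan-combinatorial modification of the subdivision over $|\Sigma'|$ is the step that needs care.
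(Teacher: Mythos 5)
Your overall strategy (reduce to combinatorics via the fan chart, use the hypothesis over $\Sigma'$ to force compatibility, settle the dividing-cover case by surjectivity plus the open-immersion conclusion) is the right one, and your treatment of the last step agrees with the paper's. But there is a genuine gap at the start: the ``local structure theorem'' you invoke --- that Zariski-locally on $X$ the morphism $f$ is the base change of $\T_{\Delta^\circ}\hookrightarrow\T_\Delta\to\T_\Sigma$ for a subdivision $\Delta\to\Sigma$ and an open subfan $\Delta^\circ\subseteq\Delta$ --- is not what the available structure result says, and assuming it is essentially circular. The actual local description (\cite[Lemma A.11.3]{BPO}) is Zariski-local on the \emph{source} $Y$, and it only produces, near each point of $Y$, a single cone $\sigma$ contained in the support $\lvert\Sigma\rvert$ with $Y$ an open subscheme of $X\times_{\T_\Sigma}\T_{\langle\sigma\rangle}$; the cone $\sigma$ need not be a cone of $\Sigma$ or of any subdivision yet, and the various $\sigma$'s arising from different points of $Y$ need not fit together into a fan. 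Producing a subdivision $\Delta$ of $\Sigma$ that accommodates all of them simultaneously is precisely the content of the proposition (and of \cite[Proposition A.11.5]{BPO}, of which this is a refinement), so it cannot be taken as input. Correspondingly, the localization should be performed on $Y$, not on $X$: the paper covers $Y$ by $Y_1,\ldots,Y_n$, finds a fan $\Delta_i$ for each $Y_i\to X$, and takes the common refinement $\{\delta_1\cap\cdots\cap\delta_n\}$, which still contains $\Sigma'$ as a subfan.

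The second soft spot is the step ``modify $\Delta$ over $\lvert\Sigma'\rvert$ so that it restricts to $\Sigma'$ there.'' You flag it as needing care, but it is exactly where the hypothesis does its work and it is not automatic: undoing a subdivision over $\lvert\Sigma'\rvert$ while keeping a fan structure on the adjacent cones is not in general possible. The paper's route is sharper: from the hypothesis, the projection $\T_{\langle\sigma\rangle}\times_{\T_\Sigma}\T_{\Sigma'}\to\T_{\Sigma'}$ is an open immersion, which forces $\{\tau\cap\sigma':\tau\in\langle\sigma\rangle,\ \sigma'\in\Sigma'\}$ to be a subfan of $\Sigma'$, so that $\langle\sigma\rangle\cup\Sigma'$ is itself a fan; one then cites the existence (\cite[p.\ 18]{TOda}) of a subdivision of $\Sigma$ containing this fan as a subfan. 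If you replace your structure-theorem step by the genuine local statement on $Y$ and carry out this cone-by-cone compatibility argument, the rest of your outline goes through.
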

\begin{proof}
This is a slight improvement of \cite[Proposition A.11.5]{BPO}.

Assume that we have shown the claim for log \'etale monomorphisms.
If $f$ is a dividing cover,
then $f'$ is surjective by \cite[Proposition A.11.6]{BPO} and proper,
so $f'$ is an isomorphism.
Hence it suffices to show the claim for log \'etale monomorphisms.

Let $\{Y_1,\ldots,Y_n\}$ be a Zariski covering of $Y$ such that we can find a desired fan $\Delta_i$ for each $Y_i\to X$.
Then
\[
\Delta:=\{\delta_1\cap \cdots \cap \delta_n:\delta_1\in \Delta_1,\ldots,\delta_n\in \Delta_n\}
\]
is a desired fan for $Y\to X$.
Hence we can work Zariski locally on $Y$.
By \cite[Lemma A.11.3]{BPO},
we may assume that there exists a cone $\sigma$ contained in the support $\lvert \Sigma \rvert$ such that $Y$ is an open subscheme of $X\times_{\T_\Sigma}\T_{\langle \sigma\rangle }$,
where $\langle \sigma \rangle$ denotes the fan with the single largest cone $\sigma$.
We may also assume that the dual monoid $\sigma^\vee$ is a neat chart of $Y$ at a point $y\in Y$.

Since the induced morphism $Y\times_{\T_\Sigma}\T_{\Sigma'}\to X\times_{\T_\Sigma}\T_{\Sigma'}$ is an open immersion,
the projection
$\T_{\langle \sigma \rangle}\times_{\T_\Sigma} \T_{\Sigma'}\to \T_{\Sigma'}$ is an open immersion too.
It follows that the fan
\[
\{\tau\cap \sigma':\tau\in \langle \sigma \rangle
,\sigma'\in \Sigma'\}
\]
is a subfan of $\Sigma'$.
Hence the union of fans $\langle \sigma \rangle \cup \Sigma'$ is a fan.
By \cite[p.\ 18]{TOda},
there exists a subdivision $\Delta$ of $\Sigma$ containing $\langle \sigma \rangle \cup \Sigma'$ as a subfan,
which is a desired fan.
\end{proof}

The following result enables us to utilize the arguments in \S \ref{boundary} to schemes.

\begin{prop}
\label{boundary.44}
Let $X\in \RglRg'$ (resp.\ $X\in \SmlSm'/S$ with $S\in \Sch$),
and let $X\to X_0$ be a strict morphism with $X_0\in \SmlSm/\F_1$.
Then for every $V\in \Sdiv_X^n$ and integer $n\geq 0$,
there exists $V_0\in \Sdiv_{X_0}^n$ and a morphism $X\times_{X_0}V_0\to V$ in $\Sdiv_X^n$.
\end{prop}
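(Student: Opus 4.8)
The plan is to descend the log modification defining $V$ from $X$ to $X_0$ by means of a fan chart. Since the assertion is local on $X_0$ (we glue the local constructions at the end), and since each object of $\SmlSm/\F_1$ admits Zariski-locally a fan chart — locally it is $\A_P$ for a smooth toric monoid $P$ with log structure along a subset of the coordinate divisors, whence a strict morphism onto the corresponding affine space — we may assume that $X_0$ comes with a strict morphism $X_0\to \T_{\Sigma_0}$ for a smooth fan $\Sigma_0$. Recalling that $\Gmlog=(\P^1,0+\infty)$ carries the \emph{full} toric log structure of $\P^1$, the composite $X\times (\Gmlog)^n\to X_0\times (\Gmlog)^n\to \T_{\Sigma_0}\times (\Gmlog)^n=\T_\Sigma$, where $\Sigma:=\Sigma_0\times (\P^1)^n$, is strict; thus $\Sigma$ is a fan chart of $X\times (\Gmlog)^n$.

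Let $\Sigma'\subseteq \Sigma$ be the subfan obtained by restricting every $\P^1$-factor to the fan of $\A_\N$, so that $\T_{\Sigma'}=\T_{\Sigma_0}\times \A_\N^n$ and $(X\times (\Gmlog)^n)\times_{\T_\Sigma}\T_{\Sigma'}\cong X\times \A_\N^n$. Since $V\in \Sdiv_X^n$, the structure morphism $g\colon V\to X\times (\Gmlog)^n$ is a dividing cover — hence a surjective proper log \'etale monomorphism — and its base change along $X\times \A_\N^n\hookrightarrow X\times (\Gmlog)^n$ is the isomorphism $g^{-1}(X\times \A_\N^n)\xrightarrow{\sim}X\times \A_\N^n$, in particular an open immersion. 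Applying Proposition \ref{boundary.40} to $g$ with this choice of $\Sigma'$, and using its final clause since $g$ is a dividing cover, produces a subdivision $\Delta\to \Sigma$ containing $\Sigma'$ as a subfan such that the morphism $f'\colon V\times_{\T_\Sigma}\T_\Delta\to (X\times (\Gmlog)^n)\times_{\T_\Sigma}\T_\Delta$ induced by $g$ and $\id_{\T_\Delta}$ is an isomorphism.

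Set $V_0:=(X_0\times (\Gmlog)^n)\times_{\T_\Sigma}\T_\Delta$, the base change of the dividing cover $\T_\Delta\to \T_\Sigma$ along the strict morphism $X_0\times (\Gmlog)^n\to \T_\Sigma$; it is a dividing cover of $X_0\times (\Gmlog)^n$ in $\lSm/\F_1$ by stability of dividing covers under base change (Proposition \ref{logmonoid.13}), and toric monoid schemes are log smooth over $\F_1$. Because $\Sigma'$ is a subfan of $\Delta$, one has $\T_\Delta\times_{\T_\Sigma}\T_{\Sigma'}=\T_{\Sigma'}$, so the structure morphism of $V_0$ restricts to an isomorphism over $X_0\times \A_\N^n$ and $V_0\in \Sdiv_{X_0}^n$. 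By associativity of fiber products $X\times_{X_0}V_0\cong (X\times (\Gmlog)^n)\times_{\T_\Sigma}\T_\Delta$, and the same two computations give $X\times_{X_0}V_0\in \Sdiv_X^n$. Composing $X\times_{X_0}V_0\xrightarrow{f'^{-1}}V\times_{\T_\Sigma}\T_\Delta\to V$ with $g$ recovers, since $f'$ is induced by $g$, the projection $(X\times (\Gmlog)^n)\times_{\T_\Sigma}\T_\Delta\to X\times (\Gmlog)^n$, i.e.\ the structure morphism of $X\times_{X_0}V_0$; hence this composite $X\times_{X_0}V_0\to V$ lies over $X\times (\Gmlog)^n$ and is a morphism in $\Sdiv_X^n$, as required.

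It remains to glue these constructions over a finite cover $X_0=\bigcup_i X_{0,i}$ by opens admitting fan charts. The delicate point is that the subdivision furnished by Proposition \ref{boundary.40} is not canonical, so the locally constructed $V_{0,i}\in \Sdiv_{X_{0,i}}^n$ need not agree on overlaps; one repairs this by passing, over each pairwise intersection, to a common refinement of the relevant objects, which exists because the categories $\Sdiv^n$ are cofiltered (Proposition \ref{boundary.17}), with no coherence condition to check since all $\Hom$-sets there have at most one element (Proposition \ref{boundary.69}). I expect this gluing step to be the only genuinely technical part; the rest is the formal toric input already packaged in Proposition \ref{boundary.40}.
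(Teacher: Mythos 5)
Your local construction is correct and is essentially the paper's argument: both reduce to a local fan chart and invoke Proposition \ref{boundary.40} to replace the dividing cover $V$ by the pullback of a subdivision $\T_\Delta\to\T_\Sigma$ defined already over the chart. (The paper first base-changes $V$ to $W:=U\times_X V$ over a piece $U_0=\A_P\times\A^m$ of $X_0$ and applies the proposition to $W$; you apply it directly to $V$ over the chart of $X\times(\Gmlog)^n$ — a cosmetic difference.) Your verifications that $V_0\in\Sdiv_{X_0}^n$, that $X\times_{X_0}V_0\cong(X\times(\Gmlog)^n)\times_{\T_\Sigma}\T_\Delta$, and that the composite through $f'^{-1}$ lies over $X\times(\Gmlog)^n$ are all fine.

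The gap is in the gluing step, which you correctly flag as the technical heart but then dispose of too quickly. Taking a common refinement of $V_{0,i}|_{X_{0,ij}}$ and $V_{0,j}|_{X_{0,ij}}$ in $\Sdiv_{X_{0,ij}}^n$ only produces an object over the overlap; it does not modify $V_{0,i}$ and $V_{0,j}$ themselves into objects that agree there, and a dividing cover prescribed only over $X_{0,ij}$ has no reason to extend to $X_{0,i}$ — so cofilteredness plus singleton $\Hom$-sets alone do not assemble a global $V_0\in\Sdiv_{X_0}^n$. The missing ingredient is Lemma \ref{boundary.18}: restriction $\Sdiv_{X_0}^n\to\Sdiv_{X_{0,i}}^n$ along an open immersion is essentially surjective (up to refinement). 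With it, one first extends each locally constructed $V_{0,i}$ (or a refinement thereof) to a \emph{global} object $V_{0,i}'\in\Sdiv_{X_0}^n$, then uses cofilteredness of the single category $\Sdiv_{X_0}^n$ (Proposition \ref{boundary.41}, via Proposition \ref{boundary.17}) to find one $V_0$ mapping to all $V_{0,i}'$; the resulting local morphisms $X\times_{X_0}V_0\times_{X_0}X_{0,i}\to V\times_{X_0}X_{0,i}$ then glue by the uniqueness statement of Proposition \ref{boundary.67}. This is exactly how the paper closes the argument; without the extension lemma your pairwise-intersection scheme does not terminate in a global object.
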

\begin{proof}
Let $U_0$ be an open subscheme of $X_0$ of the form $\A_P\times \A^m$ for a smooth toric monoid $P$ and integer $m\geq 0$,
and let $U:=X\times_{X_0} U_0$.
Then $U$ has a chart $P$.
We set $W:=U\times_X V$.
Observe that the induced morphism
\[
W\times_{U_0 \times (\Gmlog)^n}((U_0-\partial U_0) \times \A_\N^n)
\to
(U\times (\Gmlog)^n) \times_{U_0 \times (\Gmlog)^n}((U_0-\partial U_0) \times \A_\N^n)
\]
is an isomorphism.
Proposition \ref{boundary.40} implies that
there exists a dividing cover $W_0\to U_0\times (\Gmlog)^n$ such that the induced morphism
\[
q\colon
W\times_{U_0\times (\Gmlog)^n}W_0\to (U\times (\Gmlog)^n)\times_{U_0\times (\Gmlog)^n} W_0
\]
and the projection
\[
W_0\times_{U_0\times (\Gmlog)^n}((U_0-\partial U_0)\times \A_\N^n)\to (U_0-\partial U_0)\times \A_\N^n
\]
are isomorphisms.
In particular,
we have $W_0\in \Sdiv_{U_0}^n$.
Compose $q^{-1}$ with the projection $W\times_{U_0\times (\Gmlog)^n}W_0\to W$ to obtain
\begin{equation}
\label{boundary.44.1}
U\times_{U_0} W_0\to W.
\end{equation}

Now, let $\{U_{01},\ldots,U_{0r}\}$ be a Zariski covering of $X_0$ such that each $U_{0i}$ is of the form $\A_{P_i}\times \A^{m_i}$ for some smooth toric monoid $P_i$ and integer $m_i\geq 0$.
Construct $W_{0i}\in \Sdiv_{U_{0i}}^n$ for each $U_{0i}$ as the above paragraph.
Then we have a morphism $X\times_{X_0} W_{0i}\to V\times_{X_0} U_{0i}$ from \eqref{boundary.44.1}.
By Lemma \ref{boundary.18}, there exists $V_{0i}\in \Sdiv_{X_0}^n$ with a morphism $V_{0i}\times_{X_0}U_{0i}\to W_{0i}$ in $\Sdiv_{U_{0i}}^n$.
Since $\Sdiv_{X_0}^n$ is cofiltered by Proposition \ref{boundary.17},
there exists $V_0\in \Sdiv_{X_0}^n$ with morphisms $V_0\to V_{01},\ldots,V_{0r}$ in $\Sdiv_{X_0}^n$.
Together with Proposition \ref{boundary.67},
we can glue the induced morphisms
\[
X\times_{X_0} V_0\times_{X_0} U_{0i}\to V\times_{X_0}U_{0i}
\]
for all $i$ to obtain a morphism $X\times_{X_0} V_0\to V$ in $\Sdiv_X^n$.
\end{proof}

Using this result,
we obtain some properties of $\SBl_X^n$ and $\Sdiv_X^n$ as follows.

\begin{prop}
\label{boundary.41}
Let $X\in \RglRg'$ (resp.\ $X\in \SmlSm'/S$ with $S\in \Sch$).
Then the categories $\SBl_X^n$ and $\Sdiv_X^n$ are cofiltered for every integer $n\geq 0$.
\end{prop}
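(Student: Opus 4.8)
The plan is to reduce everything to the $\F_1$‑case, which is already settled in Propositions~\ref{boundary.17} and~\ref{boundary.29}, by exploiting a strict morphism $X\to X_0$ with $X_0\in\SmlSm/\F_1$; such a morphism exists precisely because $X$ lies in a primed category (Definition~\ref{boundary.71}). Fix $n\geq 0$. By Proposition~\ref{boundary.69} every Hom‑set in $\SBl_X^n$ and in $\Sdiv_X^n$ has at most one element, so in each case it suffices to verify that the category is nonempty (it is, since $X\times\square^n$, resp.\ $X\times(\Gmlog)^n$, is an object) and that any two objects admit a common object mapping to both.

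First I would dispose of $\Sdiv_X^n$, where no regularity hypothesis intervenes. Given $V,V'\in\Sdiv_X^n$, Proposition~\ref{boundary.44} produces $V_0,V_0'\in\Sdiv_{X_0}^n$ together with morphisms $X\times_{X_0}V_0\to V$ and $X\times_{X_0}V_0'\to V'$ in $\Sdiv_X^n$; by Proposition~\ref{boundary.17} the category $\Sdiv_{X_0}^n$ is cofiltered, so there is $W_0\in\Sdiv_{X_0}^n$ mapping to both $V_0$ and $V_0'$. Since the canonical map $X\times(\Gmlog)^n\to X_0\times(\Gmlog)^n$ is strict (being a base change of $X\to X_0$), fs base change along it agrees with base change of underlying schemes, so $X\times_{X_0}W_0\to X\times(\Gmlog)^n$ is still a surjective proper log \'etale monomorphism, i.e.\ a dividing cover; and the preimage of $X\times\A_\N^n$ in it is the base change of the preimage of $X_0\times\A_\N^n$ in $W_0$, hence $X\times\A_\N^n$. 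Thus $X\times_{X_0}W_0\in\Sdiv_X^n$ maps to both $V$ and $V'$, and $\Sdiv_X^n$ is cofiltered.

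For $\SBl_X^n$, take $Y,Y'\in\SBl_X^n$ and view them through $V:=\rho(Y)$, $V':=\rho(Y')\in\Sdiv_X^n$ (Proposition~\ref{boundary.70}). Applying Proposition~\ref{boundary.44} and cofilteredness of $\Sdiv_{X_0}^n$ exactly as above gives $V_0''\in\Sdiv_{X_0}^n$ with $X\times_{X_0}V_0''$ mapping to both $V$ and $V'$; then Proposition~\ref{boundary.29} for $X_0$ supplies $Z_0\in\SBl_{X_0}^n$ with a morphism $\rho(Z_0)\to V_0''$ in $\Sdiv_{X_0}^n$, where crucially $\rho(Z_0)\in\SmlSm/\F_1$. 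I then set $W:=X\times_{X_0}\rho(Z_0)$: as in the previous paragraph this is a dividing cover of $X\times(\Gmlog)^n$ belonging to $\Sdiv_X^n$, and it maps to $X\times_{X_0}V_0''$ and hence to $V$ and to $V'$. The heart of the argument is to see that $W$ lies in the essential image of $\rho$. Because $\rho(Z_0)$ is a \emph{smooth} toric monoid scheme and $X\to X_0$ is strict, the underlying scheme of $W$ is — \'etale‑locally on $X$, where $\ul X$ is \'etale over $\ul{X_0}\times_\Z\A^m_S$ compatibly with the chart in the $\SmlSm/S$ case, and has the analogous toric‑regular local form by the structure theory of log regular schemes in the $\RglRg$ case — the base change of $\ul{\rho(Z_0)}\to\ul{X_0}$ along $\ul X\to\ul{X_0}$, which takes the shape $\ul{\rho(Z_0)}\times_\Z\A^m_S$ (resp.\ is regular because the chart coordinates form a regular sequence on $\ul X$). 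Hence $\ul W$ is smooth over $S$ (resp.\ regular), and equipping it with the boundary complementary to $(X-\partial X)\times\A^n$ — which is strict normal crossing by the argument of Proposition~\ref{logmonoid.25} — produces $Z\in\SBl_X^n$ with $\rho(Z)\cong W$. Finally, by full faithfulness of $\rho$ the two morphisms $\rho(Z)\to\rho(Y)$ and $\rho(Z)\to\rho(Y')$ descend to $Z\to Y$ and $Z\to Y'$ in $\SBl_X^n$, so $\SBl_X^n$ is connected and, with Proposition~\ref{boundary.69}, cofiltered.

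The step I expect to be the real obstacle is the verification that $W=X\times_{X_0}\rho(Z_0)$ has regular (resp.\ smooth over $S$) underlying scheme and defines an honest object of $\SBl_X^n$: this is where strictness of the chart $X\to X_0$ and smoothness of the toric resolution $\rho(Z_0)$ over $\F_1$ must be combined with the local structure of objects of $\RglRg'$ and $\SmlSm'/S$. Everything else — the $\Sdiv$‑part and the reductions via Propositions~\ref{boundary.44}, \ref{boundary.69}, and~\ref{boundary.70} — is routine bookkeeping.
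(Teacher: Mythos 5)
Your proof follows the same route as the paper's: reduce to the monoid-scheme case via the strict chart $X\to X_0$, combine Propositions \ref{boundary.17}, \ref{boundary.29}, and \ref{boundary.44} to produce common refinements, and invoke Proposition \ref{boundary.69} so that a thin category with binary lower bounds is cofiltered. The only difference is that you spell out the step the paper leaves implicit --- that $X\times_{X_0}\rho(Z_0)$ again defines an object of $\SBl_X^n$, i.e.\ that base change of a smooth standard blow-up along the strict chart stays regular (resp.\ smooth over $S$) --- and you correctly identify this as the point where the local structure of $\RglRg'$ and $\SmlSm'/S$ enters.
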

\begin{proof}
Let $X\to X_0$ be a strict morphism with $X_0\in \SmlSm/\F_1$.
Since $\Sdiv_{X_0}^n$ is cofiltered by Proposition \ref{boundary.17},
Propositions \ref{boundary.29} and \ref{boundary.44} imply that $\SBl_X^n$ and $\Sdiv_X^n$ are connected.
To conclude,
observe that there exists at most one morphism $Y'\to Y$ for all $Y,Y'\in \Sdiv_X^n$ by Proposition \ref{boundary.69}.
\end{proof}

\begin{prop}
\label{boundary.45}
Let $X\in \RglRg'$ (resp.\ $X\in \SmlSm'/S$ with $S\in \Sch$),
and let $X\to X_0$ be a strict morphism with $X_0\in \SmlSm/\F_1$.
Then the induced functors $\SBl_{X_0}^n\to \SBl_X^n$, $\Sdiv_{X_0}^n\to \Sdiv_{X}^n$, and $\SBl_X^n\to \Sdiv_X^n$ are cofinal for every $n\in \N$.
In particular,
for every presheaf of spectra $\cF$ on $\RglRg'$ (resp.\ $\SmlSm'/S$),
we have
\begin{equation}
\label{boundary.45.1}
\colim_{Y_0\in (\SBl_{X_0}^n)^\op}\cF(\ul{X}\times_{\ul{X_0}}\ul{Y_0})
\cong
\colim_{Y\in (\SBl_X^n)^\op}\cF(\ul{Y}).
\end{equation}
\end{prop}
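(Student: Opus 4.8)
The plan is to deduce all three cofinality statements, and then the colimit identity \eqref{boundary.45.1}, from the scheme-level approximation result Proposition \ref{boundary.44}, the full faithfulness of $\rho$ in Proposition \ref{boundary.70}, and the corresponding facts already available over $\F_1$ (Propositions \ref{boundary.17} and \ref{boundary.29}). The preliminary observation is that, by Propositions \ref{boundary.17}, \ref{boundary.41}, \ref{boundary.62}, and \ref{boundary.69}, each of the four categories $\SBl_{X_0}^n$, $\Sdiv_{X_0}^n$, $\SBl_X^n$, $\Sdiv_X^n$ is cofiltered and has at most one morphism between any two objects. For a functor $u$ between the opposites of two such categories the second half of the cofinality criterion --- coequalizing two parallel arrows out of a fixed object --- is therefore vacuous, so $u^\op$ is cofinal as soon as every object of the target admits a morphism from the image of some object of the source. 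Thus for each of the three functors I only need to produce such dominating objects, and the identity \eqref{boundary.45.1} will then follow immediately from the case of $\SBl_{X_0}^n\to\SBl_X^n$, since that functor sends $Y_0$ to an object $\bar Y_0$ with $\ul{\bar Y_0}\cong\ul X\times_{\ul{X_0}}\ul{Y_0}$.

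For the pullback functor $\Sdiv_{X_0}^n\to\Sdiv_X^n$ the required domination is exactly the content of Proposition \ref{boundary.44}: given $V\in\Sdiv_X^n$ there are $V_0\in\Sdiv_{X_0}^n$ and a morphism $X\times_{X_0}V_0\to V$ in $\Sdiv_X^n$. For $\rho\colon\SBl_X^n\to\Sdiv_X^n$, I would start from $V\in\Sdiv_X^n$, use Proposition \ref{boundary.44} to obtain $V_0\in\Sdiv_{X_0}^n$ with $X\times_{X_0}V_0\to V$, and then apply Proposition \ref{boundary.29} over $\F_1$ to get $Z_0\in\SBl_{X_0}^n$ together with a morphism $\rho(Z_0)\to V_0$ in $\Sdiv_{X_0}^n$; base-changing along $X\to X_0$ and composing yields $X\times_{X_0}\rho(Z_0)\to V$ in $\Sdiv_X^n$. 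The key point here is the interchange identity $X\times_{X_0}\rho(Z_0)\cong\rho(\bar Z_0)$, where $\bar Z_0\in\SBl_X^n$ is the image of $Z_0$; this holds because both sides are dividing covers of $X\times(\Gmlog)^n$ with the same underlying scheme $\ul X\times_{\ul{X_0}}\ul{Z_0}$, and such a dividing cover is unique by Propositions \ref{boundary.67} and \ref{boundary.68}. Hence $\rho(\bar Z_0)\to V$ exhibits the needed domination, so $\rho$ is cofinal.

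For the functor $\SBl_{X_0}^n\to\SBl_X^n$, I would take $Y\in\SBl_X^n$ and run the previous construction with $V:=\rho(Y)$, producing $\bar Z_0\in\SBl_X^n$ and a morphism $\rho(\bar Z_0)\to\rho(Y)$ in $\Sdiv_X^n$; full faithfulness of $\rho$ (Proposition \ref{boundary.70}) then promotes this to a morphism $\bar Z_0\to Y$ in $\SBl_X^n$, which is the domination we wanted. Finally, with cofinality of $\SBl_{X_0}^n\to\SBl_X^n$ in hand, the left-hand side of \eqref{boundary.45.1} rewrites as $\colim_{Y_0\in(\SBl_{X_0}^n)^\op}\cF(\ul{\bar Y_0})$, which agrees with $\colim_{Y\in(\SBl_X^n)^\op}\cF(\ul Y)$.

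I expect the only genuinely delicate points to be, first, checking that the three functors are well defined on the nose --- i.e.\ that $X\times_{X_0}Y_0$ again lies in $\SmlSm/S$ (resp.\ $\RglRg$) and is an admissible blow-up of $X\times\square^n$ --- which I would deduce from strictness of $X\to X_0$ together with the smoothness of the fans defining the objects over $\F_1$; and second, the interchange identity $X\times_{X_0}\rho(Z_0)\cong\rho(\bar Z_0)$, which is where the uniqueness statements of Propositions \ref{boundary.67} and \ref{boundary.68} are essential. Everything else is bookkeeping around which of the categories $\SBl$ or $\Sdiv$, over $X_0$ or over $X$, a given object lives in.
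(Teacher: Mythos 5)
Your proposal is correct and follows essentially the same route as the paper, whose proof of this proposition is a three-sentence gesture ("the categories are cofiltered by \ref{boundary.17} and \ref{boundary.41}; use \ref{boundary.29} and \ref{boundary.44} to show the functors are cofinal; the last claim follows"). You have simply filled in the details the paper leaves implicit — the reduction of cofinality to domination via cofilteredness and thinness of the Hom-sets, the base-change/interchange step combining \ref{boundary.44} with \ref{boundary.29}, and the lift along the fully faithful $\rho$ — all of which are the intended argument.
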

\begin{proof}
The categories $\SBl_{X_0}^n$, $\SBl_X^n$, $\Sdiv_{X_0}^n$, and $\Sdiv_X^n$ are cofiltered by Propositions \ref{boundary.17} and \ref{boundary.41}.
Use Propositions \ref{boundary.29} and \ref{boundary.44} to show that the functors are cofinal.
The last claim is a consequence of this.
\end{proof}

\begin{prop}
\label{boundary.46}
Let $Y\to X$ be a strict morphism in $\RglRg'$ (resp.\ $\SmlSm'/S$ with $S\in \Sch$).
Then the induced functor $\Sdiv_X^n \to \Sdiv_Y^n$ is cofinal for every $n\in \N$.
\end{prop}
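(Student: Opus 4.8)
The plan is to deduce this directly from Proposition \ref{boundary.45} by a cancellation argument for cofinal functors, with no new geometric input.

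Since $X\in \RglRg'$ (resp.\ $X\in \SmlSm'/S$), choose a strict morphism $X\to X_0$ with $X_0\in \SmlSm/\F_1$. As $f\colon Y\to X$ is strict, the composite $Y\to X_0$ is strict as well, and $Y\in \RglRg'$ (resp.\ $\SmlSm'/S$) by hypothesis; hence Proposition \ref{boundary.45} applies to both $X\to X_0$ and $Y\to X_0$, and it shows that the pullback functors
\[
h\colon \Sdiv_{X_0}^n\to \Sdiv_X^n,
\text{ }
h'\colon \Sdiv_{X_0}^n\to \Sdiv_Y^n
\]
are cofinal for every $n\in \N$.

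The next step is to check that $h'$ factors, up to natural isomorphism, as $f^*\circ h$, where $f^*\colon \Sdiv_X^n\to \Sdiv_Y^n$ is the functor in the statement. Indeed, for $V_0\in \Sdiv_{X_0}^n$ one has $h(V_0)\cong V_0\times_{X_0}X$ in $\Sdiv_X^n$, and $f^*$ pulls this back along $Y\to X$, so $f^*(h(V_0))\cong (V_0\times_{X_0}X)\times_X Y$; the pasting law for fibre products of fs log schemes identifies this with $V_0\times_{X_0}Y\cong h'(V_0)$, compatibly over $Y\times (\Gmlog)^n$. (Proposition \ref{boundary.67} may be invoked to see that these canonical comparison morphisms are the ones induced by the structure morphisms, so we indeed get an isomorphism of functors.) Since $h$ is cofinal and $f^*\circ h\cong h'$ is cofinal, $f^*$ is cofinal, by the standard fact that a functor which becomes cofinal after precomposition with a cofinal functor is itself cofinal.

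For completeness, here is the direct alternative, which does not use Proposition \ref{boundary.45}: by Propositions \ref{boundary.41} and \ref{boundary.69} the categories $\Sdiv_X^n$ and $\Sdiv_Y^n$ are cofiltered with at most one morphism between any two objects, so it is enough to produce, for each $V\in \Sdiv_Y^n$, some $U\in \Sdiv_X^n$ admitting a morphism $f^*(U)\to V$ in $\Sdiv_Y^n$. Applying Proposition \ref{boundary.44} to $Y\to X_0$ yields $V_0\in \Sdiv_{X_0}^n$ together with a morphism $Y\times_{X_0}V_0\to V$, and then $U:=X\times_{X_0}V_0\in \Sdiv_X^n$ works because $f^*(U)\cong Y\times_{X_0}V_0$ as above. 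In either form there is essentially no obstacle: the geometric content lies entirely in Propositions \ref{boundary.44} and \ref{boundary.45}, and the one point needing care is the compatibility of the successive strict base changes along $X\to X_0$ and $Y\to X$ with the composite $Y\to X_0$, which is formal.
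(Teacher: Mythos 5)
Your proof is correct and takes essentially the same route as the paper, whose entire argument is: choose a strict $X\to X_0$ with $X_0\in \SmlSm/\F_1$, observe that the composite $Y\to X_0$ is strict, and invoke Proposition \ref{boundary.45}. You have merely made explicit the factorization $h'\cong f^*\circ h$ and the cancellation property of cofinal functors that the paper leaves implicit.
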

\begin{proof}
Let $X\to X_0$ be a strict morphism with $X_0\in \SmlSm/\F_1$.
Then the composite morphism $Y\to X_0$ is strict too.
Proposition \ref{boundary.45} finishes the proof.
\end{proof}

Next, we establish analogs of Construction \ref{boundary.35}, Definition \ref{boundary.74}, and Proposition \ref{boundary.65}.

\begin{const}
\label{boundary.39}
Let $f\colon X'\to X$ be a morphism in $\RglRg$ (resp.\ $\SmlSm/S$ with $S\in \Sch$) and $n\in \N$.
If $g\colon V\to X\times (\Gmlog)^n$ is a dividing cover corresponding to $Y\in \Sdiv_X^n$,
consider the pullback $g'\colon V'\to X'\times (\Gmlog)^n$ of $g$,
which is a dividing cover.
We have
\[
g^{-1}((X-\partial X)\times \A_\N^n)\cong (X-\partial X)\times \A_\N^n,
\]
which implies
\[
g'^{-1}((X'-\partial X')\times \A_\N^n)\cong (X'-\partial X')\times \A_\N^n.
\]
Hence we have $V'\in \Sdiv_{X'}^n$.
The construction of $V'$ is natural,
so we obtain a functor $f^*\colon \Sdiv_X^n\to \Sdiv_{X'}^n$ given by $f^*(V):=V'$.
\end{const}

\begin{df}
Let $\SBl(\Rg)$ (resp.\ $\SBl(S)$ with $S\in \Sch$) be the category of triples $(Y, X,[n])$ with $Y\in \SBl_X^n$, $X\in \RglRg'$ (resp.\ $X\in \SmlSm'/S$), and $n\in \N$ whose morphisms are of the form
\[
(u,f,\alpha)\colon (Y',X',[n'])\to (Y,X,[n])
\]
such that the square
\[
\begin{tikzcd}
Y'\ar[d]\ar[r,"u"]&
Y\ar[d]
\\
X'\times \square^{n'}\ar[r,"f\times \square^\alpha"]&
X\times \square^n
\end{tikzcd}
\]
commutes.

A morphism $(u,f,\alpha)\colon (Y',X',[n'])\to (Y,X,[n])$ is an \emph{admissible blow-up} if $f\colon X'\to X$ and $\alpha\colon [n']\to [n]$ are isomorphisms.
Let $\SAdm$ denote the class of admissible blow-ups.
\end{df}

\begin{prop}
\label{boundary.73}
The class $\SAdm$ in $\SBl(\Rg)$ (resp.\ $\SBl(S)$ with $S\in \Sch$) admits calculus of right fractions.
\end{prop}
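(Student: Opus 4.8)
The plan is to imitate the proof of Proposition \ref{boundary.65} step by step, replacing each $\F_1$-ingredient with its scheme-theoretic counterpart. Condition (i) is immediate: every isomorphism is an admissible blow-up and admissible blow-ups are closed under composition. For conditions (ii) and (iii) the substitutions are: Proposition \ref{boundary.69} in place of Proposition \ref{boundary.62} for the uniqueness of morphisms in $\SBl_X^n$ and $\Sdiv_X^n$; Proposition \ref{boundary.41} in place of Proposition \ref{boundary.17} for cofilteredness of these categories; Construction \ref{boundary.39} in place of Construction \ref{boundary.35} for the pullback functors $f^*\colon\Sdiv_X^n\to\Sdiv_{X'}^n$; the cofinality statements of Proposition \ref{boundary.45} in place of Propositions \ref{boundary.29} and \ref{boundary.31}; Lemma \ref{boundary.75} to promote a morphism between dividing covers to a morphism between the associated standard blow-ups; and Proposition \ref{boundary.67} (together with the remark following it) for every argument of the form ``two morphisms of fs log schemes out of an object of $\SmlSm/S$ that agree on a dense open subscheme coincide''.

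For the Ore condition I would start from morphisms
\[
(u,f,\alpha)\colon (Y',X',[n'])\to (Y,X,[n]),
\qquad
(v,\id,\id)\colon (Z,X,[n])\to (Y,X,[n])
\]
with $(v,\id,\id)\in\SAdm$, factor $\alpha$ into elementary cofaces $\delta_i$ and codegeneracies $p_i$ of $\bDelta$, and reduce, exactly as in the proof of Proposition \ref{boundary.65}, to the two cases where either $f$ is arbitrary and $\alpha=\id$, or $f=\id$ and $\alpha$ is a single $\delta_i$ or $p_i$. In the first case I would pull $\rho(Z)\in\Sdiv_X^n$ back to $f^*\rho(Z)\in\Sdiv_{X'}^n$ via Construction \ref{boundary.39}, invoke cofinality of $\rho\colon\SBl_{X'}^n\to\Sdiv_{X'}^n$ (Proposition \ref{boundary.45}) to obtain $Z'\in\SBl_{X'}^n$ with a morphism $\rho(Z')\to f^*\rho(Z)\to\rho(Z)$ over $f\times(\Gmlog)^n$, and then apply Lemma \ref{boundary.75} to lift it to a morphism $Z'\to Z$ over $f\times\square^n$. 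In the second case I would pick a strict chart $X\to X_0$ with $X_0\in\SmlSm/\F_1$, which exists by definition of $\RglRg'$ (resp.\ $\SmlSm'/S$), use cofinality of $\SBl_{X_0}^n\to\SBl_X^n$ (Proposition \ref{boundary.45}) to refine $Z$ so that it admits a morphism from the base change $\ul X\times_{\ul{X_0}}\ul{Z_0}$ of some $Z_0\in\SBl_{X_0}^n$, apply the $\F_1$-level functor $\alpha^*$ to $Z_0$ (Construction \ref{boundary.35} for a codegeneracy or a non-terminal coface, Proposition \ref{boundary.21} for the terminal one), and base-change the resulting commutative square back along $X\to X_0$ to produce $Z'\in\SBl_X^{n'}$ together with a morphism $Z'\to Z$ over $\id\times\square^\alpha$. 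In either case, since $\SBl_{X'}^{n'}$ is cofiltered (Proposition \ref{boundary.41}) I may further replace $Z'$ by an admissible blow-up mapping also to $Y'$ in $\SBl_{X'}^{n'}$; the square
\[
\begin{tikzcd}
Z'\ar[d]\ar[r]& Y'\ar[d,"u"]\\
Z\ar[r,"v"]& Y
\end{tikzcd}
\]
then commutes because both composites agree on the dense open subscheme $(X'-\partial X')\times\A^{n'}$ of $Z'$, so Proposition \ref{boundary.67} forces equality.

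For the right cancellability condition, given $(u_1,f_1,\alpha_1),(u_2,f_2,\alpha_2)\colon (Y',X',[n'])\rightrightarrows (Y,X,[n])$ and an admissible blow-up $(v,\id,\id)\colon(Y'',X',[n'])\to(Y',X',[n'])$ that equalises them, one reads off $f_1=f_2$ and $\alpha_1=\alpha_2$ from the components in $\SmlSm/S$ (resp.\ $\RglRg$) and in $\bDelta$, and then, because $(X'-\partial X')\times\A^{n'}$ is dense in $Y'$ and $u_1,u_2$ restrict to the same morphism there (using that $v$ is an isomorphism over the open part), Proposition \ref{boundary.67} gives $u_1=u_2$. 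The step I expect to be the main obstacle is the case ``$f=\id$, $\alpha$ elementary'' of the Ore condition: in the $\F_1$ world the face-map functor $\alpha^*$ on $\SBl_X^n$ is manufactured in Construction \ref{boundary.38} out of closures of toric monoid schemes, a purely combinatorial device with no direct analogue over a general base, so one is forced to descend to a strict chart $X\to X_0$, perform the construction over $X_0\in\SmlSm/\F_1$, and transport it back using Propositions \ref{boundary.44} and \ref{boundary.45}. The real work there is the bookkeeping of checking that the squares over $X\times\square^{\bullet}$ obtained by base change along $X\to X_0$ remain commutative, which once more reduces to density of $(X-\partial X)\times\A^{\bullet}$ and Proposition \ref{boundary.67}.
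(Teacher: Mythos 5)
Your proposal is correct and follows essentially the same route as the paper: reduce the Ore condition to the two cases $\alpha=\id$ (handled by Construction \ref{boundary.39}, Proposition \ref{boundary.45}, and Lemma \ref{boundary.75}) and $f=\id$ (handled by descending to a strict chart $X\to X_0$ with $X_0\in\SmlSm/\F_1$ and transporting the $\F_1$-level construction back), then use cofilteredness of $\SBl_{X'}^{n'}$ and Proposition \ref{boundary.67} for commutativity and for right cancellability. The only cosmetic difference is that in the $f=\id$ case the paper does not re-decompose $\alpha$ into elementary faces and degeneracies but simply invokes the already-proved $\F_1$-level Ore condition (Proposition \ref{boundary.65}) over $X_0$, which internalizes that decomposition.
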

\begin{proof}
For the Ore condition,
let
\[
(u,f,\alpha)\colon (Y',X',[n'])\to (Y,X,[n]),
\text{ }
(v,\id,\id)\colon (Z,X,[n])\to (Y,X,[n])
\]
be morphisms in $\SBl(\Rg)$ (resp.\ $\SBl(S)$) so that $(v,\id,\id)\in \SAdm$,
and let $X\to X_0$ be a strict morphism with $X_0\in \SmlSm/\F_1$.
We may assume $f=\id$ or $\alpha=\id$ since the pair $(f,\alpha)$ is the composite $(f,\id)\circ (\alpha,\id)$.

Assume $\alpha=\id$.
Consider $W\in \Sdiv_X^n$ corresponding to $Z\in \SBl_X^n$.
Construction \ref{boundary.39} yields $W'\in \Sdiv_{X'}^n$ with a morphism $W'\to W$ over $X\times (\Gmlog)^n$.
By Proposition \ref{boundary.45},
we may assume that there exists $Z'\in \SBl_{X'}^n$ corresponding to $W'$.
Lemma \ref{boundary.75} yields a morphism $Z'\to Z$ over $X\times \square^n$.
The square
\[
\begin{tikzcd}
Z'\ar[d]\ar[r]&
Z\ar[d]
\\
X'\times \square^n\ar[r]&
X\times \square^n
\end{tikzcd}
\]
commutes by Proposition \ref{boundary.67}.
Hence we obtain a morphism $(Z',X',[n])\to (Z,X,[n])$.
Since $\SBl_{X'}^{n}$ is cofiltered by Proposition \ref{boundary.41},
there exist morphisms $Z''\to Y',Z'$ in $\SBl_X'^{n}$.
The square
\begin{equation}
\label{boundary.73.1}
\begin{tikzcd}
Z''\ar[d]\ar[r]&
Y'\ar[d,"u"]
\\
Z\ar[r,"v"]&
Y
\end{tikzcd}
\end{equation}
commutes by Proposition \ref{boundary.67}.
This shows the Ore condition for the case of $\alpha=\id$.

Assume $f=\id$.
Consider $W\in \Sdiv_X^n$ corresponding to $Z\in \SBl_X^n$.
By Proposition \ref{boundary.45},
there exists $W_0\in \Sdiv_{X_0}^n$ with a morphism $X\times_{X_0} W_0\to W$ in $\Sdiv_X^n$.
Using Proposition \ref{boundary.31},
we may assume that there exists $Z_0\in \SBl_{X_0}^n$ corresponding to $W_0$.
Then there exists a morphism $X\times_{X_0} Z_0\to Z$ in $\SBl_X^n$ by Proposition \ref{boundary.70}.
Also, Proposition \ref{boundary.65} yields a morphism $(Z_0',X_0,[n'])\to (Z_0,X_0,[n])$ in $\SBl(\F_1)$.
Hence we obtain a morphism $(X\times_{X_0} Z_0',X,[n'])\to (Z,X,[n])$.
Since $\SBl_X^{n'}$ is cofiltered by Proposition \ref{boundary.41},
there exist morphisms $Z''\to Y',X\times_{X_0} Z_0'$ in $\SBl_X^{n'}$.
The square \eqref{boundary.73.1} commutes by Proposition \ref{boundary.67}.
This shows the Ore condition for the case of $f=\id$.

For the right cancellability condition,
let $(u_1,f_1,\alpha_1),(u_2,f_2,\alpha_2)\colon (Y',X',[n'])\to (Y,X,[n])$ be two morphisms, and let $(v,\id,\id)\colon (Y'',X',[n'])\to (Y',X',[n'])$ be an admissible blow-up such that
\[
(u_1,f_1,\alpha_1)\circ (v,\id,\id)
=
(u_2,f_2,\alpha_2)\circ (v,\id,\id).
\]
Then we have $f_1=f_2$ and $\alpha_1=\alpha_2$,
and we also have $u_1=u_2$ by Proposition \ref{boundary.67}.
This shows the right cancellability condition.
\end{proof}

\begin{prop}
\label{boundary.76}
There are equivalence of categories
\begin{gather*}
\SBl(\Rg)[\SAdm^{-1}]
\simeq
\RglRg'\times \bDelta,
\\
\SBl(\SmlSm/S)[\SAdm^{-1}]\simeq \SmlSm'/S\times \bDelta.
\end{gather*}
\end{prop}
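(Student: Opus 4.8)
The plan is to imitate the proof of Proposition~\ref{boundary.64}, feeding in the results of this section (Propositions~\ref{boundary.41}, \ref{boundary.67}, \ref{boundary.69}, \ref{boundary.73}) where that proof used facts about log monoid schemes. I write out the case of $\SBl(\Rg)$; the case of $\SBl(\SmlSm/S)$ is identical, every cited result having both versions. The forgetful functor $\SBl(\Rg)\to\RglRg'\times\bDelta$, $(Y,X,[n])\mapsto(X,[n])$, sends every morphism in $\SAdm$ to an isomorphism, hence factors through a functor $\bar F\colon\SBl(\Rg)[\SAdm^{-1}]\to\RglRg'\times\bDelta$; I claim $\bar F$ is an equivalence. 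By Proposition~\ref{boundary.73}, $\SAdm$ admits a calculus of right fractions, so for $A,B\in\SBl(\Rg)$ one has $\Hom_{\SBl(\Rg)[\SAdm^{-1}]}(A,B)\cong\colim_{A'\to A}\Hom_{\SBl(\Rg)}(A',B)$, the colimit over the cofiltered category of $\SAdm$-morphisms $A'\to A$. Essential surjectivity is clear since $(X,[n])=\bar F(X\times\square^n,X,[n])$, with $(X\times\square^n,X,[n])\in\SBl(\Rg)$ via the canonical square~\eqref{boundary.49.1}; and since for any $(Y,X,[n])\in\SBl(\Rg)$ the structure map $Y\to X\times\square^n$ is an admissible blow-up $(Y,X,[n])\to(X\times\square^n,X,[n])$, full faithfulness reduces to showing that for all $(Y',X',[n'])\in\SBl(\Rg)$ and $(X,[n])\in\RglRg'\times\bDelta$ the forgetful map $(u,f,\alpha)\mapsto(f,\alpha)$ is a bijection
\[
\colim_{Z}\Hom_{\SBl(\Rg)}\bigl((Z,X',[n']),(X\times\square^n,X,[n])\bigr)\;\xrightarrow{\ \sim\ }\;\Hom_{\RglRg'}(X',X)\times\Hom_\bDelta([n'],[n]),
\]
the colimit running over the slice $(\SBl_{X'}^{n'})_{/Y'}$, which is cofiltered by Propositions~\ref{boundary.41} and~\ref{boundary.69}.

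Injectivity is immediate: two representatives over the same $(f,\alpha)$, pulled back to a common $Z\to Y'$, have underlying morphisms $Z\to X\times\square^n$ that agree on the dense open $(X'-\partial X')\times\A^{n'}\subset Z$ — where both equal $(f-\partial f)\times\A^\alpha$ followed by $(X-\partial X)\times\A^n\hookrightarrow X\times\square^n$ — hence coincide by Proposition~\ref{boundary.67}. For surjectivity, fix $(f,\alpha)$; composing with $f\times\id_{\square^n}\colon X'\times\square^n\to X\times\square^n$ reduces to $X=X'$, $f=\id$, i.e.\ to extending $\id\times\A^\alpha\colon(X'-\partial X')\times\A^{n'}\to(X'-\partial X')\times\A^n\hookrightarrow X'\times\square^n$ over some object of $(\SBl_{X'}^{n'})_{/Y'}$. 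Unwinding the full faithfulness in Proposition~\ref{boundary.64} for the morphism $(\id,\alpha)\colon(\Spec(\F_1),[n'])\to(\Spec(\F_1),[n])$ through its calculus of right fractions yields an admissible blow-up $W\to\square^{n'}$ in $\SmlSm/\F_1$ together with $u_0\colon W\to\square^n$ restricting to $\A^\alpha$ on $\A^{n'}$ (one may equally cite Proposition~\ref{logSH.18}). Then $X'\times W\to X'\times\square^{n'}$ is an admissible blow-up whose underlying scheme $\ul{X'}\times\ul{W}$ is again regular (resp.\ smooth over $S$) since $\ul{W}$ is associated with a smooth fan, and which is underlain by the dividing cover $X'\times V_W\to X'\times(\Gmlog)^{n'}$ for $V_W$ the dividing cover underlying $W$; hence $X'\times W\in\SBl_{X'}^{n'}$. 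Choosing, by cofilteredness (Proposition~\ref{boundary.41}), some $Z\in(\SBl_{X'}^{n'})_{/Y'}$ with a map $Z\to X'\times W$ over $X'\times\square^{n'}$, the composite $Z\to X'\times W\xrightarrow{\id\times u_0}X'\times\square^n\xrightarrow{f\times\id}X\times\square^n$ restricts to $(f-\partial f)\times\A^\alpha$ on the dense open and thus furnishes a preimage of $(f,\alpha)$.

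The main obstacle is this surjectivity step: since $\square$ has no multiplication map in $\SmlSm/\F_1$, the morphism $\A^\alpha$ genuinely fails to extend over $\square^{n'}$, so one must pass to an admissible blow-up, and the nontrivial input — that this can be done — is exactly what is imported from the $\F_1$-theory via Proposition~\ref{boundary.64} (or~\ref{logSH.18}). The remaining work is bookkeeping: checking that the $\F_1$-level blow-up, crossed with $X'$, stays inside $\RglRg$ (resp.\ $\SmlSm/S$) and inside $\SBl_{X'}^{n'}$, and that the resulting roof can be pushed below a prescribed $Y'$; both follow from Propositions~\ref{boundary.41} and~\ref{boundary.67}.
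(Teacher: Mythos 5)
Your proof is correct, and its skeleton (essential surjectivity via $(X,[n])\mapsto(X\times\square^n,X,[n])$, plus a Hom-set computation using the calculus of right fractions from Proposition~\ref{boundary.73} and the rigidity from Proposition~\ref{boundary.67}) is exactly what the paper intends when it says to argue as in Proposition~\ref{boundary.64}. Where you diverge is in the surjectivity step, which you flag as the main obstacle; in fact it is vacuous in this (simplicial) setting. For a target of the form $(X\times\square^n,X,[n])$ the right-hand vertical map in the defining square is the identity, so a morphism $(u,f,\alpha)$ out of any $(Z,X',[n'])$ is forced to have $u=(f\times\square^\alpha)\circ(Z\to X'\times\square^{n'})$, and this composite always exists because $\square^\alpha\colon\square^{n'}\to\square^n$ is an honest morphism for every $\alpha\in\bDelta$: the structure maps the paper actually uses are the $1$-section, the partial diagonals, the $0$-section and the projections (Constructions~\ref{boundary.35} and~\ref{boundary.38}), none of which involves the multiplication $\mu$. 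The multiplication — and hence the need to pass to an admissible blow-up such as $\Bl_{(\infty,0)+(0,\infty)}(\square^2)$ — only enters in the cubical variant through the generators $\mu_i$ of $\ECube$ (Propositions~\ref{boundary.30} and~\ref{boundary.72}), which is presumably where your instinct comes from. Thus the Hom-set $\Hom_{\SBl(\Rg)}((Z,X',[n']),(X\times\square^n,X,[n]))$ is already in bijection with $\Hom((X',[n']),(X,[n]))$ term by term, and the colimit is constant; no $\F_1$-level blow-up $W$ is needed. Your detour is nonetheless harmless: the $u$ you construct agrees with the forced composite on a dense open, hence coincides with it by Proposition~\ref{boundary.67}, so you land on the same bijection. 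The only net effect is that your argument imports Proposition~\ref{logSH.18} (or re-unwinds Proposition~\ref{boundary.64}) where the paper needs nothing, at the price of the extra bookkeeping that $X'\times W$ lies in $\SBl_{X'}^{n'}$ — which you do carry out correctly.
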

\begin{proof}
Argue as in Proposition \ref{boundary.64}.
\end{proof}

\begin{df}
\label{boundary.43}
Let $\cF$ be a presheaf of spectra on $\RglRg'$ (resp.\ $\SmlSm'/S$ with $S\in \Sch$).
Argue as in Definition \ref{boundary.32} but use Propositions \ref{boundary.73} and \ref{boundary.76} instead of Propositions \ref{boundary.65} and \ref{boundary.64} to obtain
\[
\widetilde{\log}_\bullet \cF\in \PSh(\RglRg\times \bDelta,\Sp)
\text{ (resp.\ $\widetilde{\log}_\bullet \cF\in \PSh(\SmlSm/S\times \bDelta,\Sp)$).}
\]
Let $\widetilde{\Log}_n\cF(X)$ denote its value at $X$ and $n\in \bDelta$.
Then we have
\[
\widetilde{\Log}_n\cF(X)
\cong
\colim_{Y\in (\SBl_X^n)^\op}
\cF(Y)
\]
for $X\in \RglRg$ (resp.\ $X\in \SmlSm/\F_1$) and integer $n\geq 0$.
We define
\begin{equation}
\label{boundary.43.1}
\widetilde{\Log} \cF
:=
\colim_{n\in \bDelta^\op}\widetilde{\Log}_n\cF
\cong
\colim_{n\in \bDelta^\op}\colim_{Y\in (\SBl_X^n)^\op} \cF(Y).
\end{equation}
The construction of $\widetilde{\Log} \cF$ is natural in $\cF$,
so we obtain the functor
\begin{gather*}
\widetilde{\Log}
\colon
\PSh(\RglRg,\Sp)
\to
\PSh(\RglRg',\Sp)
\\
\text{
(resp.\
$\widetilde{\Log}
\colon
\PSh(\SmlSm/S,\Sp)
\to
\PSh(\SmlSm'/S,\Sp)$).
}
\end{gather*}
\end{df}

\begin{df}
Let $S\in \Sch$.
The \emph{logarithm functor} is  
\begin{gather*}
\Log:=\widetilde{\Log}\omega^\sharp
\colon \PSh(\Rg,\Sp)
\to
\PSh(\RglRg',\Sp)
\\
\text{(resp.\ 
$
\Log:=\widetilde{\Log}\omega^\sharp
\colon \PSh(\Sm/S,\Sp)
\to
\PSh(\SmlSm'/S,\Sp)
$)},
\end{gather*}
see Construction \ref{omega.20} for $\omega^\sharp$.
Explicitly,
for a presheaf $\cF$ of spectra on $\Rg$ (resp.\ $\Sm/S$) and $X\in \RglRg'$ (resp.\ $\SmlSm'/S$),
\eqref{boundary.43.1} yields a natural isomorphism
\begin{equation}
\label{boundary.12.1}
\Log \cF(X)
\cong
\colim_{n\in \bDelta^\op}
\colim_{Y\in (\SBl_X^n)^\op}
\cF(\ul{Y}).
\end{equation}
If $X\to X_0$ is a strict morphism with $X_0\in \SmlSm/\F_1$,
then Proposition \ref{boundary.45} yields a natural isomorphism
\begin{equation}
\label{boundary.52.1}
\Log \cF(X)
\cong
\colim_{n\in \bDelta^\op}
\colim_{Y_0\in (\SBl_{X_0}^n)^\op}
\cF(\ul{X}\times_{\ul{X_0}}\ul{Y_0}).
\end{equation}
Observe that if $X_0=\A_P$ for some sharp fs monoid $P$, then $P$ is free since $X\in \SmlSm/S$, so we have
\begin{equation}
\label{boundary.52.2}
\Log \cF(X)
\cong
\colim_{n\in \bDelta^\op}
\colim_{Y_0\in \Adm_{\A_P\times \square^n}^\op}
\cF(\ul{X}\times_{\ul{\A_P}}\ul{Y_0})
\end{equation}
together with Lemma \ref{boundary.23}.
\end{df}

\begin{prop}
\label{boundary.72}
Let $X\in \RglRg$ (resp.\ $X\in \SmlSm/\F_1$ with $S\in \Sch$).
For all integers $n\geq 0$ and $1\leq i\leq n$,
there exists a functor $\mu_i^*\colon \Sdiv_X^n \to \Sdiv_X^{n+1}$ satisfying the following property:
For $V\in \Sdiv_X^n$,
there exists a commutative square
\[
\begin{tikzcd}
(X-\partial X)\times \G_m^{n+1}\ar[r,"\id \times \G_m^{\mu_i}"]\ar[d]&
(X-\partial X)\times \G_m^n\ar[d]
\\
\alpha^*(V)\ar[r]&
V.
\end{tikzcd}
\]
\end{prop}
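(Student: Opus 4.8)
The plan is to carry over the construction from the proof of Proposition~\ref{boundary.30} verbatim; what makes this possible is that that construction uses only base change and composition of dividing covers, operations available over an arbitrary base $X$. Recall from that proof the log monoid scheme $D$ with $\ul{D}=\ul{\Bl_{(0,\infty)+(\infty,0)}(\square^2)}$ and $D-\partial D=\G_m^2$: its fan is obtained from the fan of $(\P^1)^2$ by inserting the two rays $e_2-e_1$ and $e_1-e_2$, whence a dividing cover $q\colon D\to (\Gmlog)^2$, and the summation map $\Z^2\to \Z$ is a morphism of fans from the fan of $D$ to that of $\Gmlog$, hence induces $\mu''\colon D\to \Gmlog$ extending the multiplication $\mu\colon \G_m^2\to \G_m$. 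For $V\in \Sdiv_X^n$ with dividing cover $g\colon V\to X\times (\Gmlog)^n$, I would set
\[
\mu_i^*(V):=V\times_{X\times(\Gmlog)^n}\bigl(X\times(\Gmlog)^{i-1}\times D\times(\Gmlog)^{n-i}\bigr),
\]
where the fiber product is formed against $\id\times\mu''\times\id\colon X\times(\Gmlog)^{i-1}\times D\times(\Gmlog)^{n-i}\to X\times(\Gmlog)^n$ (so that $\mu''$ occupies the $i$th slot), and I would equip $\mu_i^*(V)$ with the structure morphism to $X\times(\Gmlog)^{n+1}$ obtained by composing the projection $\mu_i^*(V)\to X\times(\Gmlog)^{i-1}\times D\times(\Gmlog)^{n-i}$ with $\id\times q\times\id$. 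The remaining projection $\mu_i^*(V)\to V$ is to be the bottom arrow of the square in the statement.

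Then I would check, all in parallel with the proof of Proposition~\ref{boundary.30}, the following three points. (i) The structure morphism $\mu_i^*(V)\to X\times(\Gmlog)^{n+1}$ is a dividing cover: the projection $\mu_i^*(V)\to X\times(\Gmlog)^{i-1}\times D\times(\Gmlog)^{n-i}$ is a base change of the dividing cover $g$, hence a dividing cover as in Construction~\ref{boundary.39}; the map $\id\times q\times\id$ is a base change of the dividing cover $q$; and a composition of dividing covers is a dividing cover. (ii) $\mu_i^*(V)\in\Sdiv_X^{n+1}$: since $\Cone(e_1,e_2)$ is a maximal cone of the fan of $D$ left untouched by the subdivision, $q$ restricts to the identity over the chart $\A_\N^2\subset(\Gmlog)^2$, while $\mu''$ restricts over that chart to the multiplication $\A_\N^2\to\A_\N\subset\Gmlog$; pulling back the isomorphism $g^{-1}(X\times\A_\N^n)\cong X\times\A_\N^n$ along the multiplication in the $i$th coordinate then shows that the structure morphism of $\mu_i^*(V)$ is an isomorphism over $X\times\A_\N^{n+1}$. (iii) The square in the statement commutes: because $g$ and $q$ are dividing covers, restricting to the complements of the boundaries identifies $\mu_i^*(V)-\partial\mu_i^*(V)$ with $(X-\partial X)\times\G_m^{n+1}$ and $V-\partial V$ with $(X-\partial X)\times\G_m^n$, and under these identifications the projection $\mu_i^*(V)\to V$ becomes $\id\times\G_m^{\mu_i}$, since $\mu''$ extends $\mu$. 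Naturality of all these constructions in $V$ then yields the functor $\mu_i^*\colon\Sdiv_X^n\to\Sdiv_X^{n+1}$.

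I do not expect a genuine obstacle: in contrast with the neighbouring results of this section, no fan chart on $X$ and no reduction to the case over $\F_1$ are needed here, as the construction is manifestly global. The one point requiring care is (ii), namely pinning down the conventions so that the chart of $D$ over which $q$ is an isomorphism is matched with the chart $\A_\N^{n+1}\subset(\Gmlog)^{n+1}$ and so that the restriction of $\mu''$ to it is literally the multiplication $\A_\N^2\to\A_\N$; but this is exactly the bookkeeping already performed in the proof of Proposition~\ref{boundary.30}.
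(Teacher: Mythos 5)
Your construction is exactly the one the paper intends: its proof of this proposition is literally ``argue as in Proposition \ref{boundary.30},'' and you reproduce that argument faithfully in the scheme setting — pulling back $V\to X\times(\Gmlog)^n$ along $\id\times\mu''\times\id$ with $\mu''\colon D\to\Gmlog$ the extension of the multiplication over the blow-up $D$, composing with $\id\times q\times\id$ to land in $X\times(\Gmlog)^{n+1}$, and verifying the dividing-cover, chart, and commutativity conditions by base change. No gap; the bookkeeping you flag in (ii) is indeed the only delicate point and you resolve it correctly.
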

\begin{proof}
Argue as in Proposition \ref{boundary.30}.
\end{proof}

\begin{df}
Let $\cF$ be a presheaf of complexes on $\RglRg$ (resp.\ $\SmlSm/S$ with $S\in \Sch$).
Argue as in Definition \ref{boundary.27} but use Proposition \ref{boundary.72} also to obtain
\begin{gather*}
\logCLog_\bullet \cF\in \PSh(\RglRg\times \ECube,\Sp)
\\
\text{(resp.\ $\logCLog_\bullet \cF\in \PSh(\SmlSm/S\times \ECube,\Sp)$)}
\end{gather*}
such that
\[
\logCLog_n\cF(X)
\cong
\widetilde{\Log}_n\cF(X)
\]
for all $X\in \RglRg$ (resp.\ $X\in \SmlSm/S$) and $n\in \N$.
Let $\logCLog\cF$ be the presheaf of complexes given by
\[
\logCLog\cF(X)
:=
\lvert \logCLog_\bullet\cF(X)\rvert_\mathrm{cube}
\cong
\big\lvert \colim_{Y\in (\SBl_X^\bullet)^\op}\cF(Y)\big\rvert_\mathrm{cube}
\]
for $X\in \RglRg$ (resp.\ $X\in \SmlSm/S$).
The construction of $\logCLog \cF$ is natural in $\cF$,
so we obtain the functor
\begin{gather*}
\logCLog
\colon
\PSh(\RglRg,\rD(\Z))
\to
\PSh(\RglRg',\rD(\Z))
\\
\text{(resp.\ $\logCLog
\colon
\PSh(\SmlSm/S,\rD(\Z))
\to
\PSh(\SmlSm'/S,\rD(\Z))$).}
\end{gather*}
The \emph{cubical logarithm functor} is
\begin{gather*}
\CLog:=\logCLog\omega^\sharp
\colon
\PSh(\Rg,\rD(\Z))
\to
\PSh(\RglRg',\rD(\Z))
\\
\text{(resp.\ $\CLog:=\logCLog\omega^\sharp
\colon
\PSh(\Sm/S,\rD(\Z))
\to
\PSh(\SmlSm'/S,\rD(\Z))$).}
\end{gather*}
\end{df}

\begin{prop}
\label{boundary.51}
Let $\cF$ be a presheaf of spectra on $\RglRg$ (resp.\ $\SmlSm/S$ with $S\in \Sch$).
If $\cF$ is $\A^1$-invariant,
then $\widetilde{\Log} \cF$ on $\RglRg'$ (resp.\ $\SmlSm'/S$) is $\A^1$-invariant.
If $\cF$ is a Zariski (resp.\ strict Nisnevich) sheaf,
then $\widetilde{\Log} \cF$ on $\RglRg'$ (resp.\ $\SmlSm'/S$) is a Zariski (resp.\ strict Nisnevich) sheaf.
\end{prop}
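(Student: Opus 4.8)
The plan is to follow the proof of Proposition~\ref{boundary.1}, using the scheme-theoretic cofinality statements Propositions~\ref{boundary.45} and~\ref{boundary.46} in place of Lemmas~\ref{boundary.36} and~\ref{boundary.18}, and reducing everything to the ``universal'' situation over a log monoid scheme by means of Proposition~\ref{boundary.45}. Fix $X\in\RglRg'$ (resp.\ $X\in\SmlSm'/S$) together with a strict morphism $X\to X_0$ with $X_0\in\SmlSm/\F_1$. For any $T\in\RglRg'$ (resp.\ $\SmlSm'/S$) equipped with a morphism $T\to X$, the composite $T\to X_0$ is again strict, so Proposition~\ref{boundary.45} together with \eqref{boundary.43.1} yields
\[
\widetilde{\Log}_n\cF(T)\cong\colim_{Y_0\in(\SBl_{X_0}^n)^\op}\cF\bigl(T\times_{X_0}Y_0\bigr),
\]
naturally in $T$, where $T\times_{X_0}Y_0$ denotes the fiber product in fs log schemes, which lies in $\RglRg$ (resp.\ $\SmlSm/S$).

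\emph{$\A^1$-invariance.} Assume $\cF$ is $\A^1$-invariant. The projection $p\colon X\times\A^1\to X$ is strict, and $(X\times\A^1)\times_{X_0}Y_0\cong(X\times_{X_0}Y_0)\times\A^1$ for every $Y_0\in\SBl_{X_0}^n$. Applying the displayed formula to $X$ and to $X\times\A^1$ and using the $\A^1$-invariance of $\cF$ term by term, $p$ induces an isomorphism $\widetilde{\Log}_n\cF(X)\cong\widetilde{\Log}_n\cF(X\times\A^1)$ for every $n\geq0$; passing to the colimit over $\bDelta^\op$ shows $\widetilde{\Log}\cF(X)\cong\widetilde{\Log}\cF(X\times\A^1)$, so $\widetilde{\Log}\cF$ is $\A^1$-invariant.

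\emph{Descent.} Assume $\cF$ is a Zariski (resp.\ strict Nisnevich) sheaf; it suffices to check that $\widetilde{\Log}\cF$ sends distinguished squares to cocartesian squares. Let
\[
Q=
\begin{tikzcd}
W\ar[d]\ar[r]&V\ar[d]\\
U\ar[r]&X
\end{tikzcd}
\]
be a Zariski (resp.\ strict Nisnevich) distinguished square in $\RglRg'$ (resp.\ $\SmlSm'/S$); the morphisms $U\to X$, $V\to X$, and hence $W\to X$, are strict, so $U$, $V$, $W$ admit strict morphisms to $X_0$ and the displayed formula applies to each of them over $X$. Since $\widetilde{\Log}\cF=\colim_{n\in\bDelta^\op}\widetilde{\Log}_n\cF$ and a colimit of cocartesian squares of spectra is again cocartesian, it is enough to show that $\widetilde{\Log}_n(Q)$ is cartesian for every $n\geq0$. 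By the displayed formula, $\widetilde{\Log}_n(Q)$ is the colimit over $Y_0\in(\SBl_{X_0}^n)^\op$ of the squares obtained by applying $\cF$ to the base change $Q\times_X(X\times_{X_0}Y_0)$. Each such base change is a Zariski (resp.\ strict Nisnevich) distinguished square in $\RglRg$ (resp.\ $\SmlSm/S$): distinguished squares are stable under base change, and its corners are open (resp.\ strict \'etale) over $X\times_{X_0}Y_0\in\RglRg$ (resp.\ $\SmlSm/S$), hence again objects of $\RglRg$ (resp.\ $\SmlSm/S$). Therefore $\cF$ carries each of them to a cartesian square of spectra, and since $\SBl_{X_0}^n$ is cofiltered by Proposition~\ref{boundary.17}, the filtered colimit over $(\SBl_{X_0}^n)^\op$ preserves this, so $\widetilde{\Log}_n(Q)$ is cartesian.

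\emph{Main difficulty.} No essentially new ingredient is required beyond Propositions~\ref{boundary.45}, \ref{boundary.46}, and~\ref{boundary.17}; the point that needs care is purely bookkeeping, namely verifying that every $X\times_{X_0}Y_0$ together with its open (resp.\ strict \'etale) localizations remains inside $\RglRg$ (resp.\ $\SmlSm/S$), and that the base change of a strict Nisnevich distinguished square along $X\times_{X_0}Y_0\to X$ is again strict Nisnevich distinguished, which is exactly what the constructions of \S\ref{boundary} and Proposition~\ref{boundary.45} supply.
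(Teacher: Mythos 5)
Your proof is correct and follows essentially the same route as the paper, which simply adapts the argument of Proposition \ref{boundary.1} using the cofinality statements for strict morphisms; your only (harmless) variation is to reindex all corners over the common category $\SBl_{X_0}^n$ via Proposition \ref{boundary.45}, whereas the paper invokes the equivalent Proposition \ref{boundary.46} directly on the corners of the distinguished square.
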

\begin{proof}
Argue as in Proposition \ref{boundary.1} but use Proposition \ref{boundary.46} also.
\end{proof}

\begin{df}
\label{boundary.53}
Every $X\in \RglRg$ (resp.\ $X\in \SmlSm/S$ with $S\in \Sch$) admits a Zariski covering $\{U_i\}_{i\in I}$ with finite $I$ such that $U_i\in \RglRg'$ (resp.\ $U_i\in \SmlSm'/S$) since $X$ has a chart Zariski locally.
As a consequence of Proposition \ref{boundary.51},
we have the induced functors
\begin{gather*}
\widetilde{\Log}
\colon
\Sh_\Zar(\RglRg,\Sp)
\to
\Sh_\Zar(\RglRg,\Sp),
\\
\widetilde{\Log}
\colon
\Sh_\Zar(\SmlSm/S,\Sp)
\to
\Sh_\Zar(\SmlSm/S,\Sp),
\\
\Log
\colon
\Sh_\Zar(\Rg,\Sp)
\to
\Sh_\Zar(\RglRg,\Sp),
\\
\Log
\colon
\Sh_\Zar(\Sm/S,\Sp)
\to
\Sh_\Zar(\SmlSm/S,\Sp)
\end{gather*}
and similarly for the strict Nisnevich topology and cubical logarithm functor.
\end{df}

\begin{df}
A Zariski sheaf $\cF$ of spectra on $\Rg$ (resp.\ $\Sm/S$ with $S\in \Sch$) is \emph{logarithmic} if
\[
\cF(X)\cong \Log \cF(X\times \square^n)
\]
for $X\in \Rg$ (resp.\ $X\in \Sm/S$) and $n\in \N$.
\end{df}

\begin{prop}
\label{boundarify.5}
Let $S\in \Sch$,
and let $\cF$ be a dividing invariant strict Nisnevich sheaf of spectra on $\SmlSm/S$ such that the induced morphism
\[
\cF(X)
\to
\cF(X\times \square^n)
\]
is an isomorphism for every $X\in \Sm/S$ and integer $n\geq 0$.
Then we have $\cF\in \logSH_{S^1}(S)$.
\end{prop}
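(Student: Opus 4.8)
The plan is to reduce the assertion $\cF\in\logSH_{S^1}(S)=\Sh_\sNis(\SmlSm/S,\Sp)[\square^{-1},\divi^{-1}]$ to a single $\square$-invariance statement. Unwinding the definition of this Bousfield localization, a strict Nisnevich sheaf of spectra lies in it precisely when it is $\square$-invariant (local with respect to the projections $X\times\square\to X$) and invariant under dividing covers (local with respect to dividing covers, which suffices to check on the cover itself since dividing covers are monomorphisms). As $\cF$ is a strict Nisnevich sheaf and dividing invariant by hypothesis, the only thing left to prove is that $\cF(Y)\to\cF(Y\times\square)$ is an equivalence for every $Y\in\SmlSm/S$; the case of $\square^n$ with $n\geq 2$ then follows by iteration. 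So the content is to bootstrap the hypothesis, which gives $\square^n$-invariance only for $Y\in\Sm/S$, to arbitrary $Y$.

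First I would note that this $\square$-invariance statement is strict-Nisnevich-local in $Y$: both $Y\mapsto\cF(Y)$ and $Y\mapsto\cF(Y\times\square)$ are strict Nisnevich sheaves (the second because $(-)\times\square$ carries strict Nisnevich distinguished squares to strict Nisnevich distinguished squares), and a map of strict Nisnevich sheaves is an equivalence once it is so strict-Nisnevich-locally. Using the existence of charts and the local structure of smooth log smooth schemes (smooth schemes with strict normal crossing boundary), I would reduce to the model $Y\cong W\times\A_\N^d$ with $W\in\Sm/S$ and $d:=\max_{y\in Y}\rank\ol\cM_{Y,y}^\gp$, and then induct on $d$. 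For $d=0$ this is exactly the hypothesis.

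For the inductive step I would mimic the proof of Proposition \ref{omega.8}. From the Zariski covering $\{\A^1,\A_\N\}$ of $\square$ one gets the induced covering of $\square^d$ by the $2^d$ ``basic'' open subschemes $\prod_{i=1}^d B_i$ with $B_i\in\{\A^1,\A_\N\}$; let $U:=\A_\N^d$ be the corner piece and let $V$ be the union of the remaining $2^d-1$ pieces, so $U\cup V=\square^d$. Applying $\cF$ to the two Zariski coverings $\{W\times U,\,W\times V\}$ of $W\times\square^d$ and $\{W\times\square\times U,\,W\times\square\times V\}$ of $W\times\square^{d+1}$ yields two cartesian squares, linked by the projection $W\times\square\to W$. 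On the corner vertex $\cF(W\times\square^d)\to\cF(W\times\square^{d+1})$ this map is an equivalence because, $W$ lying in $\Sm/S$, both sides are identified with $\cF(W)$ by the hypothesis; on the vertices indexed by $V$ and by $U\cap V$, every basic piece acquires an extra $\A^1$ or $\G_m$ factor and so, crossed with $W$, has boundary rank $\leq d-1$, whence the inductive hypothesis applies and, by Zariski descent along these coverings (all multi-intersections again having boundary rank $\leq d-1$), the maps are equivalences. Since in a commutative cube with cartesian top and bottom faces any three of the four connecting maps being equivalences forces the fourth, the remaining map $\cF(W\times U)\to\cF(W\times\square\times U)$ — that is, $\cF(Y)\to\cF(Y\times\square)$ — is an equivalence, completing the induction.

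The step I expect to be the main obstacle is the reduction to the model $Y\cong W\times\A_\N^d$: one must either argue that strict-Nisnevich-locally one may work with an honest product of this form, or else adapt the Mayer--Vietoris argument to the general charted situation of a strict morphism $Y\to\A_\N^d$, keeping track of the fact that the corner piece and its sub-opens still carry the correct boundary ranks. Everything beyond that is the bookkeeping of boundary ranks across the $2^d$ pieces of $\square^d$, which is routine once the shape of the induction is fixed and runs entirely parallel to Proposition \ref{omega.8}; the only genuine change is that here we prove $\square$-invariance rather than the identification $(-)\mapsto(-)-\partial(-)$, so it is the hypothesis on $\Sm/S$, rather than $\A^1$-invariance, that is fed into the corner cell.
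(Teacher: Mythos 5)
Your proposal is correct and follows essentially the same route as the paper: induction on the boundary rank $d$, reduction to the model $Y'\cong W\times\A_\N^d$ with $W\in\Sm/S$, and the Mayer--Vietoris argument over the covering of $\square^d$ by the $2^d$ basic opens, with the hypothesis on $\Sm/S$ fed into the corner piece $U=\A_\N^d$ and the inductive hypothesis into $V$ and $U\cap V$. The one step you flag as the main obstacle --- the reduction to the product model --- is resolved in the paper not by showing $Y$ is strict-Nisnevich-locally such a product, but by comparing $Y$ with $Y'\cong W\times\A_\N^d$ through a third object $Y''$ via the two strict Nisnevich distinguished squares of \cite[Proof of Proposition 2.4.3]{logA1} and descent.
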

\begin{proof}
We need to show that the induced morphism
\[
\cF(Y)\to \cF(Y\times \square)
\]
is an isomorphism for every $Y\in \SmlSm/S$.
We proceed by induction on $d:=\max_{y\in Y} \rank \ol{\cM}_{Y,y}^\gp$.
The question is Zariski local on $Y$,
so we may assume that $Y$ admits a chart $\N^d$ for some $d\in \N$.
The claim is trivial if $d=0$,
so assume $d>0$.

Consider the strict Nisnevich distinguished squares
\[
\begin{tikzcd}
Y''-W\ar[d]\ar[r]&
Y''\ar[d]
\\
Y-W\ar[r]&
Y,
\end{tikzcd}
\text{ }
\begin{tikzcd}
Y''-W\ar[d]\ar[r]&
Y''\ar[d]
\\
Y'-W\ar[r]&
Y'
\end{tikzcd}
\]
with $W\in \Sm/S$ and $Y,Y',Y''\in \SmlSm/S$
in \cite[Proof of Proposition 2.4.3]{logA1}.
Note that we have $Y'\cong W\times \A_\N^d$.
By induction,
the induced morphisms
\[
\cF(Y-W)\to \cF((Y-W)\times \square),
\text{ }
\cF(Y''-W'')\to \cF((Y''-W'')\times \square)
\]
are isomorphisms.
By strict Nisnevich descent,
the induced morphism
\[
\cF(Y)\to \cF(Y\times \square)
\]
is an isomorphism if and only if
the induced morphism
\[
\cF(Y'')\to \cF(Y''\times \square)
\]
is an isomorphism.
Arguing similarly for $Y'$ and $Y''$,
we only need to show that the induced morphism
\[
\cF(Y')\to \cF(Y'\times \square)
\]
is an isomorphism.

Consider the Zariski covering $\{\A^1,\A_\N\}$ of $\square$.
Using cartesian products,
we obtain the Zariski covering of $\square^d$ consisting of $2^d$ open subschemes.
One of them is $U:=\A_\N^d$,
and let $V$ be the union of the other open subschemes.
Using the assumption on $\cF$,
we have
\[
\cF(W\times \square^n)
\cong
\cF(W\times \square^n\times \square).
\]
On the other hand,
we have
\begin{gather*}
\cF(W\times V)\cong
\cF(W\times V\times \square),
\\
\cF(W\times (U\cap V))\cong
\cF(W\times (U\cap V)\times \square)
\end{gather*}
by induction.
Combine these three isomorphisms above and use the assumption that $\cF$ is a Zariski sheaf to have
\[
\cF(W\times U)\cong \cF(W\times U \times \square).
\]
To conclude, observe that $W\times U\cong Y'$.
\end{proof}

\begin{prop}
\label{boundary.47}
Let $S\in \Sch$,
and let $\cF$ be a Nisnevich sheaf of spectra on $\Sm/S$.
If $\cF$ is logarithmic,
then we have $\Log \cF\in \logSH_{S^1}(S)$.
\end{prop}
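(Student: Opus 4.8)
The plan is to deduce the claim from Proposition \ref{boundarify.5}, applied to the sheaf $\Log\cF$ on $\SmlSm/S$. Thus I must verify three things: (i) $\Log\cF$ is a strict Nisnevich sheaf of spectra on $\SmlSm/S$; (ii) $\Log\cF$ is dividing invariant; (iii) for every $X\in\Sm/S$ and $n\geq 0$, the projection $X\times\square^n\to X$ induces an isomorphism $\Log\cF(X)\to\Log\cF(X\times\square^n)$. Point (i) is immediate from Definition \ref{boundary.53}: the functor $\Log$ carries Nisnevich sheaves of spectra on $\Sm/S$ to strict Nisnevich sheaves of spectra on $\SmlSm/S$, and $\cF$ is a Nisnevich sheaf by hypothesis.

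For (iii) I would use that $\cF$ is logarithmic, i.e.\ that the natural map $\cF(X)\to\Log\cF(X\times\square^n)$ is an isomorphism for all $X\in\Sm/S$ and all $n\geq 0$. Taking $n=0$ shows that $\cF(X)\to\Log\cF(X)$ is an isomorphism, and since the map $\cF(X)\to\Log\cF(X\times\square^n)$ factors as $\cF(X)\to\Log\cF(X)$ followed by the projection-induced map $\Log\cF(X)\to\Log\cF(X\times\square^n)$, the latter is an isomorphism as well.

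For (ii) I would argue as in Proposition \ref{boundary.22}. Dividing invariance is Zariski local on the argument, so I may assume we work in $\SmlSm'/S$, where the categories $\Sdiv_X^n$ are cofiltered, are governed by a strict chart through Propositions \ref{boundary.44} and \ref{boundary.45}, and the colimit description \eqref{boundary.12.1} of $\Log\cF$ applies (comparing $\SBl_X^n$ with $\Sdiv_X^n$ via the cofinality of Proposition \ref{boundary.45}). Given a dividing cover $f\colon X'\to X$, post-composition produces a functor $f_!\colon\Sdiv_{X'}^n\to\Sdiv_X^n$, landing in $\Sdiv_X^n$ because $f-\partial f$ is an isomorphism, and it is left adjoint to the pullback functor $f^*$ of Construction \ref{boundary.39}, with $f^*f_!\cong\id$ since $f$ is a monomorphism and hence $X'\cong X'\times_X X'$. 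Therefore $f_!$ is cofinal, and as it preserves underlying schemes the colimits computing $\Log\cF(X')$ and $\Log\cF(X)$ agree; passing to the colimit over $\bDelta^\op$ gives $\Log\cF(X)\cong\Log\cF(X')$. With (i)--(iii) in hand, Proposition \ref{boundarify.5} yields $\Log\cF\in\logSH_{S^1}(S)$.

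The main obstacle will be (ii): one has to transport the adjunction-and-cofinality argument of Proposition \ref{boundary.22} from log monoid schemes to actual schemes and handle the local reduction to a strict chart, which is where one leans on the structural results of Propositions \ref{boundary.44} and \ref{boundary.45}. Everything else is formal bookkeeping; in particular (iii) uses nothing beyond the definition of logarithmic.
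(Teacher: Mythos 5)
Your overall strategy coincides with the paper's: establish that $\Log\cF$ is a strict Nisnevich sheaf via Proposition \ref{boundary.51}, feed the logarithmic hypothesis into Proposition \ref{boundarify.5} to supply the $\square^n$-invariance on objects of $\Sm/S$, and reduce everything to dividing invariance. Your points (i) and (iii) are fine and are exactly what the paper does.

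The gap is in point (ii). You assert that "dividing invariance is Zariski local on the argument, so I may assume we work in $\SmlSm'/S$," and then run the $f_!\dashv f^*$ adjunction of Proposition \ref{boundary.22} on $\Sdiv_{X'}^n\to\Sdiv_X^n$ for an arbitrary dividing cover $f\colon X'\to X$. But localizing on $X$ only places $X$ in $\SmlSm'/S$; it does not place $X'$ there. Even when $X$ has a global chart, a dividing cover $X'\to X$ need not be pulled back from a single subdivision of that chart (for instance when several connected components of a deep stratum of $X$ hit the same point of the chart and are subdivided differently), so $X'$ need not admit any strict morphism to an object of $\SmlSm/\F_1$. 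For such $X'$ the colimit description \eqref{boundary.12.1} of $\Log\cF(X')$, the cofilteredness of $\Sdiv_{X'}^n$ (Proposition \ref{boundary.41}), and the comparison with $\SBl_{X'}^n$ (Proposition \ref{boundary.45}) are all unavailable; $\Log\cF(X')$ is then only defined by Zariski descent (Definition \ref{boundary.53}), and you cannot localize on $X'$ independently of $X$, since the two sides of the map $\Log\cF(X)\to\Log\cF(X')$ do not decompose compatibly under a cover of $X'$. The paper closes exactly this gap by first invoking \cite[Proposition A.11.5, Lemma C.2.1]{BPO} to reduce to dividing covers of the form $X'\cong X\times_{X_0}X_0'$ with $X_0'\to X_0=\T_\Sigma$ a dividing cover of log monoid schemes, and only then running the adjunction/cofinality argument --- at the level of the chart categories $\SBl_{X_0}^n$ and $\SBl_{X_0'}^n$, transferring back via \eqref{boundary.45.1}. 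You need this reduction (or an equivalent descent step) before your cofinality computation can be carried out.
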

\begin{proof}
Proposition \ref{boundary.51} implies that $\Log \cF$ is a strict Nisnevich sheaf.
By Proposition \ref{boundarify.5} and the assumption that $\cF$ is logarithmic,
it suffices to show that $\Log \cF$ is a dividing invariant,
i.e.,
for every dividing cover $X'\to X$ in $\SmlSm/S$,
the induced morphism $\Log \cF(X)\to \Log \cF(X')$ is an isomorphism.
We can work Zariski locally on $X$,
so we may assume that $X$ has a fan chart $\Sigma$.
We set $X_0:=\T_\Sigma$.
Using \cite[Proposition A.11.5, Lemma C.2.1]{BPO} (with $\infty$-category $\cD:=\Sp$ instead of a $1$-category),
we reduce to the case where $X'\cong X\times_{X_0}X_0'$ for some dividing cover $X_0'\to X_0$.
Argue as in Proposition \ref{boundary.22} but use Proposition \ref{boundary.45} also to show
\[
\colim_{Y_0\in (\SBl_{X_0}^n)^\op} \cF(\ul{X}\times_{\ul{X_0}}\ul{Y_0})
\cong
\colim_{Y_0'\in (\SBl_{X_0'}^n)^\op} \cF(\ul{X}\times_{\ul{X_0}}\ul{Y_0'}).
\]
We conclude by \eqref{boundary.45.1}.
\end{proof}

\begin{df}
For $S\in \Sch$,
we denote by $\LogSh(S)$ the full subcategory of $\Sh_\Nis(\Sm/S,\Sp)$ spanned by the logarithmic sheaves,
and we have its $\P^1$-stabilization $\Sp_{\P^1}(\LogSh(S))$.
By Proposition \ref{boundary.47}, we have the induced \emph{logarithm functor}
\[
\Log\colon \Sp_{\P^1}(\LogSh(S))
\to
\logSH(S).
\]
An object of $\Sp_{\P^1}(\Sh_\Nis(\Sm/S,\Sp))$ is \emph{logarithmic} if it belongs to $\Sp_{\P^1}(\LogSh(S))$.
\end{df}

\begin{rmk}
Let $S\in \Sch$,
and let $Z\to X$ be a closed immersion in $\Sm/S$.
We have the induced cartesian square
\[
Q:=
\begin{tikzcd}
Z\times_X \Bl_Z X\ar[d]\ar[r]&
\Bl_Z X\ar[d]
\\
Z\ar[r]&
X.
\end{tikzcd}
\]
For $\cF\in \LogSh(S)$,
$\cF(Q)\cong \Log \cF(Q)$ is cocartesian by \cite[Theorem 7.3.3]{BPO}.

Consider the $\infty$-category
\[
\MS_\Nis(S):=
\Sp_{\P^1}(\Sh_\mathrm{ebu,Nis}(\Sm/S,\Sp)),
\]
which is a Nisnevich variant of $\mathrm{MS}$ due to  Annala-Hoyois-Iwasa \cite[\S 4]{zbMATH07935738},
and see \cite[Definition 2.1]{zbMATH07935738} for $\mathrm{ebu}$.
Then $\Sp_{\P^1}(\LogSh(S))$ is a full subcategory of $\MS_\Nis(S)$ due to the above paragraph.

Note that we have the functor
\[
\omega^\sharp\colon  \MS_\Nis(S)
\to
\logSH(S)
\]
sending $\Sigma^{p,q}\Sigma_{\P^1}^\infty X_+$ to $\Sigma^{p,q}\Sigma_{\P^1}^\infty X_+$ for $X\in \Sm/S$ and integers $p$ and $q$.
\end{rmk}

\begin{quest}
Let $S\in \Sch$.
Then do we have an equivalence of $\infty$-categories
\[
\Sp_{\P^1}(\LogSh(S))\simeq \MS_\Nis(S)?
\]
See Conjecture \ref{boundarify.33} for what we expect in this direction.
\end{quest}

\section{Logarithm of \texorpdfstring{$\A^1$}{A1}-invariant sheaves}

In this section,
we explore what happens if we apply the logarithm functor $\Log$ to logarithmic $\A^1$-invariant sheaves of spectra.
See Theorem \ref{boundarify.8} for the smooth log smooth case and Theorem \ref{boundary.6} for the regular log regular case.

\

Recall that for $S\in \Sch$,
we have the adjoint functors
\[
\omega_\sharp : \logSH_{S^1}(S) \rightleftarrows \SH_{S^1}(S):\omega^*
\]
such that $\omega_\sharp(\Sigma^{n}\Sigma_{S^1}^\infty X_+)\cong \Sigma^n\Sigma_{S^1}^\infty (X-\partial X)_+$ for $X\in \SmlSm/S$ and integer $n$.

\begin{thm}
\label{boundarify.8}
Let $S\in \Sch$ and $\cF\in \SH_{S^1}(S)$.
If $\cF$ is logarithmic,
then there is a natural isomorphism
\[
\Log \cF
\cong
\omega^* \cF
\]
in $\logSH_{S^1}(S)$.
In particular, for every $X\in \SmlSm/S$,
there is a natural isomorphism
\[
\Log \cF(X)
\cong
\cF(X-\partial X).
\]
\end{thm}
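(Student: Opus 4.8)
The plan is to exhibit the asserted isomorphism as the unit of the adjunction $\omega_\sharp\dashv\omega^*$ recalled just above and to check it is an equivalence. Two preliminary observations cost nothing. Since $\cF\in\SH_{S^1}(S)$ is in particular a Nisnevich sheaf of spectra on $\Sm/S$ which is logarithmic by hypothesis, Proposition \ref{boundary.47} gives $\Log\cF\in\logSH_{S^1}(S)$. And for every $X\in\SmlSm/S$ the adjunction together with $\omega_\sharp(\Sigma_{S^1}^\infty X_+)\cong\Sigma_{S^1}^\infty(X-\partial X)_+$ yields
\[
\omega^*\cF(X)\cong\hom_{\logSH_{S^1}(S)}(\Sigma_{S^1}^\infty X_+,\omega^*\cF)\cong\hom_{\SH_{S^1}(S)}(\Sigma_{S^1}^\infty(X-\partial X)_+,\cF)\cong\cF(X-\partial X),
\]
so $\omega^*\cF$ is the Nisnevich sheaf $X\mapsto\cF(X-\partial X)$, which only depends on interiors and is therefore automatically $\A^1$-, $\square$-, and dividing invariant; the final ``in particular'' clause will follow once the main isomorphism is proved.

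Next I would build the comparison map. The $n=0$ instance of ``$\cF$ is logarithmic'' reads $\Log\cF(W)\cong\cF(W)$ for $W\in\Sm/S$ with trivial log structure; equivalently, the restriction of $\Log\cF$ along $\Sm/S\hookrightarrow\SmlSm/S$ is $\cF$. Since $\omega_\sharp$ is the $\A^1$-localization of this restriction and $\cF$ is already $\A^1$-invariant, this gives a natural isomorphism $\omega_\sharp\Log\cF\cong\cF$, and the unit of $\omega_\sharp\dashv\omega^*$ at $\Log\cF$ becomes a natural morphism $\eta\colon\Log\cF\to\omega^*\omega_\sharp\Log\cF\cong\omega^*\cF$ in $\logSH_{S^1}(S)$. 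The counit $\omega_\sharp\omega^*\to\id_{\SH_{S^1}(S)}$ is an equivalence (one checks directly that on sections over $\Sm/S$ it is the identity), so $\omega^*$ is fully faithful; by the triangle identities $\omega_\sharp(\eta)$ is then an equivalence, and therefore $\eta(W)\colon\Log\cF(W)\to\cF(W)$ is an equivalence for every $W\in\Sm/S$.

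It remains to promote this to an equivalence on all of $\SmlSm/S$. Set $\cK:=\fib(\eta)\in\logSH_{S^1}(S)$, so that $\cK(W)=0$ for all $W\in\Sm/S$; it suffices to show $\cK=0$. By Zariski descent I may assume $X\in\SmlSm/S$ has a chart $\N^d$ and argue by induction on $d$, the case $d=0$ being precisely the vanishing on $\Sm/S$. For $d>0$ I would repeat the argument of Proposition \ref{boundarify.5} (and Proposition \ref{omega.8}): a strict Nisnevich descent reduction on $X$ brings us to $X\cong W\times\A_\N^d$ with $W\in\Sm/S$; covering $\square$ by $\{\A^1,\A_\N\}$ in each of the $d$ coordinates produces a cover of $\square^d$ by $2^d$ opens, and with $U:=\A_\N^d$ and $V$ the union of the other $2^d-1$ opens one has $\cK(W\times\square^d)\cong\cK(W)=0$ by $\square$-invariance, while every stratum of $V$ and of $U\cap V$ has log rank $<d$, so $\cK$ vanishes on $W\times V$ and on $W\times(U\cap V)$ by the inductive hypothesis and Zariski descent. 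Mayer--Vietoris for the cover $\{W\times U,W\times V\}$ of $W\times\square^d$ then forces $\cK(W\times\A_\N^d)=0$. Hence $\cK=0$, so $\eta$ is an equivalence and $\Log\cF\cong\omega^*\cF$; evaluating at $X$ gives $\Log\cF(X)\cong\cF(X-\partial X)$.

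The substantive content is not in this argument but upstream: it is Proposition \ref{boundary.47}, i.e.\ that $\Log\cF$ is a genuine object of $\logSH_{S^1}(S)$ (in particular $\square$-invariant), which rests on the toric computations of Theorem \ref{boundarify.9} deferred to \cite{logSHF2} and on Proposition \ref{boundarify.5}. Within the proof above the only delicate points are orienting the unit correctly --- so that one is obliged to identify $\omega_\sharp\Log\cF$, which equals $\cF$ exactly because $\cF$ is logarithmic --- and the bookkeeping in the induction on the log rank, which copies \ref{boundarify.5} and \ref{omega.8} line by line.
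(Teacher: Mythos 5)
Your proof is correct and follows essentially the same route as the paper's: both reduce the statement to the facts that $\Log\cF\in\logSH_{S^1}(S)$ is $\A^1$-invariant (Propositions \ref{boundary.51} and \ref{boundary.47}) and that $\A^1$-invariant objects of $\logSH_{S^1}(S)$ depend only on interiors, the latter being exactly \cite[Proposition 2.5.7]{logA1}, which the paper cites and which you re-prove by the log-rank induction of Propositions \ref{omega.8} and \ref{boundarify.5}. The only cosmetic difference is that you package the comparison map as the unit of $\omega_\sharp\dashv\omega^*$ (using $\omega_\sharp\Log\cF\cong\cF$), whereas the paper uses the canonical morphism $\cF(X-\partial X)\to\Log\cF(X-\partial X)$ together with the isomorphism $\Log\cF(X)\cong\Log\cF(X-\partial X)$; these produce the same natural isomorphism.
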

\begin{proof}
By Propositions \ref{boundary.51} and \ref{boundary.47},
we see that $\Log \cF\in \logSH_{S^1}(S)$ is $\A^1$-invariant.
Hence by \cite[Proposition 2.5.7]{logA1},
we see that the induced morphism $\Log \cF(X)\to \Log \cF(X-\partial X)$ is an isomorphism.
To conclude,
observe that the canonical morphism $\cF(X-\partial X)\to \Log \cF(X-\partial X)$ is an isomorphism since $\cF$ is logarithmic and $X-\partial X$ has the trivial log structure.
\end{proof}

For $X\in \lSch$,
recall that a \emph{vector bundle $\cE\to X$} is a strict morphism such that $\ul{\cE}\to \ul{X}$ is a vector bundle.
We have the \emph{Thom space of $\cE\to X$} given by the quotient notation $\Th(\cE):=\cE/(\Bl_Z \cE,E)$, where $Z$ is the zero section, and $E$ is the exceptional divisor on $\Bl_Z \cE$.
If $\cF$ is a presheaf of spectra on a category containing the induced morphism $(\Bl_Z \cE,E)\to \cE$,
then we set
\[
\cF(\Th(\cE))
:=
\fib(\cF(\cE)\to \cF(\Bl_Z \cE,E)).
\]

\begin{prop}
\label{boundary.19}
Let $\cF$ be a $(\square,\divi)$-invariant strict Nisnevich sheaf on $\RglRg$,
and let $\cE\to X$ be a vector bundle with $X\in \RglRg$.
Consider the $0$-section $Z$ of $\cE$.
Then there is a natural isomorphism
\[
\cF(\Th(\cE))
\cong
\fib(\cF(\P(\cE\oplus \cO))\to \cF(\P(\cE))).
\]
\end{prop}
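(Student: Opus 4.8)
The plan is to adapt the classical identification of a Thom space with a projective bundle pair (\cite{MV}) to the present non-$\A^1$-invariant logarithmic setting, replacing the affine bundle occurring in the classical argument by a $\P^1$-bundle and using $\square$-invariance in place of $\A^1$-invariance. Recall that $\cF(\Th(\cE))=\fib(\cF(\cE)\to\cF(\Bl_Z\cE,E))$ by the conventions fixed just before the statement.

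Write $\ol{\cE}:=\P(\cE\oplus\cO)$ and let $H:=\P(\cE)\subset\ol{\cE}$ be the hyperplane at infinity, so that $\cE\subset\ol{\cE}$ is the open complement of $H$, and the zero section $Z$ of $\cE$ lies in $\cE$ and is disjoint from $H$. Let $b\colon\Bl_Z\ol{\cE}\to\ol{\cE}$ be the blow-up, with exceptional divisor $E'$; since $Z\subset\cE$, the morphism $b$ is an isomorphism over $\ol{\cE}\setminus Z$, the preimage of $\cE$ is $\Bl_Z\cE$, and $E'\subset\Bl_Z\cE$ coincides with the exceptional divisor $E$ of $\Bl_Z\cE$. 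First I would record the two Zariski distinguished squares with corners $(\cE\setminus Z,\ \cE,\ \ol{\cE}\setminus Z,\ \ol{\cE})$ and $(\cE\setminus Z,\ (\Bl_Z\cE,E),\ \ol{\cE}\setminus Z,\ (\Bl_Z\ol{\cE},E'))$, in which every morphism is a strict open immersion and all log structures are those pulled back from $X$ except for the extra components $E=E'$; here one uses that blow-ups of regular schemes along smooth centers having strict normal crossing with the boundary again lie in $\RglRg$, so that all four corners of each square are legitimate objects of $\RglRg$. Applying the Zariski sheaf $\cF$ turns both squares into cartesian squares of spectra, and since $b$ restricts to $\Bl_Z\cE\to\cE$ over $\cE$ and to the identity over $\ol{\cE}\setminus Z$, chasing the resulting pair of pullback squares shows that
\[
\begin{tikzcd}
\cF(\ol{\cE})\ar[r]\ar[d] & \cF(\Bl_Z\ol{\cE},E')\ar[d]\\
\cF(\cE)\ar[r] & \cF(\Bl_Z\cE,E)
\end{tikzcd}
\]
is cartesian, where the vertical maps are restriction to $\cE$ and the horizontal maps are $\cF$ applied to the blow-up morphisms. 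Hence
\[
\fib\bigl(\cF(\ol{\cE})\to\cF(\Bl_Z\ol{\cE},E')\bigr)\cong\fib\bigl(\cF(\cE)\to\cF(\Bl_Z\cE,E)\bigr)=\cF(\Th(\cE)).
\]

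Next I would compute the left-hand fiber in a second way. The rational map $\ol{\cE}\dashrightarrow H$, $[v:t]\mapsto[v]$, has indeterminacy locus exactly $Z$, so it is resolved by $b$ to a morphism $\pi\colon\Bl_Z\ol{\cE}\to H$, and one checks that $\pi$ is a $\P^1$-bundle admitting $E'$ and the strict transform $\widetilde{H}\cong H$ of $H$ as two disjoint sections. Equipping $\Bl_Z\ol{\cE}$ with the single divisor $E'$, the pair $(\Bl_Z\ol{\cE},E')\to H$ is Zariski-locally on $H$ isomorphic to $\square\times U$ with $E'$ the section at infinity (a $\P^1$-bundle with a section is a $\P(\cV)$ for a rank-$2$ bundle $\cV$, which splits Zariski-locally, the chosen section becoming a coordinate section). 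Therefore $\square$-invariance together with Zariski descent makes $\pi^\ast\colon\cF(H)\to\cF(\Bl_Z\ol{\cE},E')$ an isomorphism, and since $\widetilde{H}\hookrightarrow(\Bl_Z\ol{\cE},E')$ is a strict closed immersion and a section of $\pi$, the map induced by it is the inverse of $\pi^\ast$ and hence an isomorphism. Because $b$ carries $\widetilde{H}$ isomorphically onto $H\subset\ol{\cE}$, this isomorphism intertwines the two structure maps out of $\cF(\ol{\cE})$; passing to fibers gives $\fib(\cF(\ol{\cE})\to\cF(\Bl_Z\ol{\cE},E'))\cong\fib(\cF(\ol{\cE})\to\cF(H))$. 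Concatenating with the previous step yields the asserted isomorphism $\cF(\Th(\cE))\cong\fib(\cF(\P(\cE\oplus\cO))\to\cF(\P(\cE)))$, which is natural in the vector bundle $\cE\to X$ since every construction used is.

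The step I expect to be the main obstacle is the geometric middle step: verifying carefully that $\pi\colon\Bl_Z\P(\cE\oplus\cO)\to\P(\cE)$ is a $\P^1$-bundle with $E'$ as a section, that this bundle-with-section is Zariski-locally the trivial $\square$-bundle, and --- running alongside everything --- the bookkeeping ensuring that the log structures stay fs, that the relevant immersions are strict, and that the corners of the two distinguished squares lie in $\RglRg$. None of this is deep; it is the same kind of toric and blow-up analysis as in \cite[\S 7]{BPO}, combined with a routine diagram chase in the stable $\infty$-category $\Sh_\Zar(\RglRg,\Sp)$.
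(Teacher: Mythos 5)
Your proof is correct, but it takes a different route from the paper. The paper's argument is a two-line reduction: it defines an auxiliary presheaf $\cG(Y):=\cF(Y\times_{\ul{X}}X)$ on $\SmlSm/\ul{X}$, observes that $\cG\in\logSH_{S^1}(\ul{X})$, and applies $\hom_{\logSH_{S^1}(\ul{X})}(-,\cG)$ to the already-established equivalence $\Sigma_{S^1}^\infty\Th(\ul{\cE})\cong\Sigma_{S^1}^\infty\P(\ul{\cE}\oplus\cO)/\P(\ul{\cE})$ of \cite[Proposition 7.4.5]{BPO}. You instead re-derive the geometric content of that cited proposition directly for $\cF$ on $\RglRg$: the two Zariski distinguished squares sharing the punctured column $\cF(\ol{\cE}\setminus Z)\to\cF(\cE\setminus Z)$ correctly identify $\cF(\Th(\cE))$ with $\fib(\cF(\ol{\cE})\to\cF(\Bl_Z\ol{\cE},E'))$, and the identification of $(\Bl_Z\P(\cE\oplus\cO),E')\to\P(\cE)$ as a Zariski-locally trivial $\square$-bundle (with transition functions $t\mapsto at+b$ fixing the $\infty$-section, hence automorphisms in $\RglRg$) plus $\square$-invariance and Mayer--Vietoris finishes the job. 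What the paper's route buys is brevity and reuse of existing machinery, at the cost of the (lightly treated) verification that $Y\times_{\ul{X}}X$ lies in $\RglRg$ for $Y\in\SmlSm/\ul{X}$ and that $\cG$ inherits $(\square,\divi)$-invariance and strict Nisnevich descent; your route is self-contained and sidesteps that base-change bookkeeping, at the cost of repeating the blow-up analysis of \cite[\S 7.4]{BPO}. One small point worth making explicit in your write-up is the deduction of the cartesianness of your displayed square from the two distinguished squares: both vertical fibers are identified with $\fib(\cF(\ol{\cE}\setminus Z)\to\cF(\cE\setminus Z))$ compatibly, which is exactly what is needed.
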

\begin{proof}
Consider $\cG\in \logSH_{S^1}(\ul{X})$ given by
\[
\cG(Y):=\cF(Y\times_{\ul{X}}X)
\]
for $Y\in \SmlSm/\ul{X}$.
Observe that $\ul{\cE}\to \ul{X}$ is a vector bundle.
By \cite[Proposition 7.4.5]{BPO},
we have a natural isomorphism
\[
\Sigma_{S^1}^\infty \Th(\ul{\cE})
\cong
\Sigma_{S^1}^\infty \P(\ul{\cE}\oplus \cO)/\P(\ul{\cE})
\]
in $\logSH_{S^1}(\ul{X})$.
Apply $\hom_{\logSH_{S^1}(\ul{X})}(-,\cG)$ to this isomorphism to conclude.
\end{proof}

Let $Z\to X$ be a strict closed immersion in $\lSch$.
Recall that the \emph{blow-up of $X$ along $Z$} is
\[
\Bl_Z X:=\Bl_{\ul{Z}}\ul{X}\times_{\ul{X}}X.
\]
The \emph{normal bundle of $Z$ in $X$} is
\[
\rN_Z X
:=
\rN_{\ul{Z}}\ul{X} \times_{\ul{Z}} Z.
\]

\begin{df}
Let $X\in \Rg$ (resp.\ $X\in \Sm/S$ with $S\in \Sch$),
and let $Z_1+\cdots+Z_n$ be a strict normal crossing divisor on $X$.
If $Y:=(X,Z_1+\cdots+Z_{n-1})$ and $Z:=(Z_n,Z_1\cap Z_n+\cdots+Z_{n-1}\cap Z_n)$,
then we will use the convenient notation
\[
(Y,Z):=(X,Z_1+\cdots+Z_n).
\]
\end{df}

\begin{const}
Let $X\in \RglRg$,
and let $Z$ be an effective Cartier divisor on $X$ such that $(X,Z)\in \RglRg$.
The \emph{$\A^1$-deformation space} is
\[
\rD_Z^{\A^1}X
:=
\Bl_Z (X\times \A^1)-\Bl_Z (X\times \{0\}).
\]
Then we have the induced commutative diagram
\[
\begin{tikzcd}
(X,Z)\ar[d]\ar[r]&
(\rD_Z^{\A^1}X,Z\times \A^1)\ar[d]\ar[r,leftarrow]&
(\rN_Z X,Z)\ar[d]
\\
X\ar[r]&
\rD_Z^{\A^1} X\ar[r,leftarrow]&
\rN_Z X.
\end{tikzcd}
\]
See \cite[Definition 7.5.1]{BPO} for the version using $\square$ instead of $\A^1$.
\end{const}

\begin{const}
\label{boundary.4}
Let $\Sigma \to \A^{d+1}\times (\P^1)^n$ be a subdivision of smooth fans in the lattice $\Z^{d+n+1}$ with $d,n\in \N$ such that $\Cone(e_{d+2},\ldots,e_{d+n+1})\in \Sigma$,
where $e_1,\ldots,e_{d+n+1}$ denotes the standard coordinates in $\Z^{d+n+1}$.
Let $\Sigma_0$ be the restriction of $\Sigma$ to the lattice $0 \times \Z^{d+n}$,
which is smooth.
Then we have
\[
\Cone(e_{d+2},\ldots,e_{d+n+1})\in \Sigma_0.
\]
Also, $\A^1\times \Sigma_0$ is a subdivision of $\A^{d+1}\times (\P^1)^n$,
and $\Sigma$ and $\A^1\times \Sigma_0$ contain $\G_m \times \Sigma_0$ as a common subfan.
Furthermore, we have 
\[
\Cone(e_{d+2},\ldots,e_{d+n+1})\in \A^1\times \Sigma_0.
\]
Consider the fan
\[
\Delta:=\{\sigma\cap \sigma':\sigma\in \Sigma,\sigma'\in \A^1\times \Sigma_0\}.
\]
Then $\Delta$ contains $\G_m\times \Sigma_0$ as a subfan,
and we have
\[
\Cone(e_{d+2},\ldots,e_{d+n+1})\in \Delta.
\]
By Proposition \ref{logmonoid.1},
there exists a subdivision $\Gamma\to \Delta$ such that $\Gamma$ is smooth and for every smooth cone $\delta$ of $\Delta$,
we have $\delta\in \Gamma$.
Then $\Gamma$ contains $\G_m\times \Sigma_0$ as a subfan,
and we have $\Cone(e_{d+2},\ldots,e_{d+n+1})\in \Gamma$.

We can interpret this in terms of standard blow-ups as follows.
For $d,n\in \N$,
let $\cD_{d,n}$ be the full subcategory of $\SBl_{\A_\N^{d+1}}^n$ spanned by those $Y'$ satisfying the following condition: There exists $Y\in \SBl_{\A_\N^d}^n$ with a morphism $p\colon Y'\to \A_\N\times Y$ in $\SBl_{\A_\N^{d+1}}^n$ such that $p$ is an isomorphism on $\G_m \times Y$,
i.e., $p^{-1}(\G_m \times Y)\to \G_m\times Y$ is an isomorphism.
Then $\cD_{d,n}$ is cofinal in $\SBl_{\A_\N^{d+1}}^n$.
Here, $Y$ and $Y'$ correspond to $\Sigma_0$ and $\Gamma$, the claim that $\Gamma$ contains $\G_m \times \Sigma_0$ as a subfan corresponds to the claim that $p$ is an isomorphism on $\G_m\times Y$,
and the claim that $Y\to \A_\N^d\times \square^n$ and $Y'\to \A_\N^{d+1}\times \square^n$ are admissible blow-ups corresponds to the claim that $\Cone(e_{d+2},\ldots,e_{d+n+1})\in \Sigma_0,\Gamma$.
In particular,
we have a natural isomorphism
\begin{equation}
\label{boundary.4.1}
\colim_{Y\in \SBl_{\A_\N^d}^n}\colim_{Y'\in \cC_Y}
\cong
\colim_{Y'\in \SBl_{\A_\N^{d+1}}^n},
\end{equation}
where $\cC_Y$ is the full subcategory of $\SBl_{\A_\N^{d+1}}^n$ spanned by those $Y'$ satisfying the following condition: There exists a morphism $p\colon Y'\to \A_\N\times Y$ in $\SBl_{\A_\N^{d+1}}^n$ such that $p$ is an isomorphism on $\G_m\times Y$.
\end{const}

\begin{thm}
\label{boundary.5}
Let $\cF$ be an $\A^1$-invariant cdh sheaf of spectra on $\Sch$.
Then for every $X\in \RglRg$ and effective divisor $Z$ on $X$ such that $(X,Z)\in \RglRg$,
the squares in the induced commutative diagram
\begin{equation}
\label{boundary.5.1}
\begin{tikzcd}
\Log \cF(X)\ar[d]\ar[r,leftarrow]&
\Log \cF(\rD_Z^{\A^1} X)\ar[d]\ar[r]&
\Log \cF(\rN_Z X)\ar[d]
\\
\Log \cF(X,Z)\ar[r,leftarrow]&
\Log \cF(\rD_Z^{\A^1} X,Z\times \A^1)\ar[r]&
\Log \cF(\rN_Z X,Z)
\end{tikzcd}
\end{equation}
are cartesian.
In particular,
there is a natural Gysin fiber sequence
\begin{equation}
\label{boundary.5.2}
\Log \cF(\Th(\rN_Z X))
\to
\Log \cF(X)
\to
\Log \cF(X,Z).
\end{equation}
\end{thm}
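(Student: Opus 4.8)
The plan is to deduce the Gysin fiber sequence \eqref{boundary.5.2} formally from the two cartesian squares, and to establish the latter by a deformation-to-the-normal-bundle argument run inside the colimit formula for $\Log$. For the deduction: since $Z$ is a Cartier divisor, $\rN_Z X\to Z$ is a line bundle, so $\Bl_Z\rN_Z X=\rN_Z X$ with the zero section as exceptional divisor, whence $\Log\cF(\Th(\rN_Z X))=\fib(\Log\cF(\rN_Z X)\to\Log\cF(\rN_Z X,Z))$ is the fiber of the right-hand vertical arrow of \eqref{boundary.5.1}. The right square being cartesian identifies this fiber with that of the middle vertical arrow, the left square identifies that with $\fib(\Log\cF(X)\to\Log\cF(X,Z))$, and the resulting composite fiber sequence is \eqref{boundary.5.2}. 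So it remains to prove the two squares cartesian.

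Because $\Log\cF$ is a Zariski sheaf (Proposition~\ref{boundary.51}), the objects $\rD_Z^{\A^1}X$, $\rN_Z X$ and the divisors $Z\times\A^1$, $Z$ are formed Zariski-locally on $X$, and a cartesian square of Zariski sheaves may be checked on an open cover, I may assume that $(X,Z)$ has a strict morphism to $\A_\N^{d+1}\in\SmlSm/\F_1$ cutting out $Z$ as the last coordinate hyperplane, hence $X$ a strict morphism to $X_0:=\A_\N^d\times\A^1$ and $\rN_Z X$ a strict morphism to $\A_\N^d\times\A^1$. A direct computation identifies $\ul{\rD_Z^{\A^1}X}$ with the base change along $t\colon\ul X\to\A^1$ of the toric map $\A^2\to\A^1$, $(s,u)\mapsto su$, and $\ul{\rN_Z X}$ with $\ul Z\times\A^1$, and produces strict charts $\A_\N^d\times\A^2$, $\A_\N^{d+1}\times\A^1$, $\A_\N^{d+1}$ for $\rD_Z^{\A^1}X$, $(\rD_Z^{\A^1}X,Z\times\A^1)$ and $(\rN_Z X,Z)$ respectively. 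By \eqref{boundary.52.1} each of the six terms of \eqref{boundary.5.1} is then a colimit $\colim_{n\in\bDelta^\op}\colim_{Y_0}\cF(\ul X\times_{\ul{X_0}}\ul{Y_0})$, or its analogue for the other charts, over the relevant category of standard blow-ups.

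The key step is to feed Construction~\ref{boundary.4} into these colimits. Its identification \eqref{boundary.4.1}, combined with Lemma~\ref{boundary.36} which strips off the trivial-log $\A^1$- and $\A^2$-factors, rewrites $\Log\cF(X,Z)$ and $\Log\cF(\rD_Z^{\A^1}X,Z\times\A^1)$ as iterated colimits $\colim_n\colim_{W_0\in\SBl_{\A_\N^d}^n}\colim_{Y_0\in\cC_{W_0}}\cF(-)$, in which the $Y_0\in\cC_{W_0}$ correspond to blow-ups in the deformation direction supported over $\{t=0\}$; base changing along the chart, these become blow-ups of $\ul X\times_{\A^d}\ul{W_0}$ supported over the divisor pulled back from $\ul Z$, while $\Log\cF(X)$ and $\Log\cF(\rD_Z^{\A^1}X)$ become $\colim_n\colim_{W_0}\cF(\ul X\times_{\A^d}\ul{W_0})$ and the analogous colimit over the deformation spaces of the schemes $\ul X\times_{\A^d}\ul{W_0}$ along those divisors. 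Now both hypotheses on $\cF$ enter: cdh descent makes each of these blow-up squares cartesian, and $\A^1$-invariance collapses the deformation parameter, so that---as in the classical deformation-to-the-normal-cone argument---the fiber of $\Log\cF(\rD_Z^{\A^1}X)\to\Log\cF(\rD_Z^{\A^1}X,Z\times\A^1)$ is carried isomorphically by the $\{s=1\}$-restriction $i_1$ onto that of $\Log\cF(X)\to\Log\cF(X,Z)$, and by the $\{s=0\}$-restriction $i_0$ onto that of $\Log\cF(\rN_Z X)\to\Log\cF(\rN_Z X,Z)$. These two statements are exactly that the left and right squares of \eqref{boundary.5.1} are cartesian; since colimits of cartesian squares of spectra are again cartesian, one passes to the colimits, and Zariski descent gives the general case.

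The step I expect to be the main obstacle is this interlacing of cdh descent and $\A^1$-invariance within the reindexed colimit: one must verify that the blow-up squares attached to the objects $Y_0\in\cC_{W_0}$ become cdh-distinguished after base change along the chart (so that the cdh sheaf $\cF$ carries them to cartesian squares), that the residual $\A^1$-direction is genuinely trivial-log so that $\A^1$-invariance applies, and that the two comparisons match the fibers compatibly for both $i_0$ and $i_1$. The accompanying toric bookkeeping---pinning down $\rD_Z^{\A^1}X$, $\rN_Z X$ and their boundary divisors and charts so that the combinatorics of Construction~\ref{boundary.4} applies---is the other delicate point; the purely formal parts (Zariski gluing, stability of cartesian squares under colimits, and the reduction of the Gysin sequence to the two squares) are routine.
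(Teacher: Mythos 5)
Your proposal is correct and follows essentially the same route as the paper's proof: Zariski-localize to a strict chart over $\A_\N^d\times\A^1$, reindex the colimits defining $\Log\cF$ via Construction \ref{boundary.4} and \eqref{boundary.4.1}, check cartesianness termwise by using cdh descent for the abstract blow-up squares attached to $Y_0'\to\A_\N\times Y_0$ (which is an isomorphism over $\G_m\times Y_0$) to reduce to the restrictions over $Z$, and then identify the resulting comparison map with a section of a trivial $\A^1$-factor so that $\A^1$-invariance of $\cF$ concludes. The deduction of \eqref{boundary.5.2} from the two cartesian squares via $\Log\cF(\Th(\rN_ZX))=\fib(\Log\cF(\rN_ZX)\to\Log\cF(\rN_ZX,Z))$ is likewise the paper's.
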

\begin{proof}
We focus on showing that the left square of \eqref{boundary.5.1} is cartesian since the proof for the right square is similar.
After that, we obtain \eqref{boundary.5.2}.

The question is Zariski local on $X$ by Proposition \ref{boundary.4},
so we may assume that the induced morphism $(X,Z)\to X$ is the pullback of the induced morphism $(X_0,Z_0)\to X_0$ with $X_0:=\A^1\times \A_\N^d$ and $Z_0:=\{0\} \times \A_\N^d$ along a strict morphism $X\to X_0$.
With this assumption,
we only need to show that the square
\[
\begin{tikzcd}
\Log_n \cF(\rD_Z^{\A^1} X)\ar[d]\ar[r]&
\Log_n
\cF(X)\ar[d]
\\
\Log_n \cF(\rD_Z^{\A^1} X,Z\times \A^1)\ar[r]&
\Log_n \cF(X,Z)
\end{tikzcd}
\]
is cartesian for every integer $n\geq 0$.

Form the log monoid scheme $\rD_{Z_0}^{\A^1}X_0$ as we did for fs log schemes.
Explicitly,
we have an isomorphism $\rD_{Z_0}^{\A^1}X_0\cong X_0 \times \A^1$.
We also have the induced strict morphism of squares
\[
\begin{tikzcd}
(X,Z)\ar[r]\ar[d]&
X\ar[d]
\\
(\rD_Z^{\A^1}X,Z\times \A^1)\ar[r]&
\rD_Z^{\A^1} X
\end{tikzcd}
\to
\begin{tikzcd}
(X_0,Z_0)\ar[r]\ar[d]&
X_0\ar[d]
\\
(\rD_{Z_0}^{\A^1}X_0,Z_0\times \A^1)\ar[r]&
\rD_{Z_0}^{\A^1} X_0.
\end{tikzcd}
\]
Furthermore, $X_0$ and $\rD_{Z_0}^{\A^1} X_0$ are strict over $\A_\N^d$,
and $(X_0,Z_0)$ and $(\rD_{Z_0}^{\A^1} X_0,Z_0\times \A^1)$ are strict over $\A_\N^{d+1}$.

Using Proposition \ref{boundary.45} and \eqref{boundary.4.1},
we have
\begin{gather*}
\Log_n\cF(X)
\cong
\colim_{Y_0\in \SBl_{\A_\N^d}^n}\cF(\ul{Y}),
\text{ }
\Log_n\cF(X,Z)
\cong
\colim_{Y_0\in \SBl_{\A_\N^d}^n}\colim_{Y_0'\in \cC_{Y_0}}\cF(\ul{Y'}),
\\
\Log_n\cF(\rD_Z^{\A^1}X)
\cong
\colim_{Y_0\in \SBl_{\A_\N^d}^n}\cF(\ul{W}),
\text{ }
\Log_n\cF(\rD_Z^{\A^1}X,Z\times \A^1)
\cong
\colim_{Y_0\in \SBl_{\A_\N^d}^n}\colim_{Y_0'\in \cC_{Y_0}}\cF(\ul{W'}),
\end{gather*}
where the notation $\cC_{Y_0}$ is due to Construction \ref{boundary.4}, and
\begin{gather*}
Y:=X\times_{\A^1\times \A_\N^d}(\A^1\times Y_0),
\text{ }
Y':=(X,Z)\times_{\A_\N^{d+1}}Y_0',
\\
W:=\rD_Z^{\A^1}X\times_{\A^1\times \A_\N^d}(\A^1\times Y_0),
\text{ }
W':=(\rD_Z^{\A^1}X,Z\times \A^1)\times_{\A_\N^{d+1}}Y_0'.
\end{gather*}

Hence it suffices to show that the induced square
\[
\begin{tikzcd}
\cF(\ul{W})\ar[d]\ar[r]&
\cF(\ul{Y})\ar[d]
\\
\cF(\ul{W'})\ar[r]&
\cF(\ul{Y'})
\end{tikzcd}
\]
is cartesian.
The morphism $Y_0'\to \A_\N \times Y_0$ is an isomorphism on $\G_m\times Y_0$ by the definition of $\cC_{Y_0}$.
It follows that the induced squares
\[
\begin{tikzcd}
\cF(\ul{Y})\ar[d]\ar[r]&
\cF(\ul{Y\times_X Z})\ar[d]
\\
\cF(\ul{Y'})\ar[r]&
\cF(\ul{Y'\times_X Z}),
\end{tikzcd}
\text{ }
\begin{tikzcd}
\cF(\ul{W})\ar[d]\ar[r]&
\cF(\ul{W\times_{\rD_Z^{\A^1}X}( Z\times \A^1)})\ar[d]
\\
\cF(\ul{W'})\ar[r]&
\cF(\ul{W'\times_{\rD_Z^{\A^1} X}(Z\times \A^1)})
\end{tikzcd}
\]
are cartesian since $\cF$ is a cdh sheaf.
Hence it suffices to show that the induced square
\[
\begin{tikzcd}
\cF(\ul{W\times_{\rD_Z^{\A^1}X}( Z\times \A^1)})\ar[d]\ar[r]&
\cF(\ul{Y\times_X Z})\ar[d]
\\
\cF(\ul{W'\times_{\rD_Z^{\A^1} X}(Z\times \A^1)})\ar[r]&
\cF(\ul{Y'\times_X Z})
\end{tikzcd}
\]
is cartesian.

We have the induced commutative diagram with cartesian squares
\[
\begin{tikzcd}
\ul{Z}\ar[d]\ar[r]&
\ul{Z}\times \A^1\ar[d]
\\
\ul{Z_0}\ar[r,"i_1"]\ar[d]&
\ul{Z_0}\times \A^1\ar[r]\ar[d]&
\{1\}\times \A^d\ar[d]
\\
\ul{X_0}\ar[r]&
\ul{\rD_{Z_0}^{\A^1}X_0}\ar[r]&
\A^{d+1},
\end{tikzcd}
\]
where $i_1$ is the $1$-section.
We also have
\[
\ul{Y\times_X Z}\cong
\ul{Z}\times_{\A^{d+1}}(\A^1\times \ul{Y_0}),
\text{ }
\ul{W\times_{\rD_Z^{\A^1} X}(Z\times \A^1)}
\cong
(\ul{Z}\times \A^1)\times_{\A^{d+1}}(\A^1\times \ul{Y_0}).
\]
From these, we see that the morphism $\ul{Y\times_X Z}\to \ul{W\times_{\rD_Z^{\A^1} X}(Z\times \A^1)}$ can be identified with the $1$-section $\ul{Y\times_X Z} \to \ul{Y\times_X Z} \times \A^1$.
The same holds if $Y$ and $W$ are replaced by $Y'$ and $W'$.
We conclude by the $\A^1$-invariance of $\cF$.
\end{proof}

As in \cite[Definition 1.3.2]{MR3930052},
we say that a presheaf of spectra $\cF$ on $\Rg$ satisfies \emph{absolute purity} if the squares in
\begin{equation}
\label{boundary.6.1}
\begin{tikzcd}
\cF(U)\ar[d]\ar[r,leftarrow]&
\cF(\rD_V^{\A^1} U)\ar[d]\ar[r]&
\cF(\rN_V U)\ar[d]
\\
\cF(U-V)\ar[r,leftarrow]&
\cF(\rD_V^{\A^1} U-V\times \A^1)\ar[r]&
\cF(\rN_V U- V)
\end{tikzcd}
\end{equation}
are cartesian for every regular embedding $V\to U$ in $\Rg$.

\begin{thm}
\label{boundary.6}
Let $\cF$ be an $\A^1$-invariant cdh sheaf of spectra on $\Sch$.
If $\cF$ satisfies absolute purity and the restriction of $\cF$ to $\Sm/S$ is logarithmic for every $S\in \Rg$,
then there is a natural isomorphism
\[
\Log \cF(X)\cong \cF(X-\partial X)\]
for every $X\in \RglRg$.
\end{thm}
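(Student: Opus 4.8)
The plan is to compare $\Log\cF$ with the presheaf $G$ on $\RglRg$ defined by $G(X):=\cF(X-\partial X)$, via a natural transformation $\phi\colon\Log\cF\to G$, and to show $\phi$ is an equivalence by induction on the number of branches of the boundary. Both $\Log\cF$ and $G$ are Zariski sheaves of spectra on $\RglRg$ — the former by Proposition \ref{boundary.51} and Definition \ref{boundary.53}, the latter because $\cF$ is a cdh, hence Zariski, sheaf and $X\mapsto X-\partial X$ carries Zariski covers of $\RglRg$ to Zariski covers of $\Rg$. For $Y\in\SBl_X^n$ one has $Y-\partial Y\cong(X-\partial X)\times\A^n$, so the maps $\cF(\ul Y)\to\cF(Y-\partial Y)\xleftarrow{\sim}\cF(X-\partial X)$, the last one using $\A^1$-invariance of $\cF$, assemble through \eqref{boundary.12.1} into a natural transformation $\phi\colon\Log\cF\to G$, and it suffices to prove $\phi_X$ is an equivalence for every $X\in\RglRg$. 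As this is Zariski-local and rank is Zariski-local, I may shrink $X$ at will, and in particular assume $\partial X=Z_1+\dots+Z_k$ with the $Z_i$ regular and $k\le d:=\max_{x\in X}\rank\ol\cM^\gp_{X,x}$. I would then induct on $d$.

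For $d=0$ the log structure on $X$ is trivial, so $X\in\RglRg'$ through the structure morphism $X\to\Spec(\F_1)$, and the standard blow-ups of $X\times\square^n$ are base changes of smooth proper toric schemes; hence $\Log\cF(X)$ is computed by the same colimit \eqref{boundary.52.2} whether $\cF$ is viewed on $\Sch$ or restricted to $\Sm/\ul X$, and by Theorem \ref{boundarify.8} applied to the logarithmic object $\cF|_{\Sm/\ul X}\in\SH_{S^1}(\ul X)$ this equals $\cF(\ul X)=G(X)$, the identification being $\phi_X$. The same argument shows $\phi$ is an equivalence on every $X$ lying in some $\SmlSm/S$ with $S\in\Rg$; combining the case $X\in\Sm/S$ of this with the hypothesis that $\cF$ is logarithmic gives $\Log\cF(X)\cong\Log\cF(X\times\square^n)$ for all such $X$, which together with Proposition \ref{boundary.51} and the $\RglRg$-analogues of Propositions \ref{boundary.47} and \ref{boundarify.5} shows that $\Log\cF$ is a $(\square,\divi)$-invariant strict Nisnevich sheaf on $\RglRg$ — a fact I record now since it is needed in the inductive step but depends only on the case $d=0$.

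For the inductive step, take $d\ge1$ and assume $\phi$ is an equivalence on every object of $\RglRg$ of rank $<d$. After shrinking $X$, put $W:=(\ul X,Z_1+\dots+Z_{k-1})$ and $Z:=Z_k$; then $W\in\RglRg$ has rank $\le k-1\le d-1$, $Z$ is a Cartier divisor on $W$ with $(W,Z)=X$, and Theorem \ref{boundary.5} supplies the Gysin fiber sequence $\Log\cF(\Th(\rN_Z W))\to\Log\cF(W)\to\Log\cF(X)$. Setting $U:=W-\partial W\in\Rg$ and $V:=U\cap\ul Z\in\Rg$ — a regular closed immersion with $U-V=X-\partial X$ — absolute purity of $\cF$ together with $\A^1$-invariance yields the Gysin fiber sequence $\fib(\cF(\rN_V U)\to\cF(\rN_V U-V))\to\cF(U)\to\cF(U-V)$, that is $G(\Th(\rN_Z W))\to G(W)\to G(X)$, once one uses the identifications $\rN_Z W-\partial(\rN_Z W)\cong\rN_V U$ and $(\rN_Z W,Z)-\partial\cong\rN_V U-V$, which hold because the normal bundle construction commutes with the open immersion $U\hookrightarrow\ul X$. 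The middle map $\phi_W$ is an equivalence by the inductive hypothesis, so it remains to match the Thom-space terms. Here I would apply Proposition \ref{boundary.19} to the line bundle $\rN_Z W\to Z$, obtaining $\Log\cF(\Th(\rN_Z W))\cong\fib(\Log\cF(\P(\rN_Z W\oplus\cO))\to\Log\cF(\P(\rN_Z W)))$; the two projective bundles are strict over $Z$ and hence of rank $\le k-1<d$, so the inductive hypothesis rewrites the right-hand side as $\fib(\cF(\P(\rN_V U\oplus\cO))\to\cF(\P(\rN_V U)))$, which in turn equals $\cF(\Th(\rN_V U))=G(\Th(\rN_Z W))$ by the scheme-level projective bundle presentation of Thom spaces — itself a consequence of absolute purity of $\cF$ for the zero section $V\hookrightarrow\P(\rN_V U\oplus\cO)$ and $\A^1$-invariance. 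Being natural, these identifications combine with $\phi_W$ into a map of fiber sequences whose first two vertical arrows are equivalences; therefore $\phi_X$ is an equivalence, completing the induction.

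The hard part will be the inductive step, and within it two bookkeeping points deserve care: first, verifying in full that $\Log\cF$ meets the hypotheses of Proposition \ref{boundary.19} on all of $\RglRg$ — the $\square$-invariance requiring the $\RglRg$-version of Proposition \ref{boundarify.5}, which is the second place where the hypothesis that $\cF$ is logarithmic enters; and second, checking that the chain of identifications of $\Log\cF(\Th(\rN_Z W))$ with $\fib(G(W)\to G(X))$ is genuinely compatible with the two Gysin fiber sequences, i.e.\ that the left-hand square of the asserted map of fiber sequences commutes. The remaining geometric inputs — that $W$, $\rN_Z W$, its projectivizations, and the relevant standard blow-ups stay inside $\RglRg$, and that normal bundles and the deformation space commute with passage to $X-\partial X$ — are routine but must be recorded with care.
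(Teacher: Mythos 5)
Your proposal is correct and follows essentially the same route as the paper: induction on the number of boundary components, with the base case given by the logarithmicity hypothesis, and the inductive step comparing the Gysin fiber sequence of Theorem \ref{boundary.5} for $\Log\cF$ with the one furnished by absolute purity for $\cF$ on the open complements, matching the Thom terms via Proposition \ref{boundary.19}, the inductive hypothesis applied to the projective bundles over $Z$, and the standard identification of the Thom space with $\P(\rN\oplus\cO)/\P(\rN)$. Your extra care in verifying that $\Log\cF$ satisfies the hypotheses of Proposition \ref{boundary.19} and in making the natural transformation $\phi$ and the map of fiber sequences explicit is a welcome elaboration of points the paper treats tersely, but it is not a different argument.
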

\begin{proof}
We proceed by induction on the number $d$ of irreducible components of $\partial X$.
If $d=0$,
then the claim holds since $\cF$ is logarithmic.
Assume that the claim holds for $X$.
For every effective divisor $Z$ on $X$ such that $(X,Z)\in \RglRg$,
we need to show the claim for $(X,Z)$.

We set $U:=X-\partial X$ and $V:=Z-\partial Z$ for simplicity of notation.
Since $\cF$ satisfies absolute purity,
the squares in \eqref{boundary.6.1} are cartesian.
We have
\[
\Log \cF(X)\cong \cF(U)
\]
since the claim holds for $X$.
On the other hand,
we have natural isomorphisms
\begin{align*}
&\fib(
\Log \cF(\rN_Z X)\to \Log \cF(\rN_Z X,Z))
\\
\cong &
\fib(\Log \cF(\P(\rN_Z X\oplus \cO))\to \Log \cF(\P(\rN_Z X)))
\\
\cong &
\fib(\cF(\P(\rN_V U\oplus \cO))\to \cF(\P(\rN_V U)))
\\
\cong &
\fib(\cF(\rN_V U)\to \cF(\rN_V U-V)),
\end{align*}
where the first (resp.\ second, resp.\ third) isomorphism is due to Proposition \ref{boundary.19} (resp.\ induction, resp.\ \cite[Theorem 2.23 in \S 3]{MV}).
Since we have a natural morphism from \eqref{boundary.5.1} to \eqref{boundary.6.1},
we have $\Log \cF(X,Z)\cong \cF(U-V)$ by Theorem \ref{boundary.5},
i.e., the claim holds for $(X,Z)$.
\end{proof}

\section{Logarithmic sheaves of spectra}
\label{boundarify}

In this section, we show that various sheaves of spectra are logarithmic assuming $\CH^q(\C)\cong \CLog\CH^q(\square_{\C}^n)$ for all integers $n,q\geq 0$,
whose proof is postponed to \cite{logSHF2}.

For $X\in \Sch$,
let $\Kth(X)$ be the Bass $K$-theory spectrum of $X$.
For an abelian group like $\pi_i \Kth(X)$ for $X\in \Sch$ and integer $i$,
we often regard it as the associated Eilenberg-MacLane spectrum especially when taking $\otimes$ in $\Sp$.

\begin{lem}
\label{boundarify.3}
Let $X\to S$ be a morphism in $\Sch$.
Then for every proper $Y\in \Sm/\F_1$ and integer $i$,
we have natural isomorphisms
\begin{gather}
\label{boundarify.3.1}
\Kth(X)\otimes_{\Kth(S)} \Kth(S\times Y)
\cong
\Kth(X\times Y),
\\
\label{boundarify.3.2}
\pi_i \Kth(X)\otimes_{\pi_0\Kth(S)}
\pi_0\Kth(S\times Y)
\cong
\pi_i \Kth(X\times Y).
\end{gather}
Furthermore,
if $X\in \Rg$,
then we have a natural isomorphism
\begin{equation}
\label{boundarify.3.3}
\pi_0\Kth(X)\otimes_{\Kth(S)}\Kth(S\times Y)
\cong
\pi_0\Kth(X\times Y).
\end{equation}
\end{lem}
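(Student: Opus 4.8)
The plan is to reduce all three isomorphisms to one fact about the underlying scheme: the smooth proper toric $\Z$-scheme $\Spec(\Z)\times Y$ admits a \emph{full exceptional collection defined over $\Z$}, i.e.\ a $\mathrm{Perf}(\Z)$-linear semiorthogonal decomposition $\mathrm{Perf}(\Spec(\Z)\times Y)\simeq\langle\mathrm{Perf}(\Z),\dots,\mathrm{Perf}(\Z)\rangle$ with $m$ factors, $m$ being the number of maximal cones of the fan of $Y$. Granting this, the exceptional objects $E_1,\dots,E_m$ pull back along any $W\to\Spec(\Z)$ in $\Sch$ and, together with $p_W^*\colon\mathrm{Perf}(W)\to\mathrm{Perf}(W\times Y)$, induce a semiorthogonal decomposition $\mathrm{Perf}(W\times Y)\simeq\langle\mathrm{Perf}(W),\dots,\mathrm{Perf}(W)\rangle$; since nonconnective $K$-theory sends semiorthogonal decompositions to finite direct sums, this gives a $\Kth(W)$-module equivalence $\Kth(W\times Y)\simeq\Kth(W)^{\oplus m}$ natural in $W$, and in particular $\Kth(S\times Y)\simeq\Kth(S)^{\oplus m}$ is free over $\Kth(S)$. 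The pullback map $\Kth(S\times Y)\to\Kth(X\times Y)$ is $\Kth(S)$-linear, and because the collection on $X\times Y$ is pulled back from the one on $\Spec(\Z)\times Y$ the induced map $\Kth(X)\otimes_{\Kth(S)}\Kth(S\times Y)\to\Kth(X\times Y)$ is the $m$-fold sum of the identity of $\Kth(X)$, hence an equivalence; this is \eqref{boundarify.3.1}. Since $\Kth(S\times Y)$ and $\pi_0\Kth(S\times Y)\cong(\pi_0\Kth(S))^{\oplus m}$ are free, the relevant tensor products are exact, so \eqref{boundarify.3.2} follows by taking homotopy groups of \eqref{boundarify.3.1} and \eqref{boundarify.3.3} by replacing $\Kth(X)$ with $\pi_0\Kth(X)$.

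For the exceptional collection on $\Spec(\Z)\times Y$ I would argue by induction on $\dim Y$ using the two base-change invariant decomposition formulas that hold integrally: the projective bundle formula, which handles split projective space bundles $\P(\cE)$ and in particular $\P^n_\Z$ via Beilinson's collection, and Thomason's blow-up formula for a regular closed immersion $Z\hookrightarrow W$, which refines the collection on $W$ by $(\mathrm{codim}\,Z-1)$ copies of that on $Z$. The combinatorial input is that a smooth complete toric variety can be reached from projective space through its minimal model program --- a chain of divisorial contractions and flips that are explicit on the fan, whose intermediate centers are closures of torus orbits, hence smooth complete toric of strictly smaller dimension and covered by the induction; equivalently one invokes the known existence of a full exceptional collection on any smooth complete toric variety and notes that the construction, being purely combinatorial, is valid over $\Z$. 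This toric geometry is of the kind developed in \cite{logSHF2} and underlying Theorem~\ref{boundarify.9}; for the toric varieties occurring in the applications --- products of $\P^1$'s and $\P^n$'s and their iterated smooth blow-ups --- the projective bundle and blow-up formulas already suffice directly.

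The hard part is precisely this toric-combinatorial step, together with keeping everything base-change compatible. Because $X$ may be non-regular, $\A^1$-invariance of $\Kth$ is unavailable --- nil $K$-theory does not vanish --- so an affine-cell paving of $\Spec(\Z)\times Y$ by itself does \emph{not} give $\Kth(X\times Y)\cong\Kth(X)^{\oplus m}$; the decomposition has to be expressed through projective bundle and smooth blow-up formulas, the only cell-type decompositions that survive an arbitrary base change. Moreover a top-down argument, blowing $\Spec(\Z)\times Y$ down to projective space, is awkward, since a quotient of a free module by a free submodule need not be free; this forces the build-it-up, exceptional-collection formulation and makes the toric geometry, not the $K$-theoretic formalism, the real content, which is why I would isolate and defer that part to \cite{logSHF2}.
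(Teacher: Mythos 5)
Your $K$-theoretic engine---the projective bundle formula and Thomason's blow-up formula, both compatible with arbitrary base change, reducing everything to a direct-sum decomposition of $\Kth(S\times Y)$ over $\Kth(S)$---is exactly what the paper uses, but the combinatorial step you lean on is where your argument has a genuine gap. The paper argues by induction on $\dim Y$ and invokes the toric weak factorization theorem \cite[Theorem A]{zbMATH00963264}, by which $Y$ is connected to $(\P^1)^n$ through a chain of blow-ups \emph{and blow-downs} along smooth torus-invariant centers. The blow-down direction is not the obstruction you fear: for a smooth blow-up $Y'\to Y$ with center $Z$, the blow-up formula \cite[Th\'eor\`eme 2.1]{MR1207482} identifies the comparison map for $Y'$ with the direct sum of the comparison map for $Y$ and comparison maps built from $Z$ and $Z\times_Y Y'$; the latter are isomorphisms by the induction on dimension, so the claim for $Y$ and for $Y'$ are \emph{equivalent} by a two-out-of-three argument on direct summands. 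No quotient of a free module by a free submodule, and no exceptional collections, ever enter.

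By contrast, the combinatorial inputs you propose do not work as stated. Toric MMP involves flips and intermediate varieties that need not be smooth (or even $\Q$-factorial in the smooth category), so the projective bundle and blow-up formulas do not cover those steps; and the existence of a full exceptional collection on an arbitrary smooth \emph{complete} (not necessarily projective) toric variety, let alone one defined over $\Z$, is not a known theorem in that generality---King's conjecture fails, and Kawamata's results concern projective toric stacks over a field. Note also that the paper does not defer this lemma to \cite{logSHF2}; only Theorem \ref{boundarify.9}, about Chow groups of toric varieties, is postponed there. Replacing your combinatorial step by the citation of toric weak factorization, and the semiorthogonal-decomposition packaging by the direct-sum argument above, recovers the paper's proof.
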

\begin{proof}
If $X\in \Rg$, then $\Kth(X)$ is $(-1)$-connected, so we can view $\pi_0 \Kth(X)$ as a $\Kth(S)$-module and hence we can define \eqref{boundarify.3.3}.

We proceed by induction on $n := \dim Y$. The claim is trivial if $n = 0$.
Assume $n > 0$. If $Y' \to Y$ is the blow-up along a smooth center $Z$, then the claim holds for $Z$ and $Z\times_Y Y'$ by induction.
Hence the claim holds for $Y$ if and only if the
claim holds for $Y'$ using the regular blow-up formula for K-theory \cite[Th\'eor\`eme 2.1]{MR1207482}.
Since $Y$ is obtained from $(\P^1)^n$ by several blow-ups and blow-downs along smooth centers by \cite[Theorem A]{zbMATH00963264}, we reduce to the case where $Y = (\P^1)^n$. The projective bundle formula for K-theory  \cite[Theorem 7.3]{TT} finishes the proof.
\end{proof}

\begin{lem}
\label{boundarify.4}
Let $X\to S$ be a morphism in $\Sch$.
Then for every proper $Y\in \SmlSm/\F_1$ and integer $i$,
we have natural isomorphisms
\begin{gather}
\label{boundarify.4.1}
\Kth(X)
\otimes_{\Kth(S)}
\Log \Kth(S\times Y)
\to
\Log \Kth(X\times Y),
\\
\label{boundarify.4.2}
\pi_i \Kth(X)\otimes_{\pi_0\Kth(S)}
\Log \pi_0\Kth(S\times Y)
\to
\Log \pi_i \Kth(X\times Y).
\end{gather}
Furthermore
if $X\in \Rg$,
then we have a natural isomorphism
\begin{equation}
\label{boundarify.4.3}
\pi_0\Kth(X)\otimes_{\Kth(S)}\Log \Kth(S\times Y)
\to
\Log \pi_0\Kth(X\times Y).
\\
\end{equation}
\end{lem}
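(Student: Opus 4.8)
The plan is to reduce every claim, termwise, to Lemma \ref{boundarify.3} by unwinding the colimit formula defining $\Log$. First I would observe that the structure morphism $X \times Y \to Y$ is strict and $Y \in \SmlSm/\F_1$, so formula \eqref{boundary.52.1} (taken with $X_0 := Y$) applies and yields a natural identification
\[
\Log \Kth(X \times Y) \cong \colim_{n \in \bDelta^\op} \colim_{Y_0 \in (\SBl_Y^n)^\op} \Kth(\ul X \times \ul{Y_0}),
\]
using $\ul{X \times Y} \times_{\ul Y} \ul{Y_0} \cong \ul X \times \ul{Y_0}$; the same holds with $X$ replaced by $S$, and with $\pi_i\Kth$ or $\pi_0\Kth$ in place of $\Kth$. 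The geometric input I would isolate next is that each $\ul{Y_0}$ appearing here is a \emph{proper} object of $\Sm/\F_1$: since $Y_0 \to Y \times \square^n$ is an admissible blow-up, $\ul{Y_0} \to \ul Y \times (\P^1)^n$ is proper birational, and $\ul Y \times (\P^1)^n$ is proper because $Y$ is, while smoothness is built into $Y_0 \in \SmlSm/\F_1$. Hence Lemma \ref{boundarify.3} is applicable to $\ul{Y_0}$.

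Next I would use that the relative tensor products $\Kth(X) \otimes_{\Kth(S)} (-)$, $\pi_i\Kth(X) \otimes_{\pi_0\Kth(S)} (-)$, and $\pi_0\Kth(X) \otimes_{\Kth(S)} (-)$ all preserve colimits of spectra (being left adjoints), hence commute with the double colimit above. For instance, for \eqref{boundarify.4.1},
\[
\Kth(X) \otimes_{\Kth(S)} \Log \Kth(S \times Y) \cong \colim_{n} \colim_{Y_0} \big( \Kth(X) \otimes_{\Kth(S)} \Kth(\ul S \times \ul{Y_0}) \big) \cong \colim_{n} \colim_{Y_0} \Kth(\ul X \times \ul{Y_0}) \cong \Log \Kth(X \times Y),
\]
where the middle isomorphism is \eqref{boundarify.3.1} applied with the proper $\ul{Y_0} \in \Sm/\F_1$. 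Running the identical argument with \eqref{boundarify.3.2} (resp.\ \eqref{boundarify.3.3}) in place of \eqref{boundarify.3.1} gives \eqref{boundarify.4.2} (resp.\ \eqref{boundarify.4.3}); for \eqref{boundarify.4.3} I would first record that $X \in \Rg$ makes $\Kth(X)$ connective, so that $\pi_0\Kth(X)$ is a $\Kth(S)$-module and the tensor product is defined. Naturality of the resulting maps follows from naturality of the isomorphisms in Lemma \ref{boundarify.3} together with their compatibility with the transition maps of the diagrams $Y_0 \mapsto \Kth(\ul X \times \ul{Y_0})$, $Y_0 \mapsto \Kth(\ul S \times \ul{Y_0})$ and with the cosimplicial variable $n$.

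I expect the genuine content here to be light: the one step deserving care is that the natural map written in the statement — induced by pullback along $X \times Y \to S \times Y$, equivalently by the module structure — is exactly the one produced by the above chain of identifications, which amounts to matching the canonical comparison map used in the proof of Lemma \ref{boundarify.3} with the map induced on $\Log$. The mild bookkeeping obstacle is that $\ul{X \times Y}$ need not be smooth over $S$, so the formula \eqref{boundary.52.1} must be invoked through the strict morphism $X \times Y \to Y$ rather than through a chart of $X$ over $S$, and one must check that this choice is compatible with all the functorialities in play.
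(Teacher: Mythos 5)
Your proposal is correct and is essentially the paper's own argument: the proof there is literally ``Use \eqref{boundary.52.1} and Lemma \ref{boundarify.3},'' i.e.\ rewrite $\Log\Kth(X\times Y)$ via the strict morphism to $Y$ as a double colimit over the proper smooth $\ul{Y_0}$, commute the (colimit-preserving) relative tensor products past the colimits, and apply Lemma \ref{boundarify.3} termwise. Your expansion, including the observation that each $\ul{Y_0}$ is proper because $Y$ is, is exactly the intended reasoning.
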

\begin{proof}
Use \eqref{boundary.52.1} and Lemma \ref{boundarify.3}.
\end{proof}

For a spectrum $A$ and integer $i$,
let $\tau_{\geq i}A$ and $\tau_{\leq i}A$ be the truncations of $A$.
The following result shows that K-theory is logarithmic if and only if $\pi_0\Kth$ is logarithmic.

\begin{lem}
\label{boundarify.7}
Let $S\in \Rg$.
Then the following conditions are equivalent.
\begin{enumerate}
\item[\textup{(1)}]
$\Log \Kth(S\times \square^n)\cong \Kth(S)$ for all $n\in \N$.
\item[\textup{(2)}]
$\Log \tau_{\leq i}\Kth(S\times \square^n)\cong \tau_{\leq i}\Kth(S)$ for all $i\in \Z$ and $n\in \N$.
\item[\textup{(3)}]
$\Log \pi_i\Kth(S\times \square^n)\cong \pi_i \Kth(S)$ for all $i\in \Z$ and $n\in \N$.
\item[\textup{(4)}]
$\Log \pi_0\Kth(S\times \square^n)\cong \pi_0 \Kth(S)$ for all $n\in \N$.
\end{enumerate}
\end{lem}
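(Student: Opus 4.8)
The plan is to prove the cyclic chain $(1)\Rightarrow(4)\Rightarrow(3)\Rightarrow(1)$ together with $(2)\Leftrightarrow(3)$, reducing everything to three inputs that do not presuppose $\Kth$ to be logarithmic. The first: by \eqref{boundary.52.1} (with $X_0=\square^n$) and Proposition \ref{boundary.17}, for a Zariski sheaf $\cG$ on $\Sm/S$ the spectrum $\Log\cG(S\times\square^n)$ is the geometric realization of a simplicial spectrum whose term in degree $m$ is a filtered colimit, over the cofiltered category $(\SBl_{\square^n}^m)^\op$, of the values of $\cG$ on schemes of the form $\ul{S\times\square^n}\times_{\ul{\square^n}}\ul{Y_0}$, each of which is smooth and proper over $S$, hence regular; since $\Kth$ of a regular scheme is connective, both $\Log\Kth(S\times\square^n)$ and $\Kth(S)$ are connective. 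The second: $\Log$ preserves colimits, hence is exact, so it carries the Postnikov fibre sequences $\Sigma^iH\pi_i\Kth\to\tau_{\le i}\Kth\to\tau_{\le i-1}\Kth$ of sheaves on $\Sm/S$ to fibre sequences, compatibly with the natural maps $\cG(S)\to\Log\cG(S\times\square^n)$. The third: Lemma \ref{boundarify.4}, applied with $X=S\in\Rg$ and $Y=\square^n$, gives natural isomorphisms $\pi_0\Kth(S)\otimes_{\Kth(S)}\Log\Kth(S\times\square^n)\cong\Log\pi_0\Kth(S\times\square^n)$ from \eqref{boundarify.4.3} and $\pi_i\Kth(S)\otimes_{\pi_0\Kth(S)}\Log\pi_0\Kth(S\times\square^n)\cong\Log\pi_i\Kth(S\times\square^n)$ from \eqref{boundarify.4.2}.

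Granting these, $(1)\Rightarrow(4)$ is immediate from the first isomorphism (the natural map $\Kth(S)\to\Log\Kth(S\times\square^n)$ being one of $\Kth(S)$-algebras), and $(4)\Rightarrow(3)$ from the second. For $(2)\Leftrightarrow(3)$ I would use the fibre sequences: since $\Sigma^iH\pi_i\Kth$ is the fibre of $\tau_{\le i}\Kth\to\tau_{\le i-1}\Kth$, applying $\Log$ and invoking $(2)$ identifies $\Sigma^i\Log\pi_i\Kth(S\times\square^n)$ with the fibre of $\tau_{\le i}\Kth(S)\to\tau_{\le i-1}\Kth(S)$, namely $\Sigma^i\pi_i\Kth(S)$, which gives $(2)\Rightarrow(3)$; conversely $(3)\Rightarrow(2)$ goes by induction on $i$, with $\tau_{\le i}\Kth=0$ for $i<0$ since $\Kth$ is connective on $\Sm/S$, the inductive step comparing the image under $\Log$ of $\Sigma^iH\pi_i\Kth\to\tau_{\le i}\Kth\to\tau_{\le i-1}\Kth$ with the fibre sequence of the values at $S$ through the natural maps $\tau_{\le j}\Kth(S)\to\Log\tau_{\le j}\Kth(S\times\square^n)$, and concluding by the five lemma for fibre sequences.

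The crux is $(3)\Rightarrow(1)$: one must recover $\Log\Kth(S\times\square^n)$ itself, not just its truncations, from the ``logarithmic homotopy groups'' $\Log\pi_t\Kth(S\times\square^n)$, while $\Log$, being a geometric realization, does not commute with Postnikov truncation. Writing $A_m$ for the connective spectrum $\colim_{Y_0\in(\SBl_{\square^n}^m)^\op}\Kth(\ul{S\times\square^n}\times_{\ul{\square^n}}\ul{Y_0})$, so that $\Log\Kth(S\times\square^n)=\lvert A_\bullet\rvert$, the skeletal filtration yields a first-quadrant spectral sequence $E^1_{s,t}=\pi_tA_s\Rightarrow\pi_{s+t}\lvert A_\bullet\rvert$ with $d^1$ the alternating sum of face maps, hence $E^2_{s,t}=H_s(\pi_tA_\bullet)$; since $\pi_t$ and the Eilenberg--MacLane construction commute with the filtered colimits defining $A_m$, one has $\Log\pi_t\Kth(S\times\square^n)\cong\lvert m\mapsto H\pi_tA_m\rvert$, so $\pi_s\Log\pi_t\Kth(S\times\square^n)=H_s(\pi_tA_\bullet)=E^2_{s,t}$. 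By $(3)$ this equals $\pi_t\Kth(S)$ for $s=0$ and vanishes for $s\neq0$, so the spectral sequence is concentrated in a single column, degenerates, and gives $\pi_j\Log\Kth(S\times\square^n)\cong\pi_j\Kth(S)$; one then checks that the natural map $\Kth(S)\to\Log\Kth(S\times\square^n)$ realizes this isomorphism, e.g.\ by running the same analysis on the map of simplicial spectra $\underline{\Kth(S)}\to A_\bullet$ coming from the projections $\ul{S\times\square^n}\times_{\ul{\square^n}}\ul{Y_0}\to\ul{S}$. I expect this to be the main obstacle, since among the four conditions only $(3)$ and $(4)$ — unlike $(1)$ — pin down the $E^2$-page of the skeletal spectral sequence, which is why the argument must pass through them before it can be closed back to $(1)$; a minor additional point throughout is the compatibility of $\Log$ with the sheaf-theoretic Postnikov truncations, which is harmless because $\Kth$ is connective on $\Sm/S$ for $S\in\Rg$.
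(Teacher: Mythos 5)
Your proposal is correct, and three of its four arrows coincide with the paper's: $(1)\Rightarrow(4)$ via \eqref{boundarify.4.3}, $(4)\Rightarrow(3)$ via \eqref{boundarify.4.2}, and $(2)\Leftrightarrow(3)$ by induction on the degree using the Postnikov fibre sequences and exactness of $\Log$. Where you diverge is in closing the cycle. The paper proves $(2)\Rightarrow(1)$: from \eqref{boundary.12.1} each $\Log\tau_{\geq i}\Kth(S\times\square^n)$ is $(i-1)$-connected (K-theory of the regular schemes $\ul{S}\times\ul{Y_0}_\Z$ is connective, and filtered colimits and realizations preserve connectivity), so $\lim_i\Log\tau_{\geq i}\Kth(S\times\square^n)\simeq 0$, hence $\Log\Kth(S\times\square^n)\cong\lim_i\Log\tau_{\leq i}\Kth(S\times\square^n)$, and $(2)$ finishes. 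You instead prove $(3)\Rightarrow(1)$ directly by running the skeletal spectral sequence $E^2_{s,t}=H_s(\pi_tA_\bullet)\Rightarrow\pi_{s+t}\lvert A_\bullet\rvert$ of the realization and identifying $E^2_{s,t}$ with $\pi_s\Log\pi_t\Kth(S\times\square^n)$, which $(3)$ concentrates in the column $s=0$. This is the same spectral sequence the paper isolates as Lemma \ref{boundarify.11} (citing [EKMM, Theorem X.2.9]) and deploys later in Theorem \ref{boundarify.6}, so your route is entirely within the paper's toolkit; it rests on the same connectivity input (regularity of $S$, hence connectivity of the terms $A_m$, which you correctly flag). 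What your version buys is a one-step identification of all of $\pi_*\Log\Kth(S\times\square^n)$ from $(3)$ without passing through the truncations $\tau_{\leq i}$; what the paper's version buys is avoiding any explicit spectral-sequence bookkeeping at this point, deferring it to Lemma \ref{boundarify.11}. Your closing remark about checking that the natural map $\Kth(S)\to\Log\Kth(S\times\square^n)$ induces the identified isomorphism, via the map from the constant simplicial spectrum, is the right way to finish and is needed in either approach.
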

\begin{proof}
By \eqref{boundarify.4.3},
(1) implies (4).
By \eqref{boundarify.4.2},
(4) implies (3).
From \eqref{boundary.12.1},
we see that $\Log \tau_{\geq i}\Kth(S\times \square^n)$ is $(i-1)$-connected.
Hence we have
\[
\lim_{i} \Log \tau_{\geq i} \Kth(S\times \square^n)\cong 0.
\]
It follows that (2) implies (1).
By induction on the degree, one can show that (2) is equivalent to (3).
\end{proof}

For a perfect field $k$,
consider Voevodsky's stable $\infty$-category of motives $\DM(k)$, see \cite[Definition 14.2]{MVW} for the triangulated version.
For a simplicial smooth scheme $X_\bullet$ over $k$,
we take $M(X_\bullet):=\colim_{i\in \bDelta^\op} M(X_i)$.
We have a similar definition for an ind-smooth scheme.
We have the commutative ring structure on the object $M(B \G_m)$ of $\DM(k)$ induced by the multiplication $\G_m\times \G_m\to \G_m$.
We have isomorphisms
\[
M(B\G_m)\cong M(\P^\infty)\cong \bigoplus_{q=0}^\infty \Z(q)[2q]
\]
in $\DM(k)$ by \cite[Proposition 3.7 in \S 4]{MV} and \cite[Corollary 15.5]{MVW},
so we have a commutative ring structure on $\bigoplus_{q=0}^\infty \Z(q)[2q]$.
Using this,
we have a commutative ring structure on
\[
R\Gamma_\mot(X,\L^*)
:=
\bigoplus_{q=0}^\infty R\Gamma_\mot(X,\Z(q)[2q])
\]
for $X\in \Sm/k$,
where $R\Gamma_\mot(X,\Z(q))$ denotes the weight $q$ motivic cohomology complex,
and $\L:=\Z(1)[2]$ is the notation for the Lefschetz motive.
Observe that $R\Gamma_\mot(X,\L^*)$ is $(-1)$-connected by \cite[Vanishing Theorem 19.3]{MVW}.

\begin{lem}
\label{boundarify.30}
Let $X\in \Sm/k$,
where $k$ is a perfect field.
Then for every proper $Y\in \Sm/\F_1$ and integer $i$,
we have natural isomorphisms
\begin{gather*}
R\Gamma_\mot(X,\L^*)
\otimes_{R\Gamma_\mot(k,\L^*)}
R\Gamma_\mot(Y_k,\L^*)
\cong
R\Gamma_\mot(X\times Y,\L^*),
\\
\pi_i R\Gamma_\mot(X,\L^*)
\otimes_{R\Gamma_\mot(k,\L^*)}
\pi_0 R\Gamma_\mot(Y_k,\L^*)
\cong
\pi_i R\Gamma_\mot(X\times Y,\L^*),
\\
\pi_0 R\Gamma_\mot(X,\L^*)
\otimes_{R\Gamma_\mot(k,\L^*)}
R\Gamma_\mot(Y_k,\L^*)
\cong
\pi_0 R\Gamma_\mot(X\times Y,\L^*),
\end{gather*}
where $Y_k:=\Spec(k)\times Y$.
\end{lem}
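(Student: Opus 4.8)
The plan is to run the argument of Lemma~\ref{boundarify.3} with the Bass $K$-theory spectrum replaced by $R\Gamma_\mot(-,\L^*)$ and the regular blow-up and projective bundle formulas for $K$-theory replaced by their counterparts in Voevodsky's category $\DM(k)$. Throughout we use that, for $W\in \Sm/k$, $R\Gamma_\mot(W,\Z(q)[2q])\cong \hom_{\DM(k)}(M(W),\Z(q)[2q])$, that $M(X\times Y)\cong M(X)\otimes M(Y_k)$ since $X\times Y\cong X\times_k Y_k$ and $M$ is monoidal, and that $R\Gamma_\mot(W,\L^*)$ is $(-1)$-connected, so that $\pi_0 R\Gamma_\mot(X,\L^*)$, i.e.\ the Chow ring $\bigoplus_q\CH^q(X)$, is a module over $R\Gamma_\mot(k,\L^*)$ and the third product is defined. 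In all three cases the map to be shown an isomorphism is the evident natural one, $a\otimes b\mapsto p_1^*a\cup p_2^*b$.

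I would argue by induction on $n:=\dim Y$. If $n=0$ then $Y=\Spec \F_1$ and there is nothing to prove. For $n>0$, let $Y'\to Y$ be the blow-up of $Y$ along a smooth torus-invariant center $Z$ of codimension $c\ge 2$; then $Z$ and the exceptional divisor have dimension $<n$, so the three isomorphisms hold for $Z$ by the inductive hypothesis. The blow-up formula for motives, $M(\Bl_{W}V)\cong M(V)\oplus\bigoplus_{i=1}^{c-1}M(W)(i)[2i]$ for a smooth codimension-$c$ closed immersion $W\to V$, applied to $X\times Y'\cong \Bl_{X\times Z}(X\times Y)$ and to $Y'_k\cong \Bl_{Z_k}(Y_k)$, followed by $\hom_{\DM(k)}(-,\Z(q)[2q])$ and summation over $q\ge 0$ (only non-negative weights contribute to $\L^*$, and the Tate twists are absorbed by reindexing), yields compatible direct-sum decompositions
\[
R\Gamma_\mot(X\times Y',\L^*)\cong R\Gamma_\mot(X\times Y,\L^*)\oplus\bigoplus_{i=1}^{c-1}R\Gamma_\mot(X\times Z,\L^*),
\]
and likewise for $Y'_k,Y_k,Z_k$, under which the external-product maps split as direct sums. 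Since $-\otimes_{R\Gamma_\mot(k,\L^*)}-$, $\pi_i$, and $\pi_0$ preserve finite direct sums, and the claim holds for $Z$, the claim holds for $Y$ if and only if it holds for $Y'$. By \cite[Theorem~A]{zbMATH00963264} the smooth complete toric $n$-fold $Y$ is connected to $(\P^1)^n$ through a zig-zag of such blow-ups of smooth complete toric $n$-folds, so we reduce to $Y=(\P^1)^n$.

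For $Y=(\P^1)^n$ the iterated projective bundle formula \cite[Corollary~15.5]{MVW} gives $M((\P^1)^n_k)\cong\bigoplus_{\vec a\in\{0,1\}^n}\Z(\lvert\vec a\rvert)[2\lvert\vec a\rvert]$, with the summands realized by the $2^n$ products of pulled-back hyperplane classes; in particular $R\Gamma_\mot((\P^1)^n_k,\L^*)$ is free over $R\Gamma_\mot(k,\L^*)$ on these classes. Tensoring with $M(X)$ and taking $\hom_{\DM(k)}(-,\Z(q)[2q])$ out of this finite sum (again summing over $q\ge 0$) identifies the external-product map with the canonical isomorphism exhibiting the free-module structure, which gives the first isomorphism; applying $\pi_i$ and $\pi_0$, which commute with the finite direct sum, gives the remaining two, the $(-1)$-connectivity being used to make sense of the $\pi_0$ statement.

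No single step here is a genuine obstacle: the blow-up and projective bundle formulas in $\DM(k)$ and the toric weak factorization theorem are all standard and already invoked in Lemma~\ref{boundarify.3}. The one point requiring care is making the induction run through the \emph{canonical} external-product maps rather than merely abstract isomorphisms: one must verify, using the projection formula and base change, that the splittings produced by the blow-up formula for $X\times Y'$ and for $Y'_k$ are induced by the same correspondences, and that upon reaching $(\P^1)^n$ the resulting basis matches the hyperplane-class products. This is routine bookkeeping.
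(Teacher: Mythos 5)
Your proof is correct and follows essentially the same route as the paper: the paper's proof of this lemma is literally "argue as in Lemma \ref{boundarify.3}, but use the projective bundle formula \cite[Theorem 15.12]{MVW} and blow-up formula \cite[Corollary 15.13]{MVW} in $\DM(k)$ instead," which is exactly your induction on $\dim Y$ via toric weak factorization, reduction to $(\P^1)^n$, and the motivic projective bundle formula. The extra care you take about compatibility of the splittings with the external-product maps is a reasonable elaboration of what the paper leaves implicit.
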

\begin{proof}
Argue as in Lemma \ref{boundarify.3},
but use the projective bundle formula \cite[Theorem 15.12]{MVW} and blow-up formula \cite[Corollary 15.13]{MVW} in $\DM(k)$ instead.
\end{proof}

\begin{lem}
\label{boundarify.10}
Let $X\in \Sm/k$, where $k$ is a perfect field.
Then the following conditions are equivalent.
\begin{enumerate}
\item[\textup{(1)}]
$\CLog R\Gamma_\mot(X\times \square^n,\Z(q))\cong R\Gamma_\mot(X,\Z(q))$ for all $n,q\in \N$.
\item[\textup{(2)}]
$\CLog \tau_{\leq i}R\Gamma_\mot(X\times \square^n,\Z(q))\cong \tau_{\leq i}R\Gamma_\mot(X,\Z(q))$ for all $i\in \Z$ and $n,q\in \N$.
\item[\textup{(3)}]
$\CLog H_\mot^i(X\times \square^n,\Z(q))\cong H_\mot^i(X,\Z(q))$ for all $i\in \Z$ and $n,q\in \N$.
\item[\textup{(4)}]
$\CLog \CH^q(X\times \square^n)\cong \CH^q(X)$ for all $n,q\in \N$.
\end{enumerate}
\end{lem}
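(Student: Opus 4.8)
The plan is to follow the proof of Lemma \ref{boundarify.7} essentially verbatim, replacing $\Kth$ by $R\Gamma_\mot(-,\L^*)$ and Lemmas \ref{boundarify.3}--\ref{boundarify.4} by Lemma \ref{boundarify.30}. The only preliminary I would isolate first is the motivic analog of Lemma \ref{boundarify.4}: for a morphism $f\colon X'\to X$ in $\Sm/k$, a proper $Y\in \SmlSm/\F_1$, and an integer $i$, the canonical maps
\begin{align*}
R\Gamma_\mot(X',\L^*)\otimes_{R\Gamma_\mot(X,\L^*)}\CLog R\Gamma_\mot(X\times Y,\L^*)&\to \CLog R\Gamma_\mot(X'\times Y,\L^*),\\
\pi_i R\Gamma_\mot(X',\L^*)\otimes_{\pi_0 R\Gamma_\mot(X,\L^*)}\CLog \pi_0 R\Gamma_\mot(X\times Y,\L^*)&\to \CLog \pi_i R\Gamma_\mot(X'\times Y,\L^*),\\
\pi_0 R\Gamma_\mot(X',\L^*)\otimes_{R\Gamma_\mot(X,\L^*)}\CLog R\Gamma_\mot(X\times Y,\L^*)&\to \CLog \pi_0 R\Gamma_\mot(X'\times Y,\L^*)
\end{align*}
are isomorphisms. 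To prove this I would use \eqref{boundary.52.1} with the strict projection $X'\times Y\to Y$ as chart: it identifies $\CLog R\Gamma_\mot(-,\L^*)(X'\times Y)$ with $\bigl\lvert m\mapsto \colim_{Y_0\in(\SBl_Y^m)^\op}R\Gamma_\mot(X'\times \ul{Y_0},\L^*)\bigr\rvert_\mathrm{cube}$, in which each $\ul{Y_0}$ is a proper smooth monoid scheme, being a proper birational modification of $\ul Y\times(\P^1)^m$. Lemma \ref{boundarify.30}, applied to $X$ and to $X'$, then gives natural isomorphisms $R\Gamma_\mot(X'\times \ul{Y_0},\L^*)\cong R\Gamma_\mot(X',\L^*)\otimes_{R\Gamma_\mot(X,\L^*)}R\Gamma_\mot(X\times \ul{Y_0},\L^*)$, and I would pass to the colimit over $Y_0$ and over $m$, using that $\otimes$ and $\lvert-\rvert_\mathrm{cube}$ preserve the relevant colimits; working one weight at a time, where all complexes in sight are bounded, disposes of the $\pi_i$- and $\pi_0$-variants. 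Alongside this I would record the formal facts used in Lemma \ref{boundarify.7}: by \eqref{boundary.27.1}, $\CLog$ is exact and sends $j$-connective objects to $j$-connective objects (both the transition colimits and $\lvert-\rvert_\mathrm{cube}$ only raise homological degree); $\CLog$ commutes with the weight decomposition $R\Gamma_\mot(-,\L^*)=\bigoplus_q R\Gamma_\mot(-,\Z(q)[2q])$; and $R\Gamma_\mot(-,\Z(q)[2q])$ is connective with $\pi_0=\CH^q$.

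Granting this, the four implications would go exactly as in Lemma \ref{boundarify.7}. For $(1)\Rightarrow(4)$: summing $(1)$ over weights gives $\CLog R\Gamma_\mot(X\times\square^n,\L^*)\cong R\Gamma_\mot(X,\L^*)$ as an $R\Gamma_\mot(X,\L^*)$-module via the canonical map, so the third map above with $f=\id_X$ and $Y=\square^n$ yields $\CLog \pi_0 R\Gamma_\mot(X\times\square^n,\L^*)\cong \pi_0 R\Gamma_\mot(X,\L^*)$; splitting by weight gives $\CLog\CH^q(X\times\square^n)\cong \CH^q(X)$, i.e.\ $(4)$. For $(4)\Rightarrow(3)$: the second map above with $f=\id_X$, $Y=\square^n$, combined with $(4)$, gives $\CLog \pi_i R\Gamma_\mot(X\times\square^n,\L^*)\cong \pi_i R\Gamma_\mot(X,\L^*)$, and splitting by weight (reindexing $\pi_i$ against cohomological degree) gives $(3)$. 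The equivalence $(2)\Leftrightarrow(3)$ is an induction on the degree using the truncation fiber sequences of $R\Gamma_\mot(X\times\square^n,\Z(q))$ and of $R\Gamma_\mot(X,\Z(q))$ together with the exactness of $\CLog$. Finally, for $(2)\Rightarrow(1)$: the fiber of $\CLog R\Gamma_\mot(X\times\square^n,\Z(q))\to \CLog\tau_{\leq i}R\Gamma_\mot(X\times\square^n,\Z(q))$ is $\CLog\tau_{\geq i+1}R\Gamma_\mot(X\times\square^n,\Z(q))$, hence $i$-connective, so $\CLog R\Gamma_\mot(X\times\square^n,\Z(q))\cong \lim_i \CLog\tau_{\leq i}R\Gamma_\mot(X\times\square^n,\Z(q))$, which by $(2)$ is $\lim_i\tau_{\leq i}R\Gamma_\mot(X,\Z(q))\cong R\Gamma_\mot(X,\Z(q))$, the last identification because $R\Gamma_\mot(X,\Z(q))$ is bounded.

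The hard part will be the base-change lemma of the first paragraph, and more specifically the bookkeeping required to see that the cubical geometric realization $\lvert-\rvert_\mathrm{cube}$ commutes with the base-change tensor product and with the transition colimits, and that $\CLog$ preserves connectivity; Lemma \ref{boundarify.30} supplies the only geometric input, and once these formal points are settled the remainder is a routine transcription of the proof of Lemma \ref{boundarify.7}.
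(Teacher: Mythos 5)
Your proposal is correct and follows exactly the route the paper takes: its proof of this lemma is literally the instruction to argue as in Lemmas \ref{boundarify.4} and \ref{boundarify.7} for $R\Gamma_\mot(-,\L^*)$, substituting Lemma \ref{boundarify.30} for Lemma \ref{boundarify.3}, which is precisely what you carry out (the base-change statement via \eqref{boundary.52.1}, the connectivity/Postnikov argument for $(2)\Rightarrow(1)$, and the degree induction for $(2)\Leftrightarrow(3)$). No substantive differences.
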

\begin{proof}
Argue as in Lemmas \ref{boundarify.4} and \ref{boundarify.7} for $R\Gamma_\mot(-,\L^*)$, but use Lemma \ref{boundarify.30} instead.
\end{proof}

\begin{lem}
\label{boundarify.11}
Let $\cF_\bullet$ be a simplicial spectrum.
If there exists $d\in \Z$ such that $\cF_n$ is $(d-n-1)$-connected for all $n\in \N$,
then the geometric realization $\lvert \cF_\bullet\rvert:=\colim_{n\in \bDelta^\op} \cF_n$ is $(d-1)$-connected.
\end{lem}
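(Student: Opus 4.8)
The plan is to exploit the skeletal filtration of the geometric realization. Recall the standard fact that for a simplicial spectrum $\cF_\bullet$ there is an exhaustive filtration
\[
0=\mathrm{sk}_{-1}\lvert\cF_\bullet\rvert \to \mathrm{sk}_0\lvert\cF_\bullet\rvert \to \mathrm{sk}_1\lvert\cF_\bullet\rvert \to \cdots
\]
with $\colim_m \mathrm{sk}_m\lvert\cF_\bullet\rvert \cong \lvert\cF_\bullet\rvert$, together with cofiber sequences $\mathrm{sk}_{m-1}\lvert\cF_\bullet\rvert \to \mathrm{sk}_m\lvert\cF_\bullet\rvert \to \Sigma^m(\cF_m/L_m\cF_\bullet)$, where $L_m\cF_\bullet$ is the $m$th latching object and $\Sigma^m$ denotes the $m$-fold suspension of spectra. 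This is the usual skeletal filtration of a simplicial object in a stable $\infty$-category.

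First I would estimate the connectivity of the latching object. The spectrum $L_m\cF_\bullet$ is a finite colimit of the spectra $\cF_j$ with $0\leq j\leq m-1$ (and is $0$ when $m=0$); since each such $\cF_j$ is $(d-j-1)$-connected, hence $(d-m)$-connected, and a finite colimit of $(d-m)$-connected spectra is again $(d-m)$-connected, the spectrum $L_m\cF_\bullet$ is $(d-m)$-connected. Feeding this into the cofiber sequence $L_m\cF_\bullet \to \cF_m \to \cF_m/L_m\cF_\bullet$ together with the hypothesis that $\cF_m$ is $(d-m-1)$-connected, the long exact sequence of homotopy groups shows that $\cF_m/L_m\cF_\bullet$ is still $(d-m-1)$-connected; therefore $\Sigma^m(\cF_m/L_m\cF_\bullet)$ is $(d-1)$-connected.

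Next I would prove by induction on $m\geq -1$ that $\mathrm{sk}_m\lvert\cF_\bullet\rvert$ is $(d-1)$-connected: the base case $\mathrm{sk}_{-1}\lvert\cF_\bullet\rvert=0$ is trivial, and the inductive step is immediate from the cofiber sequence $\mathrm{sk}_{m-1}\lvert\cF_\bullet\rvert \to \mathrm{sk}_m\lvert\cF_\bullet\rvert \to \Sigma^m(\cF_m/L_m\cF_\bullet)$, since the cofiber of a morphism between $(d-1)$-connected spectra is $(d-1)$-connected. Finally, because homotopy groups commute with filtered colimits, $\lvert\cF_\bullet\rvert\cong \colim_m \mathrm{sk}_m\lvert\cF_\bullet\rvert$ is $(d-1)$-connected, which is the assertion.

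The argument is routine; the only points that deserve attention are the identification of the associated graded pieces of the skeletal filtration with $\Sigma^m(\cF_m/L_m\cF_\bullet)$ and the resulting connectivity bound for $L_m\cF_\bullet$, both of which are standard facts about simplicial objects in stable $\infty$-categories. One could alternatively phrase the same computation via the homotopy spectral sequence of the simplicial spectrum, but the skeletal induction above avoids any discussion of convergence.
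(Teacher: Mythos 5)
Your argument is correct, but it takes a different route from the paper. The paper disposes of the lemma in three lines by citing the strongly convergent skeletal spectral sequence of a simplicial spectrum (EKMM, Theorem X.2.9), $E^2_{p,q}=H_p(\pi_q\cF_\bullet)\Rightarrow \pi_{p+q}\lvert\cF_\bullet\rvert$, and observing that the hypothesis forces $E^2_{p,q}=0$ for $p+q<d$. You instead unpack the underlying skeletal filtration and run the connectivity estimate by hand via latching objects. The two arguments are really the same computation seen from different ends: your associated graded pieces $\Sigma^m(\cF_m/L_m\cF_\bullet)$ are exactly what feeds the $E^1$-page of that spectral sequence. What your version buys, as you note, is that convergence is replaced by the elementary fact that homotopy groups commute with filtered colimits; what the paper's version buys is brevity and a single citable reference. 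Your connectivity bounds are all correct: the latching object $L_m\cF_\bullet$ is a finite colimit of copies of $\cF_j$ with $j\le m-1$, each at least $(d-m)$-connected, and the class of $(d-m)$-connected spectra is closed under colimits, so the cofiber $\cF_m/L_m\cF_\bullet$ is $(d-m-1)$-connected and its $m$-fold suspension is $(d-1)$-connected.

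One small imprecision in the inductive step: the fact you need is not that the \emph{cofiber} of a map of $(d-1)$-connected spectra is $(d-1)$-connected, but that the \emph{middle term} of a cofiber sequence whose outer terms are $(d-1)$-connected is $(d-1)$-connected (closure under extensions), since $\mathrm{sk}_m\lvert\cF_\bullet\rvert$ sits in the middle of your cofiber sequence. This is immediate from the long exact sequence $0=\pi_n\,\mathrm{sk}_{m-1}\to \pi_n\,\mathrm{sk}_m\to \pi_n\,\Sigma^m(\cF_m/L_m\cF_\bullet)=0$ for $n\le d-1$, so nothing breaks, but the justification as written invokes the wrong closure property.
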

\begin{proof}
Recall from \cite[Theorem X.2.9]{MR1417719} that we have the strongly convergent skeleton spectral sequence
\[
E_{p,q}^2=
H_p (\pi_q \cF_\bullet)
\Rightarrow
\pi_{p+q} \lvert \cF_\bullet \rvert.
\]
The assumption on $\cF_\bullet$ implies that $E_{p,q}^2=0$ whenever $p+q<d$.
We obtain the desired claim from this.
\end{proof}

The proof of the following theorem occupies \cite{logSHF2}.
We will use this as a black box in this paper.
For a fan $\Sigma$ and ring $R$,
let $\Sigma_R$ denote the associated toric scheme over $R$.
For $Y\in \SmlSm/\F_1$,
let $Y_R:=\Spec(R)\times Y$.

\begin{thm}
\label{boundarify.9}
We have $\CH^q(\C)\cong \CLog\CH^q(\square_{\C}^r)$ for all integers $q,r\geq 0$.
\end{thm}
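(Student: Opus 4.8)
The plan is to unwind the definition of $\CLog$ completely and reduce Theorem~\ref{boundarify.9} to a combinatorial statement about Chow rings of smooth complete toric varieties. Since $\square^r_\C$ carries the strict morphism $\square^r_\C\to\square^r$ to the object $\square^r\in\SmlSm/\F_1$, formula~\eqref{boundary.27.1} together with Proposition~\ref{boundary.45} identifies $\CLog\CH^q(\square^r_\C)$ with the cubical geometric realization of the (extended) cubical abelian group $n\mapsto\colim_{Y\in(\SBl_{\square^r}^n)^\op}\CH^q(\ul Y_\C)$, where $Y$ ranges over the admissible blow-ups of $(\P^1)^{r+n}$ subject to the constraints of Example~\ref{boundary.54} and each $\ul Y$ is a smooth complete toric variety. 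Connecting any two such $\ul Y$ by a tower of star subdivisions and applying the blow-up and projective bundle formulas (exactly as in Lemmas~\ref{boundarify.3} and \ref{boundarify.30}), all transition maps in this colimit are split monomorphisms of finitely generated free abelian groups, described explicitly from the fans; in particular the case $q=0$ is immediate, since $\CH^0(\ul Y_\C)=\Z$ for every connected $\ul Y$, the colimit is the constant group $\Z$, and its cubical realization is $\Z=\CH^0(\Spec\C)$. Thus the whole content is the vanishing: for $q\geq 1$ the complex computing this cubical realization is acyclic.

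For the vanishing I would first make the colimit tractable. Working over $\C$ lets one replace each $\CH^q(\ul Y_\C)$ by $H^{2q}(\ul Y_\C(\C);\Z)$ via the cycle class map, which is an isomorphism for smooth complete toric varieties, so that $A^q_n:=\colim_{Y}\CH^q(\ul Y_\C)$ becomes a filtered colimit of the Danilov--Jurkiewicz presentations $\Z[D_\rho]/(I_{\mathrm{SR}}+J_{\mathrm{lin}})$ along fan refinements. The key step is to choose, uniformly in $n$, a cofinal subsystem of \emph{especially tractable} admissible blow-ups---compositions of star subdivisions along prescribed walls---for which the simplicial face maps $\delta_{i,0}^*,\delta_{i,1}^*$ and the extra multiplications $\mu_i^*$ of Propositions~\ref{boundary.30} and \ref{boundary.72} (which a priori are defined only after passing to further blow-ups, cf.\ Constructions~\ref{boundary.35}, \ref{boundary.38}, and \ref{boundary.4}) become \emph{strictly} defined on the subsystem. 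Once this is arranged, $(A^\bullet_n)_n$ is an explicitly presented extended cubical graded ring, built by tensor operations out of the cohomology rings $\Z[h]/(h^2)$ of the $\P^1$-factors together with the new classes introduced by the subdivisions.

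With such a presentation in hand, the acyclicity in positive weight is formal: an extended cubical abelian group equipped with the interval contraction coming from the $\mu_i$ (and $i_0^*$) has acyclic normalized cochain complex in every weight where it vanishes on the point, by the standard cubical prism/homotopy-operator argument---the same mechanism by which $\CSing^\square L_\Adm$ is $\square$-invariant on $\SmlSm/\F_1[\Adm^{-1}]$ (Proposition~\ref{logSH.18}, Construction~\ref{omega.14}), so that $\CLog\CH^q(\square^r_\C)$ agrees with its value on $\Spec\C$, which is $\CH^q(\Spec\C)$. The main obstacle is entirely in the previous paragraph: constructing the cofinal system of blow-ups compatibly with the interval structure, i.e.\ realizing the multiplication $\square\times\square\to\square$ of Construction~\ref{logSH.17} together with all its iterates and face/degeneracy relations by a single coherent family of toric subdivisions of $(\P^1)^{r+n}$, and checking that the resulting presentation of $A^\bullet_n$ is the expected one. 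This is where essentially all of the toric geometry of \cite{logSHF2} is spent; granting it, the remainder of the argument is elementary homological algebra of (extended) cubical abelian groups.
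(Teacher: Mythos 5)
Your proposal addresses a statement that this paper does not actually prove: the text explicitly says that the proof of Theorem \ref{boundarify.9} ``occupies \cite{logSHF2}'' and is used here as a black box, with only the explicit combinatorial description of the complex $\CLog\CH^q(\square_\C^r)$ recorded in the remark that follows. Your unwinding of the definitions is consistent with that remark: the $n$-th term is $\colim_{\Sigma\in\cC_{r,n}}\CH^q(\Sigma_\C)$ over smooth subdivisions of $(\P^1)^{r+n}$ subject to the stated constraints, the transition maps are split injections by the toric blow-up formula, and the $q=0$ case is indeed immediate. The reduction from $\square^r$ to $\square^0=\mathrm{pt}$ via the interval structure of $\square$ in $\SmlSm/\F_1[\Adm^{-1}]$ is also sound.

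The genuine gap is in your last step, where you assert that ``the acyclicity in positive weight is formal'' once a strict extended cubical structure is in place. That is false. What the extended cubical structure (the $\mu_i$, together with $i_0$, $i_1$, $p$) buys you formally is (a) that the normalized complex computes the same homology as the full complex, and (b) the $\square$-invariance $\CSing^\square L_\Adm\cF(X\times\square)\simeq\CSing^\square L_\Adm\cF(X)$ of Construction \ref{omega.14}; applied to $X=\mathrm{pt}$ this only reduces $r$ to $0$. It does \emph{not} identify $\CSing^\square L_\Adm\cF(\mathrm{pt})$ with $\cF(\mathrm{pt})$: the former is the $(\square,\Adm)$-localization of $\cF$ evaluated at the point, and in cubical degree $n$ it still contains $\colim_{\Sigma\in\cC_{0,n}}\CH^q(\Sigma_\C)$, a large group for $q\geq 1$. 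If acyclicity of such a complex followed formally from the extended cubical structure plus vanishing at the point, then for instance Levine's cubical cycle complexes $z^q(k,\bullet)$ --- which carry exactly this structure and have $z^1(k,0)=\CH^1(k)=0$ --- would be acyclic in positive degrees, contradicting $\CH^1(k,1)\cong k^*$. So the vanishing of the homology of $\cdots\to\Log_1^\flat\CH^q(\square_\C^r)\to\Log_0^\flat\CH^q(\square_\C^r)$ in positive degrees is precisely the non-formal content, and it is this homological computation (not the strictification of the cubical structure, which is where you locate the difficulty) that requires the combinatorial computations of Chow groups of toric varieties that the paper says occupy \cite{logSHF2}. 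As written, your argument would prove that \emph{every} presheaf with values in a weight-graded ring is ``cubically logarithmic'' in positive weights, which cannot be right.
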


\begin{rmk}
Let us give an explicit description of $\CLog\CH^q(\square_{\C}^r)$.
Consider the category $\cC_{n,r}$ of smooth subdivisions $\Sigma \to (\P^1)^{n+r}$ satisfying the following two conditions:
\begin{enumerate}
\item[(i)] If $a:=(a_1,\ldots,a_{n+r})$ is a ray of $\Sigma$ such that $a_i>0$ for some $n+1\leq i\leq n+r$,
then we have $a=e_i$.
\item[(ii)] $\Cone(e_{1},\ldots,e_{n})\in \Sigma$.
\end{enumerate}
Note that there is an equivalence of categories
\(
\cC_{n,r}\simeq \SBl_X^n
\)
by Example \ref{boundary.54}.

Let $e_1,\ldots,e_{n+r}$ be the standard coordinates in $\Z^{n+r}$.
For integers $1\leq i\leq n$ and $\Sigma\in \cC_{n,r}$,
let $D_{i,0}(\Sigma)$ be the fan $V(e_{i})$,
and let $D_{i,1}(\Sigma)$ be the fan consisting of the cones of $\Sigma$ contained in the lattice $\Z^{i-1}\times 0 \times \Z^{n+r-i}$.
Observe that $D_{i,0}(\Sigma)_{\C}$ and $D_{i,1}(\Sigma)_{\C}$ are closed subschemes of $\Sigma_{\C}$.
By Example \ref{boundary.60}, the map
\[
\delta_{i,0}^*\colon \Log_n\CH^q(\square_\C^r)\to \Log_{n-1}\CH^q(\square_\C^r)
\]
agrees with the induced map
\[
\colim_{\Sigma\in \cC_{r,n}} \CH^q(\Sigma_{\C})\to \colim_{\Sigma\in \cC_{r,n}} \CH^q(D_{i,0}(\Sigma)_{\C}),
\]
and the map
\[
\delta_{i,1}^*\colon  \Log_n\CH^q(\square_\C^r)\to \Log_{n-1}\CH^q(\square_\C^r)
\]
agrees with the induced map
\[
\colim_{\Sigma\in \cC_{r,n}} \CH^q(\Sigma_{\C})\to \colim_{\Sigma\in \cC_{r,n}} \CH^q(D_{i,1}(\Sigma)_{\C}).
\]
We also have the maps
\begin{gather*}
p_i^*
\colon
\Log_{n-1} \CH^q(\square_\C^r)
\to
\Log_n \CH^q(\square_\C^r)
\text{ for $1\leq i\leq n$},
\\
\mu_i^*
\colon
\Log_{n-1} \CH^q(\square_\C^r)
\to
\Log_n \CH^q(\square_\C^r)
\text{ for $1\leq i\leq n-1$},
\end{gather*}
and the maps $\delta_{i,0}^*$, $\delta_{i,1}^*$, $p_i^*$, and $\mu_i^*$ form an extended cubical abelian groups.
Now, $\CLog\CH^q(\square_{\C}^r)$ is the complex
\[
\cdots \xrightarrow{\delta^*} \Log_1^\flat \CH^q(\square_\C^r)
\xrightarrow{\delta^*}
\Log_0^\flat \CH^q(\square_\C^r),
\]
where
\[
\Log_n^\flat \CH^q(\square_\C^r)
:=
\bigcap_{i=1}^n \ker(\delta_{i,0}^*)
\]
for $n\in \N$,
and $\delta^*:=\sum_{i=1}^n (-1)^i \delta_{i,1}^*$.
We will use this explicit combinatorial description of $\CLog\CH^q(\square_{\C}^r)$ in \cite{logSHF2}.
\end{rmk}

\begin{thm}
\label{boundarify.31}
Let $k$ be a perfect field.
Then the Nisnevich sheaf $R\Gamma_\mot(-,\Z(q))$ on $\Sm/k$ is logarithmic for every integer $q\geq 0$.
\end{thm}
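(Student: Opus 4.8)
The plan is to bootstrap the statement for an arbitrary smooth $k$-scheme from the single combinatorial input $\CH^q(\C)\cong\CLog\,\CH^q(\square_\C^r)$ of Theorem~\ref{boundarify.9}, by a K\"unneth argument over the base field.

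First I would pass from $\Log$ to $\CLog$. The scheme analogue of Proposition~\ref{boundary.26} holds with the same proof: for $X\in\SmlSm'/k$ equipped with a fan chart, \eqref{boundary.52.1}, its cubical counterpart and \eqref{omega.14.2} identify $\Log\,\cF(X)$ with $\CLog\,\cF(X)$ for any presheaf of complexes $\cF$ on $\Sm/k$, and by Proposition~\ref{boundary.51} both $\Log\,\cF$ and $\CLog\,\cF$ are Zariski sheaves when $\cF$ is, so the identification extends to all of $\SmlSm/k$. Hence a Zariski sheaf of complexes on $\Sm/k$ is logarithmic if and only if it is cubically logarithmic; applying Lemma~\ref{boundarify.10} for each $X\in\Sm/k$, the sheaves $R\Gamma_\mot(-,\Z(q))$, $q\in\N$, are therefore all logarithmic if and only if
\begin{equation}
\label{eq.log.goal}
\CLog\,\CH^q(X\times\square^n)\cong\CH^q(X)\qquad\text{for all }X\in\Sm/k\text{ and }n,q\in\N.
\end{equation}
So the problem is reduced to \eqref{eq.log.goal}.

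Now fix $X\in\Sm/k$ and $n,q\in\N$. Since $X$ carries the trivial log structure, the projection $X\times\square^n\to\square^n$ is strict with $\square^n\in\SmlSm/\F_1$, so $X\times\square^n\in\SmlSm'/k$; \eqref{boundary.52.1} (together with \eqref{boundary.45.1}) and its cubical analogue then give
\[
\CLog\,\CH^q(X\times\square^n)\cong\Bigl\lvert\,\colim_{Y_0\in(\SBl_{\square^n}^\bullet)^\op}\CH^q\bigl(X\times\ul{Y_0}\bigr)\,\Bigr\rvert_\cube,
\]
where $\ul{X\times\square^n}\times_{\ul{\square^n}}\ul{Y_0}\cong X\times\ul{Y_0}$ (product in the unified category). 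By Example~\ref{boundary.54} every $\ul{Y_0}$ appearing here --- and every toric variety produced from it by the face maps $\delta_{i,\epsilon}^*$ and the multiplication maps $\mu_i^*$, since restriction of a complete fan to a coordinate subfan or to a sublattice stays complete --- is the smooth monoid scheme of a complete fan, hence a \emph{proper} object of $\Sm/\F_1$. Applying Lemma~\ref{boundarify.30} with $Y=\ul{Y_0}$, and using its naturality to see that it is compatible with all cubical structure maps (which act on Chow groups as the $R\Gamma_\mot$-pullbacks along the corresponding morphisms of smooth proper toric varieties), produces a natural isomorphism of cubical abelian groups
\[
\CH^q\bigl(X\times\ul{Y_0}\bigr)\cong\bigoplus_{a+b=q}\CH^a(X)\otimes_\Z\CH^b\bigl((\ul{Y_0})_k\bigr).
\]
As $\CH^a(X)$ is independent of $Y_0$, as $\otimes_\Z$ and finite direct sums commute with the filtered colimit over $(\SBl_{\square^n}^\bullet)^\op$ --- cofiltered by Proposition~\ref{boundary.17} --- and as $\colim_{Y_0}\CH^b((\ul{Y_0})_k)$ is a filtered colimit of finitely generated free abelian groups, hence flat over $\Z$, passing to cubical realizations yields
\[
\CLog\,\CH^q(X\times\square^n)\cong\bigoplus_{a+b=q}\CH^a(X)\otimes^{\mathbb{L}}_\Z\CLog\,\CH^b(\square_k^n),
\]
using that $\lvert-\rvert_\cube$ commutes with finite direct sums and with $\CH^a(X)\otimes^{\mathbb{L}}_\Z(-)$ on the flat cubical objects at hand.

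Finally I would identify $\CLog\,\CH^b(\square_k^n)$ with its counterpart over $\C$. The Chow ring of a smooth proper toric variety has a presentation --- generators the torus-invariant subvarieties, relations linear and of Stanley--Reisner type --- depending only on the fan, and morphisms of smooth proper toric varieties act on such presentations in a purely combinatorial way; together with Proposition~\ref{boundary.45} (which lets one replace $\SBl_{\square_\C^n}^\bullet$ by $\SBl_{\square^n}^\bullet$), this shows that the cubical abelian group $\colim_{Y_0}\CH^b((\ul{Y_0})_k)$, and hence its realization $\CLog\,\CH^b(\square_k^n)$, is canonically isomorphic to $\CLog\,\CH^b(\square_\C^n)$, independently of $k$. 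By Theorem~\ref{boundarify.9}, $\CLog\,\CH^b(\square_\C^n)\cong\CH^b(\Spec\C)$, which is $\Z$ concentrated in degree $0$ if $b=0$ and is acyclic if $b>0$; substituting gives $\CLog\,\CH^q(X\times\square^n)\cong\CH^q(X)\otimes^{\mathbb{L}}_\Z\Z\cong\CH^q(X)$, which is \eqref{eq.log.goal}. The main obstacle is the bookkeeping in the third paragraph: one must check carefully that the K\"unneth isomorphism of Lemma~\ref{boundarify.30} together with the combinatorial, base-field-independent description of Chow groups of smooth proper toric varieties is genuinely compatible with every face, degeneracy and multiplication map defining the cubical object $\SBl_{\square^n}^\bullet$, and one must keep track of the derived tensor products over $\Z$ when $\CH^a(X)$ has torsion.
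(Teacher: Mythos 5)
Your argument is correct and follows essentially the same route as the paper's proof: pass to the cubical logarithm functor, use the K\"unneth decomposition of Lemma \ref{boundarify.30} over the base to strip off the factor $X$, use field-independence of Chow groups of smooth proper toric varieties to reduce to $k=\C$, and invoke Theorem \ref{boundarify.9}. The only differences are cosmetic: you apply Lemma \ref{boundarify.10} before the K\"unneth step rather than after (the paper first reduces to $X=\Spec(k)$ at the level of $R\Gamma_\mot(-,\L^*)$ and only then passes to Chow groups), and you justify the field-independence via the combinatorial presentation of the Chow ring where the paper simply cites \cite[Proposition 2.1]{MR1415592}.
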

\begin{proof}
Let $X\in \Sm/k$.
Consider the Zariski sheaf of complexes $\cF_X$ on $\SmlSm/\F_1$ given by $\cF_X(Y):=R\Gamma_\mot(X\times Y,\L^*)$.
Proposition \ref{boundary.26} yields a natural isomorphism
\[
\Log \cF_X(Y)
\cong
\CLog \cF_X(Y).
\]
Together with \eqref{boundary.45.1},
we have a natural isomorphism
\[
\Log R\Gamma_\mot(X\times Y,\L^*)
\cong
\CLog  R\Gamma_\mot(X\times Y,\L^*)
\]
for all $Y\in \SmlSm/\F_1$. 
Hence it suffices to show
\[
R\Gamma_\mot(X,\L^*)\cong \CLog R\Gamma_\mot(X\times \square^n,\L^*)
\]
for all integers $n\geq 0$.
Argue as in Lemma \ref{boundarify.4} but use Lemma \ref{boundarify.30} to show
\[
R\Gamma_\mot (X,\L^*)
\otimes_{R\Gamma_\mot(k,\L^*)}
\CLog R\Gamma_\mot(\square_k^n,\L^*)
\cong
\CLog R\Gamma_\mot(X\times \square^n,\L^*).
\]
Hence it suffices to show 
\[
R\Gamma_\mot(k,\Z(q))\cong \CLog R\Gamma_\mot(\square_k^n,\Z(q))
\]
for every integer $q\geq 0$.
By Lemma \ref{boundarify.10},
it suffices to show $\CH^q(k)\cong \CLog\CH^q(\square_k^n)$.
For every $V\in \Sm/\F_1$, $\CH^q(V_k)$ is independent of the choice of the field $k$ by \cite[Proposition 2.1]{MR1415592}.
Hence we may assume $k=\C$,
and then Theorem \ref{boundarify.9} finishes the proof.
\end{proof}

\begin{thm}
\label{boundarify.6}
Let $S\in \Sch$.
Then the Nisnevich sheaf $\Kth$ of spectra on $\Sm/S$ is logarithmic.
\end{thm}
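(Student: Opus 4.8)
The plan is to mimic the proof of Theorem~\ref{boundarify.31}, replacing the motivic K\"unneth formulas by those for K-theory (Lemmas~\ref{boundarify.3} and~\ref{boundarify.4}), using Lemma~\ref{boundarify.7} to descend from $\Kth$ to $\pi_0\Kth$, and invoking Theorem~\ref{boundarify.9} as the combinatorial input. First I would reduce to $S=\Spec(\Z)$. The underlying monoid scheme of $\square^n\in\SmlSm/\F_1$ is $(\P^1)^n$, so $\square^n$ is proper and \eqref{boundarify.4.1} applies with $Y=\square^n$, giving for a morphism $X\to S$ in $\Sch$ a natural isomorphism $\Kth(X)\otimes_{\Kth(S)}\Log\Kth(S\times\square^n)\xrightarrow{\ \sim\ }\Log\Kth(X\times\square^n)$; applying it to $X\in\Sm/S$ and then to $S\to\Spec(\Z)$ yields
\[
\Log\Kth(X\times\square^n)\cong\Kth(X)\otimes_{\Kth(\Z)}\Log\Kth(\Spec(\Z)\times\square^n).
\]
Hence, once $\Log\Kth(\Spec(\Z)\times\square^n)\cong\Kth(\Z)$ for all $n$, one gets $\Log\Kth(X\times\square^n)\cong\Kth(X)$ and the theorem holds for every $S\in\Sch$; so it suffices to treat $S=\Spec(\Z)$, which is regular.

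Since $\Spec(\Z)\in\Rg$, the equivalence (1)$\Leftrightarrow$(4) of Lemma~\ref{boundarify.7} reduces the claim to $\Log\pi_0\Kth(\Spec(\Z)\times\square^n)\cong\pi_0\Kth(\Z)=\Z$ for all $n\geq 0$. Applying \eqref{boundary.52.1} to the strict morphism $\Spec(\Z)\times\square^n\to\square^n$, this value is the realization over $\bDelta^\op$ of the filtered colimits $\colim_{Y_0\in(\SBl_{\square^n}^m)^\op}\pi_0\Kth(\Spec(\Z)\times\ul{Y_0})$, where $\ul{Y_0}$ runs over the smooth complete toric monoid schemes underlying the standard blow-ups of $\square^n\times\square^m$.

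Each such $\ul{Y_0}$ is built from $(\P^1)^{n+m}$ by blow-ups and blow-downs along smooth torus-invariant centres \cite{zbMATH00963264}, so $\Spec(\Z)\times\ul{Y_0}$ is cellular; arguing as in Lemma~\ref{boundarify.3} with the regular blow-up formula \cite{MR1207482} and the projective bundle formula \cite{TT}, its $\pi_0\Kth$ is finite free and follows the same recursion as $\bigoplus_{q\geq 0}\CH^q(\Spec(\C)\times\ul{Y_0})=\pi_0 R\Gamma_\mot(\Spec(\C)\times\ul{Y_0},\L^*)$, compatibly with the restriction, projection and multiplication maps appearing in the diagram (Examples~\ref{boundary.54} and~\ref{boundary.60}); one also uses that $\CH^q$ of a smooth toric scheme is independent of the regular base \cite{MR1415592}. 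Substituting this into the previous paragraph and passing from the simplicial functor $\Log$ to the extended-cubical functor $\CLog$ as in the proof of Theorem~\ref{boundarify.31} (via Proposition~\ref{boundary.26}), one obtains $\Log\pi_0\Kth(\Spec(\Z)\times\square^n)\cong\bigoplus_{q\geq 0}\CLog\CH^q(\square_{\C}^n)$, which by Theorem~\ref{boundarify.9} equals $\bigoplus_{q\geq 0}\CH^q(\C)=\CH^0(\C)=\Z$, as required.

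The main obstacle is this last step: one must identify $\pi_0\Kth$ with $\bigoplus_q\CH^q$ on all the toric monoid schemes of the diagram functorially for every structure map (including the extended-cubical operators $\delta_{i,\epsilon}^*$, $p_i^*$, $\mu_i^*$), and then control the comparison between the simplicial realization defining $\Log$ and the extended-cubical one defining $\CLog$, which hinges on the connectivity of $R\Gamma_\mot(-,\L^*)$ and of the K-theory of regular schemes.
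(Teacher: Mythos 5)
Your reductions agree with the paper's up to the point where everything comes down to $\pi_0\Kth$ of the toric schemes in the cubical diagram: the paper also first reduces to $S=\Spec(\Z)$ via \eqref{boundarify.4.1}, then to $\pi_0\Kth$ via Lemma \ref{boundarify.7}, and then (using $\pi_0\Kth(\Z)\cong\pi_0\Kth(\C)\cong\Z$ and \eqref{boundarify.4.2}) transfers the statement to $\C$. But at that point the paper applies Lemma \ref{boundarify.7} a \emph{second} time, in the reverse direction, to go back up from $\pi_0\Kth(\C)$ to the full spectrum $\Kth(\C)$, precisely so that it can run a filtration argument at the level of spectra: it takes the slice (equivalently Friedlander--Suslin) filtration $\fil_\bullet\Kth$ on $\Kth\in\SH(\C)$, whose graded pieces are $R\Gamma_\mot(-,(d)[2d])$, uses the connectivity bound of \cite[Lemma 13.5]{MR1949356} together with Lemma \ref{boundarify.11} to get $\lim_d\Log\fil_d\Kth(X)\cong 0$, and then concludes from completeness of the filtration and Theorem \ref{boundarify.31} applied to each graded piece.

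The step where your argument genuinely fails is the claim that $\pi_0\Kth$ of the smooth complete toric schemes in the diagram ``follows the same recursion'' as $\bigoplus_q\CH^q$ \emph{compatibly with all the structure maps}, including $\delta_{i,1}^*$, $p_i^*$, $\mu_i^*$ and the transition maps of the colimits over $\SBl$. There is no natural integral isomorphism $K_0(X)\cong\bigoplus_q\CH^q(X)$ commuting with pullbacks: the Chern character only does this rationally, and the topological or $\gamma$-filtration on $K_0$ whose associated graded relates to $\CH^*$ is functorial but does not split functorially. Having the same rank and satisfying the same blow-up and projective bundle formulas does not produce a natural transformation of diagrams, and the colimit-and-realization computing $\Log\pi_0\Kth(\square_\C^n)$ is exactly the kind of construction that is sensitive to this. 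You correctly flag this as ``the main obstacle,'' but it is not an obstacle one can overcome at the level of $\pi_0$; the missing idea is to lift the comparison to the spectrum level via the slice filtration and control the interchange of $\lim_d$ with $\Log$ by connectivity, which is what the paper does.
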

\begin{proof}
By \eqref{boundarify.4.1},
it suffices to show $\Kth(\Z) \cong \Log \Kth(\square_{\Z}^n)$ for every integer $n\geq 0$.
By Lemma \ref{boundarify.7},
it suffices to show $\pi_0\Kth(\Z) \cong \Log \pi_0\Kth(\square_{\Z}^n)$.
Since $\pi_0\Kth(\Z)\cong \pi_0\Kth(\C)\cong \Z$,
\eqref{boundarify.4.2} implies $\Log \pi_0\Kth(\square_\Z^n)\cong \Log \pi_0\Kth(\square_\C^n)$.
Hence it suffices to show $\pi_0\Kth(\C) \cong \Log \pi_0\Kth(\square_{\C}^n)$.
By Lemma \ref{boundarify.7} again,
it suffices to show $\Kth(\C) \cong \Log \Kth(\square_{\C}^n)$.

Consider the slice filtration $\fil_\bullet\Kth$ of $\Kth\in \SH(\C)$ in \cite[\S 7.1]{MR1977582}.
By \cite[Theorem 7.1.1]{MR2365658},
this agrees with the Friedlander-Suslin filtration on $\Kth$ obtained by \cite[Proposition 13.1]{MR1949356}  since the homotopy coniveau filtration is an evident extension of the Friedlander-Suslin filtration, see \cite[p.\ 217]{MR2365658}.
The graded pieces $\slice_d\Kth$ are isomorphic to $R\Gamma_\mot(-,(d)[2d])$ for all integers $d$ by \cite[Theorem 6.4.2]{MR2365658}.

Consider $\fil^d\Kth:=\fib(\Kth \to \fil_{d-1}\Kth)$ for $d\in \Z$.
By \cite[Lemma 13.5]{MR1949356},
$\fil_d \Kth(V)$ is $(d-\dim V-1)$-connected for $V\in \Sm/\C$.
Hence for $X\in \SmlSm'/\C$ (see Definition \ref{boundary.71} for this notation) and integer $n\geq 0$,
\eqref{boundary.12.1} implies that $\Log_n \fil_d \Kth(X)$ is $(d-\dim X-n-1)$-connected.
Together with Lemma \ref{boundarify.11},
we see that $\Log \fil_d \Kth(X)$ is $(d-\dim X-1)$-connected,
which implies
\begin{equation}
\label{boundarify.6.1}
\lim_d \Log \fil_d \Kth(X)
\cong
0.
\end{equation}

Assume that $\fil^d\Kth:=\Kth \to \fil_{d-1}\Kth$ on $\Sm/\C$ satisfies $\fil^d\Kth(\C)\cong \Log \fil^d\Kth(\square_{\C}^n)$ for every $d\in \Z$ and $n\in \N$.
We have
\[
\Log \Kth(\square_{\C}^n)
\cong
\lim_d \Log \fil^d \Kth(\square_{\C}^n)
\cong
\lim_d \fil^d \Kth(\C)
\cong
\Kth(\C),
\]
where the first isomorphism is due to \eqref{boundarify.6.1},
and the third isomorphism is due to the completeness of the slice filtration $\fil_\bullet \Kth$ \cite[Lemma 3.11]{RSO_hopf}.
It follows that we have $\Kth(\C) \cong \Log \Kth(\square_{\C}^n)$.
Hence it suffices to show $\fil^d\Kth(\C)\cong \Log \fil^d\Kth(\square_{\C}^n)$ for every integer $d$.
By induction on degree $d$, it suffices to show that the slice $\slice_d \Kth\cong R\Gamma_\mot(-,(d)[2d])$ satisfies $R\Gamma_\mot(\C,(d)[2d])\cong \Log R\Gamma_\mot(\square_\C^n,(d)[2d])$,
which is due to Theorem \ref{boundarify.31}.
\end{proof}

\begin{thm}
\label{boundarify.29}
Let $X\in \RglRg$.
Then we have a natural isomorphism of $\E_\infty$-rings
\[
\Log\Kth(X)
\cong
\Kth(X-\partial X).
\]
\end{thm}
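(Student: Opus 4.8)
The plan is to reduce the statement to Theorem \ref{boundary.6}, applied not to $\Kth$ itself (which on $\Sch$ is neither $\A^1$-invariant nor a cdh sheaf) but to homotopy invariant K-theory $\mathrm{KH}$, and then to invoke the agreement of $\mathrm{KH}$ and $\Kth$ on regular schemes. Recall that $\mathrm{KH}$ is an $\A^1$-invariant cdh sheaf of spectra on $\Sch$ (Cisinski), that it carries a canonical $\E_\infty$-ring structure, and that the comparison morphism $\Kth\to\mathrm{KH}$ restricts to an equivalence of $\E_\infty$-ring sheaves on $\Rg$ (Weibel). I would first record the four hypotheses of Theorem \ref{boundary.6} for $\cF=\mathrm{KH}$: $\A^1$-invariance and cdh descent are the above; absolute purity holds because on $\Rg$ one has $\mathrm{KH}\simeq\Kth$, for which the double Cartesian square \eqref{boundary.6.1} attached to a regular embedding $V\to U$ of regular schemes is the classical deformation to the normal bundle combined with the projective bundle formula; and the restriction of $\mathrm{KH}$ to $\Sm/S$ with $S\in\Rg$ is logarithmic because every object of $\Sm/S$ is then regular, so $\mathrm{KH}|_{\Sm/S}\simeq\Kth|_{\Sm/S}$, and the latter is logarithmic by Theorem \ref{boundarify.6}.

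Granting this, Theorem \ref{boundary.6} provides a natural isomorphism $\Log\mathrm{KH}(X)\cong\mathrm{KH}(X-\partial X)$ for every $X\in\RglRg$. Since $\Kth$ and $\mathrm{KH}$ coincide on $\Rg$, the functors $\Log\Kth$ and $\Log\mathrm{KH}$ on $\RglRg$ are the same, so $\Log\Kth(X)\cong\mathrm{KH}(X-\partial X)$; and for $X\in\RglRg$ the scheme $X-\partial X$ is an open subscheme of the regular scheme $\ul{X}$, hence regular, so $\mathrm{KH}(X-\partial X)\cong\Kth(X-\partial X)$. Concatenating the three identifications yields the underlying equivalence of spectra
\[
\Log\Kth(X)\cong\Kth(X-\partial X),
\]
natural in $X\in\RglRg$.

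To promote this to an isomorphism of $\E_\infty$-rings, I would observe that $\Log=\widetilde{\Log}\,\omega^\sharp$ is lax symmetric monoidal, hence preserves $\E_\infty$-ring objects: $\omega^\sharp$ is a left Kan extension along a finite-product-preserving functor and so is strong monoidal, while $\widetilde{\Log}$ is computed by the iterated colimit $\colim_{n\in\bDelta^\op}\colim_{Y\in(\SBl_X^n)^\op}$ in which $\bDelta^\op$ is sifted and each $(\SBl_X^n)^\op$ is filtered ($\SBl_X^n$ being cofiltered by Proposition \ref{boundary.41}), both kinds of colimit commuting with finite products. In particular $\Log\Kth$ is canonically an $\E_\infty$-ring. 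Moreover the isomorphism of Theorem \ref{boundary.6} is the canonical comparison morphism $\Log\cF(X)\to\cF(X-\partial X)$ obtained by restricting each $\cF(\ul{Y})$, $Y\in\SBl_X^n$, along the dense open immersion $(X-\partial X)\times\A^n\hookrightarrow\ul{Y}$ and then applying $\A^1$-invariance; restriction maps and the $\A^1$-invariance equivalence are multiplicative, so this morphism is one of $\E_\infty$-rings, and $\Kth\simeq\mathrm{KH}$ on $\Rg$ is an $\E_\infty$-ring equivalence. Hence the composite is the desired $\E_\infty$-ring isomorphism, natural in $X$.

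The main obstacle is bookkeeping rather than conceptual: confirming that the equivalence supplied by Theorem \ref{boundary.6} really is the canonical multiplicative comparison morphism---equivalently, re-running the induction in its proof while tracking the ring structures---together with recording the one input not established in this paper, namely absolute purity for $\mathrm{KH}$, which is classical over regular bases.
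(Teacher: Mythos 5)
Your proof is correct and follows essentially the same route as the paper: the paper likewise deduces the statement from Theorems \ref{boundary.6} and \ref{boundarify.6} together with cdh descent (Cisinski) and absolute purity, and obtains the $\E_\infty$-upgrade by noting that the canonical comparison morphism $\Log\Kth(X)\to\Kth(X-\partial X)$ is already an $\E_\infty$-ring map, so it suffices to know it is an equivalence of underlying spectra. Your explicit detour through $\mathrm{KH}$ and its agreement with $\Kth$ on regular schemes (including on the regular schemes $\ul{Y}$ appearing in the colimit defining $\Log\Kth$ on $\RglRg$) is precisely the careful reading of the paper's appeal to Cisinski's theorem, which is stated for homotopy-invariant K-theory.
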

\begin{proof}
The K-theory of schemes in $\Sch$ satisfies cdh descent by \cite[Th\'eor\`eme 3.9]{zbMATH06156613} and absolute purity as observed in \cite[Remark 6.1.3]{MR3930052}.
Theorems \ref{boundary.6} and \ref{boundarify.6} finish the proof since we have a natural morphism of $\E_\infty$-rings $\Log \Kth(X)\to \Kth(X-\partial X)$.
\end{proof}

For a scheme $X$,
let $\THH(X)$ and $\TC(X)$ denote the topological Hochschild homology and topological cyclic homology of $X$.

\begin{thm}
\label{boundarify.21}
Let $\cF$ be a $\Kth$-module object of $\Sh_\Zar(\Sch,\Sp)$.
Assume the following conditions:
\begin{enumerate}
\item[\textup{(i)}]
The induced morphism
\[
\Kth(X\times \P^1)\otimes_{\Kth(X)}\cF(X)\to \cF(X\times \P^1)
\]
is an isomorphism for every $X\in \Sch$ and $n\in \N$.
\item[\textup{(ii)}]
For every $S\in \Sch$ and a closed immersion $Z\to X$ in $\Sm/S$,
the induced square
\[
\begin{tikzcd}
\cF(X)\ar[d]\ar[r]&
\cF(Z)\ar[d]
\\
\cF(\Bl_Z X)\ar[r]&
\cF(\Bl_Z X\times_X Z)
\end{tikzcd}
\]
is cartesian.
\end{enumerate}
Then the restriction of $\cF$ to $\Sm/S$ is logarithmic for every $S\in \Sch$.
In particular, $\THH$ and $\TC$ are logarithmic.
\end{thm}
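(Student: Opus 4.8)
The strategy is to establish a projection formula for $\cF$ over $\Kth$, mirroring Lemmas~\ref{boundarify.3} and~\ref{boundarify.4}, and then to conclude using that $\Kth$ is logarithmic (Theorem~\ref{boundarify.6}). First I would prove the \emph{scheme-level} projection formula: for every morphism $X\to S$ in $\Sch$ and every proper $Y\in\Sm/\F_1$, the canonical map
\[
\cF(X)\otimes_{\Kth(S)}\Kth(S\times Y)\to\cF(X\times Y)
\]
is an isomorphism. As in the proof of Lemma~\ref{boundarify.3}, one argues by induction on $\dim Y$, the case $\dim Y=0$ being trivial. If $Y'\to Y$ is the blow-up along a torus-invariant smooth center $Z\in\Sm/\F_1$, then $X\times Z\to X\times Y$ is a closed immersion in $\Sm/X$, so hypothesis~(ii) with base $X$ yields a cartesian square for $\cF$, while hypothesis~(ii) for $\cF=\Kth$ (the regular blow-up formula \cite{MR1207482}) yields the analogous cartesian square for $\Kth$. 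Since a square of spectra is cartesian if and only if it is cocartesian and $\cF(X)\otimes_{\Kth(S)}(-)$ preserves colimits, tensoring keeps the $\Kth$-square cartesian; comparing it with the $\cF$-square via the canonical comparison maps and invoking the inductive hypothesis for $Z$ and $Z\times_Y Y'$ shows that the claim holds for $Y$ if and only if it holds for $Y'$. By \cite{zbMATH00963264}, $Y$ is linked to $(\P^1)^n$ by such blow-ups and blow-downs, so we reduce to $Y=(\P^1)^n$; there, iterating hypothesis~(i) gives $\cF(X\times(\P^1)^n)\cong\cF(X)\otimes_{\Kth(X)}\Kth(X\times(\P^1)^n)$, and combining with the projective bundle formula for $\Kth$ \cite{TT} finishes this step.

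Next I would upgrade this to the \emph{logarithmic} level: for every proper $Y\in\SmlSm/\F_1$ and every $X\to S$ in $\Sch$, the canonical map
\[
\cF(X)\otimes_{\Kth(S)}\Log\Kth(S\times Y)\to\Log\cF(X\times Y)
\]
is an isomorphism. Since $X$ carries the trivial log structure, $X\times Y\to Y$ is strict, so \eqref{boundary.52.1} rewrites the target as $\colim_{n\in\bDelta^\op}\colim_{Y_0\in(\SBl_Y^n)^\op}\cF(X\times\ul{Y_0})$ and likewise for $\Kth$ in the source. Each $\ul{Y_0}$ is proper over $\ul{Y}$, hence proper over $\F_1$ since $Y$ is proper, so the scheme-level projection formula identifies $\cF(X\times\ul{Y_0})$ with $\cF(X)\otimes_{\Kth(S)}\Kth(S\times\ul{Y_0})$ compatibly in $Y_0$; passing to the colimit, which commutes with $\cF(X)\otimes_{\Kth(S)}(-)$, gives the claim.

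Taking $Y=\square^n$, which is proper in $\SmlSm/\F_1$, and using Theorem~\ref{boundarify.6} to get $\Log\Kth(S\times\square^n)\cong\Kth(S)$, I obtain for every $X\in\Sm/S$ and $n\geq 0$
\[
\Log\cF(X\times\square^n)\cong\cF(X)\otimes_{\Kth(S)}\Kth(S)\cong\cF(X),
\]
and one checks this identification is induced by the canonical morphism $\cF(X)\to\Log\cF(X\times\square^n)$; hence the restriction of $\cF$ to $\Sm/S$ is logarithmic. For the last assertion, $\THH$ and $\TC$ are Zariski sheaves of spectra on $\Sch$ that are $\Kth$-module objects via the cyclotomic trace, and, being localizing invariants of perfect complexes, they satisfy hypothesis~(i) (from $\mathrm{Perf}(X\times\P^1)=\langle\cO,\cO(1)\rangle$, so $\THH(X\times\P^1)\cong\THH(X)^{\oplus 2}\cong\Kth(X\times\P^1)\otimes_{\Kth(X)}\THH(X)$, and similarly for $\TC$) and hypothesis~(ii) (from Orlov's semiorthogonal decomposition of $\mathrm{Perf}(\Bl_Z X)$); so the theorem applies to them.

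The main obstacle is the scheme-level projection formula of the first step: one must run the toric weak-factorization reduction to $(\P^1)^n$ while carefully checking that every blow-up square produced by hypothesis~(ii) is compatible with the base change $\cF(X)\otimes_{\Kth(S)}(-)$ and that the torus-invariant centers that appear stay within the range where hypotheses~(i) and~(ii) are applicable. Once this formula and its logarithmic refinement are in place, the deduction that $\cF$ is logarithmic, and the verification for $\THH$ and $\TC$, are essentially formal.
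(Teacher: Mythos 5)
Your proposal is correct and follows essentially the same route as the paper: the paper's proof likewise establishes the projection formula $\cF(X)\otimes_{\Kth(X)}\Log\Kth(X\times Y)\to\Log\cF(X\times Y)$ by arguing as in Lemmas \ref{boundarify.3} and \ref{boundarify.4} (induction on $\dim Y$, blow-up squares from (ii), weak factorization to reduce to $(\P^1)^n$, then (i)), and concludes from Theorem \ref{boundarify.6}. The only cosmetic difference is that for $\THH$ and $\TC$ the paper verifies (i) and (ii) by citing the compatibility of the cyclotomic trace with the projective bundle and blow-up formulas, which is the precise form of the semiorthogonal-decomposition argument you sketch.
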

\begin{proof}
Use the conditions (i) and (ii) and argue as in Lemmas \ref{boundarify.3} and \ref{boundarify.4} to show that the induced morphism
\begin{equation}
\label{boundarify.21.1}
\cF(X)\otimes_{\Kth(X)}L_\partial \Kth(X\times Y)
\to
L_\partial \cF(X\times Y)
\end{equation}
is an isomorphism for every $X\in \Sm/S$ and $Y\in \SmlSm/\F_1$.
Since $\Kth$ is logarithmic by Theorem \ref{boundarify.6},
we have $L_\partial \Kth(X\times \square^n)\cong \Kth(X)$ for every $n\in \N$.
Together with \eqref{boundarify.21.1},
we have $L_\partial \cF(X\times \square^n)\cong \cF(X)$, i.e., the restriction of $\cF$ to $\Sm/S$ is logarithmic.

The condition (i) for $\THH$ and $\TC$ follows from the fact that the cyclotomic trace is compatible with the projective bundle formula,
see \cite[Theorem 1.5]{BM12}.
The condition (ii) for $\THH$ and $\TC$ is a consequence of \cite[Theorem 7.3.3]{BPO} and the log motivic representability \cite[Construction 8.4.6]{BPO2} or by \cite[Theorem 1.4]{BM12}.
\end{proof}

Recall from \cite[Definition III.1.2.3]{Ogu} that a \emph{log ring $(R,P)$} (often called pre-log ring) is a pair of a ring $R$ and a monoid $P$ equipped with a map of monoids $P\to R$,
where the monoid operation on $R$ is the multiplication.
A \emph{map of log rings} is an obvious commutative square.

For an fs log scheme $X$,
let $\logTHH(X)$ and $\logTC(X)$ denote the topological Hochschild homology and topological cyclic homology of $X$ in \cite[Definition 8.3.7]{BPO2},
which are obtained by globalizing Rognes' topological Hochschild homology of log rings \cite[Definition 8.11]{zbMATH05610455} and its cyclotomic structure in \cite[Definition 3]{Ob18}.
Since $\logTHH(X)$ and $\logTC(X)$ agree with the original ones if $X$ has the trivial log structure,
we have canonical isomorphisms
\begin{equation}
\label{boundarify.20.2}
\omega_\sharp \logTHH\cong \THH,
\text{ }
\omega_\sharp \logTC\cong \TC
\end{equation}
in $\PSh(\Sm/S,\Sp)$ for $S\in \Sch$.
We refer to Construction \ref{omega.20} for
\[
\omega_\sharp\colon \PSh(\SmlSm/S,\Sp)\to \PSh(\Sm/S,\Sp).
\]

For a ring $R$ and $X\in \lSch/R$,
consider the Hochschild homology
\[
\logHH(X/R):=\logTHH(X)\otimes_{\THH(R)}R.
\]
If $X$ has the trivial log structure,
then $\logHH(X/R)$ agrees with the Hochschild homology $\HH(X/R)$.
By \cite[Theorem 5.15, \S 8.6]{BLPO},
we have the complete exhaustive HKR-filtration $\Fil_\bullet^\HKR$ on $\logHH(-/R)$ whose graded pieces satisfies
\begin{equation}
\label{boundarify.28.1}
\gr_i^\HKR\logHH(X/R)
\cong
R\Gamma_{\Zar}(X,\Omega_{X/R}^i)
\end{equation}
for $X\in \lSm/R$ and integer $i$.

\begin{prop}
For a noetherian ring $R$,
$\HH(-/R)\in \Sh_\Zar(\Sm/S,\Sp)$ is logarithmic.
\end{prop}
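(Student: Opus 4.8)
The plan is to reduce at once to $\THH$, which has already been shown to be logarithmic in Theorem \ref{boundarify.21}, via the base-change presentation of Hochschild homology. Put $S:=\Spec(R)$. By the definition recalled just above, for $X\in\Sm/S$ equipped with the trivial log structure one has $\HH(X/R)=\THH(X)\otimes_{\THH(R)}R$, where $\THH(R)$ acts on $R\cong\pi_0\THH(R)$ through the augmentation. Hence, regarding $\THH$ as an object of $\Sh_\Zar(\Sm/S,\Mod_{\THH(R)})$ (for $X$ over $S$ the spectrum $\THH(X)$ is functorially a $\THH(R)$-algebra), the sheaf $\HH(-/R)$ is the composite $(-\otimes_{\THH(R)}R)\circ\THH$. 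Since $-\otimes_{\THH(R)}R\colon\Mod_{\THH(R)}\to\Sp$ is exact, it sends the cartesian squares attached to Zariski distinguished squares to cartesian squares, so $\HH(-/R)$ is indeed a Zariski sheaf of spectra on $\Sm/S$.

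Next I would exploit that $\Log=\widetilde{\Log}\,\omega^\sharp$ is a ``levelwise filtered colimit of values'' functor: Zariski-locally on its argument, where a chart $\A_P$ is available, \eqref{boundary.52.2} gives $\Log\cF(X)\cong\colim_{n\in\bDelta^\op}\colim_{Y_0\in\Adm_{\A_P\times\square^n}^\op}\cF(\ul{X}\times_{\ul{\A_P}}\ul{Y_0})$, and Definition \ref{boundary.53} glues these for general $X\in\SmlSm/S$. The indexing categories depend only on $X$ and the toric combinatorics of the charts, not on the coefficient sheaf. Therefore $\Log$ commutes with the colimit-preserving functor $-\otimes_{\THH(R)}R$ applied levelwise, yielding a natural isomorphism
\[
\Log\HH(-/R)(X)\;\cong\;(\Log\THH)(X)\otimes_{\THH(R)}R,\qquad X\in\SmlSm/S,
\]
where on the right $\Log\THH$ may be formed with $\Mod_{\THH(R)}$-coefficients, which agrees with the $\Sp$-valued $\Log$ since the forgetful functor $\Mod_{\THH(R)}\to\Sp$ preserves colimits.

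Finally, Theorem \ref{boundarify.21} gives $\Log\THH(X\times\square^n)\cong\THH(X)$ for every $X\in\Sm/S$ and $n\geq 0$; substituting this into the displayed isomorphism yields $\Log\HH(-/R)(X\times\square^n)\cong\THH(X)\otimes_{\THH(R)}R=\HH(X/R)$, which is exactly the assertion that $\HH(-/R)$ is logarithmic.

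The one step that genuinely needs care — and the main obstacle — is the second paragraph: making precise that $\Log$ commutes with the base change $-\otimes_{\THH(R)}R$, i.e.\ that the iterated colimits over $\Adm_{\A_P\times\square^\bullet}$ computing $\Log\HH(-/R)$ and $\Log\THH$ run over literally the same indexing diagrams and that the colimit-preserving functor may be pulled through them; everything else is formal once Theorem \ref{boundarify.21} is granted. An alternative that bypasses this bookkeeping is to observe that $\HH(-/R)$ is a $\Kth$-module object of $\Sh_\Zar(\Sm/S,\Sp)$ via $\Kth\to\THH\to\HH(-/R)$ and that it satisfies conditions (i) and (ii) of Theorem \ref{boundarify.21} — the projective bundle formula and the regular blow-up formula for Hochschild homology, compatibly with the trace, as in \cite{BM12} — so that the proof of Theorem \ref{boundarify.21} applies verbatim with $\Sch$ replaced by the category of $S$-schemes.
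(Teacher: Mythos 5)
Your argument is correct, but your primary route differs from the paper's. The paper's entire proof is the single sentence ``This is a consequence of Theorem \ref{boundarify.21}'' --- that is, exactly the alternative you sketch in your last sentence: $\HH(-/R)$ is a $\Kth$-module via $\Kth\to\THH\to\HH(-/R)$, it satisfies the projective bundle formula and the blow-up formula (inherited from the corresponding formulas for $\THH$ in \cite{BM12} by base change along $\THH(R)\to R$), and so the criterion of Theorem \ref{boundarify.21} applies. Your main route --- writing $\HH(-/R)=\THH\otimes_{\THH(R)}R$ and commuting $\Log$ past the colimit-preserving functor $-\otimes_{\THH(R)}R$ --- is also sound: the indexing categories $\SBl_X^n$ in \eqref{boundary.12.1} and \eqref{boundary.52.2} depend only on $X$ and its charts, the diagram $Y\mapsto\THH(\ul{Y})$ lifts canonically to $\Mod_{\THH(R)}$ since all the $\ul{Y}$ are $R$-schemes, and the forgetful functor to $\Sp$ preserves colimits, so $\Log\HH(-/R)\cong(\Log\THH)\otimes_{\THH(R)}R$ and Theorem \ref{boundarify.21} (logarithmicity of $\THH$) finishes the job. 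What your route buys is that it isolates the single input ``$\THH$ is logarithmic'' and shows logarithmicity is stable under colimit-preserving base change of coefficients, whereas the paper's route reproves the two geometric hypotheses of Theorem \ref{boundarify.21} for $\HH(-/R)$ directly; both silently require the harmless observation that Theorem \ref{boundarify.21}, stated for sheaves on $\Sch$, applies equally to sheaves defined only on $R$-schemes. No gap either way.
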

\begin{proof}
This is a consequence of Theorem \ref{boundarify.21}.
\end{proof}

\begin{lem}
\label{boundarify.27}
Let $R$ be a ring,
and let $X\in \Sm/R$.
Then for every proper $Y\in \Sm/\F_1$ and integer $i$,
the induced morphisms
\begin{gather*}
\HH(X/R)\otimes_R \HH(Y_R/R)
\to
\HH(X\times Y/R),
\\
\gr_0^\HKR\HH(X/R)\otimes_R \HH(Y_R/R)
\to
\gr_0^\HKR\HH(X\times Y/R),
\\
\gr_i^\HKR \HH(X/R)
\otimes_R
\gr_0^\HKR\HH(Y_R/R)
\to
\gr_i^\HKR \HH(X\times Y/R)
\end{gather*}
are isomorphisms,
where $Y_R:=\Spec(R)\times Y$.
\end{lem}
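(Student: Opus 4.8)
The plan is to adapt the proof of Lemma \ref{boundarify.3} (equivalently Lemma \ref{boundarify.30}), replacing $\Kth$ by $\HH(-/R)$, and to prove the three comparison maps in parallel by induction on $n:=\dim Y$. As preliminaries I would record that $\HH(Y_R/R)$ is a perfect (hence dualizable) $R$-module, since $Y_R$ is smooth and proper over $R$, so that $-\otimes_R\HH(Y_R/R)$ and $-\otimes_R\gr_0^\HKR\HH(Y_R/R)$ commute with limits and the three maps are well defined; and that \eqref{boundarify.28.1} identifies $\gr_i^\HKR\HH(-/R)$ with $R\Gamma_\Zar(-,\Omega_{-/R}^i)$ on $\Sm/R$, which rewrites the graded-piece statements as statements about Hodge cohomology. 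The base case $n=0$ is immediate: then $Y$ is a point, $Y_R=\Spec(R)$, $X\times Y=X$, and the three maps are identities.

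For $n>0$, let $Y'\to Y$ be the blow-up of $Y$ along a smooth toric center $\ul Z\subset\ul Y$, with exceptional divisor $E:=Z\times_Y Y'$, a projective bundle over $Z$; since $\dim Z<n$ and $\dim E=n-1<n$, the inductive hypothesis gives the three isomorphisms for $Z$ and for $E$. Since $\ul X\to\Spec(R)$ is flat, flat base change identifies $Y'\times X$ with $\Bl_{Z\times X}(Y\times X)$ and $E\times X$ with $(Y'\times X)\times_{Y\times X}(Z\times X)$ inside $\Sm/R$. As $\HH(-/R)$ satisfies condition (ii) of Theorem \ref{boundarify.21} (this is part of what underlies the Proposition preceding the present Lemma; concretely it is the blow-up formula for Hochschild homology, cf.\ \cite{BM12}), the square on $\HH(Y\times X/R),\HH(Z\times X/R),\HH(Y'\times X/R),\HH(E\times X/R)$ is cartesian and natural in $X\in\Sm/R$, and likewise after passing to $\gr_i^\HKR\cong R\Gamma_\Zar(-,\Omega^i)$ by the classical blow-up formula for Hodge cohomology. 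Tensoring over $R$ the analogous cartesian square for $Y_R,Z_R,Y'_R,E_R$ against $\HH(X/R)$ (resp.\ $\gr_i^\HKR\HH(X/R)$), which preserves cartesianness of squares of $R$-modules, and comparing it with the previous square via the maps for $Y,Z,Y',E$, one sees that the three isomorphisms hold for $Y$ if and only if they hold for $Y'$. The weak factorization theorem for smooth complete toric varieties \cite[Theorem A]{zbMATH00963264} connects $Y$ to $(\P^1)^n$ through a chain of such blow-ups and blow-downs, so the problem reduces to $Y=(\P^1)^n$.

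For $Y=(\P^1)^n$ one applies the iterated projective bundle formula: for $\HH(-/R)$ this is condition (i) of Theorem \ref{boundarify.21}, or the compatibility of the cyclotomic trace with the projective bundle decomposition \cite[Theorem 1.5]{BM12}, and for the graded pieces it is the classical projective bundle formula for Hodge cohomology. These present $\HH((\P^1_R)^n/R)$, $\HH(X\times(\P^1)^n/R)$ and their associated graded pieces as the expected modules over $R=\HH(R/R)$ and over $\HH(X/R)$, built along Künneth products of hyperplane classes, after which it remains to check that the three comparison maps respect these presentations. I expect this last check to be the main obstacle, the delicate point being the bookkeeping of which $\HKR$-weight each tensor factor contributes — in particular that $\gr_0^\HKR\HH(Y_R/R)\cong R\Gamma_\Zar(Y_R,\cO_{Y_R})$ enters as a weight-zero coefficient in the Künneth decomposition of $\HH(X\times Y/R)$; for the first isomorphism alone there is also the shorter route via the Künneth formula for Hochschild homology together with the dualizability of $\HH(Y_R/R)$ over $R$ noted above. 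A further routine point is to confirm that the mixed products $-\times Y$, $-\times Z$, $-\times E$ lie in $\Sm/R$, so that condition (ii) of Theorem \ref{boundarify.21} genuinely applies.
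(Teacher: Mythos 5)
Your proposal follows essentially the same route as the paper: induction on $\dim Y$, the blow-up formula for $\HH(-/R)$ (deduced from the $\THH$ formulas of \cite{BM12}) and for Hodge cohomology via \eqref{boundarify.28.1}, weak factorization \cite[Theorem A]{zbMATH00963264} to reduce to $Y=(\P^1)^n$, and then the projective bundle formula. The paper's proof is exactly this argument, stated by reference to Lemma \ref{boundarify.3}; your additional remarks on dualizability of $\HH(Y_R/R)$ and the weight bookkeeping are correct elaborations rather than deviations.
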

\begin{proof}
Argue as in Lemma \ref{boundarify.3},
but use the blow-up and projective bundle formulas for $\HH(-/R)$ obtained by the formulas for $\THH$ in \cite[Theorems 1.4, 1.5]{BM12} and the formulas for Hodge cohomology.
\end{proof}

\begin{thm}
\label{boundarify.23}
Let $S\in \Sch$.
Then the presheaf of complexes $R\Gamma_\Zar(-,\Omega_{-/S}^i)$ on $\Sm/S$ is logarithmic for every integer $i\geq 0$.
\end{thm}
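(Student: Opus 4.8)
The plan is to follow the proof of Theorem~\ref{boundarify.31} closely, replacing motivic cohomology by $R\Gamma_\Zar(-,\Omega^\bullet_{-/S})$ and the Chow input by the Hodge cohomology of smooth complete toric varieties. Since $R\Gamma_\Zar(-,\Omega^i_{-/S})$ is a Zariski sheaf of complexes, the assertion is Zariski local on $S$, so I may assume $S=\Spec(R)$ with $R$ noetherian; by Proposition~\ref{boundary.26} (which gives $\Log\cong\CLog$ on this sheaf) and \eqref{boundary.45.1} it then suffices to show
\[
R\Gamma_\Zar(X,\Omega^i_{X/R})
\cong
\CLog R\Gamma_\Zar(-,\Omega^i_{-/R})(X\times\square^n)
\]
for all $X\in\Sm/R$ and integers $i,n\geq 0$. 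Throughout I will use the identification $R\Gamma_\Zar(-,\Omega^i_{-/R})\cong\gr_i^\HKR\HH(-/R)$ (up to a shift by $i$) provided by \eqref{boundarify.28.1} and the HKR filtration of \cite{BLPO}, together with the fact that $\HH(-/R)$ is logarithmic (Theorem~\ref{boundarify.21}).

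The first step is the reduction. For a proper $Y\in\Sm/\F_1$ — that is, a smooth complete toric variety — the vanishing $H^q(Y_R,\Omega^b_{Y_R/R})=0$ for $q\neq b$ shows that $\HH(Y_R/R)$ is a finite free $R$-module concentrated in degree $0$, so that the K\"unneth and blow-up formulas of Lemma~\ref{boundarify.27} give, for every $Y$ arising in the diagram of standard blow-ups of $X\times\square^n$, a decomposition
\[
R\Gamma_\Zar(X\times Y,\Omega^i_{X\times Y/R})
\cong
\bigoplus_{a+b=i}R\Gamma_\Zar(X,\Omega^a_{X/R})\otimes_R R\Gamma_\Zar(Y_R,\Omega^b_{Y_R/R})
\]
into pieces of bounded HKR weight. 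Arguing as in Lemmas~\ref{boundarify.3} and~\ref{boundarify.4}, this weight decomposition is preserved by the filtered colimits and the cubical realization that build $\CLog$, and hence the claim reduces to the base case
\[
\CLog R\Gamma_\Zar(-,\Omega^b_{-/R})(\square^n_R)
\cong
R\Gamma_\Zar(\Spec R,\Omega^b_{R/R}),
\]
the right-hand side being $R$ in degree $0$ when $b=0$ and $0$ when $b>0$.

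The second step handles this base case and is where Theorem~\ref{boundarify.9} enters. For any proper $V\in\Sm/\F_1$ the sheaf $\Omega^b_{V_R/R}$ is locally free, $H^q(V_R,\Omega^b_{V_R/R})$ is a free $R$-module that vanishes for $q\neq b$, and its formation commutes with base change; arguing as in Theorem~\ref{boundarify.31} via \cite[Proposition~2.1]{MR1415592} I would reduce to $R=\C$. Over $\C$ the cycle class map yields a natural isomorphism $R\Gamma_\Zar(V_\C,\Omega^b_{V_\C/\C})\cong(\CH^b(V_\C)\otimes_\Z\C)[-b]$, since smooth complete toric varieties admit affine pavings, so that
\[
\CLog R\Gamma_\Zar(-,\Omega^b_{-/\C})(\square^n_\C)
\cong
(\CLog\CH^b(\square^n_\C)\otimes_\Z\C)[-b]
\cong
(\CH^b(\C)\otimes_\Z\C)[-b]
\]
by Theorem~\ref{boundarify.9}; this is $\C$ in degree $0$ for $b=0$ and vanishes for $b>0$, matching $R\Gamma_\Zar(\Spec\C,\Omega^b_{\C/\C})$, which completes the argument.

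The main obstacle, as in the motivic case, is entirely in the bookkeeping of the first step: one must verify that $\CLog$ commutes with extracting a fixed HKR weight on the infinite diagram of standard blow-ups of $X\times\square^n$, whose members have unbounded dimension and hence a priori unbounded HKR length, and that the decomposition appearing is the tensor-product one coming from Lemma~\ref{boundarify.27}. The point making this go through is that the toric factors are smooth and complete, so each of their HKR graded pieces is a finite free module sitting in a single degree; granting this, the only genuinely new input beyond what is already in the paper is Theorem~\ref{boundarify.9}, accessed through the identification of the diagonal Hodge cohomology of smooth complete toric varieties with their Chow groups.
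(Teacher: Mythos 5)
Your argument is correct in outline but takes a genuinely different route from the paper's. The paper's proof is a short descent along the HKR filtration: having already established that $\HH(-/R)$ is logarithmic (via Theorem \ref{boundarify.21}, which needs only the projective bundle and blow-up formulas together with the logarithmicity of $\Kth$), it runs the argument of Lemma \ref{boundarify.7} with Lemma \ref{boundarify.27} to show that the filtered theory is logarithmic if and only if all of its HKR graded pieces are, and then reads off the statement from \eqref{boundarify.28.1}; no toric Hodge theory enters at all. You instead bypass $\HH$ and redo the argument of Theorem \ref{boundarify.31} directly for Hodge cohomology: a K\"unneth reduction to the base case over the point, an identification of the diagonal Hodge cohomology of the smooth complete toric varieties in the diagram with their Chow groups, and then Theorem \ref{boundarify.9}. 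This is a legitimate alternative and makes the Hodge statement logically independent of the K-theoretic chain, but it buys that independence at the cost of two substantive toric inputs the paper never needs: Danilov's vanishing $H^q(V,\Omega^b)=0$ for $q\neq b$ and the natural isomorphism $\CH^b(V)\otimes k\cong H^b(V,\Omega^b)$, both required for smooth \emph{complete} (not merely projective) toric varieties and in \emph{all} characteristics.

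The one step that does not work as written is the reduction to $R=\C$. The cited \cite[Proposition 2.1]{MR1415592} says that Chow groups of toric varieties are literally independent of the field; Hodge cohomology is not independent of the base, and knowing that the cubical complex becomes acyclic after $-\otimes_\Z\C$ only shows that its integral cohomology is torsion, which proves nothing over $\F_p$-algebras. The repair uses the base-change statement you already record: $H^b(V_R,\Omega^b_{V_R/R})\cong M_V\otimes_\Z R$ functorially with $M_V$ free, identify $M_V$ with $\CH^b(V_\C)$ via the characteristic-free cycle class map, and observe that the entire cubical diagram over $R$ is the Chow diagram of Theorem \ref{boundarify.9} tensored over $\Z$ with $R$; since that complex has flat terms and is quasi-isomorphic to $\Z$ (for $b=0$) or to $0$ (for $b>0$), tensoring with any $R$ yields $R$ or $0$. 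With that replacement, and granting the toric Hodge-theoretic inputs, your argument goes through.
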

\begin{proof}
We can work Zariski locally on $S$,
so we may assume that $S=\Spec(R)$ for some ring $R$.
Argue as in Lemma \ref{boundarify.7} and use Lemma \ref{boundarify.27} to show that the following conditions are equivalent.
\begin{enumerate}
\item[(1)] $\Log \HH(X\times \square^n/R)\cong \HH(X/R)$ for all $X\in \Sm/S$ and $n\in \N$.
\item[(2)] $\Log\, \gr_i^\HKR\HH(X\times \square^n/R)\cong \gr_i^\HKR\HH(X)$ for all $X\in \Sm/S$, $i\in \Z$, and $n\in \N$.
\end{enumerate}
We have checked that $\HH(-/R)$ is logarithmic, so we have (1).
Furthermore,
(2) is what we need to show due to \eqref{boundarify.28.1}.
\end{proof}

\begin{conj}
\label{boundarify.32}
Let $S\in \Rg$.
Then the algebraic cobordism spectrum $\MGL\in \SH(S)$ due to Voevodsky \cite[\S 6.3]{zbMATH01194164} is logarithmic.
\end{conj}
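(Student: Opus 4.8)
The plan is to follow the proof of Theorem \ref{boundarify.6}, replacing the slice filtration of $\Kth$ by that of $\MGL$. First I would prove a Künneth reduction in the style of Lemmas \ref{boundarify.3} and \ref{boundarify.4}: for $X\in \Sm/S$ with $S\in \Rg$ — so that $\ul{X}$, and more generally $\ul{X}\times \ul{Y}$ for any smooth toric $Y$, are regular — and for proper $Y\in \Sm/\F_1$ (resp.\ $Y\in \SmlSm/\F_1$), there is a natural isomorphism $\MGL(X)\otimes_{\MGL(\Z)}\MGL(\Z\times Y)\cong \MGL(X\times Y)$, hence $\MGL(X)\otimes_{\MGL(\Z)}\Log \MGL(\Z\times Y)\cong \Log \MGL(X\times Y)$ via \eqref{boundary.52.1}. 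The proof is by induction on $\dim Y$, using the projective bundle formula for $\MGL$, the blow-up formula of $\MGL$ along a regular immersion (deduced from the projective bundle formula by deformation to the normal cone, $\MGL$ being an oriented theory, cf.\ \cite{zbMATH01194164}), and the toric weak factorization theorem \cite[Theorem A]{zbMATH00963264}, exactly as in Lemma \ref{boundarify.3}. As in Theorem \ref{boundarify.6} it then suffices to show $\MGL(\Z)\cong \Log \MGL(\square_\Z^n)$ for all $n\geq 0$. Note that, unlike for $\Kth$, the weight shift in the projective bundle formula for $\MGL$ destroys the $\pi_0$-reduction of Lemma \ref{boundarify.7}, so one must work over $\Spec(\Z)$ directly rather than passing to $\C$.

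Second, over $\Spec(\Z)$ I would use the slice filtration $\fil_\bullet \MGL$ of $\MGL\in \SH(\Spec(\Z))$. By the Hopkins-Morel-Hoyois theorem, $\slice_d\MGL\cong \Sigma^{2d,d}\MZ\otimes_\Z L_d$ in $\SH(\Spec(\Z))$, where $L_\ast=\pi_{2\ast}\mathrm{MU}$ is the Lazard ring and $L_d$ is finitely generated free; as a presheaf of spectra this is $R\Gamma_\mot(-,\Z(d)[2d])\otimes_\Z L_d$, a finite direct sum of copies of $R\Gamma_\mot(-,\Z(d)[2d])$. Since $\Log$ is built from colimits of spectra it is exact, and in particular commutes with finite direct sums, shifts, and fibers; so each $\slice_d\MGL$ is logarithmic provided $R\Gamma_\mot(-,\Z(q))$ on $\Sm/\Spec(\Z)$ is. The latter is the analogue of Theorem \ref{boundarify.31} over $\Spec(\Z)$, proved the same way: a Künneth reduction as in Lemma \ref{boundarify.4} (using the projective bundle and blow-up formulas in $\DM(\Spec(\Z))$) brings it to $\CH^q(\Z)\cong \CLog \CH^q(\square_\Z^n)$, and invariance of the Chow groups of smooth toric varieties under change of base (as in \cite[Proposition 2.1]{MR1415592}) reduces this to $\CH^q(\C)\cong \CLog \CH^q(\square_\C^n)$, which is Theorem \ref{boundarify.9}. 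Next, $R\Gamma_\mot(V,\Z(d)[2d])$ is $(d-\dim V-1)$-connected by \cite[Vanishing Theorem 19.3]{MVW}, so by convergence of the slice filtration of $\MGL$ over the finite-dimensional regular base $\Spec(\Z)$ one gets that $\fil_d\MGL(V)$ is $(d-\dim V-1)$-connected for $V\in \Sm/\Spec(\Z)$. Then \eqref{boundary.12.1} shows $\Log_n \fil_d\MGL(X)$ is $(d-\dim X-n-1)$-connected, Lemma \ref{boundarify.11} shows $\Log \fil_d\MGL(X)$ is $(d-\dim X-1)$-connected, and hence $\lim_d \Log \fil^d\MGL(X)\cong 0$ with $\fil^d\MGL:=\fib(\MGL\to \fil_{d-1}\MGL)$. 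Exactly as at the end of Theorem \ref{boundarify.6} this gives
\[
\Log \MGL(\square_\Z^n)\cong \lim_d \Log \fil^d\MGL(\square_\Z^n)\cong \lim_d \fil^d\MGL(\Z)\cong \MGL(\Z),
\]
where the middle isomorphism comes by induction on $d$ from the logarithmicity of the slices $\slice_d\MGL$, and the outer ones from the connectivity statement together with exactness of $\Log$ and from completeness of the slice filtration of $\MGL$ over $\Spec(\Z)$.

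I expect the main obstacle to be the slice-theoretic input. For $\Kth$ the connectivity of the coniveau layers is available from \cite[Lemma 13.5]{MR1949356} and the slices are visibly motivic cohomology, whereas for $\MGL$ one must invoke the Hopkins-Morel-Hoyois identification of the slices together with a uniform connectivity and convergence statement for the slice filtration of $\MGL$ over $\Spec(\Z)$ (available from work of Levine and Spitzweck, but requiring care), and one should record the $\Spec(\Z)$-version of Theorem \ref{boundarify.31}, which is routine given Theorem \ref{boundarify.9} but is not literally stated here. A secondary point is the blow-up and projective bundle formula for $\MGL$ in the generality needed for the Künneth step; this is classical on regular schemes (oriented theories in the sense of Panin-Smirnov), and the reason for assuming $S\in \Rg$ rather than $S\in \Sch$ is precisely to keep all schemes and centers arising in the argument regular.
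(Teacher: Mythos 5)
First, note that the statement you are proving is stated in the paper as a \emph{conjecture}: the paper offers no proof, and the remark immediately following it explains why the author expects one to be hard, namely that an $\MGL$-analogue of Theorem \ref{boundarify.9} (a combinatorial computation of $\CLog$ applied to the \emph{algebraic cobordism} of $\square_{\C}^r$, cf.\ \cite[Corollary 1.2]{MR3142260}) seems unavoidable. Your proposal tries to bypass this by reducing to Chow groups through the slice filtration, and it founders exactly where the author predicts.

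The concrete gap is your Künneth reduction. The isomorphism $\MGL(X)\otimes_{\MGL(\Z)}\MGL(\Z\times Y)\cong \MGL(X\times Y)$ is false for the weight-zero mapping spectra $\MGL(X):=\hom_{\SH(S)}(\Sigma_{\P^1}^\infty X_+,\MGL)$, which is what the analogue of Lemma \ref{boundarify.3} requires. For $\Kth$ the projective bundle formula is untwisted ($\Omega^{2,1}\KGL\cong\KGL$ by Bott periodicity), which is the whole reason Lemma \ref{boundarify.3} works; for $\MGL$ one gets $\MGL(X\times\P^1)\cong \MGL(X)\oplus \Omega^{2,1}\MGL(X)$, and the twisted summand is \emph{not} $\MGL(X)\otimes_{\MGL(\Z)}\Omega^{2,1}\MGL(\Z)$. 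Concretely, $\pi_0\Omega^{2,1}\MGL(X)$ contains $\CH^1(X)=\Pic(X)$ (e.g.\ uncountable for an elliptic curve over $\C$), while $\pi_0\MGL(X)\cong\Z$ for connected $X$ and $\MGL^{2,1}$ of the base is finitely generated, so the tensor product cannot produce it. This kills both the reduction from general $X$ to the point and the reduction from $\Z$ to $\C$ (and there is no $\pi_0$-shortcut as in Lemma \ref{boundarify.7}, as you yourself observe). One could instead run the slice argument for general $X$ over a general regular $S$ without any Künneth step, since Theorem \ref{boundarify.31} is already stated for arbitrary $X\in\Sm/k$; but then every slice-theoretic input you invoke -- the Hopkins--Morel--Hoyois identification of $\slice_d\MGL$, the connectivity of $\fil_d\MGL(V)$, slice-completeness, and the logarithmicity of $R\Gamma_\mot(-,\Z(q))$ itself -- is only available in the paper (and largely in the literature) over perfect fields, not over $\Spec(\Z)$ or a general $S\in\Rg$. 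Finally, ``$\MGL$ is logarithmic'' is a statement about the $\P^1$-spectrum, i.e.\ about every weight $\Omega^{2n,n}\MGL$, whereas your argument only addresses weight $0$; handling all weights simultaneously is precisely where the $L_*$-module structure of the cobordism of toric varieties enters, which is the computation the paper says must be done.
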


Conjecture \ref{boundarify.32} is conceptually reasonable but technically more demanding than Theorem \ref{boundarify.6}.
The proof of Theorem \ref{boundarify.9} heavily uses known combinatorial computations of Chow groups of toric varieties.
Hence we would need to walk through more complicated computations of algebraic cobordism of toric varieties,
see \cite[Corollary 1.2]{MR3142260} for such a computation.
If we prove Conjecture \ref{boundarify.32},
then the following one would be also reachable:

\begin{conj}
\label{boundarify.33}
Let $S\in \Rg$.
Then every $\MGL$-module in $\MS_\Nis(S)$ is logarithmic.
\end{conj}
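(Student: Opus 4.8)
The plan is to run the argument of Theorem~\ref{boundarify.21} verbatim, with the K-theory sheaf $\Kth$ replaced by $\MGL$: the conclusion of that theorem is that a $\Kth$-module satisfying a projective bundle compatibility and blow-up excision is logarithmic, and the only place $\Kth$ itself enters is through the fact that it is logarithmic (Theorem~\ref{boundarify.6}). For $\MGL$-modules in $\MS_\Nis(S)$ the two hypotheses are automatic: blow-up squares along smooth centers are cartesian for every object of $\MS_\Nis(S)$ by $\mathrm{ebu}$-descent, and the projective bundle formula holds for every $\MGL$-module there (either because it holds for all objects of $\MS_\Nis(S)$, or because $\MGL$ is an oriented $\E_\infty$-ring in $\MS_\Nis(S)$). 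Hence the whole statement reduces to Conjecture~\ref{boundarify.32}, that $\MGL$ is logarithmic, exactly as Theorem~\ref{boundarify.21} reduces to Theorem~\ref{boundarify.6}.

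Concretely, the first step is to prove the $\MGL$-linear K\"unneth isomorphism: for an $\MGL$-module $\cM$ in $\MS_\Nis(S)$, $X\in\Sm/S$, and proper $Y\in\Sm/\F_1$, the natural map
\[
\cM(X)\otimes_{\MGL(X)}\MGL(X\times Y)\longrightarrow\cM(X\times Y)
\]
is an isomorphism, where $\MGL(X):=\hom_{\MS_\Nis(S)}(\Sigma_{\P^1}^\infty X_+,\MGL)$ and $\MGL(X\times Y)$ is the $\MGL(X)$-algebra obtained by pulling back along the projection. This goes by induction on $\dim Y$ exactly as in Lemma~\ref{boundarify.3}: a proper $Y\in\Sm/\F_1$ is linked to $(\P^1)^{\dim Y}$ by blow-ups and blow-downs along smooth centers \cite[Theorem~A]{zbMATH00963264}, so the blow-up step reduces to blow-up excision for $\cM$ and the base case to the projective bundle formula; crucially, in the stable category $\MS_\Nis(S)$ the blow-up squares are cocartesian as well as cartesian, so the pieces are in fact colimits and $\cM(X)\otimes_{\MGL(X)}(-)$ preserves them. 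Feeding this through \eqref{boundary.52.1} and \eqref{boundary.52.2} and commuting the $\MGL(X)$-linear tensor product past the colimit over $\SBl_X^\bullet$ (which it does, being a left adjoint) gives, for $X\in\Sm/S$,
\[
\Log\cM(X\times\square^n)\cong\cM(X)\otimes_{\MGL(X)}\Log\MGL(X\times\square^n).
\]
By Conjecture~\ref{boundarify.32} the right-hand side is $\cM(X)\otimes_{\MGL(X)}\MGL(X)\cong\cM(X)$, which is precisely the logarithmic condition, so $\cM\in\Sp_{\P^1}(\LogSh(S))$.

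The one genuine obstacle is Conjecture~\ref{boundarify.32}. As noted after its statement, proving that $\MGL$ is logarithmic demands an analog of Theorem~\ref{boundarify.9} for algebraic cobordism instead of Chow groups, i.e.\ a combinatorial computation of $\CLog$ applied to the cobordism of the smooth toric blow-ups of $\square^n_{\C}$ (cf.\ \cite[Corollary~1.2]{MR3142260} for the flavor of such computations); this is substantially more involved than the Chow-group input of Theorem~\ref{boundarify.9}, and it is where I expect essentially all of the difficulty to lie. Once Conjecture~\ref{boundarify.32} is available, the passage to arbitrary $\MGL$-modules above is a mechanical transcription of the proofs of Lemmas~\ref{boundarify.3}, \ref{boundarify.4} and Theorem~\ref{boundarify.21}, with no further input needed.
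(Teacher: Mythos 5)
This statement is a \emph{conjecture} in the paper: no proof is given, and the surrounding text explicitly says only that Conjecture \ref{boundarify.33} ``would be reachable'' once Conjecture \ref{boundarify.32} (that $\MGL$ itself is logarithmic) is established. Your proposal therefore cannot be compared against a proof in the paper, because there is none. What you have written is a conditional reduction of \ref{boundarify.33} to \ref{boundarify.32}, and that reduction is exactly the route the author envisions: transcribe Lemmas \ref{boundarify.3} and \ref{boundarify.4} and Theorem \ref{boundarify.21} with $\Kth$ replaced by $\MGL$, using ebu-descent and $\P^1$-stability in $\MS_\Nis(S)$ to supply the two hypotheses of Theorem \ref{boundarify.21}, and then commute the $\MGL(X)$-linear tensor product past the colimits defining $\Log$. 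I see no problem with that part of the argument; in the stable setting the blow-up squares are indeed (co)cartesian and the tensor product preserves the relevant colimits, so the K\"unneth step and the passage through \eqref{boundary.52.1} go through as in the K-theory case.

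The genuine gap is the one you name yourself: Conjecture \ref{boundarify.32}. Your proposal does not prove it, and it is not a routine adaptation of Theorem \ref{boundarify.6} — the slice-filtration argument there ultimately rests on Theorem \ref{boundarify.9}, the combinatorial computation $\CH^q(\C)\cong\CLog\CH^q(\square_\C^r)$, whose proof occupies a separate paper. The cobordism analogue would require controlling $\CLog$ applied to $\Omega^*$ (or $\MGL^{*,*}$) of all smooth toric subdivisions of $\square_\C^r$, for which only partial computations exist. Since that input is missing, your proposal establishes the implication $\ref{boundarify.32}\Rightarrow\ref{boundarify.33}$ but not the conjecture itself; as a proof of the stated result it is incomplete, though it is an accurate account of why the author regards \ref{boundarify.33} as within reach once \ref{boundarify.32} is known.
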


This would indicate that there is a close relationship between the full subcategories of orientable objects of $\MS_\Nis(S)$ and $\logSH(S)$.

\section{Motivic representability of logarithmic K-theory}

In this section,
we show that K-theory is representable in $\logSH(S)$ for every $S\in \Sch$,
see Theorem \ref{boundarify.19}.
We first need to set up a natural transformation $u$ as follows.

\begin{const}
\label{boundarify.12}
Let $S\in \Sch$.
Then we have the natural transformation
\[
u\colon \id\to \widetilde{\Log}
\colon
\Sh_{\sNis}(\SmlSm/S,\Sp)
\to
\Sh_{\sNis}(\SmlSm/S,\Sp)
\]
given as follows.
The morphism of spectra $u(\cF)(X)$, which we denote by $u_{\cF}(X)$, is given by
\begin{equation}
\colim_{n\in \bDelta^\op}
\cF(X)
\to
\colim_{n\in \bDelta^\op}
\colim_{Y\in (\SBl_X^n)^\op}
\cF(Y)
\end{equation}
for $\cF\in \Sh_\sNis(\SmlSm/S,\Sp)$ and $X\in \SmlSm'/S$,
see Definition \ref{boundary.71} for $\SmlSm'/S$.
Here, the colimit on the source is the constant colimit,
and when $n$ is fixed,
the morphism $\cF(X)\to \colim_{Y\in (\SBl_X^n)^\op}\cF(Y)$ factors through $\cF(X)\to \cF(X\times \square^n)$ induced by the projection $X\times \square^n\to X$.
\end{const}

\begin{prop}
\label{boundarify.24}
Let $P$ be a sharp fs monoid,
$S\in \Sch$, $n\in \N$, and $Y\in \SBl_{\A_P}^n$.
Then there exists a sequence of admissible blow-ups along smooth centers
\[
Y_m \to \cdots \to Y_0:=\A_P\times \square^n
\]
in $\SBl_{\A_P}^n$ such that there is a morphism $Y_m\to Y$ in $\SBl_{\A_P}^n$.
\end{prop}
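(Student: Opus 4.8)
The plan is to translate the statement into the combinatorics of subdivisions of smooth fans and then invoke the same toric input that underlies Lemma \ref{logSH.23}.

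First I would dispose of the generality in $P$: since $\A_P\in\SmlSm/\F_1$, the underlying monoid scheme of $\A_P$ is associated with a smooth fan, and as $P$ is sharp this forces $P$ to be free, say $P\cong\N^r$ with $r=\rank P^\gp$ (the observation already recorded around \eqref{boundary.52.2}); the scheme $S$ plays no role. Then $\ul{\A_P\times\square^n}$ is the toric monoid scheme of the smooth fan $\Sigma:=\Cone(e_1,\ldots,e_r)\times(\P^1)^n$ in the lattice $\Z^{r+n}$, and $\A_P\times\square^n-\partial(\A_P\times\square^n)\cong\G_m^r\times\A^n$ corresponds to the subfan of $\Sigma$ of faces of $\sigma:=\Cone(e_{r+1},\ldots,e_{r+n})$. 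By Lemma \ref{boundary.23} we have $\SBl_{\A_P}^n\simeq\Adm_{\A_P\times\square^n}$, so $Y\to\A_P\times\square^n$ is an admissible blow-up; writing $\ul{Y}$ as the toric monoid scheme of a smooth subdivision $\Delta\to\Sigma$, the condition that $Y$ be a standard blow-up amounts to $\sigma\in\Delta$.

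The heart of the argument is then Proposition \ref{logmonoid.2}, applied to the smooth subdivision $\Delta\to\Sigma$ and the unsubdivided cone $\sigma$: it produces a finite sequence of star subdivisions
\[
\Delta_m\to\cdots\to\Delta_0:=\Sigma
\]
of smooth fans, each step leaving $\sigma$ unsubdivided, so that $\sigma\in\Delta_i$ for every $i$, with $\Delta_m$ refining $\Delta$. For each $i$ I would let $Y_i\in\cSm/\F_1$ be the log monoid scheme with $\ul{Y_i}$ the toric monoid scheme of $\Delta_i$ and $Y_i-\partial Y_i:=\G_m^r\times\A^n$; Proposition \ref{logmonoid.25} applied to the induced morphism $Y_i\to\A_P\times\square^n$ gives $Y_i\in\SmlSm/\F_1$, and since $Y_i\to\A_P\times\square^n$ is an admissible blow-up we get $Y_i\in\SBl_{\A_P}^n$ by Lemma \ref{boundary.23}. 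Because $\ul{Y_i}\to\ul{Y_{i-1}}$ is a star subdivision and $\sigma$ lies in both $\Delta_{i-1}$ and $\Delta_i$, the morphism $Y_i\to Y_{i-1}$ is an admissible blow-up along a smooth center in $\SBl_{\A_P}^n$. Finally, as $\Delta_m$ refines $\Delta$, the identity on the dense torus extends to a morphism of monoid schemes $\ul{Y_m}\to\ul{Y}$ over $\ul{\A_P\times\square^n}$, which lifts to a morphism $Y_m\to Y$ in $\SBl_{\A_P}^n$ by Proposition \ref{logmonoid.5}; this is the required sequence.

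The main obstacle is packaged entirely into Proposition \ref{logmonoid.2}: the fact that a smooth refinement of a smooth fan can be reached by a chain of star subdivisions at smooth cones that keeps every intermediate fan smooth and never touches a prescribed unsubdivided subcone. I would use this as a black box, exactly as in the proof of Lemma \ref{logSH.23}, of which the present statement is the analogue with base $\A_P\times\square^n$ in place of $\A_\N^r\times\A^s\times\G_m^t$. The remaining ingredients — that $\A_P\cong\A_\N^r$, that each $\Delta_i$ defines an object of $\SmlSm/\F_1$ via Proposition \ref{logmonoid.25}, and that refinements of fans lift to morphisms of standard blow-ups via Proposition \ref{logmonoid.5} — are routine and already appear in the proof of Lemma \ref{logSH.23}.
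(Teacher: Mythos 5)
Your proposal is correct and follows the same route as the paper: translate $Y$ into a smooth subdivision $\Delta$ of the fan of $\A_P\times\square^n$ containing the cone $\sigma=\Cone(e_{r+1},\ldots,e_{r+n})$, apply Proposition \ref{logmonoid.2} to interpolate by star subdivisions keeping $\sigma$ intact, and read the resulting chain back as objects of $\SBl_{\A_P}^n$. The only differences are that you spell out details the paper leaves implicit (the reduction $P\cong\N^r$, the use of Propositions \ref{logmonoid.25} and \ref{logmonoid.5} to promote fans and fan maps to morphisms in $\SBl_{\A_P}^n$), which is fine.
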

\begin{proof}
We have $\ul{Y}\cong \ul{\T_\Delta}$
for some subdivision $\Delta$ of $\Spec(\F_1[P]) \times (\P^1)^n$.
Consider the cone $\sigma:=\Cone(e_1,\ldots,e_n)$ of $\Spec(\F_1[P]) \times (\P^1)^n$,
where $e_1,\ldots,e_n$ are the standard coordinates for $(\P^1)^n$.
The condition that $Y\to \A_P\times \square^n$ is an admissible blow-up is equivalent to the condition that $\sigma\in \Delta$.
By Proposition \ref{logmonoid.2},
there exists a sequence of star subdivisions
\[
\Delta_m \to \cdots \to \Delta_0:=\Spec(\F_1[P])\times (\P^1)^n
\]
such that $\sigma\in \Delta_m$ and there is a subdivision $\Delta_m\to \Delta$.
Consider $Y_i\in \SBl_{\A_P}^n$ for each $i$ such that $\ul{Y_i}:=\ul{\T_{\Delta_i}}$.
Then we have a desired sequence.
\end{proof}

\begin{prop}
\label{boundarify.13}
Let $S\in \Sch$ and $\cF\in \logSH_{S^1}(S)$.
Then $u_\cF\colon \cF\to \widetilde{\Log}\cF$ is an isomorphism and hence admits an inverse $v_\cF\colon \widetilde{\Log}\cF\to \cF$.
\end{prop}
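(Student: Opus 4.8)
The plan is to establish that for every integer $n\geq 0$ the canonical map $\cF(X\times\square^n)\to\widetilde{\Log}_n\cF(X)\cong\colim_{Y\in(\SBl_X^n)^\op}\cF(Y)$ from Construction \ref{boundarify.12} is an equivalence. Granting this, the $\square$-invariance of $\cF$ produces an equivalence $\cF(X)\to\widetilde{\Log}_n\cF(X)$ natural in $n\in\bDelta^\op$, and passing to the colimit over $\bDelta^\op$ shows that $u_\cF(X)$ is an equivalence; hence $u_\cF$ is an isomorphism and the inverse $v_\cF$ exists formally. Since $\cF$ and $\widetilde{\Log}\cF$ are strict Nisnevich sheaves (Proposition \ref{boundary.51}) and every object of $\SmlSm/S$ lies strict Nisnevich locally in $\SmlSm'/S$ (Definition \ref{boundary.53}), it is enough to treat $X\in\SmlSm'/S$; working Zariski locally we may moreover assume there is a strict morphism $X\to X_0$ with $X_0=\A_P$ for a sharp (hence free) fs monoid $P$.

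For such $X$, Proposition \ref{boundary.45} shows that the functor $(\SBl_{X_0}^n)^\op\to(\SBl_X^n)^\op$, $Y_0\mapsto X\times_{X_0}Y_0$, is cofinal, so $\widetilde{\Log}_n\cF(X)\cong\colim_{Y_0\in(\SBl_{X_0}^n)^\op}\cF(X\times_{X_0}Y_0)$, and Lemma \ref{boundary.23} identifies $\SBl_{X_0}^n$ with $\Adm_{\A_P\times\square^n}$. Write $\Phi\colon(\Adm_{\A_P\times\square^n})^\op\to\Sp$ for the functor $Y_0\mapsto\cF(X\times_{X_0}Y_0)$. The heart of the argument is to prove that $\Phi$ inverts every morphism. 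First, if $Y_0'\to Y_0$ is an admissible blow-up along a smooth center, its pullback along the strict morphism $X\to X_0$ is again an admissible blow-up along a smooth center in $\SmlSm/S$ — the center of a star subdivision is an orbit closure contained in the boundary, which pulls back along a strict morphism to a stratum of $\partial(X\times\square^n)$, hence to a smooth $S$-subscheme having strict normal crossing with the boundary — so $\Phi$ inverts it because $\cF\in\logSH_{S^1}(S)$ is invariant under admissible blow-ups along smooth centers by Theorem \ref{logSH.12}. Second, Proposition \ref{boundarify.24} is the analogue for $\SBl_{\A_P}^n$ of Lemma \ref{logSH.23}; arguing exactly as in the proof of the implication $(2)\Rightarrow(3)$ of Theorem \ref{logSH.7} — the compositions of admissible blow-ups along smooth centers admit a calculus of right fractions inside $\SBl_{\A_P}^n$, and one invokes \cite[Lemma C.2.1]{BPO} (with the $\infty$-category $\Sp$ in place of a $1$-category) — this upgrades to: $\Phi$ inverts every morphism of $\SBl_{\A_P}^n$.

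To finish, note that $\Adm_{\A_P\times\square^n}$ has a terminal object, the identity blow-up $\A_P\times\square^n$, so the nerve of $(\Adm_{\A_P\times\square^n})^\op$ is weakly contractible; a colimit over such a category of a diagram inverting every morphism agrees, via the canonical map, with the value of the diagram at any object, here $\Phi(\A_P\times\square^n)=\cF(X\times\square^n)$. This exhibits the canonical map $\cF(X\times\square^n)\to\widetilde{\Log}_n\cF(X)$ as an equivalence, as required. The main obstacle is the analysis of $\Phi$: on the geometric side, the base-change compatibility that admissibility along a smooth center survives pullback along a strict morphism (morally, that strata of strict normal crossing divisors in smooth schemes are smooth); on the combinatorial side — the step secretly carrying the toric input — the passage from invariance under admissible blow-ups along smooth centers to invariance under arbitrary standard blow-ups, in complete parallel with Theorem \ref{logSH.7}, via Proposition \ref{boundarify.24} and \cite[Lemma C.2.1]{BPO}.
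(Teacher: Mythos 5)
Your proposal is correct and follows essentially the same route as the paper: reduce Zariski locally to a sharp chart $\A_P$, pass to the colimit over $\SBl_{\A_P}^n$ via Proposition \ref{boundary.45}, use Proposition \ref{boundarify.24} to reduce to composites of admissible blow-ups along smooth centers, and conclude by Theorem \ref{logSH.12} together with $\square$-invariance. The only (immaterial) difference is in phrasing the final colimit computation — you argue that the diagram inverts all morphisms over a category with terminal object, while the paper uses cofilteredness of $\SBl_{\A_P}^n$ and the cone from $\cF(X)$.
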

\begin{proof}
We can work Zariski locally on $X$ by Proposition \ref{boundary.51},
so we may assume that $X$ has a chart $P$ such that $P$ is a sharp fs monoid.
By Proposition \ref{boundary.45},
it suffices to show that the morphism
\[
u_\cF\colon \cF(X)
\to
\colim_{n\in \bDelta^\op}\colim_{Y\in \SBl_{\A_P}^n} \cF(X\times_{\A_P} Y)
\]
is an isomorphism for every integer $n\geq 0$.
By Proposition \ref{boundarify.24},
it suffices to show that the morphism $\cF(X)\to \cF(X\times_{\A_P} Y)$ is an isomorphism if $Y\to \A_P\times \square^n$ is a composite of admissible blow-ups along smooth centers.
Since $\cF$ is invariant under admissible blow-up by Theorem \ref{logSH.12} and $\square$-invariant, we deduce this claim.
\end{proof}

\begin{prop}
\label{boundarify.22}
Let $S\in \Sch$.
Then $\logSH_{S^1}(S)$ is generated under colimits by $\Sigma^n\Sigma_{S^1}^\infty X_+$ for all $X\in \Sm/S$ and integers $n$.
\end{prop}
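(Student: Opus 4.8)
The plan is to prove that the smallest full subcategory $\cC\subseteq\logSH_{S^1}(S)$ closed under small colimits and containing $\Sigma^n\Sigma_{S^1}^\infty X_+$ for all $X\in\Sm/S$ and $n\in\Z$ is in fact all of $\logSH_{S^1}(S)$. Since $\Sh_\sNis(\SmlSm/S,\Sp)$ is generated under colimits by $\Sigma^n\Sigma_{S^1}^\infty Y_+$ for $Y\in\SmlSm/S$ and $n\in\Z$, and the localization functor $\Sh_\sNis(\SmlSm/S,\Sp)\to\logSH_{S^1}(S)$ is colimit preserving and essentially surjective, it suffices to show $\Sigma_{S^1}^\infty Y_+\in\cC$ for every $Y\in\SmlSm/S$. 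First I would record two formal properties of $\cC$ used repeatedly: (a) $\cC$ is closed under finite limits and retracts, since in a stable $\infty$-category both are instances of finite colimits; and (b) $\cC$ is closed under $\otimes$, since the generators satisfy $\Sigma^n\Sigma_{S^1}^\infty X_+\otimes\Sigma^m\Sigma_{S^1}^\infty X'_+\cong\Sigma^{n+m}\Sigma_{S^1}^\infty(X\times_S X')_+$ with $X\times_S X'\in\Sm/S$, while $\otimes$ preserves colimits in each variable because $\logSH_{S^1}(S)$ is presentably symmetric monoidal, so a standard two-step argument gives $\cC\otimes\cC\subseteq\cC$.

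Next I would settle the basic log scheme $\A_\N=(\A^1_S,\{0\}_S)$ and its products. Here one uses the strict Nisnevich (indeed Zariski) distinguished square coming from the covering $\{\A^1,\A_\N\}$ of $\square$, whose intersection is $\G_m$; by descent its image under $\Sigma_{S^1}^\infty(-)_+$ is a cocartesian — hence, $\logSH_{S^1}(S)$ being stable, also cartesian — square relating $\Sigma_{S^1}^\infty(\G_m)_+$, $\Sigma_{S^1}^\infty(\A^1)_+$, $\Sigma_{S^1}^\infty(\A_\N)_+$ and $\Sigma_{S^1}^\infty\square_+$. Because $\square$-projections are inverted in $\logSH_{S^1}(S)$ we have $\Sigma_{S^1}^\infty\square_+\cong\Sigma_{S^1}^\infty S_+$, so $\Sigma_{S^1}^\infty(\A_\N)_+$ is obtained from objects of $\cC$ by finite (co)limits and therefore lies in $\cC$. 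Combining this with property (b) and the identifications $\Sigma_{S^1}^\infty(W\times\A_\N^d)_+\cong\Sigma_{S^1}^\infty W_+\otimes(\Sigma_{S^1}^\infty(\A_\N)_+)^{\otimes d}$, one concludes $\Sigma_{S^1}^\infty(W\times\A_\N^d)_+\in\cC$ for all $W\in\Sm/S$ and $d\geq0$.

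Finally I would prove by induction on $d\geq0$ that $\Sigma_{S^1}^\infty Y_+\in\cC$ whenever $\max_{y\in Y}\rank\ol{\cM}_{Y,y}^\gp\leq d$. The case $d=0$ is immediate since then $Y\in\Sm/S$. For $d>0$, since $\Sigma_{S^1}^\infty Y_+$ is the colimit of the \v{C}ech nerve of any strict Nisnevich covering of $Y$, whose terms are strict étale over $Y$ (hence of log rank $\leq d$) and carry fs charts as soon as the members of the covering do, it suffices to treat $Y$ admitting a global chart $\N^d$. For such $Y$ I would invoke \cite[Proof of Proposition 2.4.3]{logA1}, which supplies $W\in\Sm/S$, objects $Y',Y''\in\SmlSm/S$ with $Y'\cong W\times\A_\N^d$, and two strict Nisnevich distinguished squares, one with corners $Y''-W$, $Y''$, $Y-W$, $Y$ and one with corners $Y''-W$, $Y''$, $Y'-W$, $Y'$, where $Y-W$, $Y'-W$ and $Y''-W$ all have log rank $<d$. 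Applying descent to the first square presents $\Sigma_{S^1}^\infty Y_+$ as a pushout of $\Sigma_{S^1}^\infty(Y-W)_+$, $\Sigma_{S^1}^\infty(Y''-W)_+$ and $\Sigma_{S^1}^\infty Y''_+$; the first two lie in $\cC$ by the inductive hypothesis, so it remains to see $\Sigma_{S^1}^\infty Y''_+\in\cC$, which follows from the second square together with the inductive hypothesis applied to $Y'-W$ and $Y''-W$ and the previous paragraph applied to $Y'\cong W\times\A_\N^d$. This completes the induction and hence the proof. The step I expect to be the main obstacle is precisely this last one — the reduction to a global chart and the correct use of the structure theory of $\SmlSm/S$ from \cite{logA1}; once those distinguished squares are in hand, the rest is formal manipulation of (co)cartesian squares in the stable $\infty$-category $\logSH_{S^1}(S)$ and the $\otimes$-closure of $\cC$.
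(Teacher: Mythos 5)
Your proof is correct, but it follows a genuinely different route from the paper's. The paper argues by induction on the number of irreducible components of $\partial Y$: writing $Y=(Y_0,Z_0)$ with $Z_0$ a smooth divisor and $Y_0$ having one fewer boundary component, it invokes the purity/Gysin isomorphism $\Sigma_{S^1}^\infty Y/Y_0\cong \Sigma_{S^1}^\infty \P(\rN_{Z_0}Y_0\oplus\cO)/\P(\rN_{Z_0}Y_0)$ from \cite[Proposition 7.4.5, Theorem 7.5.4]{BPO}, so that $\Sigma_{S^1}^\infty Y_+$ is assembled from $\Sigma_{S^1}^\infty Y_{0+}$ and suspension spectra of projective bundles over $Z_0$, all covered by the inductive hypothesis. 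You instead induct on $\max_y\rank\ol{\cM}_{Y,y}^\gp$, reduce to a global chart $\N^d$, reuse the two strict Nisnevich squares from \cite[Proof of Proposition 2.4.3]{logA1} exactly as in Proposition \ref{boundarify.5} of this paper, and dispose of the remaining case $W\times\A_\N^d$ by observing that $\Sigma_{S^1}^\infty(\A_\N)_+$ lies in the subcategory via the Zariski square for $\{\A^1,\A_\N\}\to\square$ together with $\square$-invariance, plus closure of the generated subcategory under $\otimes$. The paper's argument is shorter because it leans on the heavy Gysin machinery of \cite{BPO}; yours is more self-contained, using only the defining relations of $\logSH_{S^1}(S)$ (descent and $\square$-invariance) and the local structure theory, and it runs parallel to the proof of Proposition \ref{boundarify.5}, which is a pleasant consistency. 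The only points where you are slightly informal are standard and at the same level of detail as the paper: the bicartesian-square manipulation extracting $\Sigma_{S^1}^\infty Y''_+$ from the second square (any corner of a bicartesian square in a stable $\infty$-category lies in a colimit-closed stable subcategory containing the other three), and the reduction to a global chart via the \v{C}ech nerve (note that the terms of the nerve are strict \'etale over members of the cover, hence inherit global charts by pullback along strict morphisms, so no regress occurs).
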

\begin{proof}
Let $\cC$ be the full subcategory of $\logSH_{S^1}(S)$ generated under colimits by those elements.
It suffices to show that for every $Y\in \SmlSm/S$, $\Sigma^\infty Y_+$ is in $\cC$.
We proceed by induction on the number $d$ of irreducible components of $\partial Y$.
The claim is clear if $d=0$.
Assume $d>0$.
There exists $Y_0\in \SmlSm/S$ and its smooth divisor $Z_0$ such that $\ul{Y_0}=\ul{Y}$ and $\partial Y=\partial Y_0+Z_0$.
By \cite[Proposition 7.4.5, Theorem 7.5.4]{BPO},
there is an isomorphism
\[
\Sigma_{S^1}^\infty Y/Y_0
\cong
\Sigma_{S^1}^\infty \P(\rN_{Z_0}Y_0\oplus \cO)/\P(\rN_{Z_0}Y_0).
\]
By induction,
we have $\Sigma_{S^1}^\infty Y_{0+},\Sigma_{S^1}^\infty \P(\rN_{Z_0}Y_0\oplus \cO)_+,\Sigma_{S^1}^\infty \P(\rN_{Z_0}Y_0)_+\in \cC$.
It follows that we have $\Sigma_{S^1}^\infty Y_+\in \cC$.
\end{proof}

\begin{thm}
\label{boundarify.20}
Let $\cF\in \Sh_\sNis(\SmlSm/S,\Sp)$.
If $\omega_\sharp \cF\in \Sh_\Nis(\Sm/S,\Sp)$ is logarithmic and $\cF\in \logSH_{S^1}(S)$,
then there is a natural isomorphism
\[
\Log \omega_\sharp \cF
\cong
\cF
\]
in $\logSH_{S^1}(S)$.
As a consequence,
for $X\in \SmlSm/S$,
we have a natural isomorphism of $\E_\infty$-rings in cyclotomic spectra
\[
\Log \THH(X) \cong \logTHH(X),
\]
and natural isomorphism of $\E_\infty$-rings
\[
\Log \TC(X) \cong \logTC(X).
\]
\end{thm}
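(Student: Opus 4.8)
The plan is to first establish the sheaf-level statement $\Log\omega_\sharp\cF\simeq\cF$ in $\logSH_{S^1}(S)$, and then feed in \cref{boundarify.21} together with \eqref{boundarify.20.2} to obtain the claims about $\THH$ and $\TC$. Write $\cG:=\omega_\sharp\cF$, a Nisnevich sheaf on $\Sm/S$. Since $\cG$ is logarithmic, $\Log\cG=\widetilde{\Log}\omega^\sharp\cG$ lies in $\logSH_{S^1}(S)$ by \cref{boundary.47}; since $\cF\in\logSH_{S^1}(S)$, \cref{boundarify.13} gives that $u_\cF\colon\cF\to\widetilde{\Log}\cF$ is an isomorphism, with inverse $v_\cF$. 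The counit $\epsilon\colon\omega^\sharp\omega_\sharp\cF\to\cF$ of the adjunction $\omega^\sharp\dashv\omega_\sharp$ then produces, upon applying $\widetilde{\Log}$ and composing with $v_\cF$, a natural morphism
\[
\psi:=v_\cF\circ\widetilde{\Log}(\epsilon)\colon\quad \Log\omega_\sharp\cF=\widetilde{\Log}\omega^\sharp\omega_\sharp\cF\ \longrightarrow\ \widetilde{\Log}\cF\ \xrightarrow{\ \sim\ }\ \cF
\]
in $\logSH_{S^1}(S)$, and it remains to see that $\psi$ is an isomorphism.

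By \cref{boundarify.22}, $\logSH_{S^1}(S)$ is generated under colimits by the objects $\Sigma^n\Sigma_{S^1}^\infty X_+$ with $X\in\Sm/S$, so it suffices to check that $\psi$ induces an isomorphism $\Log\cG(X)\to\cF(X)$ for every such $X$, regarded in $\SmlSm/S$ with the trivial log structure. Naturality of $u\colon\id\to\widetilde{\Log}$ applied to $\epsilon$ yields $\widetilde{\Log}(\epsilon)\circ u_{\omega^\sharp\cG}=u_\cF\circ\epsilon$, whence $\psi\circ u_{\omega^\sharp\cG}=v_\cF\circ u_\cF\circ\epsilon=\epsilon$. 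Evaluating at $X$: the morphism $(u_{\omega^\sharp\cG})_X\colon\cG(X)\to\Log\cG(X)$ is the canonical comparison morphism, hence an isomorphism precisely because $\cG$ is logarithmic (the case $n=0$), while $\epsilon_X$ is an isomorphism because $X$ carries the trivial log structure and $\omega^\sharp$ is fully faithful. Therefore $\psi_X=\epsilon_X\circ(u_{\omega^\sharp\cG})_X^{-1}$ is an isomorphism, proving $\Log\omega_\sharp\cF\simeq\cF$.

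For the applications, $\THH$ and $\TC$ on $\Sm/S$ are logarithmic by \cref{boundarify.21}, and $\omega_\sharp\logTHH\cong\THH$, $\omega_\sharp\logTC\cong\TC$ by \eqref{boundarify.20.2}; moreover $\logTHH$ and $\logTC$ are $\square$-invariant, dividing invariant strict Nisnevich sheaves, so they lie in $\logSH_{S^1}(S)$ by \cite{BPO2}. Taking $\cF=\logTHH$ and $\cF=\logTC$, the first statement gives natural isomorphisms $\Log\THH\cong\logTHH$ and $\Log\TC\cong\logTC$ in $\logSH_{S^1}(S)$, hence in particular on evaluation at any $X\in\SmlSm/S$. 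To upgrade these to isomorphisms of $\E_\infty$-rings (of $\E_\infty$-rings in cyclotomic spectra in the $\THH$ case), one runs the construction of $\psi$ verbatim with coefficients in $\CAlg(\CycSp)$, resp.\ $\CAlg(\Sp)$: the functors $\omega^\sharp$ (strong symmetric monoidal), $\omega_\sharp$, and $\widetilde{\Log}$ are lax symmetric monoidal, and $\epsilon$ and $u$ are monoidal, so $\psi$ is canonically a morphism of $\E_\infty$-rings; as the forgetful functor to $\Sp$ is conservative and the underlying morphism is the isomorphism already produced, $\psi$ is an isomorphism.

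The main obstacle is the monoidal --- and in the $\THH$ case cyclotomic --- enhancement: one must check that $\widetilde{\Log}$, assembled from colimits over the cofiltered categories $\SBl_X^n$ and over $\bDelta^\op$, is lax symmetric monoidal compatibly with the cyclotomic structure carried by $\THH$, and that the transformations $u$ and $\epsilon$ are monoidal. On the level of spectra the only point requiring care is the identification of $(u_{\omega^\sharp\cG})_X$ with the comparison morphism $\cG(X)\to\Log\cG(X)$ entering the definition of ``logarithmic''; granting these, the argument is formal.
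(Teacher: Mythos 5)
Your proof is correct and follows essentially the same route as the paper: apply $\widetilde{\Log}$ to the counit $\omega^\sharp\omega_\sharp\cF\to\cF$, identify $\widetilde{\Log}\cF\cong\cF$ via Proposition \ref{boundarify.13}, reduce to evaluation on $X\in\Sm/S$ via Proposition \ref{boundarify.22}, and conclude from logarithmicity of $\omega_\sharp\cF$; your explicit naturality square making $\psi_X=\epsilon_X\circ(u_{\omega^\sharp\cG})_X^{-1}$ just spells out what the paper leaves implicit. For the multiplicative/cyclotomic enhancement the paper argues slightly more directly — the colimits defining $\widetilde{\Log}$ are sifted and the forgetful functors $\CAlg(\CycSp)\to\CycSp\to\Sp$ preserve sifted colimits, so the comparison morphisms are automatically $\E_\infty$-ring maps in cyclotomic spectra — which is a cleaner substitute for the lax-monoidality check you flag as the remaining obstacle.
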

\begin{proof}
Apply $\widetilde{\Log}$ to the counit $\omega^\sharp \omega_\sharp \cF\to \cF$ to obtain $\Log \omega_\sharp \cF\to \widetilde{\Log} \cF$.
By Proposition \ref{boundarify.13},
we have an isomorphism $\cF\cong \widetilde{\Log}\cF$.
Also, by Proposition \ref{boundarify.22}, it suffices to show that the induced morphism
\begin{equation}
\label{boundarify.20.1}
\Log \omega_\sharp \cF(X)
\to
\cF(X)
\end{equation}
is an isomorphism for $X\in \Sm/S$.
This follows from the assumption that $\omega_\sharp \cF$ is logarithmic.

Let $\CycSp$ denote the $\infty$-category of cyclotomic spectra.
The claims for $\cF=\THH$ follow from \eqref{boundarify.20.2} since the morphisms
\[
\Log \omega_\sharp \logTHH(X) \to \widetilde{\Log} \logTHH(X),
\text{ }
\logTHH(X) \to \widetilde{\Log} \logTHH(X)
\]
are morphisms of $\E_\infty$-rings in cyclotomic spectra due to the following reasons:
We used sifted colimits to form $\widetilde{\Log}$,
and the forgetful functors $\CAlg(\CycSp)\to \CycSp\to \Sp$ preserve sifted colimits by \cite[Corollary 3.2.3.2]{HA} and \cite[Corollary II.1.7]{zbMATH07009201}.
The proof of the claim for $\cF=\TC$ is similar.
\end{proof}

Next, we discuss a functorial property relating $u$ and $\widetilde{\Log}$.

\begin{lem}
\label{boundarify.14}
Let $S\in \Sch$.
Then the two natural transformations
\[
u\circ \widetilde{\Log},
\widetilde{\Log}\circ u
\colon
\widetilde{\Log}\to \widetilde{\Log}\widetilde{\Log}
\colon
\Sh_\sNis(\SmlSm/S)
\to
\Sh_\sNis(\SmlSm/S)
\]
are homotopic.
\end{lem}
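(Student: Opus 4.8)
The plan is to identify $\widetilde{\Log}$ with a reflective (Bousfield) localization whose unit is the natural transformation $u$ of Construction \ref{boundarify.12}, and then to invoke the standard fact that for a reflective localization $L$ with unit $\eta\colon \id\to L$ — equivalently, an idempotent monad — the two whiskerings $\eta L,\ L\eta\colon L\to L^2$ are canonically homotopic. Applied with $L=\widetilde{\Log}$ and $\eta=u$, this is exactly the assertion $u\circ\widetilde{\Log}\simeq\widetilde{\Log}\circ u$.

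Concretely, I would verify the two defining properties of the reflective localization $\Sh_\sNis(\SmlSm/S,\Sp)\to\logSH_{S^1}(S)$. First, $\widetilde{\Log}\cF\in\logSH_{S^1}(S)$ for every $\cF$: it is a strict Nisnevich sheaf by Proposition \ref{boundary.51}, it is dividing invariant by the argument of Propositions \ref{boundary.22} and \ref{boundary.47}, and it is $\square$-invariant because, Zariski-locally on $X$ via a chart $X\to X_0$ with $X_0\in\SmlSm/\F_1$, Proposition \ref{boundary.45} together with Lemma \ref{boundary.23} identifies $\widetilde{\Log}\cF(X)$ with the $\square$-Suslin construction $\Sing^\square$ of Construction \ref{omega.14} applied to a suitable presheaf, which is $\square$-invariant by \emph{loc.\ cit.} — here one uses that $\square$ is a genuine interval object of $\SmlSm/\F_1[\Adm^{-1}]$ by Proposition \ref{logSH.18}, which is precisely why admissible blow-ups must be inverted. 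Second, $u_\cF\colon\cF\to\widetilde{\Log}\cF$ is a $\logSH_{S^1}(S)$-local equivalence: through the same chart-wise description, $u_\cF$ is assembled from base-changes of $\square$-homotopy equivalences and from the maps $\cF(X\times\square^n)\to\colim_{Y\in\SBl_X^n}\cF(Y)$, which are cofiltered colimits over dividing-type covers; all of these are local equivalences, and local equivalences are closed under the relevant ($\bDelta^\op$- and cofiltered) colimits.

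Granting this, $\widetilde{\Log}$ is the motivic localization functor $L_\mot^{S^1}$ onto $\logSH_{S^1}(S)$ with unit $u$, and the quoted fact about idempotent monads concludes. One can also make the cancellation fully explicit: since $\widetilde{\Log}\cF\in\logSH_{S^1}(S)$, Proposition \ref{boundarify.13} already shows that $u\circ\widetilde{\Log}=u_{\widetilde{\Log}(-)}$ is a natural isomorphism, and naturality of $u$ applied to the morphism $u_\cF$ gives $(u\circ\widetilde{\Log})\circ u\simeq(\widetilde{\Log}\circ u)\circ u$; because $\widetilde{\Log}\widetilde{\Log}$ takes values in $\logSH_{S^1}(S)$, the universal property of the localization makes precomposition with $u$ an equivalence $\Map(\widetilde{\Log},\widetilde{\Log}\widetilde{\Log})\xrightarrow{\sim}\Map(\id,\widetilde{\Log}\widetilde{\Log})$, so the two transformations agree.

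The hard part will be the $\square$-invariance of $\widetilde{\Log}\cF$ for \emph{arbitrary} $\cF$ (for logarithmic $\cF$ it is immediate): this requires the chart-wise comparison of $\widetilde{\Log}$ with $\Sing^\square L_\Adm$ via Lemma \ref{boundary.23}, checking that this comparison is compatible with Zariski gluing and carries the unit of the composite localization to $u$, and the Suslin-complex index shift that upgrades $\Sing^\square$ to a genuinely $\square$-invariant — rather than merely $\square$-invariant-up-to-homotopy — functor. If making $\widetilde{\Log}$ literally a localization proves awkward, the fallback is to construct a multiplication $\mu\colon\widetilde{\Log}\widetilde{\Log}\to\widetilde{\Log}$ by composing standard blow-ups, show $\mu$ is an equivalence (idempotency of $\widetilde{\Log}$, a cofinality statement proved Zariski-locally via Lemmas \ref{boundary.23} and \ref{boundary.36} and Propositions \ref{boundary.31}, \ref{boundary.45}) and that $\mu\circ(u\widetilde{\Log})\simeq\id\simeq\mu\circ(\widetilde{\Log}u)$, whence both whiskerings equal $\mu^{-1}$.
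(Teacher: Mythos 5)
Your main route has a genuine gap. Making $\widetilde{\Log}$ into a reflective localization with unit $u$ requires, at a minimum, that $\widetilde{\Log}\cF\in\logSH_{S^1}(S)$ for \emph{every} $\cF\in\Sh_\sNis(\SmlSm/S,\Sp)$ — in particular $\square$-invariance of $\widetilde{\Log}\cF$ for arbitrary $\cF$. The paper never establishes this and is visibly careful to avoid it: Proposition \ref{boundary.47} obtains membership in $\logSH_{S^1}(S)$ only under the hypothesis that $\cF$ is logarithmic, and Theorem \ref{boundarify.15} and Lemma \ref{K.1} carry ``if $\widetilde{\Log}\cF\in\logSH_{S^1}(S)$'' as an explicit assumption. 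The obstruction is concrete: $\widetilde{\Log}\cF(X\times\square)$ is computed over $\SBl_{X\times\square}^n$, whose objects are dividing-type covers of $X\times\square\times(\Gmlog)^n$ that never modify the first $\square$ factor, whereas the contracting homotopy in the usual Suslin argument needs the multiplication $\square\times\square\to\square$, which exists only after admissible blow-ups of $\square^2$ — exactly the blow-ups that $\SBl_{X\times\square}^n$ excludes. So $\widetilde{\Log}$ is not the $\square$-Suslin functor evaluated at $X\times\square$, and Lemma \ref{boundary.23} does not bridge this. Your ``explicit cancellation'' paragraph is moreover circular within the paper's logic: cancelling $u_\cF$ on the right uses that precomposition with $u$ induces an equivalence on mapping spaces into $\widetilde{\Log}\widetilde{\Log}\cF$, which is precisely Lemma \ref{K.1} — and that lemma is proved \emph{using} the statement you are trying to prove (it is needed to identify $u_{\widetilde{\Log}\cF}$ with $\widetilde{\Log}u_\cF$ in the right-hand square of \eqref{K.1.6}).

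Your fallback is the right instinct, and it is essentially what the paper does, except that no multiplication $\mu$ or idempotency statement is needed. The paper's proof is a direct combinatorial computation: it writes both whiskerings as maps of bisimplicial spectra $\cA_{\bullet\bullet},\cB_{\bullet\bullet}\to\cC_{\bullet\bullet}$ into the double colimit computing $\widetilde{\Log}\widetilde{\Log}\cF(X)$, passes to the diagonal, and then, for each $Y\in\SBl_X^n$, uses the Ore condition (Proposition \ref{boundary.73}) to produce $Y'\in\SBl_Y^n$ fitting into a commutative square over $Y\times\square^n\to X\times\square^n$; both maps then factor through $\cF(Y)\to\cF(Y')$ and agree. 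This uses no invariance property of $\widetilde{\Log}\cF$ whatsoever, which is why the lemma can be stated for all $\cF$. If you want to salvage your approach, you would either have to prove $\square$-invariance of $\widetilde{\Log}\cF$ in general (doubtful) or restrict the lemma to those $\cF$ with $\widetilde{\Log}\cF\in\logSH_{S^1}(S)$, which would weaken it below what Lemma \ref{K.1} needs.
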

\begin{proof}
Let $\cF\in \Sh_\sNis(\SmlSm/S,\Sp)$, and let $X\in \SmlSm/S$.
Consider the bisimplicial spectra
\[
\cA_{\bullet\bullet}:=
\begin{tikzcd}[column sep=small, row sep=small]
&
\vdots
\ar[d,shift right=1ex]
\ar[d]
\ar[d,shift left=1ex]
&
\vdots
\ar[d,shift right=1ex]
\ar[d]
\ar[d,shift left=1ex]
\\
\cdots
\ar[r,shift right=1ex]
\ar[r]
\ar[r,shift left=1ex]&
\displaystyle
\colim_{Y\in (\SBl_X^1)^\op}
\cF(Y)
\ar[d,shift right=0.5ex]
\ar[d,shift left=0.5ex]
\ar[r,shift right=0.5ex]
\ar[r,shift left=0.5ex]&
\displaystyle
\colim_{Y\in (\SBl_X^1)^\op}
\cF(Y)
\ar[d,shift right=0.5ex]
\ar[d,shift left=0.5ex]
\\
\cdots
\ar[r,shift right=1ex]
\ar[r]
\ar[r,shift left=1ex]&
\displaystyle
\colim_{Y\in (\SBl_X^0)^\op}
\cF(Y)
\ar[r,shift right=0.5ex]
\ar[r,shift left=0.5ex]&
\displaystyle
\colim_{Y\in (\SBl_X^0)^\op}
\cF(Y),
\end{tikzcd}
\]
\[
\cB_{\bullet\bullet}:=
\begin{tikzcd}[column sep=small, row sep=small]
&
\vdots
\ar[d,shift right=1ex]
\ar[d]
\ar[d,shift left=1ex]
&
\vdots
\ar[d,shift right=1ex]
\ar[d]
\ar[d,shift left=1ex]
\\
\cdots
\ar[r,shift right=1ex]
\ar[r]
\ar[r,shift left=1ex]&
\displaystyle
\colim_{Y\in (\SBl_X^1)^\op}
\cF(Y)
\ar[d,shift right=0.5ex]
\ar[d,shift left=0.5ex]
\ar[r,shift right=0.5ex]
\ar[r,shift left=0.5ex]&
\displaystyle
\colim_{Y\in (\SBl_X^0)^\op}
\cF(Y)
\ar[d,shift right=0.5ex]
\ar[d,shift left=0.5ex]
\\
\cdots
\ar[r,shift right=1ex]
\ar[r]
\ar[r,shift left=1ex]&
\displaystyle
\colim_{Y\in (\SBl_X^1)^\op}
\cF(Y)
\ar[r,shift right=0.5ex]
\ar[r,shift left=0.5ex]&
\displaystyle
\colim_{Y\in (\SBl_X^0)^\op}
\cF(Y),
\end{tikzcd}
\]
\[
\cC_{\bullet\bullet}:=
\begin{tikzcd}[column sep=small, row sep=small]
&
\vdots
\ar[d,shift right=1ex]
\ar[d]
\ar[d,shift left=1ex]
&
\vdots
\ar[d,shift right=1ex]
\ar[d]
\ar[d,shift left=1ex]
\\
\cdots
\ar[r,shift right=1ex]
\ar[r]
\ar[r,shift left=1ex]&
\displaystyle
\colim_{V\in (\SBl_X^1)^\op}
\colim_{W\in (\SBl_V^1)^\op}
\cF(W)
\ar[d,shift right=0.5ex]
\ar[d,shift left=0.5ex]
\ar[r,shift right=0.5ex]
\ar[r,shift left=0.5ex]&
\displaystyle
\colim_{V\in (\SBl_X^0)^\op}
\colim_{W\in (\SBl_V^1)^\op}
\cF(W)
\ar[d,shift right=0.5ex]
\ar[d,shift left=0.5ex]
\\
\cdots
\ar[r,shift right=1ex]
\ar[r]
\ar[r,shift left=1ex]&
\displaystyle
\colim_{V\in (\SBl_X^1)^\op}
\colim_{W\in (\SBl_V^0)^\op}
\cF(W)
\ar[r,shift right=0.5ex]
\ar[r,shift left=0.5ex]&
\displaystyle
\colim_{V\in (\SBl_X^0)^\op}
\colim_{W\in (\SBl_V^0)^\op}
\cF(W).
\end{tikzcd}
\]
For $m,n\in \N$,
we have the morphism $\cA_{mn}\to \cC_{mn}$ induced by the composite
\[
\cF(Y) \to \cF(Y\times \square^m) \to \colim_{V\in (\SBl_X^m)^\op}\colim_{W\in (\SBl_V^n)^\op}\cF(W)
\]
for $Y\in \SBl_X^n$ whose second morphism corresponds to $V=X\times \square^m$ and $W=Y\times \square^m$.
We have the morphism $\cB_{mn}\to \cC_{mn}$ induced by the composite
\[
\cF(Y) \to \cF(Y\times \square^n) \to \colim_{V\in (\SBl_X^m)^\op}\colim_{W\in (\SBl_V^n)^\op}\cF(W)
\]
for
$Y\in \SBl_X^n$ whose second morphism corresponds to $V=Y$ and $W=Y\times \square^n$.

When we evaluate $u\circ \widetilde{\Log}$ and $\widetilde{\Log}\circ u$ on $\cF(X)$,
we have the morphisms of bisimplicial colimits
\[
\colim_{m\in \bDelta^\op} \colim_{n \in \bDelta^\op}
\cA_{mn}
\to
\colim_{m\in \bDelta^\op} \colim_{n \in \bDelta^\op}
\cC_{mn},
\text{ }
\colim_{m\in \bDelta^\op} \colim_{n \in \bDelta^\op}
\cB_{mn}
\to
\colim_{m\in \bDelta^\op} \colim_{n \in \bDelta^\op}
\cC_{mn}.
\]
It suffices to show that these two are naturally homotopic.
A bisimplicial colimit is isomorphic to the diagonal simplicial colimit \cite[Definition 5.5.8.1, Lemma 5.5.8.4]{HTT}.
Hence it suffices to show that $\cA_{nn}\to \cC_{nn}$ and $\cB_{nn}\to \cC_{nn}$ are naturally homotopic for $n\in \N$.

Consider $Y\in \SBl_X^n$.
By Proposition \ref{boundary.73},
there exists a commutative square
\[
\begin{tikzcd}
Y'\ar[d]\ar[r]&
Y\ar[d]
\\
Y\times \square^n\ar[r]&
X\times \square^n,
\end{tikzcd}
\]
where $Y'\in \SBl_Y^n$,
and the morphism $Y\times \square^n\to X\times \square^n$ is a pullback of the composite $Y\to X\times \square^n\to X$.
Then the morphisms $\cA_{nn}\to \cC_{nn}$ and $\cB_{nn}\to \cC_{nn}$ agree since both have a factorization
\[
\cF(Y)
\to
\cF(Y')
\to
\colim_{V\in (\SBl_X^m)^\op}\colim_{W\in (\SBl_V^n)^\op}\cF(W)
\]
whose second morphism corresponds to $V=Y$ and $W=Y'$.
\end{proof}

\begin{lem}
\label{K.1}
Let $\cF\in \Sh_\sNis(\SmlSm/S,\Sp)$ and $\cG\in \logSH_{S^1}(S)$,
where $S\in \Sch$.
If $\widetilde{\Log}\cF\in \logSH_{S^1}(S)$,
then the map
\[
\alpha\colon
\pi_0\Hom_{\Sh_\sNis(\SmlSm/S,\Sp)}(\widetilde{\Log}\cF,\cG)\to \pi_0\Hom_{\Sh_\sNis(\SmlSm/S,\Sp)}(\cF,\cG)
\]
obtained by precomposing $u_{\cF}\colon  \cF\to \widetilde{\Log}\cF$ is an isomorphism.
\end{lem}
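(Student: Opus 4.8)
The plan is to produce an explicit two-sided inverse to $\alpha$ using the already-available invertibility of $u$ on $\logSH_{S^1}(S)$ together with the self-compatibility of $\widetilde{\Log}$. Since $\cG\in\logSH_{S^1}(S)$, Proposition~\ref{boundarify.13} gives that $u_\cG\colon\cG\to\widetilde{\Log}\cG$ is an isomorphism; write $v_\cG\colon\widetilde{\Log}\cG\to\cG$ for its inverse. Since $\widetilde{\Log}$ is an endofunctor of $\Sh_\sNis(\SmlSm/S,\Sp)$ and $u\colon\id\to\widetilde{\Log}$ a natural transformation (Construction~\ref{boundarify.12}), I would define
\[
\beta\colon \pi_0\Hom_{\Sh_\sNis(\SmlSm/S,\Sp)}(\cF,\cG)\to \pi_0\Hom_{\Sh_\sNis(\SmlSm/S,\Sp)}(\widetilde{\Log}\cF,\cG),\qquad \phi\longmapsto v_\cG\circ\widetilde{\Log}(\phi),
\]
noting that $\widetilde{\Log}(\phi)\colon\widetilde{\Log}\cF\to\widetilde{\Log}\cG$ postcomposed with $v_\cG$ is a morphism $\widetilde{\Log}\cF\to\cG$.

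First I would check $\alpha\circ\beta=\id$: for $\phi\colon\cF\to\cG$, naturality of $u$ applied to $\phi$ gives $\widetilde{\Log}(\phi)\circ u_\cF=u_\cG\circ\phi$, whence $\alpha(\beta(\phi))=v_\cG\circ\widetilde{\Log}(\phi)\circ u_\cF=v_\cG\circ u_\cG\circ\phi=\phi$. Next I would check $\beta\circ\alpha=\id$: for $\psi\colon\widetilde{\Log}\cF\to\cG$ we have $\beta(\alpha(\psi))=v_\cG\circ\widetilde{\Log}(\psi)\circ\widetilde{\Log}(u_\cF)$, where the crucial input is Lemma~\ref{boundarify.14}, which identifies $\widetilde{\Log}(u_\cF)$ with $u_{\widetilde{\Log}\cF}\colon\widetilde{\Log}\cF\to\widetilde{\Log}\widetilde{\Log}\cF$ as maps in the homotopy category. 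Combining this with naturality of $u$ applied to $\psi$, namely $\widetilde{\Log}(\psi)\circ u_{\widetilde{\Log}\cF}=u_\cG\circ\psi$, yields $\beta(\alpha(\psi))=v_\cG\circ u_\cG\circ\psi=\psi$. Hence $\alpha$ is a bijection.

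The hypothesis $\widetilde{\Log}\cF\in\logSH_{S^1}(S)$ is used to apply Proposition~\ref{boundarify.13} a second time: it ensures $\widetilde{\Log}\cF$, like $\cG$, lies in the reflective subcategory $\logSH_{S^1}(S)$ and in particular that $u_{\widetilde{\Log}\cF}$ is itself invertible, which is what keeps the diagram chase in the second verification honest at the level of the subcategory rather than only after $(\square,\divi)$-localization. The main — and essentially only — obstacle here is Lemma~\ref{boundarify.14}, the coherence statement that the two evident natural maps $\widetilde{\Log}\to\widetilde{\Log}\widetilde{\Log}$ agree; once that bisimplicial comparison is granted, the present lemma follows formally from the naturality of $u$ and the invertibility of $u_\cG$. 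I would also remark that the same argument in fact exhibits $\alpha$ as an equivalence of mapping spectra, not merely a $\pi_0$-bijection.
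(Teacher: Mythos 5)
Your proposal is correct and follows essentially the same route as the paper: the same inverse $\beta(\phi)=v_{\cG}\circ\widetilde{\Log}(\phi)$, the same use of naturality of $u$ for $\alpha\beta=\id$, and the same appeal to Lemma \ref{boundarify.14} to identify $\widetilde{\Log}(u_{\cF})$ with $u_{\widetilde{\Log}\cF}$ in the verification of $\beta\alpha=\id$.
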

\begin{proof}
Consider the map
\[
\beta
\colon
\pi_0\Hom_{\Sh_\sNis(\SmlSm/S,\Sp)}(\cF,\cG)
\to
\pi_0\Hom_{\Sh_\sNis(\SmlSm/S,\Sp)}(\widetilde{\Log} \cF,\cG)
\]
sending $f\colon \cF\to \cG$ to the composite morphism
\[
\widetilde{\Log}  \cF \xrightarrow{\widetilde{\Log} f} \widetilde{\Log} \cG \xrightarrow{v_{\cG}} \cG,
\]
see Proposition \ref{boundarify.13} for $v_{\cG}$.
The composite morphism
\[
 \cF\xrightarrow{u_{ \cF}}
\widetilde{\Log}  \cF \xrightarrow{\widetilde{\Log} f} \widetilde{\Log} \cG \xrightarrow{v_{\cG}} \cG
\]
is homotopic to $f$ since we have a natural transformation $\id \to \widetilde{\Log}$ and $v_{\cG}$ is an inverse of $u_{\cG}$.
Hence we have $\alpha(\beta(f))\cong f$.

Let $g\colon \widetilde{\Log} \cF\to \cG$ be a morphism in $\Sh_\sNis(\SmlSm/S,\Sp)$.
We have the induced commutative diagram
\begin{equation}
\label{K.1.6}
\begin{tikzcd}
 \cF\ar[d,"u_{ \cF}"']\ar[r,"u_{ \cF}"]&
\widetilde{\Log} \cF\ar[d,"u_{\widetilde{\Log} \cF}"]\ar[r,"g"]&
\cG\ar[d,"u_\cG"]
\\
\widetilde{\Log} \cF\ar[r,"\widetilde{\Log}u_{ \cF}"]&
\widetilde{\Log}\widetilde{\Log} \cF\ar[r,"\widetilde{\Log}g"]&
\widetilde{\Log} \cG.
\end{tikzcd}
\end{equation}
To show $\beta(\alpha(g))\cong g$,
we need to show that the composite
\[
\widetilde{\Log} \cF
\xrightarrow{\widetilde{\Log}u_{ \cF}}
\widetilde{\Log}\widetilde{\Log} \cF\xrightarrow{\widetilde{\Log}g}
\widetilde{\Log}\cG
\xrightarrow{v_{\cG}}
\cG
\]
is homotopic to $g$.
For this,
it suffices to show $u_{\widetilde{\Log} \cF}\cong \widetilde{\Log}u_{ \cF}$ using the commutativity of the right square of \eqref{K.1.6}.
This is a consequence of Lemma \ref{boundarify.14}.
\end{proof}

For $S\in \Sch$,
let
\(
L_{\lmot} \colon
\Sh_\sNis(\SmlSm/S,\Sp)
\to
\logSH_{S^1}(S)
\)
denote the localization functor.
The constructions of $\widetilde{\Log}$ and $L_\lmot$ are different, but they are related as follows.

\begin{thm}
\label{boundarify.15}
Let $\cF\in \Sh_\sNis(\SmlSm/S,\Sp)$,
where $S\in \Sch$.
If $\widetilde{\Log}\cF\in \logSH_{S^1}(S)$,
then there exists a natural isomorphism 
\[
\widetilde{\Log} \cF\cong L_{\lmot}  \cF
\]
in $\logSH_{S^1}(S)$.
\end{thm}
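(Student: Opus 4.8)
The plan is to show that the unit map $u_\cF\colon \cF\to \widetilde{\Log}\cF$ from Construction \ref{boundarify.12} exhibits $\widetilde{\Log}\cF$ as the $\lmot$-localization of $\cF$. Recall that $L_\lmot$ is the reflective (Bousfield) localization onto $\logSH_{S^1}(S)\subseteq \Sh_\sNis(\SmlSm/S,\Sp)$; consequently $L_\lmot$ restricts to the identity on $\logSH_{S^1}(S)$, and a morphism $f$ in $\Sh_\sNis(\SmlSm/S,\Sp)$ is inverted by $L_\lmot$ precisely when it is a $\lmot$-local equivalence, i.e.\ $\hom_{\Sh_\sNis(\SmlSm/S,\Sp)}(-,\cG)$ carries $f$ to an equivalence of spectra for every $\cG\in\logSH_{S^1}(S)$. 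Since $\widetilde{\Log}\cF\in\logSH_{S^1}(S)$ by hypothesis, it will then follow that $L_\lmot(u_\cF)\colon L_\lmot\cF\to L_\lmot\widetilde{\Log}\cF\cong\widetilde{\Log}\cF$ is the desired isomorphism.

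First I would check that $u_\cF$ is a $\lmot$-local equivalence. Fix $\cG\in\logSH_{S^1}(S)$. Since $\logSH_{S^1}(S)$ is a stable $\infty$-category, every shift $\cG[n]$ with $n\in\Z$ again lies in $\logSH_{S^1}(S)$, so Lemma \ref{K.1}, whose hypothesis $\widetilde{\Log}\cF\in\logSH_{S^1}(S)$ is exactly our assumption, applies to each $\cG[n]$ and shows that precomposition with $u_\cF$ induces a bijection $\pi_0\Hom_{\Sh_\sNis(\SmlSm/S,\Sp)}(\widetilde{\Log}\cF,\cG[n])\xrightarrow{\cong}\pi_0\Hom_{\Sh_\sNis(\SmlSm/S,\Sp)}(\cF,\cG[n])$. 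Because the ambient $\infty$-category is stable, $\pi_0\Hom(-,\cG[n])\cong\pi_{-n}\hom(-,\cG)$, so these bijections say that $u_\cF$ induces an isomorphism on all homotopy groups of the mapping spectra $\hom_{\Sh_\sNis(\SmlSm/S,\Sp)}(-,\cG)$; hence $\hom(\widetilde{\Log}\cF,\cG)\to\hom(\cF,\cG)$ is an equivalence of spectra. As $\cG$ was arbitrary, $u_\cF$ is a $\lmot$-local equivalence.

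It then remains to assemble the conclusion: applying $L_\lmot$ to $u_\cF$ yields an equivalence $L_\lmot\cF\xrightarrow{\cong}L_\lmot\widetilde{\Log}\cF$, and since $\widetilde{\Log}\cF$ is already $\lmot$-local the target is canonically identified with $\widetilde{\Log}\cF$. This produces the isomorphism $\widetilde{\Log}\cF\cong L_\lmot\cF$ in $\logSH_{S^1}(S)$, and it is natural in $\cF$ because $u\colon\id\to\widetilde{\Log}$ is a natural transformation and $L_\lmot$, $\widetilde{\Log}$ are functors, so $L_\lmot(u)$ is a natural transformation of functors on the full subcategory of $\cF$ with $\widetilde{\Log}\cF\in\logSH_{S^1}(S)$. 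The main obstacle I anticipate is precisely the passage from the $\pi_0$-level statement of Lemma \ref{K.1} to a genuine equivalence of mapping spectra; this is handled by exploiting the stability of both $\Sh_\sNis(\SmlSm/S,\Sp)$ and $\logSH_{S^1}(S)$ and the closure of $\logSH_{S^1}(S)$ under shifts, after which the identification with $L_\lmot$ is formal.
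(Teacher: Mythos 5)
Your proof is correct, and it rests on the same key input as the paper's, namely Lemma \ref{K.1}; the difference lies in how the conclusion is extracted. The paper stays entirely at the $\pi_0$-level: it uses the bijection of Lemma \ref{K.1} (with $\cG=L_\lmot\cF$) to produce $f\colon\widetilde{\Log}\cF\to L_\lmot\cF$ from the unit $p\colon\cF\to L_\lmot\cF$, uses the universal property of $L_\lmot$ to produce $g\colon L_\lmot\cF\to\widetilde{\Log}\cF$ from $u_\cF$, and then verifies $fg\cong\id$ and $gf\cong\id$ by chasing the identities $gp\cong q$, $fq\cong p$ through the two $\pi_0\Hom$-bijections — constructing an explicit inverse needs no information beyond $\pi_0$. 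You instead upgrade Lemma \ref{K.1} to an equivalence of mapping spectra by applying it to all shifts $\cG[n]$ and invoking stability, conclude that $u_\cF$ is an $L_\lmot$-local equivalence, and then identify $L_\lmot\cF$ with the already-local target $\widetilde{\Log}\cF$. Your route proves the a priori stronger statement that $u_\cF$ is inverted by $L_\lmot$ (which the paper only obtains a posteriori), at the cost of the extra shift/stability bookkeeping and of invoking the characterization of $L_\lmot$-equivalences for an accessible reflective localization; the paper's route is more elementary but requires the small diagram chase to see that $f$ and $g$ are mutually inverse. Both are complete, and your treatment of naturality via the natural transformation $u$ matches what the paper implicitly does.
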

\begin{proof}
By Lemma \ref{K.1},
we have a natural isomorphism
\[
\pi_0 \Hom_{\Sh_\sNis(\SmlSm/S,\Sp)}(\widetilde{\Log} \cF,L_\lmot   \cF)
\xrightarrow{\cong}
\pi_0 \Hom_{\Sh_\sNis(\SmlSm/S,\Sp)}(  \cF,L_\lmot   \cF).
\]
Let $p\colon   \cF\to L_\lmot   \cF$ be the canonical morphism,
and this corresponds to a morphism $f\colon \widetilde{\Log} \cF\to L_\lmot   \cF$.

Since $\widetilde{\Log} \cF\in \logSH_{S^1}(S)$ and $L_\lmot$ is a localization functor,
we also have a natural isomorphism
\[
\pi_0 \Hom_{\Sh_\sNis(\SmlSm/S,\Sp)}(L_{\lmot}  \cF,\widetilde{\Log} \cF)
\xrightarrow{\cong}
\pi_0 \Hom_{\Sh_\sNis(\SmlSm/S,\Sp)}(  \cF,\widetilde{\Log} \cF).
\]
Let $q\colon   \cF\to \widetilde{\Log} \cF$ be the canonical morphism,
and this corresponds to a morphism $g\colon L_\lmot   \cF\to \widetilde{\Log} \cF$.

We have $gp\cong q$ and $fq\cong p$.
Hence we have $fgp\cong p$ and $gfq\cong q$.
Use the natural isomorphisms
\begin{gather*}
\pi_0 \Hom_{\Sh_\sNis(\SmlSm/S,\Sp)}(\widetilde{\Log} \cF,\widetilde{\Log} \cF)
\xrightarrow{\cong}
\pi_0 \Hom_{\Sh_\sNis(\SmlSm/S,\Sp)}(  \cF,\widetilde{\Log} \cF),
\\
\pi_0 \Hom_{\Sh_\sNis(\SmlSm/S,\Sp)}(L_\lmot   \cF,L_\lmot   \cF)
\xrightarrow{\cong}
\pi_0 \Hom_{\Sh_\sNis(\SmlSm/S,\Sp)}(  \cF,L_\lmot   \cF)
\end{gather*}
to show $fg\cong \id$ and $gf\cong\id$.
Hence $f$ and $g$ are isomorphisms.
\end{proof}

\begin{cor}
\label{boundarify.16}
Let $\cF\in \Sh_\Nis(\Sm/S,\Sp)$,
where $S\in \Sch$.
If $\cF$ is logarithmic,
then there exists a natural isomorphism 
\[
\Log \cF\cong L_{\lmot}  \omega^\sharp \cF.
\]
\end{cor}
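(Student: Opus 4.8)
The plan is to obtain Corollary \ref{boundarify.16} as a formal consequence of Theorem \ref{boundarify.15} and Proposition \ref{boundary.47}. Recall that, by Definition \ref{boundary.53} and the construction of $\widetilde{\Log}$ for schemes (Definition \ref{boundary.43}), we have $\Log \cF = \widetilde{\Log}\,\omega^\sharp \cF$, where $\omega^\sharp \cF$ is viewed as a strict Nisnevich sheaf of spectra on $\SmlSm/S$. First I would check that the hypothesis of Theorem \ref{boundarify.15} is met for the sheaf $\omega^\sharp \cF$: namely that $\widetilde{\Log}\,\omega^\sharp \cF \in \logSH_{S^1}(S)$. But $\widetilde{\Log}\,\omega^\sharp \cF = \Log \cF$, and since $\cF$ is logarithmic, Proposition \ref{boundary.47} gives exactly $\Log \cF \in \logSH_{S^1}(S)$.

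With the hypothesis verified, Theorem \ref{boundarify.15} applied with $\omega^\sharp \cF$ in place of its ``$\cF$'' yields a natural isomorphism
\[
\widetilde{\Log}\,\omega^\sharp \cF \cong L_{\lmot}\,\omega^\sharp \cF
\]
in $\logSH_{S^1}(S)$. Combining this with the identity $\Log \cF = \widetilde{\Log}\,\omega^\sharp \cF$ produces the desired natural isomorphism $\Log \cF \cong L_{\lmot}\,\omega^\sharp \cF$. Naturality in $\cF$ is inherited from the naturality of $\widetilde{\Log}$ and of $\omega^\sharp$ in their arguments together with the naturality statement already contained in Theorem \ref{boundarify.15}, so no additional argument is needed there.

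The only step requiring genuine care — and the one I expect to be the main (if modest) obstacle — is the compatibility bookkeeping between the presheaf-level and sheaf-level versions of $\omega^\sharp$: one must ensure that the explicit formula \eqref{boundary.12.1} for $\Log \cF$ computes precisely the object of $\Sh_\sNis(\SmlSm/S,\Sp)$ to which Theorem \ref{boundarify.15} is being applied. This is taken care of by Proposition \ref{boundary.51}, which guarantees that $\widetilde{\Log}$ preserves strict Nisnevich sheaves, together with the fact (as in Construction \ref{omega.19} and its scheme-theoretic analogue) that $\omega^\sharp$ of a Nisnevich sheaf on $\Sm/S$ is already a strict Nisnevich sheaf on $\SmlSm/S$. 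Once this identification is in place, the corollary is immediate from the two cited results and carries no further content.
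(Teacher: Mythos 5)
Your proposal is correct and follows exactly the paper's own argument: the proof given there is simply ``Combine Theorem \ref{boundarify.15} with Proposition \ref{boundary.47},'' i.e.\ use Proposition \ref{boundary.47} to verify that $\widetilde{\Log}\,\omega^\sharp\cF=\Log\cF$ lies in $\logSH_{S^1}(S)$ and then apply Theorem \ref{boundarify.15} to $\omega^\sharp\cF$. Your extra remarks on the sheaf-level bookkeeping via Proposition \ref{boundary.51} are a harmless elaboration of the same route.
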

\begin{proof}
Combine Theorem \ref{boundarify.15} with Proposition \ref{boundary.47}.
\end{proof}

\begin{df}
\label{boundarify.17}
For $S\in \Sch$,
the \emph{log algebraic K-theory spectrum} is $\Log \KGL \in \logSH(S)$,
where
\[
\KGL:=(\Kth,\Kth,\ldots)\in \Sp_{\P^1}(\Sh_\Nis(\Sm/S,\Sp))
\]
is the $\P^1$-spectrum whose bonding morphisms $\Kth\to \Omega_{\P^1}\Kth$ are obtained by the projective bundle formula.
Observe that we have a natural isomorphism
\begin{equation}
\Log \KGL\cong \Sigma^{2,1}\Log \KGL.
\end{equation}

On the other hand,
if $S\in \Rg$,
then we have the definition
\[
\logKGL:=\omega^*\KGL\in \logSH(S)
\]
in \cite[Definition 6.5.6]{BPO2},
where $\KGL\in \SH(S)$ denotes the algebraic K-theory spectrum.
\end{df}

\begin{thm}
Let $S\in \Rg$.
Then we have a natural isomorphism
\[
\Log \KGL \cong \logKGL
\]
in $\logSH(S)$.
\end{thm}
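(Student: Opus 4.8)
The plan is to reduce the statement to the unstable level, where Theorem~\ref{boundarify.8} applies, and then transport the identification through the $\P^1$-stabilization.

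First I would use that for $S\in\Rg$ algebraic $K$-theory is $\A^1$-invariant, so the Nisnevich sheaf $\Kth$ on $\Sm/S$ lies in $\SH_{S^1}(S)$; hence the $\P^1$-spectrum $\KGL=(\Kth,\Kth,\ldots)$ of Definition~\ref{boundarify.17}, whose bonding maps are the projective bundle formula equivalences, is already an object of $\SH(S)=\Sp_{\P^1}(\SH_{S^1}(S))$ and there coincides with the algebraic $K$-theory spectrum entering the definition $\logKGL=\omega^*\KGL$ of \cite[Definition 6.5.6]{BPO2}. Since $\omega^*\colon\SH(S)\to\logSH(S)$ is the $\P^1$-stabilization of $\omega^*\colon\SH_{S^1}(S)\to\logSH_{S^1}(S)$, it is computed levelwise, so $\omega^*\KGL=(\omega^*\Kth,\omega^*\Kth,\ldots)$ with bonding maps the images of the projective bundle formula equivalences.

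Next, by Theorem~\ref{boundarify.6} the sheaf $\Kth$ is logarithmic, so Theorem~\ref{boundarify.8} supplies a natural isomorphism $\Log\Kth\cong\omega^*\Kth$ in $\logSH_{S^1}(S)$, and in fact $\Log\Kth(X)\cong\Kth(X-\partial X)$ for $X\in\SmlSm/S$. I would then verify that $\Log\KGL$, as an object of $\logSH(S)=\Sp_{\P^1}(\logSH_{S^1}(S))$, is the levelwise $\Log$ of $\KGL$ with bonding maps obtained by applying $\Log$ to the projective bundle formula equivalences. This rests on two points: that $\widetilde{\Log}$ is formed with sifted colimits, so the structural equivalence $\Log\KGL\cong\Sigma^{2,1}\Log\KGL$ recorded in Definition~\ref{boundarify.17} is genuinely induced from the corresponding one for $\KGL$; and that $\Log$ is compatible with the projective bundle formula, which follows since $\Log\Kth(X)\cong\Kth(X-\partial X)$ reduces it to the classical projective bundle formula for $\Kth$. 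Granting this, the naturality of the isomorphism of Theorem~\ref{boundarify.8} shows that it intertwines the bonding maps of $\Log\KGL$ and of $\omega^*\KGL$, so the levelwise isomorphisms assemble into a natural isomorphism of $\P^1$-spectra $\Log\KGL\cong\omega^*\KGL=\logKGL$.

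The hard part will be exactly this compatibility bookkeeping at the stable level: making precise that ``$\Log$ applied to the $\P^1$-spectrum $\KGL$'' equals the levelwise $\Log$ together with $\Log$ of the bonding maps, and that the comparison of Theorem~\ref{boundarify.8} respects the resulting $\P^1$-spectrum structures. No new geometric input beyond Theorems~\ref{boundarify.6} and~\ref{boundarify.8} should be required once the $\P^1$-structure is handled, since naturality takes care of the rest.
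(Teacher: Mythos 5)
Your proposal is correct and follows essentially the same route as the paper, which simply cites Theorems \ref{boundarify.6} and \ref{boundarify.8} and declares the statement an immediate consequence; your extra discussion of the bonding maps and the $\P^1$-spectrum structure just makes explicit the bookkeeping the paper leaves implicit.
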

\begin{proof}
This is an immediate consequence of Theorems \ref{boundarify.8} and \ref{boundarify.6}.
\end{proof}

\begin{prop}
\label{boundarify.18}
Let $f\colon X\to S$ be a morphism in $\Sch$.
Then we have natural isomorphisms
\[
f^*\Log \Kth\cong \Log \Kth,
\text{ }
f^*\Log \KGL\cong \Log \KGL.
\]
\end{prop}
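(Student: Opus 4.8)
The plan is to reduce the statement to the base‑change invariance of algebraic $\Kth$‑theory, using the identification of $\Log$ with a motivic localization. Since the Nisnevich sheaf $\Kth$ on $\Sm/S$ is logarithmic by \cref{boundarify.6}, Corollary \ref{boundarify.16} gives a natural isomorphism $\Log\Kth\cong L_{\lmot}\omega^\sharp\Kth$ in $\logSH_{S^1}(S)$; passing to $\P^1$‑spectra (each level of $\KGL$ is $\Kth$, again logarithmic, with bonding maps from the projective bundle formula) gives $\Log\KGL\cong L_{\lmot}\omega^\sharp\KGL$ in $\logSH(S)$. So it suffices to establish the compatibilities $f^*\omega^\sharp\cong\omega^\sharp f^*$, $f^*L_{\lmot}\cong L_{\lmot}f^*$, $f^*\Kth_S\cong\Kth_X$ and $f^*\KGL_S\cong\KGL_X$, and then to assemble them while tracking the $\E_\infty$‑ring structures. (One can equally argue from the explicit formula \eqref{boundary.52.2}: for $Z\in\SmlSm/X$ with a free chart $P$, $\Log\Kth(Z)$ is a colimit over $\Adm_{\A_P\times\square^n}$ of the values $\Kth(\ul{Z}\times_{\ul{\A_P}}\ul{Y_0})$, and the indexing categories and the monoid schemes $\ul{Y_0}$ live over $\F_1$ and are independent of the base, so base change passes through the fiber products and the colimits, again reducing everything to $f^*\Kth\cong\Kth$.)

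The first two compatibilities are formal. The functor $\omega^\sharp$ is the left Kan extension along the inclusion $\Sm/-\hookrightarrow\SmlSm/-$ of objects with trivial log structure (Construction \ref{omega.20}), while at the presheaf level $f^*$ is the left Kan extension along the base‑change functor $U\mapsto U\times_S X$; since base change preserves trivial log structures and is strict, the square of categories commutes, and composition of left Kan extensions yields $f^*\omega^\sharp\cong\omega^\sharp f^*$ on presheaves, which descends to (strict) Nisnevich sheaves because Nisnevich covers are stable under base change. For $L_{\lmot}$ one uses that $f^*$ is colimit‑preserving and symmetric monoidal, so it is enough that it carries the generating $\lmot$‑local equivalences to local equivalences, which holds because strict Nisnevich distinguished squares, the projections $X\times\square\to X$, and dividing covers are all stable under strict base change. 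Assembling, $f^*\Log\Kth\cong f^*L_{\lmot}\omega^\sharp\Kth\cong L_{\lmot}\omega^\sharp f^*\Kth\cong L_{\lmot}\omega^\sharp\Kth\cong\Log\Kth$, and likewise for $\KGL$ after observing that $f^*$ commutes with $\P^1$‑stabilization (it sends $\P^1_S$ to $\P^1_X$, and the stabilization is a Bousfield localization of a presheaf $\infty$‑category, cf.\ \eqref{logSH.1.1}).

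The main obstacle is the remaining input: the base‑change invariance $f^*\Kth_S\cong\Kth_X$ and $f^*\KGL_S\cong\KGL_X$ of (Bass) $\Kth$‑theory as a Nisnevich sheaf of spectra, which is the only non‑formal step, since $f$ need not be smooth and $f^*$ has no elementary description on non‑representable sheaves. For $\KGL$ I would deduce it from the non‑logarithmic motivic Snaith presentation $\KGL\cong\Sigma_{\P^1}^\infty(\P^\infty)_+[\beta^{-1}]$ of Annala-Iwasa \cite[Theorem 5.3.3]{AI}, whose right‑hand side is manifestly stable under base change because $\P^\infty$, the class $\beta$, and $\Sigma_{\P^1}^\infty$ all are; the unstable statement for $\Kth$ then follows by reading off the $\P^1$‑periodic component, or directly from Thomason-Trobaugh, using that $U\mapsto\Kth(U)$ is a finitary Nisnevich sheaf on all of $\Sch$ compatible with base change. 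Finally, that each isomorphism above is one of $\E_\infty$‑rings (and respects the $\P^1$‑bonding maps) is automatic, since every functor used preserves the relevant colimits and symmetric monoidal structures.
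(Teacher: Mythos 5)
Your argument is correct and follows essentially the same route as the paper's proof: both identify $\Log\Kth$ with $L_{\lmot}\omega^\sharp\Kth$ via \cref{boundarify.16}, commute $f^*$ through the diagram $\omega^\sharp$, $L_{\lmot}$ (and $\P^1$-stabilization), and reduce everything to $f^*\Kth\cong\Kth$. The only divergence is the source of that last input — the paper cites Morel--Voevodsky while you invoke the Annala--Iwasa Snaith presentation, which is the sounder of your two options, since your Thomason--Trobaugh alternative glosses over the fact that the Kan-extension functor $f^*$ on non-representable sheaves is not directly computed by naive base-change compatibility of $\Kth$.
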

\begin{proof}
Consider the induced commutative diagram
\[
\begin{tikzcd}
\Sh_\Nis(\Sm/S,\Sp)\ar[d,"f^*"']\ar[r,"\omega^\sharp"]&
\Sh_\Nis(\SmlSm/S,\Sp)\ar[d,"f^*"]\ar[r,"L_\lmot"]&
\logSH_{S^1}(S)\ar[d,"f^*"]
\\
\Sh_\Nis(\Sm/X,\Sp)\ar[r,"\omega^\sharp"]&
\Sh_\sNis(\SmlSm/X,\Sp)\ar[r,"L_\lmot"]&
\logSH_{S^1}(X).
\end{tikzcd}
\]
As a consequence of \cite[Proposition 1.3 in \S 4]{MV} (see also \cite[Proposition 3.14 in \S 4]{MV}),
we have $f^*\Kth\cong \Kth$ in $\Sh_\Nis(\Sm/X,\Sp)$.
Use the above diagram and Corollary \ref{boundarify.16} to obtain $f^*\Log \Kth\cong \Log \Kth$ in $\logSH_{S^1}(S)$.
From this,
we also obtain $f^*\Log \KGL\cong \Log \KGL$ in $\logSH(S)$.
\end{proof}

\begin{thm}
\label{boundarify.19}
Let $S\in \Sch$ and $X\in \Sm/S$.
Then there are natural isomorphisms
\[
\Log \Kth(X)
\cong
\hom_{\logSH_{S^1}(S)}(\Sigma_{S^1}^\infty X_+,\Log \Kth)
\cong
\hom_{\logSH(S)}(\Sigma_{\P^1}^\infty X_+,\Log \KGL).
\]
\end{thm}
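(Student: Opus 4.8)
The plan is to prove the two isomorphisms in turn, the engine being that $\Kth$ is logarithmic (Theorem \ref{boundarify.6}), so that $\Log\Kth\in\logSH_{S^1}(S)$ by Proposition \ref{boundary.47} and $\Log\Kth$ is a strict Nisnevich sheaf by Proposition \ref{boundary.51}. For the first isomorphism, regard $X\in\Sm/S$ as an object of $\SmlSm/S$ with trivial log structure; then $\Sigma_{S^1}^\infty X_+\in\logSH_{S^1}(S)$ is the image under the localization $L_{\lmot}\colon\Sh_\sNis(\SmlSm/S,\Sp)\to\logSH_{S^1}(S)$ of the corresponding representable suspension spectrum. Since $L_{\lmot}$ is a Bousfield localization and $\Log\Kth$ is $\lmot$-local, the localization adjunction together with the Yoneda lemma gives
\[
\hom_{\logSH_{S^1}(S)}(\Sigma_{S^1}^\infty X_+,\Log\Kth)
\cong
\hom_{\Sh_\sNis(\SmlSm/S,\Sp)}(\Sigma_{S^1}^\infty X_+,\Log\Kth)
\cong
\Log\Kth(X).
\]
As $\Kth$ is logarithmic we also have $\Log\Kth(X)\cong\Kth(X)$; in any case the displayed chain exhibits $\Log\Kth(X)\cong\hom_{\logSH_{S^1}(S)}(\Sigma_{S^1}^\infty X_+,\Log\Kth)$, naturally in $X$.

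For the second isomorphism I would exploit that $\Log\KGL\cong(\Log\Kth,\Log\Kth,\dots)$ is a $\P^1$-$\Omega$-spectrum. By construction (Definition \ref{boundarify.17}) its bonding maps are obtained by applying $\Log$ levelwise to the bonding maps $\Kth\xrightarrow{\sim}\Omega_{\P^1}\Kth$ of $\KGL$ coming from the projective bundle formula, so it suffices to show that $\Log$ commutes with $\Omega_{\P^1}$. Here one uses two facts. First, since $\P^1$ carries the trivial log structure, the functor $Y\mapsto Y\times\P^1$ identifies $\SBl_X^n$ with $\SBl_{X\times\P^1}^n$ for every $X\in\SmlSm/S$ (no admissible blow-up along the boundary of $X\times\P^1$ touches the $\P^1$-direction), whence
\[
\Log\cF(X\times\P^1)
\cong
\colim_{n\in\bDelta^\op}\colim_{Y\in(\SBl_X^n)^\op}\cF(\ul Y\times\P^1).
\]
Second, the colimit over $(\SBl_X^n)^\op$ is filtered and the colimit over $\bDelta^\op$ is a geometric realization, both formed in the stable $\infty$-category $\Sp$, where finite limits coincide with finite colimits; hence they commute with the finite limit $\fib$ computing $\Omega_{\P^1}\cF(Y)=\fib(\cF(Y\times\P^1)\to\cF(Y))$. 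Consequently $\Log(\Omega_{\P^1}\cF)\cong\Omega_{\P^1}(\Log\cF)$, and applying this to the equivalence $\Kth\xrightarrow{\sim}\Omega_{\P^1}\Kth$ makes the bonding maps of $\Log\KGL$ equivalences, so $\Omega_{\P^1}^\infty\Log\KGL\cong\Log\Kth$. The $(\Sigma_{\P^1}^\infty,\Omega_{\P^1}^\infty)$-adjunction then yields
\[
\hom_{\logSH(S)}(\Sigma_{\P^1}^\infty X_+,\Log\KGL)
\cong
\hom_{\logSH_{S^1}(S)}(\Sigma_{S^1}^\infty X_+,\Log\Kth),
\]
and combining with the first isomorphism completes the argument.

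The main obstacle will be the commutation of $\Log$ with $\Omega_{\P^1}$: one has to check carefully that the categories $\SBl_X^n$ are insensitive to the trivial-log-structure factor $\P^1$, i.e.\ $\SBl_{X\times\P^1}^n\simeq\SBl_X^n$ canonically and compatibly in $n$ (and likewise for the auxiliary $\Sdiv$-categories), and then to organize the interchange of the filtered/geometric-realization colimits defining $\Log$ with the fiber sequence, which is legitimate precisely because we are in a stable setting. If one wanted the isomorphisms as maps of $\E_\infty$-rings one would additionally have to propagate multiplicativity through $\Log$ and the $\P^1$-stabilization, but the statement as given only asks for isomorphisms of spectra, so this point can be set aside.
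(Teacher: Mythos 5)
Your proposal is correct and follows essentially the same route as the paper, whose proof is just the two sentences "The second isomorphism is clear. The first isomorphism is a consequence of Proposition \ref{boundary.47} and Theorem \ref{boundarify.6}." — your first paragraph is precisely that argument unpacked, and your second paragraph supplies the details behind "clear" (the cofinality you need for the $\P^1$-insensitivity of the blow-up categories is exactly Proposition \ref{boundary.46} applied to the strict projection $X\times\P^1\to X$, and the interchange of filtered colimits and geometric realizations with $\fib$ is legitimate in $\Sp$ as you say).
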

\begin{proof}
The second isomorphism is clear.
The first isomorphism is a consequence of Proposition \ref{boundary.47} and Theorem \ref{boundarify.6}.
\end{proof}

\begin{const}
\label{boundarify.36}
Let $S\in \Sch$.
By \cite[Theorem 5.2.10]{BPO2}, we have an isomorphism
\[
\P^\infty \cong \rB \Gmlog
\]
in $\Sh_\sNis(\SmlSm/S,\Spc_*)[\square^{-1},\SmAdm^{-1}]$.
Using the multiplication $\Gmlog\times \Gmlog \to \Gmlog$ in \cite[Example 5.2.8]{BPO2},
we have the induced morphism $\P^\infty \otimes \P^\infty \to \P^\infty$ and hence a morphism
\[
\beta\colon \P^1\otimes \P^\infty \to \P^\infty
\]
in $\Sh_\sNis(\SmlSm/S,\Spc_*)[\square^{-1},\SmAdm^{-1}]$.
From this, we also get
\[
\beta\colon \Sigma^{2,1}\Sigma_{\P^1}^\infty (\P^\infty)_+\to \Sigma_{\P^1}^\infty (\P^\infty)_+
\]
in $\logSH(S)$.
\end{const}

The following result, which is a log motivic version of the Snaith theorem \cite{zbMATH03642323}, relies on a motivic version of the Snaith theorem established in \cite{AI}:

\begin{thm}
\label{boundarify.34}
For $S\in \Sch$,
there exists a natural isomorphism
\[
\Sigma_{\P^1}^\infty (\P^\infty)_+ [\beta^{-1}]
\cong
\Log \KGL
\]
in $\logSH(S)$.
\end{thm}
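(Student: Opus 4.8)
The plan is to transport the motivic Snaith theorem of Annala--Iwasa along the functor $\omega^\sharp\colon \MS_\Nis(S)\to \logSH(S)$. The two inputs are: (a) by \cite[Theorem 5.3.3]{AI} there is, in their category of non-$\A^1$-invariant motivic spectra, a natural ring isomorphism $\Sigma_{\P^1}^\infty(\P^\infty)_+[\beta^{-1}]\cong \KGL$ compatible with the Bott self-maps, which after comparing models identifies the K-theory $\P^1$-spectrum $(\Kth,\Kth,\ldots)\in \MS_\Nis(S)$ (already $\mathrm{ebu}$-Nisnevich-local since $\Kth$ satisfies Nisnevich and regular blow-up descent) with the Bott-inverted suspension spectrum of $\P^\infty$; and (b) since $\Kth$ on $\Sm/S$ is logarithmic by Theorem \ref{boundarify.6}, Corollary \ref{boundarify.16} together with $\P^1$-stabilization gives $\Log\KGL\cong L_\lmot\omega^\sharp\KGL$, and because $\omega^\sharp\colon \MS_\Nis(S)\to \logSH(S)$ sends $\Sigma^{p,q}\Sigma_{\P^1}^\infty X_+$ with $X\in \Sm/S$ to $\Sigma^{p,q}\Sigma_{\P^1}^\infty X_+$ and is colimit preserving, we obtain $\Log\KGL\cong \omega^\sharp\KGL$ in $\logSH(S)$.

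Next I would verify that $\omega^\sharp\colon \MS_\Nis(S)\to \logSH(S)$ is symmetric monoidal and colimit preserving: it is induced by left Kan extension along the product-preserving inclusion $\Sm/S\hookrightarrow \SmlSm/S$ of schemes with trivial log structure, followed by the monoidal localization $L_\lmot$ and $\P^1$-stabilization. Hence $\omega^\sharp$ commutes with the filtered colimit defining $[\beta^{-1}]$, fixes $\Sigma_{\P^1}^\infty(\P^\infty)_+$, and carries the Bott map of $\MS_\Nis(S)$ --- built from $\P^\infty\cong \rB\G_m$ and the multiplication --- to the map $\beta$ of Construction \ref{boundarify.36}, which is built from $\P^\infty\cong \rB\Gmlog$ and the corresponding multiplication by \cite[Theorem 5.2.10, Example 5.2.8]{BPO2}. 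Applying $\omega^\sharp$ to the isomorphism in (a) then yields
\[
\Log\KGL\cong \omega^\sharp\KGL\cong \omega^\sharp\big(\Sigma_{\P^1}^\infty(\P^\infty)_+[\beta^{-1}]\big)\cong \Sigma_{\P^1}^\infty(\P^\infty)_+[\beta^{-1}]
\]
in $\logSH(S)$, as claimed.

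A variant avoiding $\MS_\Nis(S)$ is to construct directly the map $\Sigma_{\P^1}^\infty(\P^\infty)_+[\beta^{-1}]\to \Log\KGL$ classified by the class of $\cO(1)$ in $\Kth(\P^\infty)\cong \Log\Kth(\P^\infty)\cong \hom_{\logSH(S)}(\Sigma_{\P^1}^\infty(\P^\infty)_+,\Log\KGL)$ (via Theorems \ref{boundarify.19} and \ref{boundarify.6}), note that it inverts $\beta$ by Bott periodicity of $\Kth$, and check that it becomes an isomorphism after $\hom_{\logSH(S)}(\Sigma^{p,q}\Sigma_{\P^1}^\infty X_+,-)$ for every $X\in \Sm/S$; this suffices by Proposition \ref{boundarify.22} and $\P^1$-stabilization, the right-hand side being $\Log\Kth(X)\cong \Kth(X)$ and the left-hand side being computed by the motivic Snaith theorem.

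The principal obstacle is the bookkeeping of models: one must check that $\Log\KGL$ of Definition \ref{boundarify.17} is the image under $\omega^\sharp$ of the Annala--Hoyois--Iwasa K-theory spectrum (which I would do through Corollary \ref{boundarify.16} as above), and that the Snaith equivalence of \cite{AI} --- proved in their framework --- is available in $\MS_\Nis(S)$ as defined in this paper, compatibly with $\beta$. In the variant the analogous difficulty is matching $\hom_{\logSH(S)}(\Sigma^{p,q}\Sigma_{\P^1}^\infty X_+,\Sigma_{\P^1}^\infty(\P^\infty)_+[\beta^{-1}])$ with the corresponding group computed in $\MS_\Nis(S)$, which again comes down to the compatibility of $\omega^\sharp$ with mapping spectra out of objects of $\Sm/S$.
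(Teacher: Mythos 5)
Your main argument is essentially the paper's proof: it applies $L_\lmot\omega^\sharp$ to the Annala--Iwasa Snaith equivalence in $\MS_\Nis(S)$ and identifies the two sides via Theorem \ref{boundarify.6}, Corollary \ref{boundarify.16}, and the compatibility of the Bott maps through $\rB\G_m\to\rB\Gmlog$. The only difference is that the paper first reduces to $S=\Spec(\Z)$ by Proposition \ref{boundarify.18} so that \cite[Theorem 5.3.3]{AI} is invoked over a regular base; you should either include that reduction or verify that the Annala--Iwasa theorem applies over arbitrary $S\in\Sch$.
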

\begin{proof}
By Proposition \ref{boundarify.18},
it suffices to show the claim when $S=\Spec(\Z)$.
In particular, we may assume $S\in \Rg$.
Consider the algebraic K-theory spectrum $\KGL$ in $\MS_\Nis(S)$.
By \cite[Theorem 5.3.3]{AI},
we have an isomorphism
\[
\Sigma^\infty (\P^\infty)_+ [\beta^{-1}]
\cong
\KGL
\]
in $\MS_\Nis(S)$ with $\beta$ in \cite[Lemma 5.2.6]{AI}.
Apply $L_\lmot\omega^\sharp\colon \MS_\Nis(S)\to \logSH(S)$ to this isomorphism and use Theorem \ref{boundarify.6} and Corollary \ref{boundarify.16} to conclude.
Here, we can see that $L_\lmot \omega^\sharp$ sends $\beta$ in \cite[Lemma 5.2.6]{AI} to $\beta$ in Construction \ref{boundarify.36} using the open immersion $\G_m\to \Gmlog$ and hence the induced morphism $\rB \G_m \to \rB \Gmlog$.
\end{proof}

\section{Logarithmic cyclotomic trace}
\label{trace}

See \cite[Theorem 8.6.5]{BPO2} for the log cyclotomic trace under resolution of singularities.
We now define the log cyclotomic trace without resolution of singularities.

\begin{thm}
\label{Trace.2}
Let $X\in \RglRg$ (resp.\ $\SmlSm/S$ with $S\in \Sch$).
Then there exists a natural morphism of $\E_\infty$-rings
\[
\logTr\colon
\Log \Kth(X)
\to
\logTC(X)
\]
that coincides with the usual cyclotomic trace when $X$ has the trivial log structure.
We call $\logTr$ the \emph{log cyclotomic trace}.
\end{thm}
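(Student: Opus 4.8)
The plan is to apply the logarithm functor $\Log$ to the ordinary cyclotomic trace and then map the resulting target onto $\logTC$. Recall that the cyclotomic trace is a natural transformation $\Tr\colon \Kth\to \TC$ of presheaves of $\E_\infty$-ring spectra on $\Sch$; restricting along $\Rg\hookrightarrow \Sch$ (resp.\ $\Sm/S\hookrightarrow \Sch$) we view it as a morphism of presheaves of $\E_\infty$-rings on $\Rg$ (resp.\ $\Sm/S$). By the formula \eqref{boundary.12.1}, for $X\in \RglRg'$ (resp.\ $X\in \SmlSm'/S$) the value $\Log\cF(X)$ is the iterated colimit $\colim_{n\in \bDelta^\op}\colim_{Y\in (\SBl_X^n)^\op}\cF(\ul Y)$, and this is a sifted colimit: $\bDelta^\op$ is sifted and each $(\SBl_X^n)^\op$ is filtered by \Cref{boundary.17,boundary.41}. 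The transition maps are induced by the morphisms of schemes $\ul{Y'}\to \ul Y$, hence are maps of $\E_\infty$-rings, and the forgetful functor $\CAlg(\Sp)\to \Sp$ preserves sifted colimits by \cite[Corollary 3.2.3.2]{HA}; therefore $\Log$ carries presheaves of $\E_\infty$-rings to presheaves of $\E_\infty$-rings and natural $\E_\infty$-ring maps to natural $\E_\infty$-ring maps, exactly as in the treatment of the $\E_\infty$-structure in the proof of \Cref{boundarify.20}. Applying this to $\Tr$ and sheafifying (\Cref{boundary.53}) yields a natural morphism of $\E_\infty$-rings $\Log\Tr\colon \Log\Kth\to \Log\TC$ on $\RglRg$ (resp.\ $\SmlSm/S$).

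Next I would produce a natural morphism of $\E_\infty$-rings $\Log\TC(X)\to \logTC(X)$. For each $Y\in \SBl_X^n$ there is the canonical comparison map $\THH(\ul Y)\to \logTHH(Y)$ of cyclotomic $\E_\infty$-rings, induced by the structure map $\cO^*\hookrightarrow \cM_Y$ (i.e.\ by the morphism of log schemes $Y\to (\ul Y,\mathrm{triv})$); passing to $\TC$ gives an $\E_\infty$-ring map $\TC(\ul Y)\to \logTC(Y)$. Since $Y\to X\times\square^n$ is an admissible blow-up, the admissible-blow-up invariance and $\square$-invariance of $\logTC$ identify $\logTC(Y)$ with $\logTC(X\times\square^n)$ and with $\logTC(X)$. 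These maps are natural in $Y\in \SBl_X^n$ and in $n\in\bDelta$, so they assemble into $\Log\TC(X)=\colim_n\colim_Y \TC(\ul Y)\to \logTC(X)$, again an $\E_\infty$-ring map since it is a sifted colimit of such. For $X\in \SmlSm/S$ this morphism is the isomorphism of \Cref{boundarify.20} (which invokes \Cref{boundarify.21}); for $X\in \RglRg$ we need only that it is a morphism.

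I would then set $\logTr\colon \Log\Kth(X)\xrightarrow{\Log\Tr(X)}\Log\TC(X)\to \logTC(X)$, a natural morphism of $\E_\infty$-rings. When $X$ has the trivial log structure we have $X\in \Rg$ (resp.\ $X\in \Sm/S$) and $X-\partial X=X$, so \Cref{boundarify.29} (resp.\ \Cref{boundarify.6}) gives $\Log\Kth(X)\cong \Kth(X)$ compatibly with the unit $\Kth(X)\to\Log\Kth(X)$, while $\logTC(X)=\TC(\ul X)$ and the composite $\TC(X)\to \Log\TC(X)\to \logTC(X)$ of the unit with the map of the previous paragraph is the identity by construction. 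A diagram chase using the naturality of the unit $\cF\to \Log\cF$ then identifies $\logTr$ with the usual cyclotomic trace $\Tr(X)$, as required.

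The main obstacle is the second paragraph in the regular log regular case: one must know that the globalized $\logTHH$ and $\logTC$ are invariant under admissible blow-ups and $\square$-invariant \emph{at the level of presheaves on $\RglRg$}, not merely after motivic localization, so that the identifications $\logTC(Y)\cong \logTC(X\times\square^n)\cong \logTC(X)$ are legitimate and compatible as $Y$ and $n$ vary. On $\SmlSm/S$ this is supplied by \Cref{boundarify.20}; for $\RglRg$ it must be extracted from the construction of $\logTHH$ and $\logTC$ in \cite{BPO2}, or obtained by a Zariski-local reduction to the smooth log smooth case via charts (every object of $\RglRg$ is covered by objects of $\RglRg'$, which are strict over smooth toric monoid schemes). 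Threading the cyclotomic $\E_\infty$-structure through all of these colimits is routine given that the relevant forgetful functors $\CAlg(\CycSp)\to\CycSp\to\Sp$ preserve sifted colimits, exactly as in the proof of \Cref{boundarify.20}.
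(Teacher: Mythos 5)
Your proposal is correct and follows essentially the same route as the paper: the paper also builds $\logTr$ from the levelwise composites $\Kth(\ul Y)\to\TC(\ul Y)\to\logTC(Y)$ for $Y\in\SBl_X^n$, passes to the sifted colimit defining $\Log\Kth(X)$ (using that $\CAlg(\Sp)\to\Sp$ preserves sifted colimits to retain the $\E_\infty$-structure), and collapses the target $\widetilde{\Log}\logTC(X)\cong\logTC(X)$ via the invariance of $\logTC$ under admissible blow-ups and $\square$ (Proposition \ref{boundarify.13}), which is exactly the identification you perform by factoring through $\Log\TC$. The compatibility with $\Tr$ in the trivial-log case is handled the same way in both arguments.
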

\begin{proof}
For every $Y\in \lSch$,
consider the composite morphism of $\E_\infty$-rings
\[
\Kth(\ul{Y})
\to
\TC(\ul{Y})
\to
\logTC(Y).
\]
Use this to obtain the morphism of $\E_\infty$-rings
\begin{equation}
\label{trace.2.1}
\Log \Kth(X)
=
\colim_{n\in \bDelta^\op}
\colim_{Y\in (\SBl_X^n)^\op}
\Kth(\ul{Y})
\to
\colim_{n\in \bDelta^\op}
\colim_{Y\in (\SBl_X^n)^\op }
\logTC(Y)=\widetilde{\Log}\logTC(X).
\end{equation}
The right-hand side is isomorphic to $\logTC(X)$ by Proposition \ref{boundarify.13}.
Hence we get $\logTr$.

Note that by \cite[Corollary 3.2.3.2]{HA},
the colimits in \eqref{trace.2.1} computed as $\E_\infty$-rings agree with the colimits in \eqref{trace.2.1} computed as spectra after forgetting the $\E_\infty$-structures.

Assume that $X$ has the trivial log structure.
Then we have the commutative square
\[
\begin{tikzcd}
\Kth(X)\ar[d,"\cong"']\ar[r,"\Tr"]&
\TC(X)\ar[d,"\cong"]
\\
\Log \Kth(X)\ar[r,"\logTr"]&
\logTC(X),
\end{tikzcd}
\]
so $\logTr$ can be identified with $\Tr$.
\end{proof}

\begin{rmk}
\label{trace.3}
Let $X\in \RglRg$.
Then by Theorem \ref{boundarify.29}, the log cyclotomic trace can be written as
\[
\logTr\colon \Kth(X-\partial X)\to \TC(X).
\]
We also have a commutative triangle
\[
\begin{tikzcd}
&
\logTC(X)\ar[d]
\\
\Kth(X-\partial X)\ar[ru,"\logTr"]\ar[r,"\Tr"]&
\logTC(X-\partial X).
\end{tikzcd}
\]
Hence the cyclotomic trace factors through the log cyclotomic trace,
i.e., the log cyclotomic trace is a refinement of the cyclotomic trace.
\end{rmk}

\begin{rmk}
Let $X\in \Rg$.
If $D$ is a strict normal crossing divisor on $X$,
then let $(X,D)$ denote the fs log scheme with underlying scheme $X$ and the Deligne-Faltings log structure associated $D$.
By \cite[Example III.1.11.9]{Ogu},
we have $(X,D)\in \RglRg$.

If $Z\to X$ is a closed immersion in $\Rg$ such that $D+Z$ is a strict normal crossing divisor on $X$,
then we use the convenient notation
\[
((X,D),Z)
:=
(X,D+Z).
\]
\end{rmk}

\begin{cor}
\label{trace.6}
Let $i\colon Z\to X$ be a closed immersion in $\Rg$.
Then there exists a morphism of fiber sequences
\[
\begin{tikzcd}
\Kth(Z)\ar[d,"\Tr"']\ar[r]&
\Kth(X)\ar[d,"\Tr"]\ar[r,"j^*"]&
\Kth(X-Z)\ar[d,"\logTr"]
\\
\TC(Z)\ar[r]&
\TC(X)\ar[r,"p^*"]&
\logTC(\Bl_ZX,E),
\end{tikzcd}
\]
where $E$ is the exceptional divisor,
$j\colon X-Z\to X$ is the open immersion,
and $p\colon (\Bl_Z X,E)\to X$ is the projection.
\end{cor}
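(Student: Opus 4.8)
The plan is to realise both rows as logarithmic Gysin sequences and to obtain the vertical comparison from naturality of the cyclotomic trace. I would first dispose of the top row: since $i\colon Z\to X$ is a closed immersion of regular schemes it is a regular immersion, so Thomason--Trobaugh localisation together with Quillen dévissage gives the fibre sequence $\Kth(Z)\xrightarrow{i_*}\Kth(X)\xrightarrow{j^*}\Kth(X-Z)$. It is worth recording this in logarithmic form: by Theorems \ref{boundarify.6} and \ref{boundarify.29} one has $\Log\Kth(X)\cong\Kth(X)$ (trivial log structure) and $\Log\Kth(\Bl_Z X,E)\cong\Kth(X-Z)$, the blow-down $(\Bl_Z X,E)\to X$ in $\RglRg$ inducing $j^*$, while Proposition \ref{boundary.19} and the projective bundle formula for $\Kth$ identify the fibre of this map with $\Kth(Z)$. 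Thus the top row is the $\Kth$-Gysin sequence attached to $Z\subset X$.

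For the bottom row I would run the same argument with $\logTC=\Log\TC$ (Theorem \ref{boundarify.20}) in place of $\Kth$. Here the inputs replacing ``$\A^1$-invariant cdh sheaf'' are the projective bundle formula and the blow-up formula for $\THH$ and $\TC$ (\cite[Theorems 1.4, 1.5]{BM12}), which hold along regular immersions. Concretely I would apply Proposition \ref{boundary.19} to the line bundle $\rN_E(\Bl_Z X)$ over $E$ and the projective bundle formula for $\TC$ to see that the logarithmic Gysin sequence for $\logTC$ attached to the regular divisor $E\subset\Bl_Z X$ reads $\TC(E)\to\TC(\Bl_Z X)\to\logTC(\Bl_Z X,E)$ (using $\logTC(\Bl_Z X)=\TC(\Bl_Z X)$ since the log structure is trivial). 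Feeding this into the cartesian blow-up square $\TC(X)\cong\TC(\Bl_Z X)\times_{\TC(E)}\TC(Z)$ yields exactly the fibre sequence $\TC(Z)\to\TC(X)\xrightarrow{p^*}\logTC(\Bl_Z X,E)$, and one must check along the way, via the projective bundle formula, that the first map is the transfer $i_*$.

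The morphism of fibre sequences should then come from the fact that every identification above is natural in the coefficient presheaf, and that the cyclotomic trace $\Tr\colon\Kth\to\TC$ on $\Sch$ is such a natural transformation: applying the logarithm functor it induces $\logTr\colon\Log\Kth\to\Log\TC=\logTC$ on $\RglRg$ (Theorem \ref{Trace.2}), which restricts to $\Tr$ on the trivial-log-structure objects $X$ and $Z$ and to $\logTr\colon\Kth(X-Z)\cong\Log\Kth(\Bl_Z X,E)\to\logTC(\Bl_Z X,E)$ on $(\Bl_Z X,E)$, i.e.\ the map of Remark \ref{trace.3}. Compatibility of $\logTr$ with the blow-up squares, the projective bundle formulas and the logarithmic Gysin sequences used above — all functorial in the cohomology theory — then produces the asserted map from the top fibre sequence to the bottom one.

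The main obstacle I anticipate is the bottom row: producing the logarithmic Gysin fibre sequence for the non-$\A^1$-invariant sheaf $\logTC$ on $\RglRg$ (the analogue for $\TC$ of Theorem \ref{boundary.5}) and verifying that the transfer $\TC(Z)\to\TC(X)$ extracted from the blow-up square and the deformation argument is genuinely the image under $\Tr$ of the $\Kth$-transfer $i_*$. This requires the blow-up square for $\TC$ to be cartesian for regular immersions in $\Rg$, not only in $\Sm/S$, and careful bookkeeping of the Thom and projective-bundle identifications so that everything stays natural in $\Tr$; the projective bundle formula for $\TC$ is what takes over the role played by $\A^1$-invariance in the proof of Theorem \ref{boundary.5}.
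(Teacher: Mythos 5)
Your overall strategy — Gysin/deformation arguments for both rows plus compatibility of the projective bundle formulas under $\Tr$ via \cite[Theorem 1.5]{BM12} — is the same in spirit as the paper's, but the decomposition differs and the crucial input for the bottom row is left unproven. The paper forms the single deformation-to-the-normal-cone diagram $\Gys(Z\to X)$ and shows that \emph{both} $\Log\Kth$ and $\logTC$ send its squares to cartesian squares (citing \cite[Theorem 13.6.3]{CD12} together with Theorem \ref{boundarify.29} for K-theory, and \cite[Corollary 4.7]{regGysin} for $\logTC$); both fiber sequences then have fiber $\fib\bigl(\E(\P(\rN_Z X\oplus\cO))\to\E(\P(\rN_Z X))\bigr)$ by Proposition \ref{boundary.19}, and the identification of these fibers with $\E(Z)$, compatibly under $\Tr$, is exactly \cite[Theorem 1.5]{BM12}. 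Because the two rows are produced by the \emph{same} construction, the comparison map comes for free from Theorem \ref{Trace.2}. You instead prove the top row by Thomason--Trobaugh localization and d\'evissage and assemble the bottom row from the blow-up square for $\TC$ plus a Gysin sequence for the divisor $E\subset\Bl_Z X$; this is workable, but it forces you to (a) reconcile the localization-sequence identification of $\fib(j^*)$ with $\Kth(Z)$ against the Thom/projective-bundle identification used on the $\TC$ side, and (b) control the extension when you splice $\fib(\TC(Z)\to\TC(E))$ with the Thom fiber $\TC(E)$ to recover $\TC(Z)$ with the correct transfer map — bookkeeping the paper avoids entirely.

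The genuine gap is the one you yourself flag: the Gysin fiber sequence for $\logTC$ along a regular divisor in $\RglRg$ (equivalently, that $\logTC$ sends the squares of the deformation diagram to cartesian squares). This does not follow from anything you cite; Theorem \ref{boundary.5} requires $\A^1$-invariance, and Theorem \ref{boundarify.20} only identifies $\Log\TC$ with $\logTC$ on $\SmlSm/S$, not on $\RglRg$, so you cannot transport the statement that way either. The paper imports this input wholesale from \cite[Corollary 4.7]{regGysin} (and proves the analogous statement for $\logTHH$ in the setting of Corollary \ref{trace.7} by a derived base-change argument reducing to \cite[Theorem 7.5.4]{BPO}). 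Without supplying that result, your bottom fiber sequence — and hence the corollary — is not established.
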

\begin{proof}
Consider the $\square$-deformation space
\[
\rD_Z X
:=
\Bl_Z(X\times \square)-\Bl_Z(X\times \{0\}).
\]
Then we have the induced commutative diagram
\[
\Gys(Z\to X)
:=
\begin{tikzcd}[column sep=small]
(\Bl_Z X,E)\ar[d]\ar[r]&
(\Bl_{Z\times \square}(\rD_Z X),E^D)\ar[r,leftarrow]\ar[d]&
(\Bl_Z (\rN_Z X),\E^N)\ar[d]
\\
X\ar[r]&
\rD_Z X\ar[r,leftarrow]&
\rN_Z X,
\end{tikzcd}
\]
where $E^D$ and $E^N$ are the exceptional divisors,
see \cite[Definition 7.5.1]{BPO} and \cite[Remark 2.8]{regGysin}.

By \cite[Theorem 13.6.3]{CD12} and Theorem \ref{boundarify.29},
the squares in $\Log \Kth(\Gys(Z\to X))$ are cartesian.
Moreover, by \cite[Corollary 4.7]{regGysin},
the squares in $\logTC(\Gys(Z\to X))$ are cartesian.
Together with Proposition \ref{boundary.19} and Theorem \ref{Trace.2},
we have a morphism of fiber sequences
\[
\begin{tikzcd}
\fib(\Kth(\P(\rN_Z X\oplus \cO))\to \Kth(\P(\rN_Z X)))\ar[d,"\Tr"']\ar[r]&
\Kth(X)\ar[d,"\Tr"]\ar[r,"j^*"]&
\Kth(X-Z)\ar[d,"\logTr"]
\\
\fib(\TC(\P(\rN_Z X\oplus \cO))\to \TC(\P(\rN_Z X)))\ar[r]&
\TC(X)\ar[r,"p^*"]&
\logTC(\Bl_ZX,E).
\end{tikzcd}
\]
To conclude,
observe that the projective bundle formulas for $\Kth$ and $\TC$ are compatible via the cyclotomic trace by \cite[Theorem 1.5]{BM12}.
\end{proof}

See \cite[\S 2.6]{BLPO} for the animated version of log rings.

\begin{lem}
\label{trace.15}
Let $(R,P)\to (A,M),(B,N)$ be maps of animated log rings. 
Then the induced morphism of $\E_\infty$-rings in cyclotomic spectra
\begin{equation}
\label{trace.15.1}
\THH(A,M)\otimes_{\THH(R,P)}\THH(B,N)
\to
\THH(A\otimes_R B,M\oplus_P N)
\end{equation}
is an isomorphism.
\end{lem}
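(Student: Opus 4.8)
The plan is to deduce \eqref{trace.15.1} from the symmetric monoidality of $\THH(-,-)$ on animated log rings together with its compatibility with sifted colimits. Recall from \cite{zbMATH05610455} (and \cite{BLPO} for the animated version) the model of logarithmic $\THH$ as an iterated relative smash product
\[
\THH(A,M)\simeq \THH(A)\otimes_{\mathbb S[B^{\mathrm{cy}}(M)]}\mathbb S[B^{\mathrm{rep}}(M)]
\]
in $\CAlg(\CycSp)$, where $B^{\mathrm{cy}}$ and $B^{\mathrm{rep}}$ are the cyclic and replete bar constructions of $M$ --- commutative monoid objects with $S^1$-action --- and $\mathbb S[-]:=\Sigma^\infty_+(-)$, the cyclotomic structure being the one of \cite{Ob18}. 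First I would record that each of the three functors $A\mapsto\THH(A)$, $M\mapsto\mathbb S[B^{\mathrm{cy}}(M)]$ and $M\mapsto\mathbb S[B^{\mathrm{rep}}(M)]$ preserves sifted colimits and is symmetric monoidal, the source of the monoid functors being $(\mathrm{CMon},\oplus)$: for $\THH$ this is the identification $\THH(-)=S^1\otimes(-)$ in $\CAlg(\Sp)$ of \cite{zbMATH07009201}; for $B^{\mathrm{cy}}$ it is immediate from $B^{\mathrm{cy}}_n(M)=M^{n+1}$ and symmetric monoidality of $\Sigma^\infty_+$; and for $B^{\mathrm{rep}}$ it follows from its defining homotopy pullback description, since group completion, the cyclic bar construction and the one-sided bar construction all carry $\oplus$ of commutative monoids to $\times$ of the relevant targets. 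Since relative smash products of $\mathbb E_\infty$-rings in cyclotomic spectra are geometric realizations of bar constructions, and the forgetful functors $\CAlg(\CycSp)\to\CycSp\to\Sp$ preserve sifted colimits by \cite[Corollary 3.2.3.2]{HA} and \cite[Corollary II.1.7]{zbMATH07009201}, it follows that the composite $(A,M)\mapsto\THH(A,M)$ preserves sifted colimits and is symmetric monoidal, from the coproduct $(A_1,M_1)\otimes(A_2,M_2):=(A_1\otimes A_2,M_1\oplus M_2)$ on animated log rings to the tensor product on $\CAlg(\CycSp)$.

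Granting this, the lemma is formal. The pair $(A\otimes_R B,M\oplus_P N)$, being the pushout of $(A,M)\leftarrow(R,P)\rightarrow(B,N)$ in animated log rings, is the geometric realization of the two-sided bar construction $[n]\mapsto (A,M)\otimes(R,P)^{\otimes n}\otimes(B,N)$, because the forgetful functors from log rings to rings and to monoids preserve this colimit and each of $A\otimes_R B$ and $M\oplus_P N$ is the realization of the corresponding bar construction. Applying $\THH(-,-)$ and using that it preserves this sifted colimit and is symmetric monoidal identifies $\THH(A\otimes_R B,M\oplus_P N)$ with the realization of $[n]\mapsto\THH(A,M)\otimes\THH(R,P)^{\otimes n}\otimes\THH(B,N)$, which is by definition $\THH(A,M)\otimes_{\THH(R,P)}\THH(B,N)$; unwinding the construction, the equivalence so obtained is exactly \eqref{trace.15.1}. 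Finally one checks that all of these colimits formed in $\CAlg(\CycSp)$ agree, after forgetting structure, with the corresponding colimits of spectra, so that the equivalence is compatible with the $\mathbb E_\infty$-ring and cyclotomic structures on both sides.

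The main obstacle is the assertion in the first paragraph that $\THH(-,-)$ is symmetric monoidal as a functor into $\mathbb E_\infty$-rings in \emph{cyclotomic} spectra: this rests on the compatibility of the replete bar construction with coproducts of commutative monoids --- the one genuinely non-formal input, requiring its pullback description and the exactness of group completion rather than anything abstract --- and on tracking the cyclotomic structures, which come from the $p$-th power maps on $B^{\mathrm{cy}}$, $B^{\mathrm{rep}}$ and on $\THH$, through the relative smash product. An alternative that avoids the explicit model is to use sifted-colimit generation of animated log rings by finitely generated free (pre-)log polynomial rings to reduce to the case where $(A,M)$ is free over $(R,P)$, where the Künneth equivalence can be verified directly from the multiplicativity of $\THH$ over polynomial and free-monoid factors; either way, the essential content is the Künneth behaviour of $B^{\mathrm{rep}}$, and the rest is bookkeeping with sifted colimits.
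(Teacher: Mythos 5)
Your proof is correct and rests on the same two inputs as the paper's: Rognes' decomposition $\THH(A,M)\cong\THH(A)\otimes_{\THH(\Sph[M])}\Sph[M\oplus\rB M^\gp]$ (your $\Sph[\Bcy(M)]$ and $\Sph[\Brep(M)]$) and the fact that $\Sph[-]$, group completion, and the bar construction preserve colimits. The one place where the paper takes a genuinely easier route is exactly the step you flag as "the main obstacle": rather than establishing symmetric monoidality of $(A,M)\mapsto\THH(A,M)$ as a functor into $\CAlg(\CycSp)$ and tracking the cyclotomic structures through the relative smash products, the paper observes that the forgetful functor $\CycSp\to\Sp$ is conservative (\cite[Corollary II.1.7]{zbMATH07009201}), so since the morphism \eqref{trace.15.1} is already given in $\CAlg(\CycSp)$, it suffices to check it is an equivalence of underlying $\E_\infty$-rings — no cyclotomic bookkeeping is needed. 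Your approach buys a stronger, more structured statement (a symmetric monoidal refinement of log $\THH$), but for the lemma as stated the conservativity shortcut lets you delete the entire last paragraph of your argument; you might also note that your identification of the abstractly constructed equivalence with the given map \eqref{trace.15.1} becomes unnecessary for the same reason.
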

\begin{proof}
The forgetful functor from the category of cyclotomic spectra to spectra is conservative by \cite[Corollary II.1.7]{zbMATH07009201}.
Hence it suffices to show that \eqref{trace.15.1} is an isomorphism as $\E_\infty$-rings.
We have an isomorphism of $\E_\infty$-rings
\[
\THH(R,P)
\cong
\THH(R)\otimes_{\THH(\Sph[P])}\Sph[P\oplus \rB P^\gp]
\]
by the animated versions of \cite[Lemma 3.17, Remark 8.12]{zbMATH05610455}, see also \cite[Proposition 2.21]{BLPO}.
Hence it suffices to show that the functors $R\mapsto \THH(R)$, $P\mapsto \THH(\Sph[P])$, and $P\mapsto \Sph [P\oplus \rB P^\gp]$ preserve colimits.
Since $\THH$ is defined by a colimit,
$R\mapsto \THH(R)$ preserves colimit.
To conclude,
observe that $\Sph[-]$, the group completion, and the bar construction preserve colimits.
\end{proof}

\begin{cor}
\label{trace.7}
Let $X\in \RglRg$, and let $Z\to \ul{X}$ be a closed immersion in $\Rg$ such that $\partial X+Z$ is a strict normal crossing divisor on $\ul{X}$.
Then there exists a commutative diagram
\[
\begin{tikzcd}
\Kth(Z-\partial Z)\ar[d,"\logTr"']\ar[r]&
\Kth(X-\partial X)\ar[d,"\logTr"]\ar[r]&
\Kth(X-(\partial X\cup Z))\ar[d,"\logTr"]
\\
\logTC(Z)\ar[r]&
\logTC(X)\ar[r]&
\logTC(X,Z)
\end{tikzcd}
\]
whose rows are cofiber sequences.
\end{cor}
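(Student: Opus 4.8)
The plan is to run the argument of Corollary~\ref{trace.6} relative to the ambient log structure $\partial X$. First I would note that $Z$, equipped with the log structure $(\partial X)|_{Z}$, lies in $\RglRg$: since $\partial X+Z$ is strict normal crossing on $\ul{X}$, its restriction $(\partial X)|_{Z}$ is strict normal crossing on the regular scheme $\ul{Z}$, so $Z\in \RglRg$ with $\partial Z=(\partial X)|_{Z}$, and likewise $(X,Z)=(\ul{X},\partial X+Z)\in \RglRg$. Inside $\RglRg$ I would then form the $\square$-deformation space $\rD_Z X:=\Bl_Z(X\times \square)-\Bl_Z(X\times\{0\})$ (the blown-up center being $Z\hookrightarrow X=X\times\{0\}\hookrightarrow X\times\square$, which has strict normal crossing with $\partial X\times\square$ precisely because $\partial X+Z$ is strict normal crossing), together with the Gysin diagram
\[
\Gys(Z\to X)
:=
\begin{tikzcd}[column sep=small]
(X,Z)\ar[d]\ar[r]&
(\rD_Z X,Z\times \square)\ar[r,leftarrow]\ar[d]&
(\rN_Z X,Z)\ar[d]
\\
X\ar[r]&
\rD_Z X\ar[r,leftarrow]&
\rN_Z X,
\end{tikzcd}
\]
all of whose entries lie in $\RglRg$, as in \cite[Definition 7.5.1]{BPO} and \cite[Remark 2.8]{regGysin}.

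Next I would apply $\Log\Kth$ and $\logTC$ to $\Gys(Z\to X)$. By Theorem~\ref{boundarify.29}, $\Log\Kth$ sends each entry of $\Gys(Z\to X)$ to the $K$-theory of its (regular) open complement, so $\Log\Kth(\Gys(Z\to X))$ is the deformation-to-the-normal-cone diagram in $\Rg$ attached to the regular embedding $Z-\partial Z\hookrightarrow X-\partial X$; its squares are cartesian because $\Kth$ satisfies absolute purity \cite[Theorem 13.6.3]{CD12}. Likewise the squares in $\logTC(\Gys(Z\to X))$ are cartesian by \cite[Corollary 4.7]{regGysin}. Both $\Log\Kth$ and $\logTC$ are $(\square,\divi)$-invariant strict Nisnevich sheaves of spectra on $\RglRg$ --- for $\Log\Kth$ by Proposition~\ref{boundary.51}, Theorem~\ref{boundarify.29} and the $\A^1$-invariance of $K$-theory of regular schemes, and for $\logTC$ by the results of \cite{BPO2} --- so Proposition~\ref{boundary.19} identifies the left-hand entry of each of the two diagrams with the fiber of $\cF(\P(\rN_Z X\oplus \cO))\to \cF(\P(\rN_Z X))$, and combining this with the cartesianness of the squares gives, on each side, the Gysin fiber sequence as in the proof of Theorem~\ref{boundary.5}. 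Applying the natural $\E_\infty$-ring transformation $\logTr\colon \Log\Kth\to \logTC$ of Theorem~\ref{Trace.2} to $\Gys(Z\to X)$ then produces a morphism of fiber sequences
\[
\begin{tikzcd}[column sep=tiny]
\fib(\Log\Kth(\P(\rN_Z X\oplus \cO))\to \Log\Kth(\P(\rN_Z X)))\ar[d,"\logTr"']\ar[r]&
\Log\Kth(X)\ar[d,"\logTr"]\ar[r]&
\Log\Kth(X,Z)\ar[d,"\logTr"]
\\
\fib(\logTC(\P(\rN_Z X\oplus \cO))\to \logTC(\P(\rN_Z X)))\ar[r]&
\logTC(X)\ar[r]&
\logTC(X,Z).
\end{tikzcd}
\]

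Finally I would identify the terms. Theorem~\ref{boundarify.29} gives $\Log\Kth(X)\cong \Kth(X-\partial X)$ and $\Log\Kth(X,Z)\cong \Kth(X-(\partial X\cup Z))$, the map between them being restriction to the open complement. As $\rN_Z X$ is a line bundle, $\P(\rN_Z X)\cong Z$ and $\P(\rN_Z X\oplus \cO)$ is a $\P^1$-bundle over $Z$, so the projective bundle formula for $K$-theory of regular schemes together with Theorem~\ref{boundarify.29} gives
\[
\fib(\Log\Kth(\P(\rN_Z X\oplus \cO))\to \Log\Kth(\P(\rN_Z X)))\cong \Log\Kth(Z)\cong \Kth(Z-\partial Z),
\]
while the projective bundle formula for $\logTC$ gives $\fib(\logTC(\P(\rN_Z X\oplus \cO))\to \logTC(\P(\rN_Z X)))\cong \logTC(Z)$; these identifications are compatible with $\logTr$ since the projective bundle formulas for $\Kth$ and $\TC$ match via the cyclotomic trace \cite[Theorem 1.5]{BM12}. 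Reading the two fiber sequences as cofiber sequences then yields the asserted diagram, the first map of its top row being the pushforward along $Z-\partial Z\hookrightarrow X-\partial X$. I expect the main obstacle to be the bookkeeping in the first step: checking that the deformation-space and Gysin constructions of \cite{BPO} and \cite{regGysin} genuinely take place within $\RglRg$ relative to $\partial X$, so that the strict normal crossing hypothesis on $\partial X+Z$ is exactly what is needed, and that \cite[Theorem 13.6.3]{CD12} and \cite[Corollary 4.7]{regGysin} apply to the resulting relative diagram --- after which the compatibility of the projective-bundle splittings with $\logTr$ runs as in Corollary~\ref{trace.6}.
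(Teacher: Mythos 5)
Your overall architecture matches the paper's: form the Gysin diagram $\Gys(Z\to X)$ with top row $(X,Z)$, $(\rD_Z X,Z\times\square)$, $(\rN_Z X,Z)$, prove the squares are cartesian for $\Log\Kth$ and for $\logTC$, identify the fiber term with $\E(Z)$ via Proposition \ref{boundary.19} and the projective bundle formula, and transport everything along $\logTr$ using the compatibility of \cite[Theorem 1.5]{BM12}. The $\Log\Kth$ half is handled exactly as in the paper (Theorem \ref{boundarify.29} plus \cite[Theorem 13.6.3]{CD12}).

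The gap is in the $\logTC$ half. You invoke \cite[Corollary 4.7]{regGysin} for the cartesianness of the $\logTC$ squares, but that result is used in the paper only in Corollary \ref{trace.6}, where the ambient $X$ lies in $\Rg$ with \emph{trivial} log structure and the boundaries of the top-row entries are exceptional divisors of blow-ups. In Corollary \ref{trace.7} the ambient $X$ already carries the nontrivial log structure $\partial X$, and the added boundary is the divisor $Z$ itself; there is no reason the cited result applies to this relative diagram, and you yourself flag this as "the main obstacle" without resolving it. The paper closes this gap by a separate argument: it reduces to showing the $\logTHH$ squares are cocartesian (using that the forgetful functor from cyclotomic spectra is conservative and exact, \cite[Corollary II.1.7]{zbMATH07009201}), works Zariski locally so that $(X,Z)$ is strict over the universal model $(X_0,Z_0)=(\A_\N^n\times\A^1,\ \A_\N^n\times\{0\})$, uses the base-change formula for $\logTHH$ (Lemma \ref{trace.15}) to reduce to $\Gys(Z_0\to X_0)$, and concludes with the logarithmic Gysin sequence \cite[Theorem 7.5.4]{BPO}, which is available there because the universal model lies in $\SmlSm/\Z$. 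Without some such argument your proof of the bottom fiber sequence is incomplete.
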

\begin{proof}
As in Corollary \ref{trace.6},
consider the induced commutative diagram
\[
\Gys(Z\to X)
:=
\begin{tikzcd}
(X,Z)\ar[d]\ar[r]&
(\rD_Z X,Z\times \square)\ar[r,leftarrow]\ar[d]&
(\rN_Z X,Z)\ar[d]
\\
X\ar[r]&
\rD_Z X\ar[r,leftarrow]&
\rN_Z X.
\end{tikzcd}
\]
It suffices to show that the squares in $\E(\Gys(Z\to X))$ are cartesian for $\E=\Log \Kth,\logTC$.
The claim holds for $\Log \Kth$ by \cite[Theorem 13.6.3]{CD12} and Theorem \ref{boundarify.29}.
To show the claim for $\logTC$,
it suffices to show that the squares in $\E(\Gys(Z\to X))$ for $\E=\logTHH$ are cocartesian as cyclotomic spectra.
Since the forgetful functor from the $\infty$-category of cyclotomic spectra to the $\infty$-category of spectra is conservative and exact by \cite[Corollary II.1.7]{zbMATH07009201},
it suffices to show that the squares in $\E(\Gys(Z\to X))$ for $\E=\logTHH$ are cocartesian as spectra.

We can work Zariski locally on $Y:=(X,Z)$ for this claim.
Hence we may assume that there exists a strict morphism $Y\to Y_0:=\A_\N^{n+1}$ given by $\Spec(f)$ for some map of log rings $f\colon (\Z[\N^{n+1}],\N^{n+1})\to (A,M)$.
We set $X_0:=\A_\N^n \times \A^1$ and $Z_0:=\A_\N^n \times \{0\}$ so that $Y_0\cong (X_0,Z_0)$.
As in \cite[Construction 4.2]{regGysin},
$\Gys(Z\to X)$ is a derived pullback of $\Gys(Z_0\to X_0)$.
Together with Lemma \ref{trace.15},
we see that $\E(\Gys(Z\to X))$ is a pushout of $\E(\Gys(Z_0\to X_0))$ as $\E_\infty$-rings.
Hence we reduce to the case of $Z_0\to X_0$.
In this case, we have $Y\in \SmlSm/\Z$,
so \cite[Theorem 7.5.4]{BPO} finishes the proof.
\end{proof}

Now, we collect several results relating $\Kth$ and $\TC$ of log schemes.

\begin{thm}
\label{trace.16}
Let $K$ be a local field of residual characteristic $p>0$.
Then there is an isomorphism of $\E_\infty$-rings
\[
\Kth(K;\Z_p)
\cong
\TC((\cO_K,\langle \pi \rangle);\Z_p),
\]
where $\cO_K$ denotes the ring of integers of $K$, and $\pi$ is the uniformizer of $K$.
\end{thm}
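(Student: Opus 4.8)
The plan is to mimic the proof of Theorem~\ref{trace.8}: reduce, via the Gysin square of Corollary~\ref{trace.7}, to a comparison of the ordinary cyclotomic traces of $\cO_K$ and of its residue field $\F_q$, and then deal with the one new feature---that $\F_q$ is not separably closed---by passing to the strict henselization.

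First I would observe that the fs log scheme $(\cO_K,\langle\pi\rangle)$, i.e.\ $\Spec\cO_K$ with the Deligne--Faltings log structure of the single smooth divisor $\Spec\F_q$, lies in $\RglRg$, so that Theorem~\ref{boundarify.29} gives $\Log\Kth((\cO_K,\langle\pi\rangle))\simeq\Kth(K)$ and Theorem~\ref{Trace.2} produces an $\E_\infty$-ring map $\logTr\colon\Kth(K)\to\logTC((\cO_K,\langle\pi\rangle))$; it suffices to show $\logTr$ is an equivalence after $p$-completion. Applying Corollary~\ref{trace.7} with $X:=\Spec\cO_K$ (trivial log structure) and $Z:=\Spec\F_q$ produces, after $p$-completion, a morphism of cofiber sequences whose top row is the localization--d\'evissage sequence $\Kth(\F_q;\Z_p)\to\Kth(\cO_K;\Z_p)\to\Kth(K;\Z_p)$, whose bottom row is $\TC(\F_q;\Z_p)\to\TC(\cO_K;\Z_p)\to\logTC((\cO_K,\langle\pi\rangle);\Z_p)$, and whose three vertical maps are the cyclotomic traces and $\logTr$. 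Hence $\logTr$ is a $p$-adic equivalence if and only if the induced map on horizontal cofibers $\cofib(\Tr_{\F_q;\Z_p})\to\cofib(\Tr_{\cO_K;\Z_p})$ is an equivalence. Since $(\cO_K,(\pi))$ is a henselian pair, \cite[Theorem A]{MR4280864} identifies the defects $\fib(\Tr_{\cO_K;\Z_p})\simeq\fib(\Tr_{\F_q;\Z_p})$; combined with $\Kth(\F_q;\Z_p)\simeq\Z_p$ (Quillen) and $\TC(\F_q;\Z_p)\simeq\Z_p\oplus\Sigma^{-1}\Z_p$, this shows both cofibers are $\simeq\Sigma^{-1}\Z_p$, so the only thing left is that the comparison map between them---assembled from the Gysin maps of Corollary~\ref{trace.7}---is a $p$-adic unit on $\pi_{-1}$, equivalently that $\logTC((\cO_K,\langle\pi\rangle);\Z_p)$ is connective with $\pi_0=\Z_p$.

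To handle this I would pass to the strict henselization $\cO_{\check K}:=\cO_K^{\mathrm{sh}}$, a strictly henselian regular local ring with residue field $\overline{\F_q}$ and fraction field $\check K$, carrying a continuous action of $G:=\mathrm{Gal}(\overline{\F_q}/\F_q)\simeq\widehat{\Z}$. Running the same reduction over $\cO_{\check K}$ now involves a separably closed residue field: since $\mathrm{Frob}-1$ is surjective on $W(\overline{\F_q})$ we have $\TC(\overline{\F_q};\Z_p)\simeq\Z_p$, so $\Tr_{\overline{\F_q};\Z_p}$---and hence, by \cite[Theorem A]{MR4280864}, $\Tr_{\cO_{\check K};\Z_p}$---is an equivalence, and two-out-of-three in the Gysin square forces $\logTr_{\check K}\colon\Kth(\check K;\Z_p)\xrightarrow{\sim}\logTC((\cO_{\check K},\langle\pi\rangle);\Z_p)$ to be an equivalence, moreover $G$-equivariantly by naturality (this is Theorem~\ref{trace.8} for $n=1$). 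It then remains to descend this equivalence along $\cO_K\to\cO_{\check K}$ and to identify $\Kth(\check K;\Z_p)^{hG}$ with $\Kth(K;\Z_p)$ and $\logTC((\cO_{\check K},\langle\pi\rangle);\Z_p)^{hG}$ with $\logTC((\cO_K,\langle\pi\rangle);\Z_p)$, compatibly with $\logTr$.

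The hard part---and the reason this last step is not formal---is that neither side satisfies this Galois descent integrally: $\Kth(-;\Z_p)$ only satisfies it in degrees $\geq1$ (Quillen--Lichtenbaum, $\mathrm{cd}_p(\cO_K)\leq 2$), and $p$-adic $\logTC$ is not \'etale-sheafy, so the descent must be run at the level of $\logTHH$, $\logTC^{-}$ and $\logTP$---using that $\THH(-;\Z_p)$ is insensitive to $p$-completion and that the finite \'etale layers $\cO_{K_n}$ are free over their Galois group algebras (Noether)---before forming the homotopy fiber defining $\logTC$. This is exactly where one rules out a spurious class in $\pi_{-1}$ coming from $H^1(\widehat{\Z},\Kth_0(\check K))$, and it is the same computation with (log) prismatic/syntomic cohomology and the comparison theorems that underlies the proof of Theorem~\ref{trace.8}. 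Once $\logTC((\cO_K,\langle\pi\rangle);\Z_p)$ is known to be connective with $\pi_0=\Z_p$, the morphism of cofiber sequences of the second paragraph shows $\logTr$ is a $p$-adic equivalence in all degrees, proving the theorem.
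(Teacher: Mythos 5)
Your first two paragraphs essentially reproduce the paper's own reduction: the paper proves this statement by applying the morphism of fiber sequences of Corollary \ref{trace.6} to the closed point $\Spec(\F_q)\to\Spec(\cO_K)$ and then invoking Hesselholt--Madsen's Theorem D for the two left-hand columns. Your substitution of \cite[Theorem A]{MR4280864} for that input is reasonable, and your observation that everything comes down to the induced map $\cofib(\Tr_{\F_q})\to\cofib(\Tr_{\cO_K})$ between two copies of $\Sigma^{-1}\Z_p$ being a $p$-adic unit is a correct (and usefully explicit) isolation of the remaining point: both cited inputs control the \emph{restriction}-induced comparison of the two traces, whereas the map appearing in the Gysin square is induced by the \emph{pushforward}, so knowing the two cofibers are abstractly equivalent does not by itself close the argument.

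The gap is in your third and fourth paragraphs. The strict-henselization step itself is fine (it is Theorem \ref{trace.8} with $n=1$, and it works precisely because $\TC_{-1}(\overline{\F}_q;\Z_p)=0$ kills the $\pi_{-1}$-issue there), but the proposed Galois descent back down to $K$ cannot be carried out as described: $\Kth(-;\Z_p)$ does not satisfy descent along $K\to\check{K}$ in the degrees at stake. The descent spectral sequence gives $\pi_{-1}\bigl(\Kth(\check{K};\Z_p)^{hG}\bigr)\cong H^1_{\mathrm{cont}}(\widehat{\Z},\Kth_0(\check{K};\Z_p))\cong\Z_p$, whereas $\Kth_{-1}(K;\Z_p)=0$; so the ``spurious class in $\pi_{-1}$'' you propose to rule out is genuinely present in the homotopy fixed points and genuinely absent from $\Kth(K;\Z_p)$, and no rearrangement of the descent on the $\logTC$-side (via $\logTHH$, $\logTC^{-}$, $\logTP$) can make the identification $\Kth(\check{K};\Z_p)^{hG}\simeq\Kth(K;\Z_p)$ true. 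As written these paragraphs are a plan with an acknowledged obstruction, not a proof. The paper avoids the detour entirely: it settles the comparison of the first two columns over $\cO_K$ and $\F_q$ themselves by quoting Hesselholt--Madsen's Theorem D, with no passage to $\cO_K^{\mathrm{sh}}$ and no descent. If you want to keep your CMM-based setup, the missing ingredient is a direct argument that the Gysin map $\TC_{-1}(\F_q;\Z_p)\to\TC_{-1}(\cO_K;\Z_p)$ is an isomorphism (equivalently, that $\logTC((\cO_K,\langle\pi\rangle);\Z_p)$ is connective); note that this does not follow from the self-intersection formula, which fails for the non-$\A^1$-invariant theory $\TC$.
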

\begin{proof}
Combine \cite[Theorem D]{zbMATH00938856} and Corollary \ref{trace.6}.
\end{proof}

The following removes the assumption of resolution of singularities in \cite[Proposition 7.10]{BPO3}.
Let $L_\seta^\wedge$ denote the strict \'etale hypersheafification functor.

\begin{thm}
\label{trace.9}
Let $k$ be a perfect field of characteristic $p>0$.
For $X\in \SmlSm/k$,
there is an isomorphism of $\E_\infty$-rings
\[
(L_\seta^\wedge \Log \Kth(X))_p^\wedge
\cong
\logTC(X;\Z_p).
\]
\end{thm}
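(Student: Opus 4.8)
The plan is to reduce the statement to the known comparison theorem between \'etale K-theory and $p$-adic TC for schemes due to Clausen-Mathew-Morrow, exactly as in \cite[Theorem A]{MR4280864}. The starting point is Theorem \ref{boundarify.29}, which gives $\Log \Kth(X) \cong \Kth(X-\partial X)$ as $\E_\infty$-rings for $X \in \RglRg$ (and in particular for $X \in \SmlSm/k$). So the left-hand side becomes $(L_\seta^\wedge \Kth(X-\partial X))_p^\wedge$, where the strict \'etale topology on $\SmlSm/k$ restricts to the ordinary \'etale topology on the open complement; I would first check that strict \'etale hypersheafification followed by $p$-completion, when evaluated on $X-\partial X$, agrees with the (ordinary) \'etale hypersheafification of $K$-theory on $\Sm/k$ $p$-completed, i.e. $K^{\et}(X-\partial X;\Z_p)$. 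This is essentially because $X-\partial X$ has trivial log structure and strict \'etale covers of objects with trivial log structure are just \'etale covers.

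Next I would invoke the Clausen-Mathew-Morrow theorem \cite[Theorem A]{MR4280864}: for a scheme over a perfect field of characteristic $p$ (or more generally in bounded $p$-typical situations), $K^{\et}(-;\Z_p) \cong \TC(-;\Z_p)$ after $\et$-sheafification, with a suitable connectivity/truncation caveat. Applied to $X-\partial X$ this gives $(L_\seta^\wedge \Log \Kth(X))_p^\wedge \cong (L_{\et}^\wedge \TC(-;\Z_p))(X-\partial X)$. The remaining task is to identify $(L_{\et}^\wedge \TC(-;\Z_p))(X-\partial X)$ with $\logTC(X;\Z_p)$. For this I would use Theorem \ref{boundarify.20}, which gives $\Log \TC(X) \cong \logTC(X)$ as $\E_\infty$-rings, together with the fact that $\logTC$ already satisfies strict \'etale hyperdescent on $\SmlSm/k$ in the relevant range — this should follow from Theorem \ref{boundary.6}-style arguments applied to $\TC$ (which is a cdh sheaf and whose restriction to $\Sm/k$ is... not $\A^1$-invariant, so instead I would argue via $\Log$) or, more directly, from \cite[Proposition 7.10]{BPO3} whose resolution-of-singularities hypothesis is precisely what the present theorem removes. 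The cleanest route is: apply $L_\seta^\wedge(-)_p^\wedge$ to the $\E_\infty$-ring isomorphism $\Log \Kth(X)\cong \Kth(X-\partial X)$ and to $\Log\TC(X)\cong \logTC(X)$, use CMM on the open part, and use that $\logTC(-;\Z_p)$ is already a strict \'etale hypersheaf (which one proves using $\blogTHH$ \'etale descent from \cite[Corollary II.1.7]{zbMATH07009201} and the fact that $\THH$ of the chart pieces has \'etale descent).

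The key compatibility I would spell out carefully is that the cyclotomic trace $\Tr$ is compatible with all of this: the isomorphism $(L_\seta^\wedge \Log\Kth(X))_p^\wedge \cong \logTC(X;\Z_p)$ should be realized by the log cyclotomic trace $\logTr$ of Theorem \ref{Trace.2}, $p$-completed and $\seta$-hypersheafified. Concretely, $\logTr$ fits into a square relating it to $\Tr$ on $X-\partial X$ (as in Remark \ref{trace.3}), and CMM's theorem says $\Tr$ becomes an equivalence after $\et$-sheafification and $p$-completion. Chasing this square, the $p$-completed strict-\'etale-hypersheafified $\logTr$ is an equivalence. The main obstacle I anticipate is the bookkeeping around hypersheafification: one must verify that strict \'etale hyperdescent for $\logTHH$/$\logTC$ holds on all of $\SmlSm/k$ (not just on charted pieces), that $L_\seta^\wedge$ commutes with the relevant colimits defining $\Log$, and that the truncation/connectivity hypotheses in \cite[Theorem A]{MR4280864} are met in this generality — these are the places where a careless argument would break, and where the hypothesis ``$k$ perfect of characteristic $p>0$'' is genuinely used. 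An alternative, perhaps safer, structuring is to prove directly that both sides are strict \'etale hypersheaves agreeing on objects with trivial log structure (where the statement is CMM) and then deduce the general case by descent, since every $X \in \SmlSm/k$ is covered by objects admitting charts and the argument of Theorem \ref{boundarify.29}/\ref{boundary.6} propagates the isomorphism along the number of boundary components.
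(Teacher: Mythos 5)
There is a genuine gap, and it sits in your very first reduction. The object $(L_\seta^\wedge \Log \Kth(X))_p^\wedge$ is the strict \'etale \emph{hypersheafification of the presheaf} $\Log\Kth$ on $\SmlSm/k$ (equivalently, by Theorem \ref{boundarify.29}, of the presheaf $Y\mapsto \Kth(Y-\partial Y)$), \emph{evaluated at $X$}. The covers of $X$ in the strict \'etale topology are \'etale covers of the underlying scheme $\ul{X}$; these restrict to \'etale covers of $X-\partial X$, but they generate only a small part of the \'etale site of $X-\partial X$ — for instance, for $X=(\A^1_k,\{0\})$ no Kummer cover of $\G_m$ ramified at $0$ arises this way. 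So $(L_\seta^\wedge \Log\Kth(X))_p^\wedge$ is \emph{not} $\Kth^{\et}(X-\partial X;\Z_p)$, and you cannot invoke Clausen--Mathew--Morrow (or Geisser--Hesselholt) on the open complement. The same problem recurs at the other end: $\logTC(X;\Z_p)$ is not the \'etale sheafification of $\TC(X-\partial X;\Z_p)$; the entire point of $\logTC$ is that it differs from invariants of the open part. Your two identifications together are essentially a restatement of the theorem, not a reduction of it.

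The correct mechanism is the one you only gesture at in your last sentence, and even there the phrase ``deduce the general case by descent'' is misleading: strict \'etale descent does not reduce to the trivial-log-structure case, because strict \'etale covers never trivialize the log structure. What the paper does is: (i) use that $\logTC$ is already a strict \'etale hypersheaf to produce the map $(L_\seta^\wedge\Log\Kth(X))_p^\wedge\to \logTC(X;\Z_p)$ from $\logTr$; (ii) settle the base case $\partial X=\emptyset$ by Geisser--Hesselholt for smooth schemes over a perfect field of characteristic $p$; (iii) run an induction on the number of irreducible components of $\partial X$ using the \emph{compatible localization/Gysin fiber sequences} of Corollary \ref{trace.7}, which compare
\[
\Kth(Z-\partial Z)\to \Kth(X-\partial X)\to \Kth(X-(\partial X\cup Z))
\quad\text{with}\quad
\logTC(Z)\to\logTC(X)\to\logTC(X,Z)
\]
via $\logTr$. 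Applying $L_\seta^\wedge(-)_p^\wedge$ to both rows, the outer vertical maps are equivalences by induction, hence so is the middle one. Corollary \ref{trace.7} is the indispensable input your proposal never names; without it the induction on boundary components has no engine. If you restructure your argument around (i)--(iii) — dropping entirely the detour through $\Kth^{\et}(X-\partial X)$ — you recover the paper's proof.
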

\begin{proof}
Since the presheaf $\logTC$ on $\lSch$ is a strict \'etale hypersheaf by \cite[Proposition 8.3.13]{BPO2},
we have the induced natural morphism of $\E_\infty$-rings
\[
\logTr\colon
L_\seta^\wedge \Log \Kth(X)\to \logTC(X).
\]
Take the $p$-completions on both sides to obtain
\[
\logTr\colon (L_\seta^\wedge \Log \Kth(X))_p^\wedge \to \logTC(X;\Z_p).
\]

If $X$ has the trivial log structure,
then the claim is due to Geisser-Hesselholt \cite[Theorem 4.2.6]{zbMATH01421292}.

For general $X$,
suppose that $Z\in \Rg$ is a closed subscheme of $\ul{X}$ such that $\partial X+Z$ is a strict normal crossing divisor on $\ul{X}$.
Corollary \ref{trace.7} yields a morphism of fiber sequences
\[
\begin{tikzcd}
(L_\seta^\wedge \Log \Kth(Z))_p^\wedge\ar[d,"\logTr"']\ar[r]&
(L_\seta^\wedge \Log \Kth(X))_p^\wedge\ar[d,"\logTr"]\ar[r]&
(L_\seta^\wedge \Log \Kth(X,Z))_p^\wedge\ar[d,"\logTr"]
\\
\logTC(Z;\Z_p)\ar[r]&
\logTC(X;\Z_p)\ar[r]&
\logTC((X,Z);\Z_p).
\end{tikzcd}
\]
We finish the proof by induction on the number of irreducible components of $\partial X$.
\end{proof}

\begin{thm}
\label{trace.10}
Let $A$ be a henselian discrete valuation ring of mixed characteristic $(0,p)$.
For proper $X\in \SmlSm/A$,
there is an isomorphism of $\E_\infty$-rings
\[
(L_\seta^\wedge \Log \Kth(X))_p^\wedge
\cong
\logTC(X;\Z_p).
\]
\end{thm}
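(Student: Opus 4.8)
The plan is to follow the proof of Theorem~\ref{trace.9} almost line by line, substituting a mixed-characteristic comparison theorem for the equal-characteristic one. First I would set up the log cyclotomic trace at the level of strict \'etale hypersheaves: by \cite[Proposition 8.3.13]{BPO2} the presheaf $\logTC$ on $\lSch$ is a strict \'etale hypersheaf, so Theorem~\ref{Trace.2} induces a natural morphism of $\E_\infty$-rings $\logTr\colon L_\seta^\wedge\Log\Kth(X)\to\logTC(X)$, and after $p$-completion a natural morphism of $\E_\infty$-rings $(L_\seta^\wedge\Log\Kth(X))_p^\wedge\to\logTC(X;\Z_p)$. It then remains to prove this is an isomorphism for every proper $X\in\SmlSm/A$; note that $\SmlSm/A\subset\RglRg$ since $A$, being a discrete valuation ring, is regular, so that Theorems~\ref{boundarify.29} and Corollary~\ref{trace.7} apply.

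Next I would reduce to the trivial-log-structure case by induction on the number $d$ of irreducible components of $\partial X$. For $d>0$ choose an irreducible component $Z$ of $\partial X$; it is smooth over $A$ and, being a closed subscheme of the proper $A$-scheme $\ul X$, it is again proper over $A$, while $\partial X$ is a strict normal crossing divisor on $\ul X$ of which $Z$ is a component. Corollary~\ref{trace.7} then yields, after strict \'etale hypersheafification and $p$-completion, a morphism of cofiber sequences relating $(L_\seta^\wedge\Log\Kth(-))_p^\wedge$ and $\logTC(-;\Z_p)$ on log schemes with underlying scheme $\ul X$ and strictly fewer boundary components, together with $X$ itself; by the inductive hypothesis the outer two vertical maps are isomorphisms, hence so is the middle one. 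This is the exact induction carried out in the proof of Theorem~\ref{trace.9}.

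It remains to treat the base case: $X\in\Sm/A$ proper, with trivial log structure. Then Theorem~\ref{boundarify.29} gives $\Log\Kth(X)\cong\Kth(X)$, so the claim becomes $(L_\seta^\wedge\Kth(X))_p^\wedge\cong\TC(X;\Z_p)$. Where the proof of Theorem~\ref{trace.9} invoked Geisser--Hesselholt's comparison \cite{zbMATH01421292} over a perfect field of characteristic $p$, here I would invoke the comparison of Clausen--Mathew--Morrow \cite[Theorem A]{MR4280864} between $p$-complete \'etale K-theory and topological cyclic homology for rings henselian along $(p)$, together with $p$-adic proper base change over the special fibre: properness of $X$ over the henselian base $A$ is precisely what makes this comparison applicable (for $X$ not proper over $A$ the identification already fails on the generic fibre). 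In particular this is the only step in which both the mixed-characteristic hypothesis and the properness hypothesis genuinely enter.

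The main obstacle is therefore this base case. Unlike in equal characteristic, the generic fibre $X_K$ lives in characteristic $0$, where $\Kth(-;\Z_p)$ and $\TC(-;\Z_p)$ genuinely differ (indeed $\TC(R;\Z_p)=0$ for any $\Q$-algebra $R$, since $\THH(R)\simeq\HH(R/\Q)$ is uniquely $p$-divisible), so the asserted identification cannot be checked stalkwise on the strict \'etale site and must be a global consequence of properness over the henselian base. Pinning down the precise mechanism---identifying $(L_\seta^\wedge\Kth(X))_p^\wedge$ with $\TC(X;\Z_p)$ via $p$-adic continuity and proper base change together with the henselian-pair theorem, and tracking the $\E_\infty$-ring (and, for the $\THH$-statements implicit in Corollary~\ref{trace.7}, cyclotomic) structures throughout---is the technical heart of the argument; everything else, including the construction of $\logTr$ and the reduction to trivial log structures, is formal once Theorem~\ref{Trace.2} and Corollary~\ref{trace.7} are in hand.
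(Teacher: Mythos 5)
Your argument is correct and follows the paper's proof essentially verbatim: set up $\logTr$ on strict \'etale hypersheaves via \cite[Proposition 8.3.13]{BPO2}, induct on the number of components of $\partial X$ using Corollary \ref{trace.7} exactly as in Theorem \ref{trace.9}, and reduce to the trivial-log-structure base case. The only difference is that for that base case the paper simply cites Geisser--Hesselholt's comparison theorem for smooth proper schemes over henselian discrete valuation rings of mixed characteristic, whereas you propose to rederive that statement from Clausen--Mathew--Morrow plus proper base change --- a valid (and in the literature, standard) alternative, but one that replaces a one-line citation with the technical work you correctly identify in your last two paragraphs.
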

\begin{proof}
If $X$ has the trivial log structure,
then this is due to Geisser-Hesselholt \cite[Theorem A]{GeissHesTrans}.
Argue as in Theorem \ref{trace.9} to conclude.
\end{proof}

\begin{thm}
\label{trace.8}
Let $R$ be a strictly henselian regular local ring with residue characteristic $p>0$,
and let $x_1,\ldots,x_n$ be a regular sequence of $R$ such that the divisor $(x_1)+\cdots +(x_n)$ is strict normal crossing.
Then there is a natural isomorphism of $\E_\infty$-rings
\[
\Kth(R[1/x_1,\ldots,1/x_n];\Z_p)
\cong
\logTC(R,(x_1)+\cdots+(x_n);\Z_p).
\]
\end{thm}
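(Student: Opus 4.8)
The plan is to translate the statement into the assertion that the log cyclotomic trace is an equivalence after $p$-completion, and then to prove this by first peeling off the divisor components one at a time and second reducing the resulting one-component situations to equal characteristic. Concretely: put $X := (\Spec R, (x_1)+\cdots+(x_n))$, which lies in $\RglRg$ (log regularity by \cite[Example III.1.11.9]{Ogu}) and has $X-\partial X = \Spec R[1/x_1,\ldots,1/x_n]$. Theorem \ref{boundarify.29} gives a natural isomorphism of $\E_\infty$-rings $\Log\Kth(X) \cong \Kth(R[1/x_1,\ldots,1/x_n])$, and Theorem \ref{Trace.2} (see also Remark \ref{trace.3}) supplies the $\E_\infty$-ring map $\logTr\colon \Kth(R[1/x_1,\ldots,1/x_n]) \to \logTC(X)$; so it suffices to show $\logTr$ is an equivalence after $p$-completion, and, since a map of $\E_\infty$-rings that is an equivalence of underlying spectra is an equivalence of $\E_\infty$-rings, we may argue with spectra.

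First I would induct on $n$. Applying Corollary \ref{trace.7} to $(\Spec R, (x_1)+\cdots+(x_{n-1})) \in \RglRg$ and the closed immersion $\Spec(R/x_n) \hookrightarrow \Spec R$ (whose induced boundary is $(x_1)+\cdots+(x_{n-1})$ restricted along $R \to R/x_n$, still strict normal crossing) produces a map of cofiber sequences of spectra whose three columns are $\logTr$ for $(\Spec R/x_n, (x_1)+\cdots+(x_{n-1})|_{R/x_n})$, for $(\Spec R, (x_1)+\cdots+(x_{n-1}))$, and for $X$. As $R/x_n$ is again strictly henselian regular local with residue characteristic $p$, the inductive hypothesis makes the first two columns equivalences after $p$-completion, hence so is the third; this reduces us to the case $n=1$. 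For a single component $(\Spec R,(x))$ with $R \supseteq \F_p$, Néron–Popescu desingularization writes $R$ as a filtered colimit of smooth algebras over its residue field $k$ (separably closed, hence perfect), and one may arrange $x$ together with the strict normal crossing condition at a finite stage, so that $X = \lim_\alpha X_\alpha$ along strict affine transition maps with $X_\alpha \in \SmlSm/k$. Since $\Kth(-;\Z_p)$ and $\TC(-;\Z_p)$, and hence $\logTC(-;\Z_p)$ via the decomposition of Lemma \ref{trace.15}, commute with such filtered colimits, and since the small strict étale site of the strictly henselian $X$ is trivial so that $L_\seta^\wedge$ does not change its value, the case $n=1$, $R\supseteq\F_p$ follows from Theorem \ref{trace.9} applied to each $X_\alpha$.

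The remaining, and hardest, case is a single component $(\Spec R,(x))$ with $R$ of mixed characteristic $(0,p)$. When $(x)=(p)$ this is a higher-dimensional refinement of Theorem \ref{trace.16}: one writes $R$ as a filtered colimit of finitely presented smooth $\Z$-algebras (Néron–Popescu, $\Z \to R$ being a regular map) and reduces, again by finitariness and strict étale locality, to the input of \cite[Theorem D]{zbMATH00938856} together with Corollary \ref{trace.6} in this relative setting. When $(x)\ne(p)$ the pair $(R,(p))$ is henselian, so the comparison of $K$-theory and topological cyclic homology for henselian pairs \cite[Theorem A]{MR4280864} shows that the fiber of $\logTr$ is unchanged by reduction modulo $p$ — using the naturality of the deformation-to-the-normal-cone constructions underlying Corollary \ref{trace.7} — which brings us back to the equal-characteristic case treated above. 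I expect the main obstacle to be exactly this mixed-characteristic one-component step: organizing the localization and Gysin sequences, the Néron–Popescu limits, the finitariness of $\TC(-;\Z_p)$ and $\logTC(-;\Z_p)$, and the henselian-pair comparison into coherently compatible maps of cofiber sequences of spectra, so that the ``two out of three'' principle applies; the $\E_\infty$-bookkeeping throughout should be routine, following the sifted-colimit argument in the proof of Theorem \ref{boundarify.20}.
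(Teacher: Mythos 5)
Your first two paragraphs are exactly the paper's argument: identify $\Log\Kth(X)$ with $\Kth(R[1/x_1,\ldots,1/x_n])$ via Theorem \ref{boundarify.29}, and run the induction on the number of components of the divisor using the map of cofiber sequences from Corollary \ref{trace.7}. But you have misidentified where the induction terminates. In the inductive step both of the left-hand columns carry divisors with $n-1$ components, so the recursion bottoms out at $n=0$, not at $n=1$. The $n=0$ statement is $\Kth(R;\Z_p)\cong\TC(R;\Z_p)$ for a strictly henselian local ring with residue characteristic $p$, which is precisely the Clausen--Mathew--Morrow theorem \cite[Theorem A]{MR4280864} that you yourself invoke later; this is what the paper uses as the base case, and with it the proof is complete after your second paragraph.

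Consequently your third and fourth paragraphs are not needed, and it is fortunate that they are not, because they contain the only genuine gaps in the proposal. In the equal-characteristic $n=1$ step, commuting $L_\seta^\wedge\Log\Kth$ from Theorem \ref{trace.9} past the N\'eron--Popescu filtered limit requires a continuity/hypercompleteness argument that you assert rather than give. More seriously, in the mixed-characteristic case with $(x)\neq(p)$ you claim that the henselian-pair comparison shows ``the fiber of $\logTr$ is unchanged by reduction modulo $p$''; but \cite[Theorem A]{MR4280864} compares $\Kth$ and $\TC$ of rings along a henselian surjection, and does not by itself control the fiber of the \emph{log} cyclotomic trace $\Log\Kth(\Spec R,(x))\to\logTC(\Spec R,(x))$ under $R\to R/p$ --- the boundary divisor and the log structure on $\logTC$ do not reduce along the pair in any way that the cited theorem addresses. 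You flag this yourself as ``the main obstacle,'' and it is indeed unresolved as written. The fix is simply to delete these two paragraphs and close the induction at $n=0$ with Clausen--Mathew--Morrow.
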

\begin{proof}
The case $n=0$ without regularity assumption is due to Clausen-Mathew-Morrow \cite[Theorem A]{GeissHesTrans}.
Argue as in Theorem \ref{trace.9} to conclude.
\end{proof}

\begin{const}
Let $S$ be the spectrum of a quasi-syntomic ring in the sense of \cite[Definition 4.10]{BMS19}.
Consider the very effective slice filtration $\tilde{\fil}_n\colon \logSH(S)\to \logSH(S)$ with graded pieces $\tilde{\slice}_n$ for integers $n$ in \cite[Construction 2.4]{BPO3} and the Bhatt-Morrow-Scholze filtration $\Fil_n^\BMS\blogTC$ in \cite[Definition 5.10]{BPO3}.
By \cite[Lemma 2.12, Theorem 5.17]{BPO3},
the log cyclotomic trace induces a natural morphism
\[
\tilde{\fil}_n\Log\KGL \to \Fil_n^\BMS \blogTC
\]
in $\logSH(S)$ for every integer $n$.
Its graded pieces are
\[
\tilde{\slice}_n \Log \KGL \to \MZ_p^\syn(n)[2n],
\]
see \cite[Definition 5.12, Proposition 5.13]{BPO3} for the right-hand side.
In \cite[Theorem 1.4(1)]{BPO3}, the construction of such a morphism assumed that $S$ is the spectrum of a perfect field admitting resolution of singularities,
but now this condition is no longer needed.
Unfortunately, we still need resolution of singularities to identify $\tilde{\slice}_0\Log\KGL$ with the motivic cohomology spectrum,
i.e., we have not improved \cite[Theorem 1.1]{loghomotopy} and hence \cite[Theorem 3.4]{BPO3}.

It would be also an interesting question to compare $\tilde{\slice}_0\Log\KGL$ with the Elmanto-Morrow motivic cohomology \cite{Elmanto_Morrow}. 
\end{const}

\appendix

\section{Pointed monoids}

A monoid in this paper means a commutative monoid. Let $0$ denote the identity of a monoid.
As rings are building blocks for schemes,
pointed monoids are building blocks for monoid schemes.
In this section,
we review the theory of pointed monoids following \cite[\S 1]{CHWW}.
See also \cite[\S 1]{Ogu} for the theory of monoids.

\begin{df}
\label{mon.1}
A \emph{pointed monoid} is a monoid $A$ with a fixed element $\infty$\footnote{With the multiplicative notation, $0$ denotes the base point, and $1$ denotes the identity.},
which we call the \emph{base point of $A$},
such that $x+\infty=\infty$ for every $x\in A$.
A \emph{map of pointed monoids} is a map of monoids preserving the base points.
\end{df}

\begin{df}
\label{mon.2}
For a monoid $M$,
let $\F_1[M]$ denote the pointed monoid $M\cup \{\infty\}$ with the base point $\infty$ such that its monoid operation extends the monoid operation of $M$.
We set $\F_1:=\F_1[0]$, where $0$ means the zero monoid.
\end{df}

\begin{df}
\label{mon.3}
A monoid $M$ is \emph{integral} if $a+b=a+c$ with $a,b,c\in M$ implies $b=c$.
In this case,
$M$ is a submonoid of the group completion $M^\gp$.
A pointed monoid $A$ is \emph{cancellative} if $a+b=a+c$ with $a\in A-\{\infty\}$ and $b,c\in A$ implies $b=c$.

If $A$ is cancellative,
then $A-\{\infty\}$ is an integral monoid,
and we have $A\cong \F_1[A-\{\infty\}]$.
On the other hand,
if $M$ is an integral monoid,
then $\F_1[M]$ is cancellative.
\end{df}

\begin{df}
A pointed monoid $A$ is \emph{torsion free} if $na=nb$ with $a,b\in A$ and integer $n>0$ implies $a=b$.

If $A$ is cancellative and $(A-\{\infty\})^\gp$ is a torsion free abelian group,
then $A$ is torsion free.
\end{df}

\begin{df}
\label{mon.11}
A monoid $M$ is \emph{saturated} if it is integral and satisfies the following condition: For $x\in M^\gp$, if there exists an integer $n>0$ such that $nx\in M$, then we have $x\in M$.
A pointed monoid $A$ is \emph{normal} if it is cancellative and $A-\{\infty\}$ is saturated.

For a monoid $M$,
we have the saturation $M^\sat$ \cite[Proposition I.1.3.5(1)]{Ogu}.
For a cancellative monoid $A$,
we have the normalization
\begin{equation}
\label{mon.11.1}
A^\nor
:=
\F_1[(A-\{\infty\})^\sat].
\end{equation}
\end{df}

\begin{rmk}
Let $\Mon$ be the category of monoids,
and let $\Mon_*$ be the category of pointed monoids.
Then we have the adjoint functors
\[
\F_1[-]\colon \Mon \rightleftarrows \Mon_*:U,
\]
where $U$ is the forgetful functor.
\end{rmk}

\begin{rmk}
Let $\Ring$ be the category of commutative rings.
Then we have the adjoint functors
\[
\Z[-]\colon \Mon \rightleftarrows \Ring:U,
\]
where $U$ is the forgetful functor,
and $\Z[M]$ denotes the monoid ring for a monoid $M$.
Note that an element of $\Z[M]$ is written as a formal sum $\Sigma_{t\in M} a_tx^t$ with $a_t\in \Z$.
\end{rmk}

\begin{prop}
\label{mon.13}
Consider the functor
\[
U\colon \Ring\to \Mon_*
\]
forgetting the additive structure and setting $0$ in the ring as the base point $\infty$.
This admits a left adjoint sending a pointed monoid $A$ to the quotient ring $\Z[A]/(x^\infty)$.
\end{prop}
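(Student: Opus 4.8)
The plan is to exhibit, for every pointed monoid $A$ and every ring $R$, a bijection
\[
\Hom_{\Ring}(\Z[A]/(x^\infty),R)\cong \Hom_{\Mon_*}(A,U(R))
\]
natural in both arguments, after first checking that $A\mapsto \Z[A]/(x^\infty)$ is a functor; this exhibits the asserted left adjoint. Throughout, $x^\infty$ denotes the monomial $x^t$ attached to the base point $t=\infty\in A$ inside the monoid ring $\Z[A]$ of the underlying (ordinary) monoid of $A$, and $(x^\infty)$ is the principal ideal it generates.

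First I would verify functoriality of $\Phi\colon \Mon_*\to \Ring$, $\Phi(A):=\Z[A]/(x^\infty)$. A map of pointed monoids $f\colon A\to B$ induces a ring map $\Z[f]\colon \Z[A]\to \Z[B]$ with $\Z[f](x^\infty)=x^{f(\infty)}=x^\infty$, since $f$ preserves base points; hence $\Z[f]$ descends to a ring map $\Phi(A)\to \Phi(B)$, and compatibility with identities and composites is immediate. It is also worth recording (though not strictly needed) that for $f=\sum_t a_tx^t\in \Z[A]$ one has $x^\infty f=\big(\sum_t a_t\big)x^\infty$, so $(x^\infty)=\Z\,x^\infty$ and $\Phi(A)$ is the free abelian group on $A\setminus\{\infty\}$ equipped with the induced multiplication.

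Then I would derive the adjunction from the classical monoid-ring adjunction $\Z[-]\colon \Mon\rightleftarrows \Ring\colon U$ recalled above, where $U(R)$ denotes the multiplicative monoid of $R$; it supplies a natural bijection $\Hom_{\Ring}(\Z[A],R)\cong \Hom_{\Mon}(A,U(R))$, sending a ring map $\varphi$ to the monoid map $t\mapsto \varphi(x^t)$. Now $\Hom_{\Ring}(\Phi(A),R)$ is exactly the subset of $\Hom_{\Ring}(\Z[A],R)$ consisting of those $\varphi$ with $\varphi(x^\infty)=0$, i.e.\ those whose associated monoid map sends $\infty$ to $0_R$. Since $0_R$ is the absorbing element of the multiplicative monoid $U(R)$, so that $U(R)$ with unit $1_R$ and base point $0_R$ is genuinely a pointed monoid, a monoid map $A\to U(R)$ carrying $\infty$ to $0_R$ is precisely a map of pointed monoids $A\to U(R)$ in the sense of the proposition. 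Thus the monoid-ring bijection restricts to
\[
\Hom_{\Ring}(\Phi(A),R)\cong \Hom_{\Mon_*}(A,U(R)).
\]

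Finally I would note that this bijection is natural in $A$ and $R$, being a restriction of the natural transformation furnished by the monoid-ring adjunction, which establishes $\Phi\dashv U$. The hard part here is essentially nil: the one step deserving care is the translation, in the middle of the argument, between the factorization condition $\varphi(x^\infty)=0$ for descending through the quotient and the base-point-preservation condition defining $\Hom_{\Mon_*}$, together with the trivial but necessary check that $U(R)$ really is a pointed monoid in the required sense.
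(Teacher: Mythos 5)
Your proof is correct and in substance identical to the paper's: both identify $\Hom_{\Ring}(\Z[A]/(x^\infty),R)$ with $\Hom_{\Mon_*}(A,U(R))$ via $\varphi\mapsto(t\mapsto\varphi(x^t))$ and its evident inverse. The only cosmetic difference is that you obtain the bijection by restricting the classical adjunction $\Z[-]\dashv U$ on unpointed monoids to the maps killing $x^\infty$, whereas the paper writes the two mutually inverse maps directly and leaves "check they are inverses" to the reader; your route discharges that check for free.
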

\begin{proof}
Let $R$ be a ring.
We have the natural map
\begin{equation}
\label{mon.13.1}
\Hom_{\Mon_*}(A,U(R))
\to
\Hom_{\Ring}(\Z[A]/(x^\infty),R)
\end{equation}
sending a map $\theta\colon A\to U(R)$ to the map $\Z[A]/(x^\infty)\to R$ given by $x^t\mapsto \theta(t)$.
We have the natural map
\begin{equation}
\label{mon.13.2}
\Hom_{\Ring}(\Z[A]/(x^\infty),R)
\to
\Hom_{\Mon_*}(A,U(R))
\end{equation}
sending a map $f\colon \Z[A]/(x^\infty)\to R$ to the map $A\to U(R)$ given by $t\mapsto f(x^t)$.
Check that \eqref{mon.13.1} and \eqref{mon.13.2} are inverses to each other.
\end{proof}

\begin{const}
\label{mon.4}
We can unify $\Mon_*$ and $\Ring$ into a single category $\Mon_*\cup \Ring$,
whose objects are the pointed monoids and rings,
and whose hom sets are given as follows:
\[
\Hom_{\Mon_*\cup \Ring}(A,B)
:=
\left\{
\begin{array}{ll}
\Hom_{\Mon_*}(A,B) & \text{if }A,B\in \Mon_*,
\\
\Hom_{\Ring}(A,B) & \text{if }A,B\in \Ring,
\\
\Hom_{\Mon_*}(A,U(B)) & \text{if }A\in \Mon_*\text{ and }B\in \Ring_*,
\\
\emptyset & \text{if }A\in \Ring\text{ and }B\in \Mon_*.\end{array}
\right.
\]
Let $\otimes$ denote the coproduct in $\Mon_*\cup \Ring$.
Observe that $\F_1$ is an initial object of $\Mon_*\cup \Ring$,
so we have $\otimes = \otimes_{\F_1}$.
\end{const}

\begin{exm}
\label{mon.12}
By \cite[Explanations after Remark 1.7.1]{CHWW},
we have the following descriptions of coproducts of pointed monoids.

For pointed monoids $A$ and $B$,
$A\otimes B$ is the smash product,
i.e., the quotient set of $A\times B$ such that $(a,\infty)$ and $(\infty,b)$ are identified with $\infty$ for all $a\in A$ and $b\in B$.
For maps of pointed monoids $A\to B,C$, $B\otimes_A C$ is the quotient of $B\otimes C$ by the congruence relation generated by $(b+a,c)=(b,a+c)$ for $a\in A$, $b\in B$, and $c\in C$.
\end{exm}

\begin{exm}
\label{mon.5}
For monoids $M$ and $N$,
we have a natural isomorphism
\[
\F_1[M]\otimes_{\F_1} \F_1[N]
\cong
\F_1[M\oplus N],
\]
where $M\oplus N$ denotes the coproduct in the category of monoids.
\end{exm}

\begin{exm}
\label{mon.6}
For a pointed monoid $A$ and a ring $R$,
consider the monoid ring $R[A]$ whose elements are written as a formal sum $\Sigma_{t\in A} a_t x^t$ with $a_t\in R$.
Using Proposition \ref{mon.13},
one can show
\[
A\otimes_{\F_1}R\cong R[A]/(x^\infty).
\]
For a monoid $M$,
we also have
\[
\F_1[M]\otimes_{\F_1} R
\cong
R[M].
\]
\end{exm}

\begin{rmk}
\label{mon.7}
In the category $\Mon_*\cup \Ring$,
we also have
\[
A\otimes_{\F_1}B\cong A\otimes_\Z B.
\]
for $A,B\in \Ring$.
However, this is against the philosophy that $\Z\otimes_{\F_1}\Z$ should be neither a ring nor pointed monoid, see e.g.\ \cite[End of \S 1]{zbMATH06894815}.
Hence we avoid using the notation $A\otimes_{\F_1}B$.
The reason why we introduce $\Mon_*\cup \Ring$ is to justify the notation like $R\otimes_A B$ with $A,B\in \Mon_*$ and $R\in \Ring$.
\end{rmk}

\begin{df}
\label{mon.8}
An \emph{ideal $I$} of a pointed monoid $A$ is a subset satisfying the following two conditions:
\begin{enumerate}
\item[(i)] $\infty\in I$.
\item[(ii)] $r\in I$ and $a\in A$ implies $r+a\in I$.
\end{enumerate}

If $A=\F_1[M]$ for some monoid $M$,
then there is a one-to-one correspondence between ideals of $M$ in the sense of \cite[Definition I.1.4.1]{Ogu} and ideals of $A$ given by $(I\subset M)\mapsto(I\cup \{\infty\}\subset A)$.

An ideal $I$ of $A$ is \emph{prime} if $I\neq A$ and $A-I$ is closed under $+$.
\end{df}

\begin{df}
For a pointed monoid $A$,
let $A^*$ denote the set of units of $A$,

A map of pointed monoids $A\to B$ is \emph{local} if it is local as a map of monoids in the sense of \cite[Definition I.4.1.1(1)]{Ogu}, i.e., $\theta^{-1}(B^*)=A^*$.
\end{df}

\begin{df}
For a pointed monoid $A$ and its nonempty subset $S$ closed under $+$,
the \emph{localization $A_S$} is the quotient set of $A\times S$ under the following equivalence relation: $(a,s)\sim (a',s')$ if $a+s'+s''=a'+s+s''$ for some $s''\in S$.
Observe that $A_S$ is a pointed monoid whose monoid operation is the induced one.

For a prime ideal $\mathfrak{p}$ of $A$,
we set $A_\mathfrak{p}:=A_{A-\mathfrak{p}}$.
\end{df}

\section{Monoid schemes}
\label{msch}

In this section,
we review the theory of monoid schemes following \cite{CHWW}.

\begin{df}
\label{msch.2}
A \emph{pointed monoidal space}\footnote{This is called \emph{monoid space} in \cite[\S 2]{CHWW}.
The reason for our choice of the terminology is that we also use the terminology \emph{monoidal spaces}, see Definition \ref{msch.1}.} $X$ is a topological space $X$ equipped with a sheaf of pointed monoids $\cO_X$ on $X$.
A \emph{morphism of pointed monoidal spaces} $f\colon X\to S$ is a morphism of topological spaces $f\colon X\to S$ equipped with a morphism of sheaves of pointed monoids $f^\sharp\colon  \cO_S\to f_*\cO_X$ such that the induced map of the stalks $f^\sharp_x\colon \cO_{S,f(x)}\to \cO_{X,x}$ is local.
\end{df}

\begin{df}
\label{msch.3}
For a pointed monoid $A$, the \emph{affine monoid scheme} $\Spec(A)$ is defined as follows.
The underlying topological space of $\Spec(A)$ is the set of primes ideals of $A$ endowed with the Zariski topology, which is generated by the base consisting $D(f)$ for all $f\in A$ with
\[
D(f):=\{\mathfrak{p}\in \Spec(A):f\notin \mathfrak{p}\}.
\]
The assignment $D(f)\mapsto A_f$ defines a presheaf on the base,
and let $\cO_{\Spec(A)}$ be the associated sheaf on $\Spec(A)$.
\end{df}

\begin{df}
\label{msch.4}
A \emph{monoid scheme} is a pointed monoidal space admitting an open covering $\{U_i\}_{i\in I}$ such that each $U_i$ is an affine monoid scheme.
\end{df}

\begin{rmk}
The category of monoid schemes admits fiber products by \cite[Proposition 3.1]{CHWW}. Its proof also shows that fiber products of affine monoid schemes is affine.
\end{rmk}

\begin{prop}
Let $A$ be a pointed monoid,
and let $X$ be a pointed monoidal space.
Then there is a natural isomorphism
\[
\Hom(X,\Spec(A))
\cong
\Hom(A,\Gamma(X,\cO_X)).
\]
\end{prop}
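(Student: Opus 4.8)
The statement is the standard adjunction between global sections and $\Spec$ for pointed monoidal spaces, exactly parallel to the classical case of schemes (EGA I, or Hartshorne II.2.3); the plan is to mimic that proof. First I would construct the natural map in one direction: given a morphism of pointed monoidal spaces $f\colon X\to \Spec(A)$, apply the sheaf morphism $f^\sharp\colon \cO_{\Spec(A)}\to f_*\cO_X$ on global sections and compose with the canonical map $A\to \Gamma(\Spec(A),\cO_{\Spec(A)})$ (which is in fact an isomorphism, since $\Gamma(\Spec(A),\cO_{\Spec(A)})=A_{\emptyset}$-type reasoning; but even if one does not know it is an isomorphism, one only needs the canonical map). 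This produces a map $A\to \Gamma(X,\cO_X)$, and naturality in $X$ is formal.

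For the reverse direction I would start from a map of pointed monoids $\varphi\colon A\to \Gamma(X,\cO_X)$ and build a morphism $f\colon X\to \Spec(A)$. On points: for $x\in X$ one has the composite $A\xrightarrow{\varphi}\Gamma(X,\cO_X)\to \cO_{X,x}$, and since $\cO_{X,x}$ is a local pointed monoid (its non-units form the unique maximal ideal), the preimage of the maximal ideal is a prime ideal of $A$; set $f(x)$ to be that prime. One checks continuity by verifying that $f^{-1}(D(a))=\{x: \varphi(a)_x \text{ is a unit in }\cO_{X,x}\}$, which is open because being a unit is an open condition on sections of a sheaf of pointed monoids (the locus where a section is invertible is open, since invertibility can be checked stalk-locally and a local inverse on an open set glues). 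To define $f^\sharp$, note that on a basic open $D(a)\subseteq\Spec(A)$ we have $\cO_{\Spec(A)}(D(a))=A_a$, and $\varphi$ together with the fact that $\varphi(a)$ becomes a unit on $f^{-1}(D(a))$ induces a unique map $A_a\to \cO_X(f^{-1}(D(a)))$ by the universal property of localization of pointed monoids; these are compatible with restriction, so they glue to $f^\sharp\colon \cO_{\Spec(A)}\to f_*\cO_X$. Locality of $f^\sharp_x$ holds by construction, since $f(x)$ was defined precisely as the contraction of the maximal ideal.

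Finally I would check that the two constructions are mutually inverse. One direction: starting from $f$, extracting $\varphi$, and rebuilding a morphism, one verifies it agrees with $f$ on underlying spaces (the prime $f(x)$ is recovered as the contraction of the maximal ideal of $\cO_{X,x}$ along $f^\sharp_x$, which is forced by locality) and on structure sheaves (both $f^\sharp$ and the rebuilt one restrict on $D(a)$ to the unique map $A_a\to \cO_X(f^{-1}D(a))$ extending $\varphi$). The other direction: starting from $\varphi$, building $f$, and taking global sections recovers $\varphi$ because the map on $D(1)=\Spec(A)$ is by construction the one induced by $\varphi$. All of these verifications are routine diagram chases once the localization universal property for pointed monoids (Definition of $A_S$ in Appendix A) is in place.

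\textbf{Main obstacle.} The one point that needs a little care rather than pure formality is the continuity of $f$ and, relatedly, that ``invertibility of a section is an open condition'' for sheaves of pointed monoids; this is where the topology of $\Spec(A)$ (generated by the $D(a)$) and the sheaf axioms interact. Everything else — constructing $f^\sharp$ via the universal property of localization, checking locality, and the two round-trip identities — is a direct transcription of the classical scheme-theoretic argument and should be stated briefly rather than belabored.
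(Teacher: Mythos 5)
Your proposal is correct and is exactly the argument the paper intends: the paper's proof is the one-line "Argue as in [Ogus, Proposition II.1.2.2]," i.e., the standard $\Spec$/global-sections adjunction for monoidal spaces transported to the pointed setting, which is what you spell out. The points you flag (openness of the invertibility locus, the universal property of $A_a$, and locality of $f^\sharp_x$ forcing $f(x)$ to be the contraction of the maximal ideal of non-units) are precisely the places where the pointed-monoid analogues of the classical facts are needed, and they all go through as you describe.
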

\begin{proof}
Argue as in \cite[Proposition II.1.2.2]{Ogu}.
\end{proof}

\begin{df}
For a pointed monoid $A$,
we define $\Spec(A)\otimes_{\F_1}\Z:=\Spec(A\otimes_{\F_1}\Z)$.
For a monoid scheme $X$,
glue this local construction to define $X\otimes_{\F_1} \Z$.
\end{df}

\begin{const}
\label{msch.6}
We can unify the categories of schemes and monoid schemes into a single category, whose objects are the schemes and monoid schemes,
and whose hom sets are given as follows:
\begin{enumerate}
\item[(i)]
If $S$ and $X$ are both schemes or both monoid schemes,
then $\Hom(X,S)$ is the original one.
\item[(ii)]
If $S$ is a scheme and $X$ is a monoid scheme,
then $\Hom(X,S):=\emptyset$.
\item[(iii)]
If $S$ is a monoid scheme and $X$ is a scheme,
then
\[
\Hom(X,S):=\Hom(X,S\otimes_{\F_1}\Z).
\]
\end{enumerate}

In this category, the product $\times$ agrees with the fiber product $\times_{\Spec(\F_1)}$ since $\Spec(\F_1)$ is a final object.

For a monoid scheme $X$ and a ring $R$,
the \emph{$R$-realization of $X$} is
$X_R:=X\times_{\Spec(\F_1)}\Spec(R)$.
The local description of $X_R$ is in Example \ref{mon.6}.
\end{const}

\begin{df}
A morphism of monoid schemes $j\colon U\to X$ is an \emph{open immersion} if $j$ is a homeomorphism onto an open subset of $X$ and the induced morphism $j^{-1}\cO_X\to \cO_U$ is an isomorphism.
Observe that the class of open immersions is closed under compositions and pullbacks.
When $U$ is regarded as an open subset of $X$,
we say that $U$ is an \emph{open subscheme of $X$}.
\end{df}

\begin{df}
A morphism of monoid schemes $Z\to X$ is a \emph{closed immersion} if for every open affine subscheme $U$ of $X$,
$Z\times_X U$ is affine, and the induced map $\Gamma(U,\cO_U)\to \Gamma(Z\times_X U,\cO_{Z\times_X U})$ is surjective.

Observe that in this case, $Z\to X$ is a homeomorphism onto its image as topological spaces,
which was included as a condition of closed immersions in \cite[Definition 2.5]{CHWW}.
Indeed, we can work locally on $X$ for the claim,
and then use \cite[Lemma 2.7]{CHWW}.

When $Z$ is regarded as a subset of $X$,
we say that $Z$ is a \emph{closed subscheme of $X$}.
\end{df}

\begin{prop}
\label{msch.20}
The class of closed immersions is closed under compositions and pullbacks.
\end{prop}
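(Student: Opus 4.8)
\textbf{Proof proposal for Proposition \ref{msch.20}.}

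The plan is to reduce everything to the affine case and then to a purely algebraic statement about maps of pointed monoids, exactly as one does for ordinary schemes. Since closed immersions are defined by a condition that is local on the target, both closure properties are local on the base, so I may freely shrink the base to an affine open subscheme at will.

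For compositions: let $Z \to Y \to X$ be closed immersions. To check that $Z \to X$ is a closed immersion I may assume $X = \Spec(A)$ is affine. Then $Y \times_X X = Y$ is affine, say $Y = \Spec(B)$, and $\Gamma(X,\cO_X) = A \to B = \Gamma(Y,\cO_Y)$ is surjective. Now I must show that for every affine open $U \subseteq X$, the fiber product $Z \times_X U$ is affine with surjective coordinate map. First I would cover $Y$ by affine opens of the form $\Spec(B_f)$ for $f \in B$; since $A \to B$ is surjective I may lift $f$ to $g \in A$, so these opens are of the form $Y \times_X \Spec(A_g) = \Spec(B \otimes_A A_g)$, i.e.\ they are pulled back from principal opens of $X$. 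Applying the closed-immersion hypothesis for $Z \to Y$ to this affine open cover of $Y$, the pieces $Z \times_Y \Spec(B_f)$ are affine with surjective structure map from $B_f$; since $B_f$ is a quotient of $A_g$, the composite $A_g \to \Gamma(Z \times_Y \Spec(B_f))$ is surjective. This realizes $Z \times_X \Spec(A_g)$ (for $g$ ranging over lifts) as affine with surjective coordinate map, and since such principal opens form a base of $X$ this suffices — but I should be slightly careful that "affine with surjective coordinate map over a base of the topology" upgrades to the same over every affine open; this is the standard affine-communication argument, and here it is routine because surjectivity of $A \to \Gamma(Z\times_X U)$ localizes well, using that localization of pointed monoids is exact in the relevant sense (Definition for $A_S$ in Appendix A).

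For pullbacks: let $Z \to X$ be a closed immersion and $X' \to X$ any morphism of monoid schemes; I must show $Z' := Z \times_X X' \to X'$ is a closed immersion. Working locally, assume $X = \Spec(A)$, $X' = \Spec(A')$, $Z = \Spec(B)$ with $A \to B$ surjective. Then $Z' = \Spec(B \otimes_A A')$, and by Example \ref{mon.12} the pointed monoid $B \otimes_A A'$ is a quotient of $B \otimes A'$, which in turn receives a surjection from $A' \otimes A' $ modulo the $A$-congruence; in any case the natural map $A' \to B \otimes_A A'$ is surjective because $A \to B$ is (the smash/pushout construction preserves surjections of pointed monoids). For a general affine open $U' \subseteq X'$ one repeats this after restricting, using that $Z' \times_{X'} U' = Z \times_X U'$ and that $U' \to X$ factors through some affine open of $X$ Zariski-locally. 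The key step — and the only place any thought is needed — is verifying that the pushout of pointed monoids along a surjection is again a surjection, and that this is compatible with localization so that the affine-communication lemma applies; everything else is bookkeeping parallel to the scheme-theoretic proof, and the reference \cite[Lemma 2.7]{CHWW} handles the point-set topology aspect that was folded into the old definition.
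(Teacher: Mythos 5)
Your overall strategy coincides with the paper's: reduce both closure properties to the affine case and to statements about surjections of pointed monoids. But the one step you yourself single out as ``the only place any thought is needed'' --- that the pushout $A' \to A' \otimes_A B$ of a surjection $A \to B$ along $\theta\colon A \to A'$ is again surjective --- is asserted rather than proved, and the gestured justification is not valid. This is \emph{not} a formal consequence of pushouts preserving epimorphisms, because epimorphisms of (pointed) monoids need not be surjective (e.g.\ $\N \to \Z$); and your intermediate claim that $B \otimes A'$ ``receives a surjection from $A' \otimes A'$ modulo the $A$-congruence'' does not parse into an argument. The actual verification, which is the entire content of the paper's proof of the pullback case, is the explicit computation: given $(b,a') \in B \otimes_A A'$ with $b,a' \neq \infty$, lift $b$ to $a \in A$ using surjectivity of $A \to B$; then the congruence of Example \ref{mon.12} gives $(b,a') = (0, \theta(a)+a')$, which lies in the image of $A'$. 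You need to supply this (or an equivalent) computation.

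Your treatment of compositions is correct in substance but takes an unnecessary detour. Since an affine open $U \subseteq X$ pulls back to an \emph{affine open} $Y \times_X U$ of $Y$ (affine by the definition of closed immersion applied to $Y \to X$, open because open immersions pull back), you can apply the definition of the closed immersion $Z \to Y$ directly to $Y \times_X U$ and compose the two surjections on global sections; no covering of $Y$ by principal opens, no lifting of $f \in B$ to $g \in A$, and no affine-communication argument are needed. As written, your claim that the principal opens $\Spec(A_g)$ you construct ``form a base of $X$'' is false --- they only cover a neighborhood of the image of $Y$ --- and the promised upgrade ``from a base to every affine open'' is left unproved (though in this setting Remark \ref{msch.9} shows every affine open of $\Spec(A)$ is already a principal localization, which is why the direct argument goes through without any such lemma).
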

\begin{proof}
Let $W\to Z\to X$ be closed immersions of monoid schemes.
To show that the composite $W\to X$ is a closed immersion,
we can work Zariski locally on $X$.
Hence we may assume that $X$ is affine.
To conclude,
observe that the composite of surjective maps is surjective.

Similarly,
to show that the class of closed immersions is closed under pullbacks,
it suffices to show that for every surjective map of pointed monoids $A\to B$ and a map of pointed monoids $\theta\colon A\to A'$,
the pushout $A'\to A'\otimes_A B$ is surjective.
For every $(a',b)\in A'\otimes_A B$ such that $a',b\neq \infty$,
there exists $a\in A$ whose image in $B$ is $b$.
Then we have $(a',b)=(a'+\theta(a),0)$ by Example \ref{mon.12},
which shows the desired claim.
\end{proof}

\begin{df}
\label{logSH.9}
The \emph{Zariski distinguished square in $\Sm/\F_1$} is the set of cartesian squares
\begin{equation}
\label{logSH.9.1}
\begin{tikzcd}
W\ar[d]\ar[r]&
V\ar[d]
\\
U\ar[r]&
X
\end{tikzcd}
\end{equation}
in $\Sm/\F_1$ such that $U\to X$ and $V\to X$ are jointly surjective open immersions.
The \emph{Zariski cd-structure on $\Sm/\F_1$} is the set of Zariski distinguished squares in $\Sm/\F_1$.
The associated topology is the \emph{Zariski topology}.
We refer to \cite[Definition 2.1]{Vcdtop} for the notion of cd-structures.
\end{df}

\begin{rmk}
\label{logSH.11}
Consider a Zariski distinguished square in $\Sm/\F_1$ of the form \eqref{logSH.9.1}.
Then the induced square
\[
\begin{tikzcd}
W\ar[d]\ar[r]&
V\ar[d]
\\
W\times_U W\ar[r]&
V\times_X V
\end{tikzcd}
\]
is a Zariski distinguished square,
where the vertical morphisms are the diagonal morphisms.
Furthermore,
any pullback of Zariski distinguished squares in $\Sm/\F_1$ is a Zariski distinguished square.
In particular,
the Zariski cd-structure on $\Sm/\F_1$ is complete and regular in the sense of \cite[Definitions 2.3, 2.10]{Vcdtop} by \cite[Lemmas 2.5, 2.11]{Vcdtop},
see \cite[Theorem 12.7]{CHWW}.
Furthermore,
it is bounded in the sense of \cite[Definition 2.22]{Vcdtop} by \cite[Theorem 12.8]{CHWW}.
\end{rmk}

\begin{rmk}
\label{msch.9}
Let $A$ be a pointed monoid.
Then every affine open subscheme of $\Spec(A)$ is of the form $\Spec(A_\mathfrak{p})$ for some prime ideal of $A$ by \cite[Lemma 2.4]{CHWW}.
Furthermore,
$A_\mathfrak{p}=A_s$ for some element $s\in A$ by \cite[Lemma 1.3]{CHWW}.
\end{rmk}

\begin{df}
\label{msch.7}
A monoid scheme $X$ is \emph{cancellative} (resp.\ \emph{normal}, \emph{torsion free}, \emph{locally of finite type}) if the stalk $\cO_{X,x}$ is cancellative (resp.\ normal, torsion free, finitely generated) for every $x\in X$.
This is equivalent to saying that there exists an open neighborhood $U$ of $x$ such that $\Gamma(U,\cO_U)$ is cancellative (resp.\ normal, resp.\ torsion free, resp.\ finitely generated) for every $x\in X$ by Remark \ref{msch.9}.

A monoid scheme $X$ is \emph{of finite type} if $X$ is quasi-compact and locally of finite type.
In this case,
observe that the underlying topological space of $X$ has only a finite number of points.

For a cancellative monoid scheme $X$,
we have the \emph{normalization} $X_\mathrm{nor}$ of $X$ whose local description is given as \eqref{mon.11.1}.
\end{df}

\begin{df}
An integral monoid $M$ is \emph{valuative} if $x\in M^\gp$ implies either $x\in M$ or $-x\in M$. As observed in \cite[p.\ 14]{Ogu},
every valuative monoid is saturated.
\end{df}

\begin{df}
\label{msch.11}
Let $f\colon X\to S$ be a morphism of monoid schemes.
\begin{enumerate}
\item[(1)]
$f$ is \emph{separated} if the diagonal morphism $X\to X\times_S X$ is a closed immersion.
\item[(2)]
$f$ satisfies the \emph{valuative criterion of separateness} (resp.\ \emph{properness}) if for every commutative square of monoid schemes
\[
\begin{tikzcd}
\Spec(\F_1[V^\gp])\ar[d]\ar[r]&
X\ar[d,"f"]
\\
\Spec(\F_1[V])\ar[ru,dashed]\ar[r]&
S
\end{tikzcd}
\]
with a valuative monoid $V$,
there exists at most one morphism (resp.\ a unique morphism) of monoid schemes $\Spec(\F_1[V])\to X$ such that the resulting diagram still commutes.
\end{enumerate}

When $X$ and $S$ are of finite type,
$f$ is \emph{proper} if it satisfies the valuative criterion of properness.
\end{df}

\begin{rmk}
\label{msch.19}
Let $k$ be a field.
For a morphism of monoid schemes $f\colon X\to S$,
$f$ satisfies the valuative criterion of separateness (resp.\ properness) if and only if the $k$-realization $f_k\colon X_k\to S_k$ satisfies the valuative criterion of separateness (resp.\ properness), see \cite[Theorem 8.9]{CHWW}.
\end{rmk}

\begin{exm}
\label{logmonoid.15}
A closed immersion of monoid schemes satisfies the valuative criterion of properness by \cite[Corollary 8.6]{CHWW}.
For every cancellative monoid scheme of finite type $X$,
the canonical morphism $X^\nor\to X$ satisfies the valuative criterion of properness by \cite[Propositions 6.3, 8.5]{CHWW}.
\end{exm}

\begin{prop}
Let $f\colon X\to S$ be a morphism of monoid schemes of finite type.
Then $f$ satisfies the valuative criterion of separateness if and only if $f$ is separated.
\end{prop}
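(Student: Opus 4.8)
The plan is to reduce the statement to the classical valuative criterion of separateness for finite type morphisms of noetherian schemes by passing to a realization over a field. Fix a field $k$ and consider the $k$-realization $f_k\colon X_k\to S_k$, which is a morphism of finite type $k$-schemes, hence of noetherian schemes. On the one hand, $f_k$ is a separated morphism of schemes if and only if it satisfies the scheme-theoretic valuative criterion of separateness (valuation rings, equivalently discrete valuation rings suffice here). On the other hand, by Remark \ref{msch.19} the morphism $f_k$ satisfies the valuative criterion of separateness if and only if $f$ satisfies the valuative criterion of separateness in the sense of Definition \ref{msch.11}. Thus the proposition follows once we show that $f$ is separated (as a morphism of monoid schemes) if and only if $f_k$ is separated (as a morphism of schemes).

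The forward implication is formal. Realization commutes with fiber products of monoid schemes and with $\Spec$ of pushouts of pointed monoids, using Proposition \ref{mon.13} and Example \ref{mon.6}; so $(X\times_S X)_k\cong X_k\times_{S_k}X_k$ and $(\Delta_{X/S})_k=\Delta_{X_k/S_k}$. Moreover $k[-]/(x^\infty)$ sends a surjection of pointed monoids to a surjection of rings (compare monomials), so the $k$-realization of a closed immersion of monoid schemes is a closed immersion of schemes. Hence if $\Delta_{X/S}$ is a closed immersion, so is $\Delta_{X_k/S_k}$, i.e.\ $f$ separated implies $f_k$ separated. For the converse, I first record the standard fact that $\Delta_{X/S}$ is always a closed immersion into an open subscheme $W\subseteq X\times_S X$: choose an affine open cover $\{U_i=\Spec A_i\}$ of $X$ with each $U_i$ mapping into an affine open $\Spec B_i$ of $S$, and set $W:=\bigcup_i\, U_i\times_S U_i$. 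Then $W$ is open, contains $\Delta(X)$, satisfies $\Delta^{-1}(U_i\times_S U_i)=U_i$, and over $U_i\times_S U_i$ the diagonal factors as the closed immersion attached to the canonical surjection $A_i\otimes_{B_i}A_i\to A_i$ followed by the isomorphism $U_i\times_{\Spec B_i}U_i\xrightarrow{\ \sim\ }U_i\times_S U_i$, the latter being the base change of the diagonal of the open immersion $\Spec B_i\hookrightarrow S$, which is an isomorphism because open immersions are monomorphisms. Consequently $f$ is separated if and only if $\Delta(X)$ is closed in $X\times_S X$.

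So for the converse it remains to deduce closedness of $\Delta(X)\subseteq X\times_S X$ from that of $\Delta(X_k)=(\Delta_{X/S})_k(X_k)\subseteq X_k\times_{S_k}X_k$, and then to upgrade this to the statement that $\Delta_{X/S}$ satisfies the defining conditions of a closed immersion in Section \ref{msch}; this is checked affine-locally on $X\times_S X$ using that affineness of a finite type monoid scheme and surjectivity of a finitely generated map of pointed monoids are both detected after applying $k[-]/(x^\infty)$ (together with the finiteness of the underlying spaces coming from $X,S$ being of finite type, so that closedness may be tested via stability under specialization). I expect this descent step — transferring the topological and algebraic content of ``$\Delta$ is a closed immersion'' between schemes and monoid schemes — to be the main obstacle; everything else is the formal bookkeeping above plus the cited criteria. (A parallel, more self-contained route avoids realization: with $\Delta_{X/S}$ an immersion as above, for ``valuative criterion $\Rightarrow$ separated'' one realizes a specialization $z\leadsto z'$ in the finite type monoid scheme $X\times_S X$ by a morphism $\Spec\F_1[V]\to X\times_S X$ with $V$ valuative carrying the generic point of $\Spec\F_1[V^\gp]$ to $z$ and the closed point to $z'$, lifts it by hypothesis, and concludes $z'\in\Delta(X)$; for ``separated $\Rightarrow$ valuative criterion'' two competing lifts $g_1,g_2\colon\Spec\F_1[V]\to X$ yield $(g_1,g_2)\colon\Spec\F_1[V]\to X\times_S X$ landing in the closed set $\Delta(X)$ on the dense open $\Spec\F_1[V^\gp]$, hence everywhere since $\F_1[V]$ is cancellative, so $(g_1,g_2)$ factors through the monomorphism $\Delta_{X/S}$ and $g_1=g_2$. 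In this route the obstacle is the existence of the witnessing $\Spec\F_1[V]$, i.e.\ that specializations in finite type monoid schemes are dominated by valuative monoids.)
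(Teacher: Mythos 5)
Your overall route is exactly the paper's: pass to the $k$-realization, invoke Remark \ref{msch.19} to transfer the valuative criterion between $f$ and $f_k$, use the classical valuative criterion of separateness for the finite type morphism of schemes $f_k$, and finally identify separatedness of $f$ with separatedness of $f_k$. The paper's entire proof is a three-line citation chain, and the third link — the equivalence ``$f$ is separated $\iff$ $f_k$ is separated'' — is precisely \cite[Proposition 5.13]{CHWW}, which the paper quotes without reproving.

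This is also where your write-up has its gap. You prove the forward implication (realization of a closed immersion is a closed immersion, realization commutes with fiber products and diagonals), but for the converse you only reduce the problem to ``deduce closedness of $\Delta(X)$ in $X\times_S X$ from closedness of $\Delta(X_k)$, then upgrade to the surjectivity condition in the definition of closed immersion,'' and you explicitly flag this descent step as the main obstacle rather than carrying it out. As stated, your argument therefore proves only one direction of the proposition. The missing step is true and is exactly the content of \cite[Proposition 5.13]{CHWW} (for finite type monoid schemes, separatedness can be tested on the realization, using that the underlying space of $X\times_S X$ is finite and that the canonical map $X_k\times_{S_k}X_k\to X\times_S X$ is surjective and specialization-compatible); either cite it, as the paper does, or supply that descent argument. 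Your parenthetical ``self-contained'' alternative via valuative monoids dominating specializations is closer in spirit to how \cite{CHWW} proves their Proposition 8.5/8.9-type statements, but it too hinges on an existence claim (a valuative monoid witnessing a given specialization in a finite type monoid scheme) that you leave unestablished.
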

\begin{proof}
Use the valuative criterion of separateness for schemes, \cite[Proposition 5.13]{CHWW}, and Remark \ref{msch.19}.
\end{proof}

\begin{df}
\label{msch.21}
A monoid scheme $X$ is \emph{toric} if it is connected, separated, normal, torsion free, and of finite type.
\end{df}

\begin{rmk}
\label{msch.8}
For every fan $\Sigma$,
one can naturally associate a monoid scheme by \cite[Theorem 4.4]{CHWW},
which we often denote by the same notation $\Sigma$ when no confusion seems likely to arise.
A monoid scheme is toric if and only if it is associated with a fan by \cite[Theorem 4.4]{CHWW}.
\end{rmk}

\begin{df}
\label{msch.22}
A toric monoid scheme is \emph{smooth} if it is associated with a smooth fan. 
Let $\Sm/\F_1$ denote the category of smooth toric monoid schemes.
\end{df}

\begin{df}
\label{msch.16}
Let $M$ be a toric monoid, i.e., $M$ is fine saturated and $M^\gp$ is torsion free.
We use the notation $\Spec(\F_1[M])$ for the fan in the dual lattice $(M^\gp)^\vee$ whose single maximal cone is the dual monoid $M^\vee$ because its associated monoid scheme can be identified with $\Spec(\F_1[M])$ in the sense of Definition \ref{msch.3}.
\end{df}

\begin{df}
\label{msch.17}
Let $\A^1$ be the fan $\Spec(\F_1[\N])$.
Let $\G_m$ be the fan $\Spec(\F_1[\Z])$.
For $n\in \N$,
let $\P^n$ be the fan in $\Z^n$ whose maximal cones are $\Cone(e_1,\ldots,e_n)$ and
\[
\Cone(e_1,\ldots,e_{i-1},e_{i+1},\ldots,e_n,-e_1-\cdots-e_n)
\]
for integers $1\leq i\leq n$,
where $e_1,\ldots,e_n$ are the standard coordinates in $\Z^n$.

Following the convention in Remark \ref{msch.8},
we use the same notations $\A^1$, $\G_m$, and $\P^n$ for the associated toric monoid schemes.
\end{df}

\begin{rmk}
For a ring $R$ and integer $n$,
the schemes $\A_{R}^1$, $\G_{m,R}$, and $\P_{R}^n$ obtained by Construction \ref{msch.6} and Definition \ref{msch.17} are the usual ones.
\end{rmk}

\begin{df}
\label{msch.18}
Let $\Sigma$ be a fan in a lattice $N$,
and let $\sigma$ be a cone of $\Sigma$.
Consider the projection $\ol{(-)}\colon N\to N/N_\sigma$,
where $N_\sigma$ denotes the sublattice of $N$ generated by $\sigma$.
Let $V(\sigma)$ (denoted $\mathrm{Star}(\sigma)$ in \cite[(3.2.8)]{CLStoric}) be the fan in $N/N_\sigma$ consisting of $\ol{\tau}$ for all cones $\tau$ of $\Sigma$ containing $\sigma$.
\end{df}

\begin{const}
\label{msch.10}
Let $\Sigma$ be a fan with a cone $\tau$.
In \cite[p.\ 53]{Fulton:1436535},
a natural morphism of schemes $V(\tau)_\C\to \Sigma_{\C}$ was constructed.
Let us promote this to a natural morphism of monoid scheme $V(\tau)\to \Sigma$ as follows.

If $\sigma$ is a cone of $\Sigma$ containing $\tau$,
then we have the map
\[
\F_1[\sigma^\vee]\to \F_1[\sigma^\vee\cap \tau^\bot]
\]
sending $x$ to $x$ if $x\in \sigma^\vee\cap \tau^\bot$ and to $\infty$ otherwise,
where $\sigma^\vee$ denotes the dual cone of $\sigma$,
and $\tau^\bot$ denotes set of $y$ in the dual lattice of $\Sigma$ such that $\langle x,y\rangle =0$ for all $x\in \tau$.
If $\sigma'$ is a cone of $\Sigma$ containing $\sigma$,
then the induced square
\[
\begin{tikzcd}
\F_1[\sigma'^\vee]\ar[d]\ar[r]&
\F_1[\sigma'^\vee\cap \tau^\bot]\ar[d]
\\
\F_1[\sigma^\vee]\ar[r]&
\F_1[\sigma^\vee\cap \tau^\bot]
\end{tikzcd}
\]
commutes.
Hence we can glue these maps to construct a natural morphism $V(\tau)\to \Sigma$.

Observe that the morphism of monoid schemes $V(\tau)\to \Sigma$ is \emph{not} a morphism in the category of fans and hence not a morphism in the category of monoschemes in the sense of \cite[Definition II.1.2.3]{Ogu} even though $V(\tau)$ and $\Sigma$ are fans.
Working with monoid schemes has this advantage.
\end{const}

\begin{df}
Let $X$ be a monoid scheme.
A \emph{rational point of $X$} is a morphism of monoid schemes $\Spec(\F_1)\to X$.
\end{df}

\begin{exm}
\label{msch.15}
There are exactly two maps of pointed monoids $\F_1[\N] \to \F_1$ given by $n\mapsto 0$ and $n\mapsto \infty$ for $n\in \N^+$.
Hence $\A^1$ has exactly two rational points $0$ and $1$.

There is exactly one map of pointed monoids $\F_1[\Z] \to \F_1$ given by $n\mapsto 0$ for $n\in \Z$.
Hence $\G_m$ has exactly one rational point $1$.
Furthermore,
the obvious open immersion $\G_m\to \A^1$ identifies $1$ in $\G_m$ with $1$ in $\A^1$.

Hence $\P^1$ has exactly three rational points $0$, $1$, and $\infty$.
\end{exm}

\begin{df}
\label{msch.12}
A morphism of monoid schemes $f\colon X\to S$ is \emph{birational} if there exists a dense open subscheme $U$ of $S$ such that $f^{-1}(U)$ is dense in $X$ and the induced morphism $f^{-1}(U)\to U$ is an isomorphism.
\end{df}

\begin{df}
A morphism of fans $f\colon \Delta\to \Sigma$ is a \emph{partial subdivision} if $f$ induces an isomorphism between the lattices of $\Delta$ and $\Sigma$.
A partial subdivision $f$ is a \emph{subdivision} if $f$ sends the support $\lvert \Delta \rvert$ onto the support $\lvert \Sigma \rvert$.
\end{df}

\begin{exm}
\label{msch.13}
A subdivision of fans $\Delta\to \Sigma$ is the same as a proper birational morphism of toric monoid schemes under the identification of fans into toric monoid schemes,
see \cite[Corollary 10.2]{CHWW}.
Similarly, a partial subdivision of fans corresponds to a birational morphism of toric monoid schemes by \cite[Theorem 4.4(2)]{CHWW}.
\end{exm}

\begin{df}
\label{image.1}
Let $f\colon X\to S$ be a morphism of monoid schemes.
The \emph{scheme theoretic image of $f$} is the closed subscheme $Z$ of $S$ factoring $f$ that is initial among such closed subschemes.
If $f$ is an open immersion,
then $Z$ is also called the \emph{closure of $X$ in $S$}.

For the existence of the scheme theoretic image,
we refer to \cite[Proposition 9.3]{CHWW},
which also provides the local description as follows:
Suppose $S=\Spec(A)$ for some pointed monoid $A$.
Then the scheme theoretic image of $f$ is identified with the closed subscheme $\Spec(C)$ of $S$,
where $C$ is the image of the induced map $A\to \Gamma(X,\cO_X)$.
\end{df}

\begin{prop}
\label{image.5}
Let $X$ be a cancellative monoid scheme,
and let $U$ be its dense open subscheme.
Then the closure of $U$ in $X$ is $X$.
\end{prop}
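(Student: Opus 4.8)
The plan is to reduce the statement, via the explicit local description of the scheme‑theoretic image in Definition \ref{image.1}, to the injectivity of a localization map of pointed monoids. The closure $Z$ of $U$ in $X$ is by definition the scheme‑theoretic image of the open immersion $j\colon U\to X$, hence a closed subscheme of $X$, and the equality $Z=X$ may be checked on an affine open cover of $X$; over an affine open $V=\Spec(A)$ the intersection $Z\cap V$ is the closure of $U\cap V$ in $V$, both facts being immediate from the local description in Definition \ref{image.1} together with the fact that taking images commutes with the localizations $A\mapsto A_s$. Since $X$ is cancellative, so is $A$, and $U\cap V$ is again dense and open in $V$ (the intersection of a dense subset with an open subset is dense in that open). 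Thus I am reduced to showing: for a cancellative pointed monoid $A$ and a dense open subscheme $W$ of $\Spec(A)$, the restriction map $A\to\Gamma(W,\cO_W)$ is injective, since then the image $C$ of this map equals $A$ and the closure is all of $\Spec(A)$.

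So let $A$ be cancellative with $W\subseteq\Spec(A)$ dense open. By Definition \ref{mon.3} we have $A\cong\F_1[M]$ for the integral monoid $M:=A-\{\infty\}$; in particular $M$ is closed under $+$ inside $A$. Consequently $\{\infty\}$ is a prime ideal of $A$, and it is contained in every prime ideal since $\infty$ belongs to every ideal; hence $\Spec(A)$ is irreducible with generic point $\eta$ corresponding to $\{\infty\}$, with $\overline{\{\eta\}}=\Spec(A)$. A dense open subset of an irreducible space necessarily contains its generic point (otherwise it lies in a proper closed subset and is empty), so $\eta\in W$.

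Now restrict once more to the stalk at $\eta$. The composite
\[
A\longrightarrow\Gamma(W,\cO_W)\longrightarrow\cO_{W,\eta}=\cO_{\Spec(A),\eta}=A_{\{\infty\}}
\]
is the localization map of $A$ at the multiplicative subset $A-\{\infty\}=M$. This map is injective: if $a,a'\in A$ have the same image, then $a+s=a'+s$ in $A$ for some $s\in M$, and cancelling $s$ (which lies in $A-\{\infty\}$, so cancellation applies) yields $a=a'$. Concretely, $A_{\{\infty\}}\cong\F_1[M^{\gp}]$ and the map is $\F_1[M]\hookrightarrow\F_1[M^{\gp}]$, injective because $M$ is integral. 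Since the above composite factors through $A\to\Gamma(W,\cO_W)$, the latter is injective, which is what was needed; hence $C=A$ and the closure of $U$ in $X$ equals $X$.

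I expect the only genuinely delicate point to be the reduction to the affine case: one must be sure that $Z=X$ can be tested on an affine open cover and that the closure of $U$ restricts to the closure of $U\cap V$ over an affine open $V$. Both are straightforward from the description of the scheme‑theoretic image recalled in Definition \ref{image.1} and from Proposition \ref{image.5}'s local setting, but they are where care is warranted. Everything after that is a direct computation with $\Spec(A)$, its stalks, and localizations of pointed monoids, as set up in Appendix \ref{msch}.
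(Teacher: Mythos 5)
Your proof is correct and follows essentially the same route as the paper: reduce to the affine case $X=\Spec(A)$ and deduce the claim from the injectivity of a localization map of the cancellative pointed monoid $A$. The only cosmetic difference is that you factor through the stalk at the generic point $\{\infty\}$ (inverting all of $M=A-\{\infty\}$), whereas the paper shrinks $U$ to a basic open $\Spec(A_s)$ and observes that $A\to A_s$ has image $A$.
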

\begin{proof}
We can work Zariski locally on $X$,
so we may assume that $X=\Spec(A)$ for some cancellative pointed monoid $A$.
We can also assume $U=\Spec(A_s)$ for some $s\in A-\{\infty\}$.
To conclude,
observe that the image of $A\to A_s$ is $A$ since $A$ is cancellative.
\end{proof}

\begin{lem}
\label{image.3}
Let $f\colon X\to S$ be a morphism of monoid schemes of finite type,
and let $Z$ be the scheme theoretic image of $f$.
If $X$ is toric and $S$ is separated,
Then $Z$ is cancellative, and its normalization $Z^\nor$ is toric.
\end{lem}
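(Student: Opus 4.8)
The statement is local on $S$, so I would first reduce to the affine case $S = \Spec(A)$ with $A$ a separated pointed monoid of finite type, and then to the affine case for $X$ as well, say $X = \Spec(B)$ where $B$ is a toric pointed monoid (i.e.\ $B = \F_1[M]$ for a toric monoid $M$, by Definitions \ref{msch.21} and \ref{msch.16}). By the local description of the scheme theoretic image in Definition \ref{image.1}, $Z = \Spec(C)$ where $C$ is the image of the induced map $A \to \Gamma(X,\cO_X) = B$. So the first task is to understand $C$ as a submonoid of $B = \F_1[M]$.

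The key observation is that $C$, being the image of a map of pointed monoids into a cancellative pointed monoid $\F_1[M]$, is itself cancellative: if $c + b = c + b'$ with $c \in C \setminus\{\infty\}$ and $b,b' \in C$, then the same equation holds in $\F_1[M]$, where $c$ is not the basepoint, so $b = b'$. Hence $C$ is cancellative, which gives the first assertion. Then $C \setminus \{\infty\}$ is an integral monoid, and it is a submonoid of $M$; moreover $C^\gp \setminus \{\infty\} = (C\setminus\{\infty\})^\gp$ embeds into $M^\gp$, which is torsion free, so $C$ is torsion free as well. Now $C^\nor = \F_1[(C\setminus\{\infty\})^\sat]$ by \eqref{mon.11.1}; its underlying monoid is saturated and torsion free by construction, and it is finitely generated because $C$ is finitely generated (as a quotient of a finitely generated submonoid — $A$ has finite type — and saturation of a finitely generated torsion free monoid is finitely generated by Gordan's lemma, \cite[Proposition I.1.3.5]{Ogu} or its toric-geometry avatar). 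So globally $Z^\nor$ is normal, torsion free, and of finite type.

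It remains to check that $Z^\nor$ is \emph{connected} and \emph{separated}, the two remaining conditions in Definition \ref{msch.21}. Connectedness: $X$ is toric hence connected, $f$ factors through $Z$, and the scheme theoretic image is the closure of the (topological) image of $f$; since $X$ is connected so is its image, hence so is $Z$, and normalization does not disconnect (the normalization of a connected cancellative monoid scheme is connected — reduce to the affine local statement via Remark \ref{msch.9} and the fact that $\F_1[M^\sat]$ has a unique closed point). Separatedness: here I would use that $Z$ is a closed subscheme of the separated $S$, so $Z \to S$ is separated (closed immersions are separated, Example \ref{logmonoid.15}, and separatedness is closed under composition), hence $Z$ is separated; and $Z^\nor \to Z$ is finite-type and affine, with $Z^\nor \to Z \to S$ still separated, so $Z^\nor$ is separated. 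One can also argue directly with the valuative criterion (Definition \ref{msch.11}, Remark \ref{msch.19}): the normalization map $Z^\nor \to Z$ and the closed immersion $Z \to S$ both satisfy the valuative criterion of separateness by Example \ref{logmonoid.15}, and the criterion is stable under composition.

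\textbf{Main obstacle.} The routine parts are cancellativity and torsion-freeness; the one step requiring genuine care is the \emph{finite generation / toric-ness} of $C^\nor$ globally, i.e.\ verifying that $Z^\nor$ is of finite type rather than merely locally of finite type, and in particular that the local pieces $\F_1[(C\setminus\{\infty\})^\sat]$ glue to a monoid scheme with finitely many points. This is where one must invoke that $X$ is of finite type (so its underlying space is finite, Definition \ref{msch.7}) together with $S$ of finite type, to control the number of affine charts and points of $Z$; combined with the fact that saturation preserves finite generation for torsion free fine monoids, this yields that $Z^\nor$ is of finite type, completing the verification that $Z^\nor$ is toric.
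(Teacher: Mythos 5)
Your proposal follows essentially the same route as the paper's proof: reduce to affine $S=\Spec(A)$, identify $Z=\Spec(C)$ with $C$ the image of $A\to\Gamma(X,\cO_X)$, deduce cancellativity and torsion-freeness by embedding this image into a cancellative torsion-free pointed monoid of the form $\F_1[N]$, get finite generation of $C$ from that of $A$, and invoke Gordan's lemma for the saturation. One step, however, does not work as stated: you cannot also ``reduce to the affine case for $X$.'' The scheme-theoretic image is local on the target $S$ but not on the source $X$; replacing $X$ by an affine open $X_i$ replaces $\Gamma(X,\cO_X)$ by $\Gamma(X_i,\cO_{X_i})$ and hence replaces $C$ by the image $C_i$ of $A$ in $\Gamma(X_i,\cO_{X_i})$, which is only a quotient of $C$, and cancellativity of all the $C_i$ does not formally yield cancellativity of $C$ (the embedding $C\hookrightarrow\prod_i C_i$ does not suffice, since a coordinate of a non-basepoint element may be the basepoint). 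Fortunately the reduction is unnecessary: since $X$ is toric, hence connected and given by a fan in a lattice $N$, the global sections $\Gamma(X,\cO_X)$ already embed into $\F_1[N]$, which is exactly how the paper argues, and your cancellativity and torsion-freeness argument applies verbatim to this global embedding. Two smaller points: your connectedness argument appeals to the identification of the scheme-theoretic image with the closure of the set-theoretic image, which is not established in the paper; the paper instead uses the initial property of $Z$ directly (a connected component $Z'$ of $Z$ meeting $f(X)$ has clopen preimage in the connected $X$, so $f$ factors through $Z'$ and initiality forces $Z=Z'$). On the other hand, you are more careful than the paper in checking that connectedness and separatedness pass from $Z$ to $Z^{\nor}$, which is a worthwhile supplement.
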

\begin{proof}
Let $Z'$ be a connected component of $Z$ such that $f(X)\cap Z'\neq \emptyset$.
Then $f^{-1}(Z')$ is closed and open in $X$.
Since $X$ is connected,
we have $f^{-1}(Z')=X$.
Hence $f$ factors through $Z'$,
so we have $Z=Z'$, i.e., $Z$ is connected.
Since $S$ is separated and $Z$ is a closed subscheme of $S$,
$Z$ is separated.

It remains to show that $Z$ is cancellative, torsion free, and of finite type since this implies that $Z^\nor$ is torsion free and of finite type and $Z^\nor$ is already normal.
This question is Zariski local on $S$,
so we may assume $S=\Spec(A)$ for some finitely generated pointed monoid $A$.
Then we have $Z=\Spec(C)$,
where $C$ is the image of the induced map $A\to \Gamma(X,\cO_X)$.
We can regard $X$ as a fan in a lattice $N$.
Since $\Gamma(X,\cO_X)$ is a submonoid of $\Gamma(X,\F_1[N])\cong \F_1[N]$,
$\Gamma(X,\cO_X)$ is cancellative and torsion free.
Hence $C$ is cancellative and torsion free.
To conclude,
observe that $C$ is finitely generated since $A$ is finitely generated.
\end{proof}

\begin{lem}
\label{image.4}
Let $j\colon U\to X$ be an open immersion of monoid schemes,
let $i\colon W\to U$ be a closed immersion of monoid schemes,
and let $Z$ be the scheme theoretic image of $ji$.
Then the induced morphism $W\to Z$ is an open immersion.
\end{lem}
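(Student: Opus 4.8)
The plan is to show that $W$ is an open subscheme of $Z$ and that the induced morphism $W\to Z$ is the corresponding inclusion. Set-theoretically this is almost immediate: $U$ is open in $X$, so for $W$ closed in $U$ the closure of $W$ in $X$ meets $U$ in exactly $W$ (closure in $X$ intersected with an open $U$ is the closure taken in $U$, which is $W$ itself). The real content of the lemma is the matching of \emph{scheme structures}, and the whole argument can be reduced to one formal input: the scheme-theoretic image commutes with restriction to open subschemes of the target.

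Concretely, I would first record the following. If $g\colon W'\to X'$ is a morphism of monoid schemes with scheme-theoretic image $Z'\hookrightarrow X'$ and $V\hookrightarrow X'$ is an open subscheme, then $Z'\times_{X'}V$ is the scheme-theoretic image of $g^{-1}(V)\to V$. This is Zariski-local on $X'$, so one may take $X'=\Spec A$; covering $V$ by principal opens it suffices to treat $V=D(a)$ for $a\in A$, and then the local description of the scheme-theoretic image in Definition \ref{image.1} turns the claim into the identity $\bigl(\im(A\to\Gamma(W',\cO_{W'}))\bigr)_a=\im\bigl(A_a\to\Gamma(g^{-1}(D(a)),\cO)\bigr)$ of submonoids, which follows by comparing sections over the principal open $D(a)$ with sections over $X'$. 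I expect this to be the only real obstacle; everything after it is formal.

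Granting this input, apply it to $g=ji\colon W\to X$ and the open subscheme $U\hookrightarrow X$. Since $(ji)^{-1}(U)=W$ and the induced morphism $W\to U$ is exactly the closed immersion $i$, we get that $Z\times_X U$ is the scheme-theoretic image of $i$; and the scheme-theoretic image of a closed immersion is the source itself, because locally $i$ corresponds to a surjection of pointed monoids $R\twoheadrightarrow S$ and $\im(R\to S)=S$. Hence $Z\times_X U=W$ as closed subschemes of $U$. Now $Z\times_X U=Z\cap U$ is an open subscheme of $Z$ (it is the restriction of the closed immersion $Z\hookrightarrow X$ along the open immersion $U\hookrightarrow X$), and the morphism $W\to Z$ through which $ji$ factors lands in $Z\cap U$, since $W\to Z\to X$ factors through $U$; so under the identification $W=Z\cap U$ it is precisely the open immersion $Z\cap U\hookrightarrow Z$. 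This completes the proof. As a sanity check, in the affine model $X=\Spec A$, $U=D(s)=\Spec A_s$, $W=\Spec B$ with $A_s\twoheadrightarrow B$, one has $Z=\Spec C$ with $C:=\im(A\to B)$, and a direct verification gives $C_{\bar s}\simeq B$ with $\bar s$ the image of $s$ in $C$, exhibiting $W$ as the principal open subscheme $D(\bar s)$ of $Z$.
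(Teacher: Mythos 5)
Your argument is correct in outline but takes a different route from the paper's. The paper works entirely in the affine model $X=\Spec(A)$, $U=\Spec(A_s)$, $W=\Spec(C)$ and directly verifies that $B:=\im(A\to C)$ localizes to $C$ at the image $t$ of $s$ (injectivity of $B_t\to C$ from $B\subseteq C$ and $t\in C^*$, surjectivity from surjectivity of $A_s\to C$); this is literally your closing ``sanity check.'' You instead factor the proof through two general principles: the scheme-theoretic image commutes with restriction to open subschemes of the target, and the scheme-theoretic image of a closed immersion is the source. That packaging is cleaner and reusable, and the second principle is immediate from the local description in Definition \ref{image.1}.

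The one place where your write-up is thinner than it should be is the verification of the first principle. The identity $\bigl(\im(A\to\Gamma(W',\cO_{W'}))\bigr)_a=\im\bigl(A_a\to\Gamma(g^{-1}(D(a)),\cO)\bigr)$ has an easy surjectivity half, but the injectivity half amounts to the statement that $\Gamma(W',\cO_{W'})_a\to\Gamma(g^{-1}(D(a)),\cO)$ is injective, i.e.\ that two global sections of the source whose restrictions to $g^{-1}(D(a))$ agree become equal after multiplying by a power of $a$. For ordinary schemes this is exactly where a quasi-compactness/quasi-separatedness hypothesis enters, and ``comparing sections over $D(a)$ with sections over $X'$'' does not by itself dispose of it; you should either restrict the auxiliary lemma to the affine/finite-type situations actually used (where the underlying spaces are finite and the comparison is harmless, cf.\ Remark \ref{msch.9}) or note that the needed compatibility is part of what \cite[Proposition 9.3]{CHWW} establishes when it glues the local descriptions of the scheme-theoretic image. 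To be fair, the paper's own reduction ``the question is Zariski local on $U$ and $X$'' leans on the same compatibility, so this is a presentational gap rather than a substantive one; but since your whole proof rests on this lemma, it deserves either a precise citation or the explicit affine computation you relegated to the sanity check.
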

\begin{proof}
The question is Zariski local on $U$ and $X$,
so we may assume that $X=\Spec(A)$, $U=\Spec(A_s)$, and $W=\Spec(C)$ for some pointed monoids $A$ and $C$ and element $s\in A$.
Then we have $Z=\Spec(B)$,
where $B$ is the image of the induced map $A\to C$.
The induced map $B\to C$ is injective.
Let $t$ be the image of $s$ in $B$.
Since $s$ is a unit in $A_s$,
$t$ is a unit in $C$.
It follows that the induced map $B_t\to C$ is injective too.
Since the map $A_s\to C$ is surjective,
the map $B_t\to C$ is surjective too.
Hence the map $B\to C$ is a localization,
i.e.,
$W\to Z$ is an open immersion.
\end{proof}

\begin{prop}
\label{image.6}
Let $f,g\colon X\rightrightarrows S$ be morphisms of monoid schemes.
Assume that $S$ is separated and $X$ is cancellative.
If $j\colon U\to X$ is a dense open immersion of monoid schemes such that $fj=gj$,
then $f=g$.
\end{prop}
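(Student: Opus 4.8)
The plan is to run the standard argument that two morphisms into a separated object that agree on a dense open subscheme must coincide, using the scheme-theoretic image as the bookkeeping device.

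First I would form the morphism $(f,g)\colon X\to S\times S$ from the universal property of the product in the category of monoid schemes (recall $S\times S=S\times_{\Spec(\F_1)}S$). Since $S$ is separated, the diagonal $\Delta\colon S\to S\times S$ is a closed immersion, so its pullback $Z:=X\times_{S\times S}S\to X$ along $(f,g)$ is again a closed immersion by Proposition \ref{msch.20}. By construction of this pullback over the diagonal, the two composites $Z\to X\xrightarrow{f}S$ and $Z\to X\xrightarrow{g}S$ are equal.

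Next I would verify that $j$ factors through $Z$. Since $fj=gj$, the composite $(f,g)\circ j\colon U\to S\times S$ equals $\Delta\circ(fj)$ and hence factors through $\Delta$; by the universal property of the pullback $Z$, the morphism $j$ therefore factors through the closed immersion $Z\to X$. By Definition \ref{image.1}, the closure $\ol{U}$ of $U$ in $X$ is initial among closed subschemes of $X$ through which $j$ factors, so $\ol{U}$ is contained in $Z$ as a closed subscheme of $X$. On the other hand, Proposition \ref{image.5} gives $\ol{U}=X$, using that $X$ is cancellative and $U$ is dense in $X$. Hence $Z=X$, and the equality of the two composites $Z\to S$ from the previous paragraph becomes exactly $f=g$.

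The argument is essentially formal, so I do not expect a genuine obstacle; the only points that need a little care are the claim that $j$ factors through the pullback $Z$ (which is where the hypothesis $fj=gj$ enters) and the subsequent appeal to the initiality of the scheme-theoretic image in Definition \ref{image.1}. Everything else is a direct application of Propositions \ref{msch.20} and \ref{image.5} together with the definition of separatedness in Definition \ref{msch.11}.
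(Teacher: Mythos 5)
Your proof is correct and follows the same route as the paper: pull back the diagonal of $S$ along $(f,g)$, note the resulting $Z\to X$ is a closed immersion through which $j$ factors, and conclude $Z=X$ via Proposition \ref{image.5}. You simply make explicit the initiality argument from Definition \ref{image.1} that the paper leaves implicit in the step ``Proposition \ref{image.5} implies that $i$ is an isomorphism.''
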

\begin{proof}
Form the cartesian square
\[
\begin{tikzcd}
Z\ar[r,"i"]\ar[d]&
X\ar[d,"{(f,g)}"]
\\
S\ar[r,"\Delta"]&
S\times S,
\end{tikzcd}
\]
where $\Delta$ is the diagonal morphism.
Since $S$ is separated,
$\Delta$ is a closed immersion.
Hence $i$ is a closed immersion too by Proposition \ref{msch.20}.
Since $fj=gj$, $j$ factors through $i$.
Proposition \ref{image.5} implies that $i$ is an isomorphism.
Hence we have $f=g$.
\end{proof}

\begin{lem}
\label{image.8}
Let $\Delta\to \Sigma$ be a partial subdivision of $n$-dimensional fans with an integer $n\geq 1$,
and let $a$ be a ray of $\Sigma$.
Consider the open immersion $U:=\G_m^{n-1}\to V(a)$ from the maximal torus of $V(a)$.
If there exists a ray $b\in \Delta$ mapping to $a$,
then the scheme theoretic image $Z$ of $U\times_\Sigma \Delta$ in $\Delta$ is isomorphic to $V(b)$.
If not,
then we have $U\times_\Sigma \Delta=\emptyset$.
\end{lem}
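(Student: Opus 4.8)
The plan is to reduce the statement to an explicit computation on the affine chart of the ray $a$. Write $M:=N^\vee$ for the dual lattice, let $U_a:=\Spec(\F_1[a^\vee\cap M])$ be the affine open subscheme of $\Sigma$ attached to the cone $a$ (so, as a subfan of $\Sigma$, $U_a$ is $\{a\}$ together with its faces), and recall that the maximal torus of $V(a)$ is $U=\Spec(\F_1[a^\perp\cap M])\cong\G_m^{n-1}$. The first step is to unwind Construction \ref{msch.10} in the case $\sigma=\tau=a$ to see that the composite $U\to V(a)\to\Sigma$ factors through $U_a$ as the closed immersion $U\hookrightarrow U_a$ induced by the map of pointed monoids $\F_1[a^\vee\cap M]\to\F_1[a^\perp\cap M]$ sending $x$ to $x$ when $x\in a^\perp$ and to $\infty$ otherwise; equivalently, $U$ is the torus orbit $O(a)$ of $\Sigma$. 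Base change along the open immersion $U_a\to\Sigma$ then identifies $U\times_\Sigma\Delta$ with $U\times_{U_a}\Delta_a$, where $\Delta_a:=\Delta\times_\Sigma U_a$ is the open subscheme of $\Delta$ corresponding to the subfan of those cones $\delta\in\Delta$ with $\delta\subseteq a$.

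Next I would classify those cones. Since $\dim\delta\le\dim a=1$, each such $\delta$ is $\{0\}$ or a ray, and a ray contained in the ray $a$ equals $a$; hence $\delta$ ranges over $\{0\}$ and, at most, the single ray $a$, the latter occurring exactly when $a\in\Delta$, i.e.\ exactly when there is a ray $b\in\Delta$ mapping to $a$ (necessarily $b=a$, and it is then unique). Assume first there is no such $b$. Then (the case $\Delta=\emptyset$ being trivial) $\Delta_a$ is the big torus $\Spec(\F_1[M])$, so $U\times_\Sigma\Delta\cong\Spec\bigl(\F_1[M]\otimes_{\F_1[a^\vee\cap M]}\F_1[a^\perp\cap M]\bigr)$. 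The crucial observation is that this pushout of pointed monoids is the zero pointed monoid: picking $x\in(a^\vee\cap M)\setminus a^\perp$ (possible because $a^\vee$ is a half-space with boundary hyperplane $a^\perp$), the image of $x$ in $\F_1[M]$ is a unit while its image in $\F_1[a^\perp\cap M]$ is $\infty$, so by the description of pushouts of pointed monoids in Example \ref{mon.12} the corresponding element is at once a unit and equal to $\infty$ in the pushout; this forces $1=\infty$ there, whence $\Spec$ of it is empty and $U\times_\Sigma\Delta=\emptyset$.

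Now assume $b=a\in\Delta$. Then the subfan $\{\{0\},a\}$ of $\Delta$ defining $\Delta_a$ literally coincides with the subfan of $\Sigma$ defining $U_a$, so $\Delta_a\to U_a$ is an isomorphism; moreover $\Delta_a$ is exactly the affine chart $U_b$ of the ray $b$ in $\Delta$. Hence $W:=U\times_\Sigma\Delta\cong U=O(b)$, realized inside $\Delta$ as the closed orbit of the chart $U_b$. To compute the scheme theoretic image $Z$ of $W\to\Delta$ I would argue chart by chart via the local description in Definition \ref{image.1}: $\Delta$ has finitely many affine charts $U_\delta=\Spec(\F_1[\delta^\vee\cap M])$, $\delta\in\Delta$, and $Z\cap U_\delta$ is the spectrum of the image of $\F_1[\delta^\vee\cap M]\to\Gamma(W\times_\Delta U_\delta,\cO)$. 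If $\delta\supseteq b$, then $U_b\cap U_\delta=U_b$, so $W\times_\Delta U_\delta\cong O(b)$ and the image of $\F_1[\delta^\vee\cap M]\to\F_1[b^\perp\cap M]$ is $\F_1[\delta^\vee\cap b^\perp\cap M]$, which is precisely the $\ol{\delta}$-chart of $V(b)$ from Construction \ref{msch.10}. If $\delta\not\supseteq b$, then $\delta\cap b=\{0\}$, so $W\times_\Delta U_\delta\cong O(b)\times_{U_b}\G_m^n=\emptyset$ by the pushout computation above, matching the fact that $V(b)$ meets $U_\delta$ only for $\delta\supseteq b$. Since the scheme theoretic image of the quasi-compact morphism $W\to\Delta$ commutes with restriction to the affine opens $U_\delta$, these local identifications glue to an isomorphism $Z\cong V(b)$.

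I expect the main obstacle to be the bookkeeping around Construction \ref{msch.10}: verifying precisely that $U\to V(a)\to\Sigma$ is the asserted closed immersion into $U_a$, and that the chart-wise images reassemble $V(b)$ rather than a larger or non-reduced subscheme. The ``pushout is the zero pointed monoid'' step is short but load-bearing --- it is used both for the case with no $b$ and for the charts with $\delta\not\supseteq b$ --- and it rests on Example \ref{mon.12} together with the elementary fact that $\Spec(A)=\emptyset$ exactly when $1=\infty$ in $A$; one also needs that scheme theoretic image commutes with restriction to affine opens here, which is harmless since $W$ is affine and $\Delta$ has finitely many charts (being of finite type as a toric monoid scheme, cf.\ Definition \ref{msch.21}).
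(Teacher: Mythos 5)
Your proof is correct and follows essentially the same route as the paper's: both identify $U\times_\Sigma\Delta$ with the torus orbit $O(b)$ of $\Delta$ (via the factorization of $U\to\Sigma$ through the chart of $a$) and then compute the scheme-theoretic image chart-wise as the image $\F_1[\delta^\vee\cap b^\perp\cap M]$ of the pointed-monoid map, recovering Construction \ref{msch.10} for $V(b)$. The only cosmetic difference is that the paper reduces to the affine case at the outset while you keep $\Delta$ global and glue over its charts, and you spell out the ``pushout is the zero pointed monoid'' computation that the paper leaves implicit in the emptiness step.
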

\begin{proof}
We can work locally on $\Sigma$ and $\Delta$.
Hence we may assume that $\Delta\to \Sigma$ is given by $\Spec(\F_1[Q])\to \Spec(\F_1[P])$ for some map of toric monoids $P\to Q$.

Assume that there exists no ray $b\in \Delta$ mapping to $a$.
The composite $U\to \Sigma$ factors through the subfan $\Sigma'\cong \A^1\times \G_m^{n-1}$ with a single largest cone $a$.
The fiber product $\Sigma'\times_{\Sigma}\Delta$ is the torus $\G_m^n$.
We have $U\times_\Sigma \Delta=\emptyset$ from $(\{0\}\times \G_m^{n-1})\times_{\A^1\times \G_m^{n-1}}\G_m^n=\emptyset$.

Assume that there exists a ray $b\in \Delta$ mapping to $a$.
Then we have a commutative square
\[
\begin{tikzcd}
V(b)\ar[d]\ar[r]&
\Delta\ar[d]
\\
V(a)\ar[r]&
\Sigma.
\end{tikzcd}
\]
Observe that $V(b)\to V(a)$ is a partial subdivision and $U\times_\Sigma \Delta\cong U$.
Consider the face $G:=Q\cap b^\bot$ of $Q$ so that we have $V(b)=\Spec(\F_1[G])$.
The morphism $V(b)\to \Delta$ is induced by the map $\F_1[Q]\to \F_1[G]$ sending $x$ to $x$ if $x\in Q$ and to $\infty$ otherwise.
Let $C$ be the image of the composite map
\[
\F_1[Q]\to \F_1[G]\to \F_1[G^\gp].
\]
Then $\Spec(C)$ is isomorphic to $Z$.
To conclude, observe that we have $C\cong \F_1[G]$.
\end{proof}

\section{Log monoid schemes}
\label{logmonoid}

In this section,
we introduce the notion of log monoid schemes imitating \cite{Ogu}.
We also explain basic properties of log monoid schemes, which are direct analogs of the corresponding properties of log schemes.

\begin{df}
\label{msch.1}
Recall from \cite[Definition II.1.1.1]{Ogu} that a \emph{monoidal space} is a topological space $X$ equipped with a sheaf of monoids $\cM_X$ on $X$.
A \emph{morphism of monoidal spaces}  $f\colon X\to S$ is a map of topological spaces $f\colon X\to S$ equipped with a morphism of sheaves of pointed monoids $f^\flat\colon  \cM_S\to f_*\cM_X$ such that the induced map of the stalks $f^\flat_x \colon \cM_{S,f(x)}\to \cM_{X,x}$ is local.
\end{df}

\begin{df}
Let $X$ be a monoid scheme.
A \emph{prelog structure on $X$} is a morphism of sheaves of monoids $\alpha\colon \cP\to \cO_X$.
It is a \emph{log structure on $X$} if $\alpha^{-1}(\cO_X^*)\to \cO_X^*$ is an isomorphism.
By \cite[Proposition II.1.1.5]{Ogu},
the inclusion functor from the category of prelog structures on $X$ to the category of log structures on $X$ admits a left adjoint
\[
(\cP\to \cO_X)
\mapsto
(\cP^{\log}\to \cO_X)
\]
for a prelog structure $\cP$ on $X$.
The \emph{log structure on $X$ associated with $\cP$} is $\cP^{\log}$.

The \emph{trivial log structure on $X$} is the inclusion $\cO_X^*\to \cO_X$.
\end{df}

\begin{df}
A \emph{log monoid scheme $X$} is a monoid scheme $\ul{X}$ equipped with a log structure $\cM_X\to \cO_X:=\cO_{\ul{X}}$.
Note that we can regard $X$ as a monoidal space (resp.\ pointed monoidal space) if we forget the structure of $\cO_X$ (resp.\ $\cM_X$).

A \emph{morphism of log monoid schemes $f\colon X\to S$} is a morphism of monoid schemes equipped with a morphism of sheaves of monoids $f^\flat \colon \cM_S\to f_* \cM_X$ such that the induced square
\[
\begin{tikzcd}
\cM_S\ar[d]\ar[r,"f^\flat"]&
f_*\cM_X\ar[d]
\\
\cO_S\ar[r,"f^\sharp"]&
f_*\cO_X
\end{tikzcd}
\]
commutes.
Note that this condition automatically implies that $f^\flat_x\colon \cM_{S,f(x)}\to \cM_{X,x}$ is local for every point $x\in X$.
Hence $f$ induces a morphism of monoidal spaces.

A monoid scheme is regarded as a log monoid scheme with the trivial log structure.
\end{df}

\begin{df}
We refer to \cite[Definition II.2.1.1]{Ogu} for the notion of charts for monoidal spaces,
which we can use for log monoid schemes.
A log monoid scheme $X$ is \emph{coherent} (resp.\ fine, resp.\ fs) if locally on $X$, there exists a chart $P$ on $X$ such that $P$ is finitely generated (resp.\ fine, resp.\ fs).
\end{df}

\begin{prop}
The category of log monoid schemes admits fiber products.
\end{prop}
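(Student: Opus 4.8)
The plan is to mimic the standard construction of fiber products of fine/fs log schemes as in \cite[\S III.2]{Ogu}, adapted to the setting of log monoid schemes. Since the underlying category of monoid schemes already admits fiber products (see the remark after Definition \ref{msch.4}), the task reduces to equipping $\ul{X}\times_{\ul{S}}\ul{Y}$ with an appropriate log structure and checking the universal property. First I would recall that for a morphism of monoid schemes $g\colon Z\to W$ and a log structure $\cM_W\to \cO_W$ on $W$, there is a \emph{pullback log structure} $g^*\cM_W$ on $Z$, defined as the log structure associated with the prelog structure $g^{-1}\cM_W\to g^{-1}\cO_W\to \cO_Z$; this is the direct analogue of \cite[Definition III.1.1.4]{Ogu} and its construction goes through verbatim for monoid schemes since it only uses the adjunction $(\cP\to \cO_Z)\mapsto (\cP^{\log}\to \cO_Z)$ recalled before Definition \ref{msch.1}'s neighborhood.

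Given morphisms of log monoid schemes $f\colon X\to S$ and $h\colon Y\to S$, let $\ul{P}:=\ul{X}\times_{\ul{S}}\ul{Y}$ with projections $p\colon \ul{P}\to \ul{X}$ and $q\colon \ul{P}\to \ul{Y}$. The natural candidate for the log structure is the pushout
\[
\cM_P
:=
p^*\cM_X
\oplus_{(p\circ \ul{f})^*\cM_S}
q^*\cM_Y
\]
formed in the category of log structures on $\ul{P}$ (the pushout in log structures is the log structure associated with the pointwise pushout of sheaves of monoids, as in \cite[(III.1.1.5)]{Ogu}). Equivalently, on charts, if $P_X\to \cO_X$, $P_Y\to \cO_Y$, $P_S\to \cO_S$ are fine charts compatible with $f$ and $h$ over an affine piece, then $\cM_P$ is the log structure associated with the chart $P_X\oplus_{P_S} P_Y$ mapping to $\cO_P$. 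Then $(\ul{P},\cM_P)$ with the evident projections is the fiber product: a morphism $T\to \ul P$ of log monoid schemes compatible with the two projections to $X$ and $Y$ is the same as a morphism $\ul T\to \ul P$ of monoid schemes together with compatible maps $p^*\cM_X\to \cM_T$ and $q^*\cM_Y\to \cM_T$ agreeing after restriction to $(p\circ\ul f)^*\cM_S$, which by the universal property of the pushout is exactly a map $\cM_P\to \cM_T$ over $\cO_T$. I would carry this out in three steps: (1) construct the pullback log structure and record its universal property; (2) construct $\cM_P$ as the pushout and write down the two projection morphisms of log monoid schemes; (3) verify the universal property by unwinding the adjunctions, reducing to the chart description affine-locally.

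The main obstacle, as in the classical case, is purely formal bookkeeping: one must check that the pushout log structure is independent of the choice of charts and glues correctly over an affine cover, and that the colimit defining the associated log structure commutes with the relevant restrictions — this is where \cite[Proposition II.1.1.5]{Ogu} and the fact that sheafification of presheaves of monoids is exact enough get used. There is no coherence or finiteness claim in the statement (the category of \emph{all} log monoid schemes, not the fine or fs subcategory), so I do not need to worry about saturating or integralizing the pushout monoid $P_X\oplus_{P_S}P_Y$; that subtlety, which produces the $\oplus^{\mathrm{int}}$ and $\oplus^{\mathrm{sat}}$ variants appearing elsewhere in the paper (e.g.\ in the proof of Proposition \ref{boundary.68}), only arises when one wants the fiber product to stay inside a subcategory. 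Thus the proof is essentially a transcription of \cite[\S III.2.1]{Ogu} with ``scheme'' replaced by ``monoid scheme'' throughout, and I would phrase it as such, citing Ogus for the statements that transfer mechanically and spelling out only the construction of $\cM_P$ and the verification of the universal property.
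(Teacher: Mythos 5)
Your construction coincides with the paper's: the underlying monoid scheme is $\ul{X}\times_{\ul{S}}\ul{Y}$ and the log structure is the pushout of the pulled-back log structures, following \cite[Proof of Proposition III.2.1.2]{Ogu}, with the correct observation that no integralization or saturation is needed since the statement concerns all log monoid schemes. This is essentially the same approach as the paper, only spelled out in more detail (modulo the harmless slip that the structure map from $\ul{P}$ to $\ul{S}$ is $\ul{f}\circ p$, not $p\circ\ul{f}$).
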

\begin{proof}
Let $X,Y\to S$ be morphisms of log monoid schemes.
As in \cite[Proof of Proposition III.2.1.2]{Ogu},
the fiber product has the following description:
The underlying monoid scheme $\ul{X}\times_{\ul{S}}\ul{Y}$.
The log structure is the coproduct of the log structures pulled back from the log structures on $X$, $Y$, and $S$.
\end{proof}

\begin{prop}
The inclusion functor from the category of fine (resp.\ fs) log monoid schemes to coherent log monoid schemes admits a right adjoint $X\mapsto X^\mathrm{int}$ (resp.\ $X\mapsto X^\sat$).
Furthermore,
the category of fine (resp.\ fs) log monoid schemes admits fiber products,
which is $(-)^\mathrm{int}$ (resp.\ $(-)^\sat$) of the fiber product in the category of log monoid schemes.
\end{prop}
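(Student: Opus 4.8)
The plan is to follow the proof of the analogous statements for log schemes in \cite{Ogu} (see III.2.1.5 and the surrounding results), reducing everything to an algebraic fact about finitely generated monoids by systematically passing to charts. Recall that the integralization $P\mapsto P^{\mathrm{int}}:=\im(P\to P^\gp)$ is left adjoint to the inclusion of integral monoids into all monoids, that the saturation $P\mapsto P^\sat:=(P^{\mathrm{int}})^\sat$ is left adjoint to the inclusion of fs monoids into finitely generated monoids, and that both operations preserve finite generation. Given a coherent log monoid scheme $X$, I would work Zariski-locally on $\ul{X}$ and choose a chart, i.e.\ a finitely generated monoid $P$ together with a morphism $X\to \Spec(\F_1[P])$ of log monoid schemes inducing the given chart on $\cM_X$, where $\Spec(\F_1[P])$ carries the log structure associated to the canonical prelog structure $P\to \F_1[P]$. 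Then, locally, I set
\[
X^{\mathrm{int}} := X\times_{\Spec(\F_1[P])}\Spec(\F_1[P^{\mathrm{int}}]),
\qquad
X^\sat := X\times_{\Spec(\F_1[P])}\Spec(\F_1[P^\sat]),
\]
the fiber products being taken in the category of log monoid schemes, which exist by the preceding proposition. These are fine (resp.\ fs) because $\Spec(\F_1[P^{\mathrm{int}}])$ (resp.\ $\Spec(\F_1[P^\sat])$) is, and the log structure of a fiber product admitting a fine (resp.\ fs) chart is fine (resp.\ fs).

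To see that this local definition is canonical, and hence glues to a global $X^{\mathrm{int}}$ (resp.\ $X^\sat$), I would prove directly that it represents the expected functor rather than comparing two charts by hand. The key input is the monoid-scheme analog of the standard adjunction: for a log monoid scheme $T$ there is a natural isomorphism $\Hom(T,\Spec(\F_1[P]))\cong \Hom_{\Mon}(P,\Gamma(\ul{T},\cM_T))$, obtained by unwinding the affine property of $\Spec(\F_1[P])$, the adjunction $\F_1[-]\dashv U$, and the universal property of the log structure associated to a prelog structure. Now if $T$ is fine, every stalk $\cM_{T,t}$ is integral, and since two sections of $\cM_T$ agree iff they agree in all stalks, cancellation in the stalks forces a homomorphism $P\to\Gamma(\ul{T},\cM_T)$ to factor uniquely through $P\to P^{\mathrm{int}}$; this gives $\Hom_{\Mon}(P^{\mathrm{int}},\Gamma(\ul{T},\cM_T))\xrightarrow{\ \sim\ }\Hom_{\Mon}(P,\Gamma(\ul{T},\cM_T))$. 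Combined with the adjunction above and the universal property of the fiber product, this shows the local $X^{\mathrm{int}}$ represents $T\mapsto \Hom_{\mathrm{coh}}(T,X)$ on fine log monoid schemes, so by uniqueness of a representing object the local constructions glue to a right adjoint $(-)^{\mathrm{int}}$ to the inclusion of fine into coherent log monoid schemes. The fs case is identical: for $T$ fs the stalks $\cM_{T,t}$ are saturated, so after first factoring through $P^{\mathrm{int}}$ a map further factors uniquely through $P^{\mathrm{int}}\to P^\sat$, yielding the right adjoint $(-)^\sat$.

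For the second assertion, let $X,Y\to S$ be morphisms of fine (resp.\ fs) log monoid schemes and let $X\times_S Y$ denote their fiber product in the category of log monoid schemes; since the coproduct of finitely generated charts is finitely generated, $X\times_S Y$ is coherent. I claim $(X\times_S Y)^{\mathrm{int}}$ (resp.\ $(X\times_S Y)^\sat$) is the fiber product in fine (resp.\ fs) log monoid schemes: for any fine (resp.\ fs) $T$,
\[
\Hom(T,(X\times_S Y)^{\mathrm{int}})\cong \Hom_{\mathrm{coh}}(T,X\times_S Y)\cong \Hom(T,X)\times_{\Hom(T,S)}\Hom(T,Y),
\]
using the adjunction just established and then the universal property of $X\times_S Y$, and likewise with $(-)^\sat$. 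This is purely formal once the right adjoint is in hand.

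The only genuinely technical point is the gluing in the second paragraph, i.e.\ the independence of the local constructions of $X^{\mathrm{int}}$ and $X^\sat$ from the chosen chart. The route above sidesteps a direct chart-comparison by characterizing the local object through its universal property, which in turn rests on the monoid-scheme adjunction $\Hom(T,\Spec(\F_1[P]))\cong\Hom_{\Mon}(P,\Gamma(\ul{T},\cM_T))$ and on the compatibility of $\Spec(\F_1[-])$ with the monoid operations $(-)^{\mathrm{int}}$ and $(-)^\sat$; these are routine analogs of facts in \cite{Ogu}, but I would take care to verify them explicitly in the $\F_1$-setting, together with the (easy) point that a fiber product in log monoid schemes of coherent log monoid schemes is again coherent.
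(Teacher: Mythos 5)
Your proposal is correct and is essentially the argument the paper intends: the paper's proof consists of the single line ``Argue as in \cite[Proposition II.2.1.5, Corollary II.2.1.6]{Ogu},'' and what you have written is precisely that argument transported to monoid schemes — local construction via charts and fiber products with $\Spec(\F_1[P^{\mathrm{int}}])$ and $\Spec(\F_1[P^\sat])$, gluing via the universal property coming from the adjunction $\Hom(T,\Spec(\F_1[P]))\cong\Hom_{\Mon}(P,\Gamma(\ul{T},\cM_T))$ (the paper's Example on $\A_P$), and the formal deduction of fiber products. No gaps; your fleshing-out of the chart-independence via representability is exactly the standard route.
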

\begin{proof}
Argue as in \cite[Proposition II.2.1.5, Corollary II.2.1.6]{Ogu}.
\end{proof}

\begin{rmk}
Let $X,Y\to S$ be morphisms of fs monoid schemes.
Then the fiber products $X\times_S^\mathrm{log} Y$, $X\times_S^\mathrm{int} Y$, and $X\times_S Y$ in the categories of log monoid schemes, fine log monoid schemes, and fs log monoid schemes respectively can be non-isomorphic and hence have to be distinguished.
\end{rmk}

\begin{df}
A morphism of log monoid schemes $f\colon X\to S$ is \emph{strict} if the induced morphism of sheaves of monoids $(f^{-1}\cM_S)^{\log}\to \cM_X$ is an isomorphism.
\end{df}

\begin{prop}
A morphism of log monoid schemes $f\colon X\to S$ is strict if and only if the induced morphism $X\to \ul{X}\times_{\ul{S}}S$ is an isomorphism.
\end{prop}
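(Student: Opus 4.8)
The plan is to prove the equivalence ``$f$ is strict $\iff$ the canonical morphism $g\colon X\to \ul{X}\times_{\ul{S}}S$ is an isomorphism'' by a purely local argument, reducing everything to a statement about log structures on a fixed monoid scheme. First I would observe that $g$ is, by construction, a morphism of log monoid schemes whose underlying morphism of monoid schemes is the identity $\id_{\ul{X}}$ (since the underlying monoid scheme of $\ul{X}\times_{\ul{S}}S$ is $\ul{X}\times_{\ul{S}}\ul{S}\cong \ul{X}$). Hence $g$ is an isomorphism precisely when it is an isomorphism on log structures, i.e.\ when the map $\cM_{\ul X\times_{\ul S}S}\to \cM_X$ it induces is an isomorphism of sheaves of monoids on $\ul X$.

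Next I would identify the log structure on $\ul{X}\times_{\ul{S}}S$ explicitly. By the description of fiber products of log monoid schemes (the analogue of \cite[Proof of Proposition III.2.1.2]{Ogu}), the log structure on $\ul X\times_{\ul S}S$ is the coproduct of the log structures pulled back from $X$ (trivial, since the first factor $\ul X$ carries the trivial log structure), from $S$, and from $\ul S$ (trivial). The coproduct of the trivial log structure with any log structure $\cN$ is $\cN$ itself, so the log structure on $\ul X\times_{\ul S}S$ is exactly the log structure on $\ul X$ associated with the prelog structure $h^{-1}\cM_S\to h^{-1}\cO_S\to \cO_{\ul X}$, where $h\colon \ul X\to \ul S$ is the underlying morphism; that is, it is $(h^{-1}\cM_S)^{\log}=(f^{-1}\cM_S)^{\log}$. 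With this identification the morphism $g$ on log structures becomes precisely the canonical map $(f^{-1}\cM_S)^{\log}\to \cM_X$ appearing in the definition of strictness, so $g$ is an isomorphism iff $f$ is strict.

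The two remaining points I would spell out are: (i) that the canonical map on log structures really is the universal/adjunction map, so that no compatibility is lost in the translation — this uses the adjunction between prelog and log structures and the universal property of the fiber product of log monoid schemes; and (ii) that ``isomorphism of log monoid schemes'' can be checked by checking that the underlying morphism of monoid schemes and the morphism of log structure sheaves are both isomorphisms, which is immediate from the definition of a morphism of log monoid schemes. I do not expect any serious obstacle here: the argument is a formal transcription of the corresponding fact for log schemes (\cite[Remark III.1.2.6]{Ogu} or the analogue), and the only mild care needed is in writing down the coproduct of log structures correctly and noting that the trivial log structure is the unit for this coproduct operation on a fixed base. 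The ``hard'' part, to the extent there is one, is simply being careful that all the sheaves live on the same space $\ul X$ and that pullback and associated-log-structure functors commute appropriately, which is handled by the same lemmas (\cite[Proposition II.1.1.5]{Ogu} and its monoid-scheme analogue) already invoked in this appendix.
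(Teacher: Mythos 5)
Your proof is correct and is essentially the argument the paper intends: the paper's proof consists of the single line ``Argue as in \cite[Proposition III.1.2.5]{Ogu}'', and what you have written is precisely that argument transcribed to log monoid schemes, using the paper's own description of fiber products (underlying monoid scheme $\ul{X}\times_{\ul{S}}\ul{S}\cong\ul{X}$, log structure the amalgamated sum of the pulled-back log structures, which collapses to $(f^{-1}\cM_S)^{\log}$ because the trivial log structure is the unit). The identification of the log-structure component of $g$ with the adjunction map $(f^{-1}\cM_S)^{\log}\to\cM_X$ is the only substantive point, and you handle it correctly.
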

\begin{proof}
Argue as in \cite[Proposition III.1.2.5]{Ogu}.
\end{proof}

\begin{df}
A \emph{log monoid $(A,M)$} is a pair of a pointed monoid $A$ and a monoid $M$ equipped with a map of monoids $M\to A$.
A \emph{map of log monoids} is an obvious commutative square.

Let $\Spec(A,M)$ be the log monoid scheme whose underlying scheme is $\Spec(A)$ and log structure is associated with the constant prelog structure $M$.
\end{df}

\begin{rmk}
Let $X$ be a log monoid scheme.
Observe that $X$ is coherent (resp.\ fine, resp.\ fs) if and only if $X$ is locally isomorphic to $\Spec(A,M)$ for some log monoid $(A,M)$ such that $\ol{M}:=M/M^*$ is finitely generated (resp.\ fine, resp.\ fs).
\end{rmk}

\begin{df}
For a monoid $P$,
we set $\A_P:=\Spec(\F_1[P],P)$.
\end{df}

\begin{exm}
Let $(A,M)$ be a log monoid,
and let $A\to B$ be a map of pointed monoids.
Then the induced morphism of log monoid schemes $\Spec(B,M)\to \Spec(A,M)$ is strict.
\end{exm}

\begin{prop}
\label{logmonoid.6}
Let $f\colon X\to S$ be a morphism of log monoid schemes.
Assume that $X$ has the trivial log structure.
Then for every point $x\in X$,
the log structure on $S$ at $f(x)$ is trivial.
\end{prop}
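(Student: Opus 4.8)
The plan is to reduce to a statement about stalks and then exploit the locality built into the definition of a morphism of log monoid schemes. Fix a point $x \in X$ and put $s := f(x)$. Since $X$ carries the trivial log structure we have $\cM_X = \cO_X^*$, so the stalk $\cM_{X,x} = \cO_{X,x}^*$ is a \emph{group}; in particular the unit submonoid $\cM_{X,x}^*$ is all of $\cM_{X,x}$. The goal is to show that $\cM_{S,s} \to \cO_{S,s}$ is an isomorphism onto $\cO_{S,s}^*$, equivalently that $\ol{\cM}_{S,s}$ is trivial.

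First I would invoke the fact, recorded right after the definition of a morphism of log monoid schemes, that the induced map on stalks $f^\flat_x \colon \cM_{S,s} \to \cM_{X,x}$ is local, i.e.\ $(f^\flat_x)^{-1}(\cM_{X,x}^*) = \cM_{S,s}^*$. Since $\cM_{X,x}^* = \cM_{X,x}$, the left-hand side is all of $\cM_{S,s}$, so $\cM_{S,s}^* = \cM_{S,s}$ and $\cM_{S,s}$ is a group as well. The remaining point is the elementary observation that a log structure whose stalk is a group is trivial at that point: writing $\alpha_S \colon \cM_S \to \cO_S$ for the structure map, any $m \in \cM_{S,s}$ has an inverse $-m$, whence $\alpha_{S,s}(m)\,\alpha_{S,s}(-m) = \alpha_{S,s}(0) = 1$ in $\cO_{S,s}$ and $\alpha_{S,s}(m) \in \cO_{S,s}^*$. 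Thus $\cM_{S,s} = \alpha_{S,s}^{-1}(\cO_{S,s}^*)$, and taking stalks of the log structure axiom $\alpha_S^{-1}(\cO_S^*) \xrightarrow{\sim} \cO_S^*$ gives $\alpha_{S,s}^{-1}(\cO_{S,s}^*) \xrightarrow{\sim} \cO_{S,s}^*$; hence $\cM_{S,s} \xrightarrow{\sim} \cO_{S,s}^*$, which is exactly the triviality of the log structure on $S$ at $f(x)$.

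I do not expect a serious obstacle: the whole argument is formal and is the verbatim analog of the corresponding fact for log schemes (cf.\ \cite[\S III.1]{Ogu}). The only places demanding a little attention are (i) confirming that the locality statement for $f^\flat$ on stalks is precisely what makes $\cM_{S,s}$ a group, which is already part of the definition in the excerpt, and (ii) checking that the log structure axiom can be read off at the level of stalks, for which one only needs that the subsheaf $\alpha_S^{-1}(\cO_S^*)$ has stalk $\alpha_{S,s}^{-1}(\cO_{S,s}^*)$ at $s$, since $\cO_S^*$ is the sheaf of units of $\cO_S$.
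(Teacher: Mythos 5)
Your argument is correct and follows the paper's proof exactly: both use the locality of $f^\flat_x$ on stalks together with $\cM_{X,x}=\cM_{X,x}^*$ to conclude $\cM_{S,f(x)}=\cM_{S,f(x)}^*$. The only difference is that you spell out the final (easy) implication that a log structure with group stalk is trivial at that point, which the paper leaves implicit; note only that in the paper's additive notation for pointed monoids this reads $\alpha_{S,s}(m)+\alpha_{S,s}(-m)=\alpha_{S,s}(0)=0$.
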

\begin{proof}
Recall that the map $f^\flat_x\colon \cM_{S,f(x)}\to \cM_{X,x}$ is local,
which means
\[
(f^\flat_x)^{-1}(\cM_{X,x}^*)\cong \cM_{S,f(x)}^*.
\]
Since $X$ has the trivial log structure,
we have $\cM_{X,x}\cong \cM_{X,x}^*$.
Hence we have $\cM_{S,f(x)}\cong \cM_{S,f(x)}^*$, which means that the log structure on $S$ at $f(x)$ is trivial.
\end{proof}

\begin{df}
\label{logmonoid.8}
Let $\lSm/\F_1$ be the full subcategory of the category of log monoid schemes $X$ satisfying the following condition:
The underlying monoid scheme $\ul{X}$ is toric, and Zariski locally on $\ul{X}$, $X$ is isomorphic to $\A_P\times \A^m$ for some toric monoid $P$ and integer $m\geq 0$.

Let $\SmlSm/\F_1$ be the full subcategory of $\lSm/\F_1$ spanned by those $X$ such that $\ul{X}$ is a smooth toric monoid scheme.
Observe that Zariski locally on $\ul{X}$, $X$ is isomorphic to $\A_\N^r \times \A^s \times \G_m^t$ for some integers $r,s,t\geq 0$.
\end{df}

\begin{df}
\label{logmonoid.19}
A \emph{Zariski distinguished square in $\lSm/\F_1$} is a cartesian square
\begin{equation}
\begin{tikzcd}
W\ar[d]\ar[r]&
V\ar[d]
\\
U\ar[r]&
X
\end{tikzcd}
\end{equation}
in $\lSm/\F_1$ such that $U\to X$ and $V\to X$ are jointly surjective open immersions.
The \emph{Zariski cd-structure on $\lSm/\F_1$} is the set of Zariski distinguished squares in $\lSm/\F_1$.
The associated topology is the \emph{Zariski topology}.
\end{df}

\begin{rmk}
\label{logmonoid.17}
The Zariski cd-structure on $\lSm/\F_1$ satisfies the claims analogous to Remark \ref{logSH.11}.
In particular,
the Zariski cd-structure on $\lSm/\F_1$ is bounded, complete, and regular.
\end{rmk}

\begin{const}
\label{logmonoid.7}
Let $S$ be a smooth toric monoid scheme associated with a smooth fan $\Sigma$,
and let $\alpha_1,\ldots,\alpha_n$ be different rays of $\Sigma$.
We define the log monoid scheme $X:=(S,Z)\in \SmlSm/\F_1$ with the notation $Z:=V(\alpha_1)+\cdots +V(\alpha_n)$ as follows.

Let $U:=X-\partial X$ be the open subscheme of $S$ associated with the subfan of $\Sigma$ consisting of the cones $\sigma$ such that $\sigma$ does not contain $\alpha_i$ for every $i$.
Assume $S=\A^r\times \G_m^t$ and $\alpha_i=e_i$ for $1\leq i\leq d$ with integers $d\leq r$ and $t\geq 0$,
where $e_1,\ldots,e_{r+t}$ are the standard coordinates in $\Z^{r+t}$.
Then we have $U\cong \A_\N^d \times \A^{r-d}\times \G_m^t$.
Hence with $P:=\N^r\times \Z^t$ and $F:=\N^{r-d}\times 0 \times \Z^t$, we have $X\cong \Spec(\F_1[P])$ and $U\cong \Spec(\F_1[P_F])$.
In this case, we define $(S,Z):=\Spec(F\to \F_1[P])$.
Glue this local construction to define $(S,Z)$ for general $X$.

Observe that for $x\in X$, $X$ has the trivial log structure at $x$ if and only if $x\in X-\partial X$.
\end{const}

\begin{df}
In the fan $\P^n$ with an integer $n\geq 1$
consider the ray $\alpha:=(-1,\ldots,-1)$, and we regard $V(\alpha)$ as the hyperplane $\P^{n-1}$ of $\P^n$ at $\infty$.
Then we have $(\P^n,\P^{n-1})\in \SmlSm/\F_1$.
We set $\square:=(\P^1,\P^0)$.

In the fan $\P^1$,
consider the rays $e_1$ and $-e_1$,
and we regard $V(e_1)$ as the point $0$ and $V(-e_1)$ as the point $\infty$.
Then we have $\Gmlog:=(\P^1,0+\infty)\in \SmlSm/\F_1$.
\end{df}

\begin{df}
\label{logmonoid.26}
For an $n$-dimensional fan $\Sigma$,
let $\T_\Sigma$ be the log monoid scheme in $\lSm/\F_1$ defined as follows.
If $\Sigma=\Spec(\F_1[P])$ for some toric monoid $P$,
then $\T_\Sigma:=\A_P$.
For general $P$, glue this local construction to define $\T_\Sigma$.
Observe that $\Sigma$ is a smooth fan if and only if $\T_\Sigma\in \SmlSm/\F_1$.
\end{df}

\begin{prop}
\label{logmonoid.5}
Let $S,X\in \lSm/\F_1$.
Then we have
\[
\Hom(X,S)
\cong
\{\ul{f}\colon \ul{X}\to \ul{S}:\ul{f}(X-\partial X)\subset S-\partial S\}.
\]
\end{prop}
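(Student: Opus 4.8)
The map to be analyzed is $f\mapsto\ul{f}$, sending a morphism of log monoid schemes to its underlying morphism of monoid schemes, and the plan is to show it is well defined, injective, and surjective onto the indicated set. For well-definedness: by Proposition \ref{logmonoid.6} any morphism $f\colon X\to S$ of log monoid schemes carries the locus where $X$ has trivial log structure into the locus where $S$ has trivial log structure; since the log structures here are fine, these loci are open, and for objects of $\lSm/\F_1$ they are by definition exactly $X-\partial X$ and $S-\partial S$ (dense, as $\ul{X}$ is cancellative, cf.\ Proposition \ref{image.5}). Hence $\ul{f}(X-\partial X)\subseteq S-\partial S$, so $f\mapsto\ul{f}$ does land in the target set.

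The key technical input I would isolate is a local description of the log structure of objects of $\lSm/\F_1$: for $X\in\lSm/\F_1$ the structure map $\cM_X\to\cO_X$ is injective, and its image is the subsheaf of sections restricting to units along $j\colon X-\partial X\hookrightarrow\ul{X}$; that is, $\cM_X$ is the compactifying log structure of this dense open immersion. To prove this I would work Zariski locally on $\ul{X}$, using Definition \ref{logmonoid.8} and the finiteness of the underlying space of a toric monoid scheme, to reduce to $X=\A_P\times\A^m$ for a toric monoid $P$; then $\ul{X}=\Spec\F_1[P\oplus\N^m]$ with log structure associated to the chart $P\to\F_1[P\oplus\N^m]$, and $X-\partial X=\Spec\F_1[P^\gp\oplus\N^m]$. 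The primes of $\F_1[P\oplus\N^m]$ are the complements of faces $F_1\times F_2$ of $P\oplus\N^m$, and a direct stalk computation — using that $P\cap F_1^\gp=F_1$ for a face $F_1$ of $P$ and that $F_2^\gp\hookrightarrow\Z^m$ — identifies $\cM_{X,\mathfrak p}$ with $P\oplus_{F_1}(F_1^\gp\oplus F_2^\gp)$, shows it injects into $\cO_{X,\mathfrak p}$, and computes the image as exactly the germs becoming invertible on $X-\partial X$ near $\mathfrak p$. (Alternatively this is the monoid-scheme analogue of the fact that a neat chart realizes the compactifying log structure, cf.\ \cite[\S III.1]{Ogu}.) I would also note that an open subscheme of such an $\A_P\times\A^m$, though not literally of that product form, is again covered by such, so the description is valid over all of $\lSm/\F_1$.

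Granting this local description, the two remaining points become formal. Injectivity: since $\cM_X\to\cO_X$ is injective, so is $f_*\cM_X\to f_*\cO_X$, hence $f^\flat\colon\cM_S\to f_*\cM_X$ is determined by its composite $\cM_S\to f_*\cM_X\hookrightarrow f_*\cO_X$, which equals $\cM_S\hookrightarrow\cO_S\xrightarrow{f^\sharp}f_*\cO_X$ and thus depends only on $\ul{f}$. Surjectivity: given $\ul{f}\colon\ul{X}\to\ul{S}$ with $\ul{f}(X-\partial X)\subseteq S-\partial S$ and restriction $\ul{f}'\colon X-\partial X\to S-\partial S$, for a local section $t$ of $\cM_S\subseteq\cO_S$ one has $f^\sharp(t)|_{X-\partial X}=\ul{f}'^{\,*}(t|_{S-\partial S})$, the pullback of a unit and hence a unit in $\cO_{X-\partial X}$; by the local description $f^\sharp(t)$ then lies in $\cM_X$, so $f^\sharp$ restricts to $f^\flat\colon\cM_S\to f_*\cM_X$, giving a morphism of log monoid schemes with underlying morphism $\ul{f}$.

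The main obstacle will be the local identification of $\cM_X$ with the compactifying log structure: one must carry out the stalkwise analysis over faces of $P\oplus\N^m$ carefully and deal with the fact that objects of $\lSm/\F_1$ are only Zariski-locally of the product form $\A_P\times\A^m$ (localizations of $\F_1[P\oplus\N^m]$ need not be of that form), so that the description and the gluing of $f^\flat$ are legitimate. Everything after that is bookkeeping with sheaves of monoids.
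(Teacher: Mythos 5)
Your proof is correct, and it reaches the statement by a somewhat different route than the paper. The paper's proof also reduces to the local model, but then argues entirely at the level of charts: writing $S-\partial S$ and $X-\partial X$ as localizations $\Spec(\F_1[P_F])$ and $\Spec(\F_1[Q_G])$ at faces $F\subseteq P$ and $G\subseteq Q$, the hypothesis on $\ul{f}$ forces the composite $\F_1[P]\to\F_1[Q]\to\F_1[Q_G]$ to factor through $\F_1[P_F]$, hence $F$ maps into $G$ (since $Q\cap G^\gp=G$), which produces a map of prelog structures $(\F_1[P],F)\to(\F_1[Q],G)$ and therefore of log structures; the paper then simply observes that the two constructions are mutually inverse. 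You instead first prove that for $X\in\lSm/\F_1$ the log structure is the compactifying log structure of $X-\partial X\hookrightarrow\ul{X}$ (injective structure map, image the sections becoming units on $X-\partial X$), after which injectivity and surjectivity of $f\mapsto\ul{f}$ are formal. The core local computation is the same in both arguments — your stalk identification of the image as $(P+F_1^\gp)\oplus F_2^\gp$ is precisely what underlies the paper's ``sends $F$ into $G$'' step — but your packaging is more intrinsic and, usefully, makes the uniqueness half explicit where the paper only asserts that the constructions are inverse. It also mirrors what the paper later does in the scheme case (Propositions \ref{boundary.67} and \ref{boundary.68}, which invoke Ogus's results on compactifying log structures); your lemma is the monoid-scheme analogue. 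The price is that you must prove that lemma by hand, including the care you correctly flag about open subschemes of $\A_P\times\A^m$ not being literally of product form, whereas the chart-level factorization argument sidesteps it.
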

\begin{proof}
Let $f\colon X\to S$ be a morphism in $\lSm/\F_1$.
Then its underlying morphism of schemes is $\ul{f}\colon \ul{X}\to \ul{S}$,
and this sends $X-\partial X$ into $S-\partial S$ by Proposition \ref{logmonoid.6}.

Conversely,
let $\ul{f}\colon \ul{X}\to \ul{S}$ be a morphism of monoid schemes such that $\ul{f}(X-\partial X)\subset S-\partial S$.
Assume that $\ul{f}$ is associated with a map of monoids $\F_1[P]\to \F_1[Q]$ for some toric monoids $P$ and $Q$ and $X-\partial X$ and $S-\partial S$ are associated with $\F_1[P_F]$ and $\F_1[Q_G]$ for some faces $F$ and $G$ of $P$ and $Q$.
Then the condition $f(X-\partial X)\subset S-\partial S$ implies that the composite map $\F_1[P]\to \F_1[Q]\to \F_1[Q_G]$ factors through the localization $\F_1[P]\to \F_1[P_F]$.
Hence $\F_1[P]\to \F_1[Q]$ sends $F$ into $G$,
so we have a map of log monoids $(\F_1[P],F)\to (\F_1[Q],G)$.
For general $\ul{f}$,
glue this local construction to obtain a morphism $X\to S$.

To conclude,
observe that the above two constructions are inverses to each other.
\end{proof}

\begin{exm}
\label{logmonoid.10}
Let $X$ be a monoid scheme,
and let $P$ be a monoid.
As in \cite[Proposition III.1.2.4]{Ogu},
there is a natural isomorphism
\[
\Hom(X,\A_P)
\cong
\Hom(P,\Gamma(X,\cM_X)).
\]
\end{exm}

\begin{df}
\label{logmonoid.18}
Let $f\colon X\to S$ be a morphism in $\lSm/\F_1$ such that $\ul{f}\colon \ul{X}\to \ul{S}$ is proper birational.
We say that $f$ is a \emph{dividing cover} (resp.\ \emph{partial dividing cover}) if Zariski locally on $S$, it is of the form $\T_u\times \id \colon \T_\Sigma\times \A^m \to \A_P\times \A^m$ for some toric monoid $P$,
subdivision (resp.\ partial subdivision) of fans $u\colon \Sigma\to \Spec(\F_1[P])$, and integer $m\geq 0$.
\end{df}

\begin{exm}
\label{logmonoid.9}
For a subdivision (resp.\ partial subdivision) of fans $\Delta\to \Sigma$,
the induced morphism $\T_\Delta\to \T_\Sigma$ is a dividing (resp.\ partial dividing) cover.
\end{exm}

\begin{const}
\label{logmonoid.16}
Let $S\in \lSm/\F_1$.
Consider the fans $\Sigma$ and $\Sigma'$ whose associated monoid schemes are $\ul{S}$ and $S-\partial S$.
Let $\Sigma''$ be the subfan of $\Sigma$ consisting of those cones $\sigma$ such that all the rays of $\sigma$ are not in $\Sigma'$.
We set $S^\sharp:=\T_{\Sigma''}$,
which we regard as an open subscheme of $S$.

In the local case $S=\A_P\times \A^m$ with a toric monoid $P$ and integer $m\geq 0$,
$\Sigma$ is $\Spec(\F_1[P])\times \A^m$,
$\Sigma'$ is $\Spec(\F_1[P^\gp])\times \A^m$,
and $\Sigma''$ is $\Spec(\F_1[P])\times \G_m^m$.
Hence the categories of dividing covers of $S$ and $S^\sharp$ are equivalent.

For general $S$,
we can glue the local constructions to have the same conclusion.
\end{const}

\begin{prop}
\label{logmonoid.11}
Let $f\colon X\to S$ be a dividing cover in $\lSm/\F_1$.
If $S=\T_\Sigma\times \A^m$ for some fan $\Sigma$ and integer $m\geq 0$,
then $f$ can be identified with $\T_u\times \id\colon \T_\Delta\times \A^m\to \T_\Sigma \times \A^m$ for some subdivision $u\colon \Delta\to \Sigma$.
\end{prop}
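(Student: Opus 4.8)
The plan is to translate the statement into one about subdivisions of fans and then glue. Since $f$ is a dividing cover, $\ul f\colon\ul X\to\ul S$ is proper birational, and both $\ul X$ (because $X\in\lSm/\F_1$) and $\ul S=\ul{\T_\Sigma}\times\A^m$ are toric monoid schemes; in fact $\ul S$ is the monoid scheme of the product fan $\Sigma\times\Spec(\F_1[\N^m])$, where $\Spec(\F_1[\N^m])$ is the fan of $\A^m$. By Example \ref{msch.13}, $\ul f$ is then identified with $\ul{\T_w}$ for a unique subdivision of fans $w$ onto $\Sigma\times\Spec(\F_1[\N^m])$, so its source fan $E$ satisfies $\ul X\cong\ul{\T_E}$. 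It remains to see that $E=\Delta\times\Spec(\F_1[\N^m])$ and $w=u\times\id$ for a subdivision $u\colon\Delta\to\Sigma$, and that this lifts to the log structures so that $f\cong\T_u\times\id$.

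First I would set up the local picture. By Definition \ref{logmonoid.18} there is an open cover $\{W_\lambda\}$ of $S$ over which $f$ has the form $\T_{u_\lambda}\times\id\colon\T_{\Delta_\lambda}\times\A^m\to\A_{Q_\lambda}\times\A^m$. Using Remark \ref{msch.9}, together with the fact that open subschemes of $\T_\Sigma\times\A^m$ correspond to subfans of $\Sigma\times\Spec(\F_1[\N^m])$ and that cones of a product fan are products, one checks that each $W_\lambda$ is necessarily of the form $\A_{P_\lambda}\times\A^m$ with $\A_{P_\lambda}$ an affine chart of $\T_\Sigma$ (the $\A^m$-factor cannot shrink, since it carries the trivial log structure but has $m$ boundary divisors). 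Thus, after relabelling, $\{\A_{P_\lambda}\}$ is an affine open cover of $\T_\Sigma$, and $f|_{f^{-1}(\A_{P_\lambda}\times\A^m)}=\T_{u_\lambda}\times\id$ with $u_\lambda\colon\Delta_\lambda\to\Spec(\F_1[P_\lambda])$ a subdivision; equivalently, $w$ restricts over $\A_{P_\lambda}\times\A^m$ to $u_\lambda\times\id$.

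The key point is that $(-)\times\id_{\A^m}$ is faithful on subdivisions: if $\T_a\times\id_{\A^m}=\T_b\times\id_{\A^m}$ as morphisms into $\A_Q\times\A^m$ for subdivisions $a,b$ of $\Spec(\F_1[Q])$, then $a=b$. Indeed, by Example \ref{msch.13} the underlying morphism determines the subdivision $\{\sigma\times\tau\}$ of the product fan, and $a$ (resp.\ $b$) is recovered from it as the subfan of cones lying in the lattice of $\Spec(\F_1[Q])$. Applying this over the overlaps shows the $u_\lambda$ agree there, so the $\Delta_\lambda$ glue to a fan $\Delta$ with a morphism $u\colon\Delta\to\Sigma$ that is a subdivision because $|\Delta|=\bigcup_\lambda|\Delta_\lambda|=\bigcup_\lambda|\Spec(\F_1[P_\lambda])|=|\Sigma|$, and $E=\Delta\times\Spec(\F_1[\N^m])$.

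Finally I would glue the local identifications $f^{-1}(\A_{P_\lambda}\times\A^m)\cong\T_{\Delta_\lambda}\times\A^m$ over $\A_{P_\lambda}\times\A^m$ to a global identification $X\cong\T_\Delta\times\A^m$ over $S$: on an overlap the two resulting isomorphisms coincide on the dense open torus, each being the canonical one coming from birationality of $\ul f$, hence coincide by Propositions \ref{image.6} and \ref{logmonoid.5}, as the target is separated. Under this identification $f$ restricts over each $\A_{P_\lambda}\times\A^m$ to $\T_u\times\id$, so $f\cong\T_u\times\id$ globally, which is a dividing cover by Example \ref{logmonoid.9}. I expect the main obstacle to be the step isolating the standard local form --- showing each $W_\lambda$ must already be of the shape $\A_{P_\lambda}\times\A^m$ over an affine chart of $\T_\Sigma$ --- together with the faithfulness point, which is precisely what makes the local subdivisions glue without ambiguity.
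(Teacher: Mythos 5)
Your overall strategy (reduce to the underlying subdivision of the product fan via Example \ref{msch.13}, then glue using uniqueness of morphisms agreeing on a dense open) is workable in principle, but it rests on a step whose justification is wrong, and that step is exactly where the content of the proposition lies. You assert that every member $W_\lambda$ of a Zariski cover of $S$ over which $f$ has the standard form of Definition \ref{logmonoid.18} ``is necessarily of the form $\A_{P_\lambda}\times\A^m$,'' because ``the $\A^m$-factor cannot shrink.'' As a statement about open covers this is false: $\A_\N\times\G_m$ is a perfectly good open chart of $\A_\N\times\A^1$, and it is of the form $\A_{\N\oplus\Z}\times\A^0$, so the integer appearing in the local standard form can very well be smaller than $m$, with the missing $\G_m$-factors absorbed into the monoid $P_\lambda$ as units. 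The triviality of the log structure on the $\A^m$-factor does not prevent this, and you flag this step yourself as the main obstacle without actually closing it.

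What does save the claim---and is essentially the entire content of the paper's proof---is a topological fact about affine monoid schemes: $\A_P\times\A^m=\Spec(\F_1[P\oplus\N^m])$ has a unique closed point, namely the maximal ideal $\mathfrak{m}=A\setminus A^*$, and $D(s)\ni\mathfrak{m}$ forces $s\in A^*$, i.e.\ $D(s)=\Spec(A)$. Hence \emph{every} Zariski cover of an affine monoid scheme contains the whole scheme as a member, so the local standard form of a dividing cover automatically holds globally over each affine chart $\A_{P}\times\A^m$ of $\T_\Sigma\times\A^m$; one then glues over the finitely many charts of $\T_\Sigma$. Once this is in place, most of your remaining machinery (identifying $\ul{f}$ with a subdivision of the product fan, the faithfulness of $(-)\times\id_{\A^m}$ on subdivisions, and the gluing via Propositions \ref{image.6} and \ref{logmonoid.5}) becomes either unnecessary or routine bookkeeping. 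As written, however, your third and fourth paragraphs never get off the ground, because the inputs $u_\lambda\colon\Delta_\lambda\to\Spec(\F_1[P_\lambda])$ with $\{\A_{P_\lambda}\}$ an affine cover of $\T_\Sigma$ have not actually been produced.
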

\begin{proof}
If $\Sigma=\Spec(\F_1[P])$ for some toric monoid $P$,
then the claim is due to the definition of dividing covers since every Zariski covering of $\Spec(\F_1[P])$ contains $\Spec(\F_1[P])$ itself.
For general $\Sigma$,
glue the local constructions.
\end{proof}

\begin{prop}
\label{logmonoid.12}
The class of dividing covers in $\lSm/\F_1$ is closed under compositions.
\end{prop}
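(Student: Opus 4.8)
The statement to prove is that the class of dividing covers in $\lSm/\F_1$ is closed under compositions. The plan is to reduce to the local situation and then use the corresponding fact about subdivisions of fans. First I would take dividing covers $f\colon X\to S$ and $g\colon Y\to X$ in $\lSm/\F_1$, and observe that being a dividing cover is a Zariski-local property on the base by Definition \ref{logmonoid.18}. So I may work Zariski locally on $S$; after such a localization, $S$ is of the form $\A_P\times \A^m$ for a toric monoid $P$ and integer $m\geq 0$, and $f$ is of the form $\T_u\times\id\colon \T_\Sigma\times \A^m\to \A_P\times \A^m$ for a subdivision $u\colon \Sigma\to \Spec(\F_1[P])$.

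Next I would analyze $g\colon Y\to X = \T_\Sigma\times \A^m$. Since $\T_\Sigma \times \A^m = \T_{\Sigma \times \A^m}$ (treating $\A^m$ as a fan), I can apply Proposition \ref{logmonoid.11}: a dividing cover whose base is of the form $\T_{\Sigma'}\times \A^{m'}$ for a fan $\Sigma'$ is globally of the form $\T_v\times\id\colon \T_\Delta\times \A^{m'}\to \T_{\Sigma'}\times \A^{m'}$ for a subdivision $v\colon \Delta\to \Sigma'$. Wait — one must be slightly careful about where the $\A^m$ factor sits; the cleanest route is to absorb the $\A^m$ into the fan, writing $X = \T_{\Sigma\times\A^m}$, so that $g$ is identified with $\T_v\colon \T_\Delta\to \T_{\Sigma\times\A^m}$ for a subdivision $v\colon \Delta\to \Sigma\times\A^m$ of fans. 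Then the composite $\ul{fg}$ is proper birational (a composition of proper birational morphisms of monoid schemes is proper birational, which follows from Example \ref{msch.13} since compositions of subdivisions of fans are subdivisions, or directly from the valuative criterion together with Definition \ref{msch.12}).

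Finally I would assemble: the composite $u\circ v_0$, where $v_0\colon \Delta\to \Sigma\times\A^m \to \Spec(\F_1[P])\times \A^m$ is the subdivision underlying $g$ followed by the subdivision underlying $f$ (the latter being $u\times\id$), is again a subdivision of fans $\Delta\to \Spec(\F_1[P])\times\A^m$, since the composition of subdivisions is a subdivision (both conditions in the definition of subdivision — isomorphism on lattices and surjectivity on supports — are preserved under composition). Pulling the $\A^m$ factor back out, this exhibits $fg$ Zariski locally on $S$ as $\T_{u\circ v_0}\times \id$ for a subdivision, i.e.\ as a dividing cover, and since dividing is a Zariski-local condition on the base, $fg$ is a dividing cover. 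The main obstacle, and the only place requiring genuine care, is the bookkeeping of the trivializing $\A^m$-factor: I need to make sure that Proposition \ref{logmonoid.11} applies with the correct fan (namely $\Sigma\times\A^m$, not just $\Sigma$) and that the local forms match up compatibly over different members of a Zariski covering of $S$, so that the conclusion is genuinely a global statement about $fg$; this is routine but is where one could slip.
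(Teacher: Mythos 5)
Your proof is correct and follows essentially the same route as the paper: localize Zariski-locally on $S$ to put $f$ in the standard form $\T_u\times\id$, invoke Proposition \ref{logmonoid.11} to put $g$ in the form $\T_v\times\id$ over $\T_\Sigma\times\A^m$, and compose the subdivisions. One small caution: the aside about "absorbing $\A^m$ into the fan" by writing $X=\T_{\Sigma\times\A^m}$ is not literally valid, since $\T_\Sigma\times\A^m$ and $\T_{\Sigma\times\A^m}$ carry different log structures (the $\A^m$ factor must remain trivially logged, and a subdivision touching those directions would not give a dividing cover); but your first, direct application of Proposition \ref{logmonoid.11} — which keeps the subdivision $v\colon\Delta\to\Sigma$ and the identity on $\A^m$ separate — is exactly the paper's argument and suffices.
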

\begin{proof}
Let $f\colon X\to S$ and $g\colon Y\to X$ be dividing covers in $\lSm/\F_1$.
To show that $fg$ is a dividing cover,
we can work Zariski locally on $S$.
Hence we may assume that $f=\T_u\times \id\colon \T_\Sigma \times \A^m \to \A_P\times \A^m$ for some subdivision of fans $u\colon \Sigma\to \Spec(\F_1[P])$ with a toric monoid $P$ and integer $m\geq 0$.
By Proposition \ref{logmonoid.11},
$g$ is identified with $\T_v\times \id\colon \T_\Delta\times \A^m \to \T_\Sigma \times \A^m$ for some subdivision of fans $v\colon \Delta\to \Sigma$.
The composite morphism $gf$ is identified with $\T_{uv}\times \id$, so it is a dividing cover.
\end{proof}

\begin{prop}
\label{logmonoid.13}
Let $f\colon X\to S$ be a dividing cover in $\lSm/\F_1$,
and let $g\colon S'\to S$ be a morphism in $\lSm/\F_1$.
Then the pullback $X'\to S'$ of $f$ in the category of fs log monoid schemes is a dividing cover in $\lSm/\F_1$.
Furthermore,
$\ul{X'}$ is the normalization of the closure of $X'-\partial X'$ in $\ul{X}\times_{\ul{S}}\ul{S'}$.
\end{prop}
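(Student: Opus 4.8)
The plan is to reduce, by working Zariski-locally first on $S$ and then on $S'$, to an explicit computation with fans, and then to invoke the standard toric fact that subdivisions pull back to subdivisions and that the resulting toric variety computes the relevant $\mathrm{fs}$ fiber product. Both assertions are Zariski-local on $S$, so by Definition \ref{logmonoid.18} we may assume $f = \T_u \times \mathrm{id} \colon \T_\Sigma \times \A^m \to \A_P \times \A^m$ for a toric monoid $P$, an integer $m \geq 0$ and a subdivision of fans $u \colon \Sigma \to \Spec(\F_1[P])$. Since the trivial factor $\A^m$ cancels in the fiber product, one has $X' = S' \times_S^{\mathrm{fs}} X \cong S' \times_{\A_P}^{\mathrm{fs}} \T_\Sigma$, where $S' \to \A_P$ is the composite of $g$ with the first projection; so we may assume $m = 0$. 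The assertions are also Zariski-local on $S'$, so we may further assume $S' = \A_Q \times \A^{m'}$ for a toric monoid $Q$ and an integer $m' \geq 0$.

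A standard computation gives $\Gamma(\A_Q \times \A^{m'}, \cM_{\A_Q \times \A^{m'}}) \cong Q$, so by Example \ref{logmonoid.10} the composite $S' \to \A_P$ corresponds to a map of monoids $\theta \colon P \to Q$. Letting $\psi := (\theta^{\gp})^{\vee}$ be the dual lattice map, which carries the cone $Q^{\vee}$ into $P^{\vee}$, I would form the pulled-back fan $\psi^{*}\Sigma := \{\psi^{-1}(\sigma) \cap Q^{\vee} : \sigma \in \Sigma\}$; it is a subdivision of $\Spec(\F_1[Q])$, its underlying monoid scheme is toric, and the induced morphism $\T_v \colon \T_{\psi^{*}\Sigma} \to \A_Q$ for the subdivision $v \colon \psi^{*}\Sigma \to \Spec(\F_1[Q])$ is a dividing cover by Example \ref{logmonoid.9}. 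The crux is then the toric lemma that $\A^{m'} \times \T_{\psi^{*}\Sigma}$, with its canonical maps to $S' = \A_Q \times \A^{m'}$ and to $\T_\Sigma$, realizes the fiber product $S' \times_{\A_P}^{\mathrm{fs}} \T_\Sigma$ in $\mathrm{fs}$ log monoid schemes, and that $\ul{\T_{\psi^{*}\Sigma}} \times \A^{m'}$ is the normalization of the closure of the dense open locus of trivial log structure inside $\ul{S'} \times_{\ul{\A_P}} \ul{\T_\Sigma}$. This is combinatorial: exactly as in the proof of Proposition \ref{boundary.68}, with $\Spec(\F_1[-])$ in place of $\Spec(\Z[-])$, the underlying monoid scheme of $\A_Q \times_{\A_P}^{\mathrm{int}} \T_\Sigma$ is the closure of the torus locus in $\ul{\A_Q} \times_{\ul{\A_P}} \ul{\T_\Sigma}$ and passing to $(-)^{\mathrm{fs}}$ normalizes it; combined with the classical identification of this normalization with $\T_{\psi^{*}\Sigma}$ (see \cite{Fulton:1436535}, \cite{CLStoric}), the lemma follows. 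Granting it, $X' \cong \A^{m'} \times \T_{\psi^{*}\Sigma}$ and the projection $X' \to S'$ is identified with $\mathrm{id}_{\A^{m'}} \times \T_v$; hence $X' \in \lSm/\F_1$ and $X' \to S'$ is a dividing cover, and $\ul{X'}$ is the normalization of the closure of $X' - \partial X' \cong (S' - \partial S') \times_{S - \partial S} (X - \partial X)$ in $\ul{X} \times_{\ul{S}} \ul{S'}$, as asserted.

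Finally I would glue the local models: each is canonical, any two agree on overlaps because a morphism of monoid schemes is determined by its restriction to a dense open subscheme (Proposition \ref{image.6}), and forming the closure and the normalization is local on $S'$; this produces the global morphism $X' \to S'$ with the stated properties. I expect the toric lemma of the previous paragraph to be the main obstacle — identifying the $\mathrm{fs}$ fiber product with the toric variety of the pulled-back fan and recognizing its underlying scheme as the claimed normalization of a closure — while the Zariski-local reductions and the gluing are routine.
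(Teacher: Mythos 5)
Your proposal is correct and follows essentially the same route as the paper: reduce Zariski-locally on $S$ and then on $S'$ to a map of toric monoids $\theta\colon P\to Q$, identify the fs fiber product with the toric monoid scheme of the pulled-back subdivision, and recognize its underlying scheme as the normalization of the closure of the torus locus. The only difference is presentational: you phrase the subdivision dually via $\psi^{-1}(\sigma)\cap Q^\vee$ and defer the key identification to classical toric references and the analogy with Proposition \ref{boundary.68}, whereas the paper carries out the same computation directly on monoids, taking the pushout $Q_i\oplus_P^{\mathrm{mon}}P'$, identifying its integralization with the closure (as the image in the group pushout) and its saturation with the normalization.
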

\begin{proof}
We can work Zariski locally on $S$,
so we may assume that $f$ is of the form $T_u\times \id\colon \T_\Sigma \times \A^m\to \A_P \times \A^m$ for some subdivision of fans $u\colon \Sigma\to \Spec(\F_1[P])$ and integer $m\geq 0$.
Since $T_u\times \id$ is a pullback of $T_u$,
we reduce to the case of $m=0$.

We can work Zariski locally on $S'$ too, so we may assume $S'=\A_{P'}\times \A^{m'}$ for some toric monoid $P'$ and integer $m'\geq 0$.
We have $\Gamma(S',\cM_{S'})\cong \Gamma(\A_{P'},\cM_{\A_{P'}})\cong P'$,
so the morphism $S'\to S$ factors through the projection $S'\to \A_{P'}$ by Example \ref{logmonoid.10}.
Hence we also reduce to the case of $m'=0$.
In this case,
$g=\A_\theta$ for some map of monoids $\theta\colon P\to P'$.

Let $\{\Spec(\F_1[Q_i])\}_{i\in I}$ be the cones of $\Sigma$,
and let $Q_i'$ be the coproduct $Q\oplus_P^\mathrm{mon} P'$ in the category of monoids,
and consider its saturation $(Q_i')^\sat$.
Then the cones $\{\Spec(\F_1[(Q_i')^\sat])\}_{i\in I}$ form a fan $\Sigma'$,
and the morphism $X'\to X$ can be identified with $\T_{\Sigma'}\to \T_{\Sigma}$ with a subdivision $\Sigma'\to \Sigma$.
Hence $X'\to X$ is a dividing cover.

For the last claim,
consider $X_i:=\Spec(\F_1[Q_i])$ and $X_i'=\Spec(\F_1[(Q_i')^\sat])$ for $i\in I$.
Then we have $\ul{X_i}\times_{\ul{S}}\ul{S'}\cong \Spec(\F_1[Q_i'])$ and $X_i'-\partial X_i'\cong \Spec(\F_1[Q^\gp\oplus_{P^\gp}P'^\gp])$.
Also, the image of the induced map $\F_1[Q_i']\to \F_1[Q^\gp\oplus_{P^\gp}P'^\gp]$ is precisely $\F_1[(Q_i')^\mathrm{int}]$.
Hence the integral closure of $X'-\partial X'$ in $\ul{X}\times_{\ul{S}}\ul{S'}$ is $\Spec(\F_1[(Q_i')^\mathrm{int}])$ using the local description of the scheme theoretic image in Definition \ref{image.1}.
To conclude,
observe that the normalization of $\Spec(\F_1[(Q_i')^\mathrm{int}])$ is $\Spec(\F_1[(Q_i')^\sat])$.
\end{proof}

\begin{prop}
\label{logmonoid.14}
A partial dividing cover $f\colon X\to S$ in $\lSm/\F_1$ is a monomorphism of fs log monoid schemes.
\end{prop}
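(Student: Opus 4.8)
The plan is to reduce to an affine toric model and then compute the fibre product $X\times_S X$ in the category of fs log monoid schemes explicitly, showing that the diagonal is an isomorphism. Being a monomorphism is Zariski-local on $S$ and stable under base change, and $\T_u\times\id_{\A^m}$ is the pullback of $\T_u\colon\T_\Sigma\to\A_P$ along the projection $\A_P\times\A^m\to\A_P$; so by Definition \ref{logmonoid.18} it suffices to treat the case $f=\T_u\colon\T_\Sigma\to\A_P$, where $u\colon\Sigma\to\Spec(\F_1[P])$ is a partial subdivision of fans in the lattice dual to $M:=P^\gp$. Since $u$ induces an isomorphism of lattices, every cone $\sigma\in\Sigma$ satisfies $(\sigma^\vee\cap M)^\gp=M$; I write $Q_\sigma:=\sigma^\vee\cap M$ and $U_\sigma:=\Spec(\F_1[Q_\sigma],Q_\sigma)$, so that the $U_\sigma$ form an affine open cover of $\T_\Sigma$.

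Next, arguing as in the proof of Proposition \ref{logmonoid.13}, I would compute, for cones $\sigma,\tau\in\Sigma$, the fs fibre product $U_\sigma\times_{\A_P}U_\tau$. Its underlying monoid scheme, before fs-ification, is $\Spec(\F_1[Q_\sigma\oplus_P^{\mathrm{mon}}Q_\tau])$ with the log structure associated to the tautological prelog structure; applying $(-)^{\mathrm{int}}$ replaces the chart monoid by its image $Q_\sigma+Q_\tau$ in $(Q_\sigma\oplus_P^{\mathrm{mon}}Q_\tau)^\gp=Q_\sigma^\gp\oplus_{P^\gp}Q_\tau^\gp=M$, and then $(-)^{\sat}$ replaces it by the saturation of $Q_\sigma+Q_\tau$ inside $M$. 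As $Q_\sigma+Q_\tau$ is finitely generated, this saturation is $\Cone(Q_\sigma+Q_\tau)\cap M=(\sigma^\vee+\tau^\vee)\cap M=(\sigma\cap\tau)^\vee\cap M=Q_{\sigma\cap\tau}$, using the identity $(\sigma\cap\tau)^\vee=\sigma^\vee+\tau^\vee$ for rational polyhedral cones. Hence $U_\sigma\times_{\A_P}U_\tau\cong U_{\sigma\cap\tau}$ in fs log monoid schemes.

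Finally, since $\Sigma$ is a fan, $\sigma\cap\tau$ is a common face of $\sigma$ and of $\tau$, hence itself a cone of $\Sigma$, and $U_{\sigma\cap\tau}$ is identified with the open subscheme $U_\sigma\cap U_\tau$ of $\T_\Sigma$. The opens $U_\sigma\times_{\A_P}U_\tau$ cover $\T_\Sigma\times_{\A_P}\T_\Sigma$ (open immersions are stable under base change and $(-)^{\sat}$ commutes with them), and the preimage of $U_\sigma\times_{\A_P}U_\tau$ under the diagonal $\T_\Sigma\to\T_\Sigma\times_{\A_P}\T_\Sigma$ is exactly $U_\sigma\cap U_\tau=U_{\sigma\cap\tau}$, on which the diagonal restricts to the identity $U_{\sigma\cap\tau}\to U_{\sigma\cap\tau}$. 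Therefore the diagonal is an isomorphism, i.e., $f$ is a monomorphism of fs log monoid schemes.

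The step I expect to be the main obstacle is the explicit identification of the fs fibre product $U_\sigma\times_{\A_P}U_\tau$ with $U_{\sigma\cap\tau}$: one must carefully unwind the coproduct log structure and the functors $(-)^{\mathrm{int}}$, $(-)^{\sat}$ as constructed above for log monoid schemes (the analogues of \cite[Proposition II.2.1.5, Corollary II.2.1.6]{Ogu}), check that the group completions of all the chart monoids involved coincide with $M$ so that the pushout only contributes the saturation of $Q_\sigma+Q_\tau$, and verify the cone identity $(\sigma\cap\tau)^\vee=\sigma^\vee+\tau^\vee$ at the level of lattice points. The remaining points — Zariski-locality and base-change stability of monomorphisms, and the combinatorics of faces in a fan — are routine.
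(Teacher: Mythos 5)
Your proposal is correct and follows essentially the same route as the paper: reduce Zariski-locally (and via base change along $\A_P\times\A^m\to\A_P$) to a partial subdivision $\T_u\colon\T_\Sigma\to\A_P$, then identify $U_\sigma\times_{\A_P}U_\tau$ with $U_{\sigma\cap\tau}$ and glue to see that the diagonal is an isomorphism. Your explicit unwinding of $(-)^{\mathrm{int}}$ and $(-)^{\sat}$ via the identity $(\sigma\cap\tau)^\vee=\sigma^\vee+\tau^\vee$ is exactly the computation the paper delegates to (the proof of) Proposition \ref{logmonoid.13}, phrased there as the normalization of the closure of the dense torus in $\ul{X}\times_{\ul{S}}\ul{X}$.
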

\begin{proof}
We can work Zariski locally on $S$,
so we may assume that $f$ is given by $\T_u\times \id \colon \T_\Sigma\times \A^m\to \A_P\times \A^m$ for some toric monoid $P$, partial subdivision of fans $u\colon \Sigma\to \Spec(\F_1[P])$, and integer $m\geq 0$.
We further reduce to the case of $m=0$ since any pullback of a monomorphism is a monomorphism.

We need to show that the diagonal morphism $X\to X\times_S X$ is an isomorphism.
Let $\sigma$ and $\sigma'$ be cones of $\Sigma$,
and let $U_\sigma$ be the corresponding open subscheme of $\ul{X}$.
As in the proof of Proposition \ref{logmonoid.13},
$\ul{X\times_S X}$ is the normalization of the closure of $X-\partial X$ in $\ul{X}\times_{\ul{S}}\ul{X}$.
Using this,
we can show
\[
U_{\sigma\cap \sigma'}
\cong
U_\sigma \times_{S} U_{\sigma'}.
\]
Glue these local constructions to obtain $X\cong X\times_S X$.
\end{proof}

\begin{df}
For a log monoid $(A,M)$,
we define $\Spec(A,M)\otimes_{\F_1}\Z:=\Spec(A\otimes_{\F_1}\Z,M)$.
For a coherent log monoid scheme $X$,
glue the local constructions to define $X\otimes_{\F_1} \Z$.
\end{df}

\begin{const}
\label{logmonoid.20}
As in Construction \ref{msch.6},
we can unify the categories of coherent log schemes and coherent log monoid schemes into a single category whose hom sets are given as follows:
\begin{enumerate}
\item[(i)]
If $S$ and $X$ are both coherent log schemes or coherent log monoid schemes,
then $\Hom(X,S)$ is the original one.
\item[(ii)]
If $S$ is a coherent log scheme and $X$ is a coherent log monoid scheme,
then
\[
\Hom(X,S):=\emptyset.
\]
\item[(iii)]
If $S$ is a coherent log monoid scheme and $X$ is a coherent log scheme,
then
\[
\Hom(X,S):=\Hom(X,S\otimes_{\F_1}\Z).
\]
\end{enumerate}
\end{const}

\begin{df}
\label{logmonoid.21}
Let $X$ be an fs log scheme,
and let $X_0$ be an fs log monoid scheme.
A morphism $X\to X_0$ is \emph{strict} if the induce morphism $X\to \ul{X}\times_{\ul{X_0}}X_0$ is an isomorphism.
\end{df}

\begin{df}
\label{logmonoid.22}
Let $\cSm/\F_1$ be the category of pairs $X:=[\ul{X},X-\partial X]$ equipped with an open immersion of toric monoid schemes $X-\partial X\to \ul{X}$ such that $X-\partial X\in \Sm/\F_1$.

We also have the Zariski topology on $\cSm/\F_1$,
which consists of the families $\{U_i\to X\}_{i\in I}$ with finite $I$ such that $\{\ul{U_i}\to \ul{X}\}_{i\in I}$ is the Zariski covering of monoid schemes.

A morphism $X\to S$ in $\cSm/\F_1$ is an \emph{admissible blow-up} if $\ul{X}\to \ul{S}$ is proper and the induced morphism $X-\partial X\to S-\partial S$ is an isomorphism.
\end{df}

\begin{rmk}
\label{logmonoid.23}
We have the functor $\lSm/\F_1\to \cSm/\F_1$ sending $X$ to $[\ul{X},X-\partial X]$,
which is fully faithful by Proposition \ref{logmonoid.5}.
\end{rmk}

\begin{prop}
\label{logmonoid.24}
Let $\Sigma$ be a smooth fan,
and let $\Delta$ be its subfan.
Regard them as smooth toric monoid schemes.
Then we have $[\Sigma,\Delta]\in \SmlSm/\F_1$ if and only if for every cone $\sigma\in \Sigma$ not in $\Delta$,
there exists a ray of $\sigma$ not in $\Delta$.
\end{prop}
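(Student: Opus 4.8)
The plan is to realize $[\Sigma,\Delta]$ by an explicit Deligne--Faltings-type log structure on $\T_\Sigma$ coming from Construction \ref{logmonoid.7}, and then to compare its trivial locus with $\Delta$. Write $B$ for the set of rays of $\Sigma$ that are not cones of $\Delta$; thus the rays lying in $\Delta$ are exactly those outside $B$. Applying Construction \ref{logmonoid.7} to the smooth toric monoid scheme $\T_\Sigma$ and the rays in $B$ produces $X:=(\T_\Sigma,\sum_{\rho\in B}V(\rho))\in\SmlSm/\F_1$ with $\ul{X}=\T_\Sigma$, and by that construction $X$ has trivial log structure at $x$ precisely when $x\in X-\partial X$, where $X-\partial X$ is the open subscheme associated with the subfan
\[
\Delta':=\{\sigma\in\Sigma:\text{no ray of }\sigma\text{ lies in }B\}=\{\sigma\in\Sigma:\text{every ray of }\sigma\text{ lies in }\Delta\}.
\]
Since $\Delta$ is a subfan, every cone of $\Delta$ has all of its rays in $\Delta$, so $\Delta\subseteq\Delta'$; and $\Delta=\Delta'$ holds exactly when every $\sigma\in\Sigma$ all of whose rays lie in $\Delta$ already belongs to $\Delta$, equivalently (by contraposition) when every $\sigma\in\Sigma\setminus\Delta$ has a ray not in $\Delta$. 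So under the stated condition $X-\partial X=\Delta$ as an open subscheme of $\T_\Sigma$, whence $[\Sigma,\Delta]=[\ul{X},X-\partial X]\in\SmlSm/\F_1$ via the fully faithful functor of Remark \ref{logmonoid.23}. This settles the ``if'' direction.

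For the ``only if'' direction I would show that any $X\in\SmlSm/\F_1$ with $\ul{X}=\T_\Sigma$ and $X-\partial X=\Delta$ forces $\Delta=\Delta'$. Call a ray $\rho$ of $\Sigma$ \emph{divisorial} if $\rho\notin X-\partial X$; since $X-\partial X$ is intrinsic this is well defined, and the set of divisorial rays is precisely $B=\{\rho:\rho\notin\Delta\}$. In a Zariski chart of $X$ around the generic point of $V(\rho)$, which by definition of $\SmlSm/\F_1$ has the form $\A_\N^r\times\A^s\times\G_m^t$, one checks that the trivial locus is $\G_m^r\times\A^s\times\G_m^t$, i.e. the subfan of faces of the local cone that avoid the first $r$ coordinate rays; hence the divisorial rays visible in the chart are exactly those first $r$ coordinate rays, and $X-\partial X$ locally consists of the cones avoiding them. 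Globalizing, $X-\partial X=\{\sigma\in\Sigma:\text{no ray of }\sigma\text{ is divisorial}\}=\Delta'$, so $\Delta=\Delta'$, which as above is equivalent to the condition that each $\sigma\in\Sigma\setminus\Delta$ has a ray outside $\Delta$.

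The only genuinely delicate point is the bookkeeping in the second paragraph: one must verify that ``divisorial ray'' is a well-defined global notion and that the trivial locus of an object of $\SmlSm/\F_1$ with smooth toric underlying scheme is always the subfan spanned by the cones whose rays are all non-divisorial (equivalently, that such an object is Zariski-locally of the shape produced by Construction \ref{logmonoid.7}). I do not expect any further geometric input: the entire content is the observation that a subfan $\Delta\subseteq\Sigma$ need not be closed under the operation ``adjoin any cone all of whose rays already belong to $\Delta$'', and the proposition pins down exactly when it is.
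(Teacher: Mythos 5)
Your proposal is correct and follows essentially the same route as the paper: the ``if'' direction realizes $[\Sigma,\Delta]$ via Construction \ref{logmonoid.7} applied to the rays of $\Sigma$ outside $\Delta$ (after checking that the hypothesis forces $\Delta$ to equal the subfan of cones avoiding those rays), and the ``only if'' direction reduces to the local model $\A_\N^r\times\A^s\times\G_m^t$, where the trivial locus is visibly $\G_m^r\times\A^s\times\G_m^t$. Your extra bookkeeping with ``divisorial rays'' just makes explicit the localization step the paper leaves implicit.
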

\begin{proof}
Assume that for every cone $\sigma\in \Sigma$ not in $\Delta$,
there exists a ray of $\sigma$ not in $\Delta$.
Let $\alpha_1,\ldots,\alpha_d$ be the set of rays of $\Sigma$ not in $\Delta$.
Then $\Delta$ is precisely the set of cones $\delta$ of $\Sigma$ such that all the rays of $\delta$ are different from $\alpha_1,\ldots,\alpha_d$.
By construction \ref{logmonoid.7},
we have $[\Sigma,\Delta]\in \SmlSm/\F_1$.

Conversely,
assume $[\Sigma,\Delta]\in \SmlSm/\F_1$.
We can work Zariski locally on $\Sigma$,
so we may assume $[\Sigma,\Delta]=\A_\N^r\times \A^s \times \G_m^t$ for some integers $r,s,t\geq 0$.
The desired condition holds for this case.
\end{proof}

\begin{prop}
\label{logmonoid.25}
Let $f\colon X\to S$ be a birational morphism in $\cSm/\F_1$ such that the induced morphism $f^{-1}(S-\partial S)\to (S-\partial S)$ is an open immersion.
If $\ul{X}\in \Sm/\F_1$ and $S\in \SmlSm/\F_1$,
then we have $X\in \SmlSm/\F_1$.
\end{prop}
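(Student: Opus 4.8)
The plan is to reduce to the combinatorial criterion of Proposition~\ref{logmonoid.24}. Since $\ul{X}\in \Sm/\F_1$, it is the toric monoid scheme of a smooth fan $\Sigma$ in some lattice $N$, and $X-\partial X$, being an open subscheme of $\ul{X}$, is the toric monoid scheme of a subfan $\Delta$ of $\Sigma$. Likewise $\ul{S}$ has a smooth fan $\Sigma_S$, and $S-\partial S$ a subfan $\Delta_S$; since $S\in \SmlSm/\F_1$, Proposition~\ref{logmonoid.24} tells us that every cone of $\Sigma_S$ all of whose rays lie in $\Delta_S$ already lies in $\Delta_S$. By Proposition~\ref{logmonoid.24} again, it suffices to prove the same implication for $(\Sigma,\Delta)$: if a cone $\sigma\in \Sigma$ has all of its rays in $\Delta$, then $\sigma\in \Delta$.

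First I would translate the hypotheses on $f$ into fan language. As $\ul{f}\colon \ul{X}\to \ul{S}$ is a birational morphism of toric monoid schemes, Example~\ref{msch.13} shows it is a partial subdivision, so the lattices of $\Sigma$ and $\Sigma_S$ are identified and, for a cone $\tau\in \Sigma$, the point $x_\tau$ of $\ul{X}$ associated to $\tau$ is sent by $\ul{f}$ to the point $x_{\tau'}$, where $\tau'$ is the smallest cone of $\Sigma_S$ containing $\tau$. The fact that $f\colon X\to S$ is a morphism in $\cSm/\F_1$ says $\ul{f}(X-\partial X)\subseteq S-\partial S$, i.e. $\Delta\subseteq\{\tau\in\Sigma:\tau'\in\Delta_S\}$. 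The remaining hypothesis, that $f^{-1}(S-\partial S)\to S-\partial S$ is an open immersion, I would unwind as follows: the pullback $f^{-1}(S-\partial S)=X\times_S(S-\partial S)$ has underlying scheme $\ul{f}^{-1}(S-\partial S)$, its open (trivial-locus) part is $X-\partial X$, and an open immersion in $\cSm/\F_1$ must match the open parts; hence $X-\partial X=\ul{f}^{-1}(S-\partial S)$, that is, $\Delta=\{\tau\in\Sigma:\tau'\in\Delta_S\}$ exactly. This equality is essential: allowing $X-\partial X$ to be strictly smaller than $\ul{f}^{-1}(S-\partial S)$ makes the statement fail.

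Now I would run the argument. Let $\sigma\in\Sigma$ have rays $\alpha_1,\dots,\alpha_k$, all lying in $\Delta$. Then each $\alpha_i'\in\Delta_S$, because $\ul{f}$ carries $X-\partial X$ into $S-\partial S$ and $\ul{f}(x_{\alpha_i})=x_{\alpha_i'}$; since $\Delta_S$ is a subfan, every ray of every $\alpha_i'$ lies in $\Delta_S$. The key geometric input is that $\sigma'$ equals the cone generated by $\alpha_1'\cup\dots\cup\alpha_k'$: each $\alpha_i'$ is a face of $\sigma'$, and because $\sigma'$ is smooth, hence simplicial, the cone generated by a union of faces of $\sigma'$ is again a face of $\sigma'$; that face contains every primitive generator of every $\alpha_i$, hence contains $\sigma$, so by minimality of $\sigma'$ it equals $\sigma'$. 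Consequently every ray of $\sigma'$ is a ray of some $\alpha_i'$ and therefore lies in $\Delta_S$; by the criterion for $S$ we get $\sigma'\in\Delta_S$. Finally $\ul{f}(x_\sigma)=x_{\sigma'}\in S-\partial S$, so $x_\sigma\in\ul{f}^{-1}(S-\partial S)=X-\partial X$, i.e. $\sigma\in\Delta$, as needed.

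The step I expect to be the main obstacle is the careful handling of the hypothesis on $f^{-1}(S-\partial S)$ in the second paragraph: one must check that ``$f^{-1}(S-\partial S)$'' denotes the fiber product pair whose open part is $X-\partial X$, and that ``open immersion'' in $\cSm/\F_1$ forces the open parts to coincide, so that $X-\partial X$ is \emph{all} of $\ul{f}^{-1}(S-\partial S)$ rather than merely contained in it. Once that is pinned down, the remaining ingredients — the partial-subdivision description of birational morphisms (Example~\ref{msch.13}), the behavior of the associated morphism on points, and the elementary fact that a union of faces of a simplicial cone generates a face — are routine toric combinatorics, and the reductions to fans are already supplied by Proposition~\ref{logmonoid.24}.
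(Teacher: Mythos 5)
Your proof is correct and follows essentially the same route as the paper's: both reduce to the ray criterion of Proposition \ref{logmonoid.24}, extract from the open-immersion hypothesis the identity $X-\partial X=\ul{f}^{-1}(S-\partial S)$ (equivalently $\lvert\Delta_X\rvert=\lvert\Sigma_X\rvert\cap\lvert\Delta_S\rvert$), and then verify the criterion for $X$ by passing to the smallest cone of $\Sigma_S$ containing $\sigma$ and applying the criterion to $S$. The only difference is cosmetic: the paper uses that the rays of $\sigma$ are literally rays of $\Sigma_S$ (since the fan of $X-\partial X$ is a subfan of that of $S-\partial S$) to conclude $\sigma$ is itself a cone of $\Sigma_S$, whereas you show $\sigma'$ is the face generated by the images $\alpha_i'$ of the rays; both steps rest on the same simpliciality of smooth cones.
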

\begin{proof}
We have $S=[\Sigma,\Delta]$ and $X=[\Sigma',\Delta']$ for a partial subdivision of smooth fans $\Sigma'\to \Sigma$ and subfans $\Delta$ and $\Delta'$ of $\Sigma$ and $\Sigma'$.
The assumption on $f$ implies that $\Delta'$ is a subfan of $\Delta$ and $\lvert \Delta'\rvert = \lvert \Sigma' \rvert \cap \lvert \Delta \rvert$.
Let $\sigma'$ be a cone of $\Sigma'$ not in $\Delta'$ such that every ray of $\sigma$ is in $\Delta'$.
By Proposition \ref{logmonoid.24},
we only need to show that this leads to a contradiction.

We have $\sigma'=\Cone(f_1,\ldots,f_d)$ for some rays $f_1,\ldots,f_d\in \Delta'$.
Let $\sigma$ be the smallest cone of $\Sigma$ containing $\sigma'$.
Then we have $\dim \sigma =\dim \sigma' = d$ and $f_1,\ldots,f_d\in \Sigma$,
so we have $\sigma'=\sigma$.
By Proposition \ref{logmonoid.24},
we have $\sigma'\in \Delta$ since $S\in \SmlSm/\F_1$ and hence $\sigma'\in \Delta'$ since $\lvert \Delta'\rvert = \lvert \Sigma' \rvert \cap \lvert \Delta \rvert$,
which is a contradiction.
\end{proof}

\section{Toric resolution of singularities}

In this section,
we collect several results on toric resolution of singularities that are needed in this paper.

\begin{prop}
\label{logmonoid.1}
Let $\Sigma$ be a fan.
Then there exists a subdivision of fans $\Sigma'\to \Sigma$ satisfying the following conditions:
\begin{enumerate}
\item[\textup{(i)}] $\Sigma'$ is smooth.
\item[\textup{(ii)}] If $\sigma$ is a smooth cone of $\Sigma$,
then $\sigma$ is a cone of $\Sigma'$,
i.e., $\sigma$ is not divided in $\Sigma'$.
\item[\textup{(iii)}] $\Sigma'$ is obtained by a finite sequence of star subdivisions.
\end{enumerate}
\end{prop}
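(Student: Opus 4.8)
The statement is purely combinatorial, so the plan is to build $\Sigma'$ by an explicit finite sequence of star subdivisions, following the classical toric resolution of singularities \cite[\S 2.6]{Fulton:1436535} (see also \cite[Ch.\ 11]{CLStoric}), and to arrange that all the centers lie in the relative interiors of non-smooth cones, which will give (ii) and (iii) for free. Two elementary facts drive this. First, in a fan a cone contained in another is a face of it (it is the intersection of the two), and a face of a smooth cone is smooth. Second, if $v$ is a primitive lattice vector and $\sigma_0$ is the smallest cone containing $v$, then the star subdivision at $v$ alters only the cones having $\sigma_0$ as a face; hence if $\sigma_0$ is \emph{not} smooth, then no smooth cone is altered, since a cone containing $v$ contains the non-smooth cone $\sigma_0$ as a face and so cannot be smooth. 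Thus it suffices to run the resolution using only star subdivisions centered in relative interiors of non-smooth cones.

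First I would make $\Sigma$ simplicial, by induction on the dimension $k$. Suppose every cone of dimension $<k$ is simplicial (vacuous for $k\le 2$). For a non-simplicial $k$-cone $\sigma$, take $v$ to be the primitive vector on the ray through the sum of the primitive generators of $\sigma$; then $v$ lies in the relative interior of $\sigma$, so its minimal containing cone is $\sigma$, which is non-simplicial and hence non-smooth. The star subdivision at $v$ replaces $\sigma$ by the cones $\Cone(v)+\tau$ with $\tau$ a facet of $\sigma$; the facets of $\sigma$ are already simplicial with $k-1$ rays, so these new $k$-cones are simplicial, the only $k$-cone removed is $\sigma$, and the remaining $k$-cones are untouched (none contains $\sigma$). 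So the number of non-simplicial $k$-cones decreases, and after finitely many steps all $k$-cones are simplicial; then I would pass to $k+1$, noting that subdividing a non-simplicial cone of dimension $>k$ neither touches nor creates any non-simplicial cone of dimension $\le k$ (a simplicial cone never contains a non-simplicial one). By the second fact above, the smooth cones of the original $\Sigma$ survive this stage.

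Next, with $\Sigma$ simplicial, I would run Fulton's multiplicity reduction. Let $m$ be the maximum of $\mathrm{mult}(\sigma)$ over all cones of the fan; if $m=1$ the fan is smooth and we stop. Otherwise pick $\sigma=\Cone(v_1,\ldots,v_d)$ with $\mathrm{mult}(\sigma)=m$ and a nonzero lattice point $v=\sum_i t_i v_i$ with $0\le t_i<1$ in the fundamental parallelepiped of $\sigma$, which exists since $m>1$. Its minimal containing cone $\sigma_0$ is the face of $\sigma$ spanned by the $v_i$ with $t_i\neq 0$, and $\sigma_0$ is non-smooth: if it were smooth its ray generators would extend to a lattice basis, forcing those $t_i$ to be integers, contradicting $0<t_i<1$. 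The star subdivision at $v$ alters only the cones having $\sigma_0$ as a face; for each such cone $\sigma'$, the multiplicity formula of \cite[\S 2.6]{Fulton:1436535} shows that each cone into which $\sigma'$ is subdivided has multiplicity $t_i\cdot\mathrm{mult}(\sigma')$ for some $t_i\in(0,1)$, hence a positive integer strictly below $\mathrm{mult}(\sigma')\le m$, so at most $m-1$; meanwhile $\sigma$, of multiplicity $m$, is removed. Thus the pair $\bigl(m,\ \#\{\text{cones of multiplicity }m\}\bigr)$ strictly decreases lexicographically, so the process terminates in a smooth fan $\Sigma'$, and its centers again lie in non-smooth cones, so smooth cones are preserved. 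Combining the two stages, $\Sigma'$ satisfies (i) by termination, (iii) by construction, and (ii) by the preservation of smooth cones.

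I expect the main difficulty to be organizational rather than a single hard estimate: making sure the whole resolution can be realized by star subdivisions centered inside non-smooth cones — the feature that simultaneously yields (ii) and (iii) — and checking that the lexicographic invariant in the last stage controls all cones, not merely the maximal ones. The one nontrivial external ingredient is Fulton's lemma computing the multiplicities of the cones of a star subdivision, which I would cite rather than reprove.
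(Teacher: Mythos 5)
Your argument is correct and is essentially the proof of the result the paper relies on: the paper's proof of Proposition \ref{logmonoid.1} is just a citation of \cite[Theorem 11.1.9]{CLStoric}, whose proof is exactly this two-stage resolution (simplicialize by induction on dimension, then reduce multiplicity) with every star-subdivision center placed in the relative interior of a non-smooth cone so that the smooth cones of $\Sigma$ are never touched. The only cosmetic imprecision is that the formula $t_i\cdot\mathrm{mult}(\sigma')$ applies to the maximal new cones of the subdivided $\sigma'$; for the lower-dimensional new cones one should add that the multiplicity of a face of a simplicial cone divides that of the cone, which still gives the bound $\le m-1$ needed for your lexicographic invariant.
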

\begin{proof}
See \cite[Theorem 11.1.9]{CLStoric}.
\end{proof}

\begin{prop}
\label{resolution.1}
Let $X$ be a toric monoid scheme.
Then there exists a proper birational morphism of monoid schemes $Y\to X$ such that $Y\in \Sm/\F_1$.
\end{prop}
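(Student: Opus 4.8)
The plan is to reduce the statement immediately to a combinatorial fact about fans. First I would invoke Remark \ref{msch.8}: since $X$ is toric, it is isomorphic, as a monoid scheme, to the monoid scheme associated with some fan $\Sigma$, and because $X$ is of finite type the fan $\Sigma$ has only finitely many cones. We may therefore assume $X$ itself is the monoid scheme associated with $\Sigma$.

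Next I would apply Proposition \ref{logmonoid.1} to $\Sigma$, obtaining a subdivision of fans $\Sigma' \to \Sigma$ with $\Sigma'$ smooth (condition (i) there); conditions (ii) and (iii) are not needed, except that (iii) makes it transparent that $\Sigma'$ is again a finite fan. Let $Y$ be the monoid scheme associated with $\Sigma'$. By Remark \ref{msch.8} and Definition \ref{msch.22}, $Y$ is a smooth toric monoid scheme, so $Y \in \Sm/\F_1$. Finally, by Example \ref{msch.13} the subdivision $\Sigma' \to \Sigma$ corresponds, under the identification of fans with toric monoid schemes, to a proper birational morphism of toric monoid schemes $Y \to X$, which is the morphism we want.

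There is no genuine obstacle here: the proposition is an assembly of Remark \ref{msch.8}, Proposition \ref{logmonoid.1}, and Example \ref{msch.13}. The only point deserving a moment's attention is that all the defining conditions of ``toric'' (connectedness, separatedness, normality, torsion-freeness, finite type) are inherited by $Y$; but this is automatic, since $Y$ is associated with a finite smooth fan and Remark \ref{msch.8} guarantees that any monoid scheme associated with a fan is toric.
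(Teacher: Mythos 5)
Your proposal is correct and follows exactly the route the paper takes: its proof is a one-line appeal to Proposition \ref{logmonoid.1} combined with the correspondence between toric monoid schemes and fans (Remark \ref{msch.8}, Example \ref{msch.13}), which is precisely what you have spelled out. No issues.
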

\begin{proof}
This is an immediate consequence of Proposition \ref{logmonoid.1} due to the correspondence between toric monoid schemes and fans.
\end{proof}

\begin{df}
Let $\sigma$ and $\tau$ be cones in a lattice $N$.
We say that \emph{$\tau$ crosses $\sigma$} if $\sigma\cap \tau$ is not a face of $\sigma$.
Let $\Sigma$ be a fan in $N$.
We say that \emph{$\tau$ crosses $\Sigma$} if there exists $\sigma\in \Sigma$ such that $\tau$ crosses $\sigma$.
\end{df}

\begin{lem}
\label{resolution.5}
Let $\sigma$ and $\tau$ be $n$-dimensional cones in an $n$-dimensional lattice $N$ with an integer $n\geq 0$.
If $\tau$ crosses $\sigma$,
then there exists an $(n-1)$-dimensional face of $\tau$ crossing $\sigma$.
\end{lem}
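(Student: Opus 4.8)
The plan is to rephrase ``crossing'' in terms of facet normals and sums, and then to push a witness of the crossing onto the boundary of $\tau$. First I would record the cases $n\le 1$ separately: there $\sigma\cap\tau$ is always a face of $\sigma$ (it is either $\sigma$ or $\{0\}$), so the hypothesis is vacuous and there is nothing to prove. For $n\ge 2$ I would recall the standard description of faces of a polyhedral cone: a subcone $F$ of a polyhedral cone $\sigma$ is a face of $\sigma$ if and only if, whenever $x,y\in\sigma$ and $x+y\in F$, one has $x,y\in F$; see \cite[\S1.2]{CLStoric}. In particular, $\tau$ crosses $\sigma$ precisely when there exist $x,y\in\sigma$ with $x+y\in\tau$ but $x\notin\tau$. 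Since $\tau$ is $n$-dimensional inside the $n$-dimensional lattice, I would also fix a presentation $\tau=\{z\in N_\R:\langle u_i,z\rangle\ge 0,\ i=1,\dots,m\}$ by its inner facet normals $u_1,\dots,u_m$, so that the $(n-1)$-dimensional faces of $\tau$ are exactly the facets $\tau_i:=\tau\cap u_i^\bot$.

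Now assume $\tau$ crosses $\sigma$ and fix $x,y\in\sigma$ with $x+y\in\tau$ and $x\notin\tau$. Because $x\notin\tau$, the index set $I:=\{i:\langle u_i,x\rangle<0\}$ is nonempty, and for $i\in I$ the inequality $\langle u_i,x+y\rangle\ge 0$ forces $\langle u_i,y\rangle>0$. I would then consider the family $\rho(t):=(x+y)+tx=(1+t)x+y$ for $t\ge 0$; each $\rho(t)$ is a nonnegative combination of $x,y\in\sigma$, hence lies in $\sigma$. For $i\notin I$ one has $\langle u_i,\rho(t)\rangle\ge 0$ for all $t\ge 0$, while for $i\in I$ the map $t\mapsto\langle u_i,\rho(t)\rangle$ is affine and strictly decreasing, vanishing at $t_i:=\langle u_i,x+y\rangle/(-\langle u_i,x\rangle)\ge 0$. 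Setting $t_0:=\min_{i\in I}t_i$, attained at some $i_0\in I$, I obtain $\rho(t_0)\in\tau$ together with $\langle u_{i_0},\rho(t_0)\rangle=0$, i.e.\ $\rho(t_0)\in\tau_{i_0}$.

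Finally I would write $\rho(t_0)=x+b$ with $b:=t_0x+y\in\sigma$. Then $x,b\in\sigma$, their sum $\rho(t_0)$ lies in $\tau_{i_0}$, but $x\notin\tau_{i_0}$ since $x\notin\tau\supseteq\tau_{i_0}$. By the face characterization, $\sigma\cap\tau_{i_0}$ is not a face of $\sigma$, i.e.\ the $(n-1)$-dimensional face $\tau_{i_0}$ of $\tau$ crosses $\sigma$, which is the claim. The only genuine external input is the characterization of faces of a polyhedral cone; the rest is an elementary computation, and the point that needs the most care is taking $t_0$ to be the \emph{minimum} of the $t_i$ over $I$ — this is exactly what guarantees that $\rho(t_0)$ still lies in $\tau$ and lands on a genuine facet $\tau_{i_0}$, rather than exiting $\tau$ or hitting a lower-dimensional face.
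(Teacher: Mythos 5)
Your argument is correct, but it is a genuinely different proof from the one in the paper. The paper argues at the level of faces: since $\sigma\cap\tau$ is not a face of $\sigma$, it picks a face $\eta$ of $\sigma\cap\tau$ of dimension $\leq n-1$ that is not a face of $\sigma$, encloses it in an $(n-1)$-dimensional face $\eta'$ of $\tau$, and asserts that $\eta'$ crosses $\sigma$. You instead work with points, via the additive characterization of faces (a subcone $F\subseteq\sigma$ is a face iff $x,y\in\sigma$ and $x+y\in F$ force $x,y\in F$): you extract a witness pair $x,y\in\sigma$ with $x+y\in\tau$, $x\notin\tau$, and then shoot the ray $\rho(t)=(x+y)+tx$ until it exits $\tau$, landing on a facet $\tau_{i_0}$ at $t_0=\min_{i\in I}t_i$; the decomposition $\rho(t_0)=x+(t_0x+y)$ then exhibits $\tau_{i_0}$ as crossing $\sigma$ by the same criterion. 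What your route buys is that the facet of $\tau$ comes equipped with an explicit witness, so the final ``crossing'' assertion is immediate, whereas the paper's version leaves to the reader both why $\eta$ sits inside an $(n-1)$-dimensional face of $\tau$ and why that face then crosses $\sigma$. The only points to flag: you use both directions of the face characterization (the ``sum property implies face'' direction, needed to produce $x,y$ from the hypothesis, is the less trivial one and is only an exercise-level statement in \cite{CLStoric}, so a one-line justification or a precise reference would be appropriate), and your argument tacitly assumes $\tau$ has at least one facet normal, i.e.\ $\tau\neq N_\R$ — harmless, since otherwise $\sigma\cap\tau=\sigma$ is a face and the hypothesis fails, but worth a word alongside your treatment of $n\leq 1$.
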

\begin{proof}
Since $\sigma\cap \tau$ is not a face of $\sigma$,
there exists a face $\eta$ of $\sigma\cap \tau$ such that $\dim \eta\leq n-1$ and $\eta$ is not a face of $\sigma$.
Let $\eta'$ be an $(n-1)$-dimensional face of $\tau$ containing $\eta$.
Then $\eta'$ crosses $\sigma$.
\end{proof}

\begin{lem}
\label{logmonoid.3}
Let $\Sigma$ be a smooth fan in $\Z^n$ with the support $\N^n$,
where $n\in \N$.
Let $e_1,\ldots,e_n$ be the standard coordinates in $\Z^n$.
If $\eta:=\Cone(e_1,\ldots,e_r)$ with an integer $0\leq r\leq n$ and $\tau$ is an $(n-1)$-dimensional cone contained in $\N^n$ such that $\eta\cap \tau$ is a face of $\eta$,
then there is a subdivision $\Sigma'$ of $\Sigma$ obtained by a sequence of star subdivisions relative to $2$-dimensional cones such that $\eta\in \Sigma'$ and $\tau$ does not cross $\Sigma'$.
\end{lem}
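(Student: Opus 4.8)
The plan is to subdivide $\Sigma$ until no maximal cone is crossed by $\tau$, using only star subdivisions along smooth $2$-dimensional cones, arranged so that $\eta$ is never cut, and proceeding by induction on $n$ (the base case $n\le 1$ is vacuous). A preliminary remark: the conclusion forces $\eta\in\Sigma$, since a face of the orthant $\N^n=|\Sigma|$ cannot be created by refining $\Sigma$, only subdivided; so we may assume $\eta\in\Sigma$, as is the case in the intended applications. Fix a primitive $u\in(\Z^n)^\vee$ with $\langle\tau\rangle=\{u=0\}$. By the definition of ``crosses'', the hypothesis that $\eta\cap\tau$ is a face of $\eta$ says exactly that $\tau$ does not cross $\eta$. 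Smoothness is preserved throughout, because a star subdivision along a smooth $2$-cone of a smooth fan is again smooth and the support stays $\N^n$.

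\emph{Phase 1.} As long as some maximal cone $\sigma=\Cone(w_1,\ldots,w_n)$ (with $w_1,\ldots,w_n$ a lattice basis, by smoothness) has $u(w_i)>0>u(w_j)$ for some $i,j$, I would star-subdivide along the smooth $2$-cone $\Cone(w_i,w_j)$, introducing the ray through $w_i+w_j$. This terminates by a measure of the kind used in toric resolution of singularities, built from the values $|u(\cdot)|$ on the primitive generators of the maximal cones on which $u$ is not single-signed: a sign-cancelling step replaces a generator by $w_i+w_j$, and $|u(w_i+w_j)|=\big||u(w_i)|-|u(w_j)|\big|$ is strictly smaller than $\max(|u(w_i)|,|u(w_j)|)$. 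When the process stops, $u$ is single-signed on every maximal cone, i.e.\ $\N^n\cap\{u=0\}$ is a union of cones of the fan.

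\emph{Phase 2.} If $u$ is single-signed on a maximal cone $\sigma$, then $F:=\sigma\cap\{u=0\}$ is a face of $\sigma$ and $\sigma\cap\tau=F\cap\tau$, so $\tau$ crosses $\sigma$ if and only if $\tau$ crosses $F$ --- a problem inside the hyperplane $\{u=0\}$, where $\tau$ is top-dimensional. By Lemma \ref{resolution.5}, if $\tau$ crosses $F$ then some $(n-2)$-dimensional face of $\tau$ crosses $F$; one then applies the present lemma inductively in the lattice $\{u=0\}\cap\Z^n$ (after a unimodular identification with $\Z^{n-1}$ carrying the coordinate face $\eta\cap\{u=0\}$ of $\eta$ to a standard coordinate cone), and propagates the resulting $2$-dimensional star subdivisions of $F$ outward to $\sigma$, using Proposition \ref{logmonoid.1} to stay smooth. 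This clears all remaining crossings of $\tau$.

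The main obstacle is to carry $\eta$ through both phases as an honest cone. A star subdivision along a $2$-cone $\omega$ destroys $\eta$ precisely when $\omega$ is a face of $\eta$, i.e.\ $\omega=\Cone(e_a,e_b)$ with $a,b\le r$; so I must show that at each step a separating $2$-cone meeting $\{e_1,\ldots,e_r\}$ in at most one generator can be chosen, and that the Phase 2 descent can be steered away from the faces of $\eta$. This is exactly where ``$\tau$ does not cross $\eta$'' is used: it guarantees that none of the rays one is forced to introduce lies in the relative interior of $\eta$, so whenever $u(e_a)>0>u(e_b)$ with $a,b\le r$ the same effect can be obtained through a generator $v\notin\eta$ (as in the model computation, routing $e_a\rightsquigarrow e_a+v\rightsquigarrow\cdots$ rather than using $e_a+e_b$). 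Proving that this avoidance can be organized uniformly along the induction, and that it is compatible with the Phase 1 termination measure, is the crux; everything else reduces to the toric resolution statements and the fan/monoid-scheme dictionary already established.
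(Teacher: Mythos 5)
Your two-phase strategy (sign-cancelling star subdivisions along $2$-cones to make the fan compatible with the hyperplane $\langle\tau\rangle=\{u=0\}$, then descent into the hyperplane with Lemma \ref{resolution.5} and induction on dimension) is the same skeleton as the paper's, which delegates the termination of Phase 1 to \cite[Proof of Lemma 2.3]{MR803344}. But you have correctly isolated the one real difficulty---keeping $\eta$ alive---and then left it unresolved: you explicitly call the avoidance argument ``the crux'' and only gesture at a ``routing through $v\notin\eta$''. This is a genuine gap, and it is not a removable one for your setup. The hypothesis that $\eta\cap\tau$ is a face of $\eta$ does \emph{not} force $u$ to be single-signed on $\eta$ when $\tau$ is a proper subcone of $\{u=0\}\cap\N^n$. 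Concretely, take $n=3$, $\Sigma=\{\text{faces of }\Cone(e_1,e_2,e_3)\}$, $\eta=\Cone(e_1,e_2)$, $\tau=\Cone(e_3,\,e_1+e_2+e_3)\subset\{x_1=x_2\}$; then $\eta\cap\tau=0$ is a face of $\eta$, but $u=(1,-1,0)$ satisfies $u(e_1)>0>u(e_2)$ and $u(e_3)=0$, so the \emph{only} sign-cancelling pair available to your Phase 1 is $(e_1,e_2)$, and the very first step destroys $\eta$. There is no generator $v$ with $u(v)\neq 0$ to ``route through,'' so the fix must first create new rays by subdivisions that your Phase 1, as described, would never perform; making this precise and compatible with the termination measure is exactly the work that is missing.

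The paper closes this gap by a preliminary reduction that your proposal lacks. Writing $\eta\cap\tau=\Cone(e_1,\ldots,e_s)$, it introduces the auxiliary functional $\varphi_\delta=-x_{s+1}-\cdots-x_r+m(x_{r+1}+\cdots+x_n)$ with $m\gg 0$, so that $\varphi_\delta\le 0$ on all generators of $\eta$ while $\varphi_\delta\ge 0$ on all rays of $\tau$. Running the hyperplane case for $\{\varphi_\delta=0\}$ first is harmless for $\eta$, because every sign-cancelling $2$-cone for $\varphi_\delta$ has one generator with $\varphi_\delta>0$ and hence is never a face of $\eta$. Restricting then to the halfspace $\{\varphi_\delta\ge 0\}$ (which contains $\tau$) replaces $\eta$ by $\eta\cap\tau$, reducing to the case $\eta\subset\tau\subset\{u=0\}$; after this reduction $u$ vanishes identically on $\eta$, so the sign-cancelling subdivisions for $u$ and the subsequent induction inside $\ol{\tau}:=\langle\tau\rangle\cap\N^n$ never need to cut $\eta$. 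Without this reduction, or a worked-out substitute for your routing claim, the proof is incomplete. (Your side remark that one may as well assume $\eta\in\Sigma$ is fine and matches the paper's implicit use of $\eta\in\Sigma_0$ in its induction.)
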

\begin{proof}
Without loss of generality,
we may assume $\eta\cap \tau=\Cone(e_1,\ldots,e_s)$ with an integer $0\leq s\leq r$.
We proceed by induction on $n$.
The claim is obvious if $n=0$.
Assume $n\geq 1$.

We first treat the case where $\tau$ is the intersection of a hyperplane and $\N^n$,
i.e.,
\[
\tau=\{(x_1,\ldots,x_n)\in \N^n:a_1x_1+\cdots+a_nx_n=0\}
\]
for some $(a_1,\ldots,a_n)\in \Z^n-0$.
Observe that we have $a_1=\cdots=a_s=0$ and $a_{s+1},\ldots,a_r\neq 0$.
Since $\eta\cap \tau$ is a face of $\eta$,
$a_{s+1},\ldots,a_r$ have the same sign.
We may assume that they are positive.
Consider the function $\varphi_\tau\colon \Z^n\to \Z$ given by
\[
\varphi_\tau(x_1,\ldots,x_n):=a_1x_1+\cdots+a_nx_n.
\]
According to \cite[Proof of Lemma 2.3]{MR803344},
there exists a sequence of star subdivisions $\Sigma':=\Sigma_m\to \cdots \to \Sigma_0:=\Sigma$ such that $\Sigma'$ does not cross $\tau$ and $\Sigma_{i+1}$ is the star subdivision of $\Sigma_i$ relative to a $2$-dimensional cone $\delta_i$ for each $i$ satisfying the following condition:
One vertex $v$ of $\delta_i$ satisfies $\varphi_\tau(v)>0$ and the other vertex $v'$ of $\delta_i$ satisfies $\varphi_\tau(v')<0$.
Since $\varphi_\tau(e_1),\ldots,\varphi_\tau(e_r)\geq 0$,
the center of $\delta_i$ is not in $\eta$.
Hence we have $\eta\in \Sigma_i$ for each $i$ by induction,
so $\Sigma'$ is a desired one.

For general $\tau$,
consider
\[
\delta
:=
\{(x_1,\ldots,x_n)\in \N^n:-x_{s+1}-\cdots-x_r+mx_{r+1}+\cdots+mx_n=0\}
\]
with an integer $m\geq 0$.
If $f$ is a ray of $\tau$ different from $e_1,\ldots,e_s$,
then $f\notin \Cone(e_1,\ldots,e_r)$ since $\eta\cap \tau=\Cone(e_1,\ldots,e_s)$.
Hence there exists an integer $m>0$ such that $\varphi_\delta(f)>0$ for such a ray $f$.
By the special case above,
there exists a subdivision $\Delta$ of $\Sigma$ containing $\eta$ and obtained by a sequence of star subdivisions relative to $2$-dimensional cones such that if $\delta$ does not cross $\Delta$.
Then every cone of $\Delta$ is contained either in the region $\varphi_\delta\geq 0$ or $\varphi_\delta\leq 0$.
Let $\Delta'$ be the subfan of $\Delta$ consisting of the cones in the region $\varphi_\delta\geq 0$.
Then we have $\tau\subset \lvert \Delta'\rvert$.
Hence we can replace $\Sigma$ by $\Delta'$,
so we may assume $\eta\cap \tau=\eta$.

With this assumption,
let $\ol{\tau}$ be the intersection of $\N^n$ and the hyperplane generated by $\tau$.
We have $\ol{\tau}\cap \eta=\eta$ since $\tau\cap \eta=\eta$.
By the special case above,
there exists a subdivision $\Sigma'\to \Sigma$ obtained by a finite sequence of star subdivisions relative to $2$-dimensional cones such that $\eta\in \Sigma'$ and $\Sigma'$ does not cross $\ol{\tau}$.
Consider the subfan $\Gamma$ of $\Sigma'$ consisting of the cones contained in $\ol{\tau}$.
Using the induction hypothesis to $\Gamma$ and all the $(n-2)$-dimensional faces of $\tau$,
and after star subdividing $\Gamma$ and hence $\Sigma'$ too,
we may assume that these faces do not cross $\Gamma$.
Now, let $\sigma$ be a cone of $\Sigma'$.
Since $\ol{\tau}$ does not cross $\Sigma$,
$\sigma\cap \ol{\tau}$ is a cone of $\sigma$.
Together with Lemma \ref{resolution.5},
we see that $\tau$ does not cross $\sigma\cap \ol{\tau}$ and hence $\sigma$ too.
\end{proof}

The following result is a slight adjustment of 
\cite[Theorem 2.4]{MR803344} (see also \cite[pp.\ 39-40]{TOda}).

\begin{prop}
\label{logmonoid.2}
Let $\Sigma$ and $\Delta$ be fans with the same support $\lvert \Sigma\rvert = \lvert \Delta\rvert$ and a common cone $\eta\in \Sigma,\Delta$.
Assume that $\Sigma$ is smooth.
Then there exist subdivisions of fans $\Sigma'\to \Sigma,\Delta$ such that $\eta\in \Sigma'$ and $\Sigma'\to \Sigma$ is obtained by a finite sequence of star subdivisions relative to $2$-dimensional cones.
\end{prop}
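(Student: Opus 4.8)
The plan is to follow the strategy of \cite[Theorem 2.4]{MR803344} closely, reducing the problem to a purely local statement about the support $\lvert \Sigma\rvert = \lvert \Delta\rvert$ and applying Lemma \ref{logmonoid.3} as the key local resolution tool. First I would reduce to the case where $\eta$ is a maximal cone by the following device: the common cone $\eta$ is smooth (it is a cone of the smooth fan $\Sigma$), so we may perform a change of lattice and restrict attention near $\eta$. Concretely, since all cones of both fans that meet the relative interior of $\eta$ contain $\eta$ as a face, passing to the quotient lattice $N/N_\eta$ replaces $\eta$ by the origin; this lets us arrange $\eta = \{0\}$ after the reduction, so that the only constraint is that $\Sigma'$ refines both $\Sigma$ and $\Delta$ and is obtained from $\Sigma$ by star subdivisions at $2$-dimensional cones. (One must check that star subdivisions relative to $2$-dimensional cones in the quotient lift to star subdivisions relative to $2$-dimensional cones upstairs, which is straightforward since the preimage of a $2$-dimensional cone containing no ray of $\eta$ is again $2$-dimensional.)

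Next I would induct on the number of cones of $\Delta$ that ``cross'' $\Sigma$, using the terminology introduced just before Lemma \ref{resolution.5}. If no cone of $\Delta$ crosses $\Sigma$, then $\Sigma$ already refines $\Delta$ and we are done with $\Sigma' = \Sigma$. Otherwise, pick a cone $\tau \in \Delta$ of smallest dimension that crosses $\Sigma$; by minimality every proper face of $\tau$ is a union of cones of $\Sigma$. Working inside the (smooth, after a further subdivision that fixes $\eta$) subfan of $\Sigma$ supported on a neighborhood of $\tau$, and after another change of lattice so that $\tau$ becomes a full-dimensional cone with a chosen face identified with $\Cone(e_1,\dots,e_s)$, I would invoke Lemma \ref{logmonoid.3}: it produces a subdivision $\Sigma'$ of (the relevant part of) $\Sigma$, obtained by star subdivisions relative to $2$-dimensional cones, retaining $\eta$ and no longer crossed by $\tau$. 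One then checks that this operation does not create any new crossing cones of $\Delta$ (a star subdivision relative to a $2$-dimensional cone only refines, so a cone of $\Delta$ that did not cross $\Sigma$ still does not cross the refinement), so the number of crossing cones strictly decreases and the induction goes through. Finally I would globalize: the local resolutions near each problematic $\tau$ are compatible on overlaps because they are all refinements of the fixed ambient fan, so they glue to a subdivision $\Sigma' \to \Sigma$ of the desired type, which by construction also refines $\Delta$ and contains $\eta$.

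The main obstacle I expect is the careful bookkeeping in the reduction steps: verifying that the various lattice-change and localization reductions preserve both the smoothness of $\Sigma$ and the property ``obtained by a finite sequence of star subdivisions relative to $2$-dimensional cones,'' and in particular that a sequence of such star subdivisions performed on a subfan or a quotient lifts to the same kind of sequence on the original fan $\Sigma$. The delicate point is that star subdivision relative to a $2$-dimensional cone is defined via the $2$-dimensional cone itself, not via a single ray, so one must track the location of the centers of the star subdivisions relative to $\eta$ — this is exactly the hypothesis $\varphi_\tau(e_1),\dots,\varphi_\tau(e_r) \geq 0$ extracted in the proof of Lemma \ref{logmonoid.3}, guaranteeing the center is never in $\eta$ and hence $\eta$ survives each step. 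Once that invariant is isolated, the rest is a routine induction following \cite{MR803344} and \cite[pp.\ 39--40]{TOda}.
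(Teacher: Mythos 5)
Your second step (induct on crossing cones, apply Lemma \ref{logmonoid.3}, observe that refinement never creates new crossings) is close in spirit to the paper's argument, but the reductions wrapped around it contain genuine gaps. The most serious is the opening reduction to $\eta=\{0\}$ by passing to $N/N_\eta$: that quotient only records the star of $\eta$ (the cones of $\Sigma$ and $\Delta$ containing $\eta$) and discards everything else, whereas the proposition demands a subdivision $\Sigma'$ refining $\Delta$ on the \emph{entire} support. The constraint $\eta\in\Sigma'$ is a mild negative condition near $\eta$, but the work of refining $\Delta$ happens globally, so no quotient construction can reduce the statement to the case $\eta=\{0\}$. The paper's first reduction is different and does work: cut the support into the maximal cones $\sigma_1,\dots,\sigma_m$ of the smooth fan $\Sigma$ and induct on $m$, reducing to the case where the support is a single smooth cone $\N^n$ — exactly the hypothesis under which Lemma \ref{logmonoid.3} is stated.

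The second gap is in how you invoke Lemma \ref{logmonoid.3}. That lemma requires $\tau$ to be an $(n-1)$-dimensional cone inside the support $\N^n$; a crossing cone of $\Delta$ of minimal dimension need not be $(n-1)$-dimensional, and your proposed fix — a change of lattice making $\tau$ full-dimensional — puts you outside the lemma's hypotheses rather than inside them (and a lattice change adapted to $\tau$ will in general destroy the normalization of $\Sigma$ as a fan with support $\N^n$). The paper instead applies Lemma \ref{logmonoid.3} to \emph{every} $(n-1)$-dimensional cone of $\Delta$ and then uses Lemma \ref{resolution.5} to conclude that the $n$-dimensional cones of $\Delta$ also fail to cross $\Sigma'$, whence $\Sigma'$ refines $\Delta$; no treatment of lower-dimensional crossing cones is needed. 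Finally, the closing "glue the local resolutions" step assumes that refinements produced on overlapping pieces assemble into a single sequence of star subdivisions of the ambient fan — but producing a common refinement that is realized by star subdivisions is precisely the content of the proposition, so this cannot be taken for granted; the paper avoids the issue entirely by working with star subdivisions of the full fan (with support $\N^n$) from the start.
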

\begin{proof}
Let $\sigma_1,\ldots,\sigma_m$ be the maximal cones of $\Sigma$.
Assume that the claim holds for any fan whose support agrees with $\sigma_i$ for some $i$.
Then by induction on $m$,
we obtain a desired $\Sigma'$.
Hence we reduce to the case where $\Sigma$ is a fan in $\Z^n$ with support $\N^n$.

By Lemma \ref{logmonoid.3}, there exists a subdivision $\Sigma'\to \Sigma$ obtained by a finite sequence of star subdivisions relative to $2$-dimensional cones such that $\eta\in \Sigma'$ and every $(n-1)$-dimensional cone of $\Delta$ does not cross $\Sigma'$.
Lemma \ref{resolution.5} implies that every $n$-dimensional cone of $\Delta$ does not cross $\Sigma'$.
It follows that $\Sigma'$ is a subdivision of $\Delta$.
\end{proof}

\begin{prop}
\label{resolution.3}
Let $X\in \cSm/\F_1$.
Then there exists an admissible blow-up $Y\to X$ with $Y\in \SmlSm/\F_1$.
\end{prop}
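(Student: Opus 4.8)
The plan is to produce the required admissible blow-up in the form $Y=[\Sigma'',\Delta]$, where $\ul X=\T_\Sigma$ for a toric monoid scheme (= fan) $\Sigma$, where $\Delta$ is the subfan of $\Sigma$ with $\T_\Delta=X-\partial X$ (which is smooth since $X-\partial X\in\Sm/\F_1$), and where $\Sigma''$ is a smooth subdivision of $\Sigma$ obtained \emph{without ever subdividing a cone of $\Delta$}. The point of the last constraint is that it forces $Y-\partial Y=\T_\Delta=X-\partial X$, and by Example \ref{msch.13} a subdivision gives a proper birational morphism $\ul Y\to\ul X$, so such a $Y$ will automatically be an admissible blow-up of $X$ in $\cSm/\F_1$; membership of $Y$ in $\SmlSm/\F_1$ will be read off from Proposition \ref{logmonoid.24}.

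First I would apply Proposition \ref{logmonoid.1} to $\Sigma$ to get a smooth subdivision $\Sigma'\to\Sigma$ in which no smooth cone of $\Sigma$ is subdivided. Every cone of the smooth subfan $\Delta$ is a smooth cone of $\Sigma$, hence survives unchanged in $\Sigma'$; so $\Delta$ is a subfan of $\Sigma'$, the subdivision is the identity over $\T_\Delta$, and $Y_1:=[\Sigma',\Delta]\in\cSm/\F_1$ has $\ul{Y_1}=\T_{\Sigma'}\in\Sm/\F_1$ and $Y_1-\partial Y_1=X-\partial X$. Thus $Y_1\to X$ is already an admissible blow-up, and it remains to further modify $\Sigma'$ so that Proposition \ref{logmonoid.24} applies.

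By Proposition \ref{logmonoid.24}, $[\Sigma',\Delta]\in\SmlSm/\F_1$ exactly when every cone of $\Sigma'$ not in $\Delta$ has a ray not in $\Delta$; call a cone $\sigma\in\Sigma'$ \emph{bad} if $\sigma\notin\Delta$ but all rays of $\sigma$ lie in $\Delta$ (necessarily $\dim\sigma\ge 2$). If there is no bad cone, take $Y=Y_1$. Otherwise I would pick a bad cone $\sigma$ and replace $\Sigma'$ by its star subdivision relative to $\sigma$, introducing the new ray $v_\sigma$ (the sum of the primitive generators of $\sigma$). Three things then need to be checked: (i) no cone of $\Delta$ contains $\sigma$ — such a cone would have $\sigma$ as a face, forcing $\sigma\in\Delta$ — so the star subdivision touches no cone of $\Delta$, $\Delta$ remains a subfan, and $\T_\Delta$ is unchanged; (ii) $\sigma$ is a smooth cone of the smooth fan $\Sigma'$, so the star subdivision is again smooth by the standard basis computation (see \cite{CLStoric}); (iii) the cones of the new fan that are not already cones of $\Sigma'$ all contain $v_\sigma$, which is not a ray of $\Delta$, so none of them is bad, while the surviving cones of $\Sigma'$ are those not containing $\sigma$ — hence the set of bad cones strictly shrinks. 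Iterating, after finitely many star subdivisions one reaches a smooth fan $\Sigma''$ with $\Delta$ a subfan, no cone of $\Delta$ ever subdivided, and no bad cone. Then $Y:=[\Sigma'',\Delta]\in\cSm/\F_1$ has $\ul Y=\T_{\Sigma''}\in\Sm/\F_1$, satisfies $Y\in\SmlSm/\F_1$ by Proposition \ref{logmonoid.24}, and $\ul Y\to\ul X$ is proper birational with $Y-\partial Y=X-\partial X$ by Example \ref{msch.13}; thus $Y\to X$ is the desired admissible blow-up.

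The main obstacle is Step 2: one must be sure that the star subdivisions used to eliminate the bad cones (a) never disturb $\Delta$, so that $X-\partial X$ is reproduced on the nose; (b) preserve smoothness of the fan; and (c) terminate. Points (a) and (c) rest on the elementary observation that a bad cone is never a face of a cone of $\Delta$ and that each star subdivision strictly decreases the (finite) number of bad cones; point (b) uses that a star subdivision of a smooth fan relative to a smooth cone is again smooth. Everything outside Step 2 is a direct application of Proposition \ref{logmonoid.1} together with the dictionary between subdivisions of fans and proper birational morphisms in Example \ref{msch.13}.
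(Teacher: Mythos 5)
Your proposal is correct and follows essentially the same route as the paper: reduce to a smooth underlying fan via Proposition \ref{logmonoid.1} (whose condition that smooth cones are not subdivided guarantees $\Delta$ survives), then eliminate the cones violating the criterion of Proposition \ref{logmonoid.24} by star subdivisions centered at those cones, noting that the new rays lie outside $\Delta$ so the count of offending cones strictly decreases. The only cosmetic difference is the termination bookkeeping (you count all bad cones at once, the paper inducts on the maximal dimension), which changes nothing of substance.
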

\begin{proof}
By Proposition \ref{logmonoid.1},
there exists an admissible blow-up $X'\to X$ in $\cSm/\F_1$ such that $\ul{X'}$ is smooth.
Replacing $X$ by $X'$,
we may assume that $\ul{X}$ is smooth.

Consider the smooth fans $\Sigma$ and $\Delta$ corresponding to $\ul{X}$ and $X-\partial X$.
Let $\alpha(\Sigma,\Delta)$ be the set of cones $\sigma$ of $\Sigma$ not in $\Delta$ such that every ray of $\sigma$ is in $\Delta$.
If $\alpha(\Sigma,\Delta)=\emptyset$, then we have $X\in \SmlSm/\F_1$ by Proposition \ref{logmonoid.24}.
This is the reason why we consider $\alpha(\Sigma,\Delta)$.

Now, let $\sigma$ be a cone of $\alpha(\Sigma,\Delta)$ having a maximal dimension among the cones of $\alpha(\Sigma,\Delta)$, say $d$.
Consider the star subdivision $\Sigma^*(\sigma)$.
Then the center of $\sigma$ is not in $\Delta$ and hence every $d$-dimensional cone containing that center is not in  $\alpha(\Sigma^*(\sigma),\Delta)$.
It follows that $\alpha(\Sigma^*(\sigma),\Delta)$ has one less $d$-dimensional cones than $\alpha(\Sigma^*(\sigma),\Delta)$.
Also, $\alpha(\Sigma^*(\sigma),\Delta)$ does not contain a cone whose dimension is $>d$.
By repeating this process,
we see that there exists a composition of star subdivisions $\Sigma'\to \Sigma$ such that $\alpha(\Sigma',\Delta)=\emptyset$,
Then Construction \ref{logmonoid.7} yields a desired $Y\to X$.
\end{proof}

\begin{prop}
\label{logmonoid.4}
Let $X\in \Sm/\F_1$.
Then there exists proper $Y\in \SmlSm/\F_1$ containing $X$ as an open subscheme.
\end{prop}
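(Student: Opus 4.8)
The plan is to compactify the fan underlying $X$ by means of the toric completion theorem and then to feed the resulting pair into Proposition \ref{resolution.3}. Under the identification of toric monoid schemes with fans (Remark \ref{msch.8}), write $\Sigma$ for the finite smooth fan in a lattice $N$ with $X\cong \Sigma$.

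First I would choose, by the toric completion theorem (Ewald--Ishida; a streamlined proof is due to Rohrer), a finite complete fan $\ol\Sigma$ in $N$ having $\Sigma$ as a subfan. As a monoid scheme $\ol\Sigma$ is connected, separated, normal, torsion free, and of finite type, hence toric in the sense of Definition \ref{msch.21}; moreover it is proper over $\Spec(\F_1)$ because $\ol\Sigma$ is complete, which follows either directly from the valuative criterion in Definition \ref{msch.11} or, via Remark \ref{msch.19}, from the classical fact that a complete fan defines a complete toric variety over a field. Since $\Sigma$ is a subfan of $\ol\Sigma$, the inclusion $\Sigma\to \ol\Sigma$ is an open immersion of toric monoid schemes with $\Sigma\in \Sm/\F_1$, so the pair $Z:=[\ol\Sigma,\Sigma]$ is an object of $\cSm/\F_1$ with $Z-\partial Z=\Sigma\cong X$.

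Next I would apply Proposition \ref{resolution.3} to $Z$, obtaining an admissible blow-up $Y\to Z$ in $\cSm/\F_1$ with $Y\in \SmlSm/\F_1$. By the definition of admissible blow-ups in $\cSm/\F_1$ (Definition \ref{logmonoid.22}), the underlying morphism $\ul{Y}\to \ul{Z}=\ol\Sigma$ is proper and the induced morphism $Y-\partial Y\to Z-\partial Z$ is an isomorphism; hence $Y-\partial Y\cong X$, and the open immersion $Y-\partial Y\to Y$ exhibits $X$ (with its trivial log structure) as an open subscheme of $Y$. Since the composite $\ul{Y}\to \ol\Sigma\to \Spec(\F_1)$ is a composition of proper morphisms of finite-type monoid schemes, it is proper (again checkable on $k$-realizations through Remark \ref{msch.19}), so $\ul{Y}$ is proper and $Y$ is the sought object.

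The main obstacle is the one external input, the toric completion theorem --- that every finite fan embeds as a subfan of a finite complete fan --- for which a reference must be cited rather than proved here. The remaining verifications are routine: that $\ol\Sigma$ is toric and proper, that $[\ol\Sigma,\Sigma]$ genuinely lies in $\cSm/\F_1$, and that passing through Proposition \ref{resolution.3} alters neither $X$ nor its being open in $Y$ --- the latter being automatic, since an admissible blow-up in $\cSm/\F_1$ fixes the complement of the boundary by definition.
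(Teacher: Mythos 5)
Your proposal is correct and follows essentially the same route as the paper: complete the fan $\Sigma$ to a finite complete fan (the paper cites \cite[p.\ 18]{TOda} for this), form the pair $[\ol\Sigma,\Sigma]\in \cSm/\F_1$, and apply Proposition \ref{resolution.3}. The extra verifications you spell out (properness of $\ul{Y}$, the identification $Y-\partial Y\cong X$) are correct and left implicit in the paper.
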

\begin{proof}
The monoid scheme $X$ is associated with a smooth fan $\Sigma$.
There exists a complete fan $\Delta$ containing $\Sigma$ as a subfan by \cite[p.\ 18]{TOda}.
Then the pair $(\Delta,\Sigma)$ yields $Y\in \cSm/\F_1$ such that $\ul{Y}$ is proper.
Proposition \ref{resolution.3} finishes the proof.
\end{proof}

\begin{prop}
\label{resolution.2}
Let $X\in \lSm/\F_1$.
Then there exists a dividing cover $Y\to X$ with $Y\in \SmlSm/\F_1$.
\end{prop}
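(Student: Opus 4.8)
The plan is to translate the statement into a problem about resolving a fan, to carry out that resolution so that it only ever subdivides in the ``boundary directions'' (so that the outcome is a dividing cover, not merely an admissible blow-up in the sense of $\cSm/\F_1$), and then to repackage the result as an object of $\SmlSm/\F_1$ via Construction \ref{logmonoid.7}.

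First I would set up the combinatorics. Since $X\in\lSm/\F_1$, the underlying monoid scheme $\ul X$ is toric, hence corresponds to a fan $\Sigma$ by Remark \ref{msch.8}; the open subscheme $X-\partial X$ of trivial log structure is toric and corresponds to a subfan $\Delta\subseteq\Sigma$, which is smooth because $X-\partial X\in\Sm/\F_1$. Call a ray of $\Sigma$ a \emph{boundary ray} if it does not belong to $\Delta$, and let $\Sigma^\mathrm{bd}\subseteq\Sigma$ be the subfan of those cones all of whose rays are boundary rays. Over a chart $X\vert_U\cong\A_P\times\A^m$, the fan $\Sigma\vert_U$ is the product of the cone $P^\vee$ (defining $\A_P$) and the smooth simplicial cone $\Cone(e_1,\dots,e_m)$ (defining $\A^m$), with $\Delta\vert_U=\{\text{faces of }\{0\}\times\Cone(e_1,\dots,e_m)\}$ and $\Sigma^\mathrm{bd}\vert_U=\{\text{faces of }P^\vee\times\{0\}\}$.

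Then I would resolve. Apply Proposition \ref{logmonoid.1} to $\Sigma^\mathrm{bd}$ to obtain a smooth subdivision, realized as a finite sequence of star subdivisions leaving smooth cones untouched, and carry out the same star subdivisions on $\Sigma$ itself; each is relative to a cone of $\Sigma^\mathrm{bd}\subseteq\Sigma$, so to a cone all of whose rays are boundary rays. This produces a subdivision $\Sigma'\to\Sigma$. The crucial local fact is that star-subdividing $P^\vee\times\Cone(e_1,\dots,e_m)$ relative to a cone of the form $\tau\times\{0\}$ coincides with (the star subdivision of $P^\vee$ relative to $\tau$)$\,\times\{\text{faces of }\Cone(e_1,\dots,e_m)\}$, so the product decomposition over each chart persists and $\Sigma'\vert_U=(P^\vee)'\times\Cone(e_1,\dots,e_m)$ for a smooth subdivision $(P^\vee)'\to P^\vee$. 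Consequently $\Sigma'$ is smooth (a product of smooth cones), every new ray lies locally in a $P^\vee$-direction and so is not in $\Delta$, the cones of $\Delta$ (smooth, with no boundary rays) are never subdivided so that $\Delta$ remains a subfan of $\Sigma'$, and $\alpha(\Sigma',\Delta)=\emptyset$ in the notation of the proof of Proposition \ref{resolution.3}: a cone of $\Sigma'$ whose rays all lie in $\Delta$ is supported in $\lvert\Delta\rvert$, which over a chart is the cone $\{0\}\times\Cone(e_1,\dots,e_m)\in\Delta\subseteq\Sigma'$, hence is a face of it by the fan axiom and lies in $\Delta$.

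Finally, declaring the boundary rays of $\Sigma'$ to be the old ones together with the new ones, Construction \ref{logmonoid.7} (cf.\ Proposition \ref{logmonoid.24}) yields $Y\in\SmlSm/\F_1$ with $\ul Y=\T_{\Sigma'}$ and $Y-\partial Y=X-\partial X$ (using $\alpha(\Sigma',\Delta)=\emptyset$). By Proposition \ref{logmonoid.5} the subdivision $\T_{\Sigma'}\to\T_\Sigma$ underlies a morphism $f\colon Y\to X$ in $\lSm/\F_1$ (alternatively invoke the full faithfulness of $\lSm/\F_1\to\cSm/\F_1$ from Remark \ref{logmonoid.23}), with $\ul f$ proper birational by Example \ref{msch.13}; over each chart $f$ is $\T_u\times\id\colon\T_{(P^\vee)'}\times\A^m\to\A_P\times\A^m$ with $u$ the subdivision $(P^\vee)'\to\Spec(\F_1[P])$, so $f$ is a dividing cover. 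I expect the main obstacle to be the bookkeeping in the third paragraph: checking that the star subdivisions produced by Proposition \ref{logmonoid.1} for $\Sigma^\mathrm{bd}$ assemble coherently into star subdivisions of $\Sigma$ compatible with the chart-local product decompositions — this is precisely what upgrades an admissible blow-up to a dividing cover and is not automatically supplied by Proposition \ref{resolution.3} — and that resolving $\Sigma^\mathrm{bd}$ alone suffices to resolve $\Sigma$, which holds because every non-smooth cone of $\Sigma$ is locally $\gamma_1\times\gamma_2$ with $\gamma_1\in\Sigma^\mathrm{bd}$ non-smooth and $\gamma_2$ a smooth face of $\Cone(e_1,\dots,e_m)$.
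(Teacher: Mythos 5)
Your proof is correct and follows essentially the same route as the paper: your subfan $\Sigma^{\mathrm{bd}}$ is precisely the fan of $X^\sharp$ from Construction \ref{logmonoid.16}, and both arguments resolve that fan by Proposition \ref{logmonoid.1} and transport the resulting smooth subdivision back to a dividing cover $Y\to X$ with $\ul{Y}$ smooth. The only difference is that the paper invokes the equivalence between dividing covers of $X$ and of $X^\sharp$ already recorded in Construction \ref{logmonoid.16}, which packages exactly the star-subdivision and chart-local product-decomposition bookkeeping you carry out by hand.
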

\begin{proof}
Consider $X^\sharp\in \lSm/\F_1$ obtained by Construction \ref{logmonoid.16},
which also shows that the categories of dividing covers of $X$ and $X^\sharp$ are equivalent.
Let $\Sigma$ be the fan such that $X^\sharp\cong \T_\Sigma$.
By Proposition \ref{logmonoid.1},
there exists a subdivision $\Delta\to \Sigma$ such that $\Delta$ is smooth.
The induced morphism $\T_\Delta\to \T_\Sigma$ is a dividing cover,
and this corresponds to a dividing cover $Y\to X$ in $\lSm/\F_1$.
From the smoothness of $\Delta$,
we have $\ul{Y}\in \Sm/\F_1$ and hence $Y\in \SmlSm/\F_1$.
\end{proof}

\bibliography{bib}
\bibliographystyle{siam}
\end{document}